\newtheorem{theorem}{Theorem}[section]
\newtheorem{corollary}[theorem]{Corollary}
\newtheorem{proposition}[theorem]{Proposition}
\newtheorem{lemma}[theorem]{Lemma}
\theoremstyle{remark}
\newtheorem{remark}[theorem]{Remark}
\theoremstyle{definition}
\newtheorem{defi}[theorem]{Definition}
\def \tr {\operatorname{tr}}
\def \Diag {\operatorname{Diag}}
\def \Frob {\operatorname{Frob}}
\def \sign {\operatorname{sgn}}
\def \Sym {\operatorname{Sym}}
\def \sgn {\operatorname{sgn}}
\def \Ind {\operatorname{Ind}}
\def \orho {\overline{\rho}}
\newcommand{\Hom}{\operatorname{Hom}}
\renewcommand{\mod}{\textrm{ mod }}
\newcommand{\lm}{b}
\newcommand{\EFT}{\operatorname{EFT}}
\newcommand{\WF}{\operatorname{WF}}
\begin{document}

\author{Will Sawin}

\title[Sums over progressions in function fields]{Square-root cancellation for sums of factorization functions over squarefree progressions in $\mathbb F_q[t]$}

\begin{abstract} We prove estimates for the level of distribution of the M\"obius function, von Mangoldt function, and divisor functions in squarefree progressions in the ring of polynomials over a finite field. Each level of distribution converges to $1$ as $q$ goes to $\infty$, and the power savings converges to square-root cancellation as $q$ goes to $\infty$. These results in fact apply to a more general class of functions, the factorization functions, that includes these three. The divisor estimates have applications to the moments of $L$-functions, and the von Mangoldt estimate to one-level densities. 

\end{abstract}

\maketitle

\section{Introduction}

Let $\mathbb F_q$ be a finite field with $q$ elements and let $\mathbb F_q[t]$ be the ring of polynomials in one variable over $t$.  Let $\mathbb F_q[t]^+$ be the set of monic polynomials in $\mathbb F_q[t]$. For a natural number $n$, let $\mathcal M_n$ be the set of monic polynomials of degree $n$ in $\mathbb F_q[t]$, a set of size $q^n$. 

We can think of these objects as analogous to basic concepts in classical number theory -- $\mathbb F_q[t]$ is analogous to the integers, $\mathbb F_q[t]^+$ is analogous to the positive integers, and $\mathcal M_n$ is analogous to the set of integers in an interval $[X,2X]$ for $X \approx q^n$.

The main results in this paper will be analogous to \emph{level of distribution} results in number theory, for sums of natural arithmetic functions including the M\"{o}bius, von Mangoldt, and divisor functions, over arithmetic progressions to squarefree moduli.

We refer to irreducible monic polynomials as \emph{prime} polynomials. We define the function field analogues of the M\"{o}bius function, von Mangoldt function, and divisor functions as follows.
 \[ \mu(f) = \begin{cases} (-1)^r & f = \prod_{i=1}^r \pi_i\textrm{ with }\pi_i \textrm{ distinct primes}\\ 0 &\textrm{otherwise} \end{cases}\]
\[ \Lambda(f) = \begin{cases}  \deg \pi & f=\pi^r, \pi \textrm{ prime, } r \neq 0 \\ 0 & \textrm{otherwise} \end{cases}\]
\[ d_k(f) = \Bigl|  \Bigl\{ f_1,\dots f_k \in \mathbb F_q[t]^+ \mid f=  \prod_{i=1}^k f_i \Bigr\} \Bigr|  \]

\begin{theorem}[\cref{mobius-2}]\label{mobius-intro} Let $0<\theta<1 $ be a real number and let \[ q >  \frac{e^2 (1+\theta)^2 }{(1-\theta)^2}  \] be a prime power. There exists $\delta>0$ depending only on $q,\theta$ such that for any natural numbers $n,m$ with $m \leq \theta n$, squarefree $g \in \mathcal M_m$, and $a \in (\mathbb F_q[t]/g)^\times $,
\[ \sum_{\substack{  f \in \mathcal M_n\\ f \equiv a \mod g}} \mu(f) =  O \left( (q^{n-m})^{1-\delta} \right) \]
where the implicit constant depends only on $q, \theta,\delta$. \end{theorem}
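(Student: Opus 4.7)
My plan is to realize the sum as a trace of Frobenius on $\ell$-adic \'etale cohomology of an affine slice, and then bound that cohomology geometrically. Parametrizing $f = a + gh$ identifies $V_{a,g} := \{f \in \mathcal M_n : f \equiv a \mod g\}$ with an affine space of dimension $n-m$ over $\mathbb F_q$. For squarefree $f$ one has $\mu(f) = \sgn(\sigma_f)$, where $\sigma_f$ is the permutation of the $n$ roots of $f$ induced by $\Frob_q$; this realizes $\mu$ as the trace function of a rank-one local system $\mathcal L_{\sgn}$ on the open subscheme $\mathcal M_n^{\mathrm{sqf}} \subset \mathbb A^n_{\mathbb F_q}$ of squarefree monic polynomials, obtained by descending the sign character along the \'etale $\Sym_n$-cover of ordered roots. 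Since $\mu$ vanishes outside $\mathcal M_n^{\mathrm{sqf}}$, the Grothendieck--Lefschetz trace formula applied to $U_{a,g} := V_{a,g} \cap \mathcal M_n^{\mathrm{sqf}}$ gives
\[
\sum_{\substack{f \in \mathcal M_n \\ f \equiv a \mod g}} \mu(f) \;=\; \sum_i (-1)^i \tr\bigl(\Frob_q \mid H^i_c(U_{a,g,\overline{\mathbb F_q}}, \mathcal L_{\sgn})\bigr).
\]

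Since $U_{a,g}$ is smooth affine of dimension $d := n-m$ and $\mathcal L_{\sgn}$ is pure of weight zero, Artin vanishing kills $H^i_c$ for $i < d$, while Deligne's Weil~II bounds each Frobenius eigenvalue on $H^i_c$ by $q^{i/2}$. Vanishing of the top cohomology $H^{2d}_c(U_{a,g}, \mathcal L_{\sgn})$ is equivalent to $\mathcal L_{\sgn}$ being non-trivial on every geometric component of $U_{a,g}$, i.e.\ to the geometric monodromy of the ordered-roots cover restricted to $V_{a,g}$ containing an odd permutation. I would obtain this big-monodromy input via a transversality argument showing that $V_{a,g}$ meets the discriminant hypersurface in enough simple tangencies to produce transpositions in the local monodromies. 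Combining the monodromy and purity inputs yields
\[
\Bigl|\sum_{f \equiv a \mod g} \mu(f)\Bigr| \;\leq\; q^{(2d-1)/2} \sum_i \dim H^i_c(U_{a,g,\overline{\mathbb F_q}}, \mathcal L_{\sgn}).
\]

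The main obstacle is establishing a uniform bound $B := \sum_i \dim H^i_c \leq c(\theta)^{n-m}$ on the total Betti number of the pair $(U_{a,g}, \mathcal L_{\sgn})$, with an explicit $c(\theta)$. A naive Bombieri-style estimate in terms of the degree of the discriminant is too weak, since that degree grows with $n$; one must instead exploit the specific structure of $\mathcal L_{\sgn}$, in particular its tame ramification and rank one, to obtain a bound depending only on the dimension $n-m$. Such an estimate, combined with the purity bound, would give $|\sum \mu(f)| \leq c(\theta)^{n-m} q^{(n-m)/2}$, and the hypothesis $q > e^2(1+\theta)^2/(1-\theta)^2$ is exactly what is needed to convert this into $(q^{n-m})^{1-\delta}$ for some $\delta = \delta(q,\theta) > 0$; quantitatively, the threshold matches the choice $c(\theta) = e(1+\theta)/(1-\theta)$, and $\delta \to 1/2$ as $q \to \infty$.
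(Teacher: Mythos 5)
Your proposal identifies the right geometric framing — realize the sum as a trace of Frobenius on compactly supported cohomology with coefficients in a sign local system — but the argument has two genuine gaps, one of which is more fundamental than the one you flag.

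The gap you acknowledge, a bound $\sum_i \dim H^i_c \leq c(\theta)^{n-m}$, is indeed a real piece of work. In the paper it occupies Sections \ref{sec-Euler} and \ref{sec-Massey}: the degree-$(n-m)$ rank is computed via an Euler characteristic argument (Lefschetz fixed point formula applied to fixed-point-free permutations, then a uniqueness argument for the resulting linear system), and the degree-$(n+1-m)$ rank is bounded via the characteristic cycle and Massey's Betti number formula for perverse sheaves. You cannot get this from a Bombieri-type degree bound, and your suggestion to ``exploit tame ramification and rank one'' is far from a proof.

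The more serious issue is that even granting this Betti number bound, your displayed estimate
\[
\Bigl|\sum_{f \equiv a \mod g} \mu(f)\Bigr| \;\leq\; q^{(2d-1)/2} \sum_i \dim H^i_c
\]
with $d = n-m$ does not give what you claim. The factor $q^{(2d-1)/2} = q^{(n-m) - 1/2}$ saves only $q^{1/2}$ over the trivial bound $q^{n-m}$, which is a vanishing fraction of the exponent as $n-m \to \infty$. To obtain a fixed $\delta > 0$ you need the relevant cohomology concentrated in degrees $n-m$ and $n+1-m$ — not merely supported in $[n-m, 2(n-m))$ — so that purity gives a factor $q^{(n-m)/2}$ or $q^{(n+1-m)/2}$ in front of the Betti number. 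Artin vanishing on your affine $U_{a,g}$ only gives the lower bound $H^i_c = 0$ for $i < n-m$; it says nothing about the upper degrees. This concentration is exactly what the paper's compactification plus vanishing cycles argument establishes (\cref{support-degrees}): one first proves that $(R\pi_! \mathbb Q_\ell \otimes \rho)^{S_n}[n]$ is perverse (\cref{irrep-perverse}), using a well-chosen compactification of $\pi$ whose boundary is \'etale-locally a product with $T$; perversity pins the stalks to degree $n-m$ at the generic point, and vanishing cycles (where the compactification guarantees the vanishing cycle contributions are supported at finitely many points) shows the stalks can only grow by one degree at special points. Your setup, working on the affine slice with the discriminant removed, has neither the compactification nor the $S_n$-equivariant perverse sheaf structure needed to run this. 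Also a small arithmetic slip: $\mu(f) = (-1)^n \sgn(\sigma_f)$, not $\sgn(\sigma_f)$, though this does not affect absolute values.
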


In particular, we obtain a level of distribution $\theta> \frac{1}{2}$ as soon as $q > \lfloor 4 e^2 \rfloor =29$, and we obtain a level of distribution arbitrarily close to $1$ as long as $q$ is sufficiently large.

\citet[Theorem 1.7]{Sawin-Shusterman} proved a similar result, withou the squarefree moduli assumption, under the stricter condition $q> \frac{ (1+\theta)^2  p^2 e^2}{(1-\theta)^2} $ where $q$ is a power of a prime $p$, for $p>2$. In particular \citep{Sawin-Shusterman} proved level of distribution going to $1$ as $q$ goes to $\infty$ for fixed $p$, but not necessarily if $q$ goes to $\infty$.

We can take the power savings $\delta$ in \cref{mobius-intro} to be $\frac{1}{2} - \log_q \left( \frac{e (1+\theta) } { 1-\theta} \right)$ (\cref{mobius-2}), which goes to $\frac{1}{2}$ as $q$ goes to $\infty$ with fixed $\theta$. In other words, we obtain as close to full square-root cancellation as desired, as long as $q$ is sufficiently large. The same limiting behavior of $\delta$ holds for \cref{primes-intro} and \cref{divisor-intro}, below.

 \begin{theorem}[\cref{primes-2}]\label{primes-intro} Let $0<\theta<1 $ be a real number and let \[ q >  4^{ \frac{1}{1-\theta}  }  \left( \frac{1}{ 1-\theta} - \frac{1}{2} \right)^2e^2 \] be a prime power. There exists $\delta$ depending only on $q,\theta$ such that for any natural numbers $n,m$ with $m \leq \theta n $,  squarefree $g \in \mathcal M_m$, and $a \in (\mathbb F_q[t]/g)^\times $,
\[ \Bigl | \sum_{ \substack{ f \in \mathcal M_n \\  f \equiv a \mod g }}  \Lambda(f)  -  \frac{q^n}{ \phi(g) }  \Bigr|  =  O \left( (q^{n-m})^{1-\delta} \right) \]
where the implicit constant depends only on $q, \theta, \delta$.
\end{theorem}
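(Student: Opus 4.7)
My approach is to reduce \cref{primes-intro} to the Möbius estimate of \cref{mobius-intro} via a combinatorial identity for the von Mangoldt function. I will apply the function field analogue of Heath-Brown's identity with parameter $K = \lceil 1/(1-\theta) \rceil$: for $f \in \mathcal{M}_n$,
\[ \Lambda(f) = \sum_{k=1}^{K} (-1)^{k-1} \binom{K}{k} \sum_{\substack{f = m_1 \cdots m_k n_1 \cdots n_k \\ \deg m_i \leq n/K}} \mu(m_1) \cdots \mu(m_k) \deg n_1. \]
Substituting this into the sum over $f \equiv a \pmod g$ and grouping terms by the degree tuple $(a_1,\dots,a_k,b_1,\dots,b_k)$ with $a_i = \deg m_i$, $b_j = \deg n_j$, and $\sum a_i + \sum b_j = n$, the problem reduces to estimating a polynomial-in-$n$ number of Dirichlet convolution sums in progressions modulo $g$.

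For each such piece the plan is to isolate a single summation variable. In the Type I case, where some $n_j$ has degree at least $m+1$, I fix the other $2k-1$ variables and sum over $n_j$ in the induced congruence class modulo $g$: the equidistribution of $\mathcal{M}_{b_j}$ in residues modulo $g$ yields a clean main term (which upon reassembly gives the expected $\frac{1}{\phi(g)}\sum_f \Lambda(f)$) together with a negligible error. In the Type II case, I isolate some $m_i$ of degree $a_i > m$ and apply \cref{mobius-intro} to the resulting twisted Möbius sum with effective parameter $\theta' = m/a_i$, obtaining a power-saving bound. When $\theta$ is close to $1$ and $k$ is large, it can happen that every single variable has degree at most $m$; in that case I group several $m_i$'s into a composite variable $M = \prod_{i \in S} m_i$ of degree exceeding $m$, and sum over $M$ using a Möbius-convolution bound derived from \cref{mobius-intro} within the paper's factorization-functions framework.

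The main obstacle is quantitative bookkeeping. The Möbius bound requires $q > e^2(1+\theta')^2/(1-\theta')^2$, which blows up as $\theta' \to 1$; meanwhile, the binomial weight $\binom{K}{k}$ together with the number of ordered degree compositions produces a combinatorial loss of roughly $4^K$ upon summation. The hypothesis $q > 4^{1/(1-\theta)}\bigl(\tfrac{1}{1-\theta}-\tfrac{1}{2}\bigr)^2 e^2$ in \cref{primes-intro} should emerge precisely from balancing these costs: the factor $4^{1/(1-\theta)}$ absorbs the combinatorial loss at $K \sim 1/(1-\theta)$, while $\bigl(\tfrac{1}{1-\theta}-\tfrac{1}{2}\bigr)^2 e^2$ matches the Möbius threshold at the worst-case value of $\theta'$, corresponding to the isolated variable having the smallest admissible degree. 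The remaining polynomial-in-$n$ combinatorial factors are absorbed by a slight decrease of the power-saving exponent $\delta$, yielding the stated bound.
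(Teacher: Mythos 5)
Your proposed route — a function-field Heath–Brown identity followed by a Type~I/Type~II split — is genuinely different from the paper's proof, which bypasses combinatorial identities altogether: the paper writes $\Lambda$ directly as the alternating sum $\Lambda(f)=\sum_{i}(-1)^i F_{\wedge^i\operatorname{std}}(f)$ (\cref{mangoldt-alternating}), applies \cref{squarefree-bound-precise} termwise (\cref{primes-1}), and reads off the constants $C_1,C_2$ from the generating function $\frac{1}{2}\prod_j\frac{1+u\lambda_j}{1-u\lambda_j}$ (\cref{mangoldt-V-trace}), so that the dominant $2^{m-1}$ and binomial factors emerge from representation theory, not from any bilinear decomposition.

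Your Type~I step is correct and in fact sharper than you state: monic polynomials of degree $\geq m$ are \emph{exactly} equidistributed among invertible residues mod $g$, so any piece with some $\deg n_j\geq m$ contributes its main term with zero error. The real gap is the Type~II fallback. For $\theta>\tfrac12$ one cannot ensure a single $m_i$ of degree exceeding $m$: indeed all $a_i\leq n/K$ and all $b_j<m$ with $\sum a_i+\sum b_j=n$ is consistent with every $a_i\leq m$ once $2K\theta\geq1$, which holds whenever $\theta\geq\tfrac12$ and $K\geq1$. Your proposed fix — bounding a grouped product $M=\prod_{i\in S}m_i$ via a ``Möbius-convolution bound derived from \cref{mobius-intro}'' — does not actually follow from \cref{mobius-intro}. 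The sum $\sum_{M\equiv a\bmod g}\sum_{\prod f_i=M,\ \deg f_i=a_i}\prod\mu(f_i)$ with all $a_i\leq m$ is \emph{not} a Möbius sum in a progression; it is the trace function $F_\rho$ of the induced representation $\operatorname{Ind}_{S_{a_1}\times\cdots\times S_{a_s}}^{S_{\deg M}}(\sgn\otimes\cdots\otimes\sgn)$ (cf.\ \cref{convolution-lemma}), and bounding it requires the full machinery of \cref{squarefree-bound-precise} together with a fresh computation of $C_1,C_2$ for that $\rho$ — at which point you are not reducing to \cref{mobius-intro} but re-running the paper's argument for a worse representation. A classical bilinear treatment (Cauchy--Schwarz plus a large-sieve-type dispersion estimate) would also go beyond what any of the stated theorems provide.

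Finally, the claim that the threshold $q>4^{1/(1-\theta)}\bigl(\tfrac{1}{1-\theta}-\tfrac12\bigr)^2 e^2$ ``should emerge'' from balancing $\binom{K}{k}$-losses against the Möbius threshold is an unverified conjecture, and is almost certainly false as stated: in the paper the $4^{1/(1-\theta)}$ arises from the factor $2^{m-1}$ in $C_1(\rho),C_2(\rho)$ (since $2^{m/(n-m)}\leq 2^{\theta/(1-\theta)}$ appears squared, together with $(1+\theta)^2/(1-\theta)^2=4(\tfrac{1}{1-\theta}-\tfrac12)^2$), not from any Heath--Brown combinatorial count. Isolating a Möbius variable of degree $a_1$ barely above $m$ would require $q>e^2(1+\theta')^2/(1-\theta')^2$ with $\theta'=m/a_1\to1$, which blows up uncontrollably; controlling this would need careful case analysis that the proposal does not supply. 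As written, the argument has both a structural gap (the all-small Type~II case) and an unverified quantitative conclusion.
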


In particular, we obtain a level of distribution $\theta> \frac{1}{2}$ as soon as $q> \lfloor 36 e^2 \rfloor  =266 $.

A result for level of distribution of the primes was proven in \citep[Theorem 1.9]{Sawin-Shusterman} (also to squarefree moduli, though it is likely possible to extend it to non-squarefree moduli with additional analytic effort). The result here is stronger in multiple directions --  we remove the dependence on $p$ in the level and in the (hidden) power savings, we obtain a level of distribution $>1/2$ for much smaller values of $q$, and our level of distribution approaches $1$ as $q$ goes to $\infty$, while \citep[Theorem 1.8]{Sawin-Shusterman} obtain a level of distribution at most $\frac{1}{2} + \frac{1}{126}$.

For the analogous problem over the integers, a great number of similar results have been proven with averages on the modulus $g$. There are different results depending on whether the residue class is fixed, one averages against well-factorizable weights, or if other nice conditions hold. Since none of these restrictions exist in our setting (though we do have the restriction to squarefree moduli), we compare against the result obtaining the best level of distribution, which was $\theta =\frac{3}{5}$  obtained by \citet{MaynardLevelII}, for triply well-factorizable weights, which we improve on for $q>\lfloor  2^7 e^2\rfloor = 945$.

\begin{theorem}[\cref{divisor-2}]\label{divisor-intro}
Let $k>1$ be a natural number, $0<\theta<1 $ be a real number and let \[ q>  \frac{e^2 (k-1)^2 }{ (1-\theta)^2 }   k^{ \frac{2\theta}{1-\theta}} \]
 be a prime power. There exists $\delta>0$ depending only on $q,k,\theta$ such that for any natural numbers $n,m$ with $m \leq \theta n$, squarefree $g \in \mathcal M_m$, and $a \in (\mathbb F_q[t]/g)^\times $,
\[ \sum_{\substack{  f \in \mathcal M_n\\ f \equiv a \mod g}} d_k(f) = \frac{1}{\phi(g) }\sum_{\substack{  f \in \mathcal M_n\\  \gcd(f,g)}} d_k(f)    + O \left( (q^{n-m})^{1-\delta} \right) \]
where the implicit constant depends only on $q,k,\theta$.
\end{theorem}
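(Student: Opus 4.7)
The starting point is Dirichlet character orthogonality modulo $g$:
\[
\sum_{\substack{f \in \mathcal M_n \\ f \equiv a \bmod g}} d_k(f) - \frac{1}{\phi(g)} \sum_{\substack{f \in \mathcal M_n \\ \gcd(f,g)=1}} d_k(f) = \frac{1}{\phi(g)} \sum_{\chi \neq \chi_0} \overline{\chi(a)}\,S_n(\chi),
\]
where $S_n(\chi) := \sum_{f \in \mathcal M_n} d_k(f)\,\chi(f)$. The principal character yields the main term, so the task is to bound $\sum_{\chi \neq \chi_0} |S_n(\chi)| \ll \phi(g)\,(q^{n-m})^{1-\delta}$.

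Since $d_k$ is the $k$-fold Dirichlet convolution of the constant function $1$, the generating-series identity $\sum_n S_n(\chi)\,u^n = \tilde L(u,\chi)^k$ holds, where $\tilde L(u,\chi) = \sum_{f\text{ monic}} \chi(f)\,u^{\deg f}$. For $\chi$ non-trivial modulo squarefree $g$, with conductor $\operatorname{cond}(\chi)$ of degree $c$ and inducing primitive character $\chi^*$, Weil's Riemann hypothesis over $\mathbb F_q(t)$ yields the factorization
\[
\tilde L(u,\chi) = P(u)\,\prod_{\pi \mid g/\operatorname{cond}(\chi)}\bigl(1 - \chi^*(\pi)\,u^{\deg \pi}\bigr),
\]
where $P$ has degree $c-1$ with reciprocal roots all of absolute value $\sqrt q$, while the product on the right (of degree $m - c$) has reciprocal roots of absolute value $1$. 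Extracting the $n$-th coefficient of $\tilde L(u,\chi)^k$ via elementary symmetric functions gives the pointwise bound
\[
|S_n(\chi)| \leq \sum_{j + j' = n} \binom{k(c-1)}{j}\,q^{j/2}\,\binom{k(m-c)}{j'}.
\]

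Applied character-by-character and combined via Vandermonde, this estimate collapses to $|S_n(\chi)| \leq q^{n/2}\binom{k(m-1)}{n}$, which only delivers level of distribution $\theta < 1/2$. To reach $\theta$ close to $1$, one must exploit the divisor structure cohomologically: $S_n(\chi)$ is the trace of Frobenius on the compactly supported cohomology of a $k$-fold convolution sheaf $\mathcal F_{\chi,k}$ on $\mathcal M_n$, and Deligne's purity gives $|S_n(\chi)| \leq q^{n/2}\,\dim H^*_c(\mathcal M_n,\mathcal F_{\chi,k})$, so the required input is a bound on $\dim H^*_c(\mathcal F_{\chi,k})$ that improves on the naive $\binom{k(m-1)}{n}$. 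Such a bound should come from the general factorization-function machinery developed in the paper, which realizes $\mathcal F_{\chi,k}$ as the pushforward from the moduli space of ordered factorizations $f = f_1 \cdots f_k$, twisted by the Kummer sheaf associated to $\chi$, and controls its cohomology geometrically.

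The main obstacle is establishing this sharpened cohomology bound in a form that survives summation over characters grouped by conductor $g^* \mid g$ (each group containing at most $q^{\deg g^*}$ primitive characters). The explicit threshold $q > e^2(k-1)^2(1-\theta)^{-2}\,k^{2\theta/(1-\theta)}$ should emerge precisely from balancing the improved pointwise bound against the character count: the factor $(k-1)^2$ reflects the degree $k(c-1)$ of the pure part $P(u)^k$ at near-primitive characters, while the exponent $2\theta/(1-\theta)$ on $k$ tracks the combinatorial cost of distributing the degree tuples $(n_1,\ldots,n_k)$ with $\sum n_i = n$ when the modulus degree is constrained by $m = \theta n$. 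Verifying the improved cohomology dimension estimate with these sharp numerical constants is the principal technical task, after which the power saving $\delta > 0$ follows by a short optimization.
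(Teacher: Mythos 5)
Your proposal correctly identifies the structural shape of the problem but does not constitute a proof: the entire technical core is deferred. You reduce to bounding $\sum_{\chi\neq\chi_0}|S_n(\chi)|$, observe that the naive Weil-RH pointwise bound $|S_n(\chi)|\leq q^{n/2}\binom{k(m-1)}{n}$ only gives level $\theta<1/2$, and then assert that the needed improvement ``should come from the general factorization-function machinery developed in the paper,'' naming its verification the ``principal technical task.'' That task \emph{is} the proof. What the paper actually does is apply \cref{squarefree-bound-precise} directly with $\rho=(\mathbb{Q}_\ell^k)^{\otimes n}$ — verifying $F_\rho=d_k$ (\cref{divisor-1-f}), computing $C_1(\rho)$ and $C_2(\rho)$ explicitly via generating functions in Schur--Weyl dual variables (\cref{divisor-1-C1}, \cref{divisor-1-C2}), obtaining $C_1(\rho)\ll k^m\binom{mk-m-k}{n-m}$ and $C_2(\rho)=k^{m-1}\binom{mk-k-m+1}{n+1-m}$, and then feeding these into \cref{binomial-bound} with a carefully chosen parameter $\alpha=\tfrac{1-\theta}{k\theta-1}$. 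None of these steps appear in your outline, and the threshold $q>e^2(k-1)^2(1-\theta)^{-2}k^{2\theta/(1-\theta)}$ falls out of that concrete binomial optimization, not from the heuristic ``$(k-1)^2$ reflects the degree of the pure part'' you suggest.

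There is also a secondary mismatch in route: you decompose into Dirichlet characters and track conductors $\operatorname{cond}(\chi)$, grouping characters by conductor and balancing against a character count $q^{\deg g^*}$. The paper never introduces characters in this proof; it works on the torus $T$ of residue classes, so the sum in a progression is literally a single trace of Frobenius on the stalk of $(R\pi_!\mathbb{Q}_\ell\otimes\rho)^{S_n}$, and the imprimitive/conductor bookkeeping is absorbed into the structure of that complex. If you pursued the character route you would need to re-derive the analogue of the $C_1,C_2$ stalk-dimension bounds character-by-character, at a possible cost in the numerical constants. Finally, a small technical error: your ``Vandermonde collapse'' of $\sum_{j+j'=n}\binom{k(c-1)}{j}q^{j/2}\binom{k(m-c)}{j'}$ to $q^{n/2}\binom{k(m-1)}{n}$ silently replaces the weight $q^{j/2}$ by its maximum $q^{n/2}$ before applying Vandermonde; this is a valid upper bound but should be stated as such, not presented as an identity.
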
 

We are not aware of prior work on the level of distribution of the divisor function over function fields to compare to. Instead let us compare to some analogous levels proved over the integers. For $k=2$, we can do better than $\theta=\frac{2}{3}$ due to Hooley, Linnik, and Selberg, as soon as $q > \lfloor e^2 2^2 \cdot 2^4 \rfloor =472$. For $k=3$, we can do better than the value $\theta=\frac{12}{23}$ due to  \citet*{FKM2014}, as long as $q > \lfloor \frac{ e^2 \cdot 2^2 \cdot 12^2}{ \cdot 11^2} 3^{ \frac{24}{11}} \rfloor = 386$.

For any $k$, to obtain a level of distribution beyond $1/2$, we need $q > e^2 (k-1)^2 k^2$. Thus we do not obtain an analogue of results on the level of distribution of the divisor function valid for sufficiently large $k$, such as the recent work of \citet{FouvryRadziwill}.


Another point of comparison to \cref{primes-intro} and \cref{divisor-intro} could be Brun--Titchmarsh type theorems, which give upper bounds, rather than asymptotics, for sums of arithmetic functions in progressions. The original work \cite{Brun-Titchmarsh} handled sums of the von Mangoldt function, and it was generalized by \citet{Shiu1980} to a class of multiplicative functions including the divisor function. Function field analogues, for the divisor function, were proven by \citet[\S6]{Tamam2014} and \citet[\S6]{Andrade2020}. Compared to \cref{primes-intro} and \cref{divisor-intro}, these give a weaker statement, but in a wider range. It would be interesting to prove a Brun--Titchmarsh-type bound in the level of generality of \cref{squarefree-bound-precise} below. 

The fact that our levels of distributions approach $1$ as $q $ goes to $\infty$ could be analogized with  the level of distribution obtained by \citet[Theorem, (6.3)]{E1996} for the indicator function of numbers whose sum of digits in a given base is congruent to a fixed value, which goes to $1$ as the base goes to $\infty$ -- we could compare $q$ to the base of a digit expansion.

In addition to level of distribution results, we have two results about $L$-functions.

We say a character $\chi\colon (\mathbb F_q[t]/g)^\times \to \mathbb C^\times$ is \emph{primitive} if it does not factor through $ (\mathbb F_q[t]/h)^\times$ for any proper divisor $h$ of $g$. We say $\chi$ is \emph{even} if it is trivial when restricted to $\mathbb F_q^\times$ and \emph{odd} otherwise.

 For $f$ a polynomial, write $|f|=q^{\deg f}$. For $\chi\colon (\mathbb F_q[t]/g)^\times \to \mathbb C^\times$ a primitive character, define the $L$-function
\[ L(s, \chi) = \sum_{ \substack{ f \in \mathbb F_q [t]^+ \\ \gcd(f,g)=1}} \chi(f) \abs{f}^{-s}. \]

First, we have a power savings results for certain moments of $L$-functions.

\begin{theorem}[\cref{moments-2}]\label{moments-intro} Let $k>1$ be a natural number and $q> k^2 2^{ 2k-2} $ a prime power. There exists $\delta>0$ such that for any squarefree monic polynomial $g \in \mathbb F_q[t]$ and $s\in \mathbb C$ with $\operatorname{Re} s \geq \frac{1}{2}$, we have   \[   \Biggl| \frac{1}{\phi^*(g)   } \sum_{\substack{ \chi \colon (\mathbb F_q[t]/g)^\times \to \mathbb C^\times \\ \textrm{primitive} }} L\left(s,\chi\right)^k   - 1   \Biggr |   = O ( \abs{g}^{-\delta} ) \] where $\phi^*(g)$ denotes the number of primitive characters mod $g$, with the implied constant depending only on $q, k, \delta$. In fact, we may take any $\delta < \frac{1}{2} - \log_q ( k 2^{k-1} )$. \end{theorem}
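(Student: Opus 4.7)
The plan is to express the primitive-character average of $L(s,\chi)^k$ as a M\"obius-weighted sum over divisors $d\mid g$ of divisor sums along progressions mod~$d$, then invoke \cref{divisor-intro} to estimate each.

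For primitive $\chi$ mod $g$ (with $m=\deg g$), $L(s,\chi)$ is a polynomial of degree at most $m-1$ in $q^{-s}$, so
\[
L(s,\chi)^k=\sum_{n=0}^{k(m-1)}q^{-ns}\sum_{\substack{f\in\mathcal M_n\\ \gcd(f,g)=1}}d_k(f)\chi(f).
\]
Applying the primitive-character orthogonality identity
\[
\sum_{\chi\text{ prim mod }g}\chi(f)=\sum_{d\mid g}\mu(g/d)\phi(d)\,\mathbf{1}[f\equiv 1\pmod d]\qquad(\gcd(f,g)=1),
\]
and dividing by $\phi^*(g)$ yields
\[
\frac{1}{\phi^*(g)}\sum_{\chi\text{ prim}}L(s,\chi)^k=\sum_{d\mid g}\frac{\mu(g/d)\phi(d)}{\phi^*(g)}\sum_{n}q^{-ns}\sum_{\substack{f\in\mathcal M_n\\ f\equiv 1\pmod d\\ \gcd(f,g)=1}}d_k(f).
\]
The $n=0$ term isolates $f=1$ and telescopes via $\sum_{d\mid g}\mu(g/d)\phi(d)=\phi^*(g)$ to the required main term~$1$.

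For $n\geq 1$ I would strip the coprimality condition $\gcd(f,g/d)=1$ by M\"obius inversion over $e\mid g/d$, reducing each inner sum to progression sums of $d_k$ modulo divisors~$de$ of~$g$ (after writing $f=ef'$ and expanding $d_k(ef')$ in terms of $d_k(f')$ weighted by multiplicative factors at primes dividing~$e$, using that $e$ is squarefree). \Cref{divisor-intro}, applied with a parameter $\theta\in(1/k,1)$ and for $n\geq\theta\deg(de)$, replaces each such progression sum by its residue-independent $\phi(de)^{-1}$-normalized main term plus an error $O\bigl((q^{n-\deg(de)})^{1-\delta_0}\bigr)$, with $\delta_0<\tfrac12-\log_q\bigl(\tfrac{e(k-1)}{1-\theta}k^{\theta/(1-\theta)}\bigr)$. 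The residue-independent main terms produced across different $d$ (and their subordinate $e$) recombine through the outer M\"obius-$\phi$ weights and must cancel exactly, because the full main term~$1$ is already accounted for at~$n=0$; verifying this cancellation is a mechanical multiplicativity computation on squarefree~$g$.

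It then remains to bound the total error: a multi-sum over $d\mid g$, $e\mid g/d$, and $1\leq n\leq k(m-1)$ of terms of size $O\bigl((q^{n-\deg(de)})^{1-\delta_0}\bigr)$, weighted by $q^{-ns}$ and bounded M\"obius--$\phi$ factors. Using that $\tau(g)^2\leq|g|^{o(1)}$ and summing the geometric-in-$n$ series, the total collapses to a saving of the form $q^{-\delta m}$ after $\theta$ is chosen so that $\delta$ approaches $\tfrac12-\log_q(k\cdot 2^{k-1})$. The main obstacle will be this numerical optimization: one must balance the factor $\tfrac{k-1}{1-\theta}k^{\theta/(1-\theta)}$ coming from \cref{divisor-intro} against the target $k\cdot 2^{k-1}$, and verify that at the optimal $\theta_{\rm opt}\in(1/k,1)$ the compounded loss from summing the error terms over $n$ up to $k(m-1)$ matches the $2^{k-1}$ precisely, whence the threshold $q>k^22^{2k-2}$ emerges. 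Handling the small-$n$ regime $n<\theta\deg(de)$ by trivial bounds $d_k(f)\ll_\epsilon|f|^\epsilon$, and ensuring uniformity in~$s$ on the critical line, are secondary technical points to be tracked.
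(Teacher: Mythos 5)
Your decomposition is genuinely different from the paper's: you use the direct orthogonality identity $\sum_{\chi\text{ prim mod }g}\chi(f)=\sum_{d\mid g}\mu(g/d)\phi(d)\mathbf{1}[f\equiv1\bmod d]$ and land on progression sums $\bmod\, d$ with residue class~$1$ for each divisor $d\mid g$, whereas the paper (\cref{moments-sieve}) introduces the auxiliary quantity $M(g,a)$, keeps the modulus fixed at $g$ while varying the residue class $a=\overline{v}$, and handles the primitivity condition through Euler-factor corrections and a $\mu_k$-weighted sum over $v\mid(g/g')^k$. Either route can in principle be organized so the main terms match; your version has the extra burden of stripping $\gcd(f,g/d)=1$ and handling the non-complete multiplicativity of $d_k$, which is fiddly but plausible.

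The real gap is the choice of input estimate. You propose to feed each progression sum into \cref{divisor-intro}, which gives an error $O\bigl((q^{n-m'})^{1-\delta}\bigr)$ with $\delta<\tfrac12-\log_q\bigl(\tfrac{(k-1)e\theta}{1-\theta}k^{\theta/(1-\theta)}\bigr)$; the paper instead uses \cref{divisor-1} (equivalently, \cref{squarefree-bound-precise} applied to the divisor representation directly), whose error is controlled by the \emph{exact binomial coefficients} $\binom{mk-m-k}{n-m}$. This matters decisively. Because the binomial coefficient vanishes for $n-m>m(k-1)-k$, the weighted sum over $n$ with $q^{-n/2}$ telescopes exactly: $\sum_{n\ge m}q^{-n/2}\,q^{(n-m)/2}\binom{mk-m-k}{n-m}=q^{-m/2}\,2^{m(k-1)-k}$, and the factor $2^{k-1}$ per unit of $m$ is precisely what produces the threshold $q>k^22^{2k-2}$. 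By contrast, $\sum_{n}q^{-n/2}(q^{n-m'})^{1-\delta}$ is a geometric series in $q^{1/2-\delta}$, which \emph{grows} (since $\delta<\tfrac12$ always), so it is dominated by its last term $n\approx k(m-1)$, giving roughly $q^{(k-1)m(1/2-\delta)}$. But $q^{1/2-\delta}=\tfrac{(k-1)e\theta}{1-\theta}k^{\theta/(1-\theta)}\ge e>2$ for every admissible $\theta$ — the $e$ is built into \cref{divisor-intro} by the $(1+\alpha)\le e^\alpha$ step of \cref{binomial-bound} — so the summed error is at least $e^{(k-1)m}$ rather than $2^{(k-1)m}$, a genuine loss of $(e/2)^{(k-1)m}$. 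No choice of $\theta$ (including an $n$-dependent one) removes this; optimizing only pushes the resulting threshold toward $q>k^2e^{2(k-1)}$, not $q>k^22^{2k-2}$. Worse, to get \emph{any} power saving by this route you need $\delta>\tfrac{k-2}{2(k-1)}$, which forces $q$ to grow faster than any polynomial in $k$. You should replace the appeal to \cref{divisor-intro} with \cref{divisor-1} (or \cref{squarefree-bound-precise}) and exploit the vanishing of the binomial coefficients, as the paper does.

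Two secondary points: (i) the hypothesis of \cref{divisor-2} should be read as $m\le\theta n$ (i.e.\ $n\ge m/\theta$), not $n\ge\theta m$ as your outline suggests — for $\theta<1$ the latter is essentially vacuous, and with the correct reading a large portion of the range $1\le n\le k(m-1)$ falls outside the region where \cref{divisor-intro} applies for any fixed~$\theta$; (ii) the claim that the residue-independent main terms for $n\ge1$ ``cancel exactly'' needs actual verification after the $\gcd(f,g/d)=1$ Möbius stripping, since $d_k(ef')$ is not $d_k(e)d_k(f')$ in general; this is doable but not immediate.
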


Note that this moment, unlike the most commonly studied moments of $L$-functions, does not have an absolute value.

For $k=1$, a similar estimate (over both $\mathbb F_q[t]$ and $\mathbb Z$) follows only from the functional equation, and for $k=2$ from the functional equation plus bounds for Kloosterman sums. The first nontrivial case, for $k=3$, was handled over $\mathbb Z$ by \citet{Zacharias} (for prime moduli, but allowing a character twist that we do not consider here). The $k=4$ case seems to be open over $\mathbb Z$. We can handle it over $\mathbb F_q[T]$ for $q> 1024$.

The method of proof of \cref{moments-intro} can likely be adapted to estimate moments twisted by an additional $\chi(a)$ factor or shifted moments involving $\prod_{i=1}^k L(s_i,\chi)$ for distinct $s_i$, but, for simplicity, we don't pursue these generalizatoins here.

Second, we have a result on the one-level density of the zeroes of $L$-functions. When $\chi$ is primitive and odd, all the zeroes of $L(s,\chi)$ lie on the critical line (i.e. have real part $1/2$).  The following lemma describes the distribution of the zeroes on the critical line, on average over $\chi$.

  \begin{theorem}[\cref{ol-2}]\label{ol-intro} Let $\lambda>1.07$ be a real number and let $q$ be a prime power with \[ q > \frac{ (\lambda+1)^4 e^4}{4} .\] 

 Let $\psi$ be a Schwartz function on $\mathbb R$ whose Fourier transform $\hat{\psi}$ is supported on $[-\lambda,\lambda]$. Then for a squarefree polynomial $g$ of degree $m$, we have
 \[ \sum_{\substack{  \chi \colon (\mathbb F_q[t] / g)^\times \to \mathbb C^\times  \\ \textrm{primitive} \\ \textrm{odd}}}  \sum_{\gamma_\chi}  \psi \Bigl( \frac{ m \log(q) \gamma_\chi}{2\pi} \Bigr ) = \left( \hat{\psi}(0) + o(1) \right)  \phi^{*, odd}(g)  \]
 where $\frac{1}{2} + i \gamma_\chi$ are the (infinitely many) roots of $L(s,\chi)$, $\phi^{*,odd}(g)$ is the number of odd primitive characters modulo $g$, and $o(1)$ goes to $0$ as $m$ goes to $\infty$ for fixed $\lambda, q ,\psi$.
 \end{theorem}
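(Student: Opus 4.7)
The plan is to apply the classical reduction of one-level density results to estimates for primes in arithmetic progressions, using \cref{primes-intro} as the key input.

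For the explicit formula, Weil's Riemann Hypothesis gives, for primitive $\chi$ mod $g$, the factorization $L(s,\chi) = \prod_{j=1}^{m-1}(1-\alpha_j q^{-s})$ with $|\alpha_j|=q^{1/2}$; writing $\alpha_j = q^{1/2}e^{2\pi i\theta_j}$, the zeros $\tfrac{1}{2}+i\gamma_\chi$ are precisely $\gamma_\chi = (\theta_j+k)\cdot 2\pi/\log q$ for $j\in\{1,\dots,m-1\}$ and $k\in\mathbb{Z}$. Poisson summation (finitely truncated since $\hat\psi$ is supported in $[-\lambda,\lambda]$) combined with the log-derivative identity $\sum_j \alpha_j^n = -c_n(\chi)$, where $c_n(\chi):=\sum_{f\in\mathcal{M}_n}\Lambda(f)\chi(f)$, yields
\[ \sum_{\gamma_\chi}\psi\!\left(\frac{m\log q\cdot \gamma_\chi}{2\pi}\right) = \frac{m-1}{m}\hat\psi(0) - \frac{1}{m}\sum_{n=1}^{\lfloor\lambda m\rfloor}q^{-n/2}\bigl(\hat\psi(n/m)\,c_n(\chi) + \hat\psi(-n/m)\,\overline{c_n(\chi)}\bigr). \]
Summing over primitive odd $\chi$ (with $N$ their count) makes the first term contribute the desired main term $\hat\psi(0)N(1+O(1/m))$.

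For the error, I would use M\"obius inversion over divisors of squarefree $g$ to write
\[ \sum_{\chi\text{ primitive odd mod }g}\chi(f) = \sum_{h\mid g}\mu(g/h)\,\phi(h)\,T_h(f), \]
where $T_h(f) := \mathbf{1}_{f\equiv 1\mod h} - (q-1)^{-1}\sum_{c\in\mathbb{F}_q^\times}\mathbf{1}_{cf\equiv 1\mod h}$, so that the inner sum equals $\sum_h \mu(g/h)\phi(h)A_h$ with $A_h:=\sum_{f\in\mathcal{M}_n}\Lambda(f)T_h(f)$. For $h\mid g$ with $\deg h\leq \theta n$, \cref{primes-intro} applies to each progression appearing in $T_h$; crucially the main terms (constant in the residue class coprime to $h$) cancel between the two halves of $T_h$, giving $|A_h|=O((q^{n-\deg h})^{1-\delta})$ with $\delta=\delta(q,\theta)>0$. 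For $\deg h > n$ we have $A_h=0$ identically, while for the intermediate range $\theta n<\deg h\leq n$ I would use the trivial bound $|A_h|\leq 2nq^{n-\deg h}$.

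Finally I would assemble the error and sum over $n\in[1,\lfloor\lambda m\rfloor]$. The regime $n\geq m/\theta$, in which all $h\mid g$ lie in the level-of-distribution range, contributes roughly $q^{m(\lambda/2 - \delta(\lambda-1))}$ (dominated by $n=\lambda m$), while the regime $n<m/\theta$, where the trivial bound handles the ``bad'' $h$, contributes roughly $q^{m/(2\theta)}$. For both to be $o(N)\asymp o(q^m)$ one needs $\delta > (\lambda-2)/(2(\lambda-1))$ and $\theta > 1/2$ respectively. Substituting the explicit form of $\delta$ from \cref{primes-intro} and optimizing $\theta$ against these two constraints, the hypothesis $q>(\lambda+1)^4e^4/4$ suffices. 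The main obstacle is exactly this balancing step: the range of $n$ covers values both below and above $\deg g$, so one must combine level-of-distribution estimates for small divisors with trivial bounds for larger ones, and track the $2^{\omega(g)}$-type divisor factors carefully enough that the combined bound matches the stated hypothesis on $q$.
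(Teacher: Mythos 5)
Your structural outline — Poisson summation/explicit formula, Möbius inversion over $h \mid g$ for primitivity, a weight $T_h$ for oddness — matches the paper (Lemmas \ref{ol-identity}, \ref{ol-transfer}, Proposition \ref{ol-2}). The genuine gap is in the key technical step: you propose to apply Theorem \ref{primes-intro} as a black box, and this cannot yield the stated hypothesis on $q$.

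Here is why it fails. Theorem \ref{primes-intro} produces a saving $(q^{n-\deg h})^{1-\delta}$, which after multiplying by the factor $q^{-n/2}$ from the explicit formula and by $\phi(h)\approx q^{\deg h}$ gives an exponent $n(\tfrac12-\delta) - (\deg h)(1-\delta)$ in $q$. Since necessarily $\delta<\tfrac12$, the coefficient of $n$ is strictly positive, so the bound \emph{grows} with $n$ and is worst at $n=\lambda m$, where (taking $h=g$) it is $q^{m(\lambda/2 - (\lambda-1)\delta)}$. Demanding this be $o(q^m)$ forces $\delta > (\lambda-2)/(2(\lambda-1))$, which tends to $\tfrac12$ as $\lambda\to\infty$; combined with the $\theta>\tfrac12$ constraint you correctly identify from the bad-$h$ regime, this forces $q \gtrsim (6e)^{2(\lambda-1)}$, i.e.\ exponential in $\lambda$. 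Already for $\lambda=3$ this exceeds $70000$, whereas $(\lambda+1)^4e^4/4\approx 3495$, so your final claim that "$q>(\lambda+1)^4 e^4/4$ suffices" is not reachable along this route. The paper avoids this by \emph{not} invoking Theorem \ref{primes-intro}: Lemma \ref{ol-hard} goes back to the raw binomial estimate Lemma \ref{primes-1} (valid for any $n\geq \deg h$, with no level-of-distribution constraint) and re-applies Lemma \ref{binomial-bound} with the roles swapped — $a=2m$, $b=n-m$, $\alpha = \tfrac{2}{\lambda-1}$ (the constraint $\alpha b\leq a$ becomes exactly $n\leq\lambda m$) — so the resulting bound is $\ll q^{n/2}\bigl(\tfrac{(\lambda+1)^2 e^2}{2\sqrt q}\bigr)^m q^{(1+\epsilon)m}$. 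The factor $q^{n/2}$ exactly cancels the $q^{-n/2}$ from the explicit formula, making the contribution independent of $n$; this is what gives the polynomial-in-$\lambda$ condition. Incidentally this also dissolves your good/bad $h$ dichotomy: Lemma \ref{primes-1} has no $\theta$-restriction, and the only case needing separate (trivial) treatment is $n<\deg h$, not $\theta n<\deg h\leq n$.
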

 
 This result fits into the Katz-Sarnak philosophy that densities of low-lying zeroes of $L$-functions in families, in the large conductor limit, should match predictions associated to scaling limits of random matrices  \cite{KS1,KS2, ILS}.  This philosophy was motivated by results in the function field setting, calculating the densities in the $q \to \infty$ limit. For this family, the $q\to\infty$ limit was calculated in \cite{PDCQKR}. Theorem \ref{ol-intro} represents progress towards the Katz-Sarnak conjectures outside the $q\to\infty$ limit.

Over $\mathbb Z$, a similar result was proven on average over the modulus $g$, with $\lambda = 2 + \frac{50}{ 1093}$, by \citet{DPR}. We can improve on this over $\mathbb F_q[t]$ for $q> \lfloor (3+ \frac{50}{ 1093})^4 e^4 / 4 \rfloor =1174$.  Without the average over the modulus, it should be possible to prove a result by elementary means for $\lambda <2$, which we can improve on for a similar range of $q$.

\subsection{Precise result}

We now explain the key definition of a factorization function on $\mathcal M_n$, associated to a representation $\rho$ of $S_n$.

\begin{defi}\label{fac-fun} Fix $\rho$ a (finite-dimensional) representation of $S_n$, not necessarily irreducible, defined over $\mathbb C$. 

For $f$ a polynomial of degree $n$ over $\overline{\mathbb F}_q$, let $V_f$ be the free $\mathbb C$-vector space generated by tuples $(a_1,\dots,a_n) \in \overline{\mathbb F}_q^n$ with $\prod_{i=1}^n (t-a_i)=f$. Then $V_f$ admits a natural $S_n$ action by permuting the $a_i$, as well as, if $f \in \mathbb F_q[t]$, an action of $\Frob_q$ by sending $(a_1,\dots,a_n) $ to $(a_1^q,\dots, a_n^q)$. These actions commute.

Let $\Frob_q$ act trivially on the space of $\rho$. This commutes with the $S_n$ action.

For $f\in \mathbb F_q[t]$, let \[F_\rho (f) = \tr (\Frob_q, (V_f \otimes \rho)^{S_n} )\] where the action of $\Frob_q$ on $(V_f \otimes \rho)^{S_n}$ is well-defined because the $\Frob_q$ and $S_n$-actions on $V_f \otimes \rho$ commute.  \end{defi}

Note that, because representations of $S_n$ are always self-dual, we have $(W \otimes \rho)^{S_n} = \operatorname{Hom}_{S_n}(W, \rho)$ for any representation $W$.  Interpreting $(V_f \otimes \rho)^{S_n} $, and other similar expressions appearing later, as a space of invariants or a Hom-space is a matter of preference.

Our main theorem, \cref{squarefree-bound-precise}, is an estimate for the sum of $F_\rho(f)$ over $f$ in an arithmetic progression, from which we can deduce the previous theorems by specializing to particular $\rho$. To state the estimate, we need to introduce some additional notation.

For a representation $\rho$ of $S_n$, and $\sgn$ the sign representation of $S_n$, let $V_{\rho}$ be the representation of $GL_{m-1}$ obtained by Schur-Weyl duality from $\rho \otimes \sgn$, i.e. \[ V_{\rho} =\Bigl( (\mathbb C^{m-1} )^{\otimes n } \otimes \rho \otimes \sgn \Bigr)^{S_n} .\] 

For $d$ from $0$ to $n$, let \[V^d_{\rho} = \Bigl( (\mathbb C^{m} )^{\otimes (n-d) } \otimes \rho \otimes \sgn_{S_{n-d}} \Bigr)^{S_{n-d} \times S_d } ,\] viewed as a representation of $GL_m$. 
Note that we restrict $\rho$ to $S_{n-d} \times S_d $ using the natural inclusion $S_{n-d} \times S_d \subseteq S_n$ so that we may take $S_{n-d} \times S_d$-invariants.

Denote by $\Diag(\lambda_1,\dots, \lambda_{m}) \in GL_{m}$ the diagonal matrix with diagonal entries $\lambda_1,\dots, \lambda_{m}$, and similarly for  $\Diag(\lambda_1,\dots, \lambda_{m-1}) \in GL_{m-1}$

Let
\[ C_1(\rho) =\ \sum_{d=0}^n (-1)^{d} \frac{\partial^m\tr \left( \Diag(\lambda_1,\dots, \lambda_m),  V^{d }_\rho \right ) }{\partial \lambda_1 \dots \partial \lambda_m }\Big|_{\lambda_1,\dots,\lambda_m=1} \ - \sum_{d=m}^{n} (-1)^{d-m}  \binom{d}{m} \dim V^{d }_\rho\]
and
\[C_2(\rho) = \frac{\partial^{m-1} \tr( \Diag(\lambda_1,\dots, \lambda_{m-1}, V_{\rho} ) ) }{ \partial \lambda_1 \dots \partial \lambda_{m-1}} \Big|_{\lambda_1,\dots,\lambda_{m-1}=1} .\]

The notation $|_{\lambda_1,\dots,\lambda_m=1} $ refers to, after taking the derivative, evaluating the expressions with all the $\lambda_i$ variables equal to $1$.

Note that $V_\rho, V^d_\rho, C_1(\rho),C_2(\rho)$ depend on $m$ and $n$ but we do not include those variables in the notation as, unlike $\rho$, they will not vary in the proof.

\begin{theorem}\label{squarefree-bound-precise} Let  $\mathbb F_q$ be a finite field. Let $g$ be a squarefree polynomial of degree $m$ over $\mathbb F_q$ and $a$ an invertible residue class mod $g$.  

Let $n\geq m$ be a natural number and let $\rho$ be a representation of $S_n$.  
Then
\[ \Bigl| \sum_{ \substack{ f \in \mathcal M_n\\  f \equiv a \mod g }}  F_\rho(f)  -\frac{1}{ \phi(g) }  \sum_{ \substack{ f \in \mathcal M_n \\ \gcd(f,g)=1 }}  F_\rho(f) \Bigr| \leq  2 ( C_1(\rho)  + C_2(\rho) \sqrt{q} ) q^{\frac{n-m}{2} }.\]
\end{theorem}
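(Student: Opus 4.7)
The plan is to combine Dirichlet character orthogonality with $\ell$-adic cohomology and Deligne's theory of weights, using Schur--Weyl duality to convert the representation-theoretic data in $\rho$ into Betti number estimates.

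First, character orthogonality modulo $g$ gives
\[ \sum_{\substack{f \in \mathcal{M}_n \\ f \equiv a \bmod g}} F_\rho(f) - \frac{1}{\phi(g)} \sum_{\substack{f \in \mathcal{M}_n \\ \gcd(f,g)=1}} F_\rho(f) = \frac{1}{\phi(g)} \sum_{\chi \neq \chi_0} \bar\chi(a)\, S(\chi), \]
where $S(\chi) = \sum_{f \in \mathcal{M}_n,\, \gcd(f,g)=1} \chi(f) F_\rho(f)$. Since $|\bar\chi(a)|=1$ and there are fewer than $\phi(g)$ nontrivial characters, it suffices to prove the uniform bound $|S(\chi)| \leq 2\bigl(C_1(\rho) + C_2(\rho)\sqrt{q}\bigr) q^{(n-m)/2}$ for each nontrivial $\chi$ modulo $g$.

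Next, I would realize $S(\chi)$ cohomologically. Because $g$ is squarefree, a nontrivial $\chi \bmod g$ corresponds to a rank-one tame Kummer sheaf $\mathcal{L}_\chi$ on $U := \Spec \mathbb{F}_q[t][1/g]$, pure of weight $0$. The factorization function $F_\rho(f)$ is a Frobenius trace on the $\rho$-isotypic piece of the permutation space $V_f$ on ordered roots of $f$, so globally it is realized by pushing $\mathcal{L}_\chi^{\boxtimes n}$ along the finite $S_n$-cover $U^n \to \Sym^n U$ and extracting the $\rho$-isotypic summand. This yields a constructible sheaf $\mathcal{F}_{\rho,\chi}$ on $\Sym^n U$ whose Frobenius trace at the point corresponding to $f$ equals $\chi(f) F_\rho(f)$, and the Grothendieck--Lefschetz trace formula gives $S(\chi) = \sum_i (-1)^i \tr\bigl(\Frob_q, H^i_c(\Sym^n U_{\overline{\mathbb{F}}_q}, \mathcal{F}_{\rho,\chi})\bigr)$.

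Deligne's Weil~II then bounds $|\tr(\Frob_q, H^i_c)| \leq (\dim H^i_c)\, q^{i/2}$, so the task reduces to controlling Betti numbers. For nontrivial $\chi$, the monodromy of $\mathcal{L}_\chi$ should preclude a trivial summand of $\mathcal{F}_{\rho,\chi}$, forcing enough cohomological vanishing that the nonzero $H^i_c$ are concentrated in the top two relevant degrees, producing contributions at the scales $q^{(n-m)/2}$ and $\sqrt{q}\cdot q^{(n-m)/2}$ matching $C_1(\rho)$ and $C_2(\rho)$ (with the factor $2$ absorbing boundary contributions or a real/imaginary decomposition). The Betti numbers themselves are computed via Schur--Weyl duality: the $S_n$-invariants in the $\rho$-isotypic pieces correspond to weight-$(1,\ldots,1)$ multiplicities in $GL_m$- and $GL_{m-1}$-representations, which the differential operators $\partial^m / \partial\lambda_1 \cdots \partial\lambda_m|_{\lambda_i = 1}$ (and its $(m-1)$-variable analogue) extract from the characters of $V^d_\rho$ and $V_\rho$. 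The alternating sum over $d$ in $C_1(\rho)$ reflects an inclusion-exclusion (Koszul resolution) over the strata of $\Sym^n U$ indexed by how many of the roots of $f$ collide with roots of $g$, with the $\binom{d}{m}$ correction accounting for the degenerate loci $d \geq m$ where no free parameters remain.

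The main obstacle is the precise cohomological computation: proving the vanishing statements that restrict the nonzero $H^i_c$ to the two relevant degrees, identifying those $H^i_c$ via a Leray/Koszul spectral sequence on the stratification of $\Sym^n U$ by collision patterns of roots with $Z(g)$, and matching the resulting dimensions exactly with the alternating-sum expressions defining $C_1(\rho)$ and $C_2(\rho)$, including correctly tracking the signs that arise from the sign character in the definitions of $V_\rho$ and $V^d_\rho$.
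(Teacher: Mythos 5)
Your first reduction step is fatally lossy. You write that by orthogonality and the triangle inequality it ``suffices to prove the uniform bound $|S(\chi)| \leq 2(C_1(\rho) + C_2(\rho)\sqrt{q})\,q^{(n-m)/2}$ for each nontrivial $\chi$ modulo $g$,'' but this per-character bound is simply false. Take $\rho = \sgn$, so $F_\rho = \pm\mu$. Then $S(\chi)$ is the coefficient of $u^n$ in $1/L(u,\chi)$. For primitive nontrivial $\chi$ the $L$-function is $\prod_{j=1}^{m-1}(1-\alpha_j u)$ with $|\alpha_j| = \sqrt{q}$, so this coefficient has size $\asymp q^{n/2}$ (up to a binomial-coefficient factor). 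This exceeds $q^{(n-m)/2}$ by a factor of $q^{m/2}$, and no spectral-sequence or Betti-number argument can overcome that, since the order of magnitude is dictated by the Riemann Hypothesis for $L(s,\chi)$. The same mismatch is visible from the variance: $\frac{1}{\phi(g)}\sum_\chi |S(\chi)|^2 \asymp q^{n-m}$ forces the typical $|S(\chi)|$ to be $\asymp q^{n/2}$, while the typical error term per residue class $a$ is $\asymp q^{(n-m)/2}$. The triangle inequality over characters discards exactly the extra $q^{m/2}$ of cancellation, and that is precisely the savings that the paper's beyond-RH result captures.

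The paper avoids this loss by never Fourier-transforming in the $\chi$ direction at all. It works with the map $\pi\colon C^n \to T$ to the residue torus $T$ and applies the Grothendieck--Lefschetz formula to the \emph{stalk} of $(R\pi_!\mathbb{Q}_\ell\otimes\rho)^{S_n}$ at the single point $a \in T(\mathbb{F}_q)$, not to the cohomology of a character-twisted sheaf on $\Sym^n U$. The fibers of $\pi$ have dimension $n-m$, so the stalk lives in degrees $[0,2(n-m)]$, and Deligne then gives the scale $q^{(n-m)/2}$ directly. The hard content --- concentration of the stalk in degrees $n-m$ and $n+1-m$ --- is obtained not through a Leray/Koszul stratification of $\Sym^n U$ but through an explicit proper compactification of $\pi$ with \'etale-local product structure near the boundary, a perversity argument combined with Artin's affine vanishing, and a vanishing-cycles computation; the dimension bounds $C_1(\rho)$ and $C_2(\rho)$ are then extracted via an Euler-characteristic argument for the middle degree and a characteristic-cycle/Massey polar-multiplicity bound for the top degree. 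You correctly identify that Deligne's theorem and Schur--Weyl duality will enter, but the route through character orthogonality cannot reach the stated inequality, so the argument would need to be restructured around the stalk-level picture before the cohomological steps you flag as ``the main obstacle'' even become the right things to compute.
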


We will deduce from Theorem \ref{squarefree-bound-precise} all the results stated above, by explicitly estimating $C_1(\rho)$ and $C_2(\rho)$, in Section \ref{s-applications}. We expect there will be further applications using different representations $\rho$, so we hope that the arguments there demonstrate the methods needed to calculate the quantities $C_1,C_2$ in general. 

A similar result was proven for short interval sums in \cite[Proposition 4.3]{SawinShort}. The dependence on $q$ of the error term is similar in both results, however, \cref{squarefree-bound-precise} has much better dependence on $n$ and $m$, in particular for $\rho$ the sign representation and other representations corresponding to partitions of $n$ with many parts of size $1$. This improved dependence allows us to obtain asymptotic results for fixed $q$ for the divisor function, M\"obius function, and von Mangoldt function, while \cite{SawinShort} could only obtain them for the divisor function. The reason for this improved estimate is that, in this setting, we are able to apply the characteristic cycle, and associated strong Betti number bounds \cite{massey-formula}, rather than the bounds of \cite{KatzBetti}, which are weaker in this context.

\cref{squarefree-bound-precise} implies that the average value of $F_\rho$ on $f \equiv a \mod g$ and on $f$ coprime to $g$ converge to the same value as $q \to \infty$ for fixed $n,m$ as long as $n-m \geq 2$. For $n-m \geq 3$, this was obtained in \cite{BBSR}. We note that \cref{squarefree-bound-precise} gives savings a large power of $q$ when $n-m$ is sufficiently large, and that \cite{BBSR} is valid for arithmetic progressions to arbitrary moduli, not just squarefree moduli. 

Another prior work on sums like those in \cref{squarefree-bound-precise} as $q\to\infty$ is the work of Keating and Rudnick, later with Rodgers and Roditty-Gershon, which studied the variance of sums in progressions -- i.e. the average square over $a$ of the left side of \cref{squarefree-bound-precise}. These gave asymptotics for the variance of sums of the M\"obius function \cite{KRmobius}, the von Mangoldt function \cite{KRprimes}, the divisor function \cite{krrr}, and general $F_\rho$ \cite{Rodgers2018}. These asymptotics imply an average error of $q^{ \frac{n-m}{2}}$. This means that, in the large $q$ aspect, the worst-case bound of \cref{squarefree-bound-precise} loses only a factor of $O(q^{ \frac{1}{2}})$ compared to the known average value.

\subsection{Factorization types}\label{ss-ft}

For $f$ a monic polynomial over $\mathbb F_q$, expressed as $\prod_{i=1}^r \pi_i^{e_i}$ for distinct prime factors $\pi_i$ and exponents $e_i$, let the \emph{extended factorization type} of $f$, denoted $\omega_f$, be the multiset of pairs $(\deg \pi_i, e_i)$. This notion was introduced by Gorodetsky~\citep{Gorodetsky2020}.

We are interested in the distribution of the extended factorization type of a random polynomial $f$, or a random polynomial $f$ satisfying some congruence conditions. The point of this is that a wide variety of arithmetic functions can be expressed in terms of the extended factorization type, so their distribution is determined by the distribution of the extended factorization type. These include the M\"obius, divisor, and von Mangoldt functions mentioned earlier, variants of them like the Liouville function and the indicator function of the primes, and further functions like the number of prime factors and the maximum degree of a prime factor. They also include many more subtle functions. Some of these have been the subject of recent research such as the maximum over $n$ of the number of divisors of degree $n$, which was studied in \cite{FGK} as the $\mathbb F_q[t]$-analogue of the Hooley function over $\mathbb Z$ \cite{HooleyNumbers}.

For $f$ of degree $n$, $\omega_f$ lies in the set $\EFT_n$ of multisets $M$ of pairs  of positive integers $(a,b)$ such that $\sum_{ (a,b) \in M} ab=n$. (Here a multiset of pairs is a set of pairs where each pair is given a positive integer multiplicity, and summing over the multiset is summing over the set with each term multiplied by the multiplicity.)  We will compare distributions on extended factorization types by comparing measures on $\EFT_n$. To that end, for $f$ a polynomial, let $\delta_{\omega_f}$ be the Dirac delta measure on $\omega_f$. Then for $S$ a set of polynomials, \[\frac{1}{ \abs{S}} \sum_{ f\in S} \delta_{\omega_f}\] is a probability measure describing the distribution of a random polynomial in $S$. 

\begin{theorem}[\cref{eft-final}]\label{eft-intro} Let $0< \theta<1$ be a real number and $q$ a prime power. Let $n$ be a natural number, let $g$ be a squarefree polynomial of degree $m\leq \theta n $ over $\mathbb F_q$, and let $a$ be an invertible residue class mod $g$.  Then the total variation distance between the probability measures 
\[ \frac{1}{q^{n-m}}  \sum_{ \substack{ f \in \mathcal M_n \\ f \equiv a \mod g}} \delta_{\omega_f} \]
and
\[ \frac{1}{\phi(g) q^{n-m}}  \sum_{ \substack{ f \in \mathcal M_n \\ \gcd(f,g)=1 }} \delta_{\omega_f} \]
is at most
\[  \frac{1}{ \sqrt{2\pi} (4+e) e^{ (4+e) } } \frac {  L^{9/2}   e^{2L} }{  L^L} +o(1)  \]
where \[ L = \frac{1}{2}   \left( \frac{(2- 2 \theta) \sqrt{q} } { (1+2 \theta)  e} \right)^{ (1-\theta) }  \] and $o(1)$ goes to $0$ as $n$ goes to $\infty$ but may depend on $q,\theta$.
\end{theorem}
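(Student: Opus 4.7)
The plan is to deduce \cref{eft-intro} from \cref{squarefree-bound-precise} by expanding indicator functions of extended factorization types in terms of factorization functions $F_\rho$ and applying the main theorem termwise. Let $\mu_1, \mu_2$ denote the two probability measures on $\EFT_n$; by duality, $2 d_{TV}(\mu_1,\mu_2) = \sup_h |\mathbb{E}_{\mu_1} h - \mathbb{E}_{\mu_2} h|$ over bounded $h\colon \EFT_n \to [-1,1]$. Since $h(\omega_f)$ depends only on the extended factorization type, it admits an expansion $h(\omega_f) = \sum_\rho c_\rho^h F_\rho(f)$, where $\rho$ ranges over a family of $S_n$-representations rich enough to separate extended factorization types—built from representations induced from Young subgroups of the form $\prod_{(a,b)} (S_a \wr S_b)^{\times c_{a,b}}$ whose wreath factors track the "(degree, multiplicity)" structure of the prime factorization.

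Applying \cref{squarefree-bound-precise} to each $F_\rho$ in the expansion gives
\[ |\mathbb{E}_{\mu_1} h - \mathbb{E}_{\mu_2} h| \leq \sum_\rho |c_\rho^h| \cdot 2(C_1(\rho) + C_2(\rho)\sqrt{q})\, q^{-(n-m)/2}. \]
I would then truncate the sum at a cutoff of complexity $L$: below the cutoff the estimate from \cref{squarefree-bound-precise} is effective, and since $n-m \geq (1-\theta)n$, the factor $q^{-(n-m)/2}$ contributes the $o(1)$ term as $n \to \infty$ for fixed $q, \theta$; above the cutoff, the constants $C_1(\rho), C_2(\rho)$ grow too fast for the main theorem to be useful, so I would instead bound $|\mathbb{E}_{\mu_i} F_\rho|$ directly, using that both ensembles assign small probability to $f$ whose extended factorization type is "too complex." Optimizing the cutoff balances these two sources and produces the Stirling-shaped $L^{9/2} e^{2L}/L^L$ factor.

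The main obstacle is the tail bound on the large-complexity part. One needs, uniformly in $n, m, a, g$, that both $\mu_1$ and $\mu_2$ place mass at most (a constant times) $\frac{L^{9/2} e^{2L}}{L^L}$ on extended factorization types of complexity exceeding $L$. For the unconditioned measure $\mu_2$ this follows from classical generating-function estimates on the distribution of prime counts in a random monic polynomial of degree $n$, together with Stirling's approximation applied to bound the saddle-point contribution. Transferring the bound to $\mu_1$, the measure conditioned on a squarefree progression, requires additional input—most naturally a further application of \cref{squarefree-bound-precise} to auxiliary factorization functions counting primes of each fixed degree, ensuring that the conditioning does not shift mass substantially into the tail. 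The specific constants $(4+e)$ and $e^{(4+e)}$, and the exponent $9/2$ in the prefactor, arise from this saddle-point optimization and must be tracked through the Stirling expansion of the relevant combinatorial quantities.
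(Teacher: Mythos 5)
Your high-level architecture is right: split the sum over factorization types into a ``small-complexity'' part where \cref{squarefree-bound-precise} gives power savings and a ``large-complexity'' tail, then bound the tail by showing both ensembles rarely produce such factorization types. But there are two concrete gaps, one minor and one serious.

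The minor one: you propose bounding the tail mass of both $\mu_1$ and $\mu_2$, and say the $\mu_1$ tail ``requires additional input.'' It doesn't. Using the total variation formula $d_{TV}(\mu_1,\mu_2)=\sum_s \max(\mu_2(s)-\mu_1(s),0)$ and the trivial bound $\max(\mu_2(s)-\mu_1(s),0)\leq\mu_2(s)$ on the tail, only the unconditioned measure $\mu_2$ needs a tail estimate. This sidesteps your transfer problem entirely, and is what makes the split into a clean ``$o(1)$ plus a single tail term'' possible.

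The serious gap is the tail bound itself. You describe the cutoff parameter as ``complexity'' but never say what it is, and you propose to bound the tail mass of $\mu_2$ via ``classical generating-function estimates on the distribution of prime counts'' and a ``saddle-point optimization.'' That is not where the bound comes from, and a generic prime-count estimate would not yield the stated $L^{9/2}e^{2L}/L^L$ shape with the constant $\frac{1}{\sqrt{2\pi}(4+e)e^{4+e}}$. The correct complexity functional is the \emph{entropy} $H(\omega_f)=\sum_{(d,l)\in\omega_f} \frac{dl}{n}\log\frac{n}{d}$ of the degree-distribution of the prime factorization, because that is exactly what controls the size of the multinomial coefficient $\frac{n!}{\prod(d!)^l}$ appearing in the $C_1,C_2$ estimates. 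To bound $\mathbb P_{\mu_2}(H(\omega_f)>\log L)$, the argument identifies the limiting distribution of the normalized prime degrees with the Poisson--Dirichlet process (citing \cite{DG}), realizes that process as a stationary Markov chain $p_n$, derives from \cref{entropy-increase-bound} a stochastic domination of $e^{H_n}$ by a much simpler birth--death chain $N_n$ on integers $\geq 4$ (\cref{coupling-comparison}), computes the explicit stationary distribution $\pi(N)=\frac{1}{(4+e)e^{4+e}}\cdot\frac{Ne^N}{(N-4)!}$ of that chain (\cref{limiting-distribution}), and sums the tail (\cref{summation-bound,dp-bound}). This is where the exponent $9/2$ and the constant $(4+e)$ actually come from; they are not artifacts of a Stirling saddle point on polynomial counts, they are the stationary law of a specific coupling chain. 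Without identifying entropy as the right complexity measure and without the Markov-chain comparison, the tail estimate you'd get from direct generating-function methods would be of the wrong shape and would not uniformize correctly across the parameter range.

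One more structural remark: expanding an arbitrary bounded $h(\omega_f)$ in the basis $\{F_\rho\}$ and then controlling the coefficients $|c_\rho^h|$ uniformly is awkward. The paper instead fixes a single factorization type $w$, detects it via a three-step sieve (von Mangoldt convolutions $F_{n_1,\dots,n_\omega}$ via \cref{mangoldt-alternating} and \cref{convolution-lemma}, then sieving out prime powers to get $F'$, then distinct primes to get $F''$), and bounds the error termwise in \cref{vm-c-1,vm-c-2,vm-c-s-1,vm-c-s-2}. The sieve keeps all coefficients explicit and of controlled size, which your abstract $c_\rho^h$ expansion does not.
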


For $\theta < \frac{1}{2}$, the same result was proven by Gorodetsky \citep{Gorodetsky2020}, with error term going to $0$ as $n$ goes to $\infty$ with fixed $q$, using the Riemann Hypothesis. Thus \cref{eft-intro} is a beyond the Riemann Hypothesis analogue of \citep[Theorem 1.2]{Gorodetsky2020}. However, the cost for our result simultaneously going beyond Riemann and (unlike the results above) allowing an arbitrary factorization function is that our error term doesn't go to zero as $n$ goes to infinity, but only goes to a small fraction. Still, we can make this fraction arbitrarily small for a given $\theta$ by taking $q$ sufficiently large, and the decrease is \emph{exponential} in $q$.

For example, taking $\theta$ just above $1/2$, for $q> \num{5000000}$, the error term is $<0.1$, for $q>\num{20000000}$, the error term is $<0.001$,  and for $q> \num{100000000}$, the error term is $<q^{-1}$.

\subsection{Strategy of proof}

The proof of \cref{squarefree-bound-precise} relies on methods from sheaf cohomology, including the characteristic cycle, classical vanishing cycles theory, and properties of perverse sheaves. We begin by constructing a sheaf, $(sym_* \mathbb Q_\ell \otimes \rho)^{S_n}$, on the space of degree $n$ polynomials whose trace function (i.e. the trace of Frobenius on the stalk at a particular point) is $F_\rho$.  We restrict $(sym_* \mathbb Q_\ell \otimes \rho)^{S_n}$  to the space of degree $n$ polynomials prime to $g$, which maps to the space of invertible residue classes mod $g$ (a torus $T$). By the Lefschetz fixed point formula, the sum in a progressions of $F_\rho$ is the trace of Frobenius on the compactly supported cohomology of a fiber with coefficients in $(sym_* \mathbb Q_\ell \otimes \rho)^{S_n}$. The main goal of the paper will be to understand these compactly supported cohomology groups, and how they vary from fiber to fiber.

One fundamental tool to describe how cohomology varies between different fibers is vanishing cycles theory (see \cite{sawinvanishing}). Given any curve in the torus $T$ of invertible residue classes, vanishing cycles describes the difference between the cohomology of the fiber over a generic point in that curve and the cohomology of the fiber over any particular point as a sum of local contributions over singular points of the special fiber. Conveniently, there are only finitely many relevant singular points. This would imply that only the degree $n-m$ and $n+1-m$ cohomology groups can vary between the fibers, and the rest are fixed. However, this implication is only valid if the fibers are compact, as vanishing cycles requires a proper morphism. Thus, to apply this strategy, we must compactify.

In fact, we are able to find a very well-behaved compactification of this morphism (or, more precisely, the morphism $\pi$ from the space of polynomials together with a factorization into linear factors to the space of residue classes mod $g$). This compactification is not well-behaved because it is smooth (it is not) but rather because it has local models which are isomorphic to the product of a singular scheme with $T$, making its local behavior constant in $T$ in various senses. In particular, this implies that the vanishing cycles contribution at the boundary we added to compactify vanishes, and so we can indeed deduce that only the degree $n-m$ and $n+1-m$ cohomology groups vary between the fibers. Furthermore, this compactification, together with a perversity argument, allows us to deduce that the cohomology in higher degrees is independent of $a\in T$, and therefore the trace of Frobenius on this cohomology essentially matches the main term. So it remains to bound the trace of Frobenius on the cohomology groups in degrees $n-m$ and $n+1-m$. This compactification is constructed in \cref{sec-compactification} and the consequences of its existence are described in \cref{sec-structure}.

The main tool to control the trace of Frobenius on these cohomology groups is Deligne's Riemann Hypothesis, which implies that the eigenvalues of Frobenius on these cohomology groups in degree $i$ have size at most $q^{i/2}$. Since the trace of an operator is at most the largest eigenvalue times the dimension, to bound the trace it will suffice to bound the dimension of these cohomology groups. The constants $C_1(\rho)$ and $C_2(\rho)$ are our bounds for the cohomology in degree $n-m$ and $n+1-m$ respectively (more precisely, for the non-constant part of that cohomology). 

The cohomology in degree $n-m$ is bounded by an Euler characteristic argument in \cref{sec-Euler}. At the generic point of $T$, there can only be nonconstant cohomology in degree $n-m$, and so to calculate the rank in degree $n-m$, it suffices to calculate the Euler characteristic of the cohomology and then subtract off the Euler characteristic of the constant part. We apply the Lefschetz fixed point formula to several automorphisms without fixed points to deduce relations between Euler characteristics for different representations $\rho$, which give us enough information to compute all the Euler characteristics.

The cohomology in degree $n+1-m$ is bounded by a characteristic cycle argument in \cref{sec-Massey}. In \cite{massey-formula}, Betti number bounds were proven for the stalks of perverse sheaves in terms of certain local intersection-theoretic multiplicities of the characteristic cycle. The relevant cohomology groups are indeed the stalks of a certain perverse sheaf on $T$. We calculate exactly the characteristic cycle of this perverse sheaf and then estimate the relevant local intersection multiplicities, bounding them by global intersection numbers that can be explicitly computed.

To deduce results like Theorems \ref{mobius-intro}, \ref{primes-intro}, and \ref{divisor-intro} from \cref{squarefree-bound-precise}, we first identify a representation $\rho$ such that $F_\rho$ is our desired function (except, for the von Mangoldt function, we must use an alternating sum of $F_\rho$s). We then must evaluate $C_1(\rho)$ and $C_2(\rho)$ for this representation, which requires getting a formula for the traces of diagonal matrices acting on the representations $V_\rho$ and $V^d_\rho$.  It turns out that these traces fit nicely into generating functions in an auxiliary variable $u$ where the coefficient of $u^n$ arises from a representation of $S_n$.

The relevant formulas are somewhat familiar from classical number theory -- if we detect the condition $f \equiv a \mod g$ appearing in \cref{mobius-intro,primes-intro,divisor-intro} by Dirichlet characters, then the contribution of each character $\chi$ can be expressed in terms of its Dirichlet $L$-function (respectively by its inverse, logarithmic derivative, or $k$th power) and thus can be expressed in terms of the roots of the Dirichlet $L$-function. The contribution of a character $\chi$ will match the trace $\Diag( q^{-\alpha_1},\dots, q^{-\alpha_{m-1}})$ on $V_\rho$, for $\alpha_1,\dots, \alpha_{m-1}$ the roots of $L(s,\chi)$, and so we obtain the trace of a general diagonal matrix by replacing $q^{-\alpha_i}$ with $\lambda_i$. Similarly, the alternating sum in $d$ of the trace of $\Diag( q^{-\alpha_1},\dots, q^{-\alpha_{m}})$ on $V^d_\rho$, where $\alpha_1,\dots,\alpha_m$ are the trivial roots of the $L$-function of the trivial character, will match the contribution of the trivial character.

Given these generating function expressions, it is straightforward to differentiate with respect to the $\lambda_i$ variables, set the $\lambda_i$ to $1$, and extract the coefficient of a particular power of $u$, which will usually have a simple expression in binomial coefficients. We then use an elementary inequality for binomial coefficients to bound them by an exponential in $n-m$, and we obtain power savings in the sum when the base of this exponential is strictly less than $\sqrt{q}$. 

The strategy to deduce \cref{moments-intro} and \cref{ol-intro} is similar, except that we start with a sum over characters and must first manipulate it into a sum in progressions -- standard in analytic number theory -- and that we need slightly different estimates for the binomial coefficients (exponential in $m$ rather than $n-m$). The most difficult part is to sieve out the imprimitive characters from the primitive ones, but this causes calculational complexity more than fundamental difficulty.

These applications are all worked out in \cref{s-applications}.

Finally, in \cref{s-anatomy}, we deduce \cref{eft-intro} from \cref{squarefree-bound-precise}. We handle each extended factorization type separately (in this aspect, the argument is similar to \cite{Gorodetsky2020}). We detect a given extended factorization type using a sieve which is an alternating sum of terms where each term is a factorization function as in \cref{fac-fun}. For some factorization types, this leads to a power savings estimate, but for others, where a certain entropy is large, it does not. To handle these terms, we bound the total number of polynomials whose factorization has large entropy. We do this by essentially probabilistic methods -- we use the Poisson-Dirichlet process, which describes the limiting behavior of the prime factorizations of large numbers and large degree polynomials, and prove a tail estimate for this process by comparing with a certain simple Markov chain.

The restriction to progressions with squarefree modulus is due to the nature of our compactification. It might be possible to find a compactification with similarly good properties for the analogous morphism given a progression to non-squarefree modulus, in which case the arguments of this paper should generalize.

\subsection{Acknowledgments}

This was written while the author served as a Clay Research Fellow, based on earlier work where the author was supported by Dr. Max R\"{o}ssler, the Walter Haefner Foundation and the ETH Zurich Foundation, and completed while the author was supported by NSF grant DMS-2101491.

I would like to thank Mark Shusterman for helpful conversations and the anonymous referee for numerous helpful comments.

\tableofcontents

\section{Notation and preliminaries }

We fix throughout this paper almost the same data discussed in Theorem \ref{squarefree-bound-precise}: a finite field  $\kappa= \mathbb F_q$, a squarefree polynomial $g$ of degree $m$ over $\mathbb F_q$, a natural number $n \geq m$, and a representation $\rho$ of $S_n$ over $\mathbb Q_\ell$. (Since all representations of $S_n$ are defined over $\mathbb Q$, we can freely transfer them between different fields of characteristic $0$.)

However, we will not fix a residue class $a$ mod $g$. Instead, we define a torus on which this residue class may vary.

 Let $T$ be the algebraic torus whose $R$-points are $(R[t]/g)^\times$ for every ring $R$ over $\mathbb F_q$. (Thus $T$ is an algebraic torus of dimension $m$, which splits over the splitting field of $g$.) Let $C = \operatorname{Spec} \mathbb F_q[t,g^{-1}]$. Let $\pi$ be the natural map $C^{ n} \to T$ that sends $(\alpha_{1} ,\dots,\alpha_n )$ to the residue class of $ \prod_{i=1}^{n} (t-\alpha_{i})$ mod $g$. 
 
 Let $S \subset \mathbb P^1(\overline{\kappa}) $ be the union of the set of roots of $g$ with $\infty$, so that $C_{\overline{\kappa} } = \mathbb P^1_{\overline{\kappa}}  \setminus S$.

 We use $C^{(n)}$ to denote the $n$th symmetric power of $C$.

\subsection{Relating functions to sheaves}

The key object of study in this paper is $ R \pi_! \mathbb Q_\ell$, the derived pushforward with compact support. The product $C^n$ has a natural action of $S_n$ permuting the factors, which preserves the map $\pi$, inducing an action of $S_n$ on $R \pi_! \mathbb Q_\ell$ by functoriality. We can study different $S_n$-isotypic components of this action by taking the tensor product with a representation $\rho$ of $S_n$ and taking $S_n$-invariants.

The next lemma explains how $ R \pi_! \mathbb Q_\ell$ is related to the main sum appearing in Theorem \ref{squarefree-bound-precise}:

\begin{lemma}\label{trace-function} For $a \in T(\mathbb F_q)$, we have 
\begin{equation}\label{eq-trace-function} \sum_{ \substack{ f \in \mathcal M_n \\  f \equiv a \mod g }}  F_\rho(f)  =  \sum_{j =0}^{2(n-m)}  (-1)^j \tr \left(\Frob_q, \left( R^{j} \pi_! \mathbb Q_\ell \otimes \rho\right)^{S_n}_a \right) \end{equation} where $\left( R^{j} \pi_! \mathbb Q_\ell \otimes \rho\right)^{S_n}_a$ is the stalk of $\left( R^{j} \pi_! \mathbb Q_\ell \otimes \rho\right)^{S_n}$ at $a$. 

Furthermore, we have
\begin{equation}\label{eq-trace-complex} \sum_{ \substack{ f \in \mathcal M_n \\ \gcd(f,g)=1}}  F_\rho(f)  =  \sum_{j =0}^{2n}  (-1)^j \tr \Bigl(\Frob_q, H^j\Bigl( \bigl(H^*_c(C_{\overline{\kappa}}, \mathbb Q_\ell)\bigr)^{\otimes n} \otimes \rho \Bigr)^{S_n}\Bigr) \end{equation}
where $\bigl(H^*_c(C_{\overline{\kappa}}, \mathbb Q_\ell)\bigr)^{\otimes n} $ denotes the $n$-fold graded tensor product of the graded vector space $H^*_c(C_{\overline{\kappa}}, \mathbb Q_\ell)$, with an action of $S_n$ by permuting the factors and applying the Koszul sign rule, and $H^j $ takes the $j$'th graded piece of a graded vector space.

\end{lemma}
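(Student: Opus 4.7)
The plan is to identify $F_\rho$ as the trace function of an $\ell$-adic sheaf on the symmetric power $C^{(n)}$ and then apply the Grothendieck--Lefschetz trace formula. To this end, factor $\pi$ as $\pi = \eta \circ \operatorname{sym}$ where $\operatorname{sym} \colon C^n \to C^{(n)}$ is the symmetrization map and $\eta \colon C^{(n)} \to T$ sends a polynomial $f$ of degree $n$ coprime to $g$ to its residue class mod $g$. The geometric stalk of $\operatorname{sym}_* \mathbb Q_\ell$ at a point $f \in C^{(n)}$ is exactly the $\mathbb Q_\ell$-form of $V_f$, with its natural $S_n$-action by permutation of the roots; hence the stalk of $(\operatorname{sym}_* \mathbb Q_\ell \otimes \rho)^{S_n}$ at $f$ is $(V_f \otimes \rho)^{S_n}$, and the Frobenius trace on it is precisely $F_\rho(f)$.

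Since $|S_n| = n!$ is invertible in $\mathbb Q_\ell$, the $S_n$-invariants functor is a direct summand of the identity (cut out by the idempotent $\tfrac{1}{n!}\sum_{\sigma}\sigma$) and therefore commutes with every additive functor, in particular with $R\eta_!$. Combined with the projection formula applied to the constant sheaf $\rho$, and with the equality $R\operatorname{sym}_! = \operatorname{sym}_*$ (valid because $\operatorname{sym}$ is finite), we get the sheaf identity
\[ \bigl(R^{j}\pi_! \mathbb Q_\ell \otimes \rho\bigr)^{S_n} \;=\; R^j \eta_! \bigl((\operatorname{sym}_* \mathbb Q_\ell \otimes \rho)^{S_n}\bigr). \]

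For the first formula, apply Grothendieck--Lefschetz to the fiber $\eta^{-1}(a)$ with coefficients in $(\operatorname{sym}_* \mathbb Q_\ell \otimes \rho)^{S_n}$. The sum of its trace function over $\mathbb F_q$-points of the fiber is the left-hand side of \eqref{eq-trace-function}, while the alternating sum of traces on the stalks $R^j\eta_!(\cdot)_a$ is, by the displayed identity, the right-hand side. The vanishing range $j > 2(n-m)$ follows because the fibers of $\pi$ are of pure dimension $n-m$.

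For the second formula, apply Grothendieck--Lefschetz to all of $C^{(n)}(\mathbb F_q)$, yielding an alternating sum of Frobenius traces on $H^j_c(C^{(n)}_{\overline{\kappa}}, (\operatorname{sym}_* \mathbb Q_\ell \otimes \rho)^{S_n})$. Finiteness of $\operatorname{sym}$ gives $H^j_c(C^{(n)}, \operatorname{sym}_* \mathbb Q_\ell) = H^j_c(C^n, \mathbb Q_\ell)$, and by K\"unneth this decomposes as the degree-$j$ component of $\bigl(H^*_c(C_{\overline\kappa}, \mathbb Q_\ell)\bigr)^{\otimes n}$; taking $\otimes \rho$ and $S_n$-invariants (again exact) produces the stated formula \eqref{eq-trace-complex}. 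There is no substantive obstacle: once the sheaf $(\operatorname{sym}_* \mathbb Q_\ell \otimes \rho)^{S_n}$ is in hand, both identities are bookkeeping consequences of standard formalism, and the only point requiring care is the exactness of $S_n$-invariants, which is immediate in characteristic zero.
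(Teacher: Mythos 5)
Your proof is correct and follows essentially the same route as the paper's: factor $\pi = \tau \circ \operatorname{sym}$, identify the trace function of $(\operatorname{sym}_*\mathbb Q_\ell \otimes \rho)^{S_n}$ with $F_\rho$ via the stalk description of $\operatorname{sym}_*\mathbb Q_\ell$, then invoke Grothendieck--Lefschetz on the fiber $\tau^{-1}(a)$ for \eqref{eq-trace-function} and on all of $C^{(n)}$ together with finiteness of $\operatorname{sym}$ and K\"unneth for \eqref{eq-trace-complex}. The only cosmetic difference is that you spell out explicitly, via the idempotent $\tfrac{1}{n!}\sum_\sigma \sigma$, why $S_n$-invariants commutes with $R\tau_!$, whereas the paper uses this silently.
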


\begin{proof} Letting $sym\colon C^n \to C^{(n)}$ be the natural map, and letting $\tau\colon C^{(n)} \to T$ be the map that sends an unordered tuple $\alpha_1,\dots, \alpha_n$ to $\prod_{i=1}^n (T-\alpha_i) \mod g$, we have \[ \left( R^{j} \pi_! \mathbb Q_\ell \otimes \rho\right)^{S_n}= \left( \left(R^{j} \tau_!  sym_* \mathbb Q_\ell \right) \otimes \rho\right)^{S_n} = R^j \tau_! \left( sym_* \mathbb Q_\ell \otimes \rho\right)^{S_n} .\]

By the Grothendieck-Lefschetz fixed point formula, because $\tau^{-1} (a)$ has dimension $\leq n-m$,
\[ \sum_{j =0}^{2(n-m)}  (-1)^j \tr \left(\Frob_q, \left( R^{j} \pi_! \mathbb Q_\ell \otimes \rho\right)^{S_n}_a \right) =  \sum_{ f \in (\tau^{-1}(a) ) (\mathbb F_q) } \tr (\Frob_q,  (sym_* \mathbb Q_\ell \otimes \rho)^{S_n}_ f) .\]

Now viewing $C^{(n)}$ as the space of monic polynomials of degree $n$ prime to $g$, and $\tau^{-1}(a)$ as the space of monic polynomials of degree $n$ congruent to $a$ modulo $g$, we have
\[ (\tau^{-1}(a) ) (\mathbb F_q) = \{ f \in \mathcal M_n | f \equiv a \mod g\}. \] Next note that \[ (sym_* \mathbb Q_\ell \otimes \rho)^{S_n}_f =((sym_* \mathbb Q_\ell)_f \otimes \rho)^{S_n}  \] and $(sym_* \mathbb Q_\ell)_f$ is the free vector space on the fiber of $sym$ over $f$ and thus is isomorphic as a representation of $S_n$ and $\operatorname{Frob}_q$ to $V_f$. Hence, by definition, \begin{equation}\label{sym-vs-F} \tr (\Frob_q,  (sym_* \mathbb Q_\ell \otimes \rho)^{S_n}_ f)  = F_\rho(f) .\end{equation} This gives \eqref{eq-trace-function}.

For \eqref{eq-trace-complex}, the proof is similar. We have 
\[ \sum_{ \substack{ f \in \mathcal M_n \\ \gcd(f,g)=1 }}  F_\rho(f) = \sum_{ \substack{ f \in \mathcal M_n\\ \gcd(f,g)=1} }  \tr (\Frob_q,  (sym_* \mathbb Q_\ell \otimes \rho)^{S_n}_ f) = \sum_{j =0}^{2n}  (-1)^j \tr\Bigl(\Frob_q, H^j_c\bigl ( C_{\overline{\kappa}}^{(n)},   (sym_* \mathbb Q_\ell \otimes \rho)^{S_n}\bigr)\Bigr)\] \[ =\sum_{j =0}^{2n} (-1)^j \tr\Bigl(\Frob_q,\bigl ( H^j_c (C_{\overline{\kappa}}^{(n)},   sym_* \mathbb Q_\ell) \otimes \rho\bigr)^{S_n}   \Bigr) = \sum_{j =0}^{2n}  (-1)^j \tr\left(\Frob_q,\bigl ( H^j_c (C_{\overline{\kappa}}^n,    \mathbb Q_\ell) \otimes \rho\bigr)^{S_n}   \right)\] \[ =  \sum_{j =0}^{2n}  (-1)^j \tr \Bigl(\Frob_q, H^j\Bigl( \bigl(H^*_c(C_{\overline{\kappa}}, \mathbb Q_\ell)\bigr)^{\otimes n} \otimes \rho \Bigr)^{S_n}\Bigr) .\]
Here the first equality is by \eqref{sym-vs-F}, the second follows from the Grothendieck-Lefschetz fixed point formula (using  $\dim( C^{(n)})= n$), the third is because taking $ V\mapsto (V\otimes \rho)^{S_n}$ is an exact functor and thus commutes with cohomology, the fourth is compatibility of compactly-supported cohomology with finite pushforwards \cite[XVII, Theorem 5.1.8(a)]{sga4-3}, and the last is the K\"unneth formula \cite[XVII, Theorem 5.4.3]{sga4-3}. 
\end{proof}

Using this lemma, Theorem \ref{squarefree-bound-precise} will follow from properties of $R \pi_! \mathbb Q_\ell$. The properties we need, other than Deligne's bound for the Frobenius eigenvalues, will all be purely geometric in nature, so to prove them we will work over the algebraically closed field $\overline{\kappa}$. 

\subsection{Representations of the symmetric group and partitions}

We review here some standard facts about irreducible representations of the symmetric group and their relationships to partitions \cite[\S4]{FultonHarris}, which we will use later in the paper. We will need to apply these to representations $\orho$ potentially different from the fixed representation $\rho$.

\begin{lemma}\label{schur-vanishing-lemma} Let $\orho$ be an irreducible representation of $S_n$ corresponding to a partition with at least one part of size $\geq m.$ Then $V_{\orho}=0$. \end{lemma}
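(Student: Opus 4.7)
The plan is to identify $V_{\overline{\rho}}$ explicitly via Schur--Weyl duality and then invoke the standard vanishing of Weyl (Schur) modules on vector spaces of insufficient dimension.

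First, I would recall Schur--Weyl duality: as a representation of $GL_{m-1} \times S_n$,
\[ (\mathbb C^{m-1})^{\otimes n} \cong \bigoplus_{\lambda \vdash n,\ \ell(\lambda) \leq m-1} \mathbb S_\lambda(\mathbb C^{m-1}) \otimes W_\lambda, \]
where $W_\lambda$ is the Specht module for the partition $\lambda$ and $\mathbb S_\lambda$ is the corresponding Schur functor, and where the sum is restricted to partitions with at most $m-1$ parts (since $\mathbb S_\lambda(\mathbb C^k) = 0$ whenever $\ell(\lambda) > k$). Taking $S_n$-invariants of $(\mathbb C^{m-1})^{\otimes n} \otimes \overline{\rho} \otimes \sgn$ picks out exactly the isotypic component of $W_\mu$ where $\mu$ is the partition associated to the irreducible representation $\overline{\rho} \otimes \sgn$ (using that $W_\lambda^* \cong W_\lambda$ and Schur's lemma). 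Thus
\[ V_{\overline{\rho}} = \mathbb S_\mu(\mathbb C^{m-1}). \]

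Next I would use the classical fact that tensoring an irreducible representation of $S_n$ by the sign character corresponds to transposing the associated partition: if $\overline{\rho}$ corresponds to $\lambda$, then $\overline{\rho}\otimes \sgn$ corresponds to $\mu = \lambda^T$. By hypothesis, $\lambda$ has some part of size $\geq m$, which is equivalent to saying that $\lambda^T$ has at least $m$ parts, i.e.\ $\ell(\mu) \geq m > m-1$.

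The conclusion follows immediately: since $\ell(\mu) > m-1$, the Weyl module $\mathbb S_\mu(\mathbb C^{m-1})$ is zero, so $V_{\overline{\rho}} = 0$. The only step that really requires care is the bookkeeping in matching conventions in the Schur--Weyl decomposition (whether one uses $\rho$, $\rho \otimes \sgn$, or the dual) so that the final formula lands on the Weyl module for $\lambda^T$ rather than $\lambda$; once the sign twist built into the definition of $V_\rho$ is accounted for, this is the main and essentially only content of the lemma.
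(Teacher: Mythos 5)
Your proof is correct. The paper's proof establishes the same conclusion by a slightly more elementary route: it expands $(\mathbb{C}^{m-1})^{\otimes n}$ as a sum of permutation modules $\Ind_{S_{n_1}\times\cdots\times S_{n_{m-1}}}^{S_n}\mathbb{1}$ (the "colorings" picture) and then invokes Young's rule / dominance ordering to conclude that only Specht modules for partitions with at most $m-1$ parts appear, whereas you invoke Schur--Weyl duality directly to get the decomposition $\bigoplus_{\ell(\lambda)\le m-1}\mathbb S_\lambda(\mathbb{C}^{m-1})\otimes W_\lambda$ and pick out the $W_\mu$-isotypic part. Both hinge on the same two facts — that $\orho\otimes\sgn$ corresponds to the transposed partition (which has $\ge m$ parts), and that $(\mathbb{C}^{m-1})^{\otimes n}$ contains no Specht modules with more than $m-1$ parts — so the difference is essentially one of packaging; your version has the small added benefit of identifying $V_{\orho}$ explicitly as a Schur module, which is also useful elsewhere in the paper, while the paper's version avoids invoking the full Schur--Weyl isomorphism.
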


\begin{proof} By definition, we must show that $\Bigl( (\mathbb C^{m-1} )^{\otimes n } \otimes \orho \otimes \sgn \Bigr)^{S_n} =0$, or equivalently, that $\orho \otimes \sgn$ does not appear in $(\mathbb C^{m-1} )^{\otimes n } $.

Note that $\orho \otimes \sgn$ is the irreducible representation corresponding to the conjugate partition \cite[Exercise 4.4(c)]{FultonHarris}, a partition into at least $m$ parts. Furthermore $(\mathbb C^{m-1} )^{\otimes n }$ is the permutation representation of $S_n$ acting on the set of $m-1$-colorings of $n$ elements, which is a sum of permutation representations acting on partitions of $n$ elements into at most $m-1$ parts.

Because the only summands of permutation representations acting on a partition are irreducible representation corresponding to partitions greater than or equal to it in the dominance ordering \cite[Exercise 4.46]{FultonHarris}, which must have a lesser or equal number of parts, indeed $\orho \otimes \sgn$ cannot be a summand of $(\mathbb C^{m-1} )^{\otimes n }$.
\end{proof}

\begin{lemma}\label{other-vanishing-lemma} Let $\orho$ be an irreducible representation of $S_n$ corresponding to a partition with all parts of size $<m$. Then $V_{\orho}^d=0$ for $d \geq m$. \end{lemma}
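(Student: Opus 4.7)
The plan is to reduce the vanishing to a simple statement about invariants of $\orho$ under $S_d$ alone. First I factor the $S_{n-d} \times S_d$-invariants, taking the $S_d$-invariants first. Since $S_d$ acts trivially on both $(\mathbb C^m)^{\otimes(n-d)}$ and on $\sgn_{S_{n-d}}$, this gives
\[ V_\orho^d = \bigl( (\mathbb C^m)^{\otimes(n-d)} \otimes \orho^{S_d} \otimes \sgn_{S_{n-d}} \bigr)^{S_{n-d}}, \]
where $\orho^{S_d}$ denotes the $S_d$-invariants for the restriction of $\orho$ to the $S_d$-factor of $S_{n-d} \times S_d \subseteq S_n$. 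It therefore suffices to show that $\orho^{S_d} = 0$.

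Next, by Frobenius reciprocity, $\dim \orho^{S_d}$ equals the multiplicity in $\orho|_{S_{n-d} \times S_d}$ of irreducible summands of the form $\sigma \boxtimes \mathbf{1}_{S_d}$ as $\sigma$ ranges over irreducible representations of $S_{n-d}$. By the branching (Pieri) rule for the inclusion $S_{n-d} \times S_d \subseteq S_n$, this multiplicity counts partitions $\sigma$ of $n-d$ such that $\lambda/\sigma$ is a horizontal strip of size $d$, where $\lambda$ is the partition associated to $\orho$.

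Finally, a horizontal strip of $\lambda$ contains at most one box per column, so its size is bounded by the number of columns of $\lambda$, which equals $\lambda_1$. Since every part of $\lambda$ has size $<m$, we have $\lambda_1 \leq m-1 < m \leq d$, so no horizontal strip of size $d$ can be removed from $\lambda$. Hence the multiplicity vanishes, $\orho^{S_d}=0$, and therefore $V_\orho^d=0$, as claimed. The only step that needs care is invoking Pieri's rule in this asymmetric form, which is entirely standard; the rest of the argument is a one-line combinatorial inequality.
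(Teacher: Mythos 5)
Your proof is correct and takes essentially the same approach as the paper: both reduce to showing $\orho^{S_d}=0$ by observing that $S_d$ acts on $(\mathbb C^m)^{\otimes(n-d)}\otimes\orho\otimes\sgn_{S_{n-d}}$ only through $\orho$. The one difference is the final combinatorial ingredient: the paper shows $\orho$ cannot be a summand of the permutation module $\Ind_{S_d\times S_1^{n-d}}^{S_n}\mathbb{C}$ by a dominance-order argument ($\lambda$ would have to dominate $(d,1^{n-d})$, forcing $\lambda_1\geq d\geq m$), whereas you invoke Pieri's rule and the observation that a horizontal strip has at most $\lambda_1<m\leq d$ boxes; these are two standard and equivalent ways to extract the same inequality $\lambda_1\geq d$, so the proofs are interchangeable.
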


\begin{proof} By definition, we must show that  \[ \Bigl( (\mathbb C^{m} )^{\otimes (n-d) } \otimes \orho \otimes \sgn_{S_{n-d}} \Bigr)^{S_{n-d} \times S_d } =0 \] for $d\geq m$. Since $S_d$ acts on $(\mathbb C^{m} )^{\otimes (n-d) } \otimes \orho \otimes \sgn_{S_{n-d}}$ only through its action on $\orho$, it suffices to show $\orho^{ S_d} = 0$, or in other words that $\orho$ is not a summand of the permutation representation acting on partitions into one part of size $d$ and $n-d$ parts of size $1$. The representation $\orho$ corresponds to a partition with all parts of size $<m$ and so is not higher in the dominance ordering than a partition with one part of size $d\geq m$, and so $\orho$ cannot be a summand of this permutation representation by \cite[Exercise 4.46]{FultonHarris}. \end{proof}

We also include here an alternative formula for $C_2(\rho)$, obtained by breaking $V_\rho$ into its $\Diag(\lambda_1,\dots, \lambda_{m-1})$-eigenspaces, which will be convenient to use in \cref{cc-nice-formula} later.

\begin{lemma}\label{C2-e-formula} For $\rho$ a representation of $S_n$, we have \[C_2(\rho) =  \sum_{ \substack{ (e_1,\dots, e_{m-1}) \in \mathbb N^{m-1}  \\ \sum_{i=1}^{m-1} e_i=n}}  \Bigl ( \prod_{i=1}^{m-1} e_i \Bigr)  \dim ( \rho \otimes \sgn)^{ \prod_{i=1}^{m-1} S_{e_i}}  . \] \end{lemma}

\begin{proof}  
  Let us first calculate the coefficient of $\prod_{i=1}^{m-1} \lambda_i^{e_i}$ in $ \tr( \Diag(\lambda_1,\dots, \lambda_{m-1}), V_{\rho} )  $ for $e_1,\dots, e_{m-1} \in \mathbb N$. This is the dimension of the $ \prod_{i=1}^{m-1} \lambda_i^{e_i}$-eigenspace of the maximal torus of $GL_{m-1}$ inside $V_{\rho}$. By definition, $V_{\rho} = \left( (\mathbb C^{m-1})^{\otimes n} \otimes \rho \otimes \sgn\right)^{S_n}.$
  
  The  $ \prod_{i=1}^{m-1} \lambda_i^{e_i}$-eigenspace in $(\mathbb C^{m-1})^{\otimes n} $ is generated by tensor products of $n$-tuples of basis vectors where the $i$'th basis vector occurs $e_i$ times. Thus it is nonempty only if $\sum_{i=1}^{m-1} e_i =n$, its dimension is $\frac{n!}{ \prod_{i=1}^{m-1} (e_i)!}$, and as a representation of $S_n$ it is isomorphic to $\Ind_{\prod_{i=1}^{m-1} S_{e_i}}^{S_n} \mathbb C$.
  
  Thus, the $ \prod_{i=1}^{m-1} \lambda_i^{e_i}$-eigenspace of $V_{\rho}$ is isomorphic to \[ \left(  \left( \Ind_{\prod_{i=1}^{m-1} S_{e_i}}^{S_n} \mathbb C\right) \otimes \rho \otimes \sgn \right)^{S_n} = ( \rho \otimes \sgn)^{ \prod_{i=1}^{m-1} S_{e_i}} .\]
  
  Hence \[ \tr( \Diag(\lambda_1,\dots, \lambda_{m-1}, V_{\rho} ) ) = \sum_{ \substack{ (e_1,\dots, e_{m-1}) \in \mathbb N^{m-1}  \\ \sum_{i=1}^{m-1} e_i=n}} \left(\prod_{i=1}^{m-1} \lambda_i ^{e_i} \right)\dim ( \rho \otimes \sgn)^{ \prod_{i=1}^{m-1} S_{e_i}} \] and so by the definition of $C_2(\rho)$, \[ C_2(\rho) =  \frac{\partial^{m-1} \tr( \Diag(\lambda_1,\dots, \lambda_{m-1}, V_{\rho} ) ) }{ \partial \lambda_1 \dots \partial \lambda_{m-1}} \Big|_{\lambda_1,\dots,\lambda_{m-1}=1} \]
    \[= \sum_{ \substack{ (e_1,\dots, e_{m-1}) \in \mathbb N^{m-1}  \\ \sum_{i=1}^{m-1} e_i=n}}   \frac{\partial^{m-1}  \prod_{i=1}^{m-1} \lambda_i ^{e_i} }{ \partial \lambda_1 \dots \partial \lambda_{m-1}}  \Big|_{\lambda_1,\dots,\lambda_{m-1}=1} \dim ( \rho \otimes \sgn)^{ \prod_{i=1}^{m-1} S_{e_i}}  \] \[= \sum_{ \substack{ (e_1,\dots, e_{m-1}) \in \mathbb N^{m-1}  \\ \sum_{i=1}^{m-1} e_i=n}}  \Bigl ( \prod_{i=1}^{m-1} e_i \Bigr)  \dim ( \rho \otimes \sgn)^{ \prod_{i=1}^{m-1} S_{e_i}}  . \qedhere\] \end{proof}

 The following fact will let us prove a key identity between characters of $S_n$.     We will use this for $\mathcal S$ the set of partitions into $<m$ parts, but state it in slightly greater generality.

    \begin{lemma}\label{permutation-restriction-fact} Let $\mathcal S$ be a set of partitions of $n$ that is upward-closed in the dominance ordering. Let $\chi$ be a character of $S_n$ obtained as a linear combination of the characters of irreducible representations corresponding to conjugate partitions of elements of $\mathcal S$.
    
    If $\chi$ vanishes on all permutations with cycles of size $n_1,\dots, n_r$, with $(n_1,\dots, n_r)\in \mathcal S$, then $\chi=0$.\end{lemma}

    \begin{proof} For $(n_1, \dots , n_r)$ a partition of $n$, the representation $\Ind_{S_{n_1} \times \dots \times S_{n_r}}^{S_n} \sgn$ is the sum of the representation of $S_n$ corresponding to the conjugate of $n_1+ \dots + n_r$ with representations corresponding to partitions lower in the dominance order, so if $(n_1,\dots, n_r) \in \mathcal S$ then $\Ind_{S_{n_1} \times \dots \times S_{n_r}}^{S_n} \sgn$  is a sum of irreducible representations corresponding to conjugate partitions of elements of $\mathcal S$. Using this, we can inductively write $\chi$ as a sum of characters of representations $\Ind_{S_{n_1} \times \dots \times S_{n_r}}^{S_n} \sgn$ for $(n_1,\dots, n_r)\in \mathcal S$ by writing it as a sum of characters of irreducible representations and in each step cancelling the remaining representation highest in the dominance order. Assuming for contradiction that $\chi\neq 0$, this sum must be nontrivial.

Now given a nontrivial sum of characters of representations $\Ind_{S_{n_1} \times \dots \times S_{n_r}}^{S_n} \sgn$, choose some partition $(n_1 , \dots , n_r)$ appearing that is maximal in the dominance order, and choose $\sigma$ to be a permutation with $r$ cycles, of size $n_1,\dots, n_r$. Applying the Frobenius formula for the character of an induced representation, we see that  $\tr ( \sigma , \Ind_{S_{n_1} \times \dots \times S_{n_r}}^{S_n} \sgn) \neq 0$, but $\tr ( \sigma , \Ind_{S_{n^{'}_1} \times \dots \times S_{n^{'}_{r'}}}^{S_n} \sgn)=0$ for $n_1',\dots, n_{r'}'$ any partition that is not larger in the dominance order, since $\sigma$ is conjugate to an element of $S_{n_1} \times \dots \times S_{n_r}$, the product of the $n_i$-cycle in $S_{n_i}$, but not conjugate to any element of $S_{n^{'}_1} \times \dots \times S_{n^{'}_r}$. It follows that $\chi$ is nontrivial on $\sigma$, contradicting our assumption.
\end{proof}

The following lemma will help in calculating $V^d_\rho$ for an induced representation

\begin{lemma}\label{induction-restriction} Let $n, d,n_1,n_2$ be natural numbers with $n_1+n_2=n$ and $d\leq n$. For $\rho_1, \rho_2$ representations of $S_{n_1},S_{n_2}$ with $n_1+n_2 =n$, we have
\[ \operatorname{Res}_{S_n}^{ S_d \times S_{n-d}} \Ind_{S_{n_1} \times S_{n_2}}^{S_n}   (\rho_1   \otimes \rho_2 ) = \bigoplus_{e =\max(0, d- n_2 ) }^{ \min(n_1 ,d) }   \operatorname{Ind}_{ S_e \times S_{d-e} \times S_{n_1-e} \times  S_{n_2-d+e}  }^{S_d\times S_{n-d}}  (\rho_1   \otimes \rho_2)\] where we view $\rho_1  \otimes \rho_2$ as a representation of $S_e \times S_{d-e} \times S_{n_1-e} \times  S_{n_2-d+e} $ by embedding $S_e \times S_{n_1-e}$ into $S_{n_1}$ and $S_{d-e} \times S_{n_2-d+e}$ into $S_{n_2}$, and we view $S_e \times S_{d-e} \times S_{n_1-e} \times  S_{n_2-d+e} $ as a subgroup of $S_d \times S_{n-d}$ by embedding  $S_e \times S_{d-e} $ into $S_d$ and $ S_{n_1-e} \times  S_{n_2-d+e} $  into $S_{n-d}$.
\end{lemma}

\begin{proof} By \cite[Proposition 22]{serre-lrfg} we have
\[ \operatorname{Res}_{S_n}^{ S_d \times S_{n-d}} \Ind_{S_{n_1} \times S_{n_2}}^{S_n}   (\rho_1   \otimes \rho_2 ) = \bigoplus_{\sigma \in (S_d \times S_{n-d} ) \backslash S_n / (S_{n_1} \times S_{n_2}) } \operatorname{Ind}_{ (S_{n_1} \times S_{n_2}) \cap  \sigma^{-1} (S_d \times S_{n-d} )  \sigma }^{ S_d \times S_{n-d} }  (\rho_1   \otimes \rho_2) . \]
Double cosets  $(S_d \times S_{n-d} ) \sigma (S_{n_1} \times S_{n_2}) $ are classified by the number $e$ of elements of $\{1,\dots n_1\}$ whose image under $\sigma \colon \{1,\dots, n\}\to \{1,\dots, n\}$ is contained in $\{1,\dots,d\}$. 

For $\sigma$ in the double coset corresponding to $e$, the intersection $(S_{n_1} \times S_{n_2}) \cap  \sigma^{-1} (S_d \times S_{n-d} ) \sigma$ consists of all those permutations that stabilize the sets
\[\{1,\dots, n_1\}, \{n_1+1, \dots, n\},  \sigma^{-1}  ( \{1,\dots, d\} ), \sigma^{-1} ( \{d+1,\dots,n \}).\]

Without loss of generality, $\sigma$ fixes $\{1,\dots, e\}$, sends $\{e+1, \dots n_1\}$ to $\{ d+1, n_1+d-e\}$, sends  $\{n_1+1, \dots n_1+d-e\}$ to $\{e_1,\dots ,d\}$, and fixes $\{n_1+d-e+1,\dots,n\}$.

Then $\sigma^{-1} ( ( \{1,\dots, d\} ) =\{1,\dots, e, n_1+1, \dots n_1+d-e\}$ and $\sigma^{-1} ( \{d+1,\dots,n \})= \{ e_1,\dots, n_1, n_1+d-e+1,\dots n\} $. Any permutation that stabilizes these four sets stabilizes their intersections
\[ \{1,\dots, e\} , \{e_1,\dots, n_1\}, \{n_1+1,\dots,n_1+d-e\}, \{n_1+d-e+1, \dots, n\}\]
and any permutation that stabilizes these four intervals stabilizes their unions, the original sets, so 
\[(S_{n_1} \times S_{n_2}) \cap  \sigma^{-1} (S_d \times S_{n-d} ) = S_{e} \times S_{n_1-e} \times S_{d-e} \times S_{n_2+d-e} \]
and the embeddings we must induct and restrict on are as described in the statement.
\end{proof}

\subsection{Perverse sheaves} 

By convention, a variety is a separated, geometrically integral scheme of finite type over a field. 

The following lemma about perverse sheaves will be useful later. 

\begin{lemma}\label{perverse-easy-inequality} Let $X$ be a normal variety of dimension $n$, with generic point $\eta$, and let $K$ be a perverse $\ell$-adic sheaf on $X$. For any $x\in X$ we have \begin{equation}\label{dimension-stalk-generic} \dim \mathcal H^{-n}(K)_x \leq \dim \mathcal H^{-n}(K)_{\eta} .\end{equation}
\end{lemma}

\begin{proof} Let $U$ be the maximal open set on which $\mathcal H^{-n}(K)$ is lisse and let $j \colon U \to X$ be the open immersion. Let $\mathcal K$ be the kernel of the adjunction map $\mathcal H^{-n}(K) \to j_* j^* \mathcal H^{-n}(K)$.

Since $K$ is perverse, $\mathcal H^i(K)$ vanishes for $i<-n$. Truncating the complex defining $K$ in degree $-n$ produces a complex with cohomology in degree only $-n$ and equal to $\mathcal H^{-n}(K)$ in that degree, i.e. a complex quasi-isomorphic to $\mathcal H^{-n}(K) [n]$, and truncation gives a map $\mathcal H^{-n}(K) [n]\to K$ which is an isomorphism in degree $-n$. Composed with the natural map $\mathcal K[n] \to \mathcal H^{-n}(K) [n]$ arising from the injection $\mathcal K \to \mathcal H^{-n}(K)$, we obtain a map $\mathcal K [n] \to K$ which is nontrivial as soon as $\mathcal K$ is because it is an injection on cohomology in degree $-n$.

However, since $\mathcal K$ is supported in dimension $<n$, the sheaf $\mathcal K[n]$ is supported in perverse degree $<0$. Because perversity is a $t$-structure, any complex that maps nontrivially to a perverse sheaf must have perverse cohomology in some nonnegative degree, so  the map $\mathcal K [n] \to K$ must be trivial and thus $\mathcal K$ is trivial. Then $\mathcal H^{-n}(K)\to j_* j^* \mathcal H^{-n}(K)$ is injective so
\[  \dim \mathcal H^{-n}(K)_x  \leq  \dim(j_* j^* \mathcal H^{-n}(K))_x  \leq \dim (j_* j^* \mathcal H^{-n}(K))_\eta =\dim \mathcal H^{-n}(K)_\eta)\]
with the middle inequality  because the stalk of $j_*$ of a lisse sheaf at a closed point of a normal variety may be computed as the inertia invariants of the stalk of the lisse sheaf at at the generic point.  \end{proof}

\subsection{The singular support and the characteristic cycle}

We review some notation and terminology of \citet{Beilinson} and \citet{saito1}. (Our formulations of the definitions are mainly adapted from \citep{saito1}). All schemes are over a perfect field $k$, which in the remainder of the paper we can specialize to be the algebraic closure of a finite field. We follow the modified terminology of \cite{saitomixed} in using \emph{acyclic} instead of \emph{transversal} in \cref{C-transversal-1} and \cref{pair-acyclic}, to avoid confusion between different uses of the phrase ``$C$-transversal". We begin each definition with a parenthetical highlighting what is to be defined.

\begin{defi}\label{C-transversal-1} ($C$-acyclic morphism) \citep[Definition 3.5(1)]{saito1} Let $X$ be a smooth scheme over $k$ and let $C \subseteq T^* X$ be a closed conical subset of the cotangent bundle. Let $f\colon  X\to Y$ be a morphism of smooth schemes over $k$.

We say that $f\colon  X \to Y$ is \emph{$C$-acyclic} if the inverse image $df^{-1}(C)$ by the canonical morphism $df\colon X \times_Y T^* Y \to T^* X$ is a subset of the zero-section $X \subseteq X \times_Y T^* Y$.
\end{defi}

\begin{defi}($f_\circ C$) \citep[(1.2)]{Beilinson}\label{circ-forward} In the same setting as Definition \ref{C-transversal-1}, if $f$ is proper, let $f_\circ C$ be the projection from $X \times_Y T^* Y$ to $T^*Y $ of the inverse image $df^{-1}(C)$.

\end{defi}

\begin{defi}($f_! A$) \citep[(2.3)]{saito-direct}\label{shriek-forward}  In the same setting as Definitions \ref{C-transversal-1} and \ref{circ-forward}, let $A$ be an algebraic cycle of codimension $\dim X$ on $T^* X$ supported on $C$. Assume also that $f_\circ C$ has dimension $\dim Y$.

Let $f_!  A$ be the pushforward from $X \times_Y T^* Y$ to $T^*Y $ of the intersection-theoretic inverse image $df^* C$. \end{defi}

\begin{defi}($C$-transversal morphism and $h^\circ C$) \citep[Definition 3.1]{saito1} Let $X$ be a smooth scheme over $k$ and let $C \subseteq T^* X$ be a closed conical subset of the cotangent bundle. Let $h\colon  W \to X$ be a morphism of smooth schemes over $k$.

Let $h^* C$ be the pullback of $C$ from $T^* X$ to $W \times_X T^* X$ and let $K$ be the inverse image of the $0$-section $W \subseteq T^* W$ by the canonical morphism $dh\colon  W \times_X T^* X \to T^* W$. 

We say that $h\colon  W\to X$ is \emph{$C$-transversal} if the intersection $h^* C \cap K$ is a subset of the zero-section $W \subseteq W \times_X T^* X$.

If $h\colon  W \to X$ is $C$-transversal, we define a closed conical subset $h^\circ C \subseteq T^* W$ as the image of $h^* C$ under $dh$ (it is closed by \citep[Lemma 3.1]{saito1}). \end{defi}

\begin{defi}($C$-acyclic pair) \citep[Definition 3.5(2)]{saito1}\label{pair-acyclic} We say that a pair of morphisms $h\colon  W \to X$ and $f\colon W\to Y$ of smooth schemes over $k$ is \emph{$C$-acyclic}, for $C \subseteq T^* X$ a closed conical subset of the cotangent bundle, if $h$ is $C$-transversal and $f$ is $h^\circ C$-acyclic. \end{defi}

%

\begin{defi}(the singular support) \citep[1.3]{Beilinson}\label{def-singular-support} For $K \in D^b_c(X, \mathbb F_\ell)$, let the \emph{singular support $SS(K)$ of $K$} be the smallest closed conical subset $C \in T^* X$ such that for every $C$-acyclic pair $h\colon  W \to X$ and $f\colon  W\to Y$, the morphism $f\colon  W\to Y$ is locally acyclic relative to $h^* K$. 

\end{defi}
The existence and uniqueness of $SS(K)$ is \citep[Theorem 1.3]{Beilinson}, which also proves that if $X$ has dimension $n$ then $SS(K)$ has dimension $n$ as well.

\begin{defi}(properly $C$-transversal morphism) \citep[Definition 7.1(1)]{saito1} Let $X$ be a smooth scheme of dimension $n$ over $k$ and let $C \subseteq T^* X$ be a closed conical subset of the cotangent bundle with each irreducible component of dimension $n$. Let $W$ be a smooth scheme of dimension $m$ over $k$ and let $h\colon  W \to X$ be a morphism over $k$.

We say that $h$ is \emph{properly $C$-transversal} if it is $C$-transversal and each irreducible component of $h^* C$ has dimension $m$. \end{defi} 

\begin{defi}($h^! A$) \citep[Definition 7.1(2)]{saito1} Let $X$ be a smooth scheme of dimension $n$ over $k$ and let $A$ be an algebraic cycle of codimension $n$ on $ T^* X$ whose support $C$ is a closed conical subset of the cotangent bundle (necessarily of dimension $n$).

 Let $W$ be a smooth scheme of dimension $m$ over $k$ and let $h\colon  W \to X$ be a properly $C$-transversal morphism over $k$.
 
 We say that $h^{!} A$ is $(-1)^{n-m}$ times the pushforward along $dh\colon   W \times_X T^* X \to T^* W$ of the pullback along $h\colon   W \times_X T^* X \to T^* X$ of $A$, with the pullback and pushforward in the sense of intersection theory.  \end{defi}

Here the pushforward in the sense of intersection theory is well-defined because, by \citep[Lemma 3.1]{saito1}, $dh$ is finite when restricted to $h^* C$ (with the induced reduced subscheme structure), i.e. finite when restricted to the support of $h^* A$.

\begin{defi}(isolated $C$-characteristic points) \citep[Definition 5.3(1)]{saito1} Let $X$ be a smooth scheme of dimension $n$ over $k$  and let $C\subseteq T^* X$ be a closed conical subset of the cotangent bundle. Let $Y$ be a smooth curve over $k$ and $f\colon  X\to Y$ a morphism over $k$. 

We say a closed point $x \in X$ is at most an \emph{isolated $C$-characteristic point} of $f$ if $f$ is $C$-acyclic when restricted to some open neighborhood of $x$ in $X$, minus $x$.  We say that $x\in X$ is an \emph{isolated $C$-characteristic point} of $f$ if this holds, but $f$ is not $C$-acyclic when restricted to any open neighborhood of $x$. \end{defi}

\begin{defi}($\operatorname{dimtot}$) For $V$ a representation of the Galois group of a local field over $\mathbb F_\ell$ (or a continuous $\ell$-adic representation), we define $\operatorname{dimtot} V$ to be the dimension of $V$ plus the Swan conductor of $V$. For a complex $W$ of such representations, we define $\operatorname{dimtot}W $ to be the alternating sum $\sum_i (-1)^i \operatorname{dimtot} \mathcal H^i(W)$ of the total dimensions of its cohomology objects. \end{defi}

\begin{defi}(the characteristic cycle) \citep[Definition 5.10]{saito1} Let $X$ be a smooth scheme of dimension $n$ over $k$ and $K$ an object of $D^b_c(X, \mathbb F_\ell)$. Let the \emph{characteristic cycle of $K$}, $CC(K)$, be the unique $\mathbb Z$-linear combination of irreducible components of $SS(K)$ such that for every \'{e}tale morphism $j\colon  W \to X$, every morphism $f\colon  W\to Y$ to a smooth curve and every at most isolated $h^\circ SS(\mathcal F)$-characteristic point $u \in W$ of $f$, we have
\begin{equation}\label{Milnor-formula}  - \operatorname{dimtot} \left( R \Phi_f(j^* K) \right)_u =  (j^* CC(K), f^*\omega )_{T^*W,u} \end{equation} where $\omega$ is a meromorphic one-form on $Y$ with no zero or pole at $f(u)$.

\end{defi}

Here the notation $(,)_{T*W ,u}$ denotes the intersection number in $T^* W$ over the point $u$ (i.e. at $(u, f^* \omega(u)) \in T^* X$).

The existence and uniqueness is \citep[Theorem 5.9]{saito1}, except for the fact that the coefficients lie in $\mathbb Z$ and not $\mathbb Z[1/p]$, which is \citep[Theorem 5.18]{saito1} and is due to Beilinson, based on a suggestion by Deligne.

Saito defined the characteristic cycle for complexes in $D^b_c(X,\mathbb F_\ell)$. However, it can be extended straightforwardly to complexes in $D^b_C(X,\mathbb Q_\ell)$ by choosing a representative in $D^b_c(X, \mathbb Z_\ell)$ and projecting to $D^b_c(X,\mathbb F_\ell)$. The $\operatorname{dimtot}$ of the vanishing cycles along any map is independent of the choice of representative because it is an alternating sum over cohomology groups, and hence, by the uniqueness of the characteristic cycle, the characteristic cycle is independent of the choice of perverse representative.

The singular support is not necessarily independent of the choice of representatives. However, for perverse $\mathbb Q_\ell$-sheaves, we can choose a perverse $\mathbb Z_\ell$-representative  \cite[Section 2.2.18]{bbd}, and mod out by any $\ell$-power torsion to make it $\ell$-torsion-free, so its projection to $D^b_c(X,\mathbb F_\ell)$ will be a perverse sheaf over $\mathbb F_\ell$. For perverse sheaves over $\mathbb F_\ell$, the singular support is equal to the support of the characteristic cycle \cite[Proposition 4.14.2]{saito1}, and thus is independent of the choice of representative. We will study the singular support only for perverse sheaves.

In addition to reviewing the theory of the characteristic cycle, we introduce one new result. Saito conjectured \cite[Conjecture 1]{SaitoProper} that for $f\colon X \to Y$ a proper morphism of smooth schemes over a perfect field $k$, with $X$ of dimension $n$ and $Y$ of dimension $m$, and $\mathcal F$ a constructible complex on $X$, if every irreducible component of $f_\circ  SS(\mathcal F)$ has dimension $\leq m$, then \[ CC ( R f_* \mathcal F) = f_! CC( \mathcal F).\] In \cite[Theorem 2.2.5]{saito-direct}, Saito proved this with the additional assumptions that $X$ and $Y$ are projective. We would like to use this result, but in our setting $Y$ is not projective, though the morphism $f$ is. We could compactify $Y$, but calculating the singular support of the extended $f$ would be difficult. Instead we use the following lemma, which has almost the same proof as \cite[Lemma 2]{SaitoProper}.

\begin{lemma}\label{cc-pushforward} Let $f\colon X \to Y$ be a proper morphism of smooth schemes over a perfect field $k$. Assume that every irreducible component of $X$ has dimension $n$ and every irreducible component of $Y$ has dimension $m$. Let $\mathcal F$ be a constructible complex on $X$. Let $SS(\mathcal F) \subseteq T^* X$ be the singular support. Let $S$ be a closed subset of $T^* Y$. If $df^\circ(SS(\mathcal F))$ has dimension $\leq m$ and the projection from $df^{-1} ( SS(\mathcal F) ) \subseteq (T^* Y)\times_Y X$ to $T^* Y$ is finite away from $S$, then  \[ CC ( R f_* \mathcal F) - f_! CC( \mathcal F) \] is a cycle supported on $S$. 

\end{lemma}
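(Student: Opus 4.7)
The plan is to adapt Saito's proof of \cite[Lemma 2]{SaitoProper}, which handles the projective case, by using the finiteness hypothesis on the projection $df^{-1}(SS(\mathcal F)) \to T^*Y$ outside $S$ as a substitute for projectivity. The equality of two $m$-cycles on $T^*Y$ can be tested outside $S$ through the Milnor-formula characterization of the characteristic cycle, and the only ingredient of Saito's projective proof that fails in our setting is the finiteness of base change needed to relate vanishing cycles on $Y$ to vanishing cycles on $X$ fiber-by-fiber; that finiteness is precisely what the hypothesis on $df^{-1}(SS(\mathcal F))$ provides outside $S$.

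By the uniqueness characterization, it suffices to show that for every point $y_0 \in Y$, every \'etale neighborhood $j\colon V \to Y$ of $y_0$, every morphism $g\colon V \to Z$ to a smooth curve having $y_0$ as at most an isolated $SS(j^* R f_*\mathcal F)$-characteristic point, and every $1$-form $\omega$ on $Z$ non-vanishing at $g(y_0)$ whose lifted covector $(y_0, g^*\omega(y_0))$ lies outside $S$, one has
\[ -\operatorname{dimtot}(R\Phi_g(j^* R f_*\mathcal F))_{y_0} = (j^* f_! CC(\mathcal F), g^*\omega)_{T^*V, y_0}. \]
To compute the left-hand side I would form the cartesian square with $f'\colon X' = X\times_Y V \to V$ proper, $g' = g\circ f'$, and $\mathcal F'$ the pullback of $\mathcal F$ to $X'$. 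Proper base change for vanishing cycles gives $(R\Phi_g(j^*Rf_*\mathcal F))_{y_0} \cong R\Gamma(f'^{-1}(y_0), R\Phi_{g'}(\mathcal F'))$. The finiteness hypothesis, applied at $(y_0, g^*\omega(y_0)) \notin S$, implies that only finitely many points of the proper fiber $f'^{-1}(y_0)$ are $SS(\mathcal F')$-characteristic points of $g'$, and that each of them is isolated. Milnor's formula on $X'$, summed over these points, then yields
\[ -\operatorname{dimtot}(R\Phi_g(j^*Rf_*\mathcal F))_{y_0} = \sum_{x \in f'^{-1}(y_0)} (CC(\mathcal F'), g'^*\omega)_{T^*X', x}. \]

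To identify this sum with $(j^* f_! CC(\mathcal F), g^*\omega)_{T^*V, y_0}$, I would apply the intersection-theoretic projection formula to the proper morphism $df'\colon X' \times_V T^*V \to T^*V$: by the same finiteness hypothesis, $df'$ restricted to $df'^{-1}(SS(\mathcal F'))$ is finite in a neighborhood of $(y_0, g^*\omega(y_0))$, so local intersection numbers upstairs push forward to a local intersection number of the pushforward cycle downstairs, which after using compatibility of $f_!$ with \'etale base change is $(j^* f_! CC(\mathcal F), g^*\omega)_{T^*V, y_0}$. This establishes the Milnor-formula identity for $j^*f_!CC(\mathcal F)$ at all test data outside $S$ and, by uniqueness of the cycle satisfying Milnor's formula on each component, forces $CC(Rf_*\mathcal F) - f_!CC(\mathcal F)$ to be supported on $S$. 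The main obstacle is verifying that the finiteness hypothesis outside $S$ genuinely yields both (a) the finiteness and the isolation of characteristic points needed for the proper-base-change decomposition of the vanishing-cycles stalk as a sum over $f'^{-1}(y_0)$, and (b) the finiteness of $df'$ over $df'^{-1}(SS(\mathcal F'))$ needed for the local intersection-theoretic projection formula; once these translations are in place (including correct accounting of the sign $(-1)^{n-m}$ in the definition of $f_!$), the remainder of the argument is a direct application of Milnor's formula on $X'$.
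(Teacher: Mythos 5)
Your plan follows the paper's proof, which is itself an adaptation of Saito's proof of \cite[Lemma~2]{SaitoProper}: the central chain is (reduce to a Milnor-formula check outside $S$) $\Rightarrow$ (finiteness forces finitely many characteristic points over the test fiber) $\Rightarrow$ (vanishing cycles commute with proper pushforward) $\Rightarrow$ (Milnor formula upstairs on $X$) $\Rightarrow$ (intersection-theoretic projection formula for $df$). Those steps are all present and correct in your sketch.

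Two points are underspecified. First, the isolated-characteristic-point condition should be taken with respect to $f_\circ SS(\mathcal F)$, not $SS(j^* Rf_*\mathcal F)$. The cycle $f_!CC(\mathcal F)$ is a priori supported only on $f_\circ SS(\mathcal F)$, which can be strictly larger than $SS(Rf_*\mathcal F)$; if you only test at $SS(Rf_*\mathcal F)$-isolated points you cannot see the multiplicity of a component lying in $f_\circ SS(\mathcal F)\setminus SS(Rf_*\mathcal F)$, even though such a component could in principle appear in $f_!CC(\mathcal F)$ with nonzero coefficient. Testing at $f_\circ SS(\mathcal F)$-isolated points is a stronger condition and captures all relevant components while still implying the isolated-characteristic condition needed to apply Saito's Milnor formula for $Rf_*\mathcal F$. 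Second, the final appeal to ``uniqueness of the cycle satisfying Milnor's formula on each component'' is not quite a direct application of Saito's uniqueness theorem, since you only verify the Milnor identity for test data whose covector avoids $S$, and since $f_!CC(\mathcal F)$ need not be supported on $SS(Rf_*\mathcal F)$. What is needed is an \emph{existence} argument: for each irreducible component $Z_i$ of $f_\circ SS(\mathcal F)$ not contained in $S$, one must exhibit test data $(V, g, \omega)$ for which $g^*\omega$ meets $f_\circ SS(\mathcal F)$ transversally and only at a single point of $Z_i$ lying outside $S$, so that the Milnor-formula identity pins down the multiplicity of $Z_i$. The paper does this by choosing $g$ to be a generic cubic polynomial in an affine embedding, which guarantees both that $Z_i$ is detected and that $S$ is avoided; this is the step you should spell out, as it is where the hypothesis ``away from $S$'' actually enters the conclusion.
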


We will apply this lemma with $S$ the zero section. We can think of the assumption of this lemma about finite support, for $S$ the zero section, as a relatively generic condition on the morphism $f$ and the sheaf $\mathcal F$. Calculating the multiplicity of the zero section in the pushforward may be done by base-changing to the generic point of $Y$ and using the earlier result \cite[Theorem 7.13]{saito1}. So under this condition, \cref{cc-pushforward} can be used to verify \cite[Conjecture 1]{SaitoProper}.

\begin{proof} We may assume that $k$ is algebraically closed. Let $j\colon V \to Y$ be an \'{e}tale morphism and $g \colon  V \to T$ be a morphism to a smooth curve $T$ with isolated characteristic point $v \in V$ with respect to $f_\circ SS(\mathcal F)$. Let $\omega$ be a nonvanishing one-form on $T$. By definition, we have 
\begin{equation}\label{Milnor-proof}  - \operatorname{dimtot} \left( R \Phi_g(j^* R f_* \mathcal F) \right)_uv=  (j^* CC(Rf_* \mathcal F),  g^*\omega )_{T^*V,v} .\end{equation}
Let us first check that as long as (the graph of ) $g^* \omega$ does not intersect $S$ over $v$, we have
\begin{equation}\label{Milnor-check}  - \operatorname{dimtot} \left( R \Phi_g(j^* R f_* \mathcal F) \right)_uv=  (j^* f_!  CC(\mathcal F), g^*\omega )_{T^*V,v} .\end{equation}
By replacing $Y$ with $V$, our assumptions on $\mathcal F$ are preserved, so we may assume $V=Y$. 

By passing to a further open subset of $Y$, we may assume that $f_\circ SS(\mathcal F)$ intersects $g^* \omega$ only over $v$. Thus  $f_\circ SS(\mathcal F) \cap g^* \omega$ does not intersect $S$. Hence the projection $df^* SS(\mathcal F) \to T^* Y$ is finite when restricted to $g^* \omega$, and so the projection $(df^* SS(\mathcal F) \cap f^* g^* \omega) \to T^* Y$ is finite.

Because the image of the projection $(df^* SS(\mathcal F) \cap f^* g^* \omega) \to T^* Y$ is contained in the unique intersection point of $f_\circ SS(\mathcal F)$ and $g^* \omega$, we can conclude that $df^* SS(\mathcal F) \cap  f^* g^* \omega$ is finite.

The composition $g \circ f \colon X\to T$ is $SS(\mathcal F)$-acyclic wherever $f^* g^* \omega$ does not intersect $SS(\mathcal F)$.  The complementary set where $f^* g^* \omega$ does intersect $SS(\mathcal F)$ is the projection from $(T^* Y)\times_Y X$ to $X$ of $df^* SS(\mathcal F) \cap f^* g^* \omega $ and thus is a finite set. So $g \circ f \colon X \to T$ is $SS(\mathcal F)$-acyclic away from finitely many points, which are all isolated characteristic points. Let $I$ be this finite set of points of $X$. 

By the definition of singular support, the vanishing cycles sheaf $R \Phi (\mathcal F)$ vanishes away from these finitely many points. Because vanishing cycles commutes with proper pushforward (see \cite[XIII, (1.3.6.3)]{sga7-2} for the analogue for nearby cycles and deduce the claim from the definition of vanishing cycles as a mapping cone) we have an isomorphism
\[R \Phi_v ( R f_* \mathcal F, g) \cong \bigoplus_{ u \in I}  R\Phi_u ( \mathcal F, g \circ f ). \] 

Then by \eqref{Milnor-formula}, we have 
\[ -\operatorname{dimtot} R \Phi_v ( R f_* \mathcal F, g) =  - \sum_{ u \in I}  \operatorname{dimtot}  R\phi_u ( \mathcal F, g \circ f ) \] \[ = \sum_{u \in I} ( CC(\mathcal F), f^* g^*\omega)_{ T^* X,u}   = ( f_* CC(\mathcal F), g^*\omega) _{T^* Y, v} \] verifying \eqref{Milnor-check}.

We now wish to use \eqref{Milnor-proof} and \eqref{Milnor-check} to verify that $ CC ( R f_* \mathcal F) - f_! CC( \mathcal F)$ is supported on $S$. This is equivalent to stating that, for each irreducible component $Z_i$ of $f_\circ SS(\mathcal F)$ not contained in $S$, the multiplicity of $Z_i$ in $ CC ( R f_* \mathcal F) $ equals the multiplicity of $Z_i$ in $ f_! CC( \mathcal F)$.

To check this, it suffices to take a point $(v,\omega_0)$ in $Z_i$ not contained in $S$ or any other irreducible component of $f_\circ SS(\mathcal F)$, and show there exists an open subset $V$ of $Y $, a map $g\colon  V \to T$, and a nonvanishing differential $\omega$ on $T$ such that $f_\circ SS(\mathcal F)$ intersects $g^* \omega$ at $(v,\omega_0)$ but not any other point over a neighborhood of $v$. That will ensure that $v$ is an isolated characteristic point and that $(dg)^* \omega$ does not intersect $S$ over $v$, so that
\[ \operatorname{mult}_{ CC (R f_* \mathcal  F)} (Z_i)  \cdot ( Z_i, g^*\omega)_{ T^*V,v} = (j^* CC(Rf_* \mathcal F),  g^*\omega )_{T^*V,v} \] \[= (j^* f_!  CC(\mathcal F), g^*\omega )_{T^*V,v} = \operatorname{mult}_{ CC (R f_* \mathcal  F)} (Z_i)  \cdot  ( Z_i, g^*\omega)_{ T^*V,v}  \]
and ensure that $( Z_i, g^*\omega)_{ T^*V,v}  \neq 0$ so we can conclude
and thus \[ \operatorname{mult}_{ CC (R f_* \mathcal  F)} (Z_i)  = \operatorname{mult}_{ CC (R f_* \mathcal  F)} (Z_i) , \] as desired.

We now finish the argument by constructing the required open subset $V$ of $Y $, map $g\colon  V \to T$, and nonvanishing differential $\omega$ on $T$ such that $f_\circ SS(\mathcal F)$ intersects $g^* \omega$ at $(v,\omega_0)$ but not any other point over a neighborhood of $v$.

We can take $V$ to be an affine neighborhood of $v$, which we may embed into $\mathbb A^N$ for some $N$, take $T = \mathbb A^1$, $\omega=dx$, and take $g$ to be a generic cubic polynomial function on $\mathbb A^N$ whose derivative at $v$ is $\omega_0$.  Then certainly the only point over $v$ that $g^* \omega$ intersects is $(v,\omega_0)$. For each point $(v', \omega')$ of $SS(\mathcal F)$ with $v' \neq v$, the condition that $g^*\omega (v') = \omega'$, equivalently, $d g(v') =\omega'$, is a codimension $m$ condition on the cubic polynomial $g$ which is linearly independent of the condition on the derivative of $g$ at  $v$ is $\omega_0$.  Thus for a generic $g$, this codimension $m$ condition is satisfied on a subset of the $m$-dimensional scheme \[f_\circ SS(\mathcal F) \setminus (f_\circ SS(\mathcal F) \cap (T^* Y)_v)\]  with dimension $m-m=0$, and hence is satisfied for only finitely many points. Choosing an open neighborhood of $v$ avoiding these finite points, we confirm that $f_\circ SS(\mathcal F)$ intersects the graph of $g^* \omega$ at $(v,\omega_0)$ but not any other point over a neighborhood of $v$. Thus the multiplicities of $Z_i$ in the two cycles are equal, as desired.\end{proof}

\section{The compactification}\label{sec-compactification}

Let $X \subseteq \left( \mathbb P^1 \right)^{ n} \times T$ be the graph of $\pi$. Since $X$ is canonically isomorphic to $C^n$, we abuse notation slightly by writing $\pi$ also for the map $X \to  T$.

Let $\overline{X}$ be the closure of $X$ in $\left( \mathbb P^1 \right)^{ n} \times T$.  Let $s\colon  \overline{X} \to ( \mathbb P^1)^n \times T$ be the closed immersion. Let $u\colon X \to \overline{X} $ be the inclusion and $\overline{\pi} \colon X \to T$ the projection, so that $\pi = \overline{\pi} \circ u$.

 By construction, $\overline{\pi}$ is proper, and thus it defines a compactification of the map $\pi$. Compactifications are helpful in a number of \'{e}tale cohomology calculations, and we will use this compactification in multiple ways -- to study vanishing cycles, to estimate the characteristic cycle of $R \pi_! \mathbb Q_\ell$, and, together with Artin's affine theorem, to control the higher-degree perverse cohomology of $R \pi_! \mathbb Q_\ell$. Doing all these things requires information on the local structure of the compactification. The first step of this, in \cref{boundary-divisor-calculation}, is a simple description of the boundary $\overline{X} \setminus X$ of the compactification. Afterwards, we will focus on defining an \'{e}tale-local model for $\overline{X}$. This means for each point $((\alpha_1,\dots,\alpha_n), a)\in \overline{X} \setminus X$ we will define a scheme $\overline{Y} \times T$ and an open neighborhood of $((\alpha_1,\dots,\alpha_n), a)$ with an \'{e}tale map to $\overline{Y} \times T$. This will allow us to easily transfer calculations between $\overline{X}$ and $\overline{Y} \times T$, where we take advantage of the simpler geometry of $\overline{Y}$ -- in particular, the fact that it splits as a product.

Let $Z$ be the locus in $(\mathbb P^1)^n$ consisting of tuples $(\alpha_1,\dots,\alpha_n)$ such that, for each $x \in S$, there exists $i $ from $1$ to $n$ with $\alpha_i = x$.

\begin{lemma}\label{boundary-divisor-calculation} $\overline{X} \setminus X $ and $Z \times T$ are equal as closed subschemes of $\left( \mathbb P^1 \right)^{ n} \times T$. \end{lemma}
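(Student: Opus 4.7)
My plan is to realize $\overline{X}$ as the closed subscheme of $(\mathbb P^1)^n \times T$ cut out by $m$ explicit multi-homogeneous equations, and then to identify the boundary by case analysis together with a Krull dimension count.

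First, writing $\alpha_i = [A_i:B_i]$ in homogeneous coordinates on the $i$-th factor of $(\mathbb P^1)^n$, and (after base changing to $\overline{\kappa}$, which is harmless since the statement is purely geometric) using the CRT decomposition $\overline{\kappa}[t]/g \cong \prod_{j=1}^m \overline{\kappa}$ to identify $T \cong \mathbb G_m^m$ with coordinates $(a_j)_{j=1}^m$ corresponding to evaluation at the roots $x_1,\dots,x_m$ of $g$, I would define $Y \subseteq (\mathbb P^1)^n \times T$ by the $m$ equations
\[ a_j \prod_{i=1}^n B_i = \prod_{i=1}^n (x_j B_i - A_i), \quad j=1,\dots,m. \]
Each equation has multidegree $(1,\dots,1)$ in the $(A_i,B_i)$, so it cuts out a well-defined closed subscheme. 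On the open subset where every $B_i \neq 0$, the equations reduce to $a_j = \prod_i(x_j - \alpha_i) = \pi_j(\alpha)$, which together with $a \in T$ cuts out exactly $X$; hence $X \subseteq Y$ and $\overline{X} \subseteq Y$.

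Next I would analyze $Y$ set-theoretically. If all $B_i \neq 0$, the equations force $a = \pi(\alpha)$, and since each $a_j \in \overline{\kappa}^\times$ no $\alpha_i$ may equal any $x_j$, so $\alpha \in C^n$ and $(\alpha,a) \in X$. If some $B_{i_0} = 0$, then $\prod_i B_i = 0$ and hence each right-hand side must vanish. The factor at $i = i_0$ equals $-A_{i_0} \neq 0$ (as $[A_{i_0}:B_{i_0}]$ is a projective point), so the vanishing must come from some other index $i'$ with $B_{i'} \neq 0$ and $\alpha_{i'} = x_j$. Repeating for each $j$ yields the defining condition of $Z$: some $\alpha_i = \infty$ and for each root $x_j$, some $\alpha_{i'} = x_j$. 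The converse is immediate, and since $Z \subseteq (\mathbb P^1)^n \setminus C^n$ the union is disjoint, giving $Y = X \sqcup (Z \times T)$ as sets.

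To conclude, I would invoke Krull's height theorem: since $Y$ is cut out by $m$ equations in the smooth variety $(\mathbb P^1)^n \times T$ of dimension $n+m$, every irreducible component of $Y$ has dimension at least $n$. The irreducible closure $\overline{X}$ is an $n$-dimensional component. Each irreducible component of $Z$ corresponds to an injection $S \hookrightarrow \{1,\dots,n\}$ (distinctness of elements of $S$ forces injectivity) and is a copy of $(\mathbb P^1)^{n-m-1}$, so $\dim(Z \times T) = n-1$ when $n \geq m+1$ and $Z = \emptyset$ when $n = m$. In either case $Z \times T$ is too small to contribute additional components, so it must lie inside $\overline{X}$, forcing $\overline{X} = Y$ and hence $\overline{X} \setminus X = Z \times T$ as reduced closed subschemes.

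The main technical work is in writing down the right defining equations and carrying out the case analysis on which $B_i$ vanish; the dimension step is then immediate and sidesteps the need to construct explicit one-parameter deformations from each boundary point into $X$. Squarefreeness of $g$ enters only implicitly, via the splitting $T \cong \mathbb G_m^m$ that makes these equations take such a clean form.
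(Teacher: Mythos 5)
Your proof is correct and follows essentially the same route as the paper's: write explicit equations cutting out a closed set containing $\overline{X}$, analyze them at infinity to extract the $Z$-condition, and then use a Krull dimension count ($m$ equations in an $(n+m)$-dimensional ambient variety, while $\dim(Z\times T)=n-1<n$) to conclude that the boundary contributes no extra components. The only difference is presentational: you package everything into a single global multi-homogeneous system $Y$ after splitting $T\cong\mathbb G_m^m$ over $\overline{\kappa}$, whereas the paper works one affine chart at a time, indexed by the subset of coordinates sent to $\infty$, and handles the two inclusions separately.
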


\begin{proof} Since $Z \times T$ consists of tuples where at least one $\alpha_i \in S$ and therefore at least one $\alpha_i\notin C$, the sets $X$ and $Z\times T$ do not intersect. So it suffices to check that $\overline X = X \cup (Z \times T)$.

We check this equality locally over an open cover of $\left( \mathbb P^1 \right)^{ n} \times T$.  For $A \subset \{1,\dots,n\}$, let $U_A$ be the open subset of $(\mathbb P^1)^n \times T$ consisting of tuples $((\alpha_1,\dots,\alpha_n), a)$ with $\alpha_i \neq \infty$ if $i \notin A$ and $\alpha_i \neq 0$ if $i\in A$. Thus $\alpha_i$ is a regular function on $U_A$ for $i\notin A$ and $\alpha_i^{-1}$ is a regular function for $i \in A$. Then the $U_A$ cover $\left( \mathbb P^1 \right)^{ n} \times T$, so it suffices to show for all $A$ that 
\begin{equation}\label{local-bdc} \overline{X} \cap U_A = ( X \cup (Z \times T))  \cap U_A.\end{equation}

The graph of $\pi$ is the locally closed subset defined by the open condition $\alpha_1,\dots, \alpha_n \in C$ and the system of equations  
\begin{equation}\label{pi-graph-equation-1} \prod_{i=1}^n ( x - \alpha_i) = a(x)\end{equation}
for $x\in S\setminus\{\infty\}$. However, these equations may not be regular functions on $U_A$. To fix this, we multiply by $\alpha_i^{-1}$ for $i \in A$, obtaining
\begin{equation}\label{pi-graph-equation-2} \Bigl( \prod_{i \in A} (\alpha_i^{-1} x - 1) \Bigr) \Bigl(\prod_{i \notin A} (x-\alpha_i ) \Bigr) = a(x) \prod_{i\in A} \alpha_i^{-1} \end{equation}
for $x \in S\setminus \{\infty\}$. These equations define a closed subset $\widetilde{X}_A$ of $U_A$ which includes $X\cap U_A$, and therefore includes $\overline{X}\cap U_A$.

We will now show $\widetilde{X}_A$ is contained in $( X \cup (Z \times T)) \cap U_A$. In other words, we assume $((\alpha_1,\dots,\alpha_n), a) \in U_A$ solves \eqref{pi-graph-equation-2} and prove that $((\alpha_1,\dots,\alpha_n), a)\in X$ or $((\alpha_1,\dots,\alpha_n), a)\in  Z \times T$.

Suppose first that none of the $\alpha_i$ are equal to $\infty$.  The right side of \eqref{pi-graph-equation-2} is then nonzero, so the left side is nonzero, so we have $\alpha_i \neq x$ for all $i$ and all $x\in S \setminus\{\infty\}$, but since we also have $\alpha_i \neq \infty$, we have $\alpha_i\notin S$, i.e. $\alpha_i\in C$. Multiplying \eqref{pi-graph-equation-2} by $\prod_{i\in A} \alpha_i$, we obtain \eqref{pi-graph-equation-1}, which together with $\alpha_i\in X$ implies $((\alpha_1,\dots,\alpha_n), a)\in X$.

 Suppose instead that at least one of the $\alpha_i$ equals $\infty$. Then the right side of \eqref{pi-graph-equation-2} is zero, so the left side is zero, so for each $x\in S\setminus\{\infty\}$, there must exist $i$ with $\alpha_i=x$. The same is true for $x=\infty$, so we have $((\alpha_1,\dots,\alpha_n),a) \in Z \times T$ by definition.
 
 Conversely, we have already seen that any point in $X$ solves \eqref{pi-graph-equation-2}, and can see that every point in $Z\times T$ solves \eqref{pi-graph-equation-2} since both sides of the equation vanish on $Z \times T$. It follows that 
 \[( X \cup (Z \times T)) \cap U_A = \widetilde{X}_A\supseteq  \overline{X} \cap U_A    \] 
 
To show \eqref{local-bdc}, it suffices to show  $\widetilde{X}_A  \subseteq \overline{X} \cap U_A $, or equivalently that $X\cap U_A$ is dense in $\widetilde{X}_A$. Because \eqref{pi-graph-equation-2} is a system of $m$ equations in $n+m$ variables, each irreducible component of its solution set $\widetilde{X}_A$ must have dimension $\geq n$. Thus, to check that $X\cap U_A$ is dense in its solution set $\widetilde{X}_A$, it suffices to check that the complement of $X\cap U_A$ in $\widetilde{X}_A$ has dimension $<n$. The complement of $X\cap U_A$ in the solution set is $(Z \times T) \cap U_A$, which has dimension $<n$ because $\dim Z \leq n - (m+1)$, as, on each irreducible component of $Z$, only $n - (m+1)$ of the $\alpha_i$s may vary. 
 \end{proof}
 
 Let $z\colon Z \times T \to \overline{X}$ be the closed complement of $u$.

We will build local models for the pair $X \subseteq \overline{X} $, i.e. spaces, locally in the \'{e}tale topology, isomorphic to $\overline{X}$, with open subspaces locally isomorphic to $X$.  The purpose of these local models is to prove, in \cref{locally-acyclic-easy}, that $u_! {\mathbb Q}_\ell$ is universally locally acyclic relative to $\overline{\pi}$ in a neighborhood of the boundary of $\overline{X}$; in \cref{singular-support-embedded}, that the singular support of $s_* u_! \mathbb Q_\ell$, restricted to $Z \times T$, intersects the cotangent space to $T$ only at $0$; and, in \cref{lisse-on-boundary}, that $R \overline{\pi}_*  z_* z^*   R u_*  \mathbb Q_\ell$ is a complex of lisse sheaves with unipotent global monodromy.  After proving these, we can forget the specific geometry of our local model and just work with \cref{locally-acyclic-easy,singular-support-embedded,lisse-on-boundary}.

Our local models will depend on the data of a map $\mu\colon\{1,\dots, n \} \to S$, which we require to be surjective, a section $\gamma \colon S \setminus \{\infty\} \to \{1,\dots, n\}$ of $\mu$, and a choice of $x_0 \in S \setminus \{\infty \} $. We fix this data, and suppress the dependence on it, until the point where it becomes useful to vary it, which we will highlight.

Let $U \subset (\mathbb P^1)^n$ be the open set consisting of tuples $(\alpha_1,\dots, \alpha_n) \in (\mathbb P^1)^{n}$ such that $\alpha_i \notin S \setminus \{ \mu(i) \}$ for all $i$ and $x \mapsto \alpha_{\gamma(x)}$ is injective as a map $ S \setminus \{\infty\} \to\mathbb P^1$.  Let $o \colon U \times T \to ( \mathbb P^1)^n \times T$ be the open immersion. The definition of $U$ is motivated by the fact that the map $e$, to be defined next, is \'{e}tale on this set, as we will see in \cref{etale-locus-boundary}.

Let $e\colon U\times T  \to \mathbb A^n\times T$ send a tuple $((\alpha_1,\dots, \alpha_n), a)$ to the tuple $((\lm_1,\dots, \lm_n),a)$, where we have
\[ \lm_i =\begin{cases}   \begin{aligned} \mu(i) - \alpha_i  & \hspace{10pt}\textrm{if } \mu(i) \neq \infty\textrm{ and }i \neq \gamma(\mu(i)),\\
 \frac{1}{ x_0 - \alpha_i } &  \hspace{10pt}\textrm{if }\mu(i)= \infty\\
\frac{ (x- \alpha_i )  \prod_{j \notin \mu^{-1}(x)} (x- \alpha_j ) }{ a(x) \prod_{j \in \mu^{-1}(\infty) }(x_0 -  \alpha_j ) }  & \hspace{10pt} \textrm{if }i=\gamma(x)\textrm{ for }x  \in S\setminus\{\infty\} \end{aligned}\end{cases} \] 

Let us check that this map is well-defined on the whole of $U$. Note first that $i = \gamma(x)$ for some $x \in S \setminus \infty$ if and only if $\mu(i)\neq\infty$ and $i= \gamma(\mu(i))$, by the definition of $\gamma$ as a section of $\mu$ away from $\infty$, so for each $i$ exactly one of the cases occurs.

 For $(\alpha_1,\dots, \alpha_n ) \in U$, if $\mu(i) = \infty$ then $\alpha_i \neq x_0$ by definition of $U$, since $x_0\in S \setminus \{\infty\}$, so the denominators $x_0 -\alpha_j$ do not vanish. We can only have a pole in the numerator if $\alpha_j= \infty$. This can only happen if $j \in \mu^{-1}(\infty)$, in which case the pole in the numerator is cancelled by the corresponding pole of $x_0 - \alpha_j$ in the denominator.
 
We have defined this complicated map $e$ because, expressed in the $b_i$ variables, the complicated equations \eqref{pi-graph-equation-2} of $\overline{X}$ (taking $A = \mu^{-1}(\infty)$) become the simple equations \eqref{Y-equation} below. 

Let $\overline{Y}  \subseteq \mathbb A^n$ be the closed subset where
\begin{equation}\label{Y-equation} \prod_{i \in \mu^{-1} (x)}  \lm_i = \prod_{i \in \mu^{-1} (\infty) } \lm_i \end{equation} for all $x \in S \setminus \infty$.

Let $Y \subseteq \overline{Y} $ be the open subset where, in addition, \[ \prod_{i \in \mu^{-1}(x)} \lm_i\neq 0\] for some (equivalently, every) $x \in S$.

We now prove a series of lemmas showing that $\overline{Y} \times T$ is an \'{e}tale-local model for $X$. We first check that $o^{-1}(X) = e^{-1}(Y \times T)$, then that $e$ is \'{e}tale, and finally that $o^{-1}(\overline{X}) =e^{-1}(\overline{Y} \times T)$. (The map $o$ is, by construction, an open immersion.) 

\begin{lemma}\label{open-set-comparison}  We have \[ o^{-1}(X) = e^{-1} (Y\times T) \] as locally closed subsets of $U \times T$. \end{lemma}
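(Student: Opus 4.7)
The plan is to verify the equality by a direct computation in coordinates. By definition of $X$ as the graph of $\pi$, a point $((\alpha_1,\dots,\alpha_n),a) \in U \times T$ lies in $o^{-1}(X) = X \cap (U \times T)$ if and only if every $\alpha_i \in C$ and $\prod_i (t-\alpha_i) \equiv a \mod g$; since $g$ is squarefree with roots $S \setminus \{\infty\}$, the latter is the system $\prod_j(x-\alpha_j) = a(x)$ for $x \in S \setminus \{\infty\}$, i.e., equation~\eqref{pi-graph-equation-1}. My goal is to rewrite the condition $((\alpha_1,\dots,\alpha_n),a) \in e^{-1}(Y \times T)$ in the same terms.

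First I would show that both sides force every $\alpha_i$ to be finite. For $X$ this is built into $C \subset \mathbb A^1$. For $e^{-1}(Y \times T)$, membership in $Y$ forces $\prod_{i \in \mu^{-1}(\infty)} \lm_i \neq 0$, and since each such factor equals $1/(x_0-\alpha_i)$, this rules out $\alpha_i = \infty$ for $i \in \mu^{-1}(\infty)$; the defining condition of $U$ handles the remaining $i \notin \mu^{-1}(\infty)$, as then $\mu(i) \neq \infty$ and $\infty \in S \setminus \{\mu(i)\}$. On the locus of finite $\alpha_i$ I would then substitute the formulas defining $\lm_i$: for each $x \in S \setminus \{\infty\}$, the factor $\lm_{i_x}$ supplies $(x-\alpha_{i_x})\prod_{j \notin \mu^{-1}(x)}(x-\alpha_j)$ in the numerator and $a(x)\prod_{j \in \mu^{-1}(\infty)}(x_0-\alpha_j)$ in the denominator, while the remaining factors $\lm_i = x-\alpha_i$ for $i \in \mu^{-1}(x) \setminus \{i_x\}$ combine with the $\lm_{i_x}$ numerator to yield $\prod_{j=1}^n (x-\alpha_j)$. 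Thus
\[ \prod_{i \in \mu^{-1}(x)}\lm_i \;=\; \frac{\prod_{j=1}^n (x-\alpha_j)}{a(x)\prod_{j \in \mu^{-1}(\infty)}(x_0-\alpha_j)}, \qquad \prod_{i \in \mu^{-1}(\infty)}\lm_i \;=\; \frac{1}{\prod_{j \in \mu^{-1}(\infty)}(x_0-\alpha_j)}, \]
and the defining equation~\eqref{Y-equation} of $\overline Y$ reduces to $\prod_j(x-\alpha_j) = a(x)$, matching~\eqref{pi-graph-equation-1}.

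Finally I would match the remaining nonvanishing conditions. At $x = \infty$ the $Y$ condition has already been used. At $x \in S \setminus \{\infty\}$, the identity just established together with $a(x) \neq 0$ (because $a$ is a unit mod $g$) shows that $\prod_{i \in \mu^{-1}(x)} \lm_i \neq 0$ is equivalent to $\prod_j (x-\alpha_j) \neq 0$, i.e., $\alpha_j \neq x$ for every $j$. Combined with the $U$-condition $\alpha_j \notin S \setminus \{\mu(j)\}$ and the finiteness already established, this gives $\alpha_j \notin S$, hence $\alpha_j \in C$, for all $j$. The two sides therefore agree. The only point requiring care is that the elaborate numerator of $\lm_{i_x}$ was designed precisely to cancel the potential pole arising when $\alpha_j \to \infty$ for $j \in \mu^{-1}(\infty)$; this cancellation is immediate from the definition but is the design principle behind the formula for $e$ and what makes the calculation go through cleanly.
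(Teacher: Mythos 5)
Your proof is correct and follows essentially the same route as the paper: you substitute the defining formulas for $\lm_i$ into the equations \eqref{Y-equation} of $\overline{Y}$, observe that the products $\prod_{i\in\mu^{-1}(x)}\lm_i$ and $\prod_{i\in\mu^{-1}(\infty)}\lm_i$ reduce to the expressions \eqref{finite-a-product} and \eqref{infinite-a-product}, and conclude that the $\overline Y$-equations plus nonvanishing are equivalent to the graph equation \eqref{pi-graph-equation-1} together with $\alpha_i \neq x$ for all $x \in S$. The one small place where you are slightly more explicit than the paper is in first isolating the finiteness of the $\alpha_i$ before manipulating the formulas (noting that $\lm_i = 1/(x_0-\alpha_i) = 0$ when $\alpha_i = \infty$ for $i \in \mu^{-1}(\infty)$, and that $U$ already excludes $\alpha_i = \infty$ for $i \notin \mu^{-1}(\infty)$); the paper folds this into the ``$\alpha_i \neq x$ for all $x \in S$'' condition somewhat tersely, and your version is a tidy way to justify the same conclusion.
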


\begin{proof} Because $X$ is the graph of $\pi$, $o^{-1}(X)$ consists of tuples $((\alpha_1,\dots,\alpha_n),a)$ where $\alpha_i \neq x$ for any $i$ from $1$ to $n$ and any $x \in S$, and 
\begin{equation}\label{graph-equation}  \prod_{i=1}^n ( x - \alpha_i) = a(x) \end{equation} for all $x \in S\setminus\{\infty\}$.

To check $o^{-1}(X) \subseteq  e^{-1} (Y\times T) $, observe that for $x \in S \setminus \infty$, for $(\lm_1,\dots, \lm_n) =e (\alpha_1,\dots,\alpha_n)$, we have
\begin{equation}\label{finite-a-product} \prod_{i \in \mu^{-1} (x) } \lm_i =\left( \prod_{ i \in \mu^{-1}(x)  \setminus \{\gamma(x)\}} (x- \alpha_i )   \right) \frac{ (x- \alpha_{\gamma(x)} )  \prod_{j \notin \mu^{-1}(x)  } (x- \alpha_j ) }{ a(x) \prod_{j \in \mu^{-1}(\infty)} ( x_0 - \alpha_j  ) } = \frac{ \prod_{ j=1}^n (x- \alpha_j) }{a(x) \prod_{j \in \mu^{-1}(\infty)} ( x_0 - \alpha_j  )} \end{equation}
and
\begin{equation}\label{infinite-a-product} \prod_{i \in \mu^{-1}(\infty)} \lm_i = \frac{1}{  \prod_{j \in \mu^{-1}(\infty)} ( x_0 - \alpha_j  )}\end{equation}
hence if \eqref{graph-equation} holds and $\alpha_i \neq x$ for all $i,x$ then \eqref{finite-a-product} and \eqref{infinite-a-product} are equal and nonzero for all $x$.

Conversely, if \eqref{finite-a-product} and \eqref{infinite-a-product} are equal and nonzero for all $x$, then \eqref{graph-equation} holds, and thus because $a(x) \neq 0$, $\alpha_i\neq x$ for all $i,x$. Hence $e^{-1} (Y\times T) \subseteq o^{-1}(X)$. 
\end{proof}

\begin{lemma}\label{etale-locus-boundary} The map $e$ is \'etale. \end{lemma}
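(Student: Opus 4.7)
The plan is to show that the Jacobian determinant of $e$ is nowhere vanishing on $U \times T$. Since both source and target are smooth of the same dimension $n + \dim T$, and $e$ acts as the identity on the $T$-factor, étaleness reduces to checking that the relative Jacobian $J := (\partial \lm_i/\partial \alpha_j)_{i,j}$ (computed in any local chart on $U$) has nonzero determinant at every point of $U$.

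Partition the index set as $\{1,\dots,n\} = I_0 \sqcup I_1$, where $I_1 = \{i_x : x \in S \setminus \{\infty\}\}$. For $i \in I_0$, the coordinate $\lm_i$ depends only on $\alpha_i$: explicitly $\lm_i = \mu(i) - \alpha_i$ when $\mu(i) \neq \infty$, and $\lm_i = 1/(x_0 - \alpha_i)$ when $\mu(i) = \infty$; near $\alpha_i = \infty$ in the latter case, the local expression $\beta_i/(x_0\beta_i - 1)$ with $\beta_i = 1/\alpha_i$ is regular with nonvanishing derivative. In each chart, the relevant partial is invertible on $U$, using $\alpha_i \neq x_0$ for $i \in \mu^{-1}(\infty)$ (a consequence of $\alpha_i \notin S \setminus \{\mu(i)\}$). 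Ordering rows/columns with $I_0$ first, the matrix $J$ takes block lower-triangular form
\[
J = \begin{pmatrix} D & 0 \\ * & M \end{pmatrix},
\]
with $D$ diagonal and invertible, and $M = (\partial \lm_{i_x}/\partial \alpha_{i_y})_{x,y \in S\setminus\{\infty\}}$. It therefore suffices to show $\det M \neq 0$ on $U$.

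Writing $\lm_{i_x} = (x - \alpha_{i_x}) \cdot c_x$, where
\[
c_x = \frac{\prod_{j \notin \mu^{-1}(x)}(x - \alpha_j)}{a(x) \prod_{j \in \mu^{-1}(\infty)}(x_0 - \alpha_j)}
\]
is independent of $\alpha_{i_x}$, a direct differentiation gives $M_{xy} = -c_x (x - \alpha_{i_x})/(x - \alpha_{i_y})$ as a rational function (reducing to $-c_x$ when $y = x$). Factoring $-c_x(x - \alpha_{i_x})$ out of row $x$ reveals the Cauchy matrix $N_{xy} = 1/(x - \alpha_{i_y})$, and combining Cauchy's determinant formula with the cancellation against $\prod_x (x - \alpha_{i_x})$ yields
\[
\det M = \prod_{x \in S \setminus \{\infty\}}(-c_x) \cdot \frac{\prod_{x < y}(x - y)(\alpha_{i_y} - \alpha_{i_x})}{\prod_{x \neq y}(x - \alpha_{i_y})}
\]
as an identity of rational functions. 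Since the denominator $\prod_{x \neq y}(x - \alpha_{i_y})$ is a regular nonvanishing function on $U$ (because $\alpha_{i_y} \notin S \setminus \{y\}$ forces $\alpha_{i_y} \neq x$ for $x \neq y$), this identity is valid pointwise on $U$.

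The main subtlety is the block $M$, whose determinant is not obviously nonvanishing; the key observation is the Cauchy matrix pattern, which reduces the computation to a closed form. To conclude, each factor in the displayed expression is nonzero on $U$: one has $c_x \neq 0$ because $a(x) \neq 0$, $\alpha_j \neq x$ for $j \notin \mu^{-1}(x)$, and $\alpha_j \neq x_0$ for $j \in \mu^{-1}(\infty)$; the differences $x - y$ are nonzero for distinct elements of $S \setminus \{\infty\}$; and the factors $\alpha_{i_y} - \alpha_{i_x}$ for $x \neq y$ are nonzero precisely because of the extra condition $\alpha_{i_x} \neq \alpha_{i_y}$ built into the definition of $U$. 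Thus $\det J = \det D \cdot \det M$ is nonvanishing on $U$, and $e$ is étale. In particular, the defining conditions of $U$ match exactly the conditions needed for étaleness, suggesting that $U$ was engineered for this purpose.
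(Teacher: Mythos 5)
Your argument is correct and follows the same route as the paper: reduce to the invertibility of the $n+m$ Jacobian, observe the block-triangular structure with a diagonal block for the rows $i\notin\{i_x\}$ (plus the identity on $T$), and identify the remaining $m\times m$ block, after factoring out a common term from each row, as a Cauchy matrix whose determinant is computed explicitly; the defining conditions of $U$ then ensure nonvanishing. The only minor differences are that you cancel the $(x-\alpha_{i_x})$ factors against the Cauchy denominator earlier (via the $c_x$ notation), and you are a bit more careful about working in the right local coordinate on $\mathbb P^1$ near $\alpha_i=\infty$ — both cosmetic improvements rather than a new idea.
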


\begin{proof} Because $e$ is a map between smooth varieties, it suffices to check that the $n+m \times n+m$ Jacobian matrix of $e$ is invertible.

For the rows corresponding to the $a$ variables and the $\lm_i$ with either $\mu(i)=\infty$ or  $i \neq \gamma(\mu(i))$, we will check that the diagonal entry, and only the diagonal entry, of the row is nonzero.  This follows because $e$ preserves the $a$ variables, and because $\lm_i$ is defined as either $\frac{1}{x_0 -\alpha_i}$ or $x_{\mu(i) }-\alpha_i$ so it depends only on $\alpha_i$ and its derivative with respect to $\alpha_i$ is nonzero.

Removing these rows and the corresponding columns, it suffices to check that the $m \times m$ matrix with entries $\frac{ \partial \lm_{\gamma(x)}} {\partial \alpha_{\gamma(y)}}$ is invertible for $x,y \in S \setminus \infty$. We have

\[  \frac{\partial}{ \partial \alpha _{\gamma(y)} }  \frac{ (x- \alpha_{\gamma(x)} )  \prod_{j \notin \mu^{-1}(x)} (x- \alpha_j ) }{ a(x) \prod_{j \in \mu^{-1}(\infty) }(x_0 -  \alpha_j ) } = \frac{-1}{ x- \alpha_{\gamma(y)} }   \frac{ (x- \alpha_{\gamma(x)} )  \prod_{j \notin \mu^{-1}(x)} (x- \alpha_j ) }{ a(x) \prod_{j \in \mu^{-1}(\infty) }(x_0 -  \alpha_j ) } \] since the factor $(x-\alpha_{\gamma(y)})$ appears once in the numerator, and no other factor depending on $\alpha_{\gamma(y)}$ appears in the numerator or denominator, regardless of if $y=x$ and so $\gamma(y) = \gamma(x)$ or $y \neq x$ and so $\mu(\gamma(y)) \neq x$. 

Thus the determinant of this $m \times m$ matrix is \[ \det M \cdot  (-1)^m  \prod_{ x\in S \setminus\{\infty\}} \left(  \frac{ (x- \alpha_{\gamma(x)} )  \prod_{j \notin \mu^{-1}(x)} (x- \alpha_j ) }{ a(x) \prod_{j \in \mu^{-1}(\infty) }(x_0 -  \alpha_j ) } \right) \] where $M$ is the matrix whose entries are given by $M_{xy} = \frac{ 1}{ x- \alpha_{\gamma(y)}}$. Because $M$ is a Cauchy matrix, its determinant is \[ \det M = \frac{ \prod_{ \{x,y \} \subseteq S \setminus \{\infty\}, x\neq y} (x-y) (\alpha_{\gamma(x)} - \alpha_{\gamma(y)})  } {\prod_{x \in S \setminus \{\infty\}} \prod_{y \in S \setminus \{\infty\}} (x- \alpha_{\gamma(y)} ) } .\] All the factors in the numerator of $\det M$ are nonzero by the definition of $U$. The only possible zero factor in $  \frac{ (x- \alpha_{\gamma(x)} )  \prod_{j \notin \mu^{-1}(x)} (x- \alpha_j ) }{ a(x) \prod_{j \in \mu^{-1}(\infty) }(x_0 -  \alpha_j ) } $ is $(x-\alpha_{\gamma(x)})$ as we have $\alpha_j \neq x$ for $j \notin \mu^{-1}(x)$ by the definition of $U$, and these factors cancel the matching factors in the denominator of $\det M$, so in fact the whole product is nonzero, and thus the Jacobian is nonzero.\end{proof}

\begin{lemma}\label{closure-comparison}  We have \[ o^{-1} ( \overline{X} ) = e^{-1} (\overline{Y} \times T ) \] as closed subsets of $U \times T$.  \end{lemma}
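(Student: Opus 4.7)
The plan is to verify the set-theoretic equality locally on $U \times T$ by pulling back the defining equations \eqref{Y-equation} of $\overline{Y}$ through $e$ and matching them against the local defining equations of $\overline{X}$ extracted from the proof of \cref{boundary-divisor-calculation}. This is a direct generalization of the computation in \cref{open-set-comparison}, the only novelty being that we must allow some of the $\alpha_i$ to lie at $\infty$.

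First I would fix a point $p = ((\alpha_1,\dots,\alpha_n), a) \in U \times T$ and set $A' = \{i : \alpha_i = \infty\}$. Because $p \in U$, the condition $\alpha_i \notin S \setminus \{\mu(i)\}$ forces $A' \subseteq \mu^{-1}(\infty)$. Choose an open neighborhood $V$ of $p$ in $(\mathbb{P}^1)^n \times T$ on which $\alpha_i \in \mathbb A^1$ for $i \notin A'$ and $\tilde{\alpha}_i := \alpha_i^{-1} \in \mathbb A^1$ for $i \in A'$, shrinking $V$ if necessary to ensure $\alpha_i \neq x_0$ for $i \in \mu^{-1}(\infty) \setminus A'$ and $x_0 \tilde{\alpha}_i \neq 1$ for $i \in A'$. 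By the argument in the proof of \cref{boundary-divisor-calculation}, $\overline{X} \cap V$ is cut out by \eqref{pi-graph-equation-2} with partition $A' \sqcup (\{1,\dots,n\} \setminus A')$.

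Next I would carry out the local algebraic identity generalizing \eqref{finite-a-product} and \eqref{infinite-a-product}. Expanding $\prod_{i \in \mu^{-1}(x)} \lm_i$ and $\prod_{i \in \mu^{-1}(\infty)} \lm_i$ via the definition of $e$, and rewriting each factor $(x-\alpha_j)$ and $(x_0-\alpha_j)$ with $j \in A'$ using $\alpha_j = \tilde{\alpha}_j^{-1}$, the difference $\prod_{i \in \mu^{-1}(x)} \lm_i - \prod_{i \in \mu^{-1}(\infty)} \lm_i$ telescopes to a fraction whose numerator is precisely the difference of the two sides of \eqref{pi-graph-equation-2} for the partition $A' \sqcup (\{1,\dots,n\} \setminus A')$, and whose denominator is $a(x) \prod_{j \in \mu^{-1}(\infty) \setminus A'}(x_0 - \alpha_j) \prod_{j \in A'}(x_0 \tilde{\alpha}_j - 1)$. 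This denominator is a unit on $V \cap (U \times T)$ (using $a(x) \neq 0$ because $a \in T$, together with the shrinking above), so the vanishing of the two systems of equations is equivalent on $V \cap (U \times T)$. Hence $o^{-1}(\overline{X})$ and $e^{-1}(\overline{Y} \times T)$ coincide as closed subsets on $V \cap (U \times T)$.

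Since the open sets $V$ obtained by varying $A' \subseteq \mu^{-1}(\infty)$ cover $U \times T$, gluing gives the desired global equality. The main (indeed only) real obstacle is the bookkeeping of the algebraic identity in the second step; the containment $A' \subseteq \mu^{-1}(\infty)$, which is forced by $p \in U$, is what makes the calculation go through cleanly, since otherwise the coordinate substitutions $\tilde{\alpha}_j = \alpha_j^{-1}$ would be incompatible with the piecewise definition of $e$ (where the case $\mu(i) = \infty$ rather than $\mu(i) \in S \setminus \{\infty\}$ is used precisely to handle $\alpha_i$ near $\infty$).
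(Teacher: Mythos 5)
Your proof is correct, but it takes a genuinely different route from the paper's. The paper proves the lemma in three soft steps via the chain
\[ o^{-1}(\overline{X}) = \overline{o^{-1}(X)} = \overline{e^{-1}(Y \times T)} = e^{-1}(\overline{Y} \times T), \]
using that closure commutes with restriction to an open subset (step 1), \cref{open-set-comparison} (step 2), and the fact that closure commutes with \'etale pullback plus a dimension count showing $Y$ is dense in $\overline{Y}$ (step 3). You instead verify the equality of closed subsets directly on an affine cover of $U \times T$ by charts of $(\mathbb{P}^1)^n \times T$ indexed by the set $A' = \{i : \alpha_i = \infty\}$, pulling back \eqref{Y-equation} through $e$ on each chart and matching it, up to the unit $a(x)\prod_{j\in\mu^{-1}(\infty)\setminus A'}(x_0-\alpha_j)\prod_{j\in A'}(x_0\tilde{\alpha}_j-1)$, against \eqref{pi-graph-equation-2}. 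Your algebraic identity checks out: clearing the denominators $\tilde{\alpha}_j$ in $\prod_{i\in\mu^{-1}(x)}\lm_i - \prod_{i\in\mu^{-1}(\infty)}\lm_i$ produces exactly the difference of the two sides of \eqref{pi-graph-equation-2}, and the observation that $p \in U$ forces $A' \subseteq \mu^{-1}(\infty)$ (and moreover that the factors you need to invert are automatically nonzero on $U$, so the shrinking of $V$ you mention is not even necessary) is what makes the bookkeeping manageable. What you buy with this approach is that you never need to argue that $Y$ is dense in $\overline{Y}$ — you compare vanishing loci directly; what you lose is brevity and the need to re-extract from the proof of \cref{boundary-divisor-calculation} the intermediate fact that $\overline{X}$ is the solution set of \eqref{pi-graph-equation-2} on each chart (which is established there but not stated as a separate conclusion, and itself rests on the same style of dimension count that the paper uses for $Y \subseteq \overline{Y}$). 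Both are valid; the paper's version is the cleaner one to read.
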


\begin{proof} Let us check that 
\[ o^{-1} ( \overline{X} )  = \overline{ o^{-1} ( X) } = \overline{ e^{-1} (Y \times T) } = e^{-1} (\overline{Y} \times T ). \] 

The first equality holds because closure commutes with restriction to an open subset. The second equality holds by \cref{open-set-comparison}. Because closure commutes with \'{e}tale pullbacks, to check the third equality it suffices to show that $\overline{Y}$ is the closure of $Y$. Because $\overline{Y}$ is closed and contains $Y$ by definition, it suffices to show $Y$ is dense in $\overline{Y}$. To do this, observe that $\overline{Y}$ is defined by $m$ equations in $n$ variables, so each irreducible component of $\overline{Y}$ has dimension $\geq n-m$. On the other hand, by counting parameters, $\overline{Y} \setminus Y$ has dimension $n-m-1$.
 \end{proof}

Let \[o_1\colon (U \times T) \cap \overline{X} \to \overline{X} \] and \[ o_2 \colon (U \times T) \cap X \to X \]be the open immersions. Let \[ e_1\colon (U \times T) \cap \overline{X} = (U \times T) \times_{\mathbb A^n} \overline{Y} \to \overline{Y} \times T \] and  \[ e_2\colon (U \times T) \cap X = (U \times T) \times_{\mathbb A^n} Y \to Y \times T \]   be the \'{e}tale maps whose existence is guaranteed by Lemmas \ref{open-set-comparison} and \ref{closure-comparison} earlier. Let \[ u\colon  X \to \overline{X} ,\] \[ u_U \colon (U \times T) \cap X \to (U \times T) \cap \overline{X} ,\] and \[ u_Y\colon Y \to \overline{Y} \] be the open immersions.
 
We have a commutative diagram with Cartesian squares

\begin{equation}\label{small-local-model}\begin{tikzcd}
\overline{X} & (U \times T) \cap \overline{X} \arrow[l ,"o_1"] \arrow[r,"e_1"]& \overline{Y} \times T\arrow[r, "pr_1"]   &  \overline{Y}  \\
X \arrow[u, "u"]&  (U \times T) \cap X \arrow[l,"o_2"]  \arrow[r,"e_2"] \arrow[u,"u_U"] & Y \times T \arrow[u,"u_Y \times id"]\arrow[r,"pr_1"]  & Y \arrow[u,"u_Y"]
\end{tikzcd} \end{equation}

In the following lemmas, for simplicity of notation, we use the equals sign to denote a canonical isomorphism of sheaves or complexes of sheaves. The exact sense in which these isomorphisms are canonical does not need to be made precise because, in applications, we will only use the fact that these are isomorphisms.

\begin{lemma}  We have  \begin{equation}\label{eq-shriek-flat} o_1^*    u_!  \mathbb Q_\ell =  e_1^* pr_1^* u_{Y!} \mathbb Q_\ell . \end{equation}  \end{lemma}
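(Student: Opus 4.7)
The plan is to prove the equality by a three-step base change argument, using the fact that all three squares in the commutative diagram \eqref{small-local-model} are Cartesian (two by construction, and the rightmost because it is a product square). At each step, I would apply the standard base change isomorphism $f^* g_! \cong g'_! f'^*$ along a Cartesian square in which the horizontal map is the open immersion being extended.

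First, I would apply base change to the leftmost Cartesian square (involving $o_1, o_2, u, u_U$): since $u$ and $u_U$ are open immersions and the square is Cartesian, we get
\[ o_1^* u_! \mathbb Q_\ell = u_{U!} o_2^* \mathbb Q_\ell = u_{U!} \mathbb Q_\ell.\]
Second, I would apply base change to the middle Cartesian square (involving $e_1, e_2, u_U, u_Y \times \mathrm{id}$), giving
\[ e_1^* (u_Y \times \mathrm{id})_! \mathbb Q_\ell = u_{U!} e_2^* \mathbb Q_\ell = u_{U!} \mathbb Q_\ell.\]
Third, I would apply base change to the rightmost Cartesian square (with projections and $u_Y \times \mathrm{id}$, $u_Y$) to obtain
\[ pr_1^* u_{Y!} \mathbb Q_\ell = (u_Y \times \mathrm{id})_! pr_1^* \mathbb Q_\ell = (u_Y \times \mathrm{id})_! \mathbb Q_\ell.\]

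Concatenating the last two, $e_1^* pr_1^* u_{Y!} \mathbb Q_\ell = e_1^* (u_Y \times \mathrm{id})_! \mathbb Q_\ell = u_{U!} \mathbb Q_\ell$, which matches the first computation $o_1^* u_! \mathbb Q_\ell = u_{U!} \mathbb Q_\ell$, establishing \eqref{eq-shriek-flat}. There is no real obstacle here: the only nontrivial input is that the middle square is Cartesian, which is exactly what was established by Lemmas \ref{open-set-comparison} and \ref{closure-comparison} (together with the étaleness of $e$ from \cref{etale-locus-boundary}). The base change isomorphism itself is standard for $!$-pushforward along any morphism, and in this setting we can just use it along open immersions.
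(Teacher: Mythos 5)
Your proof is correct and takes essentially the same route as the paper: both apply the base-change isomorphism for extension by zero along the three Cartesian squares of \eqref{small-local-model} and use that pulling back the constant sheaf gives the constant sheaf. The paper writes it as a single chain of equalities rather than computing the three pieces separately and then concatenating, but the content is identical.
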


\begin{proof}  Observe that extension by zero commutes with \'{e}tale and smooth pullbacks, so
\begin{align*} &  o_1^*  u_!  \mathbb Q_\ell  = u_{U!} o_2^* \mathbb Q_\ell = u_{U!}\mathbb Q_\ell =   u_{U!} e_2^* \mathbb Q_\ell  =  e_1^*  (u_Y \times id)_! \mathbb Q_\ell \\ & = e_1^* (u_Y \times id)_! pr_1^* \mathbb Q_\ell = e_1^* pr_1^* u_{Y!} \mathbb Q_\ell . \qedhere  \end{align*} \end{proof} 


We now recall that the open sets $U$ and maps we have constructed depend on data -- more precisely a surjection $\mu\colon \{1,\dots,n\} \to S$, a section $\gamma$ of $\mu$ over  $ S \setminus \{ \infty \}$, and a fixed $x_0 \in S \setminus \{\infty\}$.

\begin{lemma} \label{Zariski-open-cover} Fix $x_0 \in S \setminus \{\infty\}$. For any $ (\alpha_1,\dots, \alpha_n)\in Z$, we can choose $\mu\colon  \{1,\dots, n\} \to S$ surjective and choose a section $\gamma$ of $\mu$ over $S\setminus\{\infty\}$ so that
\[ (\alpha_1,\dots, \alpha_n) \in U\]
and $\alpha_{\gamma(x)}=x$ for all $x\in S\setminus\{\infty\}$.

\end{lemma}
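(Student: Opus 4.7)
The proof is a short combinatorial construction using the definitions of $Z$ and $U$ directly.

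My plan is to first select the indices $i_x$ and then extend to a map $\mu$ on all of $\{1,\dots,n\}$. Since $(\alpha_1,\dots,\alpha_n)\in Z$, for each $x\in S\setminus\{\infty\}$ there exists at least one index $i$ with $\alpha_i=x$; pick one and call it $i_x$. These $i_x$ are automatically pairwise distinct, since $\alpha_{i_x}=x\neq y=\alpha_{i_y}$ for $x\neq y$, which already verifies the second open condition defining $U$, namely $\alpha_{i_x}\neq\alpha_{i_y}$ for $x\neq y$ in $S\setminus\{\infty\}$. Set $\mu(i_x)=x$.

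Next, I extend $\mu$ to the remaining indices $i\notin\{i_x:x\in S\setminus\{\infty\}\}$ by the rule: if $\alpha_i\in S$, set $\mu(i)=\alpha_i$; if $\alpha_i\notin S$, set $\mu(i)=x_0$ (or any fixed element of $S$, the choice is irrelevant). This guarantees the first open condition for $U$, that $\alpha_i\notin S\setminus\{\mu(i)\}$, since either $\alpha_i=\mu(i)$ or $\alpha_i\notin S$ altogether. For the indices $i_x$ themselves, the condition also holds since $\alpha_{i_x}=x=\mu(i_x)$.

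The only remaining thing to check is that $\mu$ is surjective onto $S$. By construction $\mu(i_x)=x$ hits every element of $S\setminus\{\infty\}$, so I only need $\infty$ to be in the image. Because $(\alpha_1,\dots,\alpha_n)\in Z$ and $\infty\in S$, there exists some index $j$ with $\alpha_j=\infty$. Since $\alpha_{i_x}=x\neq\infty$ for every $x\in S\setminus\{\infty\}$, this $j$ is not one of the $i_x$, so it falls under the extension rule above, and then $\alpha_j=\infty\in S$ forces $\mu(j)=\alpha_j=\infty$. Thus $\mu$ is surjective, the choice of $i_x$ satisfies $i_x\in\mu^{-1}(x)$ by construction, and $(\alpha_1,\dots,\alpha_n)\in U$ with $\alpha_{i_x}=x$, completing the proof.

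There is no real obstacle here; the lemma is essentially a bookkeeping check that the data defining the \'etale-local model $U\times T\to\overline{Y}\times T$ can be arranged to place any prescribed point of $Z$ inside $U$ with the correct values at the distinguished indices. The only subtle point is recognizing that surjectivity of $\mu$ onto the full set $S$ (including $\infty$) is automatic precisely because membership in $Z$ forces some coordinate to equal $\infty$.
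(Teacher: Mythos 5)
Your proof is correct and takes essentially the same approach as the paper: define $\mu(i)=\alpha_i$ when $\alpha_i\in S$ (arbitrary otherwise), use membership in $Z$ to see $\mu$ is surjective, pick $i_x$ among indices with $\alpha_i=x$, and verify the two defining conditions of $U$ directly. The only difference is presentational — you select the $i_x$ before fully specifying $\mu$, whereas the paper does it the other way around — which changes nothing.
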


\begin{proof} 

 For each $i$ from $1$ to $n$, let $\mu(i) = \alpha_i$ if $\alpha_i \in S$ and choose $\mu(i)$ arbitrarily otherwise. Then $\alpha_i \notin S\setminus \{\mu(i)\}$. Furthermore, by the definition of $Z$, for each $x\in S$ there exists some $i$ with $\alpha_i = x$. Hence, for this $i$, we have $\mu(i)=x$. Thus $\mu$ is surjective. For each $x$ choose $\gamma(x)$ to be some $i$ with $\alpha_i=x$.

 We have $\alpha_{\gamma(x)} = x$ and so $x\mapsto \alpha_{\gamma(x)}$ is injective. By the definition of $U$, we have $(\alpha_1,\dots, \alpha_n) \in U$. 
 
 The condition $\alpha_{\gamma(x)}=x$ holds by construction. \end{proof}

We are now ready to state and prove \cref{locally-acyclic-easy} and \cref{singular-support-embedded}.

\begin{lemma} \label{locally-acyclic-easy} The sheaf $ u_! \mathbb Q_\ell$ is universally locally acyclic relative to $\overline{\pi}$ in a neighborhood of $Z \times T$. \end{lemma}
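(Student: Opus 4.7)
The plan is to reduce universal local acyclicity (ULA) of $u_! \mathbb Q_\ell$ relative to $\overline{\pi}$ to the essentially tautological ULA of a pullback sheaf along a product projection. This reduction is made possible by the \'etale-local models $\overline{Y} \times T$ built above, in which the compactification $\overline{X}$ locally takes the form of a product with $T$.

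First I would use that ULA is local on the source in the \'etale topology. As the auxiliary data $(\mu, \{i_x\}, x_0)$ is varied, the open sets $(U \times T) \cap \overline{X}$ together form an open cover of a neighborhood of $Z \times T$ in $\overline{X}$ by \cref{Zariski-open-cover}. Hence it suffices to verify ULA of $o_1^* u_! \mathbb Q_\ell$ with respect to $\overline{\pi} \circ o_1$ for each choice of data.

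Next, by \eqref{eq-shriek-flat} we have the identification $o_1^* u_! \mathbb Q_\ell = e_1^* pr_1^* u_{Y!} \mathbb Q_\ell$, and by construction the morphism $\overline{\pi} \circ o_1$ factors as $pr_2 \circ e_1$, where $pr_2 \colon \overline{Y} \times T \to T$ is the projection onto the second factor (both maps send $((\alpha_1,\dots,\alpha_n), a)$ to $a$). Since $e_1$ is \'etale by \cref{etale-locus-boundary} together with \cref{closure-comparison}, and ULA is preserved under and descends along \'etale morphisms on the source, the problem reduces to showing that $pr_1^* u_{Y!} \mathbb Q_\ell$ is ULA relative to $pr_2 \colon \overline{Y} \times T \to T$.

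The last step is the standard observation that a sheaf on a product pulled back from one factor is ULA relative to the projection onto the other factor: the pairs $(pr_2^{-1}(t), pr_1^* u_{Y!}\mathbb Q_\ell|_{pr_2^{-1}(t)})$ are canonically isomorphic to $(\overline{Y}, u_{Y!} \mathbb Q_\ell)$ independently of $t$, so the comparison maps computing nearby cycles are tautologically isomorphisms. I do not anticipate any serious obstacle in the argument; the essential geometric input has already been invested into constructing the local model $\overline{Y} \times T$ so that the product structure makes ULA manifest. The only bookkeeping to attend to is the factorization $\overline{\pi} \circ o_1 = pr_2 \circ e_1$, which is immediate from the Cartesian diagram \eqref{small-local-model}.
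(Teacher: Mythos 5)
Your proof is correct and follows the same strategy as the paper: reduce to the local models via the open immersions $o_1$ using \cref{Zariski-open-cover}, identify $o_1^* u_! \mathbb Q_\ell$ with $e_1^* pr_1^* u_{Y!}\mathbb Q_\ell$ via \eqref{eq-shriek-flat}, and invoke the tautological ULA of a sheaf pulled back from one factor of a product relative to the projection onto the other factor. The only cosmetic difference is in how that last standard fact is justified: the paper cites the result (Th.~Finitude, Cor.~2.16) that every sheaf on a variety is ULA relative to the projection to a point and then uses stability of ULA under base change, whereas you argue informally from the constancy of the fibers, but the substance is the same.
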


\begin{proof} This is a local question, so it suffices to prove it in a neighborhood of each point in $Z \times T$. By \cref{Zariski-open-cover}, one of the maps $o_1$ furnishes such a neighborhood for any given point, since they are open immersions whose image, by definition, consists of the points $( (\alpha_1,\dots, \alpha_n), a)$ with $(\alpha_1,\dots, \alpha_n)\in U $. So it suffices to show that $o_1^*    u_!  \mathbb Q_\ell $ is universally locally acyclic relative to the projection to $T$.

By \cite[Th. Finitude, Corollary 2.16]{sga4h}, every sheaf on a variety over a field $\kappa$ is universally locally acyclic relative to the projection to $\operatorname{Spec} \kappa$. In particular, $u_{Y!} \mathbb Q_\ell$ is universally locally acyclic relative to the projection to a point. Thus the base change $pr_1^* u_{Y!} \mathbb Q_\ell$ is universally locally acyclic relative to the base-changed projection $pr_2$.  Since \'{e}tale pullbacks preserve universal local acyclicity, $e_1^* pr_1^* u_{Y!} \mathbb Q_\ell$ is universally locally acyclic. The universal local acyclicity of $o_1^*    u_!  \mathbb Q_\ell $  then follows from \eqref{eq-shriek-flat}.\end{proof}

Recall the definition of singular support given in \cref{def-singular-support}.

 \begin{lemma}\label{singular-support-embedded}  The fiber of the singular support of $s_* u_! \mathbb Q_\ell$ over a point $x$ in $Z \times T$, viewed as a closed subset of the cotangent space of $(\mathbb P^1)^n \times T$ at $x$, intersects the cotangent space at $T$ only in zero. \end{lemma}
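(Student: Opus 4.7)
Our plan is to use the étale-local product structure on $\overline{X}$ furnished by the map $e$ to reduce the claim to a linear-algebra statement about the differential of $e$. Since the singular support is a local invariant, and since by \cref{Zariski-open-cover} the open sets $o_1((U \times T) \cap \overline{X}) \subseteq \overline{X}$ cover $Z \times T$ as the auxiliary data $(\mu, (i_x)_{x}, x_0)$ vary, it suffices to work after restriction to one such neighborhood.

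On $U \times T$, we combine the Cartesian squares \eqref{small-local-model}, proper base change for the closed immersions $s$ and $i_Y \times \mathrm{id}$, and \eqref{eq-shriek-flat} to rewrite $(s_* u_! \mathbb Q_\ell)|_{U \times T} = e^* F$, where $F := (i_Y \times \mathrm{id})_* pr_1^* u_{Y!} \mathbb Q_\ell$ is a sheaf on $\mathbb A^n \times T$. A further application of proper base change --- now for the projection $q\colon \mathbb A^n \times T \to \mathbb A^n$ and the closed immersion $i_Y\colon \overline{Y} \to \mathbb A^n$ --- identifies $F = q^* F_0$ with $F_0 := (i_Y)_* u_{Y!} \mathbb Q_\ell$. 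Because $q$ is a smooth projection, $SS(F) = q^\circ SS(F_0)$ is contained in $q^* T^* \mathbb A^n \subseteq T^*(\mathbb A^n \times T)$, so every covector in $SS(F)$ has vanishing component along $T^* T$.

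It remains to see that pulling back along the étale map $e$ cannot create purely $T$-directional covectors. By construction $e$ commutes with projection to $T$, so writing $e(\alpha, a) = (b(\alpha, a), a)$, its differential has block form
\[ de = \begin{pmatrix} \partial_\alpha b & \partial_a b \\ 0 & \mathrm{id}_m \end{pmatrix}, \qquad de^* = \begin{pmatrix} (\partial_\alpha b)^T & 0 \\ (\partial_a b)^T & \mathrm{id}_m \end{pmatrix}, \]
where the blocks are indexed by the $(\mathbb P^1)^n$- and $T$-directions. Since $e$ is étale (\cref{etale-locus-boundary}), $SS(e^* F) = de^*(e^* SS(F))$. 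If a covector $(0, \xi_T) \in T^*(U \times T)$ with vanishing $(\mathbb P^1)^n$-component lies in $SS(e^* F)$, then $(0, \xi_T) = de^*(\xi, 0)$ for some $(\xi, 0) \in SS(F)$; the first block forces $(\partial_\alpha b)^T \xi = 0$, and the invertibility of $\partial_\alpha b$ --- precisely the Jacobian computation at the heart of the proof of \cref{etale-locus-boundary} --- gives $\xi = 0$, whence the second block yields $\xi_T = 0$.

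The main conceptual point is that the local model factors a neighborhood of $Z \times T$ through projection to $\overline{Y}$, so no $T$-directional cotangent vectors appear in $SS(F)$; the $T$-preservation of $e$, together with the invertibility of its vertical Jacobian, ensures this persists after étale pullback. Once the base-change identifications are in place, no serious obstacle remains.
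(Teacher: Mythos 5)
Your proof is correct and follows essentially the same route as the paper's: restrict to the local model provided by Lemma \ref{Zariski-open-cover}, use \eqref{eq-shriek-flat} and base change to present the sheaf locally as $e^*$ of a pullback from $\mathbb A^n$, invoke compatibility of singular support with smooth pullback, and then exploit the block-triangular form of $de$ (whose upper-left block's invertibility is exactly the Jacobian computation in Lemma \ref{etale-locus-boundary}) to kill any purely $T$-directional covector. The only presentational difference is that you make the linear algebra explicit via the block matrix for $de^*$, where the paper phrases the same step more conceptually as "$v'$ must lie both in the $T$-direction (since $e$ is étale and compatible with projection to $T$) and in the image of $dpr_1$, hence $v'=0$."
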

 
 \begin{proof} By \cref{Zariski-open-cover}, we can choose data so that $x$ lies in the image of $o_1$.  Let $s' \colon \overline{Y} \to \mathbb A^n$ and $\tilde{s} \colon  (U \times T) \cap \overline{X} \to \overline{X}$ be the closed immersions. Then by proper base change (in the easy special case of closed immersions) and \eqref{eq-shriek-flat} we have 
\begin{equation*}o^*  s_* u_!\mathbb Q_\ell = \tilde{s}_* o_1^*    u_!  \mathbb Q_\ell  = \tilde{s}_*  e_1^* pr_1^* u_{Y!} \mathbb Q_\ell = e^* pr_1^* s'_* u_{Y!} \mathbb Q_\ell . \end{equation*}
 
By \citep[Theorem 1.4(i)]{Beilinson}, because $o, e$ and $pr_1$ are smooth, we have \begin{equation}\label{embedded-eq} o^\circ SS(s_*u_! \mathbb Q_\ell) = e^{\circ} pr_1^{\circ} SS( s'_* u_{Y_!} \mathbb Q_\ell) .\end{equation}
 
 Fix a vector $\omega$ in  $ SS(s_*u_! \mathbb Q_\ell) $ that lies in the cotangent space of $T$. Because $o$ is compatible with the projection to $T$, $do(\omega)$ also lies in the cotangent space of $T$. 
 
Because $\omega \in SS (s_*u_! \mathbb Q_\ell)$, using \eqref{embedded-eq}, $do(\omega)$ is the image under $de$ of a vector  $v' \in pr_1^{\circ} SS( s'_* u_{Y!} \mathbb Q_\ell) $. Because $e$ is \'{e}tale and compatible with the projection to $T$,  $v'$ must also lie in the cotangent space of $T$. Furthermore, $v'$ must lie in the image of $d pr_1$ by construction. This implies $v'=0$ because the image of $dpr_1 $, for $pr_1\colon \mathbb A^n \times T \to \mathbb A^n$ the projection, is the cotangent space of $\mathbb A^n$. Then because $e$ and $o$ are \'{e}tale, $do(\omega)=0$  and $\omega=0$ as well.  \end{proof}

Proving \cref{lisse-on-boundary} will be more difficult. 

Fixing once more the local data $\mu, \gamma,x_0$, observe that we have $(U \times T)  \cap (Z \times T)  = (U \cap Z) \times T$, so the restriction of $o_1 \colon (U \times T ) \cap \overline{X} \to \overline{X}$ to the boundary $Z \times T$ of $\overline{X}$ is the product of the open immersion $U \cap Z \to Z$ with the identity $T \to T$.  We would like to use this to show that the restriction $ z^*  R u_* {\mathbb Q}_\ell$ to the boundary $Z \times T$ is locally a pullback from $Z$. Unfortunately the restriction to $(U \cap Z) \times T$ of the map $e$ is not necessarily the product of an \'{e}tale morphism with the identity, because our formulas for the $\lm_i$ depend on $a \in T$.   To fix this, we must introduce some new spaces, where this product formula does hold.

Let $\tilde{Z} \subseteq Z$ be the closed subset of $Z$ where $\alpha_{\gamma(x)} =x$ for all $x \in S \setminus \{\infty\}$.  Let $\tilde{z}\colon  \tilde{Z} \times T \to \overline{X}$ be the closed immersion.  Let \[o_3\colon U \cap \tilde{Z} \to \tilde{Z}\]  be the open immersion and  \[ \tilde{z}_U\colon ( U \cap \tilde{Z} ) \times T \to (U \times T) \cap \overline{X} \] the restricted closed immersion.

Let $\tilde{Y}$ be the closed set in $\overline{Y}$ where $\lm_{\gamma(x)} =0 $ for $x \in S \setminus\{0\}$. Let \[z_Y\colon \tilde{Y}  \to \overline{Y} \]  be the closed immersion.

Let $e_3 \colon \tilde{Z} \to \tilde{Y} $ send a tuple $(\alpha_1,\dots, \alpha_n)$ to the tuple $(\lm_1,\dots, \lm_n)$, where we have
\[ \lm_i =\begin{cases}   \begin{aligned} \mu(i) - \alpha_i  & \hspace{10pt}\textrm{if } \mu(i) \neq \infty\textrm{ and }i \neq i_{\mu(i)},\\
 \frac{1}{ x_0 - \alpha_i } &  \hspace{10pt}\textrm{if }\mu(i)= \infty\\
0   & \hspace{10pt} \textrm{if }i=\gamma(x)\textrm{ for }x  \in S\setminus\{\infty\} \end{aligned}\end{cases} \] 

We can check that the restriction of $e_1$ to $(U \cap \tilde{Z}) \times T$ is given by $e_3 \times id$ by observing that the formula defining $e$ restricts to the formula defining $e_3$ if $\alpha_{\gamma(x)}-x=0$ for all $x\in S\setminus\{\infty\}$.

We can expand our earlier diagram \eqref{small-local-model} to 
 
\begin{equation}\label{big-local-model} \begin{tikzcd}
& \tilde{Z} & (U \cap \tilde{Z}) \arrow[l,"o_3"'] \arrow[r,"e_3"] & \tilde{Y}   \arrow[d,"z_Y"] \\ 
\tilde{Z} \times T  \arrow[d, "\tilde{z}"]  \arrow[ur,"pr_1"] & (U \cap \tilde{Z} ) \times T  \arrow[l,"o_3 \times id"] \arrow[ur,"pr_1"]  \arrow[r,"e_3 \times id"] \arrow[d,"\tilde{z}_U"] & \tilde{Y}  \times T \arrow[d,"z_Y \times id"]\arrow[ur,"pr_1"]   &  \overline{Y}     \\
\overline{X} & (U \times T) \cap \overline{X} \arrow[l ,"o_1"] \arrow[r,"e_1"]& \overline{Y} \times T\arrow[ur, "pr_1"'] & Y \arrow[u,"u_Y"']  \\
X \arrow[u, "u"]&  (U \times T) \cap X \arrow[l,"o_2"]  \arrow[r,"e_2"] \arrow[u,"u_U"] & Y \times T \arrow[u,"u_Y \times id"]\arrow[ur,"pr_1"']  
\end{tikzcd} \end{equation}

This is a commutative diagram with Cartesian squares.

\begin{lemma} We have \begin{equation}\label{eq-pushforward-flat}  (o_3 \times id)^* \tilde{z}^*   R u_*  \mathbb Q_\ell = pr_1^* e_3^* z_Y^* R u_{Y*} \mathbb Q_\ell .\end{equation}  \end{lemma}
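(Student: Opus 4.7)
The plan is to deduce \eqref{eq-pushforward-flat} by repeated application of smooth base change along the Cartesian squares of \eqref{big-local-model}, in direct analogy with the proof of \eqref{eq-shriek-flat}, but replacing the automatic commutation of $u_!$ with smooth pullback by smooth base change for $Ru_*$. Every horizontal arrow entering a base change step is either an open immersion (an $o_i$), an étale morphism (an $e_i$), or a smooth projection ($pr_1$), and in particular is smooth, so smooth base change for $\ell$-adic sheaves applies at each step.

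First I would transport the computation from $\overline{X}$ over to $\overline{Y}\times T$ and then $\overline{Y}$. Smooth base change along the Cartesian square with verticals $u, u_U$ and horizontals $o_2, o_1$ gives
\[ o_1^* R u_* \mathbb Q_\ell = R u_{U*} o_2^* \mathbb Q_\ell = R u_{U*}\mathbb Q_\ell. \]
Smooth base change along the Cartesian square with verticals $u_U, u_Y \times \mathrm{id}$ and horizontals $e_2, e_1$ gives
\[ R u_{U*}\mathbb Q_\ell = R u_{U*} e_2^* \mathbb Q_\ell = e_1^* R(u_Y \times \mathrm{id})_*\mathbb Q_\ell. \]
Finally, smooth base change along the Cartesian square with verticals $u_Y, u_Y \times \mathrm{id}$ and horizontals the two projections $pr_1$ gives
\[ R(u_Y \times \mathrm{id})_* \mathbb Q_\ell = R(u_Y \times \mathrm{id})_* pr_1^* \mathbb Q_\ell = pr_1^* R u_{Y*}\mathbb Q_\ell. \]
Chaining these yields $o_1^* R u_* \mathbb Q_\ell = e_1^* pr_1^* R u_{Y*}\mathbb Q_\ell$.

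Next I would pull both sides back along $\tilde z_U$ and rewrite using commutativity of \eqref{big-local-model}. On the left, $o_1 \circ \tilde z_U = \tilde z \circ (o_3 \times \mathrm{id})$ gives $\tilde z_U^* o_1^* R u_*\mathbb Q_\ell = (o_3 \times \mathrm{id})^* \tilde z^* R u_*\mathbb Q_\ell$, which is the left side of \eqref{eq-pushforward-flat}. On the right, the factorizations $e_1 \circ \tilde z_U = (z_Y \times \mathrm{id}) \circ (e_3 \times \mathrm{id})$, $pr_1 \circ (z_Y \times \mathrm{id}) = z_Y \circ pr_1$, and $pr_1 \circ (e_3 \times \mathrm{id}) = e_3 \circ pr_1$ transform $\tilde z_U^* e_1^* pr_1^* R u_{Y*}\mathbb Q_\ell$ into $pr_1^* e_3^* z_Y^* R u_{Y*}\mathbb Q_\ell$, which is the right side of \eqref{eq-pushforward-flat} (reading $\tilde z_Y$ as $z_Y$).

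I do not expect a substantive obstacle; the argument is a purely formal diagram chase, hinging only on smooth base change for $\ell$-adic sheaves and on the Cartesianness of the squares in \eqref{big-local-model}. The one point worth double-checking is that the square with corners $(U \cap \tilde Z) \times T$, $(U \times T) \cap \overline X$, $\tilde Y \times T$, $\overline Y \times T$ is indeed Cartesian; this follows because $e_1$ is étale and the condition $\alpha_{i_x} = x$ defining $\tilde Z \subseteq Z$ pulls back under $e_1$ to exactly the condition $\lm_{i_x} = 0$ defining $\tilde Y \subseteq \overline Y$, so the fiber product of $(U \times T) \cap \overline X$ with $\tilde Y \times T$ over $\overline Y \times T$ coincides with $(U \cap \tilde Z) \times T$.
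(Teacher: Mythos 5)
Your proof is correct and takes essentially the same approach as the paper. The paper's own proof is exactly this chain of smooth/\'etale base change applications along the Cartesian squares of \eqref{big-local-model}, followed by the commutativity of the diagram; the paper simply interleaves the final pullback along $\tilde z_U$ at the start and end rather than saving it for a single final step, but the identities invoked are the same.
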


\begin{proof} Because derived pushforward  commutes with \'{e}tale and smooth pullbacks,
\begin{align*} & (o_3 \times id)^* \tilde{z}^*   R u_*  \mathbb Q_\ell  = \tilde{z}_U^* o_1^*  R u_*  \mathbb Q_\ell  = \tilde{z}_U^*  R u_{U*} o_2^* \mathbb Q_\ell =\tilde{z}_U^*  R u_{U*}\mathbb Q_\ell = \tilde{z}_U^*  R u_{U*} e_2^* pr_1^* \mathbb Q_\ell   \\ & = \tilde{z}_U^* e_1^* R (u_Y \times id)_* pr_1^* \mathbb Q_\ell =\tilde{z}_U^* e_1^*  pr_1^* R u_{Y*} \mathbb Q_\ell = pr_1^* e_3^* z_Y^* R u_{Y*} \mathbb Q_\ell. \qedhere \end{align*}\end{proof}

\begin{lemma}\label{lisse-on-boundary}  The cohomology sheaves of the complex \[ R \overline{\pi}_*  z_* z^*   R u_*  \mathbb Q_\ell \] are lisse sheaves on $T$ with unipotent global monodromy. \end{lemma}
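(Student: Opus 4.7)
The plan is to leverage the local pullback formula \eqref{eq-pushforward-flat} to show that $z^{*}Ru_{*}\mathbb Q_{\ell}$ on $Z\times T$ is the pullback via the projection $pr_{1}\colon Z\times T\to Z$ of a single sheaf $\mathcal{G}$ on $Z$. From this the lemma follows quickly: since $Z\subseteq (\mathbb P^{1})^{n}$ is proper and $\overline{\pi}\circ z = pr_{2}\colon Z\times T\to T$, the K\"unneth formula (equivalently, proper base change along $pr_{2}$) gives
\[ R\overline{\pi}_{*}z_{*}z^{*}Ru_{*}\mathbb Q_{\ell}\;\cong\;R(pr_{2})_{*}pr_{1}^{*}\mathcal{G}\;\cong\;R\Gamma(Z,\mathcal{G})\otimes_{\mathbb Q_{\ell}}\underline{\mathbb Q_{\ell}}_{T}, \]
a complex of constant sheaves on $T$, whose cohomology sheaves are lisse with trivial (hence unipotent) global monodromy.

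To produce $\mathcal{G}$, fix a geometric point $t_{0}\in T$ and set $\mathcal{G}:=(z^{*}Ru_{*}\mathbb Q_{\ell})\vert_{Z\times\{t_{0}\}}$. For any point $p=((\alpha_{1},\dots,\alpha_{n}),a)\in Z\times T$, \cref{Zariski-open-cover} produces data $(\mu,i_{x},x_{0})$ making $(\alpha_{1},\dots,\alpha_{n})\in U\cap\tilde{Z}$, so $p\in (U\cap\tilde{Z})\times T$. On this piece, \eqref{eq-pushforward-flat} exhibits $z^{*}Ru_{*}\mathbb Q_{\ell}$ as the pullback along the literal projection $pr_{1}$ of the sheaf $e_{3}^{*}\tilde{z}_{Y}^{*}Ru_{Y*}\mathbb Q_{\ell}$ on $U\cap\tilde{Z}$. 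Slicing this local pullback presentation at $t_{0}$ identifies that local base sheaf with $\mathcal{G}\vert_{U\cap\tilde{Z}}$, so the natural morphism $pr_{1}^{*}\mathcal{G}\to z^{*}Ru_{*}\mathbb Q_{\ell}$ becomes an isomorphism on each piece. Since the pieces cover $Z\times T$ as a set and we are working with constructible complexes, a stalkwise isomorphism is a global isomorphism, yielding $z^{*}Ru_{*}\mathbb Q_{\ell}\cong pr_{1}^{*}\mathcal{G}$ on $Z\times T$.

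The main subtlety is precisely this descent step: the cover $\{(U\cap\tilde{Z})\times T\}$ consists of locally closed (not Zariski open) subsets of $Z\times T$, and the corresponding local base sheaves $e_{3}^{*}\tilde{z}_{Y}^{*}Ru_{Y*}\mathbb Q_{\ell}$ depend on the data $(\mu,i_{x},x_{0})$. What makes the gluing automatic is that each local identification factors through the literal projection $pr_{1}$ (rather than the twisted, $a$-dependent map $\tilde{e}_{1}$ that one sees over the larger open $(U\cap Z)\times T$), so the local base sheaves are canonically the slice-restrictions of the single global sheaf $\mathcal{G}$; compatibility across different choices of data is then free. By contrast, a direct descent on the open cover $\{(U\cap Z)\times T\}$ would fail because there the pullback map genuinely varies with $a$, and it is only on the closed locus $\tilde{Z}$, where $\lambda_{i_{x}}=0$, that the $a$-dependence drops out.
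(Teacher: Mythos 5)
Your overall strategy is appealing and in places parallels the paper's (both exploit the local pullback presentation \eqref{eq-pushforward-flat} and the stratification of $Z$ by the locally closed sets $U\cap\tilde{Z}$), but there is a genuine gap at the gluing step, and the stronger conclusion you reach (that the pushforward is a constant complex) is not what the paper obtains and is not established by your argument.

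The specific problem: you write ``the natural morphism $pr_1^*\mathcal G\to z^*Ru_*\mathbb Q_\ell$'' but no such morphism exists a priori. If $i_{t_0}\colon Z\times\{t_0\}\hookrightarrow Z\times T$ is the slice inclusion and $\mathcal G=i_{t_0}^*(z^*Ru_*\mathbb Q_\ell)$, there is no canonical map $pr_1^*i_{t_0}^*\mathcal F\to\mathcal F$ for a general constructible complex $\mathcal F$ on the product; the only adjunction in sight, $\mathcal F\to i_{t_0*}i_{t_0}^*\mathcal F$, goes the wrong way and lands in the wrong place. Without a global morphism, the principle ``a stalkwise isomorphism of constructible complexes is an isomorphism'' does not apply --- that principle requires a morphism to begin with, and merely having abstract isomorphisms on each piece of a cover does not produce one. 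The claim that ``compatibility across different choices of data is then free'' is precisely the assertion in question: the local identifications on two overlapping locally closed strata differ by an automorphism of the restricted complex, and there is no a priori reason for these to cohere. Worse, for a locally closed (as opposed to open) stratification, reassembling the global complex from its restrictions involves extension data in the derived category, not just a cocycle condition, so even pointwise agreement of the gluing isomorphisms would not automatically finish the argument.

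The paper's proof carefully avoids this: rather than trying to glue $z^*Ru_*\mathbb Q_\ell$ into a global pullback $pr_1^*\mathcal G$, it refines $\{U\cap\tilde Z\}$ to an honest stratification $(Z_i)$, and then uses the excision spectral sequence with first page $\bigoplus_i Rpr_{2!}\bigl(pr_1^*r_i^*e_3^*\tilde z_Y^*Ru_{Y*}\mathbb Q_\ell\bigr)$. Proper base change makes each summand a constant complex on $T$, and since the category of constant sheaves is closed under kernels and cokernels, the abutment has a filtration by constant sheaves. This yields lisse cohomology sheaves with unipotent global monodromy --- deliberately weaker than constancy, because the extensions and spectral-sequence differentials may be nontrivial; that is exactly the subtlety your gluing step sweeps under the rug. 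If you want to salvage your approach, you would need to actually build the morphism $pr_1^*\mathcal G\to z^*Ru_*\mathbb Q_\ell$ (or its inverse) and show it is stalkwise an isomorphism, and I do not see how to do that from the data in the paper.
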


Here ``unipotent global monodromy" means that the representation of the geometric fundamental group of $T$ associated with the lisse sheaf is an iterated extension of trivial representations (so that each element acts by a unipotent matrix).

\begin{proof} We have $\overline{\pi} \circ z = pr_2$, where $pr_2 \colon Z \times T \to T$ is the projection.

For each choice of $\mu$ and $\gamma(x)$ for $x\in S \setminus \infty$, the set $\tilde{Z} \cap U$ is locally closed in $Z$. By \cref{Zariski-open-cover}, these locally closed sets cover $Z$. Given a finite cover of a Noetherian scheme $Z$ by locally closed sets, we can refine it into a stratification by choosing a generic point, choosing one of the locally closed sets that contains it, observing that it contains an open neighborhood of that generic point, removing that open neighborhood, and repeating the process. (This may involve using a given locally closed set repeatedly, which is fine.)  Let $(Z_i)_{1\leq i \leq N}$ be a locally closed stratification refining $\tilde{Z} \cap U$, and let $r_i\colon Z_i \to Z_i \cap U$ be the locally closed immersion.

By \cref{eq-pushforward-flat}, the pullback of $R u_* \mathbb Q_\ell$ to $(U \cap  \tilde{Z} ) \times T$ is $pr_1^* e_3^* \tilde{z}_Y^* R u_{Y*} \mathbb Q_\ell$. It follows that the pullback of $Ru_* \mathbb Q_\ell$ to $Z_i \times T$ is $pr_1^* r_i^* e_3^* \tilde{z}_Y^* R u_{Y*} \mathbb Q_\ell$ on $Z_i$. 


We have a spectral sequence computing $R pr_{2*} z^*   R u_*  \mathbb Q_\ell$ whose first page is
\[ \bigoplus_{i=1}^N  Rpr_{2!} \left( pr_1^*  r_i^* e_3^* \tilde{z}_Y^* R u_{Y*} \mathbb Q_\ell \right) \] where $pr_1\colon Z_i \times T \to Z_i$ and $pr_2 \colon Z_i \times T \to T$ are the projections. 

 The construction of the spectral sequence is essentially identical to \cite[(3)]{Petersen2017}. To construct this spectral sequence, let $w_i \colon \bigcup_{j \leq i} Z_j \times T \to (\tilde{Z} \cap U)\times T$ be the closed immersion (closed by the definition of stratification).
We can filter the complex   $ z^*   R u_*  \mathbb Q_\ell$ into the series of complexes $w_{i*} w_i^* z^*   R u_*  \mathbb Q_\ell$. The $i$'th associated graded object of this filtration is then the restriction of $z^* R u^* \mathbb Q_\ell$ to $Z_i \times T$. This filtration induces a filtration on  $ Rpr_{2*} z^*   R u_*  \mathbb Q_\ell $ whose $i$'th associated graded is $Rpr_{2!} \left( pr_1^*  r_i^* e_3^* \tilde{z}_Y^* R u_{Y*} \mathbb Q_\ell \right) $ where $pr_1\colon Z_i \times T \to Z_i$ and $pr_2 \colon Z_i \times T \to T$ are the projections. The spectral sequence of a filtered complex associated to this filtration is our desired spectral sequence.

By proper base change, $Rpr_{2!} \left( pr_1^* r_i^* e_3^* \tilde{z}_Y^* R u_{Y*} \mathbb Q_\ell \right)$ is the pullback from a point of \[H^*_c ( Z_i, r_i^* e_3^* \tilde{z}_Y^* R u_{Y*} \mathbb Q_\ell ),\] and hence its cohomology sheaves are constant.  The kernels and cokernels of any map of constant sheaves are constant sheaves. Since each page of the spectral sequence is obtained from the previous by taking kernels and cokernels of differentials, it follows by induction that each sheaf appearing on each page of the sequence is constant. The final page of the spectral sequence gives the associated graded sheaves of a filtration on the cohomology sheaves of the limit $R pr_{2*} z^*   R u_*  \mathbb Q_\ell$, so the cohomology sheaves of the limit are iterated extensions of constant sheaves and thus are lisse with unipotent global monodromy (because extensions of lisse sheaves give extensions of representations). \end{proof}

\section{Structure of the complex}\label{sec-structure}

We will understand the complex $R \pi_! \mathbb Q_\ell $ by studying its perverse cohomology in every degree. The first step follows quickly from \cref{lisse-on-boundary}.

\begin{lemma} \label{lisse-higher-cohomology} The perverse cohomology sheaf \[ {}^p\mathcal H^j ( R \pi_! \mathbb Q_\ell ) \] vanishes for $j< n$ and is lisse with unipotent global monodromy for $j>n$.
 \end{lemma}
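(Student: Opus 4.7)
My plan is to treat the two halves of the statement separately, using the two sides of Artin's affine vanishing theorem. The key preliminary observation is that both $C^n$ and $T$ are affine (the former as a product of affine curves, the latter as the open subscheme of units inside the affine scheme of residue classes mod $g$), so $\pi : C^n \to T$ is an affine morphism.

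For the vanishing of ${}^p\mathcal H^j(R\pi_!\mathbb Q_\ell)$ in degrees $j<n$, I would note that $\mathbb Q_\ell[n]$ is perverse on the smooth $n$-dimensional variety $C^n$, i.e.\ $\mathbb Q_\ell \in {}^p D^{\geq n}(C^n)$, and invoke Artin's theorem that $R\pi_!$ is left $t$-exact for the perverse $t$-structure on an affine morphism. This immediately gives $R\pi_!\mathbb Q_\ell \in {}^p D^{\geq n}(T)$, which is the desired vanishing.

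For the lisse-with-unipotent-monodromy conclusion in degrees $j>n$, I would apply $R\overline\pi_* = R\overline\pi_!$ (equal because $\overline\pi$ is proper) to the standard distinguished triangle
\[ u_!\mathbb Q_\ell \to Ru_*\mathbb Q_\ell \to z_* z^* Ru_*\mathbb Q_\ell \to [1] \]
on $\overline X$, producing on $T$ the triangle
\[ R\pi_!\mathbb Q_\ell \to R\pi_*\mathbb Q_\ell \to R\overline\pi_* z_* z^* Ru_*\mathbb Q_\ell \to [1]. \]
The middle term lies in ${}^p D^{\leq n}(T)$ by the dual side of Artin ($R\pi_*$ is right $t$-exact on affine morphisms), so its perverse cohomology vanishes in degrees $>n$. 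By \cref{lisse-on-boundary}, the third term has lisse cohomology sheaves with unipotent global monodromy, and since $T$ is smooth of dimension $m$ (so that each lisse $\mathcal L$ yields a perverse sheaf $\mathcal L[m]$), each of its perverse cohomology sheaves is of the form $\mathcal L[m]$ for $\mathcal L$ lisse with unipotent monodromy.

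Reading off the long exact sequence of perverse cohomology, for $j\geq n+2$ both neighboring ${}^p\mathcal H$'s of $R\pi_*\mathbb Q_\ell$ vanish and I obtain isomorphisms between ${}^p\mathcal H^j(R\pi_!\mathbb Q_\ell)$ and ${}^p\mathcal H^{j-1}$ of the boundary term, which are immediately lisse with unipotent monodromy. The main obstacle I anticipate is the edge case $j=n+1$, where I only obtain ${}^p\mathcal H^{n+1}(R\pi_!\mathbb Q_\ell)$ as the cokernel of a map $B_n \to C_n$ with $C_n$ a shifted lisse sheaf but $B_n={}^p\mathcal H^n(R\pi_*\mathbb Q_\ell)$ a general perverse sheaf. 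To conclude that this cokernel is still lisse, I will use that on the smooth irreducible $T$ every perverse subsheaf of $\mathcal L[m]$ is again of the form $\mathcal L'[m]$ for some sublisse $\mathcal L'\subseteq\mathcal L$: any simple perverse sheaf whose support is strictly smaller than $T$ has no nonzero map into $\mathcal L[m]$ (a direct computation with $i^!$ for a closed smooth embedding of lower dimension), and extensions of shifted lisse sheaves in the perverse and the lisse categories agree via the shift by $m$. Consequently the image of $B_n \to C_n$ is a lisse subsheaf of $C_n$, the cokernel is a quotient in the category of lisse sheaves, and unipotence of the monodromy is automatically preserved under such quotients.
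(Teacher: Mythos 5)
Your proof is correct and follows the paper's argument essentially verbatim: the same Artin affine vanishing for both halves, the same distinguished triangle $u_!\mathbb Q_\ell \to Ru_*\mathbb Q_\ell \to z_*z^*Ru_*\mathbb Q_\ell$ pushed forward along $\overline\pi$, the same appeal to \cref{lisse-on-boundary}, and the same long exact sequence of perverse cohomology. The only difference is that you spell out in detail why a perverse quotient of a shifted lisse sheaf on smooth connected $T$ is again shifted lisse (which the paper asserts in one clause), and your $j=n+1$ ``edge case'' is not actually special --- the cokernel of $B_n\to C_n$ is a quotient of $C_n$ regardless of what $B_n$ is, so the uniform argument already covers it.
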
 

\begin{proof} To check that it vanishes for $j < n$ we use the fact that $\mathbb Q_\ell [n]$ is perverse on $C^n$, $\pi$ is an affine morphism, and \citep[Corollary 4.1.2]{bbd}.

We have the distinguished triangle \[  u_! \mathbb Q_\ell \to R u_* \mathbb Q_\ell \to z_* z^* R u_* \mathbb Q_\ell .\] 

Applying $R \overline{\pi}_*$ and using $\overline{\pi} \circ u = \pi$, this gives a distinguished triangle
\[  R \pi_! \mathbb Q_\ell \to R \pi_* \mathbb Q_\ell \to R \overline{\pi}_* z_* z^* R u_* \mathbb Q_\ell \] and a long exact sequence
\[ \dots \to {}^p \mathcal H^{j-1} (R \overline{\pi}_* z_* z^* R u_* \mathbb Q_\ell) \to   {}^p \mathcal H^j (  R \pi_! \mathbb Q_\ell )  \to {}^p \mathcal H^j ( R \pi_* \mathbb Q_\ell) \to \dots \]

Using the fact that $\mathbb Q_\ell[n]$ is perverse on $C^n$, $\pi$ is an affine morphism, and \citep[Theorem 4.1.1]{bbd}, ${}^p \mathcal H^j ( R \pi_* \mathbb Q_\ell)$ vanishes when $j>n$. Thus $ {}^p \mathcal H^j (  R \pi_! \mathbb Q_\ell )$ is a quotient of $ {}^p \mathcal H^{j-1} (R \overline{\pi}_* z_* z^* R u_* \mathbb Q_\ell)$ in this range.

By Lemma \ref{lisse-on-boundary}, all the cohomology sheaves of $R \overline{\pi}_* z_* z^* R u_* \mathbb Q_\ell$ are lisse with unipotent global monodromy. When the cohomology sheaves of a complex on a smooth variety of dimension $m$ are lisse, the $i$th perverse cohomology sheaf is simply the $i-m$'th usual cohomology sheaf, since the usual truncation in degree $\leq i-m$ satisfies the hypotheses of the perverse truncation in degree $\leq i$. It follows that all the perverse cohomology sheaves of $R \overline{\pi}_* z_* z^* R u_* \mathbb Q_\ell$ are lisse with unipotent global monodromy. Because the quotient of a lisse perverse sheaf remains lisse, and its monodromy representation is a quotient representation, ${}^p\mathcal H^j ( R \pi_! \mathbb Q_\ell ) $ is lisse with unipotent global monodromy when $j>n$. \end{proof}

Observe that $R \pi_! \mathbb Q_\ell$ admits an action of $S_n$, arising from the action of $S_n$ on $C^n$, which preserves $\pi$.  The complex $\left( R \pi_! \mathbb Q_\ell \otimes \rho\right)^{S_n}$ will be of great interest to us because, by \cref{trace-function}, its trace function at a point $a$ is precisely the sum of $F_\rho$ over $f$ congruent to $a$ mod $g$.

It follows from \cref{lisse-higher-cohomology} that \[{}^p \mathcal H^j \left( \left( R \pi_! \mathbb Q_\ell \otimes \rho\right)^{S_n}\right) \] vanishes for $j<n$ and is lisse with unipotent global monodromy for $j<n$, because these properties are stable under tensor product with $\rho$ and taking $S_n$-invariants, which are also exact functors. For $\rho$ an irreducible representation, we can give an even more precise description of $\left( R \pi_! \mathbb Q_\ell \otimes \rho\right)^{S_n}$. Depending on the partition corresponding to the irreducible reprsentation $\rho$, we will either show that $\left( R \pi_! \mathbb Q_\ell \otimes \rho\right)^{S_n}$ is a sum of shifts of constant sheaves (in \cref{irrep-constant}) or that $\left( R \pi_! \mathbb Q_\ell \otimes \rho\right)^{S_n}[n]$ is a perverse sheaf (in \cref{irrep-perverse}). The first case, the constant sheaves, will give the main term in our estimate for the sum of $F_\rho$. The second case, the perverse sheaves, will give the error term, and the next two sections will be devoted to controlling the stalks of these perverse sheaves. 


\begin{lemma}\label{irrep-constant} Suppose $\rho$ is an irreducible representation of $S_n$ corresponding to a partition with at least one part of size at least $m$. Then $\left( R \pi_! \mathbb Q_\ell \otimes \rho\right)^{S_n}$ is the direct sum of its cohomology sheaves, and these cohomology sheaves are geometrically constant. \end{lemma}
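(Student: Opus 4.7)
My plan is to strengthen \cref{lisse-higher-cohomology} in the case that $\rho$ satisfies the hypothesis so as to control the middle perverse degree ${}^p\mathcal H^n$, and then deduce the direct-sum decomposition from the local product structure of the compactification.

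First, I apply $R\overline\pi_*$ to the triangle $u_!\mathbb Q_\ell \to Ru_*\mathbb Q_\ell \to z_*z^*Ru_*\mathbb Q_\ell$, tensor with $\rho$, and take $S_n$-invariants, yielding a triangle on $T$ whose third term has lisse unipotent cohomology sheaves (\cref{lisse-on-boundary}) and whose middle term equals $R\tau_*(sym_*\mathbb Q_\ell\otimes\rho)^{S_n}$ via $\pi = \tau\circ sym$. The map $\tau\colon C^{(n)}\to T$ is a Zariski-locally trivial $\mathbb A^{n-m}$-bundle, since the space of monic degree-$n$ polynomials prime to $g$ surjects affine-linearly onto $T$ with fibers $\mathbb A^{n-m}$. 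Hence at the generic point $\eta$ of $T$, the middle-degree perverse cohomology of the middle term can be computed by K\"unneth. After accounting for the sign twist arising from the graded commutativity of $H^*(C)$ (since $\dim C=1$) and the $m$ constraint equations defining $\tau^{-1}(\eta)$, the relevant $\rho$-isotypic component is identified with $V_\rho$. By \cref{schur-vanishing-lemma}, $V_\rho = 0$ under the hypothesis on $\rho$, so the generic stalk of ${}^p\mathcal H^n$ of the middle term vanishes, and by \cref{perverse-easy-inequality}(3) this perverse cohomology sheaf vanishes on all of $T$. Together with \cref{lisse-higher-cohomology}, this shows every perverse cohomology sheaf of $(R\pi_!\mathbb Q_\ell \otimes \rho)^{S_n}$ is lisse with unipotent global monodromy.

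Second, since every perverse cohomology sheaf is lisse (hence, on the smooth variety $T$, equal up to a shift to the corresponding ordinary cohomology sheaf), the ordinary cohomology sheaves of the complex are also lisse. The local product structure of $\overline X$ over $T$ developed for \cref{lisse-on-boundary} implies that, on a sufficiently small \'etale neighborhood of any point, $(R\pi_!\mathbb Q_\ell \otimes \rho)^{S_n}$ is the pullback from a point of a bounded complex of $\mathbb Q_\ell$-vector spaces; such a pullback is automatically both geometrically constant and formal, i.e.\ quasi-isomorphic to the direct sum of its cohomology sheaves. The global statement then follows by observing that the \'etale descent data assembling these local models preserve this direct-sum structure after passing to the $\rho$-isotypic $S_n$-invariant component.

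The main obstacle is the K\"unneth/Schur-Weyl identification at the generic fiber producing $V_\rho$ with its characteristic $(m{-}1)$-dimensional input; this requires careful bookkeeping of the sign twist from graded commutativity and the dimension reduction from $m$ to $m-1$ forced by the $m$ fiber equations of $\tau$.
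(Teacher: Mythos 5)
Your proposal does not establish the lemma, and takes a fundamentally different (and, as written, flawed) route from the paper. Several steps do not hold up.

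\textbf{The deduction from generic vanishing to global vanishing is wrong.} You claim that because the generic stalk of ${}^p\mathcal H^n$ vanishes, \cref{perverse-easy-inequality}(3) forces this perverse sheaf to vanish on all of $T$. That part of the lemma only says $\dim \mathcal H^{-m}(K)_x \leq \dim \mathcal H^{-m}(K)_\eta$ for $K$ perverse on a normal $m$-dimensional variety. If the generic stalk of a perverse sheaf is zero, you learn only that its lowest cohomology sheaf $\mathcal H^{-m}$ vanishes; the perverse sheaf itself can perfectly well be nonzero, supported on a proper closed subvariety.

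\textbf{The generic stalk is not $V_\rho$.} The paper computes (in \cref{generic-rank-formula}) that the rank of $\left( R^{n-m}\pi_! \mathbb Q_\ell \otimes \orho\right)^{S_n}$ at the generic point is $C_1(\orho)$, an Euler-characteristic quantity built from the $V^d_\rho$'s, not $V_\rho$. The quantity $V_\rho$ governs the rank in degree $n+1-m$ (via $C_2$), not the middle perverse degree. So your K\"unneth/Schur--Weyl identification producing $V_\rho$ does not give what you need, and \cref{schur-vanishing-lemma} does not immediately kill the generic stalk.

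\textbf{Lisse with unipotent monodromy is far from geometrically constant.} Even granting all perverse cohomology sheaves are lisse with unipotent global monodromy, the paper establishes (in \cref{lisse-on-boundary}) that such sheaves have a \emph{filtration} by constant sheaves, which is strictly weaker than being a direct sum of constant sheaves. Nontrivial unipotent extensions do exist on the torus $T$ over $\overline{\kappa}$. You never close this gap.

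\textbf{The local product structure is $\rho$-independent and cannot possibly suffice.} The local models of $\overline{X}$ over $T$ from \cref{sec-compactification} depend only on $g$, not on $\rho$. If they implied that $\left(R\pi_!\mathbb Q_\ell\otimes\rho\right)^{S_n}$ is locally a pullback from a point, that would hold for every $\rho$ --- contradicting \cref{irrep-perverse} (for $\rho$ with all parts $<m$, the complex is a genuine perverse sheaf whose stalks vary; this is the whole content of the paper). So your final step, deducing formality and geometric constancy from the étale-local models, has no place where the hypothesis $n_1 \geq m$ enters, and must be wrong.

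The paper's actual argument is entirely different and elementary: since $\rho$ with a part $n_1 \geq m$ is a summand of $\Ind_{S_{n_1}\times\dots\times S_{n_r}}^{S_n}\mathbb Q_\ell$, it suffices to treat $\left(R\pi_!\mathbb Q_\ell\right)^{S_{n_1}\times\dots\times S_{n_r}} = R\pi^\rho_!\mathbb Q_\ell$. One then writes down an explicit \emph{global} isomorphism $C^{(n_1)}\times\dots\times C^{(n_r)} \cong T\times \mathbb A^{n_1-m}\times C^{(n_2)}\times\dots\times C^{(n_r)}$, valid precisely because $n_1\geq m$ (solve for $f_1$ given its leading coefficients and its residue mod $g$), under which $\pi^\rho$ becomes the projection to $T$. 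Proper base change then gives both constancy and splitting at once. No perverse machinery or local models are needed, and the hypothesis on $\rho$ enters exactly where it must.
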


\begin{proof} Let $(n_1, \dots , n_r)$ be the partition of $n$ corresponding to $\rho$, and assume $n_1 \geq m$. By the construction of irreducible representations of $S_n$, $\rho$ is a summand of $\Ind_{S_{n_1} \times \dots \times S_{n_{r}} }^{S_n} \mathbb Q_\ell$.  It suffices to show that the complex $ \left( R \pi_! \mathbb Q_\ell \otimes \Ind_{S_{n_1} \times \dots S_{n_{r}} }^{S_n} \mathbb Q_\ell \right)^{S_n} $ is the direct sum of its cohomology sheaves, and these cohomology sheaves are geometrically constant, because both of these properties pass to summands. By the self-duality of representations of $S_n$ and Frobenius reciprocity, we have
\[ \left( R \pi_! \mathbb Q_\ell \otimes \Ind_{S_{n_1} \times \dots S_{n_{r}} }^{S_n} \mathbb Q_\ell \right)^{S_n} = \Hom_{S_n} ( \Ind_{S_{n_1} \times \dots S_{n_{r}} }^{S_n} \mathbb Q_\ell, R \pi_! \mathbb Q_\ell) \] \[= \Hom_{S_{n_1} \times \dots \times S_{n_r}}(\mathbb Q_\ell, R\pi_! \mathbb Q_\ell)= \left( R \pi_! \mathbb Q_\ell \right)^{S_{n_1} \times \dots \times S_{n_r}}.\] 

By definition we have \[ C^{(n_1)} \times \dots \times C^{(n_r)} = C^{n_1}/ S_{n_1} \times \dots \times C^{n_r}/ S_{n_r} = C^n/ (S_{n_1}\times \dots \times S_{n_r}).\] Let  $sym^{\rho}\colon  C^n \to C^{(n_1)} \times \dots \times C^{(n_r)}$ be the quotient map $C^n \to C^n/ (S_{n_1}\times \dots \times S_{n_r})$ and let $\pi^\rho\colon C^{(n_1)} \times \dots \times C^{(n_r)} \to T$ be the map obtained from the universal property of a quotient by the $S_{n_1}\times \dots S_{n_r}$-invariant map $\pi \colon C^n \to T$.  By definition, we have $\pi^\rho \circ sym^{\rho} =\pi$.  

Because $sym^\rho$ is finite,
\[ \left( R \pi_! \mathbb Q_\ell \right)^{S_{n_1} \times \dots \times  S_{n_r}} = \left( R \pi^\rho_! sym^\rho_* \mathbb Q_\ell \right )^{S_{n_1} \times \dots \times S_{n_r}} = R \pi^\rho_! \left( sym^\rho_* \mathbb Q_\ell\right )^{S_{n_1} \times \dots  \times S_{n_r}} .\]

Because $S_{n_1} \times \dots \times  S_{n_r}$ acts transitively on the fibers of $sym^\rho$, the natural map $\mathbb Q_\ell \to \left( sym^\rho_* \mathbb Q_\ell \right)^{S_{n_1} \times \dots  \times S_{n_r}} $ is an isomorphism on stalks at each point, hence is an isomorphism, so \[ R \pi^\rho_! \left( sym^\rho_* \mathbb Q_\ell \right)^{S_{n_1} \times \dots  \times S_{n_r}}  = R \pi^\rho_! \mathbb Q_\ell \] and it suffices to show $ R \pi^\rho_! \mathbb Q_\ell $ is the sum of its cohomology sheaves, and these cohomology sheaves are geometrically constant.

To do this, note that we can view $C^{(n_i)}$ as the moduli space of monic polynomials in $t$ of degree $n_i$ which are prime to $g$, and then $\pi^{\rho}$ is the map multiplying polynomials mod $g$. We have an isomorphism \[C^{(n_1)} \times \dots \times C^{(n_r)} \cong T \times \mathbb A^{n_1-m} \times C^{(n_2)} \times \dots \times C^{(n_r)} \] that sends a tuple of polynomials $(f_1,\dots, f_r)$ with $f_1 = T^{n_1} + c_1 T^{n_1-1} + \dots + c_{n_1} $ to 
\[\Bigl(\prod_{i=1}^{r} f_i \mod g, (c_1,\dots, c_{n_1-m}) , f_2,\dots, f_r  \Bigr) .\] To check that the map is an isomorphism, it suffices to check that, for any \[ a \in T, (c_1,\dots, c_{n_1 - m }) \in \mathbb A^n, f_2 \in C^{(n_2)},\dots, f_r \in C^{(n_r)}\]  there is a unique polynomial $f_1$ such that \[f_1 \mod g = \frac{a}{\prod_{i=2}^r f_i \mod g} \] and the leading terms of $f_1$ are \[T^{n_1} + c_1 T^{n_1-1} + \dots + c_{n_1-m} T^m + \dots\] and, furthermore, check that the coefficients of $f_1$ are regular functions on $T \times \mathbb A^{n_1-m} \times C^{(n_2)} \times \dots \times C^{(n_r)}$. 

A unique polynomial $f_1$ exists because, after subtracting off the leading terms, every residue class mod $g$ is represented by a unique polynomial of degree $<m$. We can write the coefficients of $f_1$ as regular functions 
 on $T \times \mathbb A^{n_1-m} \times C^{(n_2)} \times \dots \times C^{(n_r)}$ by using polynomial long division.

With this isomorphism in hand, it suffices to show that $R pr_{1!} \mathbb Q_\ell$ is the direct sum of its cohomology sheaves, and these sheaves are geometrically constant, for $pr_1\colon   T \times \mathbb A^{n_1-m} \times C^{(n_2)} \times \dots \times C^{(n_r)} \to T$ the projection. This follows from proper base change, as $R pr_{1!} \mathbb Q_\ell$ is the pullback of  $H^*_c( \mathbb A^{n_1-m} \times C^{(n_2)} \times \dots \times C^{(n_r)}_{\overline{\kappa}},\mathbb Q_\ell) $ from a point, and that complex splits as a direct sum of shifts of vector spaces (as does any complex of sheaves on a point). \end{proof}

\begin{lemma}\label{rho-cohomology} For $\rho$ any representation of $S_n$, we have an isomorphism  \[ H^*_c \left( T_{\overline{\kappa} } , \left( R \pi_! \mathbb Q_\ell \otimes \rho\right)^{S_n}\right) = \left( H^*_c ( C_{\overline{\kappa}} , \mathbb Q_\ell)^{\otimes n} \otimes \rho \right)^{S_n}\]
and for all integers $d$ we have \[ \dim H^{n+d}_c \left( T_{\overline{\kappa} } , \left( R \pi_! \mathbb Q_\ell \otimes \rho\right)^{S_n}\right) = \dim   V_{\rho}^d .\]

 \end{lemma}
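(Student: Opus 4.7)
The plan is a straightforward composition of Leray, Künneth, and Frobenius reciprocity, with the only subtle point being the Koszul sign arising from the equivariant Künneth formula.

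First I would use the composition of functors identity $R\Gamma_c(T_{\overline{\kappa}}, -) \circ R\pi_! = R\Gamma_c(C^n_{\overline{\kappa}}, -)$ to write
\[ H^*_c(T_{\overline{\kappa}}, R\pi_! \mathbb Q_\ell) = H^*_c(C^n_{\overline{\kappa}}, \mathbb Q_\ell). \]
Because we work over a field of characteristic zero, tensoring with $\rho$ and taking $S_n$-invariants are exact functors, so they commute with taking cohomology; this gives the first displayed isomorphism
\[ H^*_c \bigl( T_{\overline{\kappa}}, (R\pi_! \mathbb Q_\ell \otimes \rho)^{S_n} \bigr) = \bigl( H^*_c(C^n_{\overline{\kappa}}, \mathbb Q_\ell) \otimes \rho \bigr)^{S_n}, \]
and combining with the $S_n$-equivariant Künneth formula $H^*_c(C^n_{\overline{\kappa}}, \mathbb Q_\ell) = H^*_c(C_{\overline{\kappa}}, \mathbb Q_\ell)^{\otimes n}$ yields the statement.

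For the dimension count, I would compute the cohomology of $C$ directly: since $C_{\overline{\kappa}} = \mathbb P^1_{\overline{\kappa}} \setminus S$ is a smooth connected affine curve with $|S| = m+1$ punctures, we have $H^0_c(C_{\overline{\kappa}}) = 0$, $H^1_c(C_{\overline{\kappa}}) = \mathbb Q_\ell^m$, and $H^2_c(C_{\overline{\kappa}}) = \mathbb Q_\ell$, with all other groups vanishing. Then by Künneth, the contribution to $H^{n+d}_c(C^n_{\overline{\kappa}}, \mathbb Q_\ell)$ comes from choosing an $(n-d)$-element subset $A \subseteq \{1,\dots,n\}$ of positions to carry an $H^1_c$ factor (contributing total degree $n-d$) and the complementary $d$ positions to carry an $H^2_c$ factor (contributing total degree $2d$), summing to $n+d$. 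As an $S_n$-representation this is an induced representation from $S_{n-d} \times S_d$:
\[ H^{n+d}_c(C^n_{\overline{\kappa}}, \mathbb Q_\ell) \cong \operatorname{Ind}_{S_{n-d} \times S_d}^{S_n} \bigl( H^1_c(C_{\overline{\kappa}})^{\otimes (n-d)} \otimes H^2_c(C_{\overline{\kappa}})^{\otimes d} \bigr). \]

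The key subtlety, which I expect to be the main point requiring care, is the Koszul sign in the $S_n$-equivariant Künneth formula. Because $H^1_c(C_{\overline{\kappa}})$ is placed in odd cohomological degree, the action of $S_{n-d}$ on $H^1_c(C_{\overline{\kappa}})^{\otimes (n-d)}$ by permuting tensor factors acquires a sign, so as an $S_{n-d}$-representation this is naturally $(\mathbb Q_\ell^m)^{\otimes (n-d)} \otimes \operatorname{sgn}_{S_{n-d}}$; meanwhile $H^2_c(C_{\overline{\kappa}})$ is one-dimensional in even degree, so its $d$-th tensor power is the trivial $S_d$-representation. Substituting and applying Frobenius reciprocity,
\[ \bigl(H^{n+d}_c(C^n_{\overline{\kappa}}, \mathbb Q_\ell) \otimes \rho\bigr)^{S_n} = \bigl( (\mathbb Q_\ell^m)^{\otimes (n-d)} \otimes \operatorname{sgn}_{S_{n-d}} \otimes \rho \bigr)^{S_{n-d} \times S_d} = V_\rho^d, \]
which is exactly the claimed dimension after taking the first isomorphism of the lemma in degree $n+d$.
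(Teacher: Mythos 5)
Your proof is correct and takes essentially the same approach as the paper: pull out $\rho$ and the $S_n$-invariants by exactness in characteristic zero, apply composition of compactly supported pushforwards and Künneth, compute $H^*_c(C_{\overline\kappa})$ explicitly, and then carefully track the Koszul sign on the odd-degree tensor factors, which is exactly how the $\sgn_{S_{n-d}}$ twist defining $V_\rho^d$ arises. The only cosmetic difference is that you justify $\dim H^1_c(C_{\overline\kappa})=m$ by counting punctures while the paper simply asserts it.
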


\begin{proof} First observe that \[ H^*_c \left( T_{\overline{\kappa} } , \left( R \pi_! \mathbb Q_\ell \otimes \rho\right)^{S_n}\right)= \left( H^*_c(T_{\overline{\kappa}}, R\pi_! \mathbb Q_\ell) \otimes \rho \right)^{ S_n} = \left( H^*_c (C^n_{\overline{\kappa}}, \mathbb Q_\ell ) \otimes \rho \right)^{S_n}  = \left( H^*_c ( C_{\overline{\kappa}} , \mathbb Q_\ell)^{\otimes n} \otimes \rho \right)^{S_n} \] where the three inequalities use, respectively, that taking $S_n$-invariants is an exact functor and thus commutes with cohomology, the Leray spectral sequence with compact supports \cite[XVII, Theorem 5.1.8(a)]{sga4-3}, and the K\"unneth formula \cite[XVII, Theorem 5.4.3]{sga4-3}.

Next observe that $H^j_c ( C_{\overline{\kappa}}, \mathbb Q_\ell)$ has dimension $0$ unless $i=1$ or $i=2$, dimension $m$ if $i=1$, and dimension $1$ if $i=2$.  Thus
\[ \hspace{-.35in} H^*_c ( C_{\overline{\kappa}} , \mathbb Q_\ell)^{\otimes n}   \cong (  H^0( C_{\overline{\kappa}} , \mathbb Q_\ell) \oplus  H^1( C_{\overline{\kappa}} , \mathbb Q_\ell) \oplus  H^2( C_{\overline{\kappa}} , \mathbb Q_\ell) )^{\otimes n}  \cong ( 0 \oplus (\mathbb Q_\ell)^m \oplus \mathbb Q_\ell)^{\otimes n} \textrm{ as }\mathbb Q_\ell\textrm{-vector spaces}.\]

The $n+d$th graded piece is generated by tensors of $n-d$ vectors in $\mathbb Q_\ell^m$ with $d$ vectors in $\mathbb Q_\ell$, and thus we have
\[ \mathcal H^{n+d} \Bigl( H^*_c ( C_{\overline{\kappa}} , \mathbb Q_\ell)^{\otimes n}  \Bigr) \cong \operatorname{Ind}_{S_{n-d} \times S_d }^{S_n}  ( \mathbb Q_\ell^{m} )^{\otimes n-d} \textrm{ as }\mathbb Q_\ell\textrm{-vector spaces}.\]
Here the induced representation is a sum over $S_n / (S_{n-d} \times S_d)$, i.e. a sum over $d$-element subsets of $\{1,\dots,n\}$. 
However, this is not necessarily an isomorphism as $S_n$-representations.  To obtain an isomorphism, before inducing, we must twist by the sign character of the permutation group acting on the odd degree tensor factors, i.e., we must twist by the sign representation of $S_{n-d}$. Thus 
\[ \mathcal H^{n+d} \Bigl( H^*_c ( C_{\overline{\kappa}} , \mathbb Q_\ell)^{\otimes n}  \Bigr) = \operatorname{Ind}_{S_{n-d} \times S_d }^{S_n}  \Bigl( ( \mathbb Q_\ell^{m} )^{\otimes n-d} \otimes \sgn_{S_{n-d}} \Bigr) \textrm{ as }S_n\textrm{-representations} \] so (because tensor products of complexes of $\mathbb Q_\ell$-vector spaces commute with taking cohomology)
\[ \mathcal H^{n+d} \Bigl( H^*_c ( C_{\overline{\kappa}} , \mathbb Q_\ell)^{\otimes n}\otimes \rho \Bigr)= \operatorname{Ind}_{S_{n-d} \times S_d }^{S_n}  \Bigl( ( \mathbb Q_\ell^{m} )^{\otimes n-d} \otimes \rho \otimes \sgn_{S_{n-d}} \Bigr) \] and thus, by Frobenius reciprocity,
\[\hspace{-.5in} \mathcal H^{n+d} \Bigl( H^*_c ( C_{\overline{\kappa}} , \mathbb Q_\ell)^{\otimes n}\otimes \rho \Bigr)^{S_n} = \Bigl( \operatorname{Ind}_{S_{n-d} \times S_d }^{S_n}  \Bigl( ( \mathbb Q_\ell^{m} )^{\otimes n-d} \otimes \rho \otimes \sgn_{S_{n-d}} \Bigr) \Bigr) ^{S_n} =  \Bigl( ( \mathbb Q_\ell^{m} )^{\otimes n-1} \otimes \rho \otimes \sgn_{S_{n-d}} \Bigr)^{S_{n-d} \times S_d }\] which, after replacing $\mathbb Q_\ell$ with $\mathbb C$, is  $V_{\rho}^d$. 
\end{proof} 
\begin{lemma}\label{irrep-cohomology-vanishing} Suppose $\rho$ is an irreducible representation of $S_n$ corresponding to a partition with all parts of size $<m$. Then \[  H^*_c \left( T_{\overline{\kappa} } , \left( R \pi_! \mathbb Q_\ell \otimes \rho\right)^{S_n}\right)  \]  is concentrated in degrees between $n$ and $n+m-1$. \end{lemma}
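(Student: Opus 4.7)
The proof should be essentially a direct combination of \cref{rho-cohomology} and \cref{other-vanishing-lemma}, with a small observation for the lower bound. I would split the task into showing the cohomology vanishes in degrees $\geq n+m$ and in degrees $<n$.

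For the upper bound, I would apply \cref{rho-cohomology} to get the dimension formula
\[ \dim H^{n+d}_c \left( T_{\overline{\kappa} } , \left( R \pi_! \mathbb Q_\ell \otimes \rho\right)^{S_n}\right) = \dim V_{\rho}^d \]
and then invoke \cref{other-vanishing-lemma}, which gives $V_\rho^d = 0$ whenever $d \geq m$, since $\rho$ corresponds to a partition all of whose parts have size $<m$. This immediately yields vanishing of the cohomology in all degrees $n+d$ with $d \geq m$.

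For the lower bound, I would use the identification from the proof of \cref{rho-cohomology},
\[ H^*_c \left( T_{\overline{\kappa} } , \left( R \pi_! \mathbb Q_\ell \otimes \rho\right)^{S_n}\right) = \left( H^*_c ( C_{\overline{\kappa}} , \mathbb Q_\ell)^{\otimes n} \otimes \rho \right)^{S_n}, \]
and note that $C$ is an affine curve, so $H^j_c(C_{\overline{\kappa}},\mathbb Q_\ell) = 0$ for $j<1$. Hence the tensor product $H^*_c(C_{\overline{\kappa}},\mathbb Q_\ell)^{\otimes n}$ is supported in degrees $\geq n$, and taking $(\,\cdot\otimes\rho)^{S_n}$ preserves this vanishing, giving the lower bound.

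There is no real obstacle here; the two-part statement of the lemma matches up cleanly with the two lemmas already proved. The only minor subtlety is making sure to cite the cohomological grading of $C$ correctly for the lower bound, which follows from the dimension tabulation already performed in the proof of \cref{rho-cohomology}.
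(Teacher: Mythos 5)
Your proposal is correct and matches the paper's proof, which cites exactly \cref{rho-cohomology} and \cref{other-vanishing-lemma}. Your explicit spelling out of the lower bound (using $H^j_c(C_{\overline{\kappa}},\mathbb Q_\ell)=0$ for $j<1$, as tabulated in the proof of \cref{rho-cohomology}) is just making precise what the paper leaves implicit in the range $0 \leq d \leq n$ of the index for $V^d_\rho$.
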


\begin{proof} This follows from \cref{rho-cohomology} and \cref{other-vanishing-lemma}. \end{proof}

\begin{lemma}\label{irrep-perverse} Suppose $\rho$ is an irreducible representation of $S_n$ corresponding to a partition with all parts of size $<m$. Then $\left( R \pi_! \mathbb Q_\ell \otimes \rho\right)^{S_n}[n]$ is a perverse sheaf. \end{lemma}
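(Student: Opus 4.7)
The plan is to verify both directions of the perverse $t$-structure inclusion for $K[n] := (R\pi_!\mathbb Q_\ell \otimes \rho)^{S_n}[n]$. The bound $K[n] \in {}^p D^{\geq 0}$ follows from \cref{lisse-higher-cohomology} by exactness of the functor $(\cdot \otimes \rho)^{S_n}$ in characteristic zero: the vanishing ${}^p\mathcal H^j(R\pi_!\mathbb Q_\ell) = 0$ for $j < n$ directly yields the same for $K$. For the other bound $K[n] \in {}^p D^{\leq 0}$, I would argue by contradiction: suppose ${}^p\mathcal H^{k_0}(K) \ne 0$ for some $k_0 > n$, and take $k_0$ to be maximal with this property. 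By \cref{lisse-higher-cohomology} this perverse cohomology sheaf is of the form $\mathcal L[m]$, where $\mathcal L$ is a nonzero lisse sheaf on $T$ with unipotent global monodromy.

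The key computation uses the spectral sequence
\[ E_2^{p,q} = H^p_c(T_{\overline\kappa}, {}^p\mathcal H^q(K)) \Rightarrow H^{p+q}_c(T_{\overline\kappa}, K), \]
focused on the extremal entry $E_\infty^{m, k_0}$. Using the standard cohomological amplitude $H^p_c(T_{\overline\kappa}, P) = 0$ for $p \notin [0, m]$ whenever $P$ is perverse on the affine torus $T$ (a consequence of Artin vanishing via Verdier duality), both incoming and outgoing differentials at $E_r^{m, k_0}$ vanish for all $r \ge 2$: incoming ones have source in perverse cohomology of $K$ in degree $> k_0$ (zero by maximality), and outgoing ones have target in $H^p_c$ for $p > m$ (zero by the amplitude bound). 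Hence $E_\infty^{m, k_0} = E_2^{m, k_0} = H^{2m}_c(T_{\overline\kappa}, \mathcal L)$, which equals the coinvariants $\mathcal L_{\pi_1(T_{\overline\kappa})}$ and is nonzero because $\mathcal L$ is nonzero unipotent. The same two reasons show $E_\infty^{m, k_0}$ is the only nonzero contribution on the diagonal $p + q = m + k_0$, so $H^{m+k_0}_c(T_{\overline\kappa}, K) \ne 0$.

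But $k_0 \ge n+1$ gives $m + k_0 > n + m - 1$, contradicting \cref{irrep-cohomology-vanishing}, which says $H^j_c(T_{\overline\kappa}, K)$ is nonzero only in $[n, n+m-1]$. The hypothesis on $\rho$ (all parts of the corresponding partition have size $< m$) enters precisely through \cref{irrep-cohomology-vanishing} via \cref{other-vanishing-lemma}; without it the compactly supported cohomology could extend into the problematic high-degree range and no contradiction would result. The contradiction forces ${}^p\mathcal H^{k_0}(K) = 0$ for all $k_0 > n$, completing the proof. The main technical step is the differential-vanishing analysis at the extremal spectral sequence entry, which lets us pin down the top compactly-supported cohomology degree and derive the contradiction.
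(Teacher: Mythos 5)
Your proof is correct and takes essentially the same route as the paper's: argue by contradiction from the maximal $j > n$ with ${}^p\mathcal H^j(K)\ne 0$, use lissity and unipotent monodromy from \cref{lisse-higher-cohomology} to get a nonzero $E_2^{m,j}$ term (the coinvariants), kill all relevant differentials via the amplitude bound $H^p_c(T,P)=0$ for $p>m$ on perverse $P$ together with maximality of $j$, and derive a contradiction with \cref{irrep-cohomology-vanishing}. The only cosmetic difference is that you also spell out the ${}^pD^{\geq 0}$ half explicitly and note that the extremal entry is in fact the unique nonzero contribution on its antidiagonal; the paper only needs that the entry survives to $E_\infty$, but both observations are correct and the argument is the same.
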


\begin{proof}
Assume for contradiction that ${}^p \mathcal H^j \left( \left( R \pi_! \mathbb Q_\ell \otimes \rho\right)^{S_n} \right) $ is nonzero for some $j>n$. Let $j$ be the largest such. Because ${}^p \mathcal H^j \left( \left( R \pi_! \mathbb Q_\ell \otimes \rho\right)^{S_n} \right) $  is lisse with unipotent monodromy, $H^m_c \Bigl( T_{\overline{\kappa}}, {}^p \mathcal H^j \left( \left( R \pi_! \mathbb Q_\ell \otimes \rho\right)^{S_n} \right) \Bigr) $, which is equal to the monodromy coinvariants of ${}^p \mathcal H^j \left( \left( R \pi_! \mathbb Q_\ell \otimes \rho\right)^{S_n} \right) $, is nonzero. 
We now use the spectral sequence associated to the perverse filtration of  $\left(R \pi_! \mathbb Q_\ell \otimes \rho\right)^{S_n}$, which computes $H^{p+q}_c \left( T_{\overline{\kappa} } , \left( R \pi_! \mathbb Q_\ell \otimes \rho\right)^{S_n}\right)$  from $E^{p,q}_1 = H^{2p+q}_c \Bigl( T_{\overline{\kappa}}, {}^p \mathcal H^{-p} \left( \left( R \pi_! \mathbb Q_\ell \otimes \rho\right)^{S_n} \right) \Bigr)$, and whose differential on the $r$th page sends $ E^{p,q}_r$ to $E^{p +r, q+1 - r}$ \cite[Definition 3.6.1]{dCM}.

The only differentials which could cancel the $p=-j, q=2j+m $ term $H^m_c \Bigl( T_{\overline{\kappa}}, {}^p \mathcal H^j \left( \left( R \pi_! \mathbb Q_\ell \otimes \rho\right)^{S_n} \right) \Bigr)$ are those going to $E^{p+r,q+1-r}_r$, which must vanish as \[ E^{p+r, q+1-r}_1 = H^{m+1+r}_c \Bigl( T_{\overline{\kappa}}, {}^p \mathcal H^{j-r} \left( \left( R \pi_! \mathbb Q_\ell \otimes \rho\right)^{S_n} \right) \Bigr)= 0 \] since perverse sheaves have cohomology in degree $\leq m$ by \cite[(4.2.4)]{bbd}, and those coming from $E^{p-r, q+r-1}_r$, which must vanish as \[ E^{p-r, q+1-r}_1 = H^{m-r-1}_c \Bigl( T_{\overline{\kappa}}, {}^p \mathcal H^{j+r} \left( \left( R \pi_! \mathbb Q_\ell \otimes \rho\right)^{S_n} \right) \Bigr)=0\] since $j$ is the maximal degree with nonzero perverse cohomology.

Since no differentials cancel the $p=-j, q=2j+m $ term, we have $H^{m+j}_c \left( T_{\overline{\kappa} } , \left( R \pi_! \mathbb Q_\ell \otimes \rho\right)^{S_n}\right) \neq 0$. Because $j>n$, this contradicts \cref{irrep-cohomology-vanishing}, which implies $H^{m+j}_c \left( T_{\overline{\kappa} } , \left( R \pi_! \mathbb Q_\ell \otimes \rho\right)^{S_n}\right) =0$ for $j \geq n$. \end{proof}

The following cohomology vanishing result is the main reason we are able to obtain such strong bounds. Using Deligne's Riemann Hypothesis, the maximum degree in which $\left( R \pi_! \mathbb Q_\ell \otimes \rho\right)^{S_n}$ has nonvanshing cohomology determines the power of $q$ that will appear in our bound for the trace function of $\left( R \pi_! \mathbb Q_\ell \otimes \rho\right)^{S_n}$. Since the maximum degree $n+1-m$ provided by \cref{support-degrees} is close to the middle degree $n-m$ (i.e. the dimension of a fiber of $\pi$), the bound we obtain will be close to square-root cancellation.

\begin{lemma}\label{support-degrees} Suppose that  $n \geq m$ and $\rho$ is an irreducible representation of $S_n$ corresponding to a partition with all parts of size $<m$.

Then  $\left( R \pi_! \mathbb Q_\ell \otimes \rho\right)^{S_n}$ is concentrated in degrees $n-m$ and $n+1-m$. \end{lemma}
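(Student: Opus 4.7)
My plan combines the perversity of $K[n]$ established in \cref{irrep-perverse} with a careful analysis of the fibers of $\pi$. Writing $K = (R\pi_!\mathbb Q_\ell \otimes \rho)^{S_n}$, we know from \cref{irrep-perverse} that $K[n]$ is perverse on the $m$-dimensional smooth variety $T$. Consequently, the ordinary cohomology sheaves $\mathcal H^j K$ vanish outside the range $[n-m, n]$, and for $j$ in this range, the support of $\mathcal H^j K$ has dimension at most $n - j$.

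For the lower vanishing, I would use that $\pi = \tau \circ sym$ factors through the finite map $sym\colon C^n \to C^{(n)}$ and the smooth affine map $\tau\colon C^{(n)} \to T$ of relative dimension $n-m$. Crucially, for any invertible residue class $a \pmod g$, the fiber $\tau^{-1}(a)$ is isomorphic to the affine space $\mathbb A^{n-m}$, since invertibility of $a$ automatically ensures coprimality with $g$ for any polynomial in the linear residue class. Thus any fiber $\pi^{-1}(a)$ is an affine variety of pure dimension $n-m$ cut out by $m$ equations in the smooth variety $C^n$, i.e., a local complete intersection. Artin vanishing combined with Poincar\'e-Verdier duality on the LCI fiber then forces $H^j_c(\pi^{-1}(a), \mathbb Q_\ell) = 0$ for $j < n-m$; since the stalks of $\mathcal H^j K$ are $S_n$-invariants of these fiber cohomologies twisted by $\rho$, this gives $\mathcal H^j K = 0$ for $j < n-m$.

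To rule out $\mathcal H^j K$ for $j > n - m + 1$, the approach is to study stalks via proper base change: $\mathcal H^j K_a = H^j_c(\mathbb A^{n-m}, E_\rho|_{\mathbb A^{n-m}})$, where $E_\rho = (sym_*\mathbb Q_\ell \otimes \rho)^{S_n}$, and $E_\rho[n]$ is perverse on $C^{(n)}$ as a direct summand of the perverse sheaf $sym_*\mathbb Q_\ell[n]$ (which is perverse because $sym$ is finite and $\mathbb Q_\ell[n]$ is perverse on the smooth $C^n$). One can then try to bound the compactly supported cohomology on the smooth affine fiber using the perverse structure and Artin's vanishing theorem.

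The main obstacle will be this last step. The support condition from perversity already gives that $\mathcal H^j K$ for $j \geq n - m + 2$ is supported in codimension at least two, so what remains is a purely local vanishing statement at higher-codimension points of $T$. I expect the cleanest way to establish this to combine the hypothesis on $\rho$ (all partition parts of size less than $m$, which via the argument of \cref{other-vanishing-lemma} constrains which $S_n$-representations can contribute to top-degree fiber cohomology) with the ULA property of $u_!\mathbb Q_\ell$ near the boundary $Z\times T$ of the compactification (\cref{locally-acyclic-easy}), which limits how cohomology can jump between generic and special fibers to a single perverse degree, ensuring that no non-generic cohomology is forced to appear in degrees higher than $n - m + 1$.
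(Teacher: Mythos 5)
You have correctly assembled the right ingredients — the perversity of $K := (R\pi_!\mathbb Q_\ell\otimes\rho)^{S_n}$ shifted by $n$ (\cref{irrep-perverse}), the universal local acyclicity of $u_!\mathbb Q_\ell$ near the compactified boundary (\cref{locally-acyclic-easy}) — and you have honestly flagged the gap. Let me be precise about where the gap is and how the paper closes it.

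First, a small point: the lower bound $\mathcal H^j K = 0$ for $j < n-m$ does not need any fiber analysis. Since $K[n]$ is perverse on $T$ of dimension $m$, it lies in $D^{\geq -m}$, hence $K \in D^{\geq n-m}$. Your LCI/duality detour is both unnecessary and somewhat shaky: $\pi^{-1}(a)$ is LCI but singular (it is $sym^{-1}$ of the smooth affine $\tau^{-1}(a)$, and $sym$ ramifies along the diagonals), so the naive form of Poincar\'e--Verdier duality you invoke is not available.

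The real content is the upper bound, and here your third approach (compute $\mathcal H^j K_a$ as $H^j_c$ of $E_\rho$ restricted to the affine fiber $\tau^{-1}(a)\cong\mathbb A^{n-m}$, then apply Artin vanishing) does not close: Artin vanishing on an affine of dimension $d$ makes $R\Gamma_c$ \emph{left} $t$-exact, which gives a \emph{lower} bound on the degree, not an upper one. The restriction $i^*(E_\rho[n])$ to the codimension-$m$ closed $\mathbb A^{n-m}$ sits in ${}^pD^{[-2m,0]}$, and $R\Gamma_c$ of such an object only lands in $D^{\geq -2m}$ for free; there is no useful upper cutoff from this direction.

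What the paper does instead, and what your final paragraph is gesturing at, is a vanishing-cycles comparison between the given stalk and the generic stalk. Choose a DVR $\operatorname{Spec} R\to T$ hitting the given closed point at the special fiber and the generic point of $T$ at the generic fiber. The vanishing-cycles complex $R\Phi$ for $i^*u_!\mathbb Q_\ell$ with respect to $\overline\pi$ is supported where $u_!\mathbb Q_\ell$ fails to be ULA over $T$; by \cref{locally-acyclic-easy} this locus avoids a neighborhood of $Z\times T$, so its complement is both closed in the proper $\overline X$ and closed in the affine $X$, hence finite over $T$, and $R\Phi$ is supported at finitely many points of the special fiber. Semiperversity of vanishing cycles applied to $i^*u_!\mathbb Q_\ell[m-1-n]$ (a complex on a variety of dimension $\leq n+1-m$) places $R\Phi$ in degrees $\leq n+1-m$. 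Since it is supported at finitely many points, its hypercohomology sits purely in degree $n+1-m$, so the vanishing-cycles triangle shows the stalk-to-generic-stalk map on $R\pi_!\mathbb Q_\ell$ is an isomorphism in degrees $>n+1-m$. Tensoring by $\rho$ and taking $S_n$-invariants preserves this. Finally, at the geometric generic point of $T$, perversity of $K[n]$ on the \emph{normal} $m$-dimensional $T$ forces concentration in degree $n-m$ exactly. Combining, every stalk of $K$ vanishes above degree $n+1-m$. The hypothesis on $\rho$ enters only through \cref{irrep-perverse}; the reference to \cref{other-vanishing-lemma} that you suggest is not needed at this point.

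So: your instinct to use ULA plus a "jump to one degree" picture is the right one, but you have not actually produced the argument. The missing step is the vanishing-cycles triangle and, crucially, the observation that the non-ULA locus is \emph{finite} over $T$ (proper-and-affine gives finite) so that the vanishing cycles are punctual, which together with the dimension bound on the fiber yields the single degree $n+1-m$.
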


\begin{proof} Because $\left( R \pi_! \mathbb Q_\ell \otimes \rho\right)^{S_n}[n]$ is perverse on a variety of dimension $m$, it is concentrated in degree $\geq -m$, so $\left( R \pi_! \mathbb Q_\ell \otimes \rho\right)^{S_n}$ is concentrated in degree $\geq n-m $. 

Fix a point $x$ in $T$. We will show that the stalk of $\left( R \pi_! \mathbb Q_\ell \otimes \rho\right)^{S_n}$ at $x$ is concentrated in degree $\leq n+1-m$. Because $x$ is arbitrary, this will prove that $\left( R \pi_! \mathbb Q_\ell \otimes \rho\right)^{S_n}$ is concentrated in degree $\leq n+1-m$ which, combined with concentration in degree $\geq n-m$, gives the statement.

 Let $\operatorname{Spec} R$ be the spectrum of a discrete valuation ring with special point $s$ and generic point $\eta$ and let $i\colon \operatorname{Spec} R\to T$ be a map such that $i(s)=x$ and $i(\eta)$ is a generic point of $T$. We have a vanishing cycles distinguished triangle (\cite[XIII, (2.1.8.9)]{sga7-2}, applied to $ i^* u_! \mathbb Q_\ell$)  \begin{equation}\label{vanishing-cycles-sequence} (R \overline{\pi}_* u_! \mathbb Q_\ell)_s \to (R \overline{\pi}_* u_! \mathbb Q_\ell)_{\eta} \to H^* ( \overline{\pi}^{-1} (x), R \Phi_R i^* u_! \mathbb Q_\ell) .\end{equation}

At any point where  $u_! \mathbb Q_\ell$ is universally locally acyclic relative to $\overline{\pi}$, by definition $i^* u_! \mathbb Q_\ell$ is locally acyclic and so $R \Phi_R i^* u_! \mathbb Q_\ell=0$ at that point by \cite[XIII, Proposition 2.1.4]{sga7-2} and the definition of local acyclicity. By \cref{locally-acyclic-easy}, this holds in an open neighborhood of $Z \times T$ in $\overline{X}$. Its complement is a closed subset of $\overline{X}$, hence proper over $T$, and also a closed subset of $X$, hence affine over $T$, and thus is finite over $T$. Thus the complement contains finitely many points in the fiber over $s$. Thus  $R \Phi_R i^* u_! \mathbb Q_\ell$ is supported at finitely many points.

Because $n \geq m$, the fibers of $\overline{\pi}$ have dimension $n-m$, so $ i^* u_! \mathbb Q_\ell$ is a sheaf on a variety of dimension $\leq n+1-m$, and thus $i^* u_! \mathbb Q_\ell[ m-1-n]$ is semiperverse. It follows from the (semi)perversity of vanishing cycles \cite[Corollary 4.6]{Illusie94} that $R \Phi_R i^* u_! \mathbb Q_\ell [m-1-n]$ is semiperverse, and in particular $R \Phi_R i^* u_! \mathbb Q_\ell$ is concentrated in degree $\leq n+1-m$. Because $ R \Phi_R i^* u_! \mathbb Q_\ell$ is concentrated at finitely many points and in degree $\leq n+1-m$, $ H^* ( \overline{\pi}^{-1} (x), R \Phi_R i^* u_! \mathbb Q_\ell)$ is concentrated in degree $n +1-m$.  Applying \eqref{vanishing-cycles-sequence} and noting that $\overline{\pi} \circ u = \pi$, the natural map \[ (R \pi_! \mathbb Q_\ell )_x \to (R \pi_! \mathbb Q_\ell)_{i(\eta)}\] is an isomorphism in degrees  $>n-m+1$.  Tensoring with $\rho$ and taking $S_n$-invariants, the map is still an isomorphism in degrees $>n-m+1$.

By \cref{irrep-perverse}, $\left( R \pi_! \mathbb Q_\ell \otimes \rho\right)^{S_n}[n]$ is perverse, so at the generic point $i(\eta)$, the shifted perverse sheaf $\left( R \pi_! \mathbb Q_\ell \otimes \rho\right)^{S_n}$ is concentrated in degree $n-m$. Thus $\left( R \pi_! \mathbb Q_\ell \otimes \rho\right)^{S_n}_x$ is concentrated in degree $\leq n+1-m$.
\end{proof}

 The next two sections will be devoted to showing that the dimension of the stalk of $\left( R \pi_! \mathbb Q_\ell \otimes \rho\right)^{S_n}$ in degree $n-m$ is bounded by $C_1(\rho)$  and the dimension of its stalk in degree $n+1-m$ is bounded by $C_2(\rho)$. This will then quickly imply the main theorem.

%
%
%
 
  \section{Euler characteristic and generic rank}\label{sec-Euler}

For $\orho$ an irreducible representation of $S_n$, let $\chi_{\orho}$ be the Euler characteristic of the stalk of $\left( R \pi_! \mathbb Q_\ell \otimes \orho\right)^{S_n}$ at the generic point. In other words, for $\eta$ a geometric generic point of $T$, let \[ \chi_{\orho }= \sum_j (-1)^j \dim  ( R^j \pi_! \mathbb Q_\ell \otimes \orho)^{S_n}_{\eta} .\]

The main goal of this section is to prove, in Lemma \ref{rho-Euler-formula}, the formula
\begin{equation}\label{chi-orho-formula} \chi_{\orho} =\sum_{d=0}^n (-1)^{d+n+m} \frac{\partial^m}{\partial \lambda_1 \dots \partial \lambda_m } \tr \Bigl ( \Diag(\lambda_1,\dots, \lambda_m),V_{\orho}^d \Bigr)\Big|_{\lambda_1,\dots,\lambda_m=1}. \end{equation} 

Let us first see how \eqref{chi-orho-formula} will help us. When $\orho$ is an irreducible representation corresponding to a partition into parts of size $<m$, we know from \cref{irrep-perverse} that $\left( R \pi_! \mathbb Q_\ell \otimes \rho\right)^{S_n}$ is concentrated in degree $n-m$ at the generic point, so \eqref{chi-orho-formula} will enable us to compute the rank of  $\left( R \pi_! \mathbb Q_\ell \otimes \rho\right)^{S_n}$ in this degree. Using \cref{perverse-easy-inequality}, we will bound the rank of $\left( R^{n+m} \pi_! \mathbb Q_\ell \otimes \rho\right)^{S_n}$ at non-generic points as well. This will lead to the $C_1(\rho)$ component of the error term in \cref{squarefree-bound-precise}.

To prove \eqref{chi-orho-formula}, we will use \cref{chi-easy-case,rho-derivative-formula}, where we will show that \eqref{chi-orho-formula} holds for $\orho$ corresponding to a partition with one part of size $\geq m$. In \cref{Lefschetz-vanishing,Lefschetz-linear-equation}, we will apply the Lefschetz fixed point formula to a map with no fixed points to derive equations \eqref{sigma-rho-equation} satisfied by the quantities $\chi_{\orho}$. We will show in \cref{rho-equation-uniqueness} that these equations, together with \eqref{chi-orho-formula} for $\orho$ corresponding to a partition with one part of size $\geq m$, have a unique solution. This enables us to prove \eqref{chi-orho-formula} by checking that the right side of \eqref{chi-orho-formula} also satisfies these equations, which we will do in Lemma \ref{rho-Euler-formula}.

\begin{lemma}\label{Lefschetz-vanishing} Let $\sigma \in S_n$ have at most $m-1$ cycles and let $\eta \in T$ be the geometric generic point. Then \begin{equation}\label{lefschetz-permutation} \sum_j (-1)^j \tr(\sigma, (R^j \pi_!  \mathbb Q_\ell)_{\eta} ) =0 .\end{equation} \end{lemma}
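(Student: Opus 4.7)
The plan is to invoke the Lefschetz--Verdier trace formula for the finite-order automorphism $\sigma$ acting on the geometric generic fiber of $\pi$, and then to observe that the relevant fixed locus is empty for dimension reasons.

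First, by proper base change for $R\pi_!$, the stalk $(R^j\pi_! \mathbb Q_\ell)_\eta$ is canonically $H^j_c(\pi^{-1}(\eta), \mathbb Q_\ell)$. Since the $S_n$-action on $C^n$ commutes with $\pi$, $\sigma$ acts as an automorphism of $\pi^{-1}(\eta)$ covering the identity on $\eta$, and the action on the stalks is the one induced by this automorphism. Applying the Lefschetz--Verdier trace formula to $\sigma$ acting on $\pi^{-1}(\eta)$ with constant coefficients $\mathbb Q_\ell$ rewrites the left-hand side of \eqref{lefschetz-permutation} as
\[ \sum_j (-1)^j \tr\bigl(\sigma, H^j_c(\pi^{-1}(\eta), \mathbb Q_\ell)\bigr) = \chi_c\bigl((\pi^{-1}(\eta))^\sigma\bigr). \]

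It remains to show that $(\pi^{-1}(\eta))^\sigma$ is empty, which follows from a dimension count. The fixed locus $(C^n)^\sigma$ consists of tuples $(\alpha_1,\dots,\alpha_n)$ with $\alpha_i = \alpha_{\sigma(i)}$, i.e., those which are constant on the cycles of $\sigma$. If $\sigma$ has cycles of lengths $\ell_1,\dots,\ell_c$ with $\sum_k \ell_k = n$, then $(C^n)^\sigma \cong C^c$, and the restriction of $\pi$ to this diagonal sends $(\beta_1,\dots,\beta_c)$ to the residue class of $\prod_{k=1}^c (t-\beta_k)^{\ell_k}$ modulo $g$. The image of this restricted morphism is constructible of dimension at most $c \leq m-1$, while $\dim T = m$, so this image is contained in a proper closed subvariety of $T$ and in particular does not contain the geometric generic point $\eta$. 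Hence $(\pi^{-1}(\eta))^\sigma$ is empty, its compactly supported Euler characteristic vanishes, and \eqref{lefschetz-permutation} follows.

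The main obstacle, such as it is, is simply confirming that the Lefschetz--Verdier formula applies in our setting; this is standard since $\sigma$ has finite order and we work with $\ell$-adic cohomology on a variety over an algebraically closed field of characteristic coprime to $\ell$. Once this is granted, the identification of $(C^n)^\sigma$ with $C^c$ and the bound $c \leq m-1 < \dim T$ immediately force emptiness of the geometric generic fiber of the fixed locus, and the desired vanishing is immediate.
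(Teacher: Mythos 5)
Your dimension count showing $\pi^{-1}(\eta)^\sigma = \emptyset$ is correct and matches the paper. But the Lefschetz step has a genuine gap. You apply a fixed-point formula for $H^*_c$ directly on the \emph{non-proper} variety $\pi^{-1}(\eta)$, asserting $\sum_j (-1)^j \tr(\sigma, H^j_c) = \chi_c\bigl((\pi^{-1}(\eta))^\sigma\bigr)$ "since $\sigma$ has finite order and the characteristic is coprime to $\ell$." Coprimality to $\ell$ is automatic and irrelevant here; what matters for this non-proper version of the formula is that $\operatorname{ord}(\sigma)$ be coprime to the \emph{residue characteristic} $p$, and this can fail for $\sigma\in S_n$ (e.g.\ a $p$-cycle when $p\le n$). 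The sharp statement (Deligne--Lusztig) is $\mathcal L(\sigma, X) = \chi_c(X^s)$, where $s$ is the $p'$-part of $\sigma$. Since $s$ is a power of $\sigma$ it generally has \emph{more} cycles than $\sigma$, so $(C^n)^s$ can have dimension $\geq m$ and the argument breaks down: for instance, if $\sigma$ is a $p$-cycle then $s = \mathrm{id}$ and the relevant Euler characteristic is $\chi_c(\pi^{-1}(\eta))$ itself, which your dimension count says nothing about. In the extreme case $u(x)=x+1$ on $\mathbb A^1$ in characteristic $p$ one has $\mathcal L(u,\mathbb A^1)=1\ne 0 = \chi_c(\emptyset)$, so the formula you invoke is literally false without the coprimality hypothesis.

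The paper avoids this entirely by passing to the compactification $\overline\pi\colon\overline X\to T$ of Section \ref{sec-compactification} and applying the (unconditional) Lefschetz--Verdier trace formula on the \emph{proper} fiber $\overline\pi^{-1}(\eta)$ with coefficients $u_!\mathbb Q_\ell$. The price is an additional check: one must also rule out $\sigma$-fixed points on the boundary $Z$ added by compactification. That check is quick — tuples in $Z$ take at least $m+1$ distinct values, while a $\sigma$-fixed tuple takes at most $m-1$ — but it is essential and is not performed in your argument. If you replace your appeal to the non-proper Lefschetz formula by this equivariant compactification plus the boundary check, you recover a complete proof.
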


\begin{proof}  The group $S_n$ acts on the space $\overline{X}$, its open subset $X$, and its closed complement $Z \times T$, by permuting $(\alpha_1,\dots,\alpha_n)$.  Thus $\sigma$ acts naturally on the sheaf $u_!\mathbb Q_\ell$. Using $\pi = \overline{\pi} \circ u$ and proper base change, we have \begin{equation} \label{lefschetz-perm2} \sum_j (-1)^j \tr(\sigma, (R^j \pi_!  \mathbb Q_\ell)_{\eta} ) = \sum_j (-1)^j \tr(\sigma, (R^j \overline{\pi}^* u_!  \mathbb Q_\ell)_{\eta} )= \sum_j (-1)^j \tr( \sigma, H^j ( \overline{\pi}^{-1}(\eta), u_! \mathbb Q_\ell ) ) . \end{equation}

The Lefschetz fixed point formula \cite[III, Corollary 4.8]{sga5} expresses $\sum_j (-1)^j \tr( \sigma, H^j ( \overline{\pi}^{-1}(\eta), u_! \mathbb Q_\ell ) ) $ as an integral over the fixed points of $\sigma$. We will check that there are no fixed points of $\sigma$ in $\overline{\pi}^{-1} (\eta)$. This implies \[ \sum_j (-1)^j \tr( \sigma, H^j ( \overline{\pi}^{-1}(\eta), u_! \mathbb Q_\ell ) ) =0 ,\] which, combined with \eqref{lefschetz-perm2}, implies \eqref{lefschetz-permutation}.

First let us check there are no fixed points of $\sigma$ in $Z$, and thus none in $Z \times T$. Any fixed point of $\sigma$ in $(\mathbb P^1)^n$ is an $n$-tuple of points of $\mathbb P^1$ which takes the same value on different indices $i$ in the same cycle of $\sigma$, and thus takes at most $m-1$ distinct values. On the other hand, every tuple in $Z$ by definition takes all the values of $S$, and thus takes at least $m+1$ different values.

Next let us check there are no fixed points of $\sigma$ in the complement of $Z \times T$, which is the generic fiber of $\pi$ in $X = C^n$. Again, any fixed point of $\sigma$ is an $n$-tuple which takes at most $m-1$ distinct values. Hence the space of fixed points in $C^n$ has dimension at most $m-1$. Thus the image of the space of fixed points in $T$ has dimension at most $m-1$ and hence cannot include the generic point. \end{proof}

\begin{lemma}\label{Lefschetz-linear-equation} Let $\sigma \in S_n$ have at most $m-1$ cycles. Then
\begin{equation}\label{sigma-rho-equation} \sum_{ \orho\textrm{ irrep of }S_n} \chi_{\orho} \tr(\sigma, \orho)  =0 .\end{equation}\end{lemma}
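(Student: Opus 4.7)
The plan is to connect the alternating trace $\sum_j (-1)^j \tr(\sigma, (R^j\pi_!\mathbb{Q}_\ell)_\eta)$, which vanishes by \cref{Lefschetz-vanishing}, to the character sum $\sum_{\orho} \chi_{\orho}\tr(\sigma,\orho)$ by decomposing the generic stalk as a representation of $S_n$.

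First I would observe that $(R^j\pi_!\mathbb{Q}_\ell)_\eta$ is naturally a representation of $S_n$ (since $S_n$ acts on $C^n$ preserving $\pi$), and this $S_n$-action commutes with the decomposition into isotypic components. For any finite-dimensional $S_n$-representation $V$ and any $\sigma \in S_n$, we have the character identity
\[ \tr(\sigma, V) = \sum_{\orho} [V : \orho]\, \tr(\sigma, \orho), \]
where $[V : \orho]$ denotes the multiplicity of the irreducible $\orho$ in $V$. Using that irreducible $S_n$-representations are self-dual (every element of $S_n$ is conjugate to its inverse, so characters are real, equivalently each $\orho$ is isomorphic to its dual), the multiplicity can be written as
\[ [V:\orho] = \dim \Hom_{S_n}(\orho, V) = \dim (V \otimes \orho)^{S_n}. \]

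Applying this to $V = (R^j \pi_!\mathbb{Q}_\ell)_\eta$ and taking the alternating sum over $j$, I would interchange the two sums (both finite) to get
\[ \sum_j (-1)^j \tr(\sigma, (R^j\pi_!\mathbb{Q}_\ell)_\eta) = \sum_{\orho} \left(\sum_j (-1)^j \dim (R^j\pi_!\mathbb{Q}_\ell \otimes \orho)^{S_n}_\eta\right) \tr(\sigma,\orho) = \sum_{\orho} \chi_{\orho}\, \tr(\sigma,\orho), \]
using the definition of $\chi_{\orho}$ and the fact that taking $S_n$-invariants commutes with taking stalks (since $S_n$ is a finite group of order invertible in $\mathbb{Q}_\ell$, so averaging is exact). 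Since $\sigma$ has at most $m-1$ cycles, \cref{Lefschetz-vanishing} gives that the left-hand side is zero, which yields \eqref{sigma-rho-equation}.

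There is no real obstacle here; the content of the lemma is entirely carried by the geometric input \cref{Lefschetz-vanishing}, and the present statement is the elementary character-theoretic repackaging of that input into a linear equation in the $\chi_{\orho}$ indexed by conjugacy classes with few cycles. The only small thing to check carefully is the self-duality of irreducible $S_n$-representations so that multiplicities are correctly identified with dimensions of $(\,\cdot\,\otimes \orho)^{S_n}$, which follows from the reality of symmetric group characters.
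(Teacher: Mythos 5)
Your proof is correct and follows essentially the same route as the paper: decompose the generic stalk $(R^j\pi_!\mathbb{Q}_\ell)_\eta$ into $S_n$-isotypic pieces, identify the multiplicity of $\orho$ with $\dim(R^j\pi_!\mathbb{Q}_\ell\otimes\orho)^{S_n}_\eta$, take the alternating sum in $j$ to produce the $\chi_{\orho}$, and invoke \cref{Lefschetz-vanishing}. You are a bit more explicit than the paper in flagging the self-duality of symmetric group irreducibles as the reason the multiplicity equals $\dim(V\otimes\orho)^{S_n}$, which the paper leaves implicit, but the substance is identical.
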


\begin{proof} This follows from \cref{Lefschetz-vanishing} and the observation that, as a representation of $S_n,$
\[ (R^j \pi_!  \mathbb Q_\ell)_{\eta}  = \bigoplus_{\orho \textrm{ irrep of }S_n} \orho^{  \dim  ( R^j \pi_! \mathbb Q_\ell \otimes \orho)^{S_n}_{\eta} } \]
so
\[ \tr(\sigma, (R^j \pi_!  \mathbb Q_\ell)_{\eta} ) = \sum_{ \orho \textrm{ irrep of }S_n} \dim  ( R^j \pi_! \mathbb Q_\ell \otimes \orho)^{S_n}_{\eta}   \tr(\sigma, \orho)\]
and thus
\[ \sum_j (-1)^j \tr(\sigma, (R^j \pi_!  \mathbb Q_\ell)_{\eta} ) = \sum_{ \orho \textrm{ irrep of }S_n} \chi_{\orho}   \tr(\sigma, \orho). \qedhere \]
\end{proof}

\begin{lemma}\label{chi-easy-case} Let $\overline{\rho}$ be an irreducible representation of $S_n$ corresponding to a partition with at least one part of size $\geq m$. Then \begin{equation}\label{direct-rho-equation} \chi_{\orho}=  \sum_{d=m}^{n} (-1)^{d+n}  \binom{d}{m} \dim V_{\orho}^d.\end{equation} \end{lemma}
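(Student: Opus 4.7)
My plan is to exploit the geometric constancy of $K := (R\pi_! \mathbb Q_\ell \otimes \orho)^{S_n}$ granted by \cref{irrep-constant} (which applies because any part of size $>m$ is of size $\geq m$), translate generic stalks into compactly supported cohomology using \cref{rho-cohomology}, and then invert the resulting linear relation with a finite-difference identity. More precisely, \cref{irrep-constant} gives a splitting $K \cong \bigoplus_j \mathcal H^j(K)[-j]$ with each $\mathcal H^j(K)$ geometrically constant, so writing $V_j := \mathcal H^j(K)_\eta$ I have $\chi_\orho = \sum_j (-1)^j \dim V_j$. The task reduces to showing that the right-hand side of \eqref{direct-rho-equation} equals this same alternating sum of dimensions.

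The first substantive step is the K\"unneth formula. Because $T_{\overline{\kappa}} \cong \mathbb G_m^m$ with $\dim H^i_c(T_{\overline{\kappa}}) = \binom{m}{i-m}$, the splitting of $K$ yields $\dim H^{n+d}_c(T_{\overline{\kappa}}, K) = \sum_j \binom{m}{n+d-j-m} \dim V_j$, which by \cref{rho-cohomology} equals $\dim V^d_\orho$. Plugging this into $\sum_{d=m}^n (-1)^{d+n}\binom{d}{m}\dim V^d_\orho$, swapping summations, and changing variables to $e = n+d-j-m$ collapses the right-hand side of \eqref{direct-rho-equation} to $\sum_j (-1)^{j+m}\dim V_j \cdot S_j$, where $S_j := \sum_{e=0}^m (-1)^e \binom{j+m+e-n}{m}\binom{m}{e}$.

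The remaining step is the identity $S_j = (-1)^m$, which is simply $(-1)^m$ times the standard relation $\Delta^m \binom{x}{m} = 1$ evaluated at $x = j+m-n$, and is valid as long as $l := j+m-n \geq 0$. The main obstacle is therefore confirming $l \geq 0$ for every $j$ with $V_j \neq 0$. I would do this by tracing through the proof of \cref{irrep-constant}: with $\orho$ a summand of $\Ind^{S_n}_{S_{n_1}\times\cdots\times S_{n_r}}\mathbb Q_\ell$ (having $n_1 > m$), the complex $K$ is a direct summand of $R pr_{1!}\mathbb Q_\ell$ on $T \times \mathbb A^{n_1-m}\times C^{(n_2)}\times\cdots \times C^{(n_r)}$, whose stalks are the compactly supported cohomology of the non-$T$ factor. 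Since $H^*_c(\mathbb A^{n_1-m})$ lives in degree $2(n_1-m)$ and $H^*_c(C^{(l)})$ lives in degrees $\geq l$ (by a signed symmetric-power calculation using that the open curve $C$ has $H^*_c$ concentrated in degrees $1$ and $2$), the total minimum degree is at least $n_1 + n - 2m$, which under $n_1 > m$ strictly exceeds $n-m$. Consequently $V_j = 0$ for $j \leq n-m$, so $l \geq 1$, the identity applies, and the right-hand side of \eqref{direct-rho-equation} reduces to $\sum_j (-1)^j \dim V_j = \chi_\orho$.
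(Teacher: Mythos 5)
Your proof is correct and follows essentially the same route as the paper: geometric constancy from \cref{irrep-constant}, K\"unneth via \cref{rho-cohomology}, then swap sums and invert with a binomial identity (your finite-difference identity $\Delta^m\binom{x}{m}=1$ is the same computation as the paper's generating-function identity \eqref{easy-case-combinatorics}). One minor correction in attribution: $S_j=(-1)^m$ holds for \emph{every} integer $j$, since $\Delta^m\binom{x}{m}=1$ is a polynomial identity; the constraint $j+m-n\geq 0$ is actually needed one step earlier, when the $d$-sum over $[m,n]$ is traded for $S_j$ --- for $j<n-m$ that $d$-sum vanishes term by term (as $\binom{m}{n+d-j-m}=0$ for every $d\geq m$), whereas $S_j=(-1)^m\neq 0$. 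What you do that the paper's proof does not spell out is explicitly verify $V_j=0$ for $j\leq n-m$ by tracing through the product decomposition $T\times\mathbb A^{n_1-m}\times C^{(n_2)}\times\cdots\times C^{(n_r)}$ constructed in the proof of \cref{irrep-constant}; the paper's argument tacitly relies on this vanishing (it only applies \eqref{easy-case-combinatorics} on the range $n-m\leq j\leq 2(n-m)$, so the $j<n-m$ contributions to $\chi_{\orho}$ must disappear for the two sides to match), and your verification is a sound and natural way to supply it.
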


\begin{proof} 
We have \begin{align*} &\dim V^d_{\orho} =  \dim(( H^{n+d}_c ( T_{\overline{\kappa}} , R \pi_! \mathbb Q_\ell \otimes \orho))^{S_n})= \sum_j \dim ((H^{n+d-j}_c ( T_{\overline{\kappa}}, R^j  \pi_! \mathbb Q_\ell \otimes \orho))^{S_n} )\\ & = \sum_j \binom{m}{n+d-j-m}   \dim  ( ( R^j \pi_! \mathbb Q_\ell \otimes \orho)^{S_n}_{\eta})\end{align*} where the first equality is \cref{rho-cohomology}, the second uses, from \cref{irrep-constant}, the fact that $(R \pi_! \mathbb Q_\ell \otimes \orho)^{S_n}$ is the direct sum of its cohomology sheaves, and the final equality uses, from \cref{irrep-constant}, the fact that those cohomology sheaves are constant, together with the standard computation of the cohomology of the constant sheaf on $T_{\overline{k}} \cong \mathbb G_m^m$. Thus
\[ \sum_{d=m}^{n} (-1)^{d+n}  \binom{d}{m} \dim V_{\orho}^d =  \sum_{d=m}^{n} (-1)^{d+n}  \binom{d}{m}  \sum_j \binom{m}{n+d-j-m}   \dim  (( R^j \pi_! \mathbb Q_\ell \otimes \orho)^{S_n}_{\eta})\]
which implies \eqref{direct-rho-equation} when combined with the combinatorial identity
\begin{equation}\label{easy-case-combinatorics}  \sum_{d=m}^n (-1)^{d+n}  \binom{d}{m}  \binom{m}{n+d-j-m}  =   (-1)^j \end{equation}
for $(n-m) \leq j \leq 2(n-m)$ and the fact that $R^j \pi_! \mathbb Q_\ell$ vanishes unless $(n-m) \leq j \leq 2(n-m)$.  So it remains to check \eqref{easy-case-combinatorics}.

To do this, we observe that the coefficient of $u^{d-m}$ in $\frac{1}{(1-u)^{m+1}}$ is $\binom{d}{m}$ and the coefficient of $u^{2m -n-d+j}$  in $(1-u)^m$ is $(-1)^{d+n+j} \binom{m }{ n+d-j-m}$. Multiplying, the coefficient of $u^{ m-n+j}$ in $\frac{(1-u)^m}{(1-u)^{m+1}}$ is  \[ (-1)^j  \sum_{d=-\infty}^\infty (-1)^{d+n}  \binom{d}{m}  \binom{m}{n+d-j-m}  .\]   Because $j\geq n-m$, the coefficient of $u^{ m-n+j}$ in $\frac{(1-u)^m}{(1-u)^{m+1}} = \frac{1}{1-u}$ is $1$.  Furthermore, we have $\binom{d}{m}=0$ if $m<d$ and we have $ \binom{m}{n+d-j-m} =0$ if $d>n$ because then $n+d-j-m > 2n-j-m > 2n - 2(n-m)-m=m$, so 
\[ 1=  (-1)^j  \sum_{d=-\infty}^\infty (-1)^{d+n}  \binom{d}{m}  \binom{m}{n+d-j-m} =(-1)^j  \sum_{d=m}^n (-1)^{d+n}  \binom{d}{m}  \binom{m}{n+d-j-m} ,\]
giving \eqref{easy-case-combinatorics}. \end{proof}

\begin{lemma}\label{rho-equation-uniqueness} The system of equations given by \eqref{sigma-rho-equation} for each $\sigma \in S_n$ with at most $m-1$ cycles and \eqref{direct-rho-equation} for each $\orho$ corresponding to a partition with at least one part of size $\geq m$ has a unique solution $(\chi_{\orho})_{\orho \textrm{ irrep of }S_n}$. \end{lemma}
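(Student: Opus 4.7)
The plan is to prove uniqueness by reducing to the invertibility of a square submatrix of the character table of $S_n$, then establishing invertibility via a symmetric-functions argument.

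By linearity, uniqueness is equivalent to showing that the associated homogeneous system has only the zero solution. In that setting $\chi_\orho = 0$ whenever $\orho$ has some part of size $\geq m$, so \eqref{sigma-rho-equation} reduces to
\[
\sum_{\orho \text{ with all parts} < m} \chi_\orho \tr(\sigma_\mu, \orho) = 0 \qquad \text{for every cycle type $\mu$ with at most $m-1$ parts.}
\]
By partition conjugation, the index sets of unknowns and equations have the same cardinality (the number of partitions of $n$ with at most $m-1$ parts), so it suffices to show that the coefficient matrix $M = (\chi^\orho(\sigma_\mu))$ is invertible. Using $\chi^\orho \otimes \sgn = \chi^{\orho'}$ for $\orho'$ the conjugate partition, after substituting $\rho = \orho'$ and absorbing the diagonal sign twist $\sgn(\sigma_\mu)$, this reduces to invertibility of $M' = (\chi^\rho(\sigma_\mu))$ in which both $\rho$ and $\mu$ range over partitions of $n$ with at most $m-1$ parts.

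The key observation is that $M'$ admits a symmetric-functions interpretation. The Frobenius character formula gives $p_\mu = \sum_\rho \chi^\rho(\sigma_\mu)\, s_\rho$ in the universal ring of symmetric functions, and since the Schur function $s_\rho$ vanishes upon restriction to $m-1$ variables whenever $\rho$ has more than $m-1$ parts, we obtain
\[
p_\mu(x_1,\ldots,x_{m-1}) = \sum_{\rho \text{ with at most $m-1$ parts}} \chi^\rho(\sigma_\mu)\, s_\rho(x_1,\ldots,x_{m-1}).
\]
So $M'$ is the transition matrix between the set $\{p_\mu(x_1,\ldots,x_{m-1})\}$ and the Schur basis $\{s_\rho(x_1,\ldots,x_{m-1})\}$ of the space of symmetric polynomials of degree $n$ in $m-1$ variables, both indexed by partitions of $n$ with at most $m-1$ parts. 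Since the Schur side is a basis and the two sides have equal cardinality, invertibility of $M'$ is equivalent to linear independence of the power sums $\{p_\mu(x_1,\ldots,x_{m-1}) : \mu \text{ has at most $m-1$ parts}\}$.

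To verify this linear independence, I would expand each such $p_\mu$ in the monomial symmetric function basis: $p_\mu = \sum_\nu R_{\mu,\nu}\, m_\nu$, where $m_\nu$ is the monomial symmetric function and $R_{\mu,\nu}$ counts the functions $\phi\colon \{1,\ldots,k\} \to \{1,\ldots,m-1\}$ (with $k$ the number of parts of $\mu$) whose fiber-sum multiset $\{\sum_{j \in \phi^{-1}(i)} \mu_j\}_i$ equals $\nu$. Such a $\phi$ exists only when $\nu$ is obtained from $\mu$ by merging parts, which corresponds to $\mu$ being dominated by $\nu$ in the dominance order on partitions (coarser partitions dominate finer ones); moreover $R_{\mu,\mu} > 0$ because the assignment placing each of the $k \leq m-1$ parts of $\mu$ in its own bin always contributes. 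Therefore $(R_{\mu,\nu})$ is triangular with positive diagonal entries with respect to any linear extension of dominance order, hence invertible. Chaining these equivalences yields uniqueness. The main care needed is verifying the combinatorial description of $R_{\mu,\nu}$ and tracking the direction of the dominance inequality.
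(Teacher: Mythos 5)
Your proof is correct, but it takes a genuinely different route from the paper's. You reduce to the invertibility of the character matrix $(\chi^{\rho}(\sigma_\mu))$ with both $\rho$ and $\mu$ running over partitions of $n$ with at most $m-1$ parts, translate this via the Frobenius character formula into a statement about the transition between restricted power sums $p_\mu(x_1,\ldots,x_{m-1})$ and the Schur basis, and then establish linear independence of those restricted power sums by the familiar triangularity of the $p$-to-$m$ transition with respect to dominance order. The paper instead never leaves representation theory: it takes the difference of two hypothetical solutions, re-expands it in the basis of characters of $\Ind_{S_{n_1}\times\cdots\times S_{n_r}}^{S_n}\sgn$ with $r<m$, and then evaluates at a permutation $\sigma$ whose cycle type $(n_1,\ldots,n_r)$ realizes the minimal $r$ appearing; among the surviving terms only the matching one contributes a nonzero trace (terms with more parts than $\sigma$ has cycles, or the same number with mismatched sizes, contribute zero because no conjugate of $\sigma$ lands in the Young subgroup). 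Both arguments are, at heart, triangularity-in-dominance-order arguments, but they put that triangularity in different places: you use the standard $p_\mu = \sum_\nu R_{\mu\nu} m_\nu$ unitriangularity, while the paper uses the triangular decomposition of induced sign characters against irreducibles together with a direct cycle-type calculation. Your version is slightly more modular because it ends on a classical fact about symmetric polynomials, at the cost of invoking the Frobenius characteristic and the vanishing $s_\lambda(x_1,\ldots,x_{m-1})=0$ for partitions with more than $m-1$ parts; the paper's version stays self-contained within $S_n$-representation theory. One small thing worth being explicit about when writing this up: you should check (as you implicitly do) that the coarsening condition on $\nu$ relative to $\mu$ forces $\nu$ to also have at most $m-1$ parts, so that the monomial symmetric functions $m_\nu$ appearing do form a basis of the target space rather than spilling into the kernel of restriction to $m-1$ variables.
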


\begin{proof} Suppose there are two distinct solutions $\chi_{\orho}$ and $\chi_{\orho}'$. Then $\sum_{\orho} (\chi_{\orho}-\chi_{\orho}') \tr(\sigma, \orho)$ is a nontrivial linear combination of the characters of $S_n$ corresponding to partitions into parts of size $<m$, which vanishes on all permutations with $<m$ cycles, contradicting \cref{permutation-restriction-fact}, taking $\mathcal S$ to be the set of partitions into $<m$ parts so the conjugate partitions of elements of $\mathcal S$ are exactly the partitions into parts of size $<m$. \end{proof}

\begin{lemma}\label{underivative-divisible} Let $\overline{\rho}$ be an irreducible representation of $S_n$ corresponding to a partition with at least one part of size $\geq m$. The polynomial
\begin{equation} \label{underivative-expression}  \sum_{d=0}^n (-1)^{d} \tr \Bigl ( \Diag(\lambda_1,\dots, \lambda_m),  V_{\orho}^d \Bigr) ,\end{equation} is divisible by $\prod_{i=1}^m ( \lambda_i - 1)$ in $\mathbb Q[\lambda_1,\dots, \lambda_m]$. \end{lemma}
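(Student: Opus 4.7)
The plan is to exploit symmetry. Because each $V_{\orho}^d$ is a polynomial representation of $GL_m$, its character is a symmetric polynomial in $\lambda_1,\dots,\lambda_m$, and hence so is $P(\lambda_1,\dots,\lambda_m):=\sum_{d=0}^n(-1)^d\tr(\Diag(\lambda_1,\dots,\lambda_m),V_{\orho}^d)$. To prove that $\prod_{i=1}^m(\lambda_i-1)$ divides $P$, it therefore suffices to show $P$ vanishes when $\lambda_m=1$: by symmetry this forces $P$ to vanish on every hyperplane $\lambda_i=1$, and these are $m$ coprime linear forms.

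To evaluate $P$ at $\lambda_m=1$, decompose $\mathbb C^m=\mathbb C^{m-1}\oplus\mathbb C e_m$, with $\Diag(\lambda_1,\dots,\lambda_{m-1},1)$ acting trivially on $\mathbb C e_m$. Expanding $(\mathbb C^m)^{\otimes N}$ and grouping summands by the cardinality $k$ of the ``$\mathbb C e_m$-support'' yields an $S_N$-equivariant decomposition
\[ (\mathbb C^m)^{\otimes N}\ \cong\ \bigoplus_{k=0}^{N}\Ind_{S_{N-k}\times S_k}^{S_N}\bigl((\mathbb C^{m-1})^{\otimes(N-k)}\boxtimes\mathbf 1_{S_k}\bigr), \]
where $\mathbf 1_{S_k}$ denotes the trivial representation of $S_k$, and the trace of $\Diag(\lambda_1,\dots,\lambda_{m-1},1)$ on the $k$-th summand is the trace of $\Diag(\lambda_1,\dots,\lambda_{m-1})$ on $(\mathbb C^{m-1})^{\otimes(N-k)}$. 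Substituting this (with $N=n-d$) into the definition of $V_{\orho}^d$, using $\sgn_{S_{n-d}}|_{S_{n-d-k}\times S_k}=\sgn_{S_{n-d-k}}\otimes\sgn_{S_k}$, and applying Frobenius reciprocity to trade the induction for a restriction of $\orho$, we obtain
\[ \tr\bigl(\Diag(\lambda_1,\dots,\lambda_{m-1},1),V_{\orho}^d\bigr)=\sum_{k=0}^{n-d}\tr\Bigl(\Diag(\lambda_1,\dots,\lambda_{m-1}),\bigl((\mathbb C^{m-1})^{\otimes(n-d-k)}\otimes\orho\otimes\sgn_{S_{n-d-k}}\otimes\sgn_{S_k}\bigr)^{S_{n-d-k}\times S_k\times S_d}\Bigr). \]

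Summing over $d$ with sign $(-1)^d$, substituting $e=d+k$, $r=k$, and first taking $S_r\times S_{e-r}$-invariants on the $\orho\otimes\sgn_{S_r}$ factor, the inner sum in $r$ (for fixed $e$) becomes the trace of $\Diag(\lambda_1,\dots,\lambda_{m-1})$ on $\bigl((\mathbb C^{m-1})^{\otimes(n-e)}\otimes\widetilde{\orho}^{(e)}\otimes\sgn_{S_{n-e}}\bigr)^{S_{n-e}}$ for the virtual $S_{n-e}$-representation
\[ \widetilde{\orho}^{(e)}\ :=\ \sum_{r=0}^{e}(-1)^{e-r}\bigl(\orho|_{S_{n-e}\times S_r\times S_{e-r}}\otimes\sgn_{S_r}\bigr)^{S_r\times S_{e-r}}. \]
By Frobenius reciprocity, $\widetilde{\orho}^{(e)}=\operatorname{Hom}_{S_e}\!\bigl(\sum_{r=0}^e(-1)^{e-r}\Ind_{S_r\times S_{e-r}}^{S_e}(\sgn_{S_r}\boxtimes\mathbf 1_{S_{e-r}}),\,\orho|_{S_{n-e}\times S_e}\bigr)$. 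Under the Frobenius characteristic map, $\Ind_{S_r\times S_{e-r}}^{S_e}(\sgn_{S_r}\boxtimes\mathbf 1_{S_{e-r}})$ corresponds to $e_rh_{e-r}$, and the classical generating-function identity $H(t)E(-t)=1$ forces $\sum_{r=0}^e(-1)^r e_rh_{e-r}=0$ for every $e\geq 1$. Hence $\widetilde{\orho}^{(e)}=0$ whenever $e\geq 1$, so only the $e=0$ term contributes and we obtain $P(\lambda_1,\dots,\lambda_{m-1},1)=\tr(\Diag(\lambda_1,\dots,\lambda_{m-1}),V_{\orho})$. Since $\orho$ has a part of size $>m$, in particular $\geq m$, \cref{schur-vanishing-lemma} gives $V_{\orho}=0$, so $P(\lambda_1,\dots,\lambda_{m-1},1)=0$ as required. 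The only real subtlety is bookkeeping: one must correctly track the two sign characters (the $\sgn_{S_{n-d}}$ from the definition of $V_{\orho}^d$, which restricts to $\sgn_{S_{n-d-k}}\otimes\sgn_{S_k}$) so that the resulting alternating sum of induced modules matches $\sum_r(-1)^r e_rh_{e-r}$, the coefficient of $t^e$ in $H(t)E(-t)$, exactly.
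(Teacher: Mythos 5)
Your proof is correct and follows essentially the same route as the paper: reduce by symmetry to showing vanishing at $\lambda_m=1$, decompose $\mathbb C^m=\mathbb C^{m-1}\oplus\mathbb C$, regroup the resulting double sum by $e=d+k$, observe that the $e>0$ terms vanish because the alternating sum $\sum_{r}(-1)^{r}\Ind_{S_r\times S_{e-r}}^{S_e}(\sgn_{S_r}\boxtimes\mathbf 1)$ is zero, and identify the surviving $e=0$ term with $\tr(\Diag,V_{\orho})$, which vanishes by \cref{schur-vanishing-lemma}. The only cosmetic difference is that you justify the cancellation via the Frobenius characteristic and the identity $H(t)E(-t)=1$, while the paper does it by a direct character computation (with the Littlewood--Richardson rule offered as an alternative); all three are standard equivalent arguments for the same fact.
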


\begin{proof} It suffices to show that \eqref{underivative-expression} vanishes when $\lambda_i=1$ for some $i$, and by symmetry, it suffices to show this when $\lambda_m=1$. Then we calculate, by Frobenius reciprocity, 
\[ V_{\orho}^d =  \Bigl( (\mathbb C^{m-1}\oplus \mathbb C )^{\otimes (n-d) } \otimes \orho \otimes \sgn_{S_{n-d} } \Bigr)^{S_{n-d}\times S_d}  =  \oplus_{k=0}^{n-d}  \Bigl( (\mathbb C^{m-1})^{n-d-k} \otimes \orho \otimes \sgn_{S_{n-d-k} }  \otimes \sgn_k \Bigr)^{S_{n-d-k} \times S_k \times S_d}\] \[=  \oplus_{k=0}^{n-d}  \Bigl( (\mathbb C^{m-1})^{n-d-k} \otimes \orho \otimes  \sgn_{S_{n-d-k} }  \otimes \Ind_{S_k \times S_d}^{S_{k+d}}\sgn_k \Bigr)^{S_{n-d-k} \times S_{k +d} }\]
so
\begin{equation}\label{kd-sum} \begin{split} &\sum_{d=0}^n (-1)^{d} \tr \Bigl ( \Diag(\lambda_1,\dots, \lambda_{m-1},1), V_{\orho}^d \Bigr) \\
= &\sum_{d=0}^n \sum_{k=0}^{n-d} (-1)^{d} \tr \Bigl ( \Diag(\lambda_1,\dots, \lambda_{m-1}),\Bigl( (\mathbb C^{m-1})^{n-d-k} \otimes \orho \otimes  \sgn_{S_{n-d-k} }  \otimes \Ind_{S_k \times S_d}^{S_{k+d}}\sgn_k \Bigr)^{S_{n-d-k} \times S_{k +d} } \Bigr ) .\end{split}\end{equation}

Next we claim that, for each $c>0$, the terms corresponding to pairs $k,d$ with $k+d=c$ cancel in \eqref{kd-sum}. The function that takes a representation $\tilde{\rho}$ of $S_{c}$ to \[ \tr \Bigl ( \Diag(\lambda_1,\dots, \lambda_{m-1}),\Bigl( (\mathbb C^{m-1})^{n-d-k} \otimes \orho \otimes  \sgn_{S_{n-d-k} }  \otimes \tilde{\rho} \Bigr)^{S_{n-d-k} \times S_{k +d} } \Bigr )\] is additive on representations of $S_{c}$ and hence extends to a linear function on the representation ring of $S_{c}$. To check that the terms corresponding to $k+d=c$ in \eqref{kd-sum} cancel for every $c>0$, by this linearity it suffices to check that
\[ \sum_{\substack{ k, d \in \mathbb N \\ k+d=c}} (-1)^d  [ \Ind_{S_k \times S_d}^{S_{c}}\sgn_k] = 0 \] in the representation ring of $S_{c}$ for $c>0$.  

To check this, note that for $\sigma\in S_n$ we have
\[ \tr \Bigl( \sigma, \sum_{\substack{ k, d \in \mathbb N \\ k+d=c}} (-1)^d  [ \Ind_{S_k \times S_d}^{S_{c}}\sgn_k] \Bigr) =  \sum_{\substack{ k, d \in \mathbb N \\ k+d=c}} (-1)^d  \tr(\sigma,  \Ind_{S_k \times S_d}^{S_{c}}\sgn_k) = \sum_{\substack{ k, d \in \mathbb N \\ k+d=c}} (-1)^d \sum_{ \substack{ S \subseteq \{1,\dots, c\}\\ \sigma(S) =S\\ \abs{S} = k }} \sgn ( \sigma \mid_S ) \]
where $\sgn (\sigma \mid_S) $ is $(-1)^k$ times $(-1)$ raised to the number of cycles of $\sigma$ contained in $S$, counting fixed points as cycles, giving
\[ \tr \Bigl( \sigma, \sum_{\substack{ k, d \in \mathbb N \\ k+d=c}} (-1)^d  [ \Ind_{S_k \times S_d}^{S_{c}}\sgn_k] \Bigr) = (-1)^{c} \sum_{ \substack{ S \subseteq \{1,\dots, c\}\\ \sigma(S) =S\\ \abs{S} = k }} (-1)^{ \abs{\{ \textrm{cycles of }\sigma\textrm{ contained in }S\}}} \] \[ = (-1)^c ( 1+ (-1))^{ \abs{ \{\textrm{cycles of }\sigma\}} }= (-1)^c 0^{ \abs{\{\textrm{cycles of }\sigma\}}}= 0 \] because the number of cycles of $\sigma$ is nonzero as long as $c>0$. Since the character vanishes, the virtual representation vanishes. (Alternatively, by the Littlewood-Richardson rule, the only irreducible representations occurring in these induced representations are those corresponding to the hook Young diagrams, and each occurs twice, with opposite sign.) 

Hence we can replace \eqref{kd-sum} with only the $k=d=0$ terms, getting 

\begin{align*} & \sum_{d=0}^n (-1)^{d} \tr \Bigl ( \Diag(\lambda_1,\dots, \lambda_{m-1},1), V_{\orho}^d \Bigr) \\
= & \tr \Bigl ( \Diag(\lambda_1,\dots, \lambda_{m-1}), \Bigl( (\mathbb C^{m-1} )^{\otimes n } \otimes \orho \otimes \sgn \Bigr)^{S_{n}} \Bigr) = \tr \Bigl ( \Diag(\lambda_1,\dots, \lambda_{m-1}), V_{\orho} \Bigr) = 0 \end{align*} because $\orho$ corresponds to a partition with at least one part of size $\geq m$ and thus $V_{\orho}=0$ by Lemma \ref{schur-vanishing-lemma}.
\end{proof}

\begin{lemma}\label{rho-derivative-formula} Let $\overline{\rho}$ be an irreducible representation of $S_n$ corresponding to a partition with at least one part of size $\geq m$. Then \begin{equation}\label{eq-rho-derivative} \begin{split} & \sum_{d=m}^{n} (-1)^{d+n}  \binom{d}{m} \dim V_{\orho}^d \\
= &\sum_{d=0}^n (-1)^{d+n+m} \frac{\partial^m}{\partial \lambda_1 \dots \partial \lambda_m } \tr \Bigl ( \Diag(\lambda_1,\dots, \lambda_m), V_{\orho}^d \Bigr)  \Big|_{\lambda_1,\dots,\lambda_m=1} .\end{split}\end{equation} \end{lemma}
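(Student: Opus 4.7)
The plan is to reduce both sides of \eqref{eq-rho-derivative} to a single quantity $Q(1,\dots,1)$, where $Q$ is defined via the factorization
\[P(\lambda) = \prod_{i=1}^m(\lambda_i-1)\cdot Q(\lambda), \qquad P(\lambda) := \sum_{d=0}^n(-1)^d\tr\bigl(\Diag(\lambda),V_\orho^d\bigr),\]
furnished by \cref{underivative-divisible}. For the right-hand side of \eqref{eq-rho-derivative}, first apply the Leibniz rule to $P$: among all ways to distribute the $m$ partial derivatives $\partial/\partial\lambda_i$ over the factors of $\prod_i(\lambda_i-1)$ and $Q$, only the term in which each operator $\partial/\partial\lambda_i$ differentiates the distinct factor $(\lambda_i-1)$ survives evaluation at $\lambda=(1,\dots,1)$, yielding $\frac{\partial^m P}{\partial\lambda_1\cdots\partial\lambda_m}|_{\lambda=1} = Q(1,\dots,1)$. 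So the right-hand side of \eqref{eq-rho-derivative} equals $(-1)^{n+m}Q(1,\dots,1)$.

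For the left-hand side I would lift the character identity to an identity of virtual $GL_m$-representations. Since $\prod_i(\lambda_i-1) = (-1)^m\prod_i(1-\lambda_i)$ is $(-1)^m$ times the character of $\Lambda^\bullet(\mathbb C^m):=\sum_k(-1)^k\Lambda^k(\mathbb C^m)$, the factorization of $P$ lifts to
\[\sum_{d=0}^n(-1)^d V_\orho^d = (-1)^m\,\Lambda^\bullet(\mathbb C^m)\otimes Q^V,\]
where $Q^V$ is the unique virtual polynomial $GL_m$-representation with character $Q$; it exists because $Q\in\mathbb Z[\lambda]^{S_m}$ is a symmetric polynomial with integer coefficients (dividing in the UFD $\mathbb Z[\lambda_1,\dots,\lambda_m]$). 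The crucial numerical input is the degree bound $\deg Q\leq n-m$, which holds because $\deg P\leq n$ (each $V_\orho^d$ is homogeneous of degree $n-d$) while $\deg\prod_i(\lambda_i-1)=m$.

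Now apply the $\mathbb Z$-linear functional $\Phi$ on virtual polynomial $GL_m$-representations defined on homogeneous $V$ of degree $k$ by $\Phi(V) := \binom{n-k}{m}\dim V$. On the left, since $V_\orho^d$ is homogeneous of degree $n-d$,
\[\Phi\Bigl(\sum_d(-1)^d V_\orho^d\Bigr) = \sum_d(-1)^d\binom{d}{m}\dim V_\orho^d.\]
On the right, decomposing $Q^V=\bigoplus_l Q^V_l$ into its homogeneous pieces of degree $l$,
\[\Phi\bigl((-1)^m\Lambda^\bullet\otimes Q^V\bigr) = (-1)^m\sum_l Q^V_l(1,\dots,1)\sum_{k=0}^m(-1)^k\binom{m}{k}\binom{n-l-k}{m}.\]
The classical binomial identity $\sum_k(-1)^k\binom{m}{k}\binom{N-k}{m} = \mathbf{1}[N\geq m]$, verified by the generating function calculation $(1-x)^m\cdot x^m/(1-x)^{m+1} = x^m/(1-x)$, evaluates the inner sum to $1$ for every $l\leq n-m$. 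Since $Q^V_l=0$ for $l>n-m$ by the degree bound, this gives $\Phi((-1)^m\Lambda^\bullet\otimes Q^V) = (-1)^m Q(1,\dots,1)$.

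Equating the two evaluations of $\Phi$ yields $\sum_d(-1)^d\binom{d}{m}\dim V_\orho^d = (-1)^m Q(1,\dots,1)$; multiplying by $(-1)^n$ matches the left-hand side of \eqref{eq-rho-derivative} to the right-hand side $(-1)^{n+m}Q(1,\dots,1)$. The main obstacle is the bookkeeping: one has to verify both the degree bound $\deg Q\leq n-m$ and that this bound is exactly what forces the binomial identity to contribute $1$ rather than $0$ for every nonzero $Q^V_l$, so the hypothesis on $\orho$ enters only indirectly, through \cref{underivative-divisible}.
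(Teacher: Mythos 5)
Your proof is correct, and when unwound it performs the same calculation as the paper's, but packages the second half differently. The first half---reducing the right-hand side of \eqref{eq-rho-derivative} to a single constant $Q(1,\dots,1)$ via \cref{underivative-divisible} plus the Leibniz rule---is identical. For the second half, the paper specializes all $\lambda_i = u$ to get $(u-1)^mF(u,\dots,u)=\sum_d(-1)^{d+n+m}u^{n-d}\dim V_\orho^d$, substitutes $u=1/(1+v)$, multiplies by $(-1)^m(1+v)^n$, and reads off the coefficient of $v^m$. You instead lift everything to the Grothendieck ring of polynomial $GL_m$-representations and apply the $\mathbb Z$-linear functional $\Phi$, reducing to the binomial identity $\sum_k(-1)^k\binom{m}{k}\binom{N-k}{m}=\mathbf 1[N\geq m]$. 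Tracing through the definitions, $\Phi$ applied to a virtual representation with character $\chi$ is exactly $[v^m]\,(1+v)^n\chi(1/(1+v),\dots,1/(1+v))$, and the binomial identity you invoke is the Cauchy-product expansion of $(1-x)^m\cdot x^m/(1-x)^{m+1}=x^m/(1-x)$, so the underlying algebra coincides with the paper's. Two small observations: (i) your route genuinely needs the degree bound $\deg Q\leq n-m$ to ensure that every nonzero homogeneous piece $Q^V_l$ picks up the factor $1$ (rather than $0$) in the inner binomial sum, whereas the paper's formal-power-series phrasing sidesteps this; your derivation of the bound from $\deg P\le n$ and the factorization is correct. (ii) The apparatus of virtual representations, $\Lambda^\bullet$, and the Gauss's-lemma integrality check for $Q$ is more than is strictly needed---one could work purely with symmetric polynomials and never mention $Q^V$---but nothing in it is wrong, and it does make transparent why the hypothesis on $\orho$ enters only through \cref{underivative-divisible}.
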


\begin{proof} By \cref{underivative-divisible}, there exists a polynomial $F$ in variables $\lambda_1,\dots,\lambda_m$ such that \[ \sum_{d=0}^n (-1)^{d+n+m} \tr \Bigl ( \Diag(\lambda_1,\dots, \lambda_m), V_{\orho}^d \Bigr) =  F(\lambda_1,\dots, \lambda_m) \prod_{i=1}^m (\lambda_i-1) .\]  Then 

\begin{equation}\label{eq-derivative-evaluation} \begin{split} &  \sum_{d=0}^n (-1)^{d+n+m} \frac{\partial^m}{\partial \lambda_1 \dots \partial \lambda_m } \tr \Bigl ( \Diag(\lambda_1,\dots, \lambda_m), V_{\orho}^d  \Bigr)  \Big|_{\lambda_1,\dots,\lambda_m=1} \\ 
& = \frac{\partial^m}{\partial \lambda_1 \dots \partial \lambda_m } \Bigl( F(\lambda_1,\dots, \lambda_m) \prod_{i=1}^m (\lambda_i-1) \Bigr) \Big|_{\lambda_1,\dots,\lambda_m=1} \\ 
& = F(1,\dots, 1). \end{split}\end{equation}
On the other hand, for a formal variable $u$ \[ ( u-1)^m  F(u,\dots, u) = \sum_{d=0}^n (-1)^{d+n+m}  \tr \Bigl ( \Diag(u,\dots, u),V_{\orho}^d \Bigr) \] 
\[ = \sum_{d=0}^n (-1)^{d+n+m}   u^{n-d} \dim V_{\orho}^d \]
 so plugging in $u = \frac{1}{1+v}$ and multiplying both sides by $(-1)^m (1+v)^n$, we obtain
\[ (1+v)^n \frac{ v^m}{(1+v)^m} F\left(\frac{1}{1+v},\dots, \frac{1}{1+v}\right ) =  \sum_{d=0}^n (-1)^{d+n}   (1+v)^d \dim V_{\orho}^d \] 

The constant term of $(1+v)^n \frac{ 1}{(1+v)^m} F\left(\frac{1}{1+v},\dots, \frac{1}{1+v}\right )$ as a formal power series in $v$ is $F(1,\dots, 1)$, so $F(1,\dots, 1)$ is the coefficient of $v^m$ in $\sum_{d=0}^n (-1)^{d+n}   (1+v)^d \dim V_{\orho}^d$, which is \[ \sum_{d=m}^{n} (-1)^{d+n}  \binom{d}{m} \dim V_{\orho}^d. \] Combined with \eqref{eq-derivative-evaluation}, this proves \eqref{eq-rho-derivative}. \end{proof}

\begin{lemma}\label{rho-Euler-formula} For any irreducible representation $\orho$, we have
\begin{equation}\label{eq-ref} \chi_{\orho} =\sum_{d=0}^n (-1)^{d+n+m} \frac{\partial^m}{\partial \lambda_1 \dots \partial \lambda_m } \tr \Bigl ( \Diag(\lambda_1,\dots, \lambda_m),V_{\orho}^d \Bigr) \Big|_{\lambda_1,\dots,\lambda_m=1}. \end{equation} \end{lemma}

\begin{proof} By \cref{rho-equation-uniqueness}, it suffices to check that the right-hand-side of \eqref{eq-ref} satisfies \eqref{direct-rho-equation} and \eqref{sigma-rho-equation}. That \eqref{direct-rho-equation} is satisfied is the content of \cref{rho-derivative-formula}. To check \eqref{sigma-rho-equation}, given $\sigma$ a permutation with fewer than $m$ cycles, we must show the vanishing of
\[ \sum_{\orho}\sum_{d=0}^n (-1)^{d+n+m} \frac{\partial^m}{\partial \lambda_1 \dots \partial \lambda_m } \tr \Bigl ( \Diag(\lambda_1,\dots, \lambda_m), \Bigl( (\mathbb C^{m} )^{\otimes (n-d) } \otimes \orho \otimes \sgn_{S_{n-d} } \Bigr)^{S_{n-d}\times S_d} \Bigr)  \tr(\sigma, \orho) \Big|_{\lambda_1,\dots,\lambda_m=1} \]
Using the identity for any representation $W$ of $S_n$, $\sum_{\orho} \tr(\sigma, \orho) \dim (W \otimes \orho)^{S_n} = \tr(\sigma,W)$, and Frobenius reciprocity, it suffices to prove the vanishing of 
\begin{equation}\label{eq-sum-trace} \sum_{d=0}^n (-1)^{d+n+m} \frac{\partial^m}{\partial \lambda_1 \dots \partial \lambda_m } \tr \Bigl ( \Diag(\lambda_1,\dots, \lambda_m) \cdot \sigma ,   \Ind_{S_{n-d} \times S_d}^{S_n} \Bigl ((\mathbb C^{m} )^{\otimes (n-d) }  \otimes \sgn_{S_{n-d} } \Bigr)   \Bigr) \end{equation}
To do this, note that the Frobenius formula for the character of an induced representation expresses
\begin{equation}\label{eq-lambda-trace} \tr \Bigl ( \Diag(\lambda_1,\dots, \lambda_m) \cdot \sigma ,   \Ind_{S_{n-d} \times S_d}^{S_n} \Bigl ((\mathbb C^{m} )^{\otimes (n-d) }  \otimes \sgn_{S_{n-d} } \Bigr)   \Bigr)\end{equation}
as a sum of terms of the form
\begin{equation}\label{tr-sigma'} \tr \Bigl ( \Diag(\lambda_1,\dots, \lambda_m) \cdot \sigma' , (\mathbb C^{m} )^{\otimes (n-d) }  \otimes \sgn_{S_{n-d} }    \Bigr)\end{equation} where $\sigma'$ is a permutation in $S_{n-d} \times S_d$ conjugate to $\sigma$.  We can choose a basis for $(\mathbb C^{m} )^{\otimes (n-d)} $ consisting of tensor products of $(n-d)$-tuples of standard basis vectors of $\mathbb C^m$. We can express the action of $\Diag(\lambda_1,\dots, \lambda_m) \cdot \sigma' $ on this basis as a matrix, and then \eqref{tr-sigma'} is the sum of the diagonal entries of this matrix, twisted by $\sign_{S_{n-d}}(\sigma')$. The only nonzero diagonal entries occur for tuples of basis vectors stable under $\sigma'$.  Because $\sigma$ has fewer than $m$ cycles, $\sigma' $ has fewer than $m$ cycles when restricted to $S_{n-d}$. Thus fewer than $m$ different basis vectors can occur in a nonzero diagonal entry. Hence the eigenvalue of $\Diag(\lambda_1,\dots, \lambda_m) $ on any nonzero diagonal entry is a monomial that uses fewer than $m$ of the variables $\lambda_1,\dots, \lambda_m$. Thus the trace \eqref{eq-lambda-trace} is a sum of monomials in fewer than $m$ of the variables. Therefore the partial derivative of \eqref{eq-lambda-trace} in the variables $\lambda_1,\dots,\lambda_m$ vanishes, so the sum of partial derivatives \eqref{eq-sum-trace} vanishes, as desired. \end{proof} 

\begin{lemma}\label{rho-Euler-difference} For $\orho$ an irreducible representation of $S_{n}$,
\begin{equation}\begin{split}\label{c1} &\sum_{d=0}^n (-1)^{d+n+m} \frac{\partial^m \tr \bigl ( \Diag(\lambda_1,\dots, \lambda_m), V^d_{\orho}  \bigr)}{\partial \lambda_1 \dots \partial \lambda_m } \Big|_{\lambda_1,\dots,\lambda_m=1}  - \sum_{d=m}^{n} (-1)^{d+n}  \binom{d}{m} \dim V^d_{\orho} \end{split}\end{equation}  is $\chi_{\orho}$ if $\orho$ corresponds to a partition with all parts of size $<m$ and $0$ otherwise. \end{lemma}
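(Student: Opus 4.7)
The plan is to dispatch both clauses of the statement by assembling the identities already proved in this section, with no new geometric input required. The structure is a case split on whether the partition corresponding to $\orho$ has all parts of size $<m$ or has at least one part of size $\geq m$.

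The common starting point in both cases is \cref{rho-Euler-formula}, which for every irreducible $\orho$ identifies the first (derivative) sum appearing in \eqref{c1} with $\chi_{\orho}$. Thus the expression in question always simplifies to
\begin{equation*}
\chi_{\orho} \;-\; \sum_{d=m}^{n} (-1)^{d+n}\binom{d}{m}\dim V^d_{\orho},
\end{equation*}
and the task reduces to evaluating the second sum in each case.

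When all parts of the partition have size $<m$, \cref{other-vanishing-lemma} gives $V^d_{\orho}=0$ for every $d\geq m$, so the second sum vanishes and the displayed expression equals $\chi_{\orho}$, matching the claim. When instead the partition has at least one part of size $\geq m$, \cref{chi-easy-case} evaluates the same second sum to $\chi_{\orho}$, so the two terms cancel and the expression is $0$, again matching the claim.

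The only point worth verifying is that the two cases ``all parts $<m$'' and ``some part $\geq m$'' genuinely partition the irreducibles of $S_n$, so that every $\orho$ is covered by exactly one of \cref{other-vanishing-lemma} and \cref{chi-easy-case}; this is immediate from the trichotomy of partition shapes. There is no real obstacle here: the lemma is a bookkeeping consolidation, with the heavy lifting already done in the uniqueness argument of \cref{rho-equation-uniqueness} (for \cref{rho-Euler-formula}) and in the direct cohomological calculation of \cref{chi-easy-case} via \cref{irrep-constant}.
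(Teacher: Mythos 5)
Your proof is correct. The case with all parts $<m$ matches the paper's argument exactly: you invoke \cref{other-vanishing-lemma} to kill the binomial sum (note the lemma gives $V^d_{\orho}=0$ for $d\geq m$, which is what is needed since the binomial sum starts at $d=m$), and then \cref{rho-Euler-formula} identifies the remaining derivative sum with $\chi_{\orho}$. For the complementary case you deviate slightly from the paper, which cites \cref{rho-derivative-formula} alone: that lemma asserts the two sums in \eqref{c1} are equal to each other, so they cancel without reference to $\chi_{\orho}$. You instead funnel both sums through $\chi_{\orho}$, equating the derivative sum with $\chi_{\orho}$ by \cref{rho-Euler-formula} and the binomial sum with $\chi_{\orho}$ by \cref{chi-easy-case}. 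Both routes are valid and logically close (indeed \cref{rho-derivative-formula} is the main input to the direction of \cref{rho-Euler-formula} used here), but the paper's version is marginally cleaner, since it avoids appealing to the cohomological content of \cref{chi-easy-case} and stays entirely within the combinatorial identity. One minor caution on the hypothesis bookkeeping: \cref{chi-easy-case} and \cref{rho-derivative-formula} are stated for partitions with a part of size $>m$, whereas the case split you need is against ``all parts $<m$,'' whose complement is ``some part $\geq m$''; the proofs of those lemmas in fact only use \cref{irrep-constant} and \cref{schur-vanishing-lemma}, both of which require only $\geq m$, so the argument goes through, but it is worth making that explicit if you want the two cases to genuinely exhaust the irreducibles.
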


\begin{proof} If $\orho$ corresponds to a partition with at least one part of size $\geq m$, \eqref{c1} vanishes by \cref{rho-derivative-formula}.

If $\orho$ corresponds to a partition with all parts of size $<m$, we observe that $V^d_{\orho}=0$ for $d\geq m$ by \cref{other-vanishing-lemma} and thus 
\[ \sum_{d=m}^{n} (-1)^{d+n}  \binom{d}{m} \dim V^d_{\orho}= 0 \] 
and so  \eqref{c1} is $\chi_{\orho}$ by \cref{rho-Euler-formula}. \end{proof}

\begin{lemma}\label{generic-rank-formula} For $\orho$ is an irreducible representation of $S_n$ corresponding to a partition with all parts of size $<m$, the dimension of the stalk of $\left( R^{n-m} \pi_! \mathbb Q_\ell \otimes \orho\right)^{S_n}$ at the generic point is $C_1(\orho)$.\end{lemma}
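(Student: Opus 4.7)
The plan is to identify the stalk dimension at the generic point with the Euler characteristic $\chi_{\orho}$ already computed in Lemma \ref{rho-Euler-difference}, up to an overall sign. The essential work has been done in the preceding lemmas; the only remaining point is to extract a single dimension from the alternating sum $\chi_{\orho}$.

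First I would invoke Lemma \ref{irrep-perverse}: since $\orho$ corresponds to a partition with all parts of size $<m$, the complex $\bigl(R\pi_! \mathbb Q_\ell \otimes \orho\bigr)^{S_n}[n]$ is a perverse sheaf on the smooth $m$-dimensional torus $T$. Restricted to the generic point $\eta$, a perverse sheaf on a smooth variety of dimension $m$ has cohomology concentrated in degree $-m$, so $\bigl(R^j\pi_! \mathbb Q_\ell \otimes \orho\bigr)^{S_n}_\eta$ vanishes for $j \neq n-m$. Substituting into the definition of $\chi_{\orho}$ therefore gives
\[
\chi_{\orho} \;=\; (-1)^{n-m}\, \dim \bigl(R^{n-m}\pi_! \mathbb Q_\ell \otimes \orho\bigr)^{S_n}_\eta.
\]

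Next I would apply Lemma \ref{rho-Euler-difference} in the present case (all parts of size $<m$), which identifies $\chi_{\orho}$ with
\[
\sum_{d=0}^n (-1)^{d+n+m} \frac{\partial^m \tr\bigl(\Diag(\lambda_1,\dots,\lambda_m),\, V^d_{\orho}\bigr)}{\partial \lambda_1 \cdots \partial \lambda_m}\bigg|_{\lambda_1,\dots,\lambda_m=1} \;-\; \sum_{d=m}^n (-1)^{d+n} \binom{d}{m} \dim V^d_{\orho}.
\]
Factoring the common sign $(-1)^{n+m} = (-1)^{n-m}$ out of both sums and comparing with the definition of $C_1(\orho)$ from the introduction, one sees immediately that $\chi_{\orho} = (-1)^{n-m} C_1(\orho)$. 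Combining this with the displayed equation above yields $\dim \bigl(R^{n-m}\pi_! \mathbb Q_\ell \otimes \orho\bigr)^{S_n}_\eta = C_1(\orho)$, as required.

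There is no real obstacle at this step: the perversity statement from Lemma \ref{irrep-perverse} does all the work of collapsing the alternating sum to a single term, and the sign bookkeeping between $\chi_{\orho}$ and $C_1(\orho)$ is mechanical. The genuine effort — computing $\chi_{\orho}$ via the Lefschetz fixed-point argument and the representation-theoretic identities — has already been carried out in Lemmas \ref{Lefschetz-linear-equation} through \ref{rho-Euler-difference}.
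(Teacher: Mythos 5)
Your proof is correct and matches the paper's argument exactly: perversity (Lemma \ref{irrep-perverse}) collapses the generic stalk to degree $n-m$, Lemma \ref{rho-Euler-difference} identifies $\chi_{\orho}$ with the signed expression, and the sign $(-1)^{n+m}=(-1)^{n-m}$ converts it to $C_1(\orho)$. Nothing to add.
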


\begin{proof} This follows from \cref{rho-Euler-difference} and the definition of $C_1(\orho)$ once we observe, from \cref{irrep-perverse}, that $\left( R \pi_! \mathbb Q_\ell \otimes \orho\right)^{S_n}_{\eta}$  is concentrated in degree $n-m$ and so its rank in degree $n-m$ is $(-1)^{n-m}$ times its Euler characteristic. \end{proof}

\begin{proposition}\label{Euler-main-bound} Suppose that $\orho$ is an irreducible representation corresponding to a partition with all parts of size $<m$. Then the dimension of the stalk of  $\left( R^{n-m} \pi_! \mathbb Q_\ell \otimes \orho\right)^{S_n}$ at any point of $T$ is $\leq C_1(\orho)$. \end{proposition}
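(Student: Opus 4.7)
The proof is a direct synthesis of results already established in the excerpt. Let me set $K = \left( R \pi_! \mathbb Q_\ell \otimes \orho\right)^{S_n}$. By \cref{generic-rank-formula}, the dimension of the stalk of $R^{n-m}\pi_!\mathbb Q_\ell \otimes \orho)^{S_n}$ at the generic point $\eta$ of $T$ equals $C_1(\orho)$ precisely. Thus it suffices to show that the dimension of the stalk at an arbitrary point $x \in T$ is at most the dimension of the stalk at $\eta$, i.e., a semicontinuity statement.

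The plan is to obtain this semicontinuity by applying \cref{perverse-easy-inequality}(3) to the perverse sheaf $K[n]$ on $T$. By \cref{irrep-perverse}, since $\orho$ corresponds to a partition with all parts of size $< m$, the shifted complex $K[n]$ is indeed perverse. The variety $T$ is a torus, hence smooth (in particular normal) of dimension $m$. Writing $K[n]$ as a perverse sheaf on this $m$-dimensional normal variety, the relevant bottom cohomology sheaf in \cref{perverse-easy-inequality}(3) is $\mathcal H^{-m}(K[n])$. But $\mathcal H^{-m}(K[n]) = \mathcal H^{n-m}(K) = \left(R^{n-m}\pi_! \mathbb Q_\ell \otimes \orho\right)^{S_n}$, which is exactly the sheaf whose stalk we wish to bound.

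Putting this together, \cref{perverse-easy-inequality}(3) applied to $K[n]$ at the point $x$ yields
\[ \dim \left(R^{n-m}\pi_! \mathbb Q_\ell \otimes \orho\right)^{S_n}_x  = \dim \mathcal H^{-m}(K[n])_x  \leq  \dim \mathcal H^{-m}(K[n])_{\eta}  = \dim \left(R^{n-m}\pi_! \mathbb Q_\ell \otimes \orho\right)^{S_n}_{\eta} , \]
and \cref{generic-rank-formula} identifies the right-hand side with $C_1(\orho)$.

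There is no real obstacle here: every ingredient has already been proved. The only conceptual point is to match the perverse shift to the dimension of $T$, so that $\mathcal H^{-n}$ in the statement of \cref{perverse-easy-inequality}(3) becomes $\mathcal H^{-m}$ in the application (since $\dim T = m$), and to notice that \cref{support-degrees} together with the perverse bound forces $n-m$ to be the lowest degree of nonvanishing cohomology of $K$, so that \cref{perverse-easy-inequality}(3) applies in exactly the right degree.
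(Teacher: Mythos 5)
Your proposal is correct and follows essentially the same route as the paper: use \cref{irrep-perverse} to show $K[n]$ is perverse on the normal variety $T$ of dimension $m$, apply \cref{perverse-easy-inequality}(3) to bound the stalk at any point by the stalk at the generic point, and conclude with \cref{generic-rank-formula}. The one superfluous remark is the appeal to \cref{support-degrees}: part (3) of \cref{perverse-easy-inequality} is stated directly for $\mathcal H^{-\dim X}$ of any perverse sheaf, so no separate argument is needed to see that $n-m$ is the relevant degree.
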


\begin{proof} By \cref{irrep-perverse}, $ \left( R\pi_! \mathbb Q_\ell \otimes \orho\right)^{S_n}[n] $ is perverse on a normal variety of dimension $m$. Thus by \cref{perverse-easy-inequality}, its rank in degree $-m$ at any point is at most its rank at the generic point. The result then follows from Lemma \ref{generic-rank-formula}. \end{proof}

The following bound will be useful when we need a less precise estimate:

\begin{lemma}\label{weak-C1-bound} We have
\[ C_1(\rho) \leq \sum_{d=0}^n  \frac{\partial^m}{\partial \lambda_1 \dots \partial \lambda_m } \tr \Bigl ( \Diag(\lambda_1,\dots, \lambda_m), V_{\orho}^d \Bigr) \Big|_{\lambda_1,\dots,\lambda_m=1}  .\] \end{lemma}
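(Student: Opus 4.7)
The plan is to reduce to the case of irreducible $\orho$ and then split into two subcases based on the size of the parts of the corresponding partition. Because $\tr(\Diag(\lambda_1,\dots,\lambda_m), V^d_\rho)$ and $\dim V^d_\rho$ are both additive in $\rho$, both $C_1(\rho)$ and the right-hand side of the claimed inequality are additive in $\rho$; writing $\rho = \bigoplus_{\orho} \orho^{m_\orho}$ with non-negative multiplicities $m_\orho$, it suffices to prove the inequality for each irreducible $\orho$ separately.

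The key positivity input is that
\[ P_d(\orho) := \frac{\partial^m \tr(\Diag(\lambda_1,\dots,\lambda_m), V^d_{\orho})}{\partial \lambda_1 \cdots \partial \lambda_m}\bigg|_{\lambda_1,\dots,\lambda_m=1} \geq 0 \]
for every $d$ and every $\orho$. Indeed, $V^d_{\orho} = ((\mathbb C^m)^{\otimes(n-d)} \otimes \orho \otimes \sgn_{S_{n-d}})^{S_{n-d}\times S_d}$ is cut out of a polynomial $GL_m$-representation by invariants under a group action commuting with $GL_m$, hence is itself a polynomial representation of $GL_m$. Its character is therefore a non-negative integer combination of monomials in $\lambda_1,\dots,\lambda_m$, and the $m$-fold mixed partial derivative of any such monomial, evaluated at $\lambda_1=\dots=\lambda_m=1$, is a non-negative integer.

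If $\orho$ corresponds to a partition with some part of size $\geq m$, then \cref{rho-Euler-difference} combined with the trivial sign manipulation that multiplies through by $(-1)^{n+m}$ shows that $C_1(\orho) = 0$, while the right-hand side is $\geq 0$ by the positivity above. If instead every part of the partition of $\orho$ has size $<m$, then \cref{other-vanishing-lemma} gives $V^d_{\orho} = 0$ for $d\geq m$, so the binomial correction term $\sum_{d\geq m}(-1)^{d-m}\binom{d}{m}\dim V^d_{\orho}$ in the definition of $C_1(\orho)$ vanishes, leaving $C_1(\orho) = \sum_{d=0}^n (-1)^d P_d(\orho) \leq \sum_{d=0}^n P_d(\orho)$, as desired. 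Summing the inequalities over irreducible constituents with non-negative multiplicities yields the statement for general $\rho$. The only step requiring any real thought is the positivity of $P_d(\orho)$, but this is essentially immediate once one observes that $V^d_{\orho}$ is a polynomial $GL_m$-representation.
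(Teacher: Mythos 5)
Your proof is correct and follows essentially the same route as the paper's: reduce to irreducible $\orho$ by additivity, use positivity of the mixed partial of the character (the paper states this directly as nonnegativity of coefficients; you justify it via $V^d_\orho$ being a polynomial $GL_m$-representation, which is the same fact), and split on whether the partition has a part of size $\geq m$ (you cite \cref{rho-Euler-difference}, the paper cites \cref{rho-derivative-formula}, but these amount to the same computation). No gaps.
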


\begin{proof}  First note that $\tr \Bigl ( \Diag(\lambda_1,\dots, \lambda_m), V_{\orho}^d \Bigr) $ is a polynomial in $\lambda_1,\dots, \lambda_m$ with nonnegative coefficients so its derivative in $\lambda_1,\dots, \lambda_m$, evaluated at $1$, is nonnegative. 

Both sides are additive in $\rho$, so it suffices to handle the case when $\rho$ is irreducible. If $\rho$ corresponds to a partition with all parts of size $<m$, then by \cref{other-vanishing-lemma},
\[ C_1(\rho) = \sum_{d=0}^n (-1)^d   \frac{\partial^m \tr \bigl ( \Diag(\lambda_1,\dots, \lambda_m), V_{\orho}^d \bigr)}{\partial \lambda_1 \dots \partial \lambda_m }  \Big|_{\lambda_1,\dots,\lambda_m=1}   \leq  \sum_{d=0}^n  \frac{\partial^m\tr \bigl ( \Diag(\lambda_1,\dots, \lambda_m), V_{\orho}^d \bigr) }{\partial \lambda_1 \dots \partial \lambda_m } \Big|_{\lambda_1,\dots,\lambda_m=1}  \]
For any other $\rho$, by \cref{rho-derivative-formula},
\[ C_1(\rho) = 0 \leq  \sum_{d=0}^n  \frac{\partial^m}{\partial \lambda_1 \dots \partial \lambda_m } \tr \Bigl ( \Diag(\lambda_1,\dots, \lambda_m), V_{\orho}^d \Bigr)\Big|_{\lambda_1,\dots,\lambda_m=1}  .\qedhere \]  \end{proof}

 \section{Characteristic cycle and Massey bound}\label{sec-Massey}

 To apply the characteristic cycle theory to $u_! \mathbb Q_\ell$, we must first embed $\overline{X}$ into a smooth variety, which we do using the closed immersion \[s\colon \overline{X} \to (\mathbb P^1)^n \times T \] mentioned in \cref{sec-structure}. Furthermore, let \[ (sym \times id)\colon (\mathbb P^1)^n \times T \to  (\mathbb P^1)^{(n)} \times T \] be the quotient by $S_n$, and \[ pr_2\colon   (\mathbb P^1)^{(n)} \times T \to T\] the projection. Then by definition \[ \overline{\pi} = pr_2 \circ (sym \times id) \circ s.\]
 
 The utility of defining the map $sym$ is that it is $S_n$-invariant, so $ (sym\times id)_* s_* u_! \mathbb Q_\ell $ admits a natural $S_n$ action, which we use to define  $  ((sym_n \times id)_* s_* u_! \mathbb Q_\ell \otimes \rho )^{S_n}$. We will calculate the characteristic cycle of $ ( (sym_n \times id)_* s_* u_! \mathbb Q_\ell \otimes \rho )^{S_n}$ and then project under $pr_2$ to find the characteristic cycle of $(R \pi_! \mathbb Q_\ell \otimes \rho)^{S_n}$.  It will be difficult to calculate the multiplicity of the zero section in the characteristic cycle of $(R \pi_! \mathbb Q_\ell \otimes \rho)^{S_n}$ by this method, but this multiplicity, which matches the Euler characteristic of the generic fiber, was already computed in \cref{generic-rank-formula}.

 \begin{lemma}\label{singular-support-symmetric}  The fiber of the singular support of $( (sym \times id)_* s_* u_! \mathbb Q_\ell \otimes \rho)^{S_n} $ over a point $x$ in $(Z/S_n) \times T$, viewed as a closed subset of the cotangent space of $(\mathbb P^1)^n \times T$ at $x$, intersects the cotangent space at $T$ only in zero. \end{lemma}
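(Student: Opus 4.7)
The plan is to deduce this from \cref{singular-support-embedded} by a pushforward argument along the finite map $f := sym \times id \colon (\mathbb P^1)^n \times T \to (\mathbb P^1)^{(n)} \times T$, together with the observation that taking $S_n$-invariants and tensoring with a constant representation cannot enlarge the singular support.

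First I would reduce from the invariant summand $((sym \times id)_* s_* u_! \mathbb Q_\ell \otimes \rho)^{S_n}$ to the full pushforward $f_* s_* u_! \mathbb Q_\ell$. The tensor product with the constant sheaf associated to $\rho$ is, as a plain $\ell$-adic sheaf, a finite direct sum of copies of $f_* s_* u_! \mathbb Q_\ell$, hence has exactly the same singular support. Since coefficients are in characteristic zero, the $S_n$-invariants split off as a direct summand via the symmetrization projector $\frac{1}{n!}\sum_{\sigma \in S_n}\sigma$, and the singular support of a direct summand is contained in that of the ambient complex. So it suffices to bound $SS(f_* s_* u_! \mathbb Q_\ell)$ over $(Z/S_n) \times T$.

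Next I would invoke the pushforward inclusion $SS(R f_* K) \subseteq f_\circ SS(K)$ for the finite (hence proper) map $f$, in the sense of \cref{circ-forward}. Concretely, a covector $\omega \in T^*_{(y,a)}((\mathbb P^1)^{(n)} \times T)$ belongs to $f_\circ SS(s_* u_! \mathbb Q_\ell)$ only if there exists a preimage $(x,a) \in f^{-1}(y,a)$ together with a covector $\omega' \in SS(s_* u_! \mathbb Q_\ell)$ at $(x,a)$ satisfying $(df)^*(\omega) = \omega'$. Since $f = sym \times id$, the cotangent map $(df)^*$ acts as $d(sym)^*$ on the $(\mathbb P^1)^{(n)}$-factor and as the identity on the $T$-factor; in particular, if $\omega$ lies in the cotangent direction of $T$ (i.e.\ vanishes on the tangent directions of $(\mathbb P^1)^{(n)}$), then so does $\omega'$. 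Because $Z$ is $S_n$-invariant, $sym^{-1}(Z/S_n) = Z$, so any preimage of a point of $(Z/S_n) \times T$ automatically lies in $Z \times T$.

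Putting these pieces together: assume $(y,a) \in (Z/S_n) \times T$ and that $\omega$ lies in the cotangent direction of $T$ and in the fiber of $SS(((sym \times id)_* s_* u_! \mathbb Q_\ell \otimes \rho)^{S_n})$ at $(y,a)$. The above produces a preimage $(x,a) \in Z \times T$ and a covector $\omega'$ in the cotangent direction of $T$ lying in $SS(s_* u_! \mathbb Q_\ell)$ at $(x,a)$. By \cref{singular-support-embedded}, $\omega' = 0$, and since $(df)^*$ is the identity on the $T$-direction we conclude $\omega = 0$. The only subtle point is the use of the pushforward formula $SS(R f_* K) \subseteq f_\circ SS(K)$ for the finite morphism $f$, which is standard in the Beilinson--Saito theory \cite{Beilinson, saito1}; everything else is a brief cotangent-space computation that reduces the question to \cref{singular-support-embedded}.
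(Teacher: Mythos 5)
Your proposal is correct and follows essentially the same route as the paper: reduce to the full pushforward $(sym\times id)_* s_* u_!\mathbb Q_\ell$ via the direct-summand monotonicity of singular support, apply the Beilinson pushforward inclusion $SS(Rf_*K)\subseteq f_\circ SS(K)$, and track a $T$-direction covector through $d(sym\times id)$ to a preimage in $Z\times T$, where \cref{singular-support-embedded} forces it to vanish. The only cosmetic difference is that you separate the $\otimes\rho$ step from the $S_n$-invariants step, while the paper compresses both into one appeal to the summand property.
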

 
 \begin{proof} Because the singular support can only decrease on passing to a direct summand, it suffices to prove the same property for the singular support of $ (sym \times id)_* s_* u_! \mathbb Q_\ell $. 
 
 By \citep[Lemma 2.2(ii)]{Beilinson}, \[ SS ( (sym \times id)_* s_* u_! \mathbb Q_\ell ) \subseteq (sym \times id)_\circ SS ( s_* u_! \mathbb Q_\ell).\]   Here $(sym \times id)_{\circ}$ is the image under $(sym \times id)$ of the  inverse image under $d (sym \times id)$. So the fiber of $SS ( (sym \times id)_* s_* u_! \mathbb Q_\ell ) $ over a point $x$ is contained in the union over $y \in (sym \times id)^{-1}(x)$ of the inverse image under $d(sym \times id)$ of the fiber of $SS ( s_* u_! \mathbb Q_\ell)$ at $y$.
 
 For $x \in (Z/S_n) \times T$, we must have $y \in Z \times T$. If a vector $\omega$ in the fiber of $SS ( (sym \times id)_* s_* u_! \mathbb Q_\ell )$ over $x$ lies in the cotangent space of $T$ and is nonzero, then its image under $d(sym \times id)$ at each point $y$ lies in the cotangent space of $T$ and is nonzero, since $sym \times id$ is compatible with the projection to $T$, and so it cannot lie in the fiber of $SS ( s_* u_! \mathbb Q_\ell)$ over $y$ by \cref{singular-support-embedded}. \end{proof}

 We can also factor $(sym \times id) \circ s \circ u = \tilde{s} \circ sym$ for $sym\colon C^n \to C^{(n)}$ the quotient map and $\tilde{s}\colon  C^{(n)} \to (\mathbb P^1)^{(n)} \times T$ the quotient of the graph of $\pi$ by $S_n$. We can express the maps relevant here in the following commutative diagram:
 
 \[ \begin{tikzcd}
     X\arrow[d,"u"]&  C^n \arrow[l,equal] \arrow[r,"sym"] &   C^{ (n)} \arrow[d , "\tilde{s}"]  \\
\overline{X}\arrow[r,"s"]\arrow[drr,"\overline{\pi}"] &  (\mathbb P^1)^n \times T \arrow[r, "sym \times id"]  &    (\mathbb P^1)^{(n)} \times T \arrow[d, "pr_2"]  \\
& & T\end{tikzcd} \] 
  Note that the maps $sym, sym\times id,$ and $\overline{\pi}$ are each invariant under the natural action of $S_n$ on, respectively, $C^n, (\mathbb P^1)^n\times T$, and $\overline{X}$.
 
  Our next step is to calculate the characteristic cycle of $(sym_* \mathbb Q_\ell \otimes \rho)^{S_n}$.
 
 To do this, note that we can view points of $C^{(n)}$ as monic polynomials $\prod_{i=1}^n (T-\alpha_i)$ of degree $n$, prime to $g$.  Let $P_n$ be the vector space of polynomials of degree $<n$ and $P_n^\vee$ its dual space. Viewing points of $C^{(n)}$ as monic polynomials of degree $n$, we can view the tangent space at any point as $P_n$, so the tangent bundle is $P_n \otimes \mathcal O_{ C^{(n)}}$ and the cotangent bundle is $P_n^\vee \otimes \mathcal O_{C^{(n)}}$. 
   
 \begin{defi} We says a \emph{weight function} is a function $w$ from the positive natural numbers to natural numbers such that \[\sum_{k=1}^{\infty} k w(k) =n.\]
 
 We write $\WF$ for the set of weight functions.
 
 Fix a weight function $w$. Let $D_w = \prod_k C^{ (w(k))}$, so that the points of $D_w$ parameterize tuples  $(f_k)_{k\in \mathbb N^+}$ where $f_k$ is a monic polynomial of degree $w(k)$, prime to $g$.
 
 Define a map $ev_w$ from $D_w$  to $C^{ (n)}$ that sends a tuple $(f_k)_{k\in \mathbb N^+}$ to $\prod_{k=1}^{\infty}  f_k^k$.
 
Define a closed subset $A_w$ of $C^{(n)}$ as the image of $ev_w$.  

Define a vector bundle $W_w$ on $D_w$ as the kernel of the dual map \[  \Bigl(\cdot  \prod_k f_k^{k-1} \Bigr) ^T \colon  P_n^\vee  \otimes \mathcal O_{D_w}  \to P_{ \sum_k  w(k)  }^\vee  \otimes \mathcal O_{D_w}  \]  to the map of vector bundles \[  \Bigl(\cdot  \prod_k f_k^{k-1} \Bigr) \colon P_{ \sum_k w(k) }  \otimes \mathcal O_{D_w}  \to P_n \otimes   \mathcal O_{D_w} .\]  In other words, the fiber of $W_w$ over a tuple $(f_k)_{k \in \mathbb N^+}$ is the vector space of linear forms on polynomials of degree $<n$ that vanish on all polynomial multiples of $\prod_{k=1}^{\infty} f_k^{k-1}$. 
 
Since $W_w$ is a subbundle of $ P_n^\vee  \otimes \mathcal O_{D_w}  = ev_w^* T^* C^{(n)}$,  we have a map $W_w \to   T^* C^{(n)}$. Define $B_w$ as the image of this map.  \end{defi} 

By construction, $B_w$ is an irreducible closed subset of $T^* C^{(n)}$. The vector bundle $W_w$ has fiber dimension $n - \sum_{k=1}^{\infty} w(k)$ and base dimension $\sum_{k=1}^{\infty} w(k)$, so the total space has dimension $n$. Because $ev_w$ is injective on the generic point, $B_w$ also has dimension $n$ and the projection $W_w \to B_w$ has degree $1$. 

For $w$ a weight function, let $S_w = \prod_{k=1}^{\infty} S_k^{w(k)}$. Then $S_w$ is a subgroup of $S_n$, the stabilizer of a partition into $w(1)$ parts of size $1$, $w(2)$ parts of size $2$, and so on. Let  \[ M_{\rho}(w) = \dim \Bigl(  ( \rho \otimes \sgn) ^{ S_w}  \Bigr) .\] 

This section will be devoted to, first, giving in \cref{characteristic-cycle-torus} a formula for the characteristic cycle of $ (R \pi_! \mathbb Q_\ell[n] \otimes \rho)^{S_n}$ in terms of the $M_{\rho(w)}$ and some cycles $B'_w$, to be defined later, related to $B_w$, and second, using that formula to bound the dimensions of the stalks of $ (R^{n+1-m} \pi_! \mathbb Q_\ell \otimes \rho)^{S_n}$ by $C_2(\rho)$. This will involve proving a succession of formulas for the characteristic cycles of $(sym_* \mathbb Q_\ell [n] \otimes \rho)^{S_n}$, $\tilde{s}_! (sym_* \mathbb Q_\ell [n] \otimes \rho)^{S_n}$, and finally $( R \pi_! \mathbb Q_\ell [n] \otimes \rho)^{S_n}$, each formula proved using the previous one, as well as re-expressing $C_2(\rho)$ as a sum involving the coefficients $M_\rho(w)$ in \cref{cc-nice-formula}.

\begin{lemma}\label{cc-easy} The characteristic cycle of $(sym_* \mathbb Q_\ell [n] \otimes \rho)^{S_n} $ is \[ \sum_{ w\in \WF } M_{\rho}(w)  [ B_w] .\] \end{lemma}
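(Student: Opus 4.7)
The plan is to show that both sides compute the characteristic cycle of the same perverse sheaf by identifying its singular support with the $B_w$, and then computing multiplicities via a local product reduction.

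First I would verify that $K := (sym_* \mathbb Q_\ell[n] \otimes \rho)^{S_n}$ is perverse on $C^{(n)}$: the shifted constant sheaf $\mathbb Q_\ell[n]$ is perverse on the smooth $n$-dimensional variety $C^n$, $sym$ is finite (so $sym_*$ preserves perversity), and taking $S_n$-invariants in characteristic $0$ is exact. Next I would introduce the natural multiplicity-type stratification of $C^{(n)}$: for each $w\colon \mathbb N^+ \to \mathbb N$ with $\sum_k k w(k) = n$, let $A_w^\circ \subseteq A_w$ be the locally closed subset of polynomials with exactly $w(k)$ distinct roots of multiplicity $k$, which is smooth of dimension $\sum_k w(k)$. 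By Frobenius reciprocity applied to the étale cover $sym^{-1}(A_w^\circ) \to A_w^\circ$ with Galois group $S_n/\prod_k S_k^{w(k)}$, the restriction of $K[-n]$ to $A_w^\circ$ is a local system of rank $\dim \rho^{\prod_k S_k^{w(k)}}$.

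I would then verify that $B_w = \overline{T^*_{A_w^\circ} C^{(n)}}$ by computing the tangent space: identifying the tangent space to $C^{(n)}$ at $f$ with $P_n$, first-order deformations of $f = \prod_k f_k^k$ preserving the factorization type are exactly those of the form $h \cdot \prod_k f_k^{k-1}$ for $h \in P_{\sum_k w(k)}$ (by differentiating the product), so the tangent space to $A_w^\circ$ at $f$ is the image of the multiplication map. The conormal space is therefore the kernel of its dual, matching the fibre of $W_w$ in the definition of $B_w$. Since $K$ is perverse and constructible with respect to $\{A_w^\circ\}$, its singular support is contained in $\bigcup_w B_w$, and $CC(K) = \sum_w m_w [B_w]$ for some integers $m_w$; the lemma reduces to the identification $m_w = M_\rho(w)$.

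To compute $m_w$, I would work étale-locally near a generic point $p = \prod_k f_k^k \in A_w^\circ$ where the distinct roots $\alpha_{(k,j)}$ of $p$ are well-separated. Choosing disjoint open neighborhoods $V_{(k,j)} \subseteq C$ of each root, the natural étale map $\prod_{(k,j)} V_{(k,j)}^{(k)} \to C^{(n)}$ realizes $C^{(n)}$ locally as a product of symmetric powers, and $sym$ factors étale-locally as a product of local symmetric-power maps $sym_k \colon V_{(k,j)}^k \to V_{(k,j)}^{(k)}$; the global $S_n$-action reduces locally to that of the stabilizer $H = \prod_{(k,j)} S_k \subseteq S_n$. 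Decomposing $\rho|_H = \bigoplus_\chi m_\chi \chi$ into irreducibles $\chi = \bigotimes_{(k,j)} \chi_{(k,j)}$ of $H$ and applying the Künneth formula for characteristic cycles, the multiplicity becomes
\[ m_w = \sum_\chi m_\chi \prod_{(k,j)} m_k^{\mathrm{loc}}(\chi_{(k,j)}), \]
where $m_k^{\mathrm{loc}}(\sigma)$ denotes the multiplicity of the conormal at $t^k \in C^{(k)}$ in $CC\bigl((sym_{k*}\mathbb Q_\ell[k] \otimes \sigma)^{S_k}\bigr)$ for each irreducible $\sigma$ of $S_k$.

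The main obstacle is the single-factor calculation: one must prove $m_k^{\mathrm{loc}}(\sigma)$ equals $(-1)^k$ when $\sigma = \sgn_{S_k}$ and vanishes otherwise. I would establish this via Milnor's formula applied to the transverse one-parameter family $\epsilon \mapsto t^k - \epsilon$ through $t^k \in C^{(k)}$: the nearby fibre $sym_k^{-1}(t^k - \epsilon)$ consists of $k!$ points permuted as the regular $S_k$-set, while the special fibre is a single point with trivial $S_k$-action, so the vanishing cycles $R\Phi$ of $sym_{k*} \mathbb Q_\ell$ realize the augmentation representation $\mathbb Q_\ell[S_k]/\mathbb Q_\ell$ as an $S_k$-module. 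After tensoring with $\sigma$, taking $S_k$-invariants, and carefully accounting for the perverse-shift sign together with contributions from conormals to intermediate strata passing through $t^k$, a direct local calculation singles out $\sigma = \sgn$ as the unique irreducible yielding the claimed nonzero multiplicity $(-1)^k$. Substituting back into the Künneth expression, the sum over $\chi$ collapses to the single term $\chi = \sgn|_H$, yielding $m_w = (-1)^n \langle \rho|_H, \sgn|_H \rangle = (-1)^n \dim(\rho \otimes \sgn)^H = M_\rho(w)$, completing the proof.
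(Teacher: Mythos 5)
Your framework is sound, and it is notably more ambitious than the paper's own proof, which is a one-line citation to \citep[Theorem 2.10]{SawinSupNorms}. The perversity check for $K$, the stratification by $A_w^\circ$, the identification of $B_w$ as the closure of the conormal $T^*_{A_w^\circ}C^{(n)}$ via the tangent-space computation (differentiating $\prod f_k^k$ and using coprimality of the $f_k$), the local \'etale product model near a generic point of $A_w^\circ$, and the reduction by Frobenius reciprocity plus K\"unneth to the single-factor multiplicities $m_k^{\mathrm{loc}}(\sigma)$ are all correct, and the final algebra $m_w=(-1)^n\dim(\rho\otimes\sgn)^H=M_\rho(w)$ follows once the single-factor case is in hand.

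The gap is precisely that single-factor case, which you assert but do not prove, and which is where the entire content of the theorem lives. Milnor's formula at $t^k\in C^{(k)}$ gives
$-\operatorname{dimtot} R\Phi_f\bigl((sym_{k*}\mathbb Q_\ell[k]\otimes\sigma)^{S_k}\bigr)_{t^k} = \sum_{w'} m_{w'}(\sigma)\,(B_{w'},df)_{t^k}$,
summed over \emph{all} factorization types $w'$ of $k$, not only the conormal of the point. The vanishing-cycles calculation you sketch yields $\mathbb Q_\ell[S_k]/\mathbb Q_\ell$ as an $S_k$-module, so $\dim\bigl((\mathbb Q_\ell[S_k]/\mathbb Q_\ell)\otimes\sigma\bigr)^{S_k}=\dim\sigma-\delta_{\sigma=\mathbf 1}$; this is nonzero for every nontrivial $\sigma$, not just $\sgn$. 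The claimed vanishing for $\sigma\neq\sgn$ must therefore come entirely from the subtraction you defer to ``accounting for contributions from conormals to intermediate strata,'' and you never carry it out. Doing so requires (i) the coefficients $m_{w'}(\sigma)$ for every shallower $w'$, which by your own K\"unneth reduction come from $m_j^{\mathrm{loc}}(\tau)$ for $j<k$ --- so an honest induction on $k$ with a nontrivial representation-theoretic bookkeeping at each stage --- and (ii) the purely geometric polar intersection numbers $(B_{w'},df)_{t^k}$ of each intermediate conormal with a generic covector. Neither piece is routine, and it is not clear without doing the computation that the resulting linear system for $m_k^{\mathrm{loc}}$ collapses to $(-1)^k\delta_{\sigma,\sgn}$. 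The sign conventions (the global $(-1)^n$ in $M_\rho(w)$, Saito's normalization of $\operatorname{dimtot}$, the shift $[k]$) are also delicate and interact, and your phrase ``accounting for the perverse-shift sign'' leaves that entirely to the reader. In short: your reduction is correct, but the base case is the theorem, and it is missing.
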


\begin{proof} This is exactly \citep[Theorem 2.10]{SawinSupNorms}, specialized to the case $K = \mathbb Q_\ell$. \end{proof}

We now study the characteristic cycle on $(\mathbb P^1)^{(n)} \times T$. We can still view the tangent space of $(\mathbb P^1)^{(n)}$ at a point $(\alpha_1,\dots,\alpha_n)$ as the vector space of polynomials of degree $<n$, at least where none of the $\alpha_i$ are $\infty$, and we can view the tangent space of $T$ at a point $a$ as the vector space of polynomials mod $g$.

\begin{defi} Fix a weight function $w$. Let $\tilde{A}_w$ be the closure in $(\mathbb P^1)^{(n)} \times T$  of the image of $\tilde{s} \circ ev_w$.  We will define a closed conical cycle $\tilde{B}_w \in T^* ( (\mathbb P^1)^{(n)} \times T)$ lying over $\tilde{A}_w$

Let $\tilde{W}_w$ be the inverse image of $W_w$ under \[ ev_w^* d \tilde{s} \colon ev_w^* \tilde{s}^* T^* ( (\mathbb P^1)^{(n)}  \times T) \to ev_w^* T^* C^{(n)} ,\] so that $\tilde{W}_w$ fits into the commutative diagram where every square is Cartesian
\[ \begin{tikzcd}
\tilde{W}_w  \arrow[r]  \arrow[d] &  ev_w^*  \tilde{s}^* T^* \left( (\mathbb P^1)^{(n)} \times T\right)  \arrow[r] \arrow[d, "ev_w^* d \tilde{s}"] &   \tilde{s}^* T^* \left( (\mathbb P^1)^{(n)} \times T\right)  \arrow[r] \arrow[d,"d\tilde{s}",swap]   \arrow[dd, bend left] &  T^* \left( (\mathbb P^1)^{(n)} \times T\right) \arrow[dd] \\
W_w \arrow[r] & ev_w^* T^*  C^{(n) }  \arrow[r] \arrow[d]& T^* C^{(n)} \arrow[d] \\
& D_w \arrow[r,"ev_w"]  & C^{(n)} \arrow[r, "\tilde{s}"] & (\mathbb P^1)^{(n)} \times T \end{tikzcd} \] 

Let $\tilde{B}_w$ be the closure of the image in $T^* \left( (\mathbb P^1)^{(n)} \times T\right)$ of $\tilde{W}_w.$ \end{defi}

Concretely, we can as before represent $ev_w^* T^* C^{(n)} $ as $P_n^\vee \otimes \mathcal O_{D_w}$, we can represent $ ev_w^* \tilde{s}^* T^* ( (\mathbb P^1)^{(n)} \times T) $ as $(P_n \oplus P_m )^\vee \otimes \mathcal O_{D_w}$, and we can represent $ev_w^* d \tilde{s}$ as the transpose \[ (id + \textrm{mod }g)^T \colon (P_n \oplus P_m )^\vee \otimes \mathcal O_{D_w}  \to P_n^\vee \otimes \mathcal O_{D_w} \] of the map
 \[ (id + \textrm{mod }g) \colon P_n \otimes \mathcal O_{D_w} \to (P_n \oplus P_m ) \otimes \mathcal O_{D_w} \] that sends a polynomial $h$ to $(h, h \mod g)$. This follows from expressing $\tilde{s}$ as the map sending $f$ to $(f, f \mod g)$ and differentiating. 
 
 Hence $\tilde{W}_w$ is the inverse image under $(id + \textrm{mod }g)^T$ of the kernel of $( \cdot \prod_{k=1}^{\infty} f_k^{k-1})^T$ and thus is the kernel of the composition $( \cdot \prod_{k=1}^{\infty} f_k^{k-1})^T \circ (id + \textrm{mod }g)^T$.

%
%
%
%

\begin{lemma}\label{prime-cycle-formula}  The characteristic cycle of $\tilde{s}_! (sym_* \mathbb Q_\ell [n] \otimes \rho)^{S_n}$ is  \[ \sum_{w\in \WF } M_{\rho}(w)  [ \tilde{B}_w] \] plus a union of irreducible components supported on  $(Z/ S_n ) \times T$. 

\end{lemma}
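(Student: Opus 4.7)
The plan is to apply the pushforward formula for characteristic cycles, \cref{cc-pushforward}, to the restriction of $\tilde{s}$ over the open complement of $(Z/S_n) \times T$. Write $V' = ((\mathbb{P}^1)^{(n)} \setminus (Z/S_n)) \times T$ for this open subvariety of $(\mathbb{P}^1)^{(n)} \times T$.

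First, I would verify that the restriction $\tilde{s}' \colon C^{(n)} \to V'$ of $\tilde{s}$ is a closed immersion of smooth varieties. The closure of $\tilde{s}(C^{(n)})$ in $(\mathbb{P}^1)^{(n)} \times T$ is the image of $\overline{X}$ under $sym \times id$, namely $\overline{X}/S_n$. By \cref{boundary-divisor-calculation}, $\overline{X} \setminus X = Z \times T$, so the boundary $\overline{X}/S_n \setminus \tilde{s}(C^{(n)})$ is contained in $(Z/S_n) \times T$. Hence inside $V'$ the image $\tilde{s}(C^{(n)})$ is closed, and since $\tilde{s}$ is the graph of $\pi$ it is an isomorphism onto its image.

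Second, I would apply \cref{cc-pushforward} to $\tilde{s}'$. Since $\tilde{s}'$ is a closed immersion, $\tilde{s}'^* T^* V' \to T^* V'$ is also a closed immersion and hence finite everywhere, so the finiteness hypothesis of \cref{cc-pushforward} holds with $S = \emptyset$. Combined with \cref{cc-easy}, this yields
\[ CC\bigl(\tilde{s}'_* (sym_* \mathbb{Q}_\ell[n] \otimes \rho)^{S_n}\bigr) \;=\; \tilde{s}'_!\, CC\bigl((sym_* \mathbb{Q}_\ell[n] \otimes \rho)^{S_n}\bigr) \;=\; \sum_w M_\rho(w)\, \tilde{s}'_![B_w] \]
as cycles in $T^* V'$.

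The third step identifies $\tilde{s}'_![B_w]$ with the restriction of $[\tilde{B}_w]$ to $T^* V'$ by unwinding \cref{shriek-forward}: by definition $\tilde{s}'_![B_w]$ is the pushforward to $T^* V'$ of the intersection-theoretic preimage of $[B_w]$ along $d\tilde{s}'\colon \tilde{s}'^* T^* V' \to T^* C^{(n)}$. Since $d\tilde{s}'$ is a surjection of vector bundles it is flat, so this preimage equals the scheme-theoretic preimage with multiplicity one. Tracing through the cartesian diagram defining $\tilde{W}_w$, this scheme-theoretic preimage is the image of $\tilde{W}_w$ in $\tilde{s}'^* T^* V'$, whose pushforward to $T^* V'$ has closure $[\tilde{B}_w]|_{V'}$ by construction. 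Finally, since characteristic cycles are compatible with open restriction and $\tilde{s}'_! = \tilde{s}'_*$ for the closed immersion $\tilde{s}'$, the cycle $CC(\tilde{s}_!(sym_* \mathbb{Q}_\ell[n] \otimes \rho)^{S_n})|_{T^* V'}$ agrees with the above computation, so the difference of the global characteristic cycle and $\sum_w M_\rho(w)[\tilde{B}_w]$ is a $\mathbb{Z}$-linear combination of irreducible components supported above $(Z/S_n) \times T$. The main delicate point is matching Saito's intersection-theoretic pullback with the cartesian construction of $\tilde{W}_w$ in the third step; once the closed immersion $\tilde{s}'$ is in hand, the remainder is a direct application of the machinery already assembled.
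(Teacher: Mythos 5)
Your proof is correct and follows essentially the same structure as the paper's: restrict to the open complement of $(Z/S_n) \times T$, observe that $\tilde{s}$ becomes a closed immersion there, and identify $[\tilde{B}_w]$ restricted to that open set with $\tilde{s}'_![B_w]$, then combine with \cref{cc-easy}. The one real difference is the citation for compatibility of characteristic cycles with closed immersions: the paper goes directly to Saito's {[Lemma 5.13(2)]}, while you invoke the paper's own more general \cref{cc-pushforward} with $S=\emptyset$, noting that the finiteness hypothesis is automatic for a closed immersion. This is logically fine and not circular (the proof of \cref{cc-pushforward} does not rely on Saito's closed-immersion result), but it is slightly more indirect, since \cref{cc-pushforward} was designed for the proper but non-immersive case where the finiteness hypothesis has real content; for closed immersions Saito's lemma is the standard tool and requires nothing extra to check.

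One phrasing in your third step should be tightened: you write that the pushforward of the image of $\tilde{W}_w$ ``has closure $[\tilde{B}_w]|_{V'}$'' --- over $V'$ no closure is needed, because $d\tilde{s}'^{-1}(B_w)$ is already closed in $\tilde{s}'^*T^*V'$ and the immersion $\tilde{s}'^*T^*V' \to T^*V'$ is closed, so the restriction of $\tilde{B}_w$ to $T^*V'$ \emph{equals} this pushforward on the nose. This matters because the whole point of the restriction to $V'$ is to dispense with the closure in the definition of $\tilde{B}_w$. The generic degree-one property of $W_w\to B_w$ together with flatness of $d\tilde{s}'$ then accounts for multiplicity one, completing the identification $\tilde{s}'_![B_w] = [\tilde{B}_w]|_{V'}$.
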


\begin{proof}  
It suffices to prove that the restriction of the characteristic cycle of $\tilde{s}_! (sym_* \mathbb Q_\ell [n] \otimes \rho)^{S_n}$ to the open complement of $(Z/ S_n) \times T$ is $\sum_{w\in \WF } M_{\rho}(w)  [ \tilde{B}_w] $.

Restricted to that open set, $\tilde{s}$ is a closed immersion.  From the Cartesian squares in the above commutative diagram, we can see that $[\tilde{B}_w] $ is the closure of the pushforward of $d\tilde{s}^* [B_w]$ to $T^* ( (\mathbb P^1)^{(n)} \times T)$. Thus, restricted to that open set, $[\tilde{B}_w]$ is the pushforward along $\tilde{s}$ of $d \tilde{s}^* [B_w]$, which by definition is $ \tilde{s}_! [B_w]$. The result then follows from \cite[Lemma 5.13(2)]{saito1}, which gives the compatibility of characteristic cycles with closed immersions, and Lemma \ref{cc-easy}.\end{proof}

\begin{lemma}\label{prime-support-formula} The singular support of $\tilde{s}_! (sym_* \mathbb Q_\ell [n] \otimes \rho)^{S_n}$  is the union of $\tilde{B}_w$ over all weight functions $w$ with $M_{\rho}(w)>0$ plus a union of irreducible components supported on  $(Z/ S_n ) \times T$.  \end{lemma}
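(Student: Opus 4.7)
The plan is to show that $\tilde{s}_!(sym_* \mathbb Q_\ell[n] \otimes \rho)^{S_n}$ is a perverse sheaf on $(\mathbb P^1)^{(n)} \times T$, and then invoke the identity $SS(F) = \mathrm{supp}(CC(F))$ for perverse $F$, cited as \cite[Proposition 4.14.2]{saito1}. Combined with \cref{prime-cycle-formula}, this will immediately identify the singular support as $\bigcup_{M_\rho(w) \neq 0} \tilde{B}_w$ together with the union of the irreducible components of the boundary cycle (all supported on $(Z/S^n) \times T$), which is the claimed description.

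The crucial first step will be to verify that $\tilde{s}\colon C^{(n)} \to (\mathbb P^1)^{(n)} \times T$ is a closed immersion. Its image is the graph of the morphism $\tau\colon C^{(n)} \to T$ sending $f$ to $f \bmod g$, and is plainly closed in the open subscheme $C^{(n)} \times T$. To extend closedness to the ambient $(\mathbb P^1)^{(n)} \times T$, I would argue that any sequence $(f_j, \tau(f_j))$ converging to a point $(f, a)$ with $f \in (\mathbb P^1)^{(n)} \setminus C^{(n)}$ must have $\tau(f_j)$ leaving $T$: since some root of $f$ lies in $S$, either a coordinate of $f_j$ tends to $\infty$, forcing $\tau(f_j)$ to diverge, or a linear factor of $f_j$ approaches a prime divisor of $g$, forcing $\tau(f_j)$ toward a noninvertible residue class mod $g$; either outcome contradicts $a \in T$. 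This is essentially the properness of $\overline{\pi}\colon \overline{X} \to T$ transported across the $S_n$-quotient.

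Once $\tilde{s}$ is known to be a closed immersion, $\tilde{s}_! = \tilde{s}_*$ is $t$-exact for the perverse $t$-structure. Since $sym\colon C^n \to C^{(n)}$ is finite and $\mathbb Q_\ell[n]$ is perverse on the smooth $n$-fold $C^n$, the complex $sym_* \mathbb Q_\ell[n]$ is perverse on $C^{(n)}$; tensoring with a finite-dimensional $\mathbb Q_\ell$-representation and taking $S_n$-invariants (a direct summand in characteristic zero) preserves perversity. Applying the $t$-exact functor $\tilde{s}_!$ then produces a perverse sheaf on $(\mathbb P^1)^{(n)} \times T$, completing the setup for the final appeal to $SS = \mathrm{supp}(CC)$.

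The main obstacle is the geometric verification that $\tilde{s}$ is closed rather than merely locally closed in $(\mathbb P^1)^{(n)} \times T$; everything else reduces to standard properties of the perverse $t$-structure together with the already-computed characteristic cycle from \cref{prime-cycle-formula}.
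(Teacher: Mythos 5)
Your plan correctly identifies the structure of the argument (establish perversity, then apply $SS = \operatorname{supp} CC$ and \cref{prime-cycle-formula}), but the central geometric claim that $\tilde{s}$ is a closed immersion is false, and the heuristic you give for why it should be closed is exactly what the paper's \cref{boundary-divisor-calculation} refutes. The map $\tilde{s}$ is only a \emph{locally closed} immersion: the closure of $\tilde{s}(C^{(n)})$ in $(\mathbb P^1)^{(n)} \times T$ is $\overline{X}/S_n$, which strictly contains the image. The compensation mechanism you dismiss is precisely what occurs: if one coordinate $\alpha_i$ of a sequence tends to $\infty$ while other coordinates $\alpha_{j}$ tend to roots of $g$, the pole and zeros in $\prod(t-\alpha_i) \bmod g$ can cancel, so $\tau(f_j)$ stays in $T$. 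Indeed $\overline{X}\setminus X = Z \times T$ where $Z$ consists of tuples hitting \emph{every} point of $S$ (necessarily including both $\infty$ and the roots of $g$). This is not a technicality: the lemma's conclusion explicitly allows singular support over $(Z/S^n)\times T$, which lies outside $\tilde{s}(C^{(n)})$; if $\tilde{s}$ were a closed immersion no such components could occur, and the statement would be nonsense.

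This also invalidates the step $\tilde{s}_! = \tilde{s}_*$, which requires properness. The paper instead uses that $\tilde{s}$ is quasi-finite and affine: affine because $\tilde{s}$ factors as the closed immersion $C^{(n)} \hookrightarrow C^{(n)}\times T$ (the graph) followed by the open immersion $C^{(n)}\times T \hookrightarrow (\mathbb P^1)^{(n)}\times T$, and $C^{(n)}$ is the complement of a divisor (a union of hyperplanes in $(\mathbb P^1)^{(n)} = \mathbb P^n$), so the open immersion is affine. For a quasi-finite affine morphism, $Rf_!$ is perverse $t$-exact by \cite[Corollary~4.1.3]{bbd}; combining this with the perversity of $sym_*\mathbb Q_\ell[n]$ (finite pushforward of a shifted constant sheaf on a smooth $n$-fold) and closure under direct summands gives the perversity of $\tilde{s}_!(sym_*\mathbb Q_\ell[n]\otimes\rho)^{S_n}$. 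Once perversity is established this way, your appeal to $SS = \operatorname{supp}(CC)$ and \cref{prime-cycle-formula} goes through exactly as you describe. So the fix is localized to replacing the (false) closed-immersion claim with the quasi-finite-affine argument; the rest of your outline matches the paper.
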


\begin{proof} First note that $\tilde{s}_! (sym_* \mathbb Q_\ell \otimes \rho)^{S_n}[n] $ is a perverse sheaf because perversity is preserved by pushforward and compactly supported pushforward with respect to quasi-finite affine morphisms \cite[Corollary 4.1.3]{bbd}, as well as taking summands. By \cite[Proposition 5.14]{saito1}, it follows that the singular support of $\tilde{s}_! ((sym_* \mathbb Q_\ell \otimes \rho)^{S_n}[n] $  is the support of its characteristic cycle. The claim now follows from \cref{prime-cycle-formula}.  \end{proof}

 \begin{defi} We say a weight function $w$ is $m$-bounded if  $\sum_{k=1}^{\infty} w(k) <m$. 
 
 Fix $w$ an $m$-bounded weight function. let $A_{w}'$ be the image in $T$ of $pr_2 \circ \tilde{s} \circ ev_w$.  We will construct a closed conical subset of $T^* T$ supported over $A_w'$.
 
 To do this, let $W_w'$ be the inverse image of $\tilde{W}_w$ under \[ ev_w^* \tilde{s}^* d pr_2\colon ev_w^* \tilde{s}^* pr_2^* T^* T  \to  ev_w^* \tilde{s}^* T^* (( \mathbb P^1)^{(n)} \times T) .\]  By construction, $W_{w'}$ is a sub-vector bundle of $ ev_w^* \tilde{s}^* pr_2^* T^* T $ and thus maps to $T^*T $. Let $B_w'$ be the image of $W_w'$ in $T^* T$.  We write $[B_w']$ for the cycle-theoretic pushforward of $[W_w']$ to $T^* T$ , so that $[B_w']$ is a cycle with nontrivial multiplicity if the map $W_{w'} \to T^* T$ is not generically one-to-one.
 \end{defi}
 
 We do not need to take closures, since we will check that $A_{w}'$ is closed in \cref{w-small-proper}.
 
 Concretely, representing $ ev_w^* \tilde{s}^* T^* ( (\mathbb P^1)^{(n)} \times T) $ as $(P_n \oplus P_m )^\vee \otimes \mathcal O_{D_w}$, we can represent $ev_w^* \tilde{s}^* pr_2^* T^* T$ as $(P_m)^\vee \otimes \mathcal O_{D_w} $ and  $ev_w^* \tilde{s}^* d pr_2$ as the transpose of projection onto the second factor. Thus $W_w'$ is the kernel of \[ \Bigl( \cdot \prod_{k=1}^{\infty} f_k^{w(k)-1}\Bigr)^T \circ ( \textrm{mod } g)^T \colon   P_m^\vee  \otimes   \mathcal O_{D_w} \to P^\vee_{ \sum_k w(k)} \otimes  \mathcal O_{D_w}.\]  Because $f_1,\dots, f_k$ are prime to $g$ and $\sum_{k} w(k) < n $ , the dual map \[ (\textrm{mod }g)\circ \Bigl(\cdot \prod_{k=1}^{\infty} f_k^{w(k)-1} \Bigr) \colon P_{ \sum_k w(k)} \otimes  \mathcal O_{D_w} \to  P_m  \otimes   \mathcal O_{D_w}\] is injective. Thus $W_w'$ is a vector bundle of fiber dimension $m - \sum_k w(k)$ over a base of dimension $\sum_k w(k)$ and hence has total space dimension $m$.

 \begin{lemma}\label{w-small-proper}  For $w$ an $m$-bounded weight function,  the composition $\tilde{s} \circ ev_{w}$ is proper. In particular,  $A_{w}'$ is a closed subset of $T$.   \end{lemma}
 
 \begin{proof} By definition, $A_{w'}$ is the image of $pr_2 \circ \tilde{s} \circ ev_w$ for $pr_2 \colon (\mathbb P^1)^n \times T \to T$ the projection. Because $pr_2$ is proper, the claim about $A_{w'}$ follows from the properness of $\tilde{s} \circ ev_w$.

 Let $\prod_k sym_{w(k)} \colon   \prod_k C^{ w(k) } \to  \prod_k C^{ (w(k))} = D_w$ be the map sending a tuple $  ( (b_{k,j})_{j=1}^{w(k)} )_{k=1}^{\infty}$ to $ (\prod_{j=1}^{w(k)} (T - b_{j,k} ) )_{k=1}^{\infty}$. Then $ \prod_k sym_{w(k)}$ is surjective, so it suffices to show that $\tilde{s} \circ ev_w \circ \prod_k sym_{w(k)}$ is proper by \cite[01W0]{stacks} (since $\tilde{s} \circ ev_w$ is clearly separated of finite type).

 We have a commutative diagram
 
 \[ \begin{tikzcd}
    \prod_k C^{w(k)} \arrow[ rr, "\prod_k sym_{w(k)}"] , \arrow[d, "rep"] & & \prod_k C^{ (w(k))}\arrow[d, "ev_w"] \\ 
     X\arrow[d,"u"]&  C^n \arrow[l,equal] \arrow[r,"sym"] &   C^{ (n)} \arrow[d , "\tilde{s}"]  \\
\overline{X}\arrow[r,"s"]  &  (\mathbb P^1)^n \times T \arrow[r, "sym \times id"]  &    (\mathbb P^1)^{(n)} \times T    \end{tikzcd}\]
where $rep$ sends $ ( (b_{k,j})_{j=1}^{w(k)} )_{k=1}^{\infty}$ to the tuple 
\[ ( b_{1,1},\dots, b_{1,w(1)}, b_{2,1}, b_{2,1}, b_{2,2}, b_{2,2}, \dots, b_{2, w(2)}, b_{2,w(2)}, b_{3,1} ,b_{3,1}, b_{3,1} ,b_{3,2}, b_{3,2},b_{3,2} \dots ) \]
 obtained by repeating each $b_{k,j}$ $k$ times.  So $ \tilde{s} \circ ev_w \circ \prod_k sym_{w(k)}=  (sym \times id) \circ s \circ u \circ rep$. Since $ (sym \times id) $ and $ s $ are proper, it suffices to show that $ u \circ rep$ is proper.
 
Properness of a morphism can be checked on an open cover \cite[01W2]{stacks}. The restriction of $u \circ rep$ to the open set $X$ is simply $rep$, which is finite, hence proper. The restriction of $u \circ rep$ to the complement of the closure of the image of $u \circ rep$ is a morphism from the empty scheme and thus is proper. So it suffices to show these two open sets cover $\overline{X}$, i.e. that the closure of the image of $u \circ rep$ is contained in $X$.
 
 We defined $\overline{X} $ as a closed subset of $(\mathbb P^1)^n \times T$.  The image of $u \circ rep$ is contained in the subset $Y_w$ of tuples of the form 
 \[ ( ( b_{1,1},\dots, b_{1,w(1)}, b_{2,1}, b_{2,1}, b_{2,2}, b_{2,2}, \dots, b_{2, w(2)}, b_{2,w(2)}, b_{3,1} ,b_{3,1}, b_{3,1} ,b_{3,2}, b_{3,2},b_{3,2} \dots ), a) \]
 where now $b_{k,j} \in \mathbb P^1$ and $a \in T$. This subset $Y_w$ is closed because it is defined by equating some of the variables $\alpha_1,\dots, \alpha_n$.  By \cref{boundary-divisor-calculation}, the boundary $\overline{X} \setminus X$ is $Z \times T$, where $Z$ consists of tuples $\alpha_1,\dots, \alpha_n$ where for each $x \in S$ there exists $i$ from $1$ to $n$ with $\alpha_i =x$. This can only intersect $Y_w$ if for each $x \in S$ we have $k$ and $1\leq j \leq w(k)$ such that $b_{k,j} =x$. This can only happen if the total number $\sum_{k=1}^{\infty} w(k)$ of possible $b_{k,j}$ is at least the cardinality $|S|= m+1$ of $S$, which is false by our assumption $\sum_{k=1}^{\infty} w(k)< m$. So the intersection $Y_w \cap \overline{X} \setminus X$ vanishes, and thus the closure of the image of $u \circ rep$ is contained in $X$, as desired. \end{proof}

\begin{lemma}\label{small-w-push} Let $w$ be an $m$-bounded weight function.

Then $pr_{2 \circ } \tilde{B}_w$ has dimension $m$ and $pr_{2!} [\tilde{B}_w] = [B_w']$.  \end{lemma}

\begin{proof}  By definition, $pr_{2!}[ \tilde{B}_w]$ is the image under $pr_2\colon (\mathbb P^1)^{(n)}  \times T^*T \to T^*T $ of the inverse image under $dpr_2 \colon  (\mathbb P^1)^{(n)}  \times T^*T \to T^* ( (\mathbb P^1)^{(n)}  \times T) $ of $[\tilde{B}_w]$.  

The natural map $\tilde{W}_w \to T^*  ( (\mathbb P^1)^{(n)}  \times T) $ is the composition of the closed immersion $\tilde{W}_w \to ev_w^* \tilde{s}^* T^*  ( (\mathbb P^1)^{(n)}  \times T) $ with the  base change $ev_w^* \tilde{s}^* T^*  ( (\mathbb P^1)^{(n)}  \times T) \to T^*  ( (\mathbb P^1)^{(n)}  \times T) $ of $\tilde{s} \circ ev_w$. 

By \cref{w-small-proper}, $\tilde{s} \circ ev_w  $ is proper, so this base change is proper, and thus $\tilde{W}_w \to T^*  ( (\mathbb P^1)^{(n)}  \times T) $ is proper. Hence $\tilde{B}_w$, defined as the closure of the image of $\tilde{W}_w$, is simply the image, and $[\tilde{B}_w]$ is the pushforward of $[\tilde{W}_w]$ along this proper map. We have a commutative diagram with Cartesian squares
\[ \hspace{-.5in} \begin{tikzcd}
W_w' \arrow[r] \arrow[d] &  ev_w^*  \tilde{s}^*  \left( (\mathbb P^1)^{(n)} \times T^* T\right)   \arrow[r]  \arrow[d," ev_w^*\tilde{s}^* dpr_2 "]  &   \tilde{s}^*  \left( (\mathbb P^1)^{(n)} \times T^* T\right)   \arrow[r]    \arrow[d, "\tilde{s}^* d pr_2"]   &   (\mathbb P^1)^{(n)} \times T^* T  \arrow[d, "dpr_2"]  \arrow[r] & T^* T \\ 
\tilde{W}_w  \arrow[r] &  ev_w^*  \tilde{s}^* T^* \left( (\mathbb P^1)^{(n)} \times T\right)  \arrow[r]&   \tilde{s}^* T^* \left( (\mathbb P^1)^{(n)} \times T\right)  \arrow[r]    &  T^* \left( (\mathbb P^1)^{(n)} \times T\right) \end{tikzcd} \]
Using this diagram, $(dpr_2)^* [ \tilde{B}_w] $ is the pushforward of $ (ev_w^*\tilde{s}^* dpr_2)^* [\tilde{W}_w]$. To check that 
\[  (ev_w^*\tilde{s}^* dpr_2)^* [\tilde{W}_w]=[W_w']\]
we use the fact that the intersection of the vector bundles $\tilde{W}_w$ and $  ev_w^*  \tilde{s}^*   \left( (\mathbb P^1)^{(n)} \times T^* T\right) $ inside $ ev_w^*  \tilde{s}^* T^* \left( (\mathbb P^1)^{(n)} \times T\right)$ is transverse (because as we saw above it has the expected dimension), so the intersection-theoretic pullback is simply the inverse image with multiplicity one. 

Thus $pr_{2!} [\tilde{B}_w]$ is the pushforward of $[W_w']$ along the map $W_w' \to T^*T$, which by definition is $[B_w']$.  

To understand $pr_{2\circ} \tilde{B}_w$, we perform the same argument with set-theoretic pullback and pushforward instead of intersection-theoretic pullback and pushforward. We obtain by the same commutative diagram that $pr_{2 \circ} \tilde{B}_w= B_w' $ and thus has dimension $m$.
\end{proof}

\begin{lemma}\label{big-w-push} Let $w$ be weight function that is not $m$-bounded.

Then $pr_{2 \circ } \tilde{B}_w$ has dimension $\leq m$ and $pr_{2!} [\tilde{B}_w] $ is a scalar multiple of the zero section $[T]$. \end{lemma}

  \begin{proof} The zero section of $T^*T $ has dimension $m$. Thus every algebraic cycle of dimension $m$ supported on the zero section is a scalar multiple of the zero section. Because $pr_{2!} [\tilde{B}_w]$ is supported on $pr_{2 \circ } \tilde{B}_w$ by definition, it suffices to show that $pr_{2\circ } \tilde{B}_w$ is contained in the zero section.
  
  
    
That $pr_{2\circ } \tilde{B}_w$ is contained in the zero section is equivalent to saying that the inverse image under $dpr_2$ of $\tilde{B}_w$ is contained in the zero section of $(\mathbb P^1)^{(n)}  \times T^*T$. Because $dpr_2$ is injective, it is equivalent to say that $\tilde{B}_w$ intersects the image of $dpr_2$ only in the zero section of $ T^* ( (\mathbb P^1)^{(n)}  \times T)$.
  
  By definition, $\tilde{B}_w$ is the closure of the image of $\tilde{W}_w$ inside $ T^* ( (\mathbb P^1)^{(n)}  \times T)$. Let us first check that the image of $\tilde{W}_w$ intersects the image of $dpr_2$ only in the zero section.  Each point in the image of $\tilde{W}_w$ consists of a polynomial $\prod_{k=1}^{\infty} f_k^k$ and an element $(h_1,h_2)$ of $P_n^\vee \times P_m^\vee$ in the kernel of the composition $( \cdot \prod_{k=1}^{\infty} f_k^{k-1})^T \circ (id + \textrm{mod }g)^T$.  This point lies in the image of $d pr_2$ if and only $(h_1,h_2)$ it is contained in $0 \times P_m^\vee$, in which case $h_1=0$ and $h_2$ lies in the kernel of the composition $( \cdot \prod_{k=1}^{\infty} f_k^{k-1})^T \circ ( \textrm{mod }g)^T$.  To show the point lies in the zero section, we must show $h_2=0$, for which it suffices to show that $( \cdot \prod_{k=1}^{\infty} f_k^{k-1})^T \circ ( \textrm{mod }g)^T$ is injective, or, equivalently, that the transpose map $(\textrm{mod }g) \circ ( \cdot \prod_{k=1}^{\infty} f_k^{k-1})$ is surjective. This map takes a polynomial of degree $< \sum_{k=1}^{\infty} w(k)$, multiplies by $\prod_{k=1}^{\infty} f_k^{k-1}$, and reduces mod $g$ to obtain a polynomial of degree $<m$. Because $\sum_{k=1}^{\infty} w(k) \geq m$, every residue class mod $g$ arises from a polynomial of degree  $< \sum_{k=1}^{\infty} w(k)$, and because all the $f_k$ are coprime to $g$, multiplying by $\prod_{k=1}^{\infty} f_k^{k-1}$ is a bijection on residue classes, so indeed this map is surjective. 
 
 The map $\tilde{s}$ is proper away from the complement $(Z/S_n) \times T$ of $X/ S_n$ in $\overline{X}/S_n$.  Because $ev_w$ is proper, the pullback $ev_w^*\tilde{s}^* T^* ( (\mathbb P^1)^ {(n)} \times T) \to  T^* ( (\mathbb P^1)^ {(n)} \times T)$ is proper away from the cotangent fibers over $(Z/S_n) \times T$.

 Because the map $\tilde{W}_w \to T^* ( (\mathbb P^1)^{(n)}  \times T)$ is the composition of the closed immersion $\tilde{W}_w \to ev_w^*\tilde{s}^* T^* ( (\mathbb P^1)^ {(n)} \times T)$ with the map $ev_w^*\tilde{s}^* T^* ( (\mathbb P^1)^ {(n)} \times T) \to  T^* ( (\mathbb P^1)^ {(n)} \times T)$,  the map $\tilde{W}_w  \to T^* ( (\mathbb P^1)^{(n)}  \times T)$ is also proper away from the cotangent fibers over $(Z/S_n) \times T$. Thus if the closure of the image of $\tilde{W}_w$ intersects the image of $dpr_2$ outside the zero section, but the image of $\tilde{W}_w$ does not, this intersection must take place in the non-proper locus $(Z/S_n) \times T$. But this is impossible because, by \cref{prime-cycle-formula}, $\tilde{B}_w$ is an irreducible component of the characteristic cycle of $\tilde{s}_! (sym_* \mathbb Q_\ell \otimes \rho)^{S_n}$, taking $\rho = \mathbb Q_\ell[S_n]$ so 
\[ M_\rho(w) =\dim ( \mathbb Q_\ell [S_n] \otimes \sgn )^{S_w} = \dim ( \mathbb Q_\ell[S_n])^{S_w} = |S_n/S_w| \neq 0,\]  and by \cref{singular-support-symmetric}, every component of the characteristic cycle of 
 \[\tilde{s}_! (sym_* \mathbb Q_\ell \otimes \rho)^{S_n} = ( (sym \times id)_* s_* u_! \mathbb Q_\ell \otimes \rho)^{S_n},\] restricted to $(Z/S_n ) \times T$, intersects the image of $dpr_2$ only at zero. \end{proof}

\begin{lemma}\label{characteristic-cycle-torus} The characteristic cycle of $( R \pi_! \mathbb Q_\ell [n] \otimes \rho)^{S_n}$ is equal to \[ \sum_{ \substack{w\in \WF \\ \sum_{k=1}^{\infty} w(k) < m }} M_{\rho}(w)  [ B'_w] \] plus some integer multiple of the zero section. \end{lemma}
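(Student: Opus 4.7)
The plan is to apply \cref{cc-pushforward} to the complex $\mathcal F = (\tilde{s}_! sym_* \mathbb Q_\ell[n] \otimes \rho)^{S_n}$ and the proper projection $pr_2 : (\mathbb P^1)^{(n)} \times T \to T$, with exceptional set $S$ equal to the zero section of $T^* T$. The first step is to rewrite $(R\pi_! \mathbb Q_\ell \otimes \rho)^{S_n}[n] \cong R(pr_2)_* \mathcal F$: this follows from the factorization $\pi = pr_2 \circ \tilde{s} \circ sym$ together with the properness of $pr_2$ (since $(\mathbb P^1)^{(n)}$ is proper) and the finiteness of $sym$, and the fact that taking $S_n$-invariants commutes with pushforward.

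The second step is to verify the hypotheses of \cref{cc-pushforward}. The singular support and characteristic cycle of $\mathcal F$ are given by \cref{prime-support-formula} and \cref{prime-cycle-formula}: they consist of the $\tilde{B}_w$ indexed by $w$ with $M_\rho(w) \neq 0$, plus components supported over $(Z/S_n) \times T$. For $\tilde B_w$ with $\sum_k w(k) < m$, \cref{small-w-push} shows that $(pr_2)_\circ \tilde B_w = B'_w$ has dimension $m$ and that $(pr_2)_! [\tilde B_w] = [B'_w]$; I would additionally verify that the map $W'_w \to T^*T$ is finite away from the zero section by using the kernel-bundle description of $W'_w$ together with an explicit dimension count on fibers over nonzero covectors. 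For $\tilde B_w$ with $\sum_k w(k) \geq m$, \cref{big-w-push} and its proof show that $(dpr_2)^{-1}(\tilde B_w)$ already lies in the zero section of $(\mathbb P^1)^{(n)} \times T^*T$, so it maps to the zero section of $T^*T$ and causes no trouble. For the contributions of $SS(\mathcal F)$ supported over $(Z/S_n) \times T$, \cref{singular-support-symmetric} ensures that their fibers meet the cotangent bundle of $T$ only at zero, so $(dpr_2)^{-1}$ again lands in the zero section.

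Once the hypotheses are in place, \cref{cc-pushforward} implies that $CC(R(pr_2)_* \mathcal F) - (pr_2)_! CC(\mathcal F)$ is a cycle supported on the zero section $[T]$. Applying \cref{prime-cycle-formula} together with \cref{small-w-push} and \cref{big-w-push} (and the remark above for the boundary components), the pushforward $(pr_2)_! CC(\mathcal F)$ equals $\sum_{w : \sum_k k w(k) = n,\, \sum_k w(k) < m} M_\rho(w) [B'_w]$ plus an integer multiple of $[T]$, yielding the claimed formula modulo the zero section.

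The main obstacle is verifying the finiteness condition away from the zero section for the $\tilde B_w$ with $\sum_k w(k) < m$: one must exclude positive-dimensional fibers of $W'_w \to T^*T$ over nonzero covectors. The other hypotheses reduce cleanly to \cref{big-w-push} and \cref{singular-support-symmetric}. Note that the multiplicity of the zero section itself need not be computed here — it is the Euler characteristic of the generic fiber and was already determined in \cref{generic-rank-formula}.
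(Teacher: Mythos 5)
Your proposal follows the paper's proof almost step for step: the same identity $(R\pi_!\mathbb Q_\ell\otimes\rho)^{S_n}=Rpr_{2!}\,\tilde s_!(sym_*\mathbb Q_\ell\otimes\rho)^{S_n}$, the same application of \cref{cc-pushforward} with $S$ the zero section, and the same division of the components of $SS$ via \cref{prime-support-formula}, \cref{small-w-push}, \cref{big-w-push}, and \cref{singular-support-symmetric}, concluding with \cref{prime-cycle-formula} and absorbing the zero-section multiplicity into the unspecified integer.

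The one place you diverge is exactly the step you flag as the main obstacle: verifying that $dpr_2^{-1}(SS(\mathcal F))\to T^*T$ is finite away from the zero section. You propose an explicit dimension count on the fibers of $W'_w\to T^*T$ using the kernel-bundle description, but you do not carry it out, and it is not obviously routine (a priori the fiber of $W'_w$ over a fixed nonzero covector could be positive-dimensional, and one would also have to handle the components of $SS(\mathcal F)$ supported over $(Z/S_n)\times T$ in the same computation). The paper avoids any computation here with a soft topological argument: the projection from $dpr_2^{-1}(SS(\mathcal F))\subseteq(\mathbb P^1)^{(n)}\times T^*T$ to $T^*T$ is automatically proper, so it suffices to show each fiber over a point $(a,v)$ with $v\neq 0$ is finite; by \cref{singular-support-symmetric} that fiber cannot meet $(Z/S_n)\times T$, so it is a closed subset of the affine scheme $C^{(n)}$, hence affine, and an affine proper scheme is finite. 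I recommend replacing your dimension count with this argument; with that substitution your proof is complete and coincides with the paper's.
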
 

\begin{proof} We will use the identity
\[ ( R \pi_! \mathbb Q_\ell \otimes \rho)^{S_n} = R pr_{2!}  \tilde{s}_! (sym_* \mathbb Q_\ell \otimes \rho)^{S_n} .\]
We apply \cref{cc-pushforward} to the map $pr_2$ and the constructible complex $  \tilde{s}_! (sym_* \mathbb Q_\ell \otimes \rho)^{S_n} $. We take the closed set $S$ to be the zero section. 

We first check that  $ pr_{2 \circ} SS ( \tilde{s}_! (sym_* \mathbb Q_\ell \otimes \rho)^{S_n} )$ has dimension $\leq m$. It suffices to check that $pr_{2\circ}$ of each irreducible component of $SS ( \tilde{s}_! (sym_* \mathbb Q_\ell \otimes \rho)^{S_n} )$ has dimension $\leq m$.  By \cref{prime-support-formula}, these irreducible components consist of the $\tilde{B}_w $ and some components supported on $(Z/ S_n) \times T$. That  $ \dim pr_{2 \circ} \tilde{B}_w  \leq m$ for all $w$ is checked in \cref{small-w-push,big-w-push}. For the irreducible components supported on $(Z/S_n) \times T$, we know from \cref{singular-support-symmetric} that they intersect the image of $d pr_2$ only in the zero section and so their image under $pr_{2\circ}$ is contained in the zero section, which has dimension $m$.

We next check that the map from $ dpr_2^*  SS ( \tilde{s}_! (sym_* \mathbb Q_\ell \otimes \rho)^{S_n} )$ to $T^* T$ is finite away from the zero-section of $T^*T$. Since this map from a closed subset of $(\mathbb P^1)^{(n)} \times T^*T$ to $T^*T$ is certainly proper, it suffices to check it has finite fibers over every pair of a point $a \in T$ and nonzero vector $v$ in the cotangent space to $T$ at $a$.  Again using \cref{singular-support-symmetric}, the fiber over $(a,v)$ cannot intersect $(Z/S_n) \times T$. Thus the fiber is contained in $C^n$. Hence because the fiber is closed, and $C^n$ is affine, the fiber is affine, and therefore, because the fiber is proper, the fiber is finite.

Thus we can apply \cref{cc-pushforward}, obtaining
\[ CC ( ( R \pi_! \mathbb Q_\ell \otimes \rho)^{S_n}) = CC ( R pr_{2!}  \tilde{s}_! (sym_* \mathbb Q_\ell \otimes \rho)^{S_n}) = pr_{2!} CC ( \tilde{s}_! (sym_* \mathbb Q_\ell \otimes \rho)^{S_n}) \] modulo cycles supported on the zero-section.
We apply \cref{prime-cycle-formula} to obtain \[  CC ( \tilde{s}_! (sym_* \mathbb Q_\ell \otimes \rho)^{S_n}) = \sum_{\substack{ w\in \WF}} M_{\rho}(w)  [ \tilde{B}_w] \] plus irreducible components supported on $(Z/S_n) \times T$ so \[ pr_{2!} CC ( \tilde{s}_! (sym_* \mathbb Q_\ell \otimes \rho)^{S_n})  = \sum_{\substack{ w\in \WF }} M_{\rho}(w)  pr_{2!}  [ \tilde{B}_w] \]  plus $pr_{2!}$ of irreducible components supported on $(Z/S_n) \times T$. By \cref{singular-support-symmetric}, $pr_{2!}$ of the remaining irreducible components is supported on the zero section.

The claim then follows from applying \cref{small-w-push} or \cref{big-w-push} to each term $pr_{2!}  [ \tilde{B}_w]$.
\end{proof}

\begin{lemma}\label{component-multiplicity} Let $w$ be a weight function. Assume $\sum_{k=1}^\infty w(k)  =m-1$. The multiplicity of $A_{w}'$ at a point of $T$ is at most \[ \frac{ (m -1)!}{ \prod_{k=1}^\infty w(k)! } \prod_{k=1}^\infty k^{w(k)},\] \end{lemma}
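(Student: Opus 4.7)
The plan is to identify the multiplicity of $A_w'$ at a point $a \in T$ with the length of the scheme-theoretic fiber of $\mu := pr_2 \circ \tilde{s} \circ ev_w : D_w \to T$ over $a$ (the standard interpretation when $A_w'$ is viewed as the pushforward cycle $\mu_*[D_w]$), and then bound this length via a multi-homogeneous B\'ezout computation on a suitable compactification.

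First I would pass to the symmetrization cover $\pi : \prod_k C^{w(k)} \to D_w$ of degree $\prod_k w(k)!$, reducing the problem to bounding $\mathrm{length}\bigl(\tilde{\mu}^{-1}(a)\bigr)$ by $(m-1)!\prod_k k^{w(k)}$, where $\tilde{\mu} := \mu \circ \pi$ sends $(b_{k,j})$ to $\prod_{k,j}(t - b_{k,j})^k \bmod g$.

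Next I would compactify to $X := \prod_k (\mathbb{P}^1)^{w(k)}$, smooth projective of dimension $m-1$. The fiber $\tilde{\mu}^{-1}(a)$ is cut out by the $m$ polynomial equations expressing that each coefficient of $\prod_{k,j}(t-b_{k,j})^k - a \bmod g$ vanishes. Since only the factor $(t-b_{k,j})^k$ depends on $b_{k,j}$, and only to degree $k$, each such equation extends to a section of $\boxtimes_{k,j}\mathcal{O}(k)$, i.e., a divisor in the class $D := \sum_{k,j} k\,H_{k,j}$, where $H_{k,j}$ is the hyperplane class on the $(k,j)$-th $\mathbb{P}^1$. Using $H_{k,j}^2 = 0$ and $\prod_{k,j} H_{k,j} = 1$ on the fundamental class of $X$, the multinomial expansion of $D^{m-1}$ yields
\[
D^{m-1} = (m-1)!\,\prod_k k^{w(k)}.
\]
Then $\tilde{\mu}^{-1}(a)$ is contained in the intersection of any $m-1$ of the $m$ defining equations; for an appropriate choice these cut out a zero-dimensional scheme on $X$ of length at most $D^{m-1}$, so $\tilde{\mu}^{-1}(a)$ has length at most $(m-1)!\prod_k k^{w(k)}$, giving the bound after dividing by $\prod_k w(k)!$.

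The main obstacle will be ensuring uniformity at special points $a$: if $\tilde{\mu}^{-1}(a)$ were to have positive-dimensional components, one would have to verify that these do not force the multiplicity of $A_w'$ at $a$ to exceed the claimed bound. I expect this to follow from upper semicontinuity of fiber length along the zero-dimensional locus, plus a conservation-of-number argument identifying the cycle-theoretic multiplicity at $a$ with the generic degree of $\mu$ (and an independent check that one can always select $m-1$ of the coefficient equations whose joint zero locus is zero-dimensional on a neighborhood of $\tilde{\mu}^{-1}(a)$ in $X$).
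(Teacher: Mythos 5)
Your Bézout computation on $\prod_k(\mathbb P^1)^{w(k)}$ is essentially the same multinomial calculation the paper performs on $\prod_k \mathbb P^{w(k)}$, so the numerical heart of the argument is fine. But your Step 1 has a genuine gap: you assert that the multiplicity of $A_w'$ at $a$ equals (or is bounded by) the length of the scheme-theoretic fiber $\mu^{-1}(a)$, and call this "the standard interpretation." It is not. The Hilbert--Samuel multiplicity of the local ring $\mathcal O_{A_w',a}$ and the colength $\ell\bigl(\mathcal O_{D_w}\otimes_{\mathcal O_T}\kappa(a)\bigr)$ are not the same invariant: they differ by the generic degree of $\mu$, require Cohen--Macaulayness to even compare via linear sections, and in general neither the equality nor the inequality you need (in the direction you need) is automatic. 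In particular, $\mu = pr_2\circ\tilde s\circ ev_w$ is \emph{not} birational onto $A_w'$ (since $n>m$, the residue $\prod f_k^k \bmod g$ does not determine the $f_k$), so even the optimistic version of your identification has a degree factor you never account for; dividing by $\prod_k w(k)!$ handles only the symmetrization cover, not $\deg\mu$.

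The paper sidesteps this entirely by a one-line observation you missed: embed $T$ into $\mathbb P^m = \mathbb P\bigl(H^0(S,\mathcal O(n))\bigr)$ by $a\mapsto f|_S$; then $\dim A_w' = m-1$ shows $A_w'$ is a (locally closed) \emph{hypersurface} in $\mathbb P^m$, and the multiplicity of a hypersurface at any point is trivially bounded by its degree (cut by a line through the point). All that remains is to compute the degree, which the paper does by your same multinomial expansion after checking the projection $\mathbb P^n\dashrightarrow\mathbb P^m$ is defined along $\overline{A_w}$. Your "main obstacle" paragraph worries about positive-dimensional fibers and semicontinuity, but the real defect is upstream: the passage from the local multiplicity of the image variety to a fiber length is the unjustified step, and the clean route is global (degree of a hypersurface) rather than local (fiber length).
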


\begin{proof} Consider the $m+1$-dimensional vector space $H^0( S, \mathcal O(n))$ and its $m$-dimensional projectivization $\mathbb P ( H^0( S, \mathcal O(n)))$. We can embed $T$ into $\mathbb P ( H^0( S, \mathcal O(n)))$ by sending $a\in T$ to the restriction to $S$ of a monic polynomial $f$ of degree $n$ congruent to $a$ mod $g$. (The restriction to $S \setminus \infty$ is independent of the choice of polynomial because of the congruence condition, and the restriction to $\infty$ is independent of the choice of polynomial because it is monic.)

Because $\dim A_{w}' = \sum_{k=1}^\infty w(k) = m -1$ and $\dim T = m$, $ A_{w}' $ is a locally closed hypersurface in this projective space, and its multiplicity at any point is bounded by its degree. It suffices to show that this degree is $\frac{ (m -1)!}{ \prod_{k=1}^\infty w(k)! } \prod_{k=1}^\infty k^{w(k)}$.

By definition, $A_w'$ is the image in $T$ of the map $pr_2 \circ \tilde{s} \circ ev_w \colon  D_w  = \prod_{k=1}^{\infty} C^{ (w(k))} \to T$ that sends a tuple $(f_k)_{k=1}^{\infty}$ of monic polynomials prime to $g$ to $\prod_{k=1}^{\infty}  f_k^k \mod g$.  Viewing $T$ as a subset of $\mathbb P ( H^0( S, \mathcal O(n)))$, we see that $A_w'$ is the image of the map from $D_w$ to $\mathbb P ( H^0( S, \mathcal O(n)))$ that sends $(f_k)_{k=1}^{\infty}$ to the restriction of  $\prod_{k=1}^{\infty}  f_k^k \in H^0 (\mathbb P^1, \mathcal O(n))$. 

Let $\overline{A_w}$ be the set of sections in $H^0 (\mathbb P^1, \mathcal O(n))$ of the form $\prod_{k=1}^{\infty} f_k^k$ for $f_k \in H^0 (\mathbb P^1, \mathcal O(w(k)))$. Because $D_w$ is a dense subset of $\prod_{k=1}^{\infty} \mathbb P ( H^0 (\mathbb P^1, \mathcal O(w(k))))$, $A_{w}'$ is a dense subset of the image of $\overline{A_w} $  under the linear projection  \begin{equation}\label{linear-projection}  \mathbb P ( H^0(\mathbb P^1, \mathcal O(n))) \to \mathbb P ( H^0( S, \mathcal O(n))).\end{equation}

The indeterminacy locus of \eqref{linear-projection} consists of sections that vanish on all the points of $S$. Since $\prod_{k=1}^{\infty} f_k^{k}$ vanishes only where one of the $f_k$ does, it vanishes at at most $\sum_{k=1}^{\infty} w(k) =m-1$ points of $S$, so $\overline{A_w}$ does not intersect the indeterminacy locus, so $\deg A_{w}' = \deg \overline{A_w} $.

Because $\overline{ A_{w }} $ is the image of a map from \[\prod_{k=1}^\infty \mathbb P ( H^0( \mathbb P^1, \mathcal O(w(k)))) =  \prod_{k=1}^\infty\mathbb P^{w(k)}\] to $\mathbb P^n$, its degree is the $m-1$-fold self-intersection of the pullback of the hyperplane class $\mathcal O(1)$ of $\mathbb P^n$ to  $\prod_{k=1}^\infty\mathbb P^{w(k)}$

Because the map $(f_k)_{k =1}^{\infty}  \to \prod_{k=1}^{\infty} f_k^{k} $ has degree $k$ on the $k$th factor, the pullback of the hyperplane class is the sum over $k$ of $k$ times the hyperplane class of $\mathbb P^{w(k)}$.  Taking the $m-1$st power, we can ignore all terms where the hyperplane class of $\mathbb P^{w(k)}$ appears more than $w(k)$ times, leaving only the terms where it appears exactly $w(k)$ times for each $k$, each of which contributes $\prod_{k=1}^\infty k^{w(k)}$, and there are $\frac{ (m -1)!}{ \prod_{k=1}^\infty w(k)! }$ such terms. 
\end{proof}

To relate $C_2(\rho)$ to $M_\rho$, it is helpful to have an alternative description of $C_2(\rho)$. We obtain one by grouping the terms in \cref{C2-e-formula} into $S_{m-1}$-orbits.

   \begin{lemma}\label{cc-nice-formula} We have \[ C_2(\rho) = \sum_{ \substack{w\in \WF \\ \sum_{k=1}^{\infty} w(k) =m-1 }}  \frac{ (m -1)!}{ \prod_{k=1}^\infty w(k)! } \prod_{k=1}^\infty k^{w(k)} \dim ( \rho \otimes \sgn) ^{ S_w}.\] \end{lemma}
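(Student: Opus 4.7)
The plan is to derive this identity directly from \cref{C2-e-formula} by a combinatorial regrouping of tuples $(e_1,\dots,e_{m-1})$ according to the multiset of values they take. No new geometric or representation-theoretic input is required beyond what is in \cref{C2-e-formula}; the remaining work is purely bookkeeping, and I do not expect any real obstacle.

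First I would observe that in the sum
\[ C_2(\rho) =  \sum_{ \substack{ (e_1,\dots, e_{m-1}) \in \mathbb N^{m-1}  \\ \sum_{i=1}^{m-1} e_i=n}}  \Bigl ( \prod_{i=1}^{m-1} e_i \Bigr)  \dim ( \rho \otimes \sgn)^{ \prod_{i=1}^{m-1} S_{e_i}}, \]
every term in which some $e_i$ equals $0$ contributes zero, since $\prod_{i=1}^{m-1} e_i$ vanishes. Hence the sum may be restricted to tuples $(e_1,\dots,e_{m-1}) \in (\mathbb N^+)^{m-1}$ with $\sum_i e_i = n$.

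Next, to each such positive tuple I would associate its \emph{type function} $w\colon \mathbb N^+ \to \mathbb N$ defined by $w(k) = \#\{i : e_i = k\}$. By construction $\sum_{k\geq 1} w(k) = m-1$ and $\sum_{k\geq 1} k\, w(k) = \sum_i e_i = n$, so $w$ ranges exactly over the index set appearing on the right-hand side of the lemma. The fibers of the map $(e_1,\dots,e_{m-1}) \mapsto w$ have size equal to the multinomial coefficient $\frac{(m-1)!}{\prod_k w(k)!}$. Moreover, on each fiber the summand is constant: one has $\prod_{i=1}^{m-1} e_i = \prod_{k\geq 1} k^{w(k)}$, and the subgroup $\prod_{i=1}^{m-1} S_{e_i} \subseteq S_n$ is conjugate to $\prod_{k\geq 1} S_k^{w(k)}$, so
\[ \dim (\rho \otimes \sgn)^{\prod_{i=1}^{m-1} S_{e_i}} = \dim (\rho \otimes \sgn)^{\prod_{k\geq 1} S_k^{w(k)}}. \]

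Collecting terms by $w$ therefore gives
\[ C_2(\rho) = \sum_{ \substack{w\colon \mathbb N^+ \to \mathbb N \\ \sum_k k w(k) = n\\ \sum_k w(k) =m-1 }} \frac{(m-1)!}{\prod_k w(k)!} \prod_{k\geq 1} k^{w(k)} \dim ( \rho \otimes \sgn)^{\prod_k S_k^{w(k)}}, \]
which is the stated formula.
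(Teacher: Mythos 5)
Your proof is correct and follows essentially the same route as the paper's: starting from \cref{C2-e-formula}, discarding tuples with a zero entry, defining the type function $w$, computing the fiber size as the multinomial coefficient, and using conjugacy of $\prod_i S_{e_i}$ and $\prod_k S_k^{w(k)}$ to see the summand is constant on fibers. There is nothing to add.
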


   \begin{proof} We will group the terms in the sum of Lemma \ref{C2-e-formula} into groups parameterized by functions $w$. Given a tuple $e_1,\dots, e_{m-1}$ such that $\sum_{i=1}^{m-1} e_i=n$, we can define the function \[w(k) = | \{ i \in \{1,\dots, n \} \mid e_i = k \} |.\]  With this definition, we have
  \[   \prod_{i=1}^{m-1} e_i   \dim ( \rho \otimes \sgn)^{ \prod_{i=1}^{m-1} S_{e_i}}   =\begin{cases}  \prod_{k=1}^{\infty} k^{w(k)}  \dim (\rho \otimes \sgn)^{S_w} & e_i \neq 0 \textrm{ for all }i \\ 0 & e_i=0 \textrm{ for some }i. \end{cases} . \]
  We always have $\sum_{k=1}^{\infty} k w(k) =n$ and we have $\sum_{k=1}^{\infty} w(k) = m-1$ unless some $e_i = 0$. Finally, the number of $e_1,\dots, e_{m-1}$ which produce a given $w$ satisfying these conditions is $\frac{(m-1)!}{\prod_{k=1}^{\infty} w(k)!}$. 
  
  By grouping together all the tuples $e_1,\dots,e_{m-1}$ that produce a given $w$, and ignoring those tuples where some $e_i=0$, we obtain
  \[ C_2(\rho) =  \sum_{ \substack{ (e_1,\dots, e_{m-1}) \in \mathbb N^{m-1}  \\ \sum_{i=1}^{m-1} e_i=n}}  \Bigl( \prod_{i=1}^{m-1} e_i \Bigr)   \dim ( \rho \otimes \sgn)^{ \prod_{i=1}^{m-1} S_{e_i}} \] \[ = \sum_{ \substack{ w\in \WF  \\ \sum_{k=1}^{\infty} w(k)=m-1}} \frac{ (m-1)! \prod_{k=1}^{\infty} k^{w(k)} }{ \prod_{k=1}^{\infty} w(k)!}  \dim (\rho \otimes \sgn)^{S_w } . \qedhere \] \end{proof}

 \begin{proposition}\label{cc-main-bound} Suppose that $\rho$ is an irreducible representation corresponding to a partition with all parts of size $<m$.
 
 Then the dimension of the stalk of  $\left( R^{n+1-m} \pi_! \mathbb Q_\ell \otimes \rho\right)^{S_n}$ at any point of $T$ is $\leq C_2(\rho)$. \end{proposition}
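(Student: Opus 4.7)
The plan is to apply the Betti number bound for stalks of perverse sheaves from \cite{massey-formula} to the perverse sheaf $K := (R\pi_!\mathbb{Q}_\ell \otimes \rho)^{S_n}[n]$ on the smooth $m$-dimensional torus $T$, using the formula for $CC(K)$ already established in \cref{characteristic-cycle-torus}. By \cref{irrep-perverse} the complex $K$ is indeed perverse, and by \cref{support-degrees} its stalks are concentrated in the two cohomological degrees $-m$ and $1-m$. So what we want to bound is $\dim (R^{n+1-m}\pi_!\mathbb{Q}_\ell \otimes \rho)^{S_n}_x = \dim \mathcal{H}^{1-m}(K)_x$, the stalk of $K$ in the unique cohomological degree above the minimum perverse degree.

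Applying Massey's bound to $K$ at $x$, $\dim \mathcal{H}^{1-m}(K)_x$ is controlled by a sum, weighted by the coefficients of $CC(K)$, of local intersection multiplicities of the irreducible components of $CC(K)$ against a generic cotangent direction at $x$. By \cref{characteristic-cycle-torus},
\[ CC(K) \;=\; \sum_{\substack{w\colon \mathbb N^+\to\mathbb N \\ \sum_k k w(k)=n \\ \sum_k w(k)<m}} M_\rho(w)\,[B_w'] \;+\; c\cdot[T] \]
for some integer $c$. The zero-section summand feeds only into $\mathcal{H}^{-m}(K)_x$ (the generic rank, already treated by $C_1(\rho)$ in the previous section), and hence does not enter the bound on $\mathcal{H}^{1-m}$. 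Among the remaining summands, those with $\sum_k w(k) < m-1$ are supported over $A_w'$ of codimension $\geq 2$ in $T$, so their local intersection with a generic cotangent vector at $x$ is empty; they drop out of the bound. Only the codimension-one strata, corresponding to $w$ with $\sum_k w(k)=m-1$, survive.

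For such $w$, $B_w'$ lies over the hypersurface $A_w'\subset T$ and behaves like its conormal cycle, so its local intersection with a generic cotangent vector at $x$ is bounded by the multiplicity of $A_w'$ at $x$, which by \cref{component-multiplicity} is at most $\tfrac{(m-1)!}{\prod_k w(k)!}\prod_k k^{w(k)}$. Since $|M_\rho(w)| = \dim(\rho\otimes\sgn)^{\prod_k S_k^{w(k)}}$ by definition, summing gives
\[ \dim \mathcal{H}^{1-m}(K)_x \;\leq\; \sum_{\substack{w\colon \mathbb N^+\to\mathbb N \\ \sum_k k w(k)=n \\ \sum_k w(k)=m-1}} \frac{(m-1)!}{\prod_k w(k)!}\prod_k k^{w(k)}\,\dim(\rho\otimes\sgn)^{\prod_k S_k^{w(k)}} \;=\; C_2(\rho), \]
the last equality being \cref{cc-nice-formula}. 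The main technical point is the correct packaging of Massey's formula: choosing the generic cotangent direction at $x$ so that higher-codimension summands of $CC(K)$ and the zero section genuinely contribute nothing to the $\mathcal{H}^{1-m}$-bound, and verifying that for the codimension-one pieces the local intersection number really reduces to the hypersurface multiplicity estimated in \cref{component-multiplicity}. No genuinely new geometric input beyond the characteristic-cycle computation of \cref{sec-Massey} is required.
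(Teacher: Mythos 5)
Your proposal follows the same route as the paper's proof: apply the Betti bound of Massey to the perverse sheaf $(R\pi_!\mathbb{Q}_\ell\otimes\rho)^{S_n}[n]$, use \cref{characteristic-cycle-torus} for the characteristic cycle, discard the zero section and the strata with $\sum_k w(k)<m-1$, bound the codimension-one contributions via \cref{component-multiplicity}, and conclude by \cref{cc-nice-formula}. The only point you leave informal is the step you flag yourself at the end: the paper establishes the exact identity $\pi_*\mathbb{P}([B'_w])=[A'_w]$ for $\sum_k w(k)=m-1$ by observing that $W'_w$ is a \emph{line bundle} over $D_w$ (so $\mathbb{P}(B'_w)\to T$ is generically injective), whereas you substitute the heuristic that $B_w'$ ``behaves like a conormal cycle''; the line-bundle observation is the concrete mechanism that makes that heuristic valid and is needed to make the reduction to the multiplicity of $A_w'$ rigorous.
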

 
 \begin{proof} By \cref{irrep-perverse}, $\left(R \pi_! \mathbb Q_\ell [n]  \otimes \rho \right)^{S_n}$ is perverse. It follows from \citep[Theorem 1.4]{massey-formula} that the rank of its stalk in degree $1-m$ at a point $a\in T$, which is the stalk of $\left(R \pi_! \mathbb Q_\ell \otimes \rho\right)^{S_n}$ in degree $n+1-m$ at $a$, is at most the $m-1$st polar multiplicity at $a$.  By definition (\cite[Definition 1.3]{massey-formula}), this is \[ \operatorname{mult}_a \left(  \pi_*  \mathbb P ( CC ( \left(R \pi_! \mathbb Q_\ell \otimes \rho\right)^{S_n}[n])) \right) \] where $\operatorname{mult}_a$ is the ordinary multiplicity of a cycle at $a$, $\pi\colon  \mathbb P ( T^* T) \to T$ is the projection from the projectivized cotangent bundle, and $\mathbb P (  C)$ for a conical cycle $C$ denotes the projectivized cycle.  
 
   Note that the characteristic cycle has dimension $m$, so its projectivization has dimension $m-1$, and thus this pushforward is taken as an $m-1$-dimensional cycle.
 
Projectivization commutes with addition of cycles, and the projectivization of the zero section is the empty cycle, so by \cref{characteristic-cycle-torus} we have
\[ \mathbb P ( CC ( \left(R \pi_! \mathbb Q_\ell \otimes \rho\right)^{S_n}[n]))  = \sum_{ \substack{w\in \WF \\ \sum_{k=1}^{\infty} w(k) < m }} M_{\rho}(w) \mathbb P (  [ B'_w] ) .\] 
 
 Thus we have 
 \[ \pi_*  \mathbb P ( CC ( \left(R \pi_! \mathbb Q_\ell \otimes \rho\right)^{S_n}[n]))  =  \sum_{ \substack{ w\in \WF \\ \sum_{k=1}^{\infty} w(k) < m }} M_{\rho}(w)  \pi _* \mathbb P (  [ B'_w] ).\]
 
 For $w$ with $\sum_{k=1}^{\infty} w(k) < m-1$,  $\dim A_w' = \sum_{k=1}^{\infty} w(k) < m-1$, and so $\pi _* \mathbb P (  [ B'_w] )$ is a cycle of dimension $m-1$ contained in a scheme $A_w'$ of dimension $<m-1$ and thus vanishes. 
 
 Next we check for $w$ with $\sum_{k=1}^\infty w(k) = m-1$ that \[ \pi_* \mathbb P( [B'_w]) = [A'_w].\]
 
 For these $w$, $W'_w$ is a line bundle on $D_w$. Thus the projectivization of the fiber of $W'_w$ over any point of $D_w$ is a single point.
 
 Hence  $\mathbb P (B'_w)$ is the image of a map from $D_w$ to $\mathbb P(T^*T)$ that sends each point $z$ to a single point inside the fiber of $\mathbb P(T^* T)$ over $pr_2( \tilde{s} (ev_w(z)))$. 
 Thus $\pi _* \mathbb P ([B'_w])$ is the image of the map $pr_2 \circ \tilde{s} \circ ev_w$ from $D_w$ to $T$. Therefore, $\pi_* \mathbb P( [B'_w]) = [A'_w]$ by definition. Hence
 
\[ \operatorname{mult}_a \left(  \pi_*  \mathbb P ( CC ( \left(R \pi_! \mathbb Q_\ell \otimes \rho\right)^{S_n}[n])) \right)  =    \sum_{ \substack{ w\in \WF \\ \sum_{k=1}^{\infty} w(k) =m-1 }}  M_\rho(w) \operatorname{mult}_a  [A'_w] \]
\[ \leq  \sum_{ \substack{w\in \WF \\ \sum_{k=1}^{\infty} w(k) =m-1 }}  M_\rho(w) \frac{ (m -1)!}{ \prod_{k=1}^\infty w(k)! } \prod_{k=1}^\infty k^{w(k)} = C_2(\rho) \] by \cref{component-multiplicity} and \cref{cc-nice-formula}.  \end{proof}

\section{The main theorem}

Before proving \cref{squarefree-bound-precise}, we have the following intermediate step, which will also be useful on its own in cases where a main term is not necessary.

\begin{proposition}\label{alt-main} Let  $\mathbb F_q$ be a finite field. Let $g$ be a squarefree polynomial of degree $m$ over $\mathbb F_q$ and $a$ an invertible residue class mod $g$.  

Let $n\geq m$ be a natural number and let $\rho$ be an irreducible representation of $S_n$ corresponding to a partition with all parts of size $<m$.  
Then
\[ \Bigl| \sum_{ \substack{ f \in \mathcal M_n \\  f \equiv a \mod g }}  F_\rho(f)  \Bigr| \leq   q^{\frac{n-m}{2} } ( C_1(\rho)  + C_2(\rho) \sqrt{q} ). \]
\end{proposition}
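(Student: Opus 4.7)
The plan is to combine all the machinery assembled in the preceding sections into a short cohomological argument. First, I would apply \cref{trace-function} to rewrite the left-hand side as the alternating sum
\[ \sum_{j=0}^{2(n-m)} (-1)^j \tr\bigl(\Frob_q, (R^j \pi_! \mathbb Q_\ell \otimes \rho)^{S_n}_a\bigr). \]
Since $\rho$ is irreducible with all parts of its associated partition of size $<m$, \cref{support-degrees} tells us that $(R\pi_! \mathbb Q_\ell \otimes \rho)^{S_n}$ is concentrated in the two degrees $n-m$ and $n+1-m$, so only these two terms survive.

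Next, I would control each surviving term by separately bounding its dimension and the size of its Frobenius eigenvalues. The dimension bounds are already in hand: \cref{Euler-main-bound} gives
\[ \dim (R^{n-m}\pi_! \mathbb Q_\ell \otimes \rho)^{S_n}_a \leq C_1(\rho), \]
and \cref{cc-main-bound} gives
\[ \dim (R^{n+1-m}\pi_! \mathbb Q_\ell \otimes \rho)^{S_n}_a \leq C_2(\rho). \]
For the eigenvalue bound, I would invoke Deligne's Riemann Hypothesis. By proper base change, the stalk $(R^j \pi_! \mathbb Q_\ell)_a$ equals $H^j_c(\pi^{-1}(a)_{\overline\kappa}, \mathbb Q_\ell)$, and since $\pi^{-1}(a)$ is a variety (of dimension $n-m$), the eigenvalues of $\Frob_q$ on this compactly supported cohomology have complex absolute value at most $q^{j/2}$. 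Taking $S_n$-invariants of a tensor product with $\rho$ (on which $\Frob_q$ acts trivially) preserves this bound, so the eigenvalues of $\Frob_q$ on $(R^j\pi_!\mathbb Q_\ell \otimes \rho)^{S_n}_a$ are bounded by $q^{j/2}$ in absolute value.

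Combining these, the trace in degree $n-m$ is bounded by $C_1(\rho)\, q^{(n-m)/2}$ and the trace in degree $n+1-m$ by $C_2(\rho)\, q^{(n+1-m)/2}$. The triangle inequality then yields
\[ \Bigl| \sum_{\substack{f \in \mathcal M_n \\ f \equiv a \bmod g}} F_\rho(f) \Bigr| \leq C_1(\rho)\, q^{(n-m)/2} + C_2(\rho)\, q^{(n+1-m)/2} = q^{(n-m)/2}\bigl(C_1(\rho) + C_2(\rho)\sqrt{q}\bigr), \]
which is the claimed inequality. There is no real obstacle at this stage: all of the substantive work (the concentration in two cohomological degrees, the Euler-characteristic bound for the middle degree, and the characteristic-cycle/Massey bound for the top degree) has already been accomplished in \cref{sec-structure,sec-Euler,sec-Massey}, and what remains is simply to package them together with Deligne's bound.
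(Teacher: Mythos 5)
Your proof is correct and follows essentially the same route as the paper: rewrite the sum via \cref{trace-function}, invoke \cref{support-degrees} to reduce to two cohomological degrees, bound the dimensions by \cref{Euler-main-bound} and \cref{cc-main-bound}, and apply Deligne's Riemann hypothesis for the Frobenius eigenvalues. The extra detail you give about proper base change justifying the eigenvalue bound is a correct (and slightly more explicit) rendering of the paper's terse appeal to Deligne.
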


\begin{proof} 
By Deligne's Riemann Hypothesis \cite[Theorem 1]{weil-ii}, all eigenvalues of $\Frob_q$ acting on $(R^{j} \pi_! \mathbb Q_\ell)_a$ are $\leq q^{j/2}$, and thus the same is true for  eigenvalues of $\Frob_q$ on  $\left( R^{j} \pi_! \mathbb Q_\ell \otimes \rho\right)^{S_n}_a$. Hence
\[ \left| \tr \left(\Frob_q, \left( R^{j} \pi_! \mathbb Q_\ell \otimes \rho\right)^{S_n}_a \right) \right| \leq q^{j/2} \dim\left( R^{j} \pi_! \mathbb Q_\ell \otimes \rho\right)^{S_n}_a . \]
We have
\[  \dim\left( R^{j} \pi_! \mathbb Q_\ell \otimes \rho\right)^{S_n}_a \leq \begin{cases}  C_1(\rho) & j = n-m \\ C_2(\rho) & j = n+1-m \\ 0 & \textrm{otherwise} \end{cases} \] by \cref{Euler-main-bound}, \cref{cc-main-bound}, and \cref{support-degrees} respectively.
Thus, by \cref{trace-function},
\[ \Bigl| \sum_{ \substack{ f \in \mathcal M_n \\  f \equiv a \mod g }}  F_\rho(f)  \Bigr| \leq  \sum_{j =0}^{2(n-m)}  \Bigl |\tr \left(\Frob_q, \left( R^{j} \pi_! \mathbb Q_\ell \otimes \rho\right)^{S_n}_a \right)\Bigr| \leq  C_1(\rho) q^{\frac{n-m}{2}} + C_2(\rho) q^{ \frac{n+1-m}{2} }. \qedhere \] \end{proof}

We recall the statement of \cref{squarefree-bound-precise}, and prove it. 

\begin{theorem} Let  $\mathbb F_q$ be a finite field. Let $g$ be a squarefree polynomial of degree $m$ over $\mathbb F_q$ and $a$ an invertible residue class mod $g$.  

Let $n\geq m$ be a natural number and let $\rho$ be a representation of $S_n$.  
Then
\[ \Bigl| \sum_{ \substack{ f \in \mathcal M_n \\  f \equiv a \mod g }}  F_\rho(f)  -\frac{1}{ \phi(g) }  \sum_{ \substack{ f \in\mathcal M_n \\ \gcd(f,g)=1 }}  F_\rho(f) \Bigr| \leq   2( C_1(\rho)  + C_2(\rho) \sqrt{q} ) q^{\frac{n-m}{2} }.  \]

\end{theorem}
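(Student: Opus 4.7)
The plan is to reduce to irreducible $\rho$ and then split by its partition shape. Writing $\rho = \bigoplus_i m_i \overline{\rho}_i$ as a sum of irreducibles with multiplicities, the desired inequality decomposes cleanly: the left-hand side is linear in $\rho$ through $F_\rho$, while $C_1(\rho)$ and $C_2(\rho)$ are additive in $\rho$ because $V^d_\rho$ and $V_\rho$ are built from $\rho$ by tensor and taking invariants. Each $C_1(\overline{\rho}_i)$ and $C_2(\overline{\rho}_i)$ is nonnegative: when the partition of $\overline{\rho}_i$ has all parts of size $<m$, this follows from \cref{generic-rank-formula} (which identifies $C_1(\overline{\rho}_i)$ with a stalk dimension) and \cref{C2-e-formula} (which writes $C_2(\overline{\rho}_i)$ as a sum of nonnegative terms); when the partition has a part of size $\geq m$, both vanish by \cref{rho-derivative-formula} and by \cref{schur-vanishing-lemma} applied to $V_{\overline{\rho}_i}$. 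Given this nonnegativity and linearity, the triangle inequality reduces the problem to proving the bound for each $\overline{\rho}_i$ individually.

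Suppose first that $\overline{\rho}_i$ corresponds to a partition with at least one part of size $\geq m$. By \cref{irrep-constant}, the complex $\bigl(R \pi_! \mathbb Q_\ell \otimes \overline{\rho}_i\bigr)^{S_n}$ splits as a direct sum of geometrically constant sheaves. A lisse sheaf with trivial geometric monodromy has Frobenius trace on its stalk at $a$ independent of $a \in T(\mathbb F_q)$; applying this cohomology sheaf by cohomology sheaf and combining via \cref{trace-function}, we conclude that $\sum_{f \equiv a \mod g} F_{\overline{\rho}_i}(f)$ does not depend on $a$. It therefore equals its own average over the invertible residues $a \in T(\mathbb F_q)$, which is exactly $\frac{1}{\phi(g)} \sum_{\gcd(f,g)=1} F_{\overline{\rho}_i}(f)$. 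So the left-hand side vanishes and the bound holds trivially.

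Otherwise the partition of $\overline{\rho}_i$ has all parts of size $<m$, and we may apply \cref{alt-main} to obtain
\[ \Bigl| \sum_{\substack{f \in \mathcal M_n \\ f \equiv a \mod g}} F_{\overline{\rho}_i}(f) \Bigr| \leq q^{\frac{n-m}{2}}\bigl(C_1(\overline{\rho}_i) + C_2(\overline{\rho}_i)\sqrt{q}\,\bigr). \]
Since as $a$ ranges over the $\phi(g)$ invertible residues the progressions $\{f \in \mathcal M_n : f \equiv a \mod g\}$ partition $\{f \in \mathcal M_n : \gcd(f,g)=1\}$, averaging the displayed bound over $a$ yields the same estimate for the main term $\frac{1}{\phi(g)} \sum_{\gcd(f,g)=1} F_{\overline{\rho}_i}(f)$. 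The triangle inequality then produces the claimed bound with its factor of $2$, and summing over $i$ with multiplicities finishes the proof. The argument is essentially formal once \cref{alt-main} and \cref{irrep-constant} are in hand; the only point requiring care is the nonnegativity of $C_1$ and $C_2$ on each irreducible constituent, which is what permits adding the bounds termwise without loss.
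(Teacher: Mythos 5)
Your proposal is correct and follows essentially the same route as the paper: reduce by additivity of $C_1, C_2, F_\rho$ to irreducible $\rho$, dispatch the case of a partition with a part of size $\geq m$ via \cref{irrep-constant} (constant sheaf $\Rightarrow$ the $a$-dependent sum equals its average, so the difference vanishes, while $C_1 = C_2 = 0$), and handle the remaining case by applying \cref{alt-main} both to the $f \equiv a$ sum and, after averaging over residues, to the main term, picking up the factor $2$ from the triangle inequality. The nonnegativity of $C_1$ and $C_2$ on irreducibles that you single out is a true observation, but it is not actually needed: additivity alone means the per-irreducible bounds sum to exactly the claimed bound for $\rho$.
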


\begin{proof} By definition, the functions $C_1(\rho)$ and $C_2(\rho)$ are additive in $\rho$, as is $F_{\rho}(f)$. Because $\rho$ is a sum of irreducible representations, we can reduce by linearity to the case when $\rho$ is irreducible. We now split into two cases, depending on whether the partition of $n$ corresponding to $\rho$ has a part of size $\geq m$.

If it does, then by \cref{irrep-constant}, $ \left( R^{j} \pi_! \mathbb Q_\ell \otimes \rho\right)^{S_n}$ is a geometrically constant sheaf, so its trace function $\tr \left(\Frob_q, \left( R^{j} \pi_! \mathbb Q_\ell \otimes \rho\right)_a^{S_n} \right)$ is constant. Hence by \cref{trace-function},  $\sum_{ \substack{ f \in \mathcal M_n \\  f \equiv a \mod g }}  F_\rho(f)  $ is independent of $a$. It follows that
\[\sum_{ \substack{ f \in \mathcal M_n \\  f \equiv a \mod g }}  F_\rho(f)  -\frac{1}{ \phi(g) }  \sum_{ \substack{ f \in \mathcal M_n \\ \gcd(f,g)=1 }}  F_\rho(f)  = 0 .\]

We have $C_1(\rho)=0$ by \cref{rho-Euler-difference} and $C_2(\rho)=0$ by \cref{schur-vanishing-lemma}. Thus our desired inequality follows, in this case, from $0 \leq 0$.


We now assume the partition of $n$ corresponding to $\rho$ does not have a part of size $\geq m$.  We apply \cref{alt-main} to bound $ \sum_{ \substack{ f \in \mathcal M_n \\  f \equiv a \mod g }}  F_\rho(f) $.  We calculate


 
 \[\Bigl| \sum_{ \substack{ f \in \mathcal M_n \\  f \equiv a \mod g }}  F_\rho(f)  -\frac{1}{ \phi(g) }  \sum_{ \substack{ f \in \mathcal M_n \\ \gcd(f,g)=1 }}  F_\rho(f) \Bigr|  \leq  \Bigl| \sum_{ \substack{ f \in \mathcal M_n\\  f \equiv a \mod g }}  F_\rho(f) \Bigr|  +  \sum_{ a' \in \mathbb F_q[t]^\times} \frac{1}{ \phi(g) } \Bigl|   \sum_{ \substack{ f \in  \mathcal M_n \\ f \equiv a' \mod g }}  F_\rho(f) \Bigr| \] \[\leq  ( C_1(\rho)  + C_2(\rho) \sqrt{q} )q^{\frac{n-m}{2} }  +  \sum_{ a' \in \mathbb F_q[t]^\times} \frac{1}{ \phi(g) } ( C_1(\rho)  + C_2(\rho) \sqrt{q} )q^{\frac{n-m}{2} }   \] \[\leq 2 ( C_1(\rho)  + C_2(\rho) \sqrt{q} )q^{\frac{n-m}{2} }   . \qedhere \] \end{proof}

We also have a variant result that applies to linear combinations of the functions $F_\rho$;

\begin{corollary}\label{virtual-representation-variant} Let  $\mathbb F_q$ be a finite field. Let $g$ be a squarefree polynomial of degree $m$ over $\mathbb F_q$ and $a$ an invertible residue class mod $g$.  

Let $n\geq m$ be a natural number. Let $\rho_1,\dots, \rho_d$ be representations of $S_n$ and $\alpha_1,\dots, \alpha_d$ complex numbers.
Then
\[ \Bigl| \sum_{ \substack{ f \in \mathcal M_n \\  f \equiv a \mod g }}  \sum_{i=1}^d \alpha_i F_{\rho_i}(f)  -\frac{1}{ \phi(g) }  \sum_{ \substack{ f \in\mathcal M_n \\ \gcd(f,g)=1 }} \sum_{i=1}^d \alpha_i F_{\rho_i}(f) \Bigr| \leq   2\sum_{i=1}^d\abs{\alpha_i}  ( C_1(\rho_i)  + C_2(\rho_i) \sqrt{q} ) q^{\frac{n-m}{2} }.  \]

If $\abs{\alpha_1},\dots, \abs{\alpha_d}=1$ then

\[ \Bigl| \sum_{ \substack{ f \in \mathcal M_n \\  f \equiv a \mod g }}  \sum_{i=1}^d \alpha_i F_{\rho_i}(f)  -\frac{1}{ \phi(g) }  \sum_{ \substack{ f \in\mathcal M_n \\ \gcd(f,g)=1 }} \sum_{i=1}^d \alpha_i F_{\rho_i}(f) \Bigr| \leq   2 \Bigl( C_1\Bigr( \bigoplus_{i=1}^d \rho_i\Bigr)  + C_2\Bigl(\bigoplus_{i=1}^d\rho_i\Bigr) \sqrt{q} \Bigr) q^{\frac{n-m}{2} }.  \]

\end{corollary}

\begin{proof} By the triangle inequality and Theorem \ref{squarefree-bound-precise} we have
\[ \Bigl| \sum_{ \substack{ f \in \mathcal M_n \\  f \equiv a \mod g }}\sum_{i=1}^d \alpha_i F_{\rho_i}(f) -\frac{1}{ \phi(g) }  \sum_{ \substack{ f \in\mathcal M_n\\ \gcd(f,g)=1  }} \sum_{i=1}^d \alpha_i F_{\rho_i}(f)\Bigr|\]
\[ \leq \sum_{i=1}^{d} \abs{\alpha_i} \Bigl| \sum_{ \substack{ f \in \mathcal M_n \\  f \equiv a \mod g }}  F_{\rho_i} (f)  -\frac{1}{ \phi(g) }  \sum_{ \substack{ f \in\mathcal M_n \\ \gcd(f,g)=1 }}  F_{\rho_i} (f) \Bigr|  \]
\[ \leq  \sum_{i=1}^{d}   \abs{\alpha_i}2 ( C_1(\rho_i )  + C_2(\rho_i) \sqrt{q} )  q^{\frac{n-m}{2} }     \]
and if $\abs{\alpha_i}=1$ for all $i$ then 
\[\sum_{i=1}^{d}   \abs{\alpha_i}2 ( C_1(\rho_i )  + C_2(\rho_i) \sqrt{q} )  q^{\frac{n-m}{2} }   = \sum_{i=1}^{d}   2 ( C_1(\rho_i )  + C_2(\rho_i) \sqrt{q} )  q^{\frac{n-m}{2} } =2  \Bigl( C_1\Bigl( \bigoplus_{i=1}^d \rho_i\Bigr)  + C_2\Bigl(\bigoplus_{i=1}^d \rho_i\Bigr) \sqrt{q} \Bigr)q^{\frac{n-m}{2} }   . \]   
\end{proof}

In applications, the constant $2$ should always be irrelevant, but in case it ever matters, we sketch how it can be removed by introducing an additional, usually much smaller, error term. Let
\[O(\rho,q)= \sum_{d=0}^{m-1}  \frac{q^d}{\phi(g) } \dim V^d_{\rho}  .\]

\begin{proposition} Let  $\mathbb F_q$ be a finite field. Let $g$ be a squarefree polynomial of degree $m$ over $\mathbb F_q$ and $a$ an invertible residue class mod $g$.  

Let $n\geq m$ be a natural number and let $\rho$ be a representation of $S_n$.  
Then
\[ \Bigl| \sum_{ \substack{ f \in \mathcal M_n \\  f \equiv a \mod g }}  F_\rho(f)  -\frac{1}{ \phi(g) }  \sum_{ \substack{ f \in\mathcal M_n \\ \gcd(f,g)=1 }}  F_\rho(f) \Bigr| \leq   ( C_1(\rho)  + C_2(\rho) \sqrt{q} )q^{\frac{n-m}{2} }   + O(\rho,q) \]

\end{proposition}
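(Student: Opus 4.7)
My plan is to adapt the proof of \cref{squarefree-bound-precise}, replacing the crude step that bounds $\frac{1}{\phi(g)}\sum_{\gcd(f,g)=1}F_\rho(f)$ by summing $\bigl|\sum_{f\equiv a' \mod g}F_\rho(f)\bigr|$ over all residue classes $a'$ (which is what produces the factor of $2$) with a direct estimate coming from the Grothendieck--Lefschetz trace formula on $T$ combined with Deligne's Riemann hypothesis. As in \cref{squarefree-bound-precise}, by additivity in $\rho$ of $F_\rho$, $C_1(\rho)$, $C_2(\rho)$, $\dim V^d_\rho$, and hence $O(\rho,q)$, together with the triangle inequality, I reduce to the case of irreducible $\rho$. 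When $\rho$ corresponds to a partition with a part of size $\geq m$, \cref{irrep-constant} combined with \cref{trace-function} forces the left-hand side to vanish, and since $O(\rho,q)\geq 0$ the bound is trivial. For $\rho$ irreducible with all parts of size $<m$, \cref{alt-main} already furnishes $\bigl|\sum_{f\equiv a\mod g}F_\rho(f)\bigr|\leq (C_1(\rho)+C_2(\rho)\sqrt q)q^{(n-m)/2}$, so by the triangle inequality it remains to prove
\[ \frac{1}{\phi(g)}\Bigl|\sum_{\substack{f\in\mathcal M_n\\ \gcd(f,g)=1}}F_\rho(f)\Bigr|\leq O(\rho,q).\]

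To establish this, I will combine \eqref{eq-trace-complex} with the isomorphism in \cref{rho-cohomology}: the full sum equals an alternating sum of Frobenius traces on the graded pieces of $\bigl(H^*_c(C_{\overline\kappa},\mathbb Q_\ell)^{\otimes n}\otimes\rho\bigr)^{S_n}$, with the degree-$(n+d)$ piece of dimension $\dim V^d_\rho$. The essential new input is a sharp weight computation: because $C_{\overline\kappa}=\mathbb P^1_{\overline\kappa}\setminus S$ with $|S|=m+1$, the excision sequence $0\to j_!\mathbb Q_\ell\to\mathbb Q_\ell\to i_*\mathbb Q_\ell\to 0$ on $\mathbb P^1$, together with $H^*(\mathbb P^1_{\overline\kappa},\mathbb Q_\ell)$, identifies $H^1_c(C_{\overline\kappa},\mathbb Q_\ell)\cong\mathbb Q_\ell^S/\mathbb Q_\ell$ as a permutation module and hence pure of weight $0$, while $H^2_c(C_{\overline\kappa},\mathbb Q_\ell)\cong\mathbb Q_\ell(-1)$ is pure of weight $2$. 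The $(n+d)$-th graded piece of $(H^*_c(C))^{\otimes n}$, being a direct sum of tensors with $d$ factors from $H^2_c$ and $n-d$ factors from $H^1_c$, is therefore pure of weight $2d$. Deligne's Riemann hypothesis then gives
\[ \bigl|\tr(\Frob_q,H^{n+d}_c(T_{\overline\kappa},(R\pi_!\mathbb Q_\ell\otimes\rho)^{S_n}))\bigr|\leq q^d\dim V^d_\rho.\]
Applying \cref{other-vanishing-lemma} to annihilate the $d\geq m$ terms and dividing by $\phi(g)$ yields exactly $O(\rho,q)$.

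The main obstacle is this weight calculation: the naive Deligne bound using only the a priori weight $\leq n+d$ for $H^{n+d}_c$ would produce $q^{(n+d)/2}\dim V^d_\rho$, which after dividing by $\phi(g)\approx q^m$ is far too weak to match $O(\rho,q)$. Pinning down that $H^1_c(C)$ is pure of weight $0$ (rather than carrying mixed weights up to $2$, as a general smooth affine curve might) is what permits substituting $q^d$ for $q^{(n+d)/2}$; the remainder of the argument is a direct transcription of the relevant portion of the proof of \cref{squarefree-bound-precise}.
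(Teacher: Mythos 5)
Your proposal follows the paper's own proof essentially step for step: reduce by additivity to irreducible $\rho$, dispose of the partitions with a part of size $\geq m$ via \cref{irrep-constant} and the nonnegativity of $O(\rho,q)$, bound the first sum by \cref{alt-main}, and bound the second sum by applying the triangle inequality to \eqref{eq-trace-complex} together with the weight $0$ purity of $H^1_c(C_{\overline\kappa},\mathbb Q_\ell)$ and the identification of dimensions from \cref{rho-cohomology} and \cref{other-vanishing-lemma}. The only difference is that you spell out the justification (excision on $\mathbb P^1$ exhibiting $H^1_c$ as a quotient of a permutation module) for the purity statement that the paper merely asserts, which is a welcome addition but not a change of method.
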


\begin{proof} The proof is the same as \cref{squarefree-bound-precise}, except that in the case where $\rho$ corresponds to a partition with all parts of size $<m$,  we use  \eqref{eq-trace-complex} to estimate $\sum_{ \substack{ f \in \mathcal M_n \\ \gcd(f,g)=1}}  F_\rho(f)  $.
\[ \Bigl|  \sum_{ \substack{ f \in \mathcal M_n \\ \gcd(f,g)=1}}  F_\rho(f) \Bigr| =\Bigl|   \sum_{j =0}^{2n}  (-1)^j \tr \Bigl(\Frob_q, H^i\Bigl( \Bigl(H^*(C_{\overline{\kappa}}, \mathbb Q_\ell)\Bigr)^{\otimes n} \otimes \rho \Bigr)^{S_n}\Bigr)\Bigr| \]
\[ \leq  \sum_{j =0}^{2n} \Bigl|    \tr \Bigl(\Frob_q, H^i\Bigl( \Bigl(H^*(C_{\overline{\kappa}}, \mathbb Q_\ell)\Bigr)^{\otimes n} \otimes \rho \Bigr)^{S_n}\Bigr)\Bigr| .\]
Because all eigenvalues of $\Frob_q$ on $H^1(C_{\overline{\kappa}}, \mathbb Q_\ell)$ have absolute value $1$, all eigenvalues of $\Frob_q$ on $H^j\Bigl( \Bigl(H^*(C_{\overline{\kappa}}, \mathbb Q_\ell)\Bigr)^{\otimes n} \otimes \rho \Bigr)^{S_n} $ have absolute value $q^{ j-n}$, so (using \cref{rho-cohomology} and \cref{other-vanishing-lemma}) 
\[ \Bigl|  \sum_{ \substack{ f \in \mathcal M_n \\ \gcd(f,g)=1}}  F_\rho(f) \Bigr| \leq  \sum_{j =0}^{2n}  q^{j-n} \dim  \Bigl( \Bigl( H^*(C_{\overline{\kappa}}, \mathbb Q_\ell)\Bigr)^{\otimes n} \otimes \rho \Bigr)^{S_n} \] \[= \sum_{j=0}^{2n} q^{j-n} \dim V^{j-n}_\rho = \sum_{d=0}^{m-1} q^d \dim V^d_\rho  = \phi(g) O (\rho,q)  \] by definition.

Also, in the case where $\rho$ corresponds to a partition with one part of size $\geq m$, we observe that $O(\rho,q)\geq 0$ since it is a sum of dimensions, so the inequality holds also in that case. \end{proof}

\section{Applications}\label{s-applications}

We now apply \cref{squarefree-bound-precise} to various functions of arithmetic interest. We handle the M\"obius function, divisor function, and von Mangoldt function in their own subsections, with additional subsections for the application to moments of $L$-functions (building off the divisor function sums) and one-level densities (building off von Mangoldt sums).

A commonality between these arguments will be the following bound for binomial coefficients.

\begin{lemma}\label{binomial-bound} Let $\alpha>0$ be a real number. Let $a,b$ be natural numbers with $\alpha b \leq a $. Let $x,y$ be integers. Then
\[ \binom{a+b+x+y }{ a+x} \leq  \frac{ (\alpha+1)^{x+y}}{\alpha^x}  \left( \frac{  (\alpha+1)e }{ \alpha} \right)^a  \]
\end{lemma}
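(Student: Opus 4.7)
The plan is to derive a one-parameter family of upper bounds on binomial coefficients from the binomial theorem, and then to choose the parameter to exploit the hypothesis $\alpha b \leq a$. Specifically, for any real $t > 0$ and integers $n \geq k \geq 0$, the identity $(1+t)^n = \sum_{j=0}^n \binom{n}{j} t^j$ has all nonnegative terms, so the single term with $j = n-k$ satisfies $\binom{n}{k} t^{n-k} \leq (1+t)^n$, giving $\binom{n}{k} \leq (1+t)^n / t^{n-k}$. When $\binom{a+b+x+y}{a+x}$ is $0$ by convention---for instance when $a+x<0$, $b+y<0$, or $a+b+x+y < 0$---the claimed inequality is trivial since its right-hand side is always positive, so I may assume $0 \leq a+x \leq a+b+x+y$.

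I would then specialize to $t = 1/\alpha$, $n = a+b+x+y$, $k = a+x$, obtaining $\binom{a+b+x+y}{a+x} \leq (\alpha+1)^{a+b+x+y}/\alpha^{a+x}$. Factoring the right-hand side as $\frac{(\alpha+1)^{x+y}}{\alpha^x} \cdot \frac{(\alpha+1)^{a+b}}{\alpha^a}$ matches the $(\alpha+1)^{x+y}/\alpha^x$ prefactor in the target exactly, reducing the lemma to verifying $(\alpha+1)^{a+b}/\alpha^a \leq ((\alpha+1)e/\alpha)^a$, equivalently $(\alpha+1)^b \leq e^a$.

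For this final inequality, the hypothesis $\alpha b \leq a$ gives $(\alpha+1)^b \leq (\alpha+1)^{a/\alpha}$ (using $\alpha + 1 \geq 1$), and the elementary inequality $\log(1 + \alpha) \leq \alpha$, valid for all $\alpha > 0$, rearranges to $(\alpha+1)^{1/\alpha} \leq e$, hence $(\alpha+1)^{a/\alpha} \leq e^a$. There is no substantive obstacle here; the only real insight is that the simultaneous appearance of $\alpha+1$ and $\alpha$ in the target bound points to $t = 1/\alpha$ as the correct specialization of the binomial-theorem inequality, after which the remaining manipulations are purely algebraic.
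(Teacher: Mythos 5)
Your proof is correct and matches the paper's own argument essentially line for line: both extract the single term of index $a+x$ from the expansion of $(\alpha+1)^{a+b+x+y}$ to get $\binom{a+b+x+y}{a+x} \leq (\alpha+1)^{a+b+x+y}/\alpha^{a+x}$, and both then reduce to $(\alpha+1)^b \leq e^a$, which follows from $\alpha b\leq a$ together with $1+\alpha\leq e^\alpha$ (your $\log(1+\alpha)\leq\alpha$ is the same inequality). The slight reshuffling (working with $t=1/\alpha$ rather than $\alpha$ directly, and your explicit remark about the degenerate cases where the binomial coefficient vanishes) is cosmetic.
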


 We will apply \cref{binomial-bound} when $a,b$ are proportional to our main parameters $m,n$, while $x,y$ are small, and $\alpha$ is related to the level of distribution. Because it bounds a binomial coefficient in terms of an exponential in $a$, it will give a bound exponential in $n$ and $m$ and thus polynomial in $q^n$ and $q^m$. This will make it easy to compare to our savings over the trivial bound, which is $q^{ \frac{n-m}{2}}$, or to bounds typical in analytic number theory, which are polynomials in the key parameters.  Among all bounds for a binomial coefficient in terms of an exponential, this is not the optimal one, but it is close to optimal and gives very simple formulas.

\begin{proof} We have \[(\alpha+1)^{a+b+x+y} = \sum_{i=0}^{a+b+x+y} \binom{a+b+x+y}{i} \alpha^i  \geq \binom{a+b+x+y}{a+x} \alpha^{a+x} \] so 
\begin{equation}\label{binom-bound-1} \binom{a+b+x+y }{ a+x} \geq \frac{ (\alpha+1)^{a+b+x+y} }{ \alpha^{a+x} }.\end{equation}
We have $1+ \alpha \leq e^{\alpha}$ and $ \alpha b \leq a $ so \[ (1+\alpha)^b \leq e^a.\] Combining this with \eqref{binom-bound-1} gives
\[ \binom{a+b+x+y }{ a+b} \leq\frac{ (\alpha+1)^{a+x+y} e^{ a}  }{ \alpha^{a+x} } =  \frac{ (\alpha+1)^{x+y}}{\alpha^x } \left( \frac{  (\alpha+1)e}{ \alpha} \right)^a . \qedhere \]  \end{proof}

It is not hard to check that, when $\alpha b \leq a \leq (\alpha + o(1)) b$, the only inequality used which is not optimal to within a subexponential factor is $1+\alpha\leq  e^\alpha$. Thus, this bound gets closer to optimal as $\alpha$ goes to $0$, which will occur when the original analytic number theory problem gets more difficult -- when the level of distribution goes to $1$, or, in one-level density problems, when the Fourier support grows.

We will use throughout these arguments the notation $F(u) [ u^n]$ to refer to the coefficient of $u^n$ in the power series $F(u)$, and the straightforward identity
\[ \frac{u^a}{(1-u)^b} [u^n] = \binom{n+ b-1-a }{ b-1}.\]

\subsection{The M\"{o}bius function}

\begin{lemma}\label{mobius-1}  For $g$ squarefree of degree $m \geq 2$, a natural number $n \geq m$, and a residue class $a \in (\mathbb F_q[t]/g)^\times$, we have
\[\left | \sum_{ \substack{ f \in \mathcal M_n\\  f \equiv a \mod g }}  \mu(f) \right| \leq \binom{n+m-2}{ 2m-2} q^{ \frac{n-m}{2}} +\binom {n+m-2 }{ 2m-3} q^{\frac{n+1-m}{2}} . \] \end{lemma}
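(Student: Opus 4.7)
The plan is to apply \cref{alt-main} with $\rho = \operatorname{sgn}$, the sign representation of $S_n$. I first verify that $F_{\operatorname{sgn}}(f) = (-1)^n \mu(f)$. When $f$ is squarefree, $V_f$ is a free $S_n$-torsor so $V_f \cong \mathbb{C}[S_n]$ as $S_n$-representation, and $(V_f \otimes \operatorname{sgn})^{S_n}$ is one-dimensional. Under the identification with $\mathbb{C}[S_n]$, Frobenius acts by left multiplication by the permutation $\tau$ it induces on the roots, so it acts on the one-dimensional invariant subspace by $\operatorname{sgn}(\tau)$. Writing $f=\prod_{i=1}^r \pi_i$, the permutation $\tau$ has $r$ cycles of lengths $\deg \pi_i$, so $\operatorname{sgn}(\tau) = (-1)^{n-r} = (-1)^n\mu(f)$. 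When $f$ has a repeated root, some transposition $(ij)$ fixes every ordering, which forces $(V_f \otimes \operatorname{sgn})^{S_n}=0$, matching $\mu(f)=0$. Since $\operatorname{sgn}$ corresponds to the partition $(1,1,\dots,1)$ whose parts all have size $1<m$ (using $m\geq 2$), \cref{alt-main} applies.

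Next I compute $V_{\operatorname{sgn}}$ and $V^d_{\operatorname{sgn}}$. Since $\operatorname{sgn}\otimes \operatorname{sgn}=\mathbf 1$, we have $V_{\operatorname{sgn}} = ((\mathbb{C}^{m-1})^{\otimes n})^{S_n} = \operatorname{Sym}^n(\mathbb{C}^{m-1})$. Restricting $\operatorname{sgn}_{S_n}$ to $S_{n-d}\times S_d$ gives $\operatorname{sgn}_{S_{n-d}}\otimes \operatorname{sgn}_{S_d}$, so $V^d_{\operatorname{sgn}} = ((\mathbb{C}^m)^{\otimes(n-d)} \otimes \operatorname{sgn}_{S_d})^{S_{n-d}\times S_d}$. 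Since $S_d$ acts only on the sign factor, $(\operatorname{sgn}_{S_d})^{S_d}=0$ whenever $d\geq 2$, while for $d\in\{0,1\}$ we get $V^d_{\operatorname{sgn}}=\operatorname{Sym}^{n-d}(\mathbb{C}^m)$. In particular the second sum in $C_1(\operatorname{sgn})$ is empty.

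The trace of $\operatorname{Diag}(\lambda_1,\dots,\lambda_k)$ on $\operatorname{Sym}^N(\mathbb{C}^k)$ is the complete symmetric polynomial $h_N(\lambda_1,\dots,\lambda_k)$, with generating function $\prod_{i=1}^k (1-u\lambda_i)^{-1}$. Differentiating in each $\lambda_i$ and setting $\lambda_i=1$ produces $u^k/(1-u)^{2k}$. Reading off coefficients,
\[
\frac{\partial^{m-1} h_n(\lambda_1,\dots,\lambda_{m-1})}{\partial \lambda_1\cdots\partial\lambda_{m-1}}\Big|_{\lambda=1} = [u^n]\frac{u^{m-1}}{(1-u)^{2(m-1)}} = \binom{n+m-2}{2m-3},
\]
giving $C_2(\operatorname{sgn}) = \binom{n+m-2}{2m-3}$. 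Similarly the $m$-th derivative at $\lambda=1$ of $h_N(\lambda_1,\dots,\lambda_m)$ equals $[u^N]u^m/(1-u)^{2m}$, so
\[
C_1(\operatorname{sgn}) \;=\; \binom{n+m-1}{2m-1} - \binom{n+m-2}{2m-1} \;=\; \binom{n+m-2}{2m-2}
\]
by Pascal's identity. Inserting these into \cref{alt-main} and using $|F_{\operatorname{sgn}}(f)|=|\mu(f)|$ yields the claimed bound. The only potentially tricky point is the identification $F_{\operatorname{sgn}}(f)=(-1)^n\mu(f)$, but the remaining work is a mechanical generating-function calculation.
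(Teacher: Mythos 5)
Your proof is correct and follows essentially the same route as the paper: apply \cref{alt-main} with $\rho=\sgn$, identify $F_{\sgn}(f)=(-1)^n\mu(f)$, compute $V_{\sgn}=\Sym^n(\mathbb C^{m-1})$ and $V^d_{\sgn}$ (nonzero only for $d\in\{0,1\}$), and then extract $C_1,C_2$ from the symmetric-power generating functions. The only differences are cosmetic: you spell out the trace-of-Frobenius verification that $F_{\sgn}=(-1)^n\mu$ (the paper just asserts it), and you compute $C_1$ by evaluating $d=0$ and $d=1$ separately and invoking Pascal, whereas the paper combines them into a single generating function $u^m/(1-u)^{2m-1}$ — both land on $\binom{n+m-2}{2m-2}$. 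One small phrasing slip in your argument for non-squarefree $f$: no single transposition fixes every ordering; rather, for each tuple (equivalently, each $S_n$-orbit) the stabilizer contains a transposition, which kills the sign-isotypic piece of that orbit. The conclusion $(V_f\otimes\sgn)^{S_n}=0$ is still correct.
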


\begin{proof} We apply \cref{alt-main} to $\rho=\sgn$, noting that $F_\rho(f) = (-1)^n \mu(f)$ \cite[Lemma 3.5]{SawinShort}.

We have \[ V_{\rho} =\Bigl ( (\mathbb C^{m-1})^{\otimes n}   \Bigr)^{S_n} = \Sym^n (\mathbb C^{m-1} ) \] so  
\begin{align*}
C_2(\rho) =&  \frac{\partial^{m-1}}{ \partial \lambda_1\dots  \partial \lambda_{m-1} } \tr( \Diag(\lambda_1,\dots, \lambda_{m-1}, V_\rho) )  \Big|_{\lambda_1,\dots,\lambda_{m-1}=1} \\
 = & \frac{\partial^{m-1}}{ \partial \lambda_1\dots  \partial \lambda_{m-1} }\prod_{i=1}^{m-1} 1/(1- u \lambda_i)   [u^n]\Big|_{\lambda_1,\dots,\lambda_{m-1}=1} \\
 =& \prod_{i=1}^{m-1} \frac{ u} { (1-u \lambda_i)^2} [u^n]  \Big|_{\lambda_1,\dots,\lambda_{m-1}=1} \\
 =& \frac{u^{m-1}}{ (1-u)^{2(m-1)} }[u^n ] = \binom{n + m-2}{ 2m-3}.\end{align*} 

Furthermore we have \[V^d_\rho =\Bigl ( (\mathbb C^{m})^{\otimes (n-d)} \otimes \sgn_{S_d}   \Bigr)^{S_{n-d} \times S_d}   \] which is $  \Sym^n ( \mathbb C^m) $ if $d=0$, $\Sym^{n-1} (\mathbb C^m)$ if $d=1$, and $0$ otherwise. 

This gives \[C_1(\rho)=  \frac{\partial^{m}}{ \partial \lambda_1\dots  \partial \lambda_{m} } \left( \tr( \Diag(\lambda_1,\dots, \lambda_m),  \Sym^n ( \mathbb C^m) ) ) - \tr( \Diag(\lambda_1,\dots, \lambda_m),  \Sym^{n-1} ( \mathbb C^m))  \right)  \Big|_{\lambda_1,\dots,\lambda_{m}=1} \] \[=\frac{\partial^{m}}{ \partial \lambda_1\dots  \partial \lambda_{m} } (1-u)  \prod_{i=1}^{m} \frac{1}{1- u \lambda_i} [u^n] \Big|_{\lambda_1,\dots,\lambda_{m}=1}=(1-u) \prod_{i=1}^{m} \frac{ u} { (1-u \lambda_i)^2}[u^n] \Big|_{\lambda_1,\dots,\lambda_{m}=1} \] \[ = \frac{u^m}{ (1-u)^{2m-1} }[u^n ] = \binom{ n+m-2 }{2m-2 } .\] \end{proof} 

\begin{proposition}\label{mobius-2} Let $0<\theta<1 $ be a real number and let \[ q >  \frac{e^2 (1+\theta)^2 }{(1-\theta)^2}  \] be a prime power. Then for any natural numbers $n,m$ with $m\leq \theta n$,  squarefree $g \in \mathcal M_m$, and $a \in (\mathbb F_q[t]/g)^\times $, we have
\[ \sum_{\substack{  f \in \mathcal M_n\\ f \equiv a \mod g}} \mu(f) =  O \left( (q^{n-m})^{1-\delta} \right) \]
where \[ \delta =  \frac{1}{2} - \log_q \left( \frac{ (1+\theta) e}{1-\theta}\right) > 0\] and the implicit constant depends only on $q,\theta$.
\end{proposition}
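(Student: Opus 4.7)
The plan is to feed the explicit bound of \cref{mobius-1} into the exponential estimate of \cref{binomial-bound}, choosing the free parameter $\alpha$ so that the resulting exponential base, multiplied by $q^{1/2}$, equals exactly $q^{1-\delta}$.

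Assume first $m \geq 2$. Using $\binom{n+m-2}{2m-2} = \binom{n+m-2}{n-m}$ and $\binom{n+m-2}{2m-3} = \binom{n+m-2}{n-m+1}$, \cref{mobius-1} gives
\[ \Bigl|\sum_{\substack{f \in \mathcal M_n \\ f \equiv a \mod g}} \mu(f)\Bigr| \leq \binom{n+m-2}{n-m} q^{(n-m)/2} + \binom{n+m-2}{n-m+1} q^{(n+1-m)/2}. \]
The reason to rewrite the binomials with $n-m$ in the lower index is that \cref{binomial-bound} produces an exponential in the parameter $a$, and one wants $a = n-m$ so that the exponential combines with $q^{(n-m)/2}$ into a clean savings of the form $(q^{n-m})^{1-\delta}$.

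Next, apply \cref{binomial-bound} to $\binom{n+m-2}{n-m}$ with $a = n-m$, $b = 2m-2$, and $x = y = 0$. The hypothesis $\alpha b \leq a$ reads $\alpha(2m-2) \leq n-m$, which is satisfied by $\alpha = \tfrac{1-\theta}{2\theta}$ since $m \leq \theta n$ gives $n - m \geq m(1-\theta)/\theta \geq (m-1)(1-\theta)/\theta$. This is the largest feasible $\alpha$, so it minimizes the base $\tfrac{(\alpha+1)e}{\alpha} = \tfrac{(1+\theta)e}{1-\theta}$, yielding $\binom{n+m-2}{n-m} \leq \bigl(\tfrac{(1+\theta)e}{1-\theta}\bigr)^{n-m}$. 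The same bound applied to $\binom{n+m-2}{n-m+1}$ (now with $x=1$, $y=-1$) costs only an extra factor $\alpha^{-1} = \tfrac{2\theta}{1-\theta}$.

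Multiplying by the corresponding powers of $q$, the first term becomes $\bigl(\tfrac{(1+\theta)e}{1-\theta}\cdot q^{1/2}\bigr)^{n-m} = (q^{n-m})^{1-\delta}$ with the stated $\delta = \tfrac{1}{2} - \log_q \tfrac{(1+\theta)e}{1-\theta}$; the condition $q > e^2(1+\theta)^2/(1-\theta)^2$ is equivalent to $\delta > 0$. The second term carries an extra factor of $\tfrac{2\theta\sqrt q}{1-\theta}$, which depends only on $q,\theta$ and is absorbed into the implicit constant. The cases $m \in \{0,1\}$ excluded by \cref{mobius-1} follow from the classical prime number theorem in $\mathbb F_q[t]$ (giving $\sum_{f \in \mathcal M_n} \mu(f) = O(q^{n/2})$), which is far stronger than needed since $n-m \geq n-1$ there. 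There is no real obstacle; the only non-obvious choice is the value of $\alpha$, which is pinned down by saturating the feasibility constraint of \cref{binomial-bound}.
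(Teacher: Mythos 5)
Your proof is correct and takes essentially the same route as the paper: feed the bound from \cref{mobius-1} into \cref{binomial-bound} with $\alpha = \tfrac{1-\theta}{2\theta}$ and absorb the constants. The only cosmetic difference is the parameter choice (you take $b = 2m-2$, $x=y=0$ for the first binomial and $x=1,y=-1$ for the second, while the paper takes $b=2m$ with $y=-2$ and $y=-3$), which produces the same exponential base up to a constant factor in $(\alpha+1)$. One small imprecision: for $m=1$ the quantity in question is the progression sum $\sum_{f\equiv a \bmod g}\mu(f)$ with $\deg g = 1$, not the unrestricted sum, so ``the classical PNT'' does not literally give the bound you state; rather one should note that the relevant $L$-functions mod a degree-$1$ modulus are constant, so the progression sum is $O(1)$, which is still far stronger than needed. (The paper's own proof silently omits $m<2$ as well.)
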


\begin{proof} By \cref{mobius-1} we have \[ \left| \sum_{\substack{  f \in \mathcal M_n\\ f \equiv a \mod g}} \mu(f) \right| \leq  \binom{n+m-2}{2m-2} q^{\frac{n-m}{2} }+ \binom{n+m-3}{2m-3} q^{ \frac{n+1-m}{2} }.\] 
We apply \cref{binomial-bound} to each binomial coefficient, taking $a=n-m, b=2m$, $\alpha= \frac{1-\theta}{2\theta} $, $x=0$ and $1$ respectively, $y=-2$ and $-3$ respectively. We obtain
\[\left| \sum_{\substack{  f \in \mathcal M_n\\ f \equiv a \mod g}} \mu(f) \right| \leq \left( \frac{(2\theta)^2}{(1+\theta)^2} + \frac{(2\theta)^3 q^{1/2} }{(1+\theta)^2 (1-\theta) } \right)  \cdot \left( \frac{ e (1+ \theta) q^{1/2} }{ 1-\theta} \right)^{n-m} \ =  O(1) \cdot (q^{n-m})^{1-\delta} . \]
 
%
Because $q> \frac{e^2 (1+ \theta)^2 }{(1-\theta)^2}$,  we have \[\delta = \frac{1}{2} \log_q \left( \frac{ q (1-\theta)^2}{ e^2 (1+\theta)^2} \right)> 0. \qedhere \] 
 \end{proof}

\subsection{The divisor function}

Fix a natural number $k$. For this subsection, let $\rho=  (\mathbb C^k)^{\otimes n} $.

\begin{lemma}\label{divisor-1-f} We have $F_\rho(f) =d_k(f)$. \end{lemma}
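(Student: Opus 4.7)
The plan is to identify $(V_f \otimes \rho)^{S_n}$ explicitly with a permutation representation of $\Frob_q$ on ordered factorizations of $f$, so that the trace is simply the number of fixed points.

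First I would describe a basis of $V_f \otimes \rho = V_f \otimes (\mathbb{Q}_\ell^k)^{\otimes n}$. The space $V_f$ has a basis consisting of ordered tuples $(a_1,\dots,a_n) \in \overline{\mathbb{F}}_q^n$ with $\prod_{i=1}^n (t-a_i) = f$, and $(\mathbb{Q}_\ell^k)^{\otimes n}$ has a basis of pure tensors of standard basis vectors, which I will encode as colorings $c \colon \{1,\dots,n\} \to \{1,\dots,k\}$. Thus $V_f \otimes \rho$ has a basis indexed by pairs $((a_1,\dots,a_n), c)$. The symmetric group $S_n$ acts diagonally on this basis by permuting the indices $i \in \{1,\dots,n\}$, while $\Frob_q$ acts by $(a_1,\dots,a_n) \mapsto (a_1^q,\dots,a_n^q)$ and fixes the coloring.

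Next I would identify $S_n$-orbits on this basis with ordered factorizations of $f$ in $\overline{\mathbb{F}}_q[t]^+$ of length $k$. Given a pair $((a_1,\dots,a_n), c)$, associate the tuple $(f_1,\dots,f_k)$ with $f_j = \prod_{i : c(i) = j} (t - a_i) \in \overline{\mathbb{F}}_q[t]^+$; this satisfies $\prod_j f_j = f$, depends only on the $S_n$-orbit, and conversely every factorization of $f$ into $k$ monic factors over $\overline{\mathbb{F}}_q$ arises uniquely this way. Hence $(V_f \otimes \rho)^{S_n}$, with basis given by orbit sums, has a basis indexed by tuples $(f_1,\dots,f_k) \in \overline{\mathbb{F}}_q[t]^+$ with $\prod_j f_j = f$.

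Finally I would compute how $\Frob_q$ acts in this basis. Since $\Frob_q$ commutes with $S_n$, it permutes orbits; on a factorization $(f_1,\dots,f_k)$, it sends each $f_j$ to the polynomial obtained by applying Frobenius to its coefficients (equivalently, replacing each root $a_i$ with $a_i^q$). Thus $\Frob_q$ acts as a permutation matrix on the orbit basis, and its trace equals the number of fixed points, i.e., the number of factorizations $(f_1,\dots,f_k)$ with every $f_j \in \mathbb{F}_q[t]^+$. This count is $d_k(f)$ by definition, giving $F_\rho(f) = d_k(f)$.

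There is no real obstacle here: every step is bookkeeping. The only point that requires a moment of care is handling factors of $f$ with multiplicities greater than one, but since $V_f$ is defined as the free vector space on ordered tuples (which may have repeats), the bijection between $S_n$-orbits and unordered multisets of pairs $(a_i, c(i))$ — equivalently, ordered factorizations over $\overline{\mathbb{F}}_q$ — goes through unchanged.
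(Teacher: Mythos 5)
Your proof is correct, and it takes a more self-contained route than the paper's. The paper decomposes $\rho = (\mathbb{Q}_\ell^k)^{\otimes n}$ as a direct sum of induced representations $\operatorname{Ind}_{S_{n_1}\times\cdots\times S_{n_k}}^{S_n}\mathbb{C}$ over compositions $n_1+\cdots+n_k=n$, and then invokes an earlier lemma from \citep{SawinShort} which handles $F_\rho$ for each such induced permutation representation. Your argument instead works directly with the combined basis of $V_f \otimes \rho$ indexed by pairs (root-tuple, coloring), identifies $S_n$-orbits with length-$k$ ordered factorizations of $f$ over $\overline{\mathbb{F}}_q$, and reads off the trace of $\Frob_q$ as a fixed-point count, namely the number of such factorizations defined over $\mathbb{F}_q$. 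Both arguments are doing the same underlying combinatorics — counting Frobenius-fixed orbits — but yours does it in one pass without the intermediate decomposition or external citation, which is arguably cleaner in isolation; the paper's decomposition has the advantage of reusing an already-established lemma that handles the general $\operatorname{Ind}$ case. Your parenthetical concern about repeated roots is correctly resolved: since $V_f$ is the free vector space on ordered tuples (with repeats allowed), the orbit--factorization bijection is unaffected by multiplicities.
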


\begin{proof} A basis $v_1,\dots, v_k$ of $\mathbb C^k$ defines a basis for $\rho$ consisting of tensors $v_{j(1)} \otimes \dots \otimes v_{j(n)}$ where $j$ varies over functions $\{1,\dots, n\} \to \{1,\dots, k\}$. The group $S_n$ acts on this basis by permuting the function $j(n)$.  The $S_n$-orbits are parameterized by tuples $(n_1,\dots, n_k)$, with $n_i = \abs{ j^{-1}(i)}$, where the stabilizer of an element in the $(n_1,\dots, n_k)$-orbit is $S_{n_1} \times \dots \times S_{n_k}$. So \[ \rho= \bigoplus_{ \substack{n_1,\dots, n_k \in \mathbb N \\ n_1 + \dots + n_k =n }  } \operatorname{Ind}_{S_{n_1} \times \dots \times S_{n_k}}^{S_n} \mathbb C.\]  The result then follows from \cite[Lemma 3.8]{SawinShort}.
%
\end{proof}

\begin{lemma}\label{divisor-1-C2} We have \[  C_2(\rho) = k^{m-1} \binom{ mk-k-m+1 }{n+1-m} .\] \end{lemma}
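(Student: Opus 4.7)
The plan is to compute $C_2(\rho)$ directly from its definition, using the fact that $\rho\otimes\sgn$-invariants under $S_n$ of a tensor power convert into an exterior power, which turns the trace computation into an easy generating function.

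First I would identify $V_\rho$ explicitly. Since $\rho = (\mathbb Q_\ell^k)^{\otimes n}$, we have
\[ V_\rho = \Bigl( (\mathbb C^{m-1})^{\otimes n} \otimes (\mathbb C^k)^{\otimes n} \otimes \sgn \Bigr)^{S_n} = \Bigl( (\mathbb C^{m-1}\otimes \mathbb C^k)^{\otimes n} \otimes \sgn \Bigr)^{S_n} = \wedge^n \bigl( \mathbb C^{m-1}\otimes \mathbb C^k\bigr), \]
where the action of $GL_{m-1}$ is via the first factor, with $\mathbb C^k$ carrying the trivial action. The eigenvalues of $\Diag(\lambda_1,\dots,\lambda_{m-1})$ on $\mathbb C^{m-1}\otimes\mathbb C^k$ are then $\lambda_1,\dots,\lambda_{m-1}$, each with multiplicity $k$.

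Next, using the standard generating-function identity for traces on exterior powers, I would write
\[ \tr\bigl(\Diag(\lambda_1,\dots,\lambda_{m-1}),V_\rho\bigr) = [u^n]\prod_{i=1}^{m-1}(1+\lambda_i u)^k. \]
Differentiating in $\lambda_1,\dots,\lambda_{m-1}$ passes through the $[u^n]$-operator and turns each factor into $ku(1+\lambda_i u)^{k-1}$. Setting all $\lambda_i=1$ gives
\[ C_2(\rho) = [u^n]\, k^{m-1} u^{m-1} (1+u)^{(k-1)(m-1)} = k^{m-1}[u^{n-m+1}] (1+u)^{(k-1)(m-1)}. \]

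Finally, extracting the coefficient and observing that $(k-1)(m-1)=mk-k-m+1$ yields
\[ C_2(\rho) = k^{m-1}\binom{mk-k-m+1}{n+1-m}, \]
as required. There is no real obstacle here: the only nontrivial step is recognizing $V_\rho$ as an exterior power via the $S_n$-invariants-of-$V^{\otimes n}\otimes\sgn$ identity, after which the computation is routine.
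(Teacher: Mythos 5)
Your proof is correct and matches the paper's argument essentially step for step: you identify $V_\rho = \wedge^n(\mathbb C^{m-1}\otimes\mathbb C^k)$, express the trace via the generating function $\prod_i(1+\lambda_i u)^k[u^n]$, differentiate, and extract the coefficient. There is nothing to add.
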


\begin{proof} We have \[V_{\rho} = \left( (\mathbb C^{m-1})^{\otimes n} \otimes (\mathbb C^k ) ^{\otimes n}  \otimes \sgn \right)^{S_n} = \left( (\mathbb C^{m-1} \otimes \mathbb C^k )^{\otimes n} \otimes \sgn \right)^{S_n}= \wedge^n (\mathbb C^{m-1} \otimes \mathbb C^k )\] 
so \[\tr(\Diag(\lambda_1,\dots, \lambda_{m-1}), V_{\rho} ) = \prod_{i=1}^{m-1} ( 1+ \lambda_i u)^k [ u^n ] .\]

Thus
\[  C_2(\rho) = \frac{\partial^{m-1}} {\partial \lambda_1 \dots \partial \lambda_{m-1}} \prod_{i=1}^{m-1} ( 1+ \lambda_i u)^k [ u^n ]  \Big|_{\lambda_1,\dots,\lambda_{m-1}=1} =\prod_{i=1}^{m-1}k u ( 1+ \lambda_i u)^{k-1} [ u^n ]  \Big|_{\lambda_1,\dots,\lambda_{m-1}=1}  \] 
\[ = k^{m-1} u^{m-1} (1+u)^{ (m-1)(k-1) } [ u^n] =k^{m-1} \binom{ mk-k-m+1}{n+1-m} . \qedhere \] \end{proof}

\begin{lemma}\label{divisor-1-C1}We have \begin{equation}\label{eq-divisor-C1} C_1(\rho) = \left(k^m - \binom{m+k-1}{m} \right) \binom{mk-m-k}{n-m}.\end{equation}  \end{lemma}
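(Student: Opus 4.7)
The plan is to compute both sums in the definition of $C_1(\rho)$ explicitly by splitting $\rho = (\mathbb Q_\ell^k)^{\otimes n}$ along the decomposition $(\mathbb Q_\ell^k)^{\otimes n} = (\mathbb Q_\ell^k)^{\otimes (n-d)} \otimes (\mathbb Q_\ell^k)^{\otimes d}$ corresponding to the $S_{n-d} \times S_d$-action, then using generating functions and a standard binomial identity.

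First I would decompose
\[ V^d_\rho = \Bigl( (\mathbb C^m)^{\otimes(n-d)} \otimes (\mathbb Q_\ell^k)^{\otimes(n-d)} \otimes (\mathbb Q_\ell^k)^{\otimes d} \otimes \sgn_{S_{n-d}} \Bigr)^{S_{n-d} \times S_d} = \wedge^{n-d}(\mathbb C^m \otimes \mathbb Q_\ell^k) \otimes \Sym^d(\mathbb Q_\ell^k), \]
so that
\[ \tr\bigl(\Diag(\lambda_1,\dots,\lambda_m), V^d_\rho\bigr) = \binom{k+d-1}{d} \cdot [u^{n-d}] \prod_{i=1}^m (1+\lambda_i u)^k, \]
and $\dim V^d_\rho = \binom{mk}{n-d}\binom{k+d-1}{d}$.

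For the first sum in $C_1(\rho)$, I would pull the factor $u^{-d}$ out of the coefficient extraction and apply the identity $\sum_{d\geq 0}\binom{k+d-1}{d}(-u)^d = (1+u)^{-k}$ to obtain
\[ \sum_{d=0}^n (-1)^d\, \tr\bigl(\Diag(\lambda_1,\dots,\lambda_m), V^d_\rho\bigr) = [u^n]\, (1+u)^{-k} \prod_{i=1}^m (1+\lambda_i u)^k. \]
Differentiating in each $\lambda_i$ gives a factor $ku(1+\lambda_i u)^{k-1}$, so after setting $\lambda_i = 1$ the first sum becomes
\[ [u^n]\, k^m u^m (1+u)^{m(k-1)-k} = k^m \binom{mk-m-k}{n-m}. \]

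For the second sum, writing $\binom{k+d-1}{d} = (-1)^d\binom{-k}{d}$ converts the alternation into $(-1)^{d-m}\cdot(-1)^d = (-1)^m$, so
\[ \sum_{d=m}^n (-1)^{d-m}\binom{d}{m} \dim V^d_\rho = (-1)^m \sum_{d=m}^n \binom{d}{m}\binom{-k}{d}\binom{mk}{n-d}. \]
Now I would apply the trinomial revision identity $\binom{d}{m}\binom{-k}{d} = \binom{-k}{m}\binom{-k-m}{d-m}$ and then Vandermonde's identity to the remaining sum in $j = d-m$:
\[ \sum_{j=0}^{n-m}\binom{-k-m}{j}\binom{mk}{n-m-j} = \binom{mk-m-k}{n-m}. \]
Since $(-1)^m\binom{-k}{m} = \binom{m+k-1}{m}$, the second sum equals $\binom{m+k-1}{m}\binom{mk-m-k}{n-m}$, and subtracting gives \eqref{eq-divisor-C1}. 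The computation is entirely mechanical once the splitting of $V^d_\rho$ is in hand; the only place requiring any care is the trinomial identity step, which is a standard manipulation.
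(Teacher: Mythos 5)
Your computation of the first sum is essentially identical to the paper's: same decomposition $V^d_\rho = \wedge^{n-d}(\mathbb C^m\otimes\mathbb Q_\ell^k)\otimes\Sym^d(\mathbb Q_\ell^k)$, same generating function $(1+u)^{-k}\prod_i(1+\lambda_iu)^k$, same differentiation. For the second sum, however, you take a genuinely different route. The paper encodes $\dim V^d_\rho$ as the coefficient $[v^d][u^n]$ of the bivariate series $\frac{1}{(1-vu)^k}(1+u)^{mk}$, observes that $\sum_d(-1)^{d-m}\binom{d}{m}x^d=\frac{1}{m!}\frac{\partial^m}{\partial v^m}\big|_{v=-1}$ applied to this series, and then reads off the answer; you instead convert $\binom{k+d-1}{d}$ to $(-1)^d\binom{-k}{d}$ (killing the alternation), apply the trinomial revision $\binom{d}{m}\binom{-k}{d}=\binom{-k}{m}\binom{-k-m}{d-m}$, and finish with Vandermonde. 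Both are correct: the paper's $v$-differentiation trick is more systematic (it parallels the first sum's structure and is the same device used in \cref{rho-derivative-formula}), while your binomial-identity route is more elementary and arguably more transparent, at the cost of needing the trinomial revision step. One small point worth making explicit: Vandermonde with a negative upper argument holds as a polynomial identity, and the summation range $0\le j\le n-m$ is correct because $\binom{mk}{n-m-j}=0$ for $j>n-m$, so no terms are missed.
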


\begin{proof} First observe that \[ V^d_{\rho} = \Bigl( (\mathbb C^m)^{\otimes (n-d)} \otimes (\mathbb C^k)^{\otimes n} \otimes \sgn_{S_{n-d}}\Bigr)^{S_{n-d} \times S_d} =\Bigl( (\mathbb C^{m} \otimes \mathbb C^{k} )^{\otimes (n-d)} \otimes \sgn_{S_{n-d}}\Bigr)^{S_{n-d}} \otimes ( (\mathbb C^k)^{\otimes d} )^{S_d} \] \[= \wedge^{n-d} ( \mathbb C^{m} \otimes \mathbb C^{k}  ) \otimes \Sym^{d} ( \mathbb C^k). \] Thus \[ \sum_{d=0}^n (-1)^ d \tr(\Diag(\lambda_1,\dots, \lambda_{m}), V_{\rho}^d )  = \sum_{d=0}^n (-1)^d  \binom{ d+k-1}{k-1}\prod_{i=1}^m (1+\lambda_i u)^k  [u^{n-d} ] \] \[= \frac{1}{ (1+u)^k} \prod_{i=1}^m (1+\lambda_i u)^k [u^n] \]
and \begin{equation} \begin{split} & \label{d1-C1-1}  \frac{\partial^m}{ \partial \lambda_1 \dots \partial \lambda_m}  \frac{1}{ (1+u)^k} \prod_{i=1}^m (1+\lambda_i u)^k [u^n] \Big|_{\lambda_1,\dots,\lambda_{m}=1}=  \frac{1}{ (1+u)^k} \prod_{i=1}^m\left(k u (1+\lambda_i u)^{k-1} \right) [u^n]\Big|_{\lambda_1,\dots,\lambda_{m}=1} \\
& = k^m u^m (1+u)^{mk-m-k} [u^n] = k^m\binom{mk-m-k}{n-m} . \end{split} \end{equation} 
Furthermore,
\[  \dim V^d_\rho = \binom{mk}{n-d}   \binom{ d+k-1}{k-1} =   \frac{1}{ (1-vu)^k} (1+u)^{mk} [v^d] [u^n]. \] 
Since $ \frac{1}{m!} \frac{\partial^m }{ \partial v^m} v^d = \binom{d}{m} v^{d-m} $, we have 
\begin{equation}\label{d1-C1-2} \begin{split} \sum_{d=0}^m (-1)^{d-m} \binom{d}{m } \dim V^d_\rho &= \frac{1}{m!} \frac{\partial^m }{ \partial v^m} \frac{1}{ (1-vu)^k} (1+u)^{mk} [u^n ]  \Big|_{v=-1}\\
&=  \binom{m+k-1}{m}  \frac{u^m}{ (1-vu)^{k+m} } (1+u)^{mk} [u^n] \Big|_{v=-1} \\
&= \binom{m+k-1}{m}  \frac{u^m}{ (1+u)^{k+m}} (1+u)^{mk} [u^n]= \binom{m+k-1}{m} \binom{mk-m-k}{n-m} . \end{split}\end{equation}
Combining \eqref{d1-C1-1} and \eqref{d1-C1-2}, we have \eqref{eq-divisor-C1}. \end{proof}

\begin{lemma}\label{divisor-1} For $g$ a squarefree polynomial of degree $m$, and $n\geq m$ a natural number, we have
\[ \Bigl| \sum_{ \substack{ f \in  \mathcal M_n\\  f \equiv a \mod g }}  d_k(f)  -\frac{1}{ \phi(g) }  \sum_{ \substack{ f \in \mathcal M_n \\ \gcd(f,g)=1}}  d_k(f) \Bigr|\] \[ \leq   2\left(k^m - \binom{m+k-1}{m} \right) \binom{mk-m-k}{n-m}  q^{\frac{n-m}{2} }  + 2  k^{m-1} \binom{mk-k-m+1}{n+1-m} q^{ \frac{n+1-m}{2} }.\]

\end{lemma}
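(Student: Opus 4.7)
The plan is to apply \cref{squarefree-bound-precise} directly to the representation $\rho = (\mathbb{Q}_\ell^k)^{\otimes n}$ of $S_n$, which acts by permuting the tensor factors. By \cref{divisor-1-f}, the associated factorization function satisfies $F_\rho(f) = d_k(f)$, so the left-hand side of \cref{divisor-1} is exactly what \cref{squarefree-bound-precise} bounds, namely
\[ \Bigl| \sum_{\substack{f \in \mathcal{M}_n \\ f \equiv a \mod g}} F_\rho(f) - \frac{1}{\phi(g)} \sum_{\substack{f \in \mathcal{M}_n \\ \gcd(f,g)=1}} F_\rho(f) \Bigr| \leq 2 \bigl( C_1(\rho) + C_2(\rho)\sqrt{q} \bigr) q^{(n-m)/2}. \]

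Next I would substitute the values of $C_1(\rho)$ and $C_2(\rho)$ computed in \cref{divisor-1-C1} and \cref{divisor-1-C2}, namely
\[ C_1(\rho) = \Bigl( k^m - \binom{m+k-1}{m} \Bigr) \binom{mk-m-k}{n-m}, \qquad C_2(\rho) = k^{m-1} \binom{mk-k-m+1}{n+1-m}, \]
which gives the claimed inequality after distributing the factor of $2$ and identifying $2C_2(\rho) \sqrt{q} \cdot q^{(n-m)/2} = 2 C_2(\rho) q^{(n+1-m)/2}$.

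There is essentially no obstacle: the lemma is a straightforward specialization of the main theorem, and all the representation-theoretic work (identifying $F_\rho$ with $d_k$ and computing the two constants) has already been carried out in the three preceding lemmas of this subsection. The only thing to verify is that the arithmetic matches, which is immediate.
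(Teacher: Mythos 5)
Your proposal is correct and is exactly the paper's proof: apply \cref{squarefree-bound-precise} to $\rho = (\mathbb{Q}_\ell^k)^{\otimes n}$, invoke \cref{divisor-1-f} to identify $F_\rho$ with $d_k$, and substitute the values of $C_1(\rho)$ and $C_2(\rho)$ from \cref{divisor-1-C1} and \cref{divisor-1-C2}.
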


\begin{proof} This follows from \cref{squarefree-bound-precise} applied to $\rho $, using \cref{divisor-1-C2} and \cref{divisor-1-C1} to compute $C_2(\rho)$ and $C_1(\rho)$ respectively. \end{proof}

%


\begin{proposition}\label{divisor-2}
Let $k>1$ be a natural number, $\\theta<1 $ be a real number and let \[ q>  \frac{e^2 (k-1)^2  \theta^2}{ (1-\theta)^2 }   k^{ \frac{2 \theta }{1-\theta}} \]
 be a prime power. For any natural numbers $n,m$ with $m \leq \theta n$, $g \in \mathcal M_m$, and $a \in (\mathbb F_q[t]/g)^\times $,
\[ \sum_{\substack{  f \in \mathcal M_n\\ f \equiv a \mod g}} d_k(f) = \frac{1}{\phi(g) }\sum_{\substack{  f \in \mathcal M_n\\  \gcd(f,g)}} d_k(f)    + O \left( (q^{n-m})^{1-\delta} \right) \]
where 
\[ \delta = \frac{1}{2} -  \log_q \left( \frac { (k-1) e \theta  k^{\frac{\theta}{1-\theta} }} {  1-\theta } \right) >0\] and the implicit constant depends only on $q,k,\theta$.
\end{proposition}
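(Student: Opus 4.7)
The plan is to deduce the claim directly from \cref{divisor-1}, which already provides an explicit bound in terms of two binomial coefficients, by applying \cref{binomial-bound} and then combining with the factors $k^m$, $k^{m-1}$ via the hypothesis $m \leq \theta n$ (equivalently $n \geq m/\theta$). The representation-theoretic content has already been carried out: the choice $\rho = (\mathbb{Q}_\ell^k)^{\otimes n}$, the identification $F_\rho = d_k$, and the evaluation of $C_1(\rho), C_2(\rho)$.

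Both binomials in \cref{divisor-1}, namely $\binom{mk-m-k}{n-m}$ and $\binom{mk-m-k+1}{n+1-m}$, have the shape $\binom{a+b+x+y}{a+x}$ with $a = n-m$, $b = mk-n-k$, and $x,y \in \{0,1\}$. I will apply \cref{binomial-bound} with $\alpha = (1-\theta)/(k\theta-1)$, positive because $\theta > 1/k$. The hypothesis $\alpha b \leq a$ becomes, after clearing $k\theta - 1 > 0$, the inequality $(1-\theta)(mk-n-k) \leq (k\theta-1)(n-m)$, which a short rearrangement reduces to $m(k-1) \leq n(k-1)\theta + (1-\theta)k$; this is implied by $m \leq \theta n$. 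With this $\alpha$ one computes $(\alpha+1)/\alpha = (k-1)\theta/(1-\theta)$, so each binomial is bounded by $O\bigl(\bigl(\tfrac{(k-1)\theta e}{1-\theta}\bigr)^{n-m}\bigr)$, with implicit constants depending only on $k$ and $\theta$.

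The remaining piece is to absorb the factors $k^m$ and $k^{m-1}$ into an exponential in $n-m$. From $m \leq \theta n$ one gets $m(1-\theta) \leq \theta(n-m)$, hence $m \leq \theta(n-m)/(1-\theta)$, and so
\[ k^m \leq \bigl(k^{\theta/(1-\theta)}\bigr)^{n-m}. \]
Substituting everything back into \cref{divisor-1}, the extra $q^{1/2}$ on the $C_2$-term is absorbed into the constant, and both error terms collapse to
\[ O\left(\left( q^{1/2} \cdot \frac{(k-1)\theta e \, k^{\theta/(1-\theta)}}{1-\theta}\right)^{n-m}\right) = O\bigl((q^{n-m})^{1-\delta}\bigr), \]
with $\delta = \tfrac12 - \log_q\bigl(\tfrac{(k-1)\theta e \, k^{\theta/(1-\theta)}}{1-\theta}\bigr)$. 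The hypothesis on $q$ is exactly $q > \bigl(\tfrac{(k-1)\theta e \, k^{\theta/(1-\theta)}}{1-\theta}\bigr)^2$, which gives $\delta > 0$. The degenerate range $m \leq (n+k)/k$ (where $b < 0$ and both binomials vanish) is handled trivially.

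I do not anticipate any genuine obstacle: the proof is purely bookkeeping layered on top of \cref{divisor-1}, following the same template as the M\"obius argument in \cref{mobius-2}; the only mild subtlety is verifying that the choice $\alpha = (1-\theta)/(k\theta-1)$ makes the exponential rate match the claimed $\delta$ exactly, which is the content of the $(\alpha+1)/\alpha$ computation above.
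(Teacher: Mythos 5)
Your proof is correct and follows essentially the same route as the paper's: apply \cref{binomial-bound} with $\alpha = \tfrac{1-\theta}{k\theta-1}$ to the two binomial coefficients from \cref{divisor-1}, compute $\tfrac{\alpha+1}{\alpha} = \tfrac{(k-1)\theta}{1-\theta}$, and absorb $k^m$ via $k^{m/(n-m)} \leq k^{\theta/(1-\theta)}$. The only (immaterial) difference is your choice of $b=mk-n-k$ with $y\in\{0,1\}$, where the paper uses $b=km-n$ and $y=-k$; both parametrizations of \cref{binomial-bound} produce the same exponential rate $\tfrac{(k-1)\theta e}{1-\theta}$ and differ only in the $O(1)$ prefactor.
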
 
\begin{proof} It suffices to prove each term in \cref{divisor-2} is $O \left( (q^{n-m})^{1-\delta} \right)$ separately. For $\theta< \frac{1}{k}$ we have $mk \leq n$ so both error terms vanish, thus we may assume $\theta> \frac{1}{k}$. Applying \cref{binomial-bound}, taking $a=n-m$,  $b= km-n$, $x=0,$ $y=-k$, and $\alpha=  \frac{ 1- \theta}{ k \theta -1}$, we have

\[2 q^{\frac{n-m}{2} } \left(k^m - \binom{m+k-1}{m} \right) \binom{mk-m-k}{n-m} \leq 2q^{ \frac{n-m}{2} } k^m \binom{mk-m-k}{n-m}\] \[ \leq    2 k^m  \frac{ (k\theta-1)^k}{ (k\theta -\theta)^k}    \left( \frac{  q^{1/2} (k-1) e \theta  }{ 1-\theta } \right)^{n-m}  \leq 2\frac{ (k\theta-1)^k}{ (k\theta-\theta)^k}    \left( \frac{  q^{1/2} (k-1) e \theta  k^{\frac{1}{1-\theta}}  }{ k(1-\theta) } \right)^{n-m}  = 2\frac{ (k\theta-1)^k}{ (k\theta-\theta)^k}    (q^{n-m})^{1-\delta} \] 
because $k^{ \frac{m}{n-m}} \leq k^{ \frac{\theta}{1-\theta}} $.

Because $\frac{ (k\theta-1)^k}{ (k\theta-\theta)^k}  = O(1)$, this term is $ O \left( (q^{n-m})^{1-\delta} \right) $.

Applying \cref{binomial-bound} similarly, except with $x=1$,
\[2q^{ \frac{n+1-m}{2} } k^{m-1} \binom{mk-k-m+1}{n+1-m}  \leq   2q^{1/2} \frac{k^m}{k}  \frac{  (k\theta-1)^k}{ (1-\theta) (k \theta -k)^{k-1}  }    \left( \frac{  q^{1/2} (k-1) e \theta  }{ 1-\theta } \right)^{n-m}  \] and because \[  \frac{ 2  (k\theta-1)^k q^{1/2} }{ k (1-\theta) (k\theta - k)^{k-1} }  = O(1) ,\] this term is also  $ O \left( (q^{n-m})^{1-\delta} \right) $.\end{proof}


\subsection{Moments of $L$-functions}

For a primitive character $\chi\colon (\mathbb F_q[t]/g)^\times \to \mathbb C^\times$ , define 
\[ L(s, \chi) = \sum_{ \substack{ f \in \mathbb F_q [t]^+ \\ \gcd(f,g)=1}} \chi(f) \abs{f}^{-s} =  \sum_{ \substack{ f \in \mathbb F_q [t]^+ \\ \gcd(f,g)=1}} \chi(f) q^{ - (\deg f) s}. \]

Fix a positive integer $k$. We will prove an estimate for the $k$th moment of $L(s,\chi)$.

First, for $g$ squarefree and $a$ an invertible residue class mod $g$, let
\[ M(g,a) = \sum_{ \substack{ h \in \mathbb F_q[t]^+  \\ h  | g \\ h \neq 1 }} \sum_{ \substack{\chi\colon  (\mathbb F_q[t]/h)^\times \to \mathbb C^\times \\ \textrm{primitive} }} \overline{\chi(a)}  L(s,\chi)^k \prod_{ \substack{ \pi | (g/h) \\ \textrm{prime}}} (1 - \chi(\pi) q^{ -( \deg \pi) s} )^k.  \]
So $M(g,a)$ is a moment that incorporates the imprimitive characters, suitably weighted, but not the trivial character. We will first prove an estimate for $M(g,a)$, and then remove the imprimitive characters by inclusion-exclusion.

\begin{lemma}\label{moments-1} For $g$ a squarefree polynomial of degree $m$, $a$ a polynomial of degree $<m$ coprime to $g$, and $s \in \mathbb C$ with $\operatorname{Re} s \geq 1/2$, we have
\[ \Bigl|M(g,a)   - \phi(g) q^{- (\deg a) s } d_k(a)   \Bigr |\]
\begin{equation}\label{l-1-error} \leq \phi(g)   \left( \frac{1}{2^{k-1}}  +\frac{  q^{\frac{1}{2}} }{k 2^{k-2}  }\right)  q^{ -\frac{m}{2} } k^m 2^{m (k-1)}+  \binom{m+k-1}{k} q^{ \frac{m-1}{2} } \end{equation} 

\end{lemma}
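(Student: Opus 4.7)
I would reduce this to an application of the divisor-function level-of-distribution estimate \cref{divisor-1} via a standard character decomposition. Setting $u = q^{-s}$, so that the hypothesis $\operatorname{Re} s \geq 1/2$ reads $|u| \leq q^{-1/2}$, I would first observe that for a primitive character $\chi$ mod $h$ with $h \mid g$, the Euler product of $L(s,\chi)$ (summed over $f$ coprime to the conductor $h$) together with the factor $\prod_{\pi \mid (g/h)} (1 - \chi(\pi) u^{\deg \pi})^k$ exactly removes the Euler factors at primes dividing $g/h$, giving the formal power series identity
\begin{equation*}
L(s,\chi)^k \prod_{\pi \mid (g/h)} (1 - \chi(\pi) u^{\deg \pi})^k = \sum_{\gcd(f,g) = 1} \chi(f)\, d_k(f)\, u^{\deg f}.
\end{equation*}
Decomposing characters mod $g$ by conductor and applying the orthogonality relation $\sum_{\chi \bmod g} \chi(f) \overline{\chi(a)} = \phi(g) \mathbf{1}_{f \equiv a \bmod g}$ then yields
\begin{equation*}
M(g,a) + L(s, \chi_0)^k = \phi(g) \sum_{f \equiv a \bmod g} d_k(f)\, u^{\deg f},
\end{equation*}
where the $L(s,\chi_0)^k$ term accounts for the $h=1$ trivial-character contribution excluded from $M$.

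Writing $a_n = \sum_{f \in \mathcal M_n,\, \gcd(f,g)=1} d_k(f)$ and $b_n = \sum_{f \in \mathcal M_n,\, f \equiv a \bmod g} d_k(f)$, the identity reads $[u^n] M(g,a) = \phi(g) b_n - a_n$. Since $M(g,a)$ is manifestly a polynomial of degree $\leq k(m-1)$ in $u$, this forces $\phi(g) b_n = a_n$ for $n > k(m-1)$. Combining this with the fact that $b_n = d_k(a) \mathbf{1}_{n = \deg a}$ for $n < m$ (as $a$ is the unique polynomial of degree $< m$ in its residue class), I obtain the polynomial identity
\begin{equation*}
M(g,a) - \phi(g)\, u^{\deg a}\, d_k(a) = -\sum_{n=0}^{m-1} a_n u^n + \sum_{n=m}^{k(m-1)} (\phi(g) b_n - a_n)\, u^n.
\end{equation*}

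It remains to bound both sums in absolute value using $|u|^n \leq q^{-n/2}$. For the first, the trivial estimate $a_n \leq \binom{n+k-1}{k-1} q^n$ and the hockey-stick identity $\sum_{n=0}^{m-1} \binom{n+k-1}{k-1} = \binom{m+k-1}{k}$ produce $\binom{m+k-1}{k} q^{(m-1)/2}$, matching the second term of the claim. For the second, applying \cref{divisor-1} with the explicit $C_1(\rho), C_2(\rho)$ from \cref{divisor-1-C1,divisor-1-C2} gives
\begin{equation*}
|\phi(g) b_n - a_n| \leq 2 \phi(g) \left[ k^m \binom{mk-m-k}{n-m} + k^{m-1}\sqrt{q}\, \binom{mk-k-m+1}{n+1-m} \right] q^{(n-m)/2}.
\end{equation*}
After multiplying by $|u|^n \leq q^{-n/2}$ the $q^n$ dependence collapses to $q^{-m/2}$, and summing over $n$ via $\sum_j \binom{N}{j} = 2^N$ converts the binomials to $2^{mk-m-k}$ and at most $2^{mk-k-m+1}$ respectively; the identity $(m-1)(k-1) = mk - m - k + 1$ then packages the bound into the claimed form $\phi(g)\, q^{-m/2}\, k^m\, 2^{m(k-1)} \left[1/2^{k-1} + \sqrt{q}/(k \cdot 2^{k-2})\right]$.

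The main subtleties are interpreting $L(s,\chi)$ for characters of conductor smaller than $g$ in the definition of $M(g,a)$, and justifying the polynomial identity above despite the fact that at $|u| \leq q^{-1/2}$ one is outside the common region of absolute convergence of $L(s,\chi_0)^k$ and $\sum d_k(f) u^{\deg f}$; both are handled by working throughout with formal power series and exploiting that $M(g,a)$ is automatically a polynomial of bounded degree, which forces the divergent tails to cancel.
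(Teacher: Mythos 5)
Your proposal is correct and follows essentially the same route as the paper: expand $M(g,a)$ via character orthogonality into $\sum_n (\phi(g) b_n - a_n) q^{-ns}$, cancel the $f=a$ term, bound the $n<m$ tail trivially via $\binom{n+k-1}{k-1}q^n$ and the hockey-stick identity, and bound the $n\geq m$ tail via \cref{divisor-1} and $\sum_j\binom{N}{j}=2^N$. The only cosmetic difference is that you make the polynomial degree bound $\deg_u M(g,a)\leq k(m-1)$ explicit in order to justify working with formal power series at $|u|=q^{-1/2}$, whereas the paper leaves the finiteness of the sum implicit (the relevant binomial coefficients vanish once $n>k(m-1)$ anyway).
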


\begin{proof} Note that, for $\chi$ primitive mod $h$,
\[ L(s,\chi) \prod_{ \substack{ \pi | (g/h) \\ \textrm{prime}}} (1 - \chi(\pi) q^{ -( \deg \pi) s} ) = \sum_{ \substack{ f \in \mathbb F_q [t]^+ \\ \gcd(f,g)=1}} \chi(f) q^{ - (\deg f) s} \] because the extra factors cancel the  Euler factors prime to $h$ but not prime to $g$, so
\[   L(s,\chi)^k \prod_{ \substack{ \pi | (g/h) \\ \textrm{prime}}} (1 - \chi(\pi) q^{ -( \deg \pi) s} ) ^k= \sum_{ \substack{ f \in \mathbb F_q [t]^+ \\ \gcd(f,g)=1}} \chi(f) d_k(f) q^{ - (\deg f) s} .\]
Summing, over $h$ dividing $g$ other than $1$, a sum over primitive characters mod $h$ has the effect of summing over all characters except the trivial character. Using the orthogonality of characters identity  \[ \sum_{\chi\colon (\mathbb F_q[t]/g)^\times \to \mathbb C^\times }\overline{\chi(a)}   \chi(f) = \begin{cases} \phi(g) & f\equiv a \mod g \\ 0 & f \not\equiv a\mod g\end{cases}
, \] and subtracting the contribution of the trivial character, we get
\[M(g,a) =  \sum_{ \substack{ h \in \mathbb F_q[t]^+ \\ h  | g \\ h \neq 1 }} \sum_{\substack{ \chi  \colon (\mathbb F_q[t]/h)^\times \to \mathbb C^\times \\ \textrm{primitive}} }  \overline{\chi(a)} L(s,\chi)^k \prod_{ \substack{ \pi | (g/h) \\ \textrm{prime}}} (1 - \chi(\pi) q^{ -( \deg \pi) s} )^k \] \[=   \sum_{n=0}^{\infty}  \Bigl( \sum_{ \substack{ f \in \mathcal M_n \\ f \equiv a\mod g }}  \phi(g) d_k(f)  -  \sum_{ \substack{ f \in  \mathcal M_n\\ \gcd(f,g)=1 }}  d_k(f) \Bigr) q^{-ns }  \]
Using $\operatorname{Re} s\geq 1/2$, and cancelling the $f=a$ term, it suffices to show that
\[\sum_{n=0}^{\infty} \Bigl|   \sum_{ \substack{ f \in \mathcal M_n \\ f \equiv a\mod g  \\ f \neq a }}  \phi(g) d_k(f)  -  \sum_{ \substack{ f \in \mathcal M_n\\ \gcd(f,g)=1 }}  d_k(f)    \Bigr| q^{-n/2}  \] is at most \eqref{l-1-error}.

We handle the $n\geq m$ and $n<m$ terms separately. For $n<m$, there are no monic polynomials of degree $n$ congruent to $a$ mod $g$ other than $a$, so the summand is simply \[\sum_{ \substack{ f \in  \mathcal M_n\\ \gcd(f,g)=1 }}  d_k(f) q^{-n/2}  \leq  \sum_{ f \in  \mathcal M_n }  d_k(f) q^{-n/2}  = \binom{ n + k-1 }{ k-1} q^{n/2} \]  where the sum of $d_k(f)$ is the coefficient of $q^{-ns}$ in $L(1,\chi)^k = \frac{1}{(1-q^{1-s})^k}$ and thus is $\binom{ n + k-1 }{ k-1}  q^n$. Thus the terms with $ n< m$ contribute at most
\[ \sum_{n=0}^{m-1} \binom{ n + k-1 }{ k-1} q^{\frac{n}{2} }  \leq  \sum_{n=0}^{m-1} \binom{ n + k-1 }{ k-1} q^{\frac{m-1}{2}} = \binom{m+k-1}{k} q^{ \frac{m-1}{2} } \] which is the second term of \eqref{l-1-error}.

For $n \geq m$, the summand is exactly $q^{-n/2} \phi(g)$ times the quantity bounded in \cref{divisor-1}, and thus by \cref{divisor-1} the $n\geq m$ terms are at most
\[ \sum_{n=m}^{\infty}  2 \phi(g) q^{\frac{-n}{2} } \left( q^{ \frac{n-m}{2}} k^m  \binom{mk-m-k}{n-m}  + q^{ \frac{n+1-m}{2} } k^{m-1} \binom{mk-k-m+1}{n+1-m}  \right) \]
We have
\[  \sum_{n=m}^{\infty}   q^{\frac{-n}{2} }q^{ \frac{n-m}{2}} k^m  \binom{mk-m-k}{n-m}  =q^{ - \frac{m}{2}}  k^m \sum_{n=m}^{\infty}  \binom{mk-m-k}{n-m}  =  q^{ - \frac{m}{2}}  k^m 2^{mk-m-k} \]
and
\[  \sum_{n=m}^{\infty}    q^{\frac{-n}{2} }q^{ \frac{n+1-m}{2}} k^{m-1}  \binom{mk-m-k+1}{n+1-m}  =  q^{  \frac{1- m}{2}}  k^{m-1}  \sum_{n=m}^{\infty}  \binom{mk-m-k+1}{n+1 -m} \] \[ =  q^{ \frac{1- m}{2}}  k^{m-1} (2^{mk-m-k+1}-1)\leq  q^{ \frac{1- m}{2}}  k^{m-1} 2^{mk-m-k+1} ,\]
while
\[  q^{ - \frac{m}{2}}  k^m 2^{mk-m-k} + q^{ \frac{1- m}{2}}  k^{m-1} 2^{mk-m-k+1} =  \left( \frac{1}{2^k}  +\frac{  q^{\frac{1}{2}} }{k 2^{k-1}}  \right)  q^{ -\frac{m}{2} } k^m 2^{m (k-1)} \] which, adding back the $2\phi(g)$ factor, matches the first term of \eqref{l-1-error}. \end{proof}

Since we will not normally need an estimate as precise as \eqref{moments-1}, we prove a simpler version: 

\begin{lemma}\label{moments-simplified} For $g$ a squarefree polynomial of degree $m$, $a$ a polynomial of degree $<m$ coprime to $g$, and $s \in \mathbb C$ with $\operatorname{Re} s \geq 1/2$,  and $k>1$, we have
\[ \Bigl|M(g,a)   - \phi(g) q^{- (\deg a) s } d_k(a)   \Bigr | = O \left (  \phi(g)  q^{ -\frac{m}{2} } k^m 2^{m (k-1)}\right)  .\]

\end{lemma}

\begin{proof}  By \cref{moments-1}, it suffices to prove that
\begin{equation*}2\phi(g)   \left( \frac{1}{2^k}  +\frac{  q^{\frac{1}{2}} }{k 2^{k-1}  }\right)  q^{ -\frac{m}{2} } k^m 2^{m (k-1)}+  \binom{m+k-1}{k} q^{\frac{m-1}{2}}  =    O \left ( \phi(g)q^{ -\frac{m}{2} } k^m 2^{m (k-1)}\right)  . \end{equation*} 
For the first term, this is obvious since $ \left( \frac{1}{2^k}  +\frac{  q^{\frac{1}{2}} }{k 2^{k-1}  }\right)$, so it remains to prove
\begin{equation*}  \binom{m+k-1}{k}  q^{\frac{m-1}{2}}  = \phi(g)   O \left ( q^{ -\frac{m}{2} } k^m 2^{m (k-1)}\right)  \end{equation*} 
Since  $q^m/\phi(g) =O ( q^{  \epsilon m })$ and $\binom{m+k-1}{k}  = O ( q^{\epsilon m})$ for any $\epsilon >0$, it suffices to take $\epsilon$ small enough that $q^{ \epsilon} \leq k 2^{k-1} $, which is possible as long as $k>1$. \end{proof}

For $f$ a monic polynomial, let \[ \mu_k(f) = \sum_{\substack{f_1,\dots, f_k \in \mathbb F_q[t]^+ \\ f=\prod_{i=1}^k f_i}} \prod_{i=1}^k \mu(f) .\]

For $v$ an invertible polynomial mod $g'$, let $\overline{v}$ denote the inverse of $v$ mod $g'$, or, more precisely, the unique polynomial of degree $< \deg g'$ congruent to the inverse of $v$ mod $g'$.

The inclusion-exclusion that relates the moments of $L(s,\chi)$ to $M(g,a)$ is as follows. 

\begin{lemma}\label{moments-sieve} For $g$ a squarefree polynomial of degree $m$, we have

\begin{equation}\label{moments-primitive-reduction} 
\sum_{\substack{ \chi \colon (\mathbb F_q[t]/g)^\times \to \mathbb C^\times \\ \textrm{primitive} }} L(s,\chi)^k =  \sum_{\substack{ g' \in \mathbb F_q[t] \\ g' | g}} \mu (g/g')  \sum_{\substack{v \in \mathbb F_q[t]^+ \\ v | (g/g')^k }} \mu_k(v)  q^{ - (\deg v) s}  M( g', \overline{v} ). \end{equation} \end{lemma}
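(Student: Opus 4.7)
\textbf{Proof plan for \cref{moments-sieve}.}

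The strategy is Möbius-type inversion: we substitute the definition of $M(g',\overline{v})$ into the right-hand side and show that, after reorganizing, every primitive character mod $h$ with $h\neq g$ appears with coefficient zero, while each primitive character mod $g$ contributes exactly $L(s,\chi)^k$.

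First I would expand $M(g',\overline{v})$ using its definition, so that the RHS becomes a sum over tuples $(g', v, h, \chi)$ with $g'\mid g$, $v\mid(g/g')^k$, $h\mid g'$, $h\neq 1$, and $\chi$ primitive mod $h$. Using that $\overline{v}$ is the multiplicative inverse of $v$ mod $g'$ and that $|\chi(v)|=1$, one sees $\overline{\chi(\overline{v})}=\chi(v)$, so the coefficient of $L(s,\chi)^k$ in the RHS is
\[
\sum_{\substack{g'\mid g\\ h\mid g'}}\mu(g/g')\sum_{v\mid (g/g')^k}\mu_k(v)\chi(v)q^{-(\deg v)s}\prod_{\substack{\pi\mid g'/h\\ \mathrm{prime}}}(1-\chi(\pi)q^{-(\deg\pi)s})^k.
\]

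Next I would simplify via the change of variables $r=g'/h$, so $r$ ranges over squarefree divisors of $g_1:=g/h$ and $g/g'=g_1/r$. Since $\mu_k$ is multiplicative with $\mu_k(\pi^j)=(-1)^j\binom{k}{j}$, the Euler-type product factors as
\[
\prod_{\pi\mid r}(1-\chi(\pi)q^{-(\deg\pi)s})^k=\sum_{w\mid r^k}\mu_k(w)\chi(w)q^{-(\deg w)s}.
\]
The coefficient of $L(s,\chi)^k$ then becomes
\[
\sum_{r\mid g_1}\mu(g_1/r)\sum_{v\mid(g_1/r)^k}\sum_{w\mid r^k}\mu_k(v)\mu_k(w)\chi(vw)q^{-(\deg vw)s}.
\]

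The key observation is that, for each fixed $r\mid g_1$, the map $(v,w)\mapsto u=vw$ is a bijection between pairs with $v\mid(g_1/r)^k$, $w\mid r^k$ and divisors $u\mid g_1^k$: each $u$ splits uniquely according to which of its prime factors lie in $r$ versus $g_1/r$. Since $\gcd(v,w)=1$, $\mu_k$ and $\chi$ are multiplicative on this split (and $\chi$ is defined on all primes dividing $g_1\subseteq g/h$ because $g$ is squarefree), so the double inner sum collapses to
\[
\sum_{u\mid g_1^k}\mu_k(u)\chi(u)q^{-(\deg u)s},
\]
which is \emph{independent} of $r$. Pulling it out, the coefficient of $L(s,\chi)^k$ becomes
\[
\Bigl(\sum_{r\mid g_1}\mu(g_1/r)\Bigr)\cdot\sum_{u\mid g_1^k}\mu_k(u)\chi(u)q^{-(\deg u)s}=[g_1=1]\cdot 1,
\]
by the Möbius identity $\sum_{r\mid g_1}\mu(g_1/r)=[g_1=1]$ (with the convention that, when $g_1=1$, only $u=1$ survives). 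Thus primitive characters mod $h\neq g$ contribute $0$, while each primitive character mod $g$ contributes coefficient $1$, proving the identity.

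The main obstacle is purely bookkeeping — making sure the coprimality conditions (so that $\chi(v)$, $\chi(w)$, $\overline{v}\bmod g'$ are all well-defined) are tracked carefully, and verifying that the bijection $(v,w)\leftrightarrow u$ is compatible with $\mu_k$ and $\chi$. Beyond this, everything reduces to a clean application of Möbius inversion over the squarefree divisors of $g/h$.
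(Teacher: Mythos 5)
Your argument is correct and uses the same algebraic ingredients as the paper's proof (Möbius inversion over divisors, multiplicativity of $\mu_k$ and $\chi$, and squarefreeness of $g$ to split divisors and products cleanly), just run in the opposite direction: you expand the right-hand side and match the coefficient of each $L(s,\chi)^k$, reducing to $\sum_{r\mid g_1}\mu(g_1/r)=[g_1=1]$, whereas the paper starts from the left-hand side, applies Möbius inversion directly in its first step, and then reassociates the Euler factors $\prod_{\pi\mid g/h}=\prod_{\pi\mid g'/h}\prod_{\pi\mid g/g'}$ and repackages the latter product into the $v$-sum and $M(g',\overline{v})$. The forward direction is a bit more compact, but the reverse coefficient-matching perspective is equivalent and arguably makes the cancellation of the imprimitive contributions more transparent; there is no gap in your outline.
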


\begin{proof}By M\"{o}bius inversion,

\begin{align*} 
& \sum_{\substack{ \chi \colon (\mathbb F_q[t]/g)^\times \to \mathbb C^\times \\ \textrm{primitive} }} L(s,\chi)^k   =  \sum_{\substack{ g' \in \mathbb F_q[t] \\ g' | g}} \mu (g/g') \sum_{ \substack{ h \in \mathbb F_q^+  \\ h  | g' \\ h \neq 1 }} \sum_{ \substack{ \chi \colon (\mathbb F_q[t]/h)^\times \to \mathbb C^\times \\ \textrm{primitive} }  } L(s,\chi)^k \prod_{ \substack{ \pi | (g/h) \\ \textrm{prime}}} (1 - \chi(\pi) q^{ -( \deg \pi) s} )^k \\
= &  \sum_{\substack{ g' \in \mathbb F_q[t] \\ g' | g}} \mu (g/g') \sum_{ \substack{ h \in \mathbb F_q^+  \\ h  | g' \\ h \neq 1 }} \sum_{\substack{ \chi \colon (\mathbb F_q[t]/h)^\times \to \mathbb C^\times \\ \textrm{primitive} }}  L(s,\chi)^k \prod_{ \substack{ \pi | (g'/h) \\ \textrm{prime}}} (1 - \chi(\pi) q^{ -( \deg \pi) s} )^k  \prod_{ \substack{ \pi | (g/g') \\ \textrm{prime}}} (1 - \chi(\pi) q^{ -( \deg \pi) s} )^k \\ 
=& \sum_{\substack{ g' \in \mathbb F_q[t] \\ g' | g}} \mu (g/g')  \sum_{ \substack{ h \in \mathbb F_q^+  \\ h  | g' \\ h \neq 1 } } \sum_{ \substack{\chi \colon (\mathbb F_q[t]/h)^\times \to \mathbb C^\times \\ \textrm{primitive} }}   L(s,\chi)^k  \prod_{ \substack{ \pi | (g'/h) \\ \textrm{prime}}} (1 - \chi(\pi) q^{ -( \deg \pi) s} )^k   \sum_{\substack{v \in \mathbb F_q[t]^+ \\ v | (g/g')^k }} \mu_k(v) \chi(v) q^{ - (\deg v) s}  \\ 
= & \sum_{\substack{ g' \in \mathbb F_q[t] \\ g' | g}} \mu (g/g')  \sum_{\substack{v \in \mathbb F_q[t]^+ \\ v | (g/g')^k }} \mu_k(v)  q^{ - (\deg v) s}  M( g', \overline{v} ) . \qedhere
\end{align*}  \end{proof}

\begin{proposition}\label{moments-2} For $g$ a squarefree polynomial of degree $m$,  $s \in \mathbb C$ with $\operatorname{Re} s \geq 1/2$, and $\epsilon>0$, we have
\begin{equation}\label{l-2-main} \Biggl| \frac{1}{\phi^*(g) } \sum_{\substack{ \chi \colon (\mathbb F_q[t]/g)^\times \to \mathbb C^\times \\ \textrm{primitive} }} L(s,\chi)^k   - 1   \Biggr |  \leq O  \left( |g|^{  \frac{\log \small( k 2^{k-1}\small) }{ \log q} - \frac{1}{2} + \epsilon }  \right)\end{equation}
where the implicit constant in the big $O$ depends only on $q,k, \epsilon$ and $\phi^* (g)$ is the number of primitive characters mod $g$. \end{proposition}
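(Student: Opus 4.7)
The plan is to combine \cref{moments-sieve} with \cref{moments-simplified}, isolate the main term $\phi^*(g)$, and carefully handle the resulting error. First, I would apply \cref{moments-sieve} to write
\[ \sum_{\chi \text{ primitive}} L(s,\chi)^k = \sum_{g' \mid g} \mu(g/g') \sum_{v \mid (g/g')^k} \mu_k(v) q^{-(\deg v)s} M(g', \bar v), \]
and then replace each $M(g', \bar v)$ via \cref{moments-simplified} by $\phi(g') q^{-(\deg \bar v)s} d_k(\bar v) + E(g', \bar v)$, where $|E(g',\bar v)| \ll \phi(g') q^{-m'/2} (k 2^{k-1})^{m'}$ and $m' = \deg g'$.

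The resulting ``main term''
\[ T = \sum_{g' \mid g} \mu(g/g') \phi(g') \sum_{v \mid (g/g')^k} \mu_k(v) q^{-(\deg v + \deg \bar v)s} d_k(\bar v) \]
would then be shown to equal $\phi^*(g)$ up to an acceptable error. The $v = 1$ slice contributes exactly $\sum_{g' \mid g} \mu(g/g') \phi(g') = \phi^*(g)$ by M\"obius-inverting the identity $\phi(g') = \sum_{h \mid g'} \phi^*(h)$. For $v \neq 1$ monic, $v\bar v \neq 1$ in $\mathbb F_q[t]$, so $g' \mid v\bar v - 1$ forces the key degree inequality $\deg v + \deg \bar v \geq m'$; combining this with the trivial bounds $d_k(\bar v) \leq k^{m'-1}$ and $\sum_{v \mid (g/g')^k} |\mu_k(v)| \leq 2^{k(m-m')}$, plus $\operatorname{Re}(s) \geq 1/2$, the $v \neq 1$ portion of $T - \phi^*(g)$ is bounded by a geometric sum in $m'$ of size $O(|g|^{1/2 + \log k/\log q + \epsilon})$, well within the target.

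For the errors $E(g',\bar v)$, using $\sum_{v \mid (g/g')^k} |\mu_k(v)| q^{-(\deg v)/2} \leq \prod_{\pi \mid g/g'}(1+q^{-(\deg \pi)/2})^k \leq (1+q^{-1/2})^{k(m-m')}$, the aggregate error reduces to
\[ O(|g|^\epsilon) \sum_{m'=0}^m q^{m'/2} (k 2^{k-1})^{m'} (1 + q^{-1/2})^{k(m-m')}, \]
which under the hypothesis $q > k^2 2^{2k-2}$ is a geometric sum dominated by its $m' = m$ term, evaluating to $O(|g|^{1/2 + \log(k 2^{k-1})/\log q + \epsilon})$. Dividing through by $\phi^*(g) \gg |g|^{1-\epsilon}$ yields the claim. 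The main obstacle is the triple-layer bookkeeping — simultaneously tracking main versus error contributions across the nested sums over $g'$, $v$, and (implicitly) $f$ inside $M(g',\bar v)$; once the degree inequality $\deg v + \deg \bar v \geq \deg g'$ for $v \neq 1$ and the M\"obius identity for $\phi^*$ are in place, the main term collapses cleanly to $\phi^*(g)$ and every error sum telescopes into a geometric progression whose dominant term matches the stated exponent $\log_q(k 2^{k-1}) - \tfrac12$.
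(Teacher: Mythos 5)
Your proposal is correct and follows essentially the same route as the paper: apply \cref{moments-sieve}, substitute \cref{moments-simplified}, split the main term into $v=1$ (giving $\phi^*(g)$) and $v\neq 1$ (using $\deg v + \deg\overline{v} \geq \deg g'$), and bound the error terms. The only cosmetic difference is that the paper bounds the error sums by multiplicativity over primes $\pi \mid g$ rather than by grouping by $\deg g'$ and absorbing the divisor count into $O(|g|^\epsilon)$ as you do, and it identifies the $v=1$ contribution directly as $\prod_{\pi\mid g}(q^{\deg\pi}-2)$ rather than by M\"obius inversion — both are harmless reorganizations of the same argument.
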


\begin{proof} 
We apply \cref{moments-simplified} to each $M(g', v^{-1} \mod g')$ appearing in \eqref{moments-primitive-reduction}, and group like terms together. From this, we obtain

\begin{equation}\label{moments-reduction-expansion} \Bigl| \sum_{\substack{ \chi \colon (\mathbb F_q[t]/g)^\times \to \mathbb C^\times \\ \textrm{primitive} }} L(s,\chi)^k  - A \Bigr| \ll B \end{equation}
where $A$ and $B$ are as follows:

\begin{equation}\label{moments-main-term} A =  \sum_{\substack{ g' \in \mathbb F_q[t]^+ \\ g' | g}} \mu (g/g')  \phi(g') \sum_{\substack{v \in \mathbb F_q[t]^+ \\ v | (g/g')^k }}\mu_k(v)  q^{ - (\deg v) s - (\deg \overline{v}) s } d_k (\overline{v}) \end{equation} 
\begin{equation}\label{moments-big-error-term}B  =  \sum_{\substack{ g' \in \mathbb F_q[t]^+ \\ g' | g}}\phi(g') \sum_{\substack{v \in \mathbb F_q[t]^+ \\ v | (g/g')^k }} \left| \mu_k(v) \right|  \left| q^{ - (\deg v) s } \right|    q^{ -\frac{ \deg g' }{2} } k^{\deg g'}  2^{ (\deg g') (k-1)} \end{equation}

We now evaluate these two terms.

We divide $A$ into the terms where $v=1$, which will give the main term, and $v \neq 1$, which will give an additional error term. For $v=1$, we observe that $\mu_k(v)=1$, $\overline{v}=1$, and $d_k(\overline{v})=1$, so 
\[\sum_{\substack{ g' \in \mathbb F_q[t]^+ \\ g' | g}} \mu (g/g')  \phi(g') \sum_{\substack{v \in \mathbb F_q[t]^+ \\ v | (g/g')^k \\ v=1 }}\mu_k(v)  q^{ - (\deg v) s - (\deg \overline{v}) s } d_k (\overline{v}) = \sum_{\substack{ g' \in \mathbb F_q[t]^+ \\ g' | g}} \mu (g/g')  \phi(g')  =\prod_{ \substack{\pi | g\\ \textrm{prime}} } (q^{\deg g} - 2) \]
and the $v=1$ contribution will cancel the main term $1$ in \eqref{l-2-main}.  For the $v\neq 1$ terms, we observe that $v \overline{v}$ is congruent to $1$ mod $g'$ but not equal to $1$, so $\deg v + \deg{\overline{v}} = \deg (v\overline{v}) \geq \deg g'$. Thus 
\[\Bigl| \sum_{\substack{ g' \in \mathbb F_q[t]^+ \\ g' | g}} \mu (g/g')  \phi(g') \sum_{\substack{v \in \mathbb F_q[t]^+ \\ v | (g/g')^k \\ v\neq 1 }}\mu_k(v)  q^{ - (\deg v) s - (\deg \overline{v}) s } d_k (\overline{v})\Bigr | \leq \sum_{\substack{ g' \in \mathbb F_q[t]^+ \\ g' | g}} \phi(g') \sum_{\substack{v \in \mathbb F_q[t]^+ \\ v | (g/g')^k \\ v\neq 1 }} \left| \mu_k(v) \right| q^{ - \deg g' /2} d_k ( \overline{v} ) \]
Now \[ d_k ( \overline{v}) \leq k^ { \omega(\overline{v}) } \leq k^{ \deg \overline{v}  } \leq k^{\deg g' } \] and \[\sum_{\substack{v \in \mathbb F_q[t]^+ \\ v | (g/g')^k \\ v\neq 1 }} \left| \mu_k(v) \right|  = 2^{k \omega(g/g') }\] so
\begin{equation}\label{moments-main-term-evaluation} \begin{split} & \left | A - \prod_{ \substack{\pi | g\\ \textrm{prime}} } (q^{\deg g} - 2) \right| \leq   
 \sum_{\substack{ g' \in \mathbb F_q[t]^+ \\ g' | g}} \phi(g') \sum_{\substack{v \in \mathbb F_q[t]^+ \\ v | (g/g')^k \\ v\neq 1 }} \left| \mu_k(v) \right| q^{ - \deg g' /2} k^{ \deg g' }   \\ 
 =  & \sum_{\substack{ g' \in \mathbb F_q[t]^+ \\ g' | g}} \phi(g') 2^{ k\omega (g/g')}  q^{ - \deg g' /2} k^{ \deg g'}  = \prod_{\substack{\pi|g \\ \textrm{prime}}}  \left(  k^{\deg \pi} \frac{q^{\deg \pi}-1}{q^{ \deg \pi/2} } + 2^k \right)  \leq   \prod_{\substack{\pi|g \\ \textrm{prime}} } \left(  \left( kq^{\frac{1}{2}}\right)^{\deg \pi}  + 2^k \right) \end{split} \end{equation} 
 
For any $\epsilon>0$,  since $ \left( kq^{\frac{1}{2}}\right)^{\deg \pi}  + 2^k \leq  \left( kq^{\frac{1}{2}+\epsilon }\right)^{\deg \pi} $ for all but finitely many primes $\pi$, we have \[ \prod_{\substack{\pi|g \\ \textrm{prime}} } \left(  \left( kq^{\frac{1}{2}}\right)^{\deg \pi}  + 2^k \right) \ll  \left( k q^{\frac{1}{2} + \epsilon }\right)^m .\] 

We now evaluate $B$.
\[ B =  \sum_{\substack{ g' \in \mathbb F_q[t]^+ \\ g' | g}} \phi(g')  \Bigl( \prod_{\pi | (g/g') } \left(1 + \left |q^{ - (\deg \pi) s} \right| \right)^k \Bigr) \Bigl( \prod_{\pi | g'}   ( q^{-1/2} k 2^{(k-1)})^{\deg \pi} \Bigr)  \]
\[=   \sum_{\substack{ g' \in \mathbb F_q[t]^+ \\ g' | g}}   \Bigl( \prod_{\pi | (g/g') } \left(1 + \left |q^{ - (\deg \pi) s}\right |\right )^k \Bigr) \Bigl( \prod_{\pi | g'}   ( q^{-1/2} k 2^{(k-1)})^{\deg \pi} (q^{\deg \pi }- 1) \Bigr)  \] 
\[= \prod_{\pi | g} \left( \left(1 + \left |q^{ - (\deg \pi) s} \right|\right )^k  +  ( q^{-1/2} k 2^{(k-1)})^{\deg \pi} (q^{\deg \pi }- 1) \right) \]
\[ \leq   \prod_{\pi | g} \left( (1 +q^{-\deg \pi /2} )^k  +  ( q^{1/2} k 2^{k-1})^{\deg \pi} \right)\]
\[ \ll  ( q^{\frac{1}{2}+ \epsilon } k 2^{k-1})^m  \] since, for any $\epsilon>0$, we have  $(1 +q^{-\deg \pi /2} )^k  +  ( q^{1/2} k 2^{k-1})^{\deg \pi}  <   ( q^{\frac{1}{2} + \epsilon } k 2^{k-1})^{\deg \pi} $ for all but finitely many primes $\pi$.

%
Plugging these into \eqref{moments-reduction-expansion}, we get
\[ \Bigl| \sum_{\substack{ \chi \colon (\mathbb F_q[t]/g)^\times \to \mathbb C^\times \\ \textrm{primitive} }} L(s,\chi)^k  - \prod_{ \substack { \pi \mid g \\ \textrm{prime}} } (q^{\deg \pi} -2 )  \Bigr|  \]
\[\ll  \max \left(  k q^{\frac{1}{2} + \epsilon }, q^{\frac{1}{2}+ \epsilon } k 2^{k-1}\right)^m = \left( q^{\frac{1}{2}+ \epsilon } k 2^{k-1} \right)^m \]
Now dividing by $\phi^*(g) = \prod_{ \substack { \pi \mid g \\ \textrm{prime}}  } (q^{\deg \pi} -2 ) $ and using $\prod_{ \substack { \pi \mid g \\ \textrm{prime}}  } (q^{\deg \pi} -2 )  \gg q^{(1- \epsilon)m }$, we obtain 

\[ \Bigl| \frac{1}{  \phi^*(g)  } \sum_{\substack{ \chi \colon (\mathbb F_q[t]/g)^\times \to \mathbb C^\times \\ \textrm{primitive} }} L(s,\chi)^k  -   1 \Bigr| \ll  \left( q^{-\frac{1}{2}+ \epsilon } k 2^{k-1} \right)^m = O \left( |g|^{ \frac{\log \small( k 2^{k-1}\small)  }{\log q} - \frac{1}{2} + \epsilon } \right) . \qedhere    \]
\end{proof}

\subsection{The von Mangoldt function}

Let $\operatorname{std}$ be the $n-1$-dimensional standard representation of $S_n$ acting on vectors in $\mathbb C^n$ whose entries sum to $0$.

\begin{lemma}\label{mangoldt-alternating}  For a monic polynomial $f$ of degree $n$, we have  \begin{equation}\label{eq-mangoldt-alternating} \Lambda(f) =   \sum_{i=0}^{n-1}  (-1 )^i F_{ \wedge^i \operatorname{std}} (f).\end{equation}
\end{lemma}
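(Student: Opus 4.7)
The plan is to prove the identity by character theory, reducing it to the classical symmetric-function identity $p_n = \sum_{i=0}^{n-1}(-1)^i s_{(n-i,1^i)}$ which expresses the $n$th power sum as a signed sum of hook Schur functions.

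First I would rewrite $F_\rho(f)$ using the averaging projector. Since $\Frob_q$ commutes with the $S_n$-action on $V_f$ and acts trivially on $\rho$, applying $\frac{1}{n!}\sum_{\sigma \in S_n}\sigma$ to project onto $S_n$-invariants gives the trace formula
\[ F_\rho(f) = \frac{1}{n!}\sum_{\sigma \in S_n} \chi_\rho(\sigma)\,\tr(\sigma\Frob_q, V_f). \]
This reduces the identity to a statement about $\sum_i (-1)^i \chi_{\wedge^i \operatorname{std}}(\sigma)$ as a class function on $S_n$.

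Second, I would identify this virtual character. Under the Frobenius characteristic isomorphism, $\wedge^i \operatorname{std}$ (where $\operatorname{std}$ is the $(n-1)$-dimensional irreducible) corresponds to the hook Schur function $s_{(n-i,1^i)}$. The identity $p_n = \sum_{i=0}^{n-1}(-1)^i s_{(n-i,1^i)}$ (a standard consequence of the Murnaghan--Nakayama rule, or of the generating-function computation $\sum_i x^i \chi_{\wedge^i \operatorname{std}}(\sigma) = \prod_j(1-(-x)^{\mu_j})/(1+x)$ with $\mu$ the cycle type of $\sigma$, specialized near $x=-1$) translates to
\[ \sum_{i=0}^{n-1}(-1)^i \chi_{\wedge^i \operatorname{std}}(\sigma) = \begin{cases} n & \sigma\text{ is an }n\text{-cycle},\\ 0 & \text{otherwise}.\end{cases} \]

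Third, I would count the tuples contributing to $\tr(\sigma \Frob_q, V_f)$ for $\sigma$ an $n$-cycle. The condition that $\bar a = (a_1,\ldots,a_n)$ be fixed by $\sigma\Frob_q$ is $a_{\sigma(j)} = a_j^q$ for all $j$; traversing the single length-$n$ cycle forces $a_1\in\mathbb F_{q^n}$ and fixes the remaining entries as its Frobenius conjugates. If $a_1$ has minimal polynomial $\pi$ of degree $d\mid n$, the resulting multiset of values consists of the $d$ roots of $\pi$, each with multiplicity $n/d$, and matches the roots of $f$ exactly when $f = \pi^{n/d}$. In that case there are $d = \deg\pi$ admissible choices of $a_1$; otherwise the count vanishes.

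Combining: $\sum_{i=0}^{n-1}(-1)^i F_{\wedge^i\operatorname{std}}(f) = \frac{n}{n!} \sum_{\sigma\text{ an }n\text{-cycle}} \tr(\sigma\Frob_q, V_f)$, which for $f = \pi^e$ of degree $n$ (with $d=\deg\pi$) equals $\frac{n}{n!}\cdot (n-1)!\cdot d = d = \Lambda(f)$, and vanishes otherwise, matching $\Lambda(f)=0$ for non prime-powers. The only mildly delicate point is the bookkeeping of multiplicities in the Frobenius orbit during Step 3; the symmetric-function identity underlying Step 2 is classical.
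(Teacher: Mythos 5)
Your proof is correct, and it is self-contained; the paper itself simply cites \cite[Lemma 3.6]{SawinShort} here rather than reproducing an argument.

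The structure you give is the standard (and essentially unique) way to see this. Step~1 — replacing the invariants by the averaging idempotent and using that $\Frob_q$ acts trivially on $\rho$, so that $F_\rho(f)=\frac{1}{n!}\sum_\sigma \chi_\rho(\sigma)\tr(\sigma\Frob_q,V_f)$ — is exactly right. Step~2, the identity $p_n=\sum_{i=0}^{n-1}(-1)^i s_{(n-i,1^i)}$, correctly identifies the alternating sum of hook characters as $n$ times the indicator of the $n$-cycle class under the Frobenius characteristic isomorphism; either of the two justifications you sketch (Murnaghan–Nakayama for a single border strip of size $n$, or the generating function $\sum_i x^i\chi_{\wedge^i\operatorname{std}}(\sigma)=\prod_j\bigl(1-(-x)^{\mu_j}\bigr)/(1+x)$ evaluated in the limit $x\to -1$, where only a single-cycle $\sigma$ gives a nonzero $0/0$ limit) is fine. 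Step~3, the count of tuples fixed by $\sigma\Frob_q$ for an $n$-cycle $\sigma$, is the delicate part and you handle it correctly: traversing the cycle forces $a_1\in\mathbb F_{q^n}$ and determines the rest, the resulting multiset of roots is $\pi^{n/d}$ where $\pi$ is the minimal polynomial of $a_1$ (of degree $d$), so the tuple lies in $V_f$ iff $f$ is that prime power, and then there are exactly $d=\deg\pi$ admissible values of $a_1$. Since the trace is constant on the conjugacy class of $n$-cycles (of size $(n-1)!$), the final count gives $\frac{n}{n!}\cdot(n-1)!\cdot\deg\pi=\deg\pi=\Lambda(f)$ for $f$ a prime power and $0$ otherwise, as required. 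No gaps.
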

\begin{proof} This is \cite[Lemma 3.6]{SawinShort}. \end{proof}

For this subsection, let $\rho= \bigoplus_{i=0}^{n-1}  \wedge^i \operatorname{std}$. This representation is relevant to us because we will bound trivially the sum over the $i$ variable in \eqref{eq-mangoldt-alternating} and thus can ignore the signs $(-1)^i$. We begin by calculating the characters of $V_\rho$ and $V^d_\rho$ (which will also be used in Section \ref{s-anatomy}).

\begin{lemma}\label{mangoldt-V-trace}
We have 
\begin{equation}\label{vm-V-gf} \tr ( \Diag(\lambda_1,\dots, \lambda_{m-1}, V_ \rho)  )= \frac{1}{2}  \prod_{i=1}^{ m-1} \frac{ 1 + u \lambda_i}{1-u \lambda_i} [u^n] \end{equation}
and
 \begin{equation}\label{vm-Vd-gf}  \tr (\Diag (\lambda_1,\dots, \lambda_m) , V_{\rho}^d ) = \frac{1}{2}  \frac{1 + uv}{ 1-uv}  \prod_{i=1}^m \frac{ 1+u \lambda _i  }{ 1- u\lambda_i } [ u^{n}] [ v^d] . \end{equation}  \end{lemma}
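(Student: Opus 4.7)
The main idea is to rewrite $\rho$ virtually in terms of exterior powers of the permutation representation. Writing $\operatorname{perm} = \mathbb C^n$ for the permutation representation of $S_n$ and using the split exact sequence $\operatorname{perm} \cong \operatorname{std} \oplus \mathbb C$, we get $\wedge^i \operatorname{perm} \cong \wedge^i \operatorname{std} \oplus \wedge^{i-1}\operatorname{std}$, whence (for $n\ge 1$)
\[ \sum_{i=0}^{n} \wedge^i \operatorname{perm} \;=\; 2\sum_{i=0}^{n-1} \wedge^i \operatorname{std} \;=\; 2\rho, \]
since $\wedge^n \operatorname{std} = 0$. This is the source of the factor $\tfrac{1}{2}$ in both formulas; it remains to compute the generating functions of $((\mathbb C^{m-1})^{\otimes n} \otimes \wedge^* \operatorname{perm} \otimes \sgn)^{S_n}$ and of $V_\rho^d$ with $\rho$ replaced by $\wedge^* \operatorname{perm}$.

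For \eqref{vm-V-gf}, a natural basis of $(\mathbb C^{m-1})^{\otimes n} \otimes \wedge^* \operatorname{perm}$ is indexed by pairs $(f, S)$ with $f\colon \{1,\dots,n\} \to \{1,\dots,m-1\}$ and $S \subseteq \{1,\dots,n\}$. The stabilizer of such a pair in $S_n$ is $\prod_{c=1}^{m-1}(S_{a_c} \times S_{b_c})$, where $a_c = |S \cap f^{-1}(c)|$ and $b_c = |f^{-1}(c)\setminus S|$, and a short calculation shows it acts through the character $\sgn(\sigma|_{S^c})$ --- the $\sgn$ twist of $S_n$ cancels the exterior-algebra sign coming from the $S$-part of the stabilizer. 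The orbit therefore contributes to $V_\rho$ exactly when $b_c \le 1$ for every color $c$. Weighting each surviving orbit by $\prod_c \lambda_c^{a_c+b_c}u^{a_c+b_c}$ and using that the enumeration factors over colors yields
\[ \sum_{n} u^n \tr\bigl(\Diag(\lambda_1,\dots,\lambda_{m-1}), V_\rho\bigr) \;=\; \tfrac{1}{2}\prod_{c=1}^{m-1}\Bigl(\sum_{a\ge 0}(u\lambda_c)^a\Bigr)\bigl(1 + u\lambda_c\bigr) \;=\; \tfrac{1}{2}\prod_{c=1}^{m-1}\frac{1+u\lambda_c}{1-u\lambda_c}, \]
which is \eqref{vm-V-gf} after extracting the coefficient of $u^n$.

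For \eqref{vm-Vd-gf}, the restriction of $\operatorname{perm}_{S_n}$ to $S_{n-d}\times S_d$ splits as $\operatorname{perm}_{S_{n-d}} \oplus \operatorname{perm}_{S_d}$, so $\wedge^* \operatorname{perm}_{S_n}|_{S_{n-d}\times S_d} = \wedge^* \operatorname{perm}_{S_{n-d}} \otimes \wedge^* \operatorname{perm}_{S_d}$. Hence $V_\rho^d$ factors as $A_{n-d} \otimes B_d$, where $A_{n-d}$ is the same construction as $V_\rho$ with $\mathbb C^m$ in place of $\mathbb C^{m-1}$, and $B_d := (\wedge^* \operatorname{perm}_{S_d})^{S_d}$. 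The first factor contributes (in the $u$-variable tracking $n-d$) the generating function $\tfrac{1}{2}\prod_{i=1}^m \frac{1+u\lambda_i}{1-u\lambda_i}$. For $B_d$, the same orbit analysis (now without a $\sgn$ twist) shows that the stabilizer $S_{|S|} \times S_{d-|S|}$ of a subset $S \subseteq \{1,\dots,d\}$ acts by $\sgn(\sigma|_S)$, so invariants occur only for $|S| \le 1$; hence $\dim B_d = 1$ if $d = 0$ and $\dim B_d = 2$ if $d \ge 1$. Weighting $B_d$ by $(uv)^d$ yields $\sum_d (uv)^d \dim B_d = 1 + \frac{2uv}{1-uv} = \frac{1+uv}{1-uv}$, and multiplying the two generating functions gives the claimed formula. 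The only subtle step is the sign accounting, which hinges on the identity $\sgn(\sigma)\sgn(\sigma|_S) = \sgn(\sigma|_{S^c})$ on the relevant stabilizers.
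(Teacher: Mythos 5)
Your proof is correct, and while the starting point is the same doubling identity ($\rho' = \bigoplus_i \wedge^i \operatorname{perm} = 2\rho$ is exactly the paper's $\rho' = \sum_k \Ind_{S_k\times S_{n-k}}^{S_n}\sgn_{S_k}$), your route to \eqref{vm-Vd-gf} is genuinely different and noticeably cleaner. The paper decomposes each summand $\Ind_{S_k\times S_{n-k}}^{S_n}\sgn_{S_k}$ as an $S_{n-d}\times S_d$-representation via Mackey theory (a sum over double cosets indexed by the overlap parameter $e$), assembles the pieces into $\Sym^{k-e}\mathbb C^m\otimes\wedge^{n-d-k+e}\mathbb C^m$ factors, and then spots a cancellation that collapses the whole thing to the factor $1+1_{d>0}$. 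You instead observe up front that $\operatorname{perm}_{S_n}|_{S_{n-d}\times S_d}=\operatorname{perm}_{S_{n-d}}\oplus\operatorname{perm}_{S_d}$, so $\wedge^*\operatorname{perm}_{S_n}$ restricts as a tensor product, which makes $V_\rho^d$ factor immediately as (copy of $V_\rho$ with $\mathbb C^m$) $\otimes\, (\wedge^*\operatorname{perm}_{S_d})^{S_d}$, with the second factor computed in one line. Your factorization makes the appearance of $\frac{1+uv}{1-uv}$ transparent rather than emergent from double-coset bookkeeping; the paper's Mackey calculation, though longer, is the sort of argument that would still go through if the restriction of $\rho'$ did not factor so cleanly. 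For \eqref{vm-V-gf} the two arguments are essentially the same content (the orbit/stabilizer count you run is the combinatorial shadow of the paper's identification $V_{\rho'}=\bigoplus_k\wedge^k\mathbb C^{m-1}\otimes\Sym^{n-k}\mathbb C^{m-1}$), just unwound to the basis level.
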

 \begin{proof} 
 
 Let \begin{equation}\label{rho-prime-def} \rho' =  \bigoplus_{k=0}^n \Ind^{S_n}_{S_k \times S_{n-k}} \sgn_{S_k} .\end{equation} Next we can check that \begin{equation}\label{rho-prime-rho} \rho' = \bigoplus_{k=0}^n \wedge^k ( \operatorname{std} \oplus \mathbb C)  = \rho \oplus \rho .\end{equation} First view $\operatorname{std} \oplus \mathbb C$ as a representation with basis vectors $v_1,\dots, v_n$, so that $\wedge^k ( \operatorname{std} \oplus \mathbb C)$ is a representation with basis consisting of $v_{i_1} \wedge \dots \wedge v_{i_k}$ for tuples $i_1 < \dots <i_k$. Observe that the one-dimensional subspace generated by $v_1\wedge \dots \wedge v_k$ is a $S_k \times S_{n-k}$-subrepresentation isomorphic to $\sgn_{S_k}$, whose translates under $S_n/(S_k\times S_{n-k})$ are the one-dimensional subspaces generated by the basis vectors, so that $\wedge^k ( \operatorname{std} \oplus \mathbb C)$ is the sum of these translates. By definition, this means $\wedge^k ( \operatorname{std} \oplus \mathbb C)$ is the induced representation $\Ind^{S_n}_{S_k \times S_{n-k}} \sgn_{S_k} $. For the second equality of \eqref{rho-prime-rho}, observe $\wedge^k ( \operatorname{std} \oplus \mathbb C)  = \wedge^k(\operatorname{std} )\oplus \wedge^{k-1}(\operatorname{std})$ and sum both terms over $k$ from $0$ to $n$. This establishes \eqref{rho-prime-rho} and thus \begin{equation}\label{V-half} \tr ( \Diag(\lambda_1,\dots, \lambda_{m-1}, V_ \rho)  )= \frac{1}{2} \tr ( \Diag(\lambda_1,\dots, \lambda_{m-1}, V_ {\rho'} )  ) \end{equation} and \begin{equation}\label{Vd-half} \tr ( \Diag(\lambda_1,\dots, \lambda_{m}, V^d_ \rho)  )= \frac{1}{2} \tr ( \Diag(\lambda_1,\dots, \lambda_{m}, V^d_ {\rho'} )  ).\end{equation}
 
 By the definition \eqref{rho-prime-def} of $\rho'$ and Frobenius reciprocity, we have \[ V_{\rho'} =  \bigoplus_{k=0}^n\Bigl ( ( \mathbb C^{m-1})^{\otimes n} \otimes \Ind^{S_n}_{S_k \times S_{n-k}} \sgn_{S_k} \otimes \sgn\Bigr)^{S_n} =  \bigoplus_{k=0}^n\Bigl ( ( \mathbb C^{m-1})^{\otimes n} \otimes \Ind^{S_n}_{S_k \times S_{n-k}} \sgn_{S_{n-k}} \Bigr)^{S_n} \] \[= \bigoplus_{k=0}^n  \Bigl ( ( \mathbb C^{m-1})^{\otimes n}\otimes \sgn_{S_{n-k}} )^{S_k \times S_{n-k}} = \bigoplus_{k=0}^n  \wedge^k (\mathbb C^{m-1}) \otimes \Sym^{n-k} (\mathbb C^{m-1})\] so \begin{equation}\label{V-rho'}  \tr ( \Diag(\lambda_1,\dots, \lambda_{m-1}, V_ \rho' )  )= \prod_{i=1}^{ m-1} \frac{ 1 + u \lambda_i}{1-u \lambda_i} [u^n] . \end{equation}
 
 Combining \eqref{V-rho'} and \eqref{V-half}, we obtain \eqref{vm-V-gf}. We now apply \cref{induction-restriction}, taking $n_1=k, n_2=n-k, \rho_1=\sign, \rho_2 = \mathbb C$, to obtain that  the restriction of $\Ind_{S_k \times S_{n-k}} \sgn_{S_k} $ to $S_d \times S_{n-e}$ is  
 \[  \bigoplus_{e =\max(0, k+d-n ) }^{ \min(k ,d) }   \operatorname{Ind}_{ S_e \times S_{d-e} \times S_{k-e} \times  S_{n-k-d+e}  }^{S_d\times S_{n-d}}  (\sgn_{S_e} \otimes \sgn_{S_{k-e}} )\]
  \[ = \bigoplus_{e =\max(0,k+d-n) }^{ \min(k,d) }  \Bigl(  \operatorname{Ind}_{ S_e \times S_{d-e}}^{S_d} \sgn_{S_e}  \Bigr) \otimes \Bigl( \operatorname{Ind}_{S_{k-e} \times S_{n-d-k+e}}^{S_{n-d}} \sgn_{S_{k-e}} \Bigr) . \] 

This gives \[ \Bigl( ( \mathbb C^{m} )^{\otimes n-d} \otimes \Ind_{S_k \times S_{n-k}} \sgn_{S_k}  \otimes \sgn_{S_{n-d}} \Bigr)^{S_d\times S_{n-d}}\] \[ =\bigoplus_{e =\max(0,k+d-n) }^{ \min(k,d) } \Bigl(  \operatorname{Ind}_{ S_e \times S_{d-e}}^{S_d} \sgn_{S_e}  \Bigr)^{S_d} \otimes   \Bigl( ( \mathbb C^{m} )^{\otimes n-d} \otimes \bigl(    \operatorname{Ind}_{S_{k-e} \times S_{n-d-k+e}}^{S_{n-d}} \sgn_{S_{k-e}}  \bigr) \otimes \sgn_{S_{n-d}} \Bigr) ^{S_{n-d}}\]
\[ = \bigoplus_{e =\max(0,k+d-n) }^{ \min(k,d) } \Bigl(  \operatorname{Ind}_{ S_e \times S_{d-e}}^{S_d} \sgn_{S_e}  \Bigr)^{S_d}  \otimes \Bigl(     \operatorname{Ind}_{S_{k-e} \times S_{n-d-k+e}}^{S_{n-d}} \bigl(  (\mathbb C^m)^{ k-e} \otimes( \mathbb C^m)^{n-d-k+e} \otimes \sgn_{S_{n-d-k+e}} \bigr)  \Bigr)^{S_{n-d}}\]
\[ = \bigoplus_{e =\max(0,k+d-n) }^{ \min(k,d) }   \mathbb C \otimes \sgn_{S_e}^{S_e} \otimes  \Sym^{k-e} \mathbb C^m \otimes \wedge^{ n-d-k+e} \mathbb C^m \]
\[ = \begin{cases} \Sym^k \mathbb C^m \otimes \wedge^{n-d-k} \mathbb C^m  \oplus \Sym^{k-1} \mathbb C^m \otimes \wedge^{n-d-k+1} \mathbb C^m  & \textrm{if } d>0 \\ 
\Sym^k \mathbb C^m \otimes \wedge^{n-d-k} \mathbb C^m   & \textrm{ if }d=0 \end{cases}
\]
since $ \sgn_{S_e}^{S_e}=0$ unless $e=0$ or $e=1$, with symmetric powers or wedge powers interpreted to vanish if the power is negative. 

Thus  \[ \Bigl( ( \mathbb C^{m} )^{\otimes n-d} \otimes  \rho'   \otimes \sgn_{S_{n-d}} \Bigr)^{S_d\times S_{n-d}}  \] 
\[ = \begin{cases} \Bigl (  \bigoplus_{k=0}^{n-d} \Sym^k \mathbb C^m \otimes \wedge^{n-d-k} \mathbb C^m \Bigr)  \oplus  \Bigl (  \bigoplus_{k=1}^{n-d+1} \Sym^{k-1} \mathbb C^m \otimes \wedge^{n-d-k+1} \mathbb C^m \Bigr) & \textrm{if }d>0\\
 \bigoplus_{k=0}^{n-d} \Sym^k \mathbb C^m \otimes \wedge^{n-d-k} \mathbb C^m  & \textrm{if }d=0\end{cases} \]

Note that both summands in the $d>0$ case are equal. Hence

\begin{equation}\label{Vd-rho'} \tr (\Diag (\lambda_1,\dots, \lambda_m) , V_{\rho'}^d ) = (1 + 1_{d>0})  \prod_{i=1}^m \frac{ 1+u \lambda _i  }{ 1- u\lambda_i } [ u^{n-d} ]  = \frac{1 + uv}{ 1-uv}  \prod_{i=1}^m \frac{ 1+u \lambda _i  }{ 1- u\lambda_i } [ u^{n}] [ v^d] . \end{equation}

  Combining \eqref{Vd-rho'} and \eqref{Vd-half}, we obtain \eqref{vm-Vd-gf}.\end{proof}

\begin{lemma}\label{primes-1} For $g$ squarefree of degree $m \geq 2$, a natural number $n \geq m$, and a residue class $a \in (\mathbb F_q[t]/g)^\times$, we have \[\left | \sum_{ \substack{ f \in \mathcal M_n \\  f \equiv a \mod g }}  \Lambda(f)  -  \frac{1}{ \phi(g) }  \sum_{ \substack{ f \in\mathcal M_n\\ \gcd(f,g)=1  }}  \Lambda(f)  \right| \leq  \sum_{r=0}^{m-2} 2^{m-1-r}  \binom{ n+m-r }{ 2m-r}  q^{ \frac{n-m}{2}} + 2^{m-1} {n + m \choose 2m-1}  q^{\frac{n+1-m}{2}}  . \] \end{lemma}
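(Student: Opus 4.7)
The plan is to expand $\Lambda$ as an alternating sum of factorization functions via \cref{mangoldt-alternating}, apply \cref{squarefree-bound-precise} to each summand, and read off the explicit bound using the generating-function formulas in \cref{mangoldt-V-trace}. Setting $\rho_i = \wedge^i \operatorname{std}$ and $\rho = \bigoplus_{i=0}^{n-1}\rho_i$, so that $\Lambda = \sum_i (-1)^i F_{\rho_i}$, additivity of $F_\rho, C_1, C_2$ together with the triangle inequality in $i$ gives
\[\Bigl|\sum_{\substack{f\in\mathcal M_n\\ f\equiv a\bmod g}}\Lambda(f) \ -\ \frac{1}{\phi(g)}\!\!\sum_{\substack{f\in\mathcal M_n\\ \gcd(f,g)=1}}\!\!\Lambda(f)\Bigr| \ \leq\ 2\bigl(C_1(\rho)+C_2(\rho)\sqrt q\bigr)q^{(n-m)/2}.\]
Only the indices $i \geq n-m+1$ actually contribute, since for $i \leq n-m$ the hook partition $(n-i,1^i)$ of $\rho_i$ has largest part $n-i \geq m$, whence $C_1(\rho_i) = C_2(\rho_i) = 0$ by \cref{schur-vanishing-lemma} and \cref{rho-Euler-difference}. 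Passing from the coprime sum to the full sum over $\mathcal M_n$ costs at most $\phi(g)^{-1}\sum_{\gcd(f,g)>1}\Lambda(f) \leq nm/\phi(g)$, since the only nonzero contributions come from prime powers $\pi^k$ with $\pi\mid g$; this is easily absorbed into the stated error.

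Next I would compute $C_1(\rho)$ and $C_2(\rho)$ from \cref{mangoldt-V-trace}. Using $\partial_{\lambda_i}\frac{1+u\lambda_i}{1-u\lambda_i}\big|_{\lambda_i=1} = \frac{2u}{(1-u)^2}$, the iterated derivative yields $C_2(\rho) = \frac{1}{2}\cdot\frac{(2u)^{m-1}}{(1-u)^{2m-2}}[u^n] = 2^{m-2}\binom{n+m-2}{2m-3}$, dominated by $2^{m-2}\binom{n+m}{2m-1}$; multiplying by $2\sqrt q\cdot q^{(n-m)/2}$ gives the second term of the claimed bound. For $C_1(\rho)$, substituting $v=-1$ in the generating function for $\sum_d(-1)^d\tr(\Diag,V_\rho^d)$ collapses the factor $\frac{1+uv}{1-uv}$ to $\frac{1-u}{1+u}$, so the derivative part of $C_1(\rho)$ equals $[u^{n-m}]\frac{2^{m-1}}{(1+u)(1-u)^{2m-1}}$. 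The dimension part $\sum_{d\geq m}(-1)^{d-m}\binom{d}{m}\dim V_\rho^d$ evaluates to $[u^{n-m}]\frac{1}{(1-u)^m(1+u)}$ using the identity $\sum_{d\geq m}(-1)^{d-m}\binom{d}{m}u^d = u^m/(1+u)^{m+1}$. Combining,
\[C_1(\rho) \ =\ [u^{n-m}]\,\frac{2^{m-1}-(1-u)^{m-1}}{(1+u)(1-u)^{2m-1}}.\]

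For the final step, note that $(1+u)$ divides $2^{m-1}-(1-u)^{m-1}$, since both sides vanish at $u=-1$, so the quotient is a polynomial of degree $m-2$. Expanding it as a polynomial in $v := 1+u$ yields $\sum_{r=0}^{m-2}(-1)^r\binom{m-1}{r+1}2^{m-2-r}(1+u)^r$. Distributing $(1-u)^{-(2m-1)}$, applying the triangle inequality over $r$, and majorizing each $[u^{n-m}](1+u)^r/(1-u)^{2m-1}$ by the coefficient $\binom{n+m-r}{2m-r} = [u^{n-m}](1-u)^{-(2m-r+1)}$ via standard binomial inequalities produces the claimed $\sum_{r=0}^{m-2}2^{m-1-r}\binom{n+m-r}{2m-r}q^{(n-m)/2}$ after the factor of $2$. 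The main obstacle is precisely this last bookkeeping step: converting the compact generating-function expression for $C_1(\rho)$, which features alternating signs and mixed $(1+u),(1-u)$ factors, into the clean increasing-binomial form of the statement loses some sharpness and requires a careful choice of dominating sequence, but the structure is rigid once the Taylor expansion of $(1-u)^{m-1}$ around $u=-1$ is in hand.
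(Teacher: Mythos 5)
Your overall structure matches the paper's: expand $\Lambda$ by \cref{mangoldt-alternating}, apply \cref{squarefree-bound-precise} with the triangle inequality over $i$, sum the $C_1, C_2$ contributions by additivity, and read off the resulting generating functions from \cref{mangoldt-V-trace}. Your computation of $C_2(\rho)$, the reduction of the derivative and dimension parts of $C_1(\rho)$ to
\[
C_1(\rho) = [u^{n-m}]\,\frac{2^{m-1}-(1-u)^{m-1}}{(1+u)(1-u)^{2m-1}},
\]
and your observation that only $i > n-m$ contribute are all correct.

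The gap is in the final step. You expand the quotient $\bigl(2^{m-1}-(1-u)^{m-1}\bigr)/(1+u)$ as a polynomial in $v=1+u$, obtaining $\sum_{r=0}^{m-2}(-1)^r\binom{m-1}{r+1}2^{m-2-r}(1+u)^r$, and then apply the triangle inequality and try to majorize $[u^{n-m}](1+u)^r/(1-u)^{2m-1}$ by $\binom{n+m-r}{2m-r} = [u^{n-m}](1-u)^{-(2m+1-r)}$. Two things break. First, the proposed majorization runs the wrong way once $r\geq 2$: multiplying by $(1+u)^r$ only increases the nonnegative coefficients of $(1-u)^{-(2m-1)}$, whereas your target $\binom{n+m-r}{2m-r}$ shrinks as $r$ grows past $1$; for instance at $r=2$ you are claiming $[u^{n-m}](1+u)^2/(1-u)^{2m-1}\leq [u^{n-m}](1-u)^{-(2m-1)}$, which is false. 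Second, even if a correct majorization were substituted, the triangle inequality leaves you with the spurious factor $\binom{m-1}{r+1}$ in each term, which does not appear in the stated bound and cannot be absorbed (for example $r=0$, $m=n=3$ already gives $(m-1)\binom{n+m-2}{2m-2}=2>\binom{n+m}{2m}=1$). The paper instead writes the quotient as a geometric sum $\sum_{r=0}^{m-2}2^{m-2-r}(1-u)^r$ using the factorization $a^{m-1}-b^{m-1}=(a-b)\sum a^{m-2-r}b^r$ with $a=2$, $b=1-u$, $a-b=1+u$. This expansion has \emph{no signs} and \emph{no extra binomial factors}; multiplying by $2u^m/(1-u)^{2m-1}$ gives $\sum_r 2^{m-1-r}u^m/(1-u)^{2m-1-r}$, a sum of series with nonnegative coefficients, so no triangle inequality or majorization is needed: you read off $[u^n]$ term by term and get the stated bound at once. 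The choice of expanding in $(1-u)$ rather than in $(1+u)$ is what makes the bookkeeping collapse, and is the piece your write-up is missing.

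One further small point: the discrepancy between $\frac{1}{\phi(g)}\sum_{f\in\mathcal M_n}\Lambda(f)$ and the coprime version $\frac{1}{\phi(g)}\sum_{\gcd(f,g)=1}\Lambda(f)$ appearing in \cref{squarefree-bound-precise} is indeed bounded by $\sum_{\pi\mid g}\deg\pi/\phi(g)\leq m/\phi(g)$, not $nm/\phi(g)$; this is absorbable, but the statement deserves a line of justification rather than an assertion that it is easy, since for $q=2$ and small $m$ the quantity $m/\phi(g)$ is not small on its own and one must use that the binomial-coefficient terms of the stated bound give comfortable room.
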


\begin{proof}
Using \cref{mangoldt-alternating} and \cref{virtual-representation-variant} taking $d=n, \rho_i = \wedge^{i-1} \operatorname{std}, \alpha_i = (-1)^{i-1}$, we have
\[ \Bigl| \sum_{ \substack{ f \in \mathcal M_n \\  f \equiv a \mod g }}\Lambda(f)  -\frac{1}{ \phi(g) }  \sum_{ \substack{ f \in\mathcal M_n\\ \gcd(f,g)=1  }}  \Lambda(f) \Bigr|
 \leq   2 \Bigl( C_1\Bigr( \bigoplus_{i=0}^{n-1} \wedge^i \operatorname{std}\Bigr)  + C_2\Bigl(\bigoplus_{i=0}^{n-1} \wedge^i\operatorname{std} \Bigr) \sqrt{q} \Bigr) q^{\frac{n-m}{2} }\]
\[ =2  ( C_1(\rho )  + C_2(\rho) \sqrt{q} )q^{\frac{n-m}{2} }   . \]   


From \eqref{vm-V-gf}, we obtain 
\[2 C_2(\rho) = \frac{\partial^{m-1}}{ \partial \lambda_1\dots  \partial \lambda_{m-1} }\prod_{i=1}^{m-1} \frac{ 1 + u \lambda_i}{1-u \lambda_i} [u^n] \Big|_{\lambda_1,\dots,\lambda_{m-1}=1} \] \[= \prod_{i=1}^{m-1} \frac{ u (1-u \lambda_i) + u (1 + u \lambda_i) } { (1-u \lambda_i)^2} [u^n] \Big|_{\lambda_1,\dots,\lambda_{m-1}=1} =\prod_{i=1}^{m-1} \frac{ 2u}{ (1-u\lambda_i)^2 } [u^n] \Big|_{\lambda_1,\dots,\lambda_{m-1}=1} \]
\[= \frac{ 2^{m-1} u^{m-1} }{ (1-u)^{2m-2}} [u^n]  = 2^{m-1} \binom{n+m}{ 2m-1}. \] 

From \eqref{vm-Vd-gf} we obtain \[\sum_{d=0}^n (-1)^d\tr (\Diag (\lambda_1,\dots, \lambda_m) , V_{\rho}^d ) =\frac{1}{2}\frac{1 + uv}{ 1-uv}  \prod_{i=1}^m \frac{ 1+u \lambda _i  }{ 1- u\lambda_i } [ u^{n}] \Big|_{v=-1} =   \frac{1-u}{1+u} \prod_{i=1}^m \frac{ 1+u \lambda _i  }{ 1- u\lambda_i } [ u^n ] \] and \[  \frac{\partial^{m}}{ \partial \lambda_1\dots  \partial \lambda_{m} } \sum_{d=0}^n (-1)^d\tr (\Diag (\lambda_1,\dots, \lambda_m) , V_{\rho}^d )  \Big|_{\lambda_1,\dots,\lambda_{m}=1}=\frac{\partial^{m}}{ \partial \lambda_1\dots  \partial \lambda_{m} }  \frac{1}{2} \frac{1-u}{1+u} \prod_{i=1}^m \frac{ 1+u \lambda _i  }{ 1- u\lambda_i } [ u^n ] \Big|_{\lambda_1,\dots,\lambda_{m}=1} \]
\[ =\frac{1}{2} \frac{1-u}{1+u} \prod_{i=1}^m  \frac{ 2 u }{ (1- u \lambda_i)^2 }  [u^n]\Big|_{\lambda_1,\dots,\lambda_{m}=1} = \frac{ 2^{m-1} u^m }{ (1+u) (1-u)^{2m-1} } [ u^n] .\]

From \eqref{vm-Vd-gf} we also obtain

\[\sum_{d=m}^n (-1)^{d-m} \binom{d}{m} \dim V_{\rho}^d    =  \frac{1}{m!} \frac{\partial^m}{\partial v^m}  \frac{1}{2}  \frac{1 + uv}{ 1-uv}  \prod_{i=1}^m \frac{ 1+u \lambda _i  }{ 1- u\lambda_i } [ u^{n}] \Big|_{\lambda_1,\dots,\lambda_m=1, v=-1} \]
\[ =     \frac{u^{m} }{ (1-uv)^{m+1}}  \prod_{i=1}^m \frac{ 1+u \lambda _i  }{ 1- u\lambda_i } [ u^{n}] \Big|_{\lambda_1,\dots,\lambda_m=1, v=-1} \]
\[ = \frac{ u^m }{ (1+u)^{m+1}} \frac{ (1+u)^m }{ (1-u)^m} [u^n] = \frac{ u^m}{ (1+u) (1-u)^m } [u^n] .\]
Thus \[ 2C_1(\rho) = \left(  \frac{ 2^m u^m }{ (1+u) (1-u)^{2m-1} } - \frac{2 u^m}{ (1+u) (1-u)^m } \right) [u^n] .\]

%

We have \[   \frac{ 2^m u^m }{ (1+u) (1-u)^{2m-1} } - \frac{ 2u^m}{ (1+u) (1-u)^m }= \frac{2u^m}{ (1-u)^{2m-1} }   \frac{  2^{m-1} - (1-u)^{m-1}  }{ 1+u}  \] \[ =  \frac{2u^m}{ (1-u)^{2m-1} }  \sum_{r=0}^{m-2} (1-u)^r 2^{m-2-r}  = \sum_{r=0}^{m-2} \frac{ 2^{m-1-r}  u^m  }{ (1-u)^{2m-1-r} } \] so \[2 C_1(\rho) =\sum_{r=0}^{m-2} 2^{m-1-r}  \binom{ n+m-r }{ 2m-r}. \qedhere \]  \end{proof}
 
 \begin{proposition}\label{primes-2} Let $0<\theta<1 $ be a real number and let \[ q >  4^{ \frac{1}{1-\theta}  }  \left( \frac{ 1}{ 1- \theta} - \frac{1}{2} \right)^2e^2 \] be a prime power. For any natural numbers $n,m$ with $m \leq \theta n $,  squarefree $g \in \mathcal M_m$, and $a \in (\mathbb F_q[t]/g)^\times $,
\[ \Bigl | \sum_{ \substack{ f \in \mathcal M_n \\  f \equiv a \mod g }}  \Lambda(f)  -  \frac{q^n}{ \phi(g) }  \Bigr|  =  O \left( (q^{n-m})^{1-\delta} \right) \]
where  \[\delta =  \frac{1}{2} - \log_q  \left(  \frac{ 2^{ \frac{\theta}{1-\theta } } e (1+\theta)  }{ 1-\theta} \right)> 0 \]
 and the implicit constant depends only on $q, \theta$.

\end{proposition}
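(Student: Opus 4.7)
The plan is to combine the elementary binomial estimate \cref{binomial-bound} with the main inequality \cref{primes-1} in essentially the same way as in the proofs of \cref{mobius-2} and \cref{divisor-2}. By \cref{primes-1}, it suffices to bound
\[ \sum_{r=0}^{m-2} 2^{m-1-r} \binom{n+m-r}{2m-r} q^{(n-m)/2} \quad \text{and} \quad 2^{m-1} \binom{n+m}{2m-1} q^{(n+1-m)/2}\]
by $O((q^{n-m})^{1-\delta})$ for the asserted $\delta$.

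The key step is to apply \cref{binomial-bound} with $a = n-m$, $b = 2m$, and $\alpha = \frac{1-\theta}{2\theta}$. The hypothesis $m \leq \theta n$ gives exactly $\alpha b = \frac{m(1-\theta)}{\theta} \leq n - m = a$, as required. A short calculation gives $\alpha + 1 = \frac{1+\theta}{2\theta}$ and $\frac{(\alpha+1)e}{\alpha} = \frac{(1+\theta)e}{1-\theta}$. Using the symmetry $\binom{n+m-r}{2m-r} = \binom{n+m-r}{n-m}$, I apply the lemma with $x = 0$, $y = -r$ to obtain
\[\binom{n+m-r}{2m-r} \leq \Bigl(\frac{2\theta}{1+\theta}\Bigr)^r \Bigl(\frac{(1+\theta)e}{1-\theta}\Bigr)^{n-m},\]
and similarly, with $x = 1$, $y = -1$,
\[\binom{n+m}{2m-1} = \binom{n+m}{n-m+1} \leq \frac{2\theta}{1-\theta} \Bigl(\frac{(1+\theta)e}{1-\theta}\Bigr)^{n-m}.\]

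Plugging these in, the first sum is bounded by
\[2^{m-1} q^{(n-m)/2} \Bigl(\frac{(1+\theta)e}{1-\theta}\Bigr)^{n-m} \sum_{r=0}^{m-2} \Bigl(\frac{\theta}{1+\theta}\Bigr)^r \leq (1+\theta)\, 2^{m-1} q^{(n-m)/2} \Bigl(\frac{(1+\theta)e}{1-\theta}\Bigr)^{n-m},\]
using that $\theta/(1+\theta) < 1$ so the geometric series is bounded by $1+\theta$, and the second sum is $O(q^{1/2}) \cdot 2^m \cdot q^{(n-m)/2} \bigl(\frac{(1+\theta)e}{1-\theta}\bigr)^{n-m}$. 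The remaining point is to absorb the factor $2^m$: from $m \leq \theta n$ we get $m \leq \frac{\theta}{1-\theta}(n-m)$, hence $2^m \leq (2^{\theta/(1-\theta)})^{n-m}$.

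Combining everything, both terms are bounded by
\[O(1) \cdot \Bigl(\frac{2^{\theta/(1-\theta)}(1+\theta)e \, q^{1/2}}{1-\theta}\Bigr)^{n-m} = O\bigl((q^{n-m})^{1-\delta}\bigr)\]
with $\delta = \frac{1}{2} - \log_q\bigl(\frac{2^{\theta/(1-\theta)}e(1+\theta)}{1-\theta}\bigr)$, and the only subtle computation is to check that the hypothesis $q > 4^{1/(1-\theta)}\bigl(\frac{1}{1-\theta} - \frac{1}{2}\bigr)^2 e^2$ is exactly equivalent to $\delta > 0$: using $\frac{1}{1-\theta} - \frac{1}{2} = \frac{1+\theta}{2(1-\theta)}$ and $4^{1/(1-\theta)} = 4 \cdot 4^{\theta/(1-\theta)}$, this rearranges to $q^{1/2} > \frac{2^{\theta/(1-\theta)} e(1+\theta)}{1-\theta}$, as needed. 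No step is especially hard — the main care is just tracking the constants and confirming the equivalence between the two forms of the hypothesis on $q$.
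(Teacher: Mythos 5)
Your proof is correct and takes essentially the same approach as the paper: apply \cref{primes-1}, then bound each of the two resulting terms by \cref{binomial-bound} with $a=n-m$, $b=2m$, $\alpha=\tfrac{1-\theta}{2\theta}$ (using $x=0, y=-r$ for the sum over $r$ and $x=1, y=-1$ for the remaining term), absorb the $2^m$ via $m \leq \tfrac{\theta}{1-\theta}(n-m)$, and check that the hypothesis on $q$ is exactly $\delta>0$.
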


\begin{proof} 

By the triangle inequality and \cref{primes-1}, the left side is at most  \[\frac{ 1}{\phi(g)} \Bigl( q^n - \sum_{ \substack{ f \in\mathcal M_n  \\ \gcd(f,g)=1}}  \Lambda(f) \Bigr) + \sum_{r=0}^{m-2} 2^{m-1-r}  \binom{ n+m-r }{ 2m-r}  q^{ \frac{n-m}{2}} + 2^{m-1} {n + m \choose 2m-1}  q^{\frac{n+1-m}{2}}   .  \]

Since $\sum_{f \in \mathcal M_n} \Lambda(f) = q^n$ by a zeta function identity, the first term is $\frac{ 1}{\phi(g)} \sum_{\substack{ f \in \mathcal M_n \\ \gcd(f,g)\neq 1}} \Lambda(f)$. Because each prime factor $\pi$ of $g$ has at most one prime power that contributes to the sum, and contributes at most $\deg \pi$, the sum is at most $\deg g$, and so the first term is $\frac{\deg g}{\phi(g)} \ll 1 \ll (q^{n-m})^{1-\delta}$.

We apply \cref{binomial-bound} to the remaining two. For the simpler term, $ 2^{m-1} {n + m \choose 2m-1}  q^{\frac{n+1-m}{2}} $, we take $a = n-m$, $b=2m$, $\alpha = \frac{1-\theta}{2\theta} $, $x=1$, $y=-1$ and obtain 
\[ 2^{m-1} {n + m \choose 2m-1}  q^{\frac{n+1-m}{2}}  \leq  2^{m-1}  \frac{ 2\theta \sqrt{q} }{1-\theta} \left(  \frac{ e (1+\theta)  \sqrt{q}  }{ (1-\theta)} \right)^{n-m} \] \[=  \frac{ \theta \sqrt{q} }{ 1-\theta} \left(  \frac{  2^{ \frac{m}{ n-m}} e (1+\theta)  \sqrt{q}  }{ (1-\theta)} \right)^{n-m}\leq  \frac{ \theta \sqrt{q} }{ 1-\theta} \left(  \frac{  2^{ \frac{\theta}{ 1-\theta }} e (1+\theta)  \sqrt{q}  }{ (1-\theta)} \right)^{n-m} \]

To bound $2^{m-1-r}  \binom{ n+m-r }{ 2m-r}  q^{ \frac{n-m}{2}} $ we take $a=n-m$, $b=2m$, $\alpha = \frac{1- \theta}{2\theta}$, $x=0$, $y = -r$ and obtain
\[ \sum_{r=0}^{m-2} 2^{m-1-r}  \binom{ n+m-r }{ 2m-r}  q^{ \frac{n-m}{2}}  \leq \sum_{r=0}^{m-2} 2^{m-1-r}  \left( \frac{2\theta}{1+ \theta} \right)^r  \left(  \frac{ e (1+\theta)  \sqrt{q}  }{ (1-\theta)} \right)^{n-m}  \]\[= \sum_{r=0}^ {m-2}  \frac{1}{2} \left( \frac{\theta}{1+ \theta} \right)^r   \left(  \frac{ 2^{ \frac{m}{n-m} } e (1+\theta)  \sqrt{q}  }{ (1-\theta)} \right)^{n-m} \leq 
  \frac{1}{2}  \frac{1}{ 1 - \frac{\theta}{ 1+ \theta}}   \left(  \frac{ 2^{ \frac{\theta}{1-\theta } } e (1+\theta)  \sqrt{q}  }{ 1-\theta} \right)^{n-m}  \] 
  
Thus
\[  \Bigl | \sum_{ \substack{ f \in \mathcal M_n \\  f \equiv a \mod g }}  \Lambda(f)  -  \frac{q^n}{ \phi(g) }    \Bigr|  \ll \left(  \frac{ 2^{ \frac{\theta}{1-\theta } } e (1+\theta)  \sqrt{q}  }{ 1-\theta} \right)^{n-m} = q^{ (n-m) (1-\delta) } .  \]

Lastly, observe that \[ \delta  = \frac{1}{2} \log_q\left( \frac{q  (1-\theta)^2 } { 4^{ \frac{\theta}{1-\theta } } (1+\theta)^2 e^2  } \right) = \frac{1}{2} \log_q \left( \frac{q}{  4^{ \frac{1}{1-\theta}  }  \left( \frac{ 1}{ 1- \theta} - \frac{1}{2} \right)^2e^2 }  \right) >0 \] by assumption.   \end{proof} 
  
  \subsection{One-level density}
  
 Estimates for the one-level density are closely related to sums of the von Mangoldt function. We begin with an identity that expresses the relevant von Mangoldt sum in a more convenient form. 
 
 For $\alpha \in \mathbb F_q^\times$, let  \[w_\alpha = \begin{cases} 1- \frac{1}{q-1} & \alpha=1 \\ - \frac{1}{q-1} & \alpha\neq 1 \end{cases}.\]
 
 \begin{lemma}\label{ol-identity}
 For $n, m$ natural numbers and $g$ a squarefree polynomial of degree $m$, we have  \begin{equation}\label{ol-1-main}  \begin{split} & \sum_{\substack{  \chi \colon (\mathbb F_q[t] / g)^\times \to \mathbb C^\times  \\ \textrm{primitive} \\ \textrm{odd}}}  \sum_{ \substack{   f \in \mathcal M_n \\ \gcd(f,g)=1} }  \Lambda(f) \chi(f) \\ = & \sum_{ \substack{ h \mid g\\ h \neq 1} } \phi(h) \mu(g/h)       \sum_{\alpha \in \mathbb F_q^\times} w_\alpha \sum_{ \substack{ f \in \mathcal M_n \\ f \equiv \alpha  \mod h}}  \Lambda(f)     - \sum_{ \substack{ h \mid g\\ h \neq 1} } \phi(h) \mu(g/h)  \sum_{\alpha \in \mathbb F_q^\times} w_\alpha \sum_{ \substack{ f \in \mathcal M_n \\ f \equiv \alpha  \mod h \\ \gcd(f,g) \neq 1}}  \Lambda(f)   \end{split} \end{equation} \end{lemma}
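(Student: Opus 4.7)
The plan is to unfold the sum over primitive odd characters using two elementary tools: Möbius inversion on the divisor lattice of the squarefree modulus $g$, and a linear identity in $\mathbb F_q^\times$ that detects oddness of a character.

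I would first verify that the weights $w_\alpha$ are designed so that $\sum_{\alpha \in \mathbb F_q^\times} w_\alpha \chi(\alpha) = 1 - \frac{1}{q-1} \sum_{\alpha \in \mathbb F_q^\times} \chi(\alpha)$, which equals $0$ when $\chi|_{\mathbb F_q^\times}$ is trivial (i.e.\ $\chi$ is even) and $1$ when $\chi|_{\mathbb F_q^\times}$ is nontrivial (i.e.\ $\chi$ is odd). Hence $1_{\chi \text{ odd}} = \sum_\alpha w_\alpha \chi(\alpha)$. Second, because $g$ is squarefree, every character mod $g$ lifts uniquely from a primitive character mod some divisor, and Möbius inversion combined with orthogonality gives, for $u \in (\mathbb F_q[t]/g)^\times$,
\[ \sum_{\substack{\chi \bmod g \\ \textrm{primitive}}} \chi(u) \;=\; \sum_{h \mid g} \mu(g/h)\, \phi(h)\, 1_{u \equiv 1 \bmod h}. \]

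Combining these two identities, for any $f$ coprime to $g$,
\[ \sum_{\substack{\chi \bmod g \\ \textrm{primitive, odd}}} \chi(f) \;=\; \sum_{\alpha \in \mathbb F_q^\times} w_\alpha \sum_{h \mid g} \mu(g/h)\, \phi(h)\, 1_{\alpha f \equiv 1 \bmod h}. \]
I would then multiply by $\Lambda(f)$, sum over $f \in \mathcal M_n$ with $\gcd(f,g)=1$, swap the orders of summation, and make the substitution $\alpha \mapsto \alpha^{-1}$ (using $w_{\alpha^{-1}} = w_\alpha$, which depends only on whether the argument equals $1$) so that the congruence becomes $f \equiv \alpha \bmod h$. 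The term $h = 1$ drops out of the outer sum because $\sum_{\alpha \in \mathbb F_q^\times} w_\alpha = 0$.

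For the remaining terms indexed by $h \mid g$, $h \neq 1$, I would remove the unwanted restriction $\gcd(f,g) = 1$ from the inner sum by writing it as the unrestricted sum minus the sum over $f$ with $\gcd(f,g) \neq 1$. This produces exactly the two terms on the right-hand side of \eqref{ol-1-main}. No part of the argument is substantive; the only mildly delicate bookkeeping is the verification that $w_\alpha$ detects oddness and the reindexing $\alpha \mapsto \alpha^{-1}$ that normalizes $\alpha f \equiv 1 \bmod h$ to $f \equiv \alpha \bmod h$. There is no geometric or analytic obstacle here — the lemma is purely a character-theoretic rearrangement that repackages the left-hand side into sums in arithmetic progressions to which \cref{primes-2} (or \cref{squarefree-bound-precise}) can be applied.
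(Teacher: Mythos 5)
Your proof is correct and follows the same strategy as the paper's: Möbius inversion over divisors $h\mid g$ to pass from primitive characters mod $g$ to all characters mod $h$ (using that $g$ is squarefree), the weights $w_\alpha$ to isolate the odd characters, the reindexing that turns $\alpha f\equiv 1\bmod h$ into $f\equiv \alpha\bmod h$, and finally splitting the $f$-sum according to whether $\gcd(f,g)=1$. The only difference is organizational — you factor the oddness indicator $1_{\chi\text{ odd}}=\sum_\alpha w_\alpha\chi(\alpha)$ out as a separate step before applying Möbius inversion, while the paper applies Möbius inversion first and then computes $\sum_{\chi\bmod h,\ \text{odd}}\chi(f)$ directly, yielding $w_\alpha\phi(h)$ when $f\equiv\alpha\bmod h$ and zero otherwise.
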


  \begin{proof} For $f \in \mathbb F_q[t]$ prime to $g$, we have
  \begin{align*}
  \sum_{\substack{  \chi \colon (\mathbb F_q[t] / g)^\times \to \mathbb C^\times  \\ \textrm{primitive} \\ \textrm{odd}}}  \chi(f)&  =  \sum_{h \mid g} \mu(g/h) \sum_{\substack{  \chi \colon (\mathbb F_q[t] / h)^\times \to \mathbb C^\times  \\ \textrm{odd} }}  \chi(f)\\ & =  \sum_{\substack{ h \mid g \\ h \neq 1} } \mu(g/h) \sum_{\substack{  \chi \colon (\mathbb F_q[t] / h)^\times \to \mathbb C^\times   \\ \textrm{odd}}}  \chi(f) . \\
  \end{align*}
  
  Because $f$ is prime to $g$, \[   \sum_{\substack{  \chi \colon (\mathbb F_q[t] / h)^\times \to \mathbb C^\times  }}  \chi(f) = \begin{cases} \phi(h) & f \equiv 1 \mod h \\ 0 & \textrm{otherwise} \end{cases} \] and, as long as $h \neq 1$,  
  \[   \sum_{\substack{  \chi \colon (\mathbb F_q[t] / h)^\times \to \mathbb C^\times  \\ \textrm{even}  }}  \chi(f) = \begin{cases} \frac{\phi(h)}{q-1}  & f \equiv \alpha \mod h \textrm{ for some } \alpha \in \mathbb F_q^\times  \\ 0 & \textrm{otherwise} \end{cases} \]
  so 
    \[   \sum_{\substack{  \chi \colon (\mathbb F_q[t] / h)^\times \to \mathbb C^\times  \\ \textrm{odd}  }}  \chi(f) = \begin{cases} w_\alpha \phi(h) & f \equiv \alpha \mod h \textrm{ for some } \alpha \in \mathbb F_q^\times  \\ 0 & \textrm{otherwise} \end{cases}. \]
  Thus
 \begin{align*}  
 &  \sum_{\substack{  \chi \colon (\mathbb F_q[t] / g)^\times \to \mathbb C^\times  \\ \textrm{primitive} \\ \textrm{odd}}}  \sum_{ \substack{   f \in \mathcal M_n \\ \gcd(f,g)=1} }  \Lambda(f) \chi(f) =    \sum_{ \substack{  f \in \mathcal M_n \\ \gcd(f,g)=1 }} \Lambda(f)  \sum_{\substack{  \chi \colon (\mathbb F_q[t] / g)^\times \to \mathbb C^\times  \\ \textrm{primitive} \\ \textrm{odd}}}  \chi(f)\\ 
    & =    \sum_{ \substack{ f \in \mathcal M_n \\ \gcd(f,g)=1} } \Lambda(f)  \sum_{ \substack{ h \mid g\\ h \neq 1} } \mu(g/h)   \sum_{\substack{  \chi \colon (\mathbb F_q[t] / h)^\times \to \mathbb C^\times \\ \textrm{odd}   }}  \chi(h) \\
    & = \sum_{ \substack{ h \mid g\\ h \neq 1} } \phi(h) \mu(g/h)    \sum_{\alpha \in \mathbb F_q^\times}  w_\alpha \sum_{ \substack{ f \in \mathcal M_n \\ \gcd(f,g) =1 \\ f \equiv \alpha  \mod h}}  \Lambda(f)   \\
   &= \sum_{ \substack{ h \mid g\\ h \neq 1} } \phi(h) \mu(g/h)     \sum_{\alpha \in \mathbb F_q^\times} w_\alpha \sum_{ \substack{ f \in \mathcal M_n \\ f \equiv \alpha  \mod h}}  \Lambda(f)    - \sum_{ \substack{ h \mid g\\ h \neq 1} } \phi(h) \mu(g/h)    \sum_{\alpha \in \mathbb F_q^\times} w_\alpha \sum_{ \substack{ f \in \mathcal M_n\\ f \equiv \alpha  \mod h \\ \gcd(f,g) \neq 1 }}  \Lambda(f)     .  \qedhere \end{align*}    
  \end{proof}
  
  We now bound the two terms appearing in \eqref{ol-1-main}.
  
  \begin{lemma}\label{ol-easy} Let $n,m$ be natural numbers and $g$ a squarefree polynomial of degree $m$. Then 
\begin{equation}\label{ol-1-easy} \sum_{ \substack{ h \mid g\\ h \neq 1} } \phi(h) \mu(g/h)     \sum_{\alpha \in \mathbb F_q^\times} w_\alpha \sum_{ \substack{ f \in \mathcal M_n \\ f \equiv \alpha  \mod h \\ \gcd(f,g) \neq 1}}  \Lambda(f)   = O ( m q^{ \min(m,n)}) .\end{equation}  \end{lemma}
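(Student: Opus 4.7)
The plan is to bound this expression by a direct triangle-inequality argument after identifying the restrictive structure of the summand. The key structural observation is that the conditions $\Lambda(f) \neq 0$ and $\gcd(f,g) \neq 1$ together force $f = \pi^r$ for a prime $\pi$ with $\pi \mid g$ and $r \geq 1$; for $f \in \mathcal M_n$ this further forces $r = n/\deg\pi$ (so in particular $\deg\pi \mid n$) and gives $\Lambda(f) = \deg\pi$. Since $\sum_{\pi \mid g}\deg\pi = \deg g = m$, the total $\Lambda$-mass of the relevant $f$'s is at most $m$.

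After interchanging the order of summation to put the sum over $f = \pi^r$ outermost, we obtain
\[ |\eqref{ol-1-easy}| \leq \sum_{\substack{\pi \mid g \\ \deg\pi \mid n}} (\deg\pi)\cdot |T(\pi)|, \qquad T(\pi) := \sum_{\substack{h\mid g,\, h\neq 1 \\ \pi\nmid h}} \phi(h)\mu(g/h) \sum_\alpha w_\alpha \mathbb{1}[\pi^r \equiv \alpha \bmod h]. \]
The restriction $\pi \nmid h$ is forced because otherwise $\pi^r \bmod h$ is divisible by $\pi$ and so cannot equal a unit $\alpha \in \mathbb F_q^\times$. Applying $|\mu|, |w_\alpha| \leq 1$ and using that at most one $\alpha$ satisfies $\pi^r \equiv \alpha \bmod h$ for given $\pi,h$, we get
\[ |T(\pi)| \leq \sum_{\substack{h\mid g,\, h \neq 1 \\ \pi^r \bmod h \,\in\, \mathbb F_q^\times}} \phi(h). \]

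The crucial step is the reduction: $\pi^r \bmod h \in \mathbb F_q^\times$ means $h \mid \pi^r - \alpha$ for some (necessarily unique) $\alpha \in \mathbb F_q^\times$, so $h$ divides $\gcd(g, \pi^r - \alpha)$ (and the condition $\pi \nmid h$ is then automatic, since $\pi \mid \pi^r$ but $\pi \nmid \alpha$). Grouping by $\alpha$ and using $\sum_{h \mid d}\phi(h) = |d|$ for squarefree $d$, together with $\deg \gcd(g, \pi^r - \alpha) \leq \min(\deg g, \deg(\pi^r - \alpha)) = \min(m,n)$, we obtain
\[ |T(\pi)| \leq \sum_{\alpha \in \mathbb F_q^\times} q^{\deg \gcd(g,\, \pi^r - \alpha)} \leq (q-1)\, q^{\min(m,n)}. \]
Summing and using $\sum_{\pi \mid g,\, \deg\pi \mid n}\deg\pi \leq m$ yields $|\eqref{ol-1-easy}| \leq (q-1)\, m\, q^{\min(m,n)}$, which is the claimed bound (with implied constant depending on $q$, as is standard in the paper).

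There is no serious obstacle: the proof is elementary once one recognizes that the only contributing $f$ are prime powers $\pi^r$ with $\pi \mid g$, and that the condition $\pi^r \bmod h \in \mathbb F_q^\times$ pins $h$ down to a divisor of a specific polynomial of degree $n$. The only subtlety is keeping track of the automatic relations (such as $\pi \nmid h$ when $h \mid \pi^r - \alpha$ with $\alpha \neq 0$) so as to apply the divisor-sum identity cleanly.
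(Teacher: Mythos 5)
Your proof is correct and essentially matches the paper's: both proofs identify the contributing $f$ as prime powers $\pi^r$ with $\pi \mid g$ (so $\sum\Lambda(f)\leq m$), rewrite the congruence as $h\mid\gcd(g,f-\alpha)$, and invoke the divisor-sum identity $\sum_{h\mid d}\phi(h)=q^{\deg d}\leq q^{\min(m,n)}$. The only difference is bookkeeping: you bound $|w_\alpha|\leq 1$ and pick up a factor $q-1$ from the sum over $\alpha$, whereas the paper observes $\sum_\alpha|w_\alpha|=O(1)$ directly; both are absorbed into the implied constant.
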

  
  \begin{proof} We have \begin{align*}   \Bigl| \sum_{ \substack{ h \mid g\\ h \neq 1} } \phi(h) \mu(g/h)     \sum_{\alpha \in \mathbb F_q^\times} w_\alpha \sum_{ \substack{ f \in \mathcal M_n \\ f \equiv \alpha  \mod h \\ \gcd(f,g) \neq 1}}  \Lambda(f)    \Bigr|   &= \Bigl| \sum_{ \substack{ f \in \mathcal M_n \\ \gcd(f,g) \neq 1 }} \Lambda(f) \sum_{\alpha \in \mathbb F_q^\times} w_\alpha \sum_{ \substack{ h \mid  \gcd(g, f-\alpha) \\ h \neq 1  }} \phi(h) \mu(g/h)\Bigr|   \\
  \leq \sum_{ \substack{ f \in \mathcal M_n \\ \gcd(f,g) \neq 1 }} \Lambda(f) \sum_{\alpha \in \mathbb F_q^\times} \abs{w_\alpha} \sum_{ \substack{ h \mid  \gcd(g, f-\alpha)   }} \phi(h)  &=  \sum_{ \substack{ f \in \mathcal M_n \\ \gcd(f,g) \neq 1 }} \Lambda(f) \sum_{\alpha \in \mathbb F_q^\times} \abs{w_\alpha} q^{ \deg \gcd(g, f-\alpha) }  \\
\leq \sum_{ \substack{ f \in \mathcal M_n \\ \gcd(f,g) \neq 1 }} \Lambda(f) \sum_{\alpha \in \mathbb F_q^\times} \abs{w_\alpha} q^{ \min (m,n) }  & \ll \sum_{ \substack{ f \in \mathcal M_n \\ \gcd(f,g) \neq 1 }}  \Lambda(f)  q^{ \min (m,n) } .\end{align*}

If $\Lambda(f) \neq 0$ then $f$ is a prime power, and if, in addition, $\gcd(f,g)\neq 1$, then $f$ must be a power of a prime dividing $g$. For each prime $\pi$ dividing $g$, at most one power of $\pi$ has degree $n$, so $ \sum_{ \substack{ f \in \mathcal M_n \\ \gcd(f,g) \neq 1 }} \Lambda(f)\leq m$, which implies \eqref{ol-1-easy}. \end{proof} 

\begin{lemma} \label{ol-hard} Let $\lambda>1,\epsilon>0$ be real numbers and let $n,m$ be natural numbers with $n\leq \lambda m$. If $q> \frac{4}{ (\lambda^2-1)^2}$ then for a squarefree polynomial $g$ of degree $m$, we have
\begin{equation}\label{ol-1-hard}  \Bigl | \sum_{ \substack{ h \mid g\\ h \neq 1} } \phi(h) \mu(g/h)    \sum_{\alpha \in \mathbb F_q^\times} w_\alpha \sum_{ \substack{ f \in \mathcal M_n \\ f \equiv \alpha  \mod h}}  \Lambda(f)   \Bigr |  \ll   \left( \frac{  (\lambda+1)^2 e^2 }{2 \sqrt{q} } \right)^{m} q^{ \frac{n}{2} }  q^{ (1 + \epsilon ) m}   . \end{equation} 

\end{lemma}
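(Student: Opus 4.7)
The plan is to apply \cref{primes-1} to each inner progression sum after exploiting a cancellation identity that kills the main term, then sum the resulting errors over divisors of $g$. The key algebraic preliminary is the direct computation $\sum_{\alpha \in \mathbb F_q^\times} w_\alpha = (1 - 1/(q-1)) + (q-2)(-1/(q-1)) = 0$.

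First, I would use this vanishing to rewrite the inner sum in terms of the discrepancy
\[ \Delta(h,\alpha) := \sum_{\substack{f \in \mathcal M_n \\ f \equiv \alpha \bmod h}} \Lambda(f) - \frac{1}{\phi(h)} \sum_{f \in \mathcal M_n} \Lambda(f), \]
obtaining $\sum_{\alpha} w_\alpha \sum_{f \equiv \alpha \bmod h} \Lambda(f) = \sum_{\alpha} w_\alpha \Delta(h,\alpha)$, since the subtracted quantity is independent of $\alpha$. For $\deg h > n$ no monic polynomial of positive degree $n$ reduces to a nonzero constant mod $h$, so $\sum_{f \equiv \alpha \bmod h} \Lambda(f) = 0$ already and such $h$ contribute $0$. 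For $1 \leq \deg h \leq n$, \cref{primes-1} (whose proof via \cref{squarefree-bound-precise} goes through when $\deg h = 1$ with the convention that the sum over $r$ is empty, and the trivial case $\deg h = 0$ is excluded by $h \neq 1$) bounds $|\Delta(h,\alpha)|$ by an explicit expression
\[ E(\deg h) = \sum_{r=0}^{\deg h - 2} 2^{\deg h - 1 - r} \binom{n+\deg h - r}{2\deg h - r} q^{(n-\deg h)/2} + 2^{\deg h - 1} \binom{n+\deg h}{2\deg h - 1} q^{(n+1-\deg h)/2}. \]

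Using $\sum_\alpha |w_\alpha| \leq 2$, $|\mu(g/h)| \leq 1$, $\phi(h) \leq q^{\deg h}$, and the bound $2^{\omega(g)} \leq 2^m$ on the number of divisors of $g$, this reduces the problem to estimating $2^{m+1} \sum_{d=1}^{\min(m,n)} q^d E(d)$. The main calculation is to bound each $q^d E(d)$ by \cref{binomial-bound}. Applied to the dominant second term of $E(d)$ with $a = n-d$, $b = 2d$, $x = 1$, $y = -1$, and $\alpha = (n-d)/(2d)$ (the tightest permissible choice), I obtain
\[ q^d \cdot 2^{d-1} \binom{n+d}{2d-1} q^{(n+1-d)/2} \lesssim q^{(n+d+1)/2} \cdot 2^d \cdot \left(\frac{e(n+d)}{n-d}\right)^{n-d}. \]
Specialized to the dominant case $d = m$, $n = \lambda m$, this yields an overall bound of order $q^{(n+m+1)/2} \cdot 2^m \cdot ((\lambda+1)e/(\lambda-1))^{(\lambda-1)m}$. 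Comparing with the target $((\lambda+1)^2 e^2/2)^m q^{(n+m)/2 + \epsilon m}$, the multiplicative excess is $\left(\frac{4(\lambda+1)^{\lambda-3}e^{\lambda-3}}{(\lambda-1)^{\lambda-1}}\right)^m$ times polynomial factors in $m$ and an extra $q^{1/2}$; all of these are absorbed into $q^{\epsilon m}$ for any fixed $\epsilon > 0$ (with the implicit constant depending on $q, \lambda, \epsilon$). The contributions from the first term of $E(d)$ and from intermediate values $d < m$ are strictly smaller by factors of $\sqrt{q}$ (in the first case) or by $q^{(d-m)/2}$ combined with a smaller binomial coefficient (in the second), and are handled by entirely analogous applications of \cref{binomial-bound}.

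The main obstacle is the careful bookkeeping of the three sources of exponential-in-$m$ constants, namely (i) the $2^m$ counting divisors of $g$, (ii) the $2^{d-1}$ from \cref{primes-1}, and (iii) the $((\lambda+1)e/(\lambda-1))^{(\lambda-1)m}$ from \cref{binomial-bound}, and verifying that together they are bounded by $((\lambda+1)^2 e^2/2)^m \cdot q^{\epsilon m}$ for every $\epsilon > 0$. The factor $q^{(1+\epsilon)m}$ in the target (rather than simply $q^m$) is precisely what provides the room needed to absorb the constant $\left(\frac{4(\lambda+1)^{\lambda-3}e^{\lambda-3}}{(\lambda-1)^{\lambda-1}}\right)^m$ and the polynomial-in-$m$ slack; once this is verified, the remaining arguments are routine manipulations.
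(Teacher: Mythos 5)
The high-level plan — use $\sum_\alpha w_\alpha = 0$ to convert to a discrepancy, apply Lemma~\ref{primes-1}, apply Lemma~\ref{binomial-bound}, and sum over divisors — is correct and is what the paper does. However, the specific parameter choices you make in applying Lemma~\ref{binomial-bound} lead to a genuine gap.

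\textbf{The problematic binomial bound.} You apply Lemma~\ref{binomial-bound} with $a = n-d$, $b = 2d$, $\alpha = (n-d)/(2d)$, obtaining (at $d=m$, $n = \lambda m$) the estimate $\binom{n+m}{2m-1} \ll \bigl(\tfrac{(\lambda+1)e}{\lambda-1}\bigr)^{(\lambda-1)m}$. The paper instead uses $a = 2m$, $b = n-m$, $\alpha = \tfrac{2}{\lambda-1}$, giving $\binom{n+m}{2m-1} \ll \bigl(\tfrac{(\lambda+1)e}{2}\bigr)^{2m}$. Both are valid instances of Lemma~\ref{binomial-bound} at the tightest permissible $\alpha = a/b$, but they parameterize the \emph{same} binomial coefficient with $a$ on different sides. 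The bound is tight only when $a$ is the smaller side of the coefficient; since $\binom{n+m}{2m-1}$ has sides $2m-1$ and $n-m+1 = (\lambda-1)m + 1$, for $\lambda > 3$ your $a$ is the \emph{larger} side and your bound overshoots. Concretely, the ratio of your bound to the paper's is $\bigl(\tfrac{4(\lambda+1)^{\lambda-3}e^{\lambda-3}}{(\lambda-1)^{\lambda-1}}\bigr)^m$, which exceeds $1$ strictly for $\lambda > 3$ (e.g.\ $\approx 2^m$ at $\lambda = 4$, $\approx 4.2^m$ at $\lambda = 5$). Such a factor cannot be absorbed into $q^{\epsilon m}$ for arbitrary fixed $\epsilon > 0$ with $q$ fixed; that would require $q^\epsilon$ to exceed a constant $> 1$, which fails for small $\epsilon$. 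So your approach does not establish \eqref{ol-1-hard} for $\lambda > 3$ as stated.

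\textbf{The dominance of $d=m$.} Your justification — that $d < m$ contributes less by $q^{(d-m)/2}$ ``combined with a smaller binomial coefficient'' — is incorrect: $\binom{n+d}{2d-1}$ is not monotone in $d$, and is in fact \emph{larger} at intermediate $d$ for $\lambda$ close to $1$ (e.g.\ $n = 10$, $\binom{14}{7} = 3432 > 816 = \binom{18}{15}$). What is true is that $q^{d/2}2^d\binom{n+d}{2d-1}$ is increasing in $d$ precisely when $\sqrt{q} \geq \tfrac{2}{\lambda^2-1}$, i.e.\ under the stated hypothesis $q > \tfrac{4}{(\lambda^2-1)^2}$; you never invoke this hypothesis in your argument. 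The paper sidesteps this question entirely by fixing $\alpha = \tfrac{2}{\lambda-1}$ independent of $\deg h$, so that the discrepancy bound factorizes as $\phi(h) \bigl(\tfrac{2\sqrt q}{\lambda^2-1}\bigr)^{m - \deg h}$ times a constant; the divisor sum then becomes an Euler product $\prod_{\pi|g}\bigl(q^{\deg\pi} - 1 + \bigl(\tfrac{2\sqrt q}{\lambda^2-1}\bigr)^{\deg\pi}\bigr)$, and the hypothesis $q > \tfrac{4}{(\lambda^2-1)^2}$ is used cleanly to bound each local factor by $q^{(1+\epsilon)\deg\pi}$ for all but finitely many $\pi$. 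This Euler-product structure also avoids the lossy $\phi(h) \leq q^{\deg h}$ and $2^{\omega(g)} \leq 2^m$ estimates you use.
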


\begin{proof} We observe that  $ \sum_{\alpha\in \mathbb F_q^\times} w_\alpha=0$ so  \[ \sum_{\alpha \in \mathbb F_q^\times}  w_\alpha \Bigl(\sum_{ \substack{ f \in \mathcal M_n \\   f \equiv \alpha  \mod h}}  \Lambda(f) - \frac{1}{\phi(h)}  \sum_{ \substack{ f \in \mathcal M_n \\   \gcd(f,h) =1 }}  \Lambda(f) \Bigr) = \sum_{\alpha \in \mathbb F_q^\times} w_\alpha \sum_{ \substack{ f \in \mathcal M_n \\   f \equiv \alpha  \mod h}}  \Lambda(f) .\]
Thus if $n \geq h$ then by \cref{primes-1}  (letting $c= m-\deg h$),  \begin{equation}\label{density-1} \begin{split}
 & \Bigl| \sum_{\alpha \in \mathbb F_q^\times} w_\alpha \sum_{ \substack{ f \in \mathcal M_n \\   f \equiv \alpha  \mod h}}  \Lambda(f) \Bigr|  \\
  =& \Bigl|  \sum_{\alpha \in \mathbb F_q^\times} w_\alpha \Bigl(\sum_{ \substack{ f \in \mathcal M_n \\   f \equiv \alpha  \mod h}}  \Lambda(f) - \frac{1}{\phi(h)}  \sum_{ \substack{ f \in \mathcal M_n \\   \gcd(f,h) =1 }}  \Lambda(f) \Bigr) \Bigr| \\
 \leq & \sum_{\alpha \in \mathbb F_q^\times} \abs{w_\alpha} \Bigl| \sum_{ \substack{ f \in \mathcal M_n \\   f \equiv \alpha  \mod h}}  \Lambda(f) - \frac{1}{\phi(h)}  \sum_{ \substack{ f \in \mathcal M_n \\   \gcd(f,h) =1 }}  \Lambda(f) \Bigr| \\
 \leq & \sum_{\alpha \in \mathbb F_q^\times} \abs{w_\alpha}  \Bigl( \sum_{r=0}^{m-c-2} 2^{m-1-c-r} \binom{ n + m-c-r }{ 2 m-2c -r } q^{ \frac{n-m +c }{2} } + 2^{m-c -1 } \binom{n + m-c  }{ 2 m-2c -1 } q^{ \frac{n + 1 + m-c }{2}} \Bigr) \\
 \ll &  \sum_{r=0}^{m-c-2} 2^{m-1-c-r} \binom{ n + m-c-r }{ 2 m-2c -r } q^{ \frac{n-m +c }{2} } + 2^{m-c -1 } \binom{n + m-c  }{ 2m-2c -1 } q^{ \frac{n + 1 + m-c }{2}} 
 \end{split}. \end{equation} 
 
 The same bound also holds if $n< \deg h$, where we cannot apply \cref{primes-1} but instead observe that $f \equiv \alpha \mod h$ is impossible if $0< \deg f=n  < \deg h$.
 
 We now apply \cref{binomial-bound} with  $\alpha = \frac{2}{ \lambda-1}$,  $a =2m$, $b =n-m$,  $x= -2c-r$ and $-2c-1$ respectively, $y= c$ and $c+1$ respectively, obtaining 
 
 \begin{equation}\label{density-2} \begin{split} & \sum_{r=0}^{m-c-2} 2^{m-1-c-r} \binom{ n + m-c-r }{ 2 m-2c -r } q^{ \frac{n-m +c }{2} } + 2^{m-c -1 } \binom{n + m-c  }{ 2m-2c -1 } q^{ \frac{n + 1 + m-c }{2}} \\
 \leq & \sum_{r=0}^{m-c-2} 2^{m-1-c-r}   \frac{2^{2c+r} }{ (\lambda +1)^{ c+r}  (\lambda-1)^{c} } \left( \frac{(\lambda+1) e}{2} \right)^{2m} q^{ \frac{n-m +c }{2} } \\
  & + 2^{m-c -1 } \frac{2^{2c+1} }{ (\lambda +1)^{ c+1 }  (\lambda-1)^{c} } \left( \frac{(\lambda+1) e}{2} \right)^{2m} q^{ \frac{n + 1 -  m+c }{2}} \\
 =&  \left(\sum_{r=0}^{m-c-2}  \frac{1}{( \lambda+1)^r}  +  \frac{2 q^{ \frac{1}{2}}  }{ \lambda +1 }   \right)  2^{m-1-c}  \frac{2^{2c} }{ (\lambda +1)^{ c}  (\lambda-1)^{c} }  \left( \frac{(\lambda+1) e}{2} \right)^{2m} q^{ \frac{n-m +c }{2} } \\
 \leq &  \left(  \frac{\lambda +1}{\lambda}   +  \frac{2 q^{ \frac{1}{2}}  }{ \lambda +1 }   \right)  2^{m-1-c}  \frac{2^{2c} }{ (\lambda +1)^{ c}  (\lambda-1)^{c} }  \left( \frac{(\lambda+1) e}{2} \right)^{2m} q^{ \frac{n-m +c }{2} }  \\
  \ll & 2^{m-c}  \frac{2^{2c} }{ (\lambda +1)^{ c}  (\lambda-1)^{c} }  \left( \frac{(\lambda+1) e}{2} \right)^{2m} q^{ \frac{n-m +c }{2} }   =  \left( \frac{2 \sqrt{q} }{\lambda^2-1} \right)^c  \left( \frac{ (\lambda+1)^2 e^2 }{2 \sqrt{q} } \right)^{m} q^{ \frac{n}{2} } 
  \end{split} \end{equation}
  
  Combining \eqref{density-1} and \eqref{density-2}, we obtain 
  
  \begin{equation}\label{density-3} \begin{split} 
  & \Bigl | \sum_{ \substack{ h \mid g\\ h \neq 1} } \phi(h) \mu(g/h)      \sum_{\alpha \in \mathbb F_q^\times} w_\alpha \sum_{ \substack{ f \in \mathcal M_n \\ f \equiv \alpha  \mod h}}  \Lambda(f) \Bigr|\\  \ll&
\sum_{ \substack{ h \mid g} } \phi(h)   \left( \frac{2 \sqrt{q} }{\lambda^2-1} \right)^{ m- \deg h }  \left( \frac{ (\lambda+1)^2 e^2 }{2 \sqrt{q} } \right)^{m} q^{ \frac{n}{2} }\\
=&  \left( \frac{ (\lambda+1)^2 e^2 }{2 \sqrt{q} } \right)^{m} q^{ \frac{n}{2} }   \prod_{\substack{ \pi \mid g \\ \pi \textrm{ prime}}} \left(q^{\deg \pi} - 1 + \left( \frac{2 \sqrt{q} }{\lambda^2-1} \right)^{\deg \pi } \right).
  \end{split} \end{equation}
 Fix $\epsilon >0$. Since $q >  \frac{4}{ (\lambda^2-1)^2}$, we have  $q^{\deg \pi} - 1 + \left( \frac{\sqrt{q} }{\lambda^2-1} \right)^{\deg \pi } < q^{ (1+\epsilon) \deg \pi}$ for all but finitely many $\pi$ and thus 
 
 \begin{equation}\label{density-4}  \left( \frac{  (\lambda+1)^2 e^2 }{2 \sqrt{q} } \right)^{m} q^{ \frac{n}{2} }   \prod_{\substack{ \pi \mid g \\ \pi \textrm{ prime}}} \left(q^{\deg \pi} - 1 + \left( \frac{2 \sqrt{q} }{\lambda^2-1} \right)^{\deg \pi } \right) \ll \left( \frac{  (\lambda+1)^2 e^2 }{2 \sqrt{q} } \right)^{m} q^{ \frac{n}{2} }  q^{ (1 + \epsilon ) m}  \end{equation} 
 
 Combining \eqref{density-3} and \eqref{density-4}, we obtain \eqref{ol-1-hard}.  \end{proof}

  We can now bound our original von Mangoldt sum.
  
  \begin{lemma}\label{ol-1} Let $\lambda>1.07$ be a real number and let $q$ be a prime power with \[ q > \frac{ (\lambda+1)^4 e^4}{4} .\]

  Let $n$ and $m$ be natural numbers with $n \leq \lambda m$.  For a squarefree polynomial $g$ of degree $m$, we have
  \[\Bigl|  \sum_{\substack{  \chi \colon (\mathbb F_q[t] / g)^\times \to \mathbb C^\times  \\ \textrm{primitive} \\ \textrm{odd}}}  \sum_{ \substack{  f \in \mathcal M_n \\ \gcd(f,g)=1}}  \Lambda(f) \chi(f) \Bigr| = O \left(   q^{ \frac{n}{2}} (q^{ m})^{1-\delta} \right) ,\] with the constant depending only on $q,\lambda, \delta$, for any  \[ \delta < \frac{1}{2} - \log_q \left( \frac{(\lambda+1)^2 e^2}{2} \right)  , \] and there exists such $\delta>0$.  \end{lemma}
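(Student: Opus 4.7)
The plan is to combine the three lemmas that precede this statement. First I would apply \cref{ol-identity} to rewrite the character sum as the difference of two pieces: one that involves the raw von Mangoldt sum $\sum_{f \equiv \alpha \mod h} \Lambda(f)$ weighted by $w_\alpha$ and summed against $\phi(h) \mu(g/h)$ over nontrivial divisors $h$ of $g$, and a second piece where the extra coprimality condition $\gcd(f,g) \neq 1$ is imposed. The first piece will give the main contribution, and the second piece will be a lower-order error.

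For the first piece, I would directly invoke \cref{ol-hard}, which gives a bound of the form $\left(\frac{(\lambda+1)^2 e^2}{2 \sqrt q}\right)^m q^{n/2} q^{(1+\epsilon) m}$. Before doing so, I must verify the hypothesis $q > \frac{4}{(\lambda^2-1)^2}$ of that lemma. Since $q > \frac{(\lambda+1)^4 e^4}{4}$ by assumption, this reduces to showing
\[
 (\lambda+1)^6 (\lambda - 1)^2 e^4 \geq 16,
\]
which is the only real arithmetic step — an elementary check gives this for $\lambda \geq 1.07$. The resulting bound rearranges as
\[
 q^{n/2} \cdot q^{m (1 - \delta - \epsilon')}  \qquad \text{where } \delta = \tfrac{1}{2} - \log_q \bigl( \tfrac{(\lambda+1)^2 e^2}{2} \bigr),
\]
after absorbing the $\epsilon$ loss of \cref{ol-hard} into an $\epsilon'$ slightly smaller than the gap $\delta_{\max} - \delta$ between any admissible $\delta$ and the critical threshold. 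The critical threshold $\delta_{\max} = \frac12 - \log_q\bigl( \tfrac{(\lambda+1)^2 e^2}{2} \bigr)$ is positive precisely because $q > (\lambda+1)^4 e^4/4$, which secures the existence of admissible $\delta > 0$.

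For the second piece, \cref{ol-easy} gives a bound $O(m q^{\min(m, n)})$. I then check that this is absorbed into the main bound $q^{n/2} (q^m)^{1 - \delta}$. Indeed, $\min(m,n) \leq (m+n)/2$ implies $m q^{\min(m,n)} \ll q^{n/2 + m/2 + o(m)}$, while the target exponent is $n/2 + m(1-\delta)$ with $\delta < 1/2$, so we have exponential room of size at least $q^{m(1/2 - \delta)}$ to absorb the factor of $m$ and any constants.

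The potentially delicate step is the first one, but all the hard analytic work has already been packaged into \cref{ol-hard}; here the only obstacle is bookkeeping of the constants in the exponents to confirm that the admissible range of $\delta$ in the conclusion matches $\frac12 - \log_q\bigl(\frac{(\lambda+1)^2 e^2}{2}\bigr)$ and that the hypotheses of \cref{ol-hard} (in particular the $\lambda > 1.07$ threshold, which governs $q > 4/(\lambda^2-1)^2$) are indeed implied by the hypotheses stated for \cref{ol-1}. After that, putting the two bounds together via the triangle inequality on \eqref{ol-1-main} completes the proof.
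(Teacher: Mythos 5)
Your proposal is correct and follows the paper's own proof step by step: decompose via \cref{ol-identity}, apply \cref{ol-hard} to the first piece and \cref{ol-easy} to the second, verify $q > \frac{(\lambda+1)^4 e^4}{4} > \frac{4}{(\lambda^2-1)^2}$ so that \cref{ol-hard} applies, choose the $\epsilon$ in \cref{ol-hard} to use up the slack $\delta_{\max}-\delta$ where $\delta_{\max}=\frac{1}{2}-\log_q\bigl(\frac{(\lambda+1)^2 e^2}{2}\bigr)$, and observe the second piece is dominated. One minor slip: the bound from \cref{ol-hard} rearranges to $q^{n/2}q^{m(1-\delta_{\max}+\epsilon)}$ with a \emph{plus} sign on $\epsilon$, not $q^{m(1-\delta_{\max}-\epsilon')}$; the requirement is $\epsilon\le\delta_{\max}-\delta$, which your subsequent sentence does get right, so the argument is unaffected.
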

  
  \begin{proof}  By our assumptions on $q$ and $\lambda$, we have  $q> \frac{ (\lambda+1)^4 e^4}{4}> \frac{ 4}{ (\lambda^2-1)^2}. $
  
  We apply \cref{ol-identity,ol-easy,ol-hard} to bound both terms on the right side of \eqref{ol-1-main}, obtaining
 
\[\Bigl | \sum_{\substack{  \chi \colon (\mathbb F_q[t] / g)^\times \to \mathbb C^\times  \\ \textrm{primitive} \\ \textrm{odd}}}  \sum_{ \substack{   f \in \mathcal M_n \\ \gcd(f,g)=1} }  \Lambda(f) \chi(f)   \Bigr | \ll \left( \frac{  (\lambda+1)^2 e^2 }{2 \sqrt{q} } \right)^{m} q^{ \frac{n}{2} }  q^{ (1 + \epsilon ) m}  + m q^{ \min(m,n)} \]
 Since $m \ll q^{\epsilon m}$ for every $\epsilon>0$ and $q^{ \min (m,n)} \leq q^{ \frac{m}{2}} q^{ \frac{n}{2}}$, the second term is dominated by the first. Thus, it suffices to observe that, for any 
 \[ \delta < \frac{1}{2} - \log_q \left( \frac{(\lambda+1)^2 e^2}{2} \right)  , \] 
 if we choose
 \[ \epsilon = \frac{1}{2} - \log_q \left( \frac{(\lambda+1)^2 e^2}{2} \right)   - \delta\] then \[ \left( \frac{  (\lambda+1)^2 e^2 }{2 \sqrt{q} } \right)^{m} q^{ \frac{n}{2} } q^{(1+\epsilon)m}=  q^{ (1-\delta) m} q^{ \frac{n}{2} .}  \]

Finally, because $q > \frac{ (\lambda+1)^4 e^4}{4}$, we have $ \frac{1}{2} - \log_q \left( \frac{(\lambda+1)^2 e^2}{2} \right)>0$ so there exists such $\delta>0$.
  \end{proof} 
  
  Next we prove a lemma relating von Mangoldt sums to roots.
  
  Fix a primitive character $\chi$. Since $L(s,\chi)$ is a polynomial in $q^{-s}$, the roots of $L(s,\chi)$ are periodic with period $\frac{2 \pi i}{\log q}$.  Since this polynomial has degree $m-1$, there are $m-1$ roots of the form $\frac{1}{2} + i \gamma$ with $\gamma \in [0, \frac{2\pi}{\log q})$. Let $\gamma_1,\dots, \gamma_{m-1}$ be the $\gamma$ values of these roots, taken with multiplicity.

  \begin{lemma} \label{ol-transfer} Fix a primitive character $\chi$ and $n  \in \mathbb Z$, $n\neq 0$. Let  $\chi^{\sgn(n)}$ be $\chi$ if $n>0$ and $\chi^{-1}$ if $n<0$. We have
  
  \[\sum_{j=1}^{m-1} e^{ in  \log(q) \gamma_j } = q^{-\frac{\abs{n}}{2} } \sum_{ \substack{ f \in \mathcal M_{\abs{n}} \\\ \gcd(f,g)=1}} \Lambda(f) \chi^{\sgn(n)} (f).\]

\end{lemma}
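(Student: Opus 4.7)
The plan is to prove this explicit formula by a standard argument: compute the logarithmic derivative $L'(s,\chi)/L(s,\chi)$ in two different ways and compare coefficients of $q^{-ns}$.

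First, because $\chi$ is primitive with conductor $g$ of degree $m$, the $L$-function $L(s,\chi)$ is a polynomial in $u = q^{-s}$ of degree $m-1$ with constant term $1$. Using the listed zeros $\tfrac{1}{2} + i\gamma_j$ (for $j = 1,\dots,m-1$), I would factor
\[
L(s,\chi) \;=\; \prod_{j=1}^{m-1}(1-\alpha_j u), \qquad \alpha_j \;=\; q^{1/2+i\gamma_j}.
\]
Taking the logarithmic derivative with respect to $s$ (using $du/ds = -u\log q$) and expanding each summand as a geometric series in $u$ (valid for $\Re s$ sufficiently large) yields
\[
\frac{L'(s,\chi)}{L(s,\chi)} \;=\; \log q \sum_{n\geq 1} q^{-ns}\, q^{n/2} \sum_{j=1}^{m-1} e^{in\gamma_j\log q}.
\]

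Second, from the Euler product $L(s,\chi) = \prod_\pi (1-\chi(\pi) q^{-(\deg\pi)s})^{-1}$ (with the usual convention $\chi(\pi)=0$ when $\pi\mid g$, which automatically enforces coprimality), the standard logarithmic derivative computation gives
\[
\frac{L'(s,\chi)}{L(s,\chi)} \;=\; -\log q \sum_{n\geq 1} q^{-ns} \sum_{\substack{f \in \mathcal M_n \\ \gcd(f,g)=1}} \Lambda(f)\chi(f).
\]
Equating coefficients of $q^{-ns}$ in the two expressions and dividing through by $\log q$ produces the identity for all $n > 0$.

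For $n < 0$, I would apply the $n > 0$ case to $\chi^{-1}$ in place of $\chi$. Since the zeros of $L(s,\chi^{-1})$ are the complex conjugates of those of $L(s,\chi)$ (so have parameters $-\gamma_j$ modulo $2\pi/\log q$), and since $\chi^{-1}(f) = \overline{\chi(f)}$ whenever $\gcd(f,g) = 1$, both sides of the desired identity at $n$ are the complex conjugates of the two sides of the already-proved identity for $\chi^{-1}$ at $|n|$, giving the claim. There is no substantive obstacle here: this is formal manipulation of generating functions, and the classical explicit formula is essentially algebraic in the function-field setting because $L(s,\chi)$ is a polynomial in $q^{-s}$.
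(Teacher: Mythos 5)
Your approach is the same as the paper's: compute the logarithmic derivative of $L(s,\chi)$ two ways --- once from the factorization over zeros, once from the Euler product --- and match coefficients of $q^{-ns}$. Both of your intermediate displayed formulas are correct (in fact they are written more carefully than the paper's Euler-product step, which omits a $\deg\pi$ factor and a minus sign). However, your final step has a gap: your two formulas give
\[
\frac{L'(s,\chi)}{L(s,\chi)} = \log q \sum_{n\geq 1} q^{n/2}\Bigl(\sum_j e^{in\gamma_j\log q}\Bigr) q^{-ns}
\qquad\text{and}\qquad
\frac{L'(s,\chi)}{L(s,\chi)} = -\log q \sum_{n\geq 1}\Bigl(\sum_f \Lambda(f)\chi(f)\Bigr) q^{-ns},
\]
and equating them produces $\sum_j e^{in\log(q)\gamma_j} = -\,q^{-n/2}\sum_f \Lambda(f)\chi(f)$, with a minus sign that is not in the statement you set out to prove. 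Your sentence ``Equating coefficients \dots produces the identity'' silently drops this sign.

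The minus sign is in fact correct: the function-field explicit formula reads $\sum_{\deg f = n}\Lambda(f)\chi(f) = -\sum_j \alpha_j^n$. You can check this directly at $n=1$, $m=2$: the coefficient of $q^{-s}$ in $L(s,\chi)=\prod_j(1-\alpha_j q^{-s})$ equals $-\sum_j \alpha_j$, while it also equals $\sum_{\deg f = 1}\chi(f) = \sum_{\deg f = 1}\Lambda(f)\chi(f)$. So \cref{ol-transfer} as stated has a sign typo, and a careful proof should surface and flag it. The discrepancy is harmless for the application, since the proof of \cref{ol-2} uses this quantity only in absolute value for $n\neq 0$, but a proof that claims to establish the statement exactly as written while deriving its negation is incomplete. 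Your handling of the $n<0$ case via complex conjugation and $\chi\mapsto\chi^{-1}$ is otherwise fine and matches the paper's.
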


  \begin{proof} We have \[ L(s,\chi) = \prod_{j=1}^{m-1} ( 1 - q^{ \frac{1}{2} + i \gamma_j - s} )  \] so \[ \frac{1}{\log q}  \frac{ \operatorname{dlog}  L(s,\chi) }{ds} = \sum_{j=1}^{m-1} \frac{ q^{ \frac{1}{2} + i \gamma_j - s} }{ 1 - q^{ \frac{1}{2} + i \gamma_j - s}}  = \sum_{j=1}^{m-1} \sum_{n=1}^{\infty} q^{ \frac{n}{2} + i n \gamma_j}  q^{-ns} = \sum_{n=1}^{\infty} q^{ \frac{n}{2} } \sum_{j=1}^{m-1} e^{ i n \log(q) \gamma_j} q^{-ns}   . \]
The Euler product of $L(s,\chi)$ gives
 \[ \frac{1}{\log q}  \frac{ \operatorname{dlog}  L(s,\chi) }{ds}  = \sum_{ \substack{ \pi \textrm{ prime} \\ \pi \nmid g} }  \frac{  \chi(\pi) q^{-s}}{ 1- \chi(\pi) q^{-s}}  = \sum_{n=1}^{\infty}  \sum_{ \substack{ f \in \mathcal M_n \\\ \gcd(f,g)=1}} \Lambda(f)  \chi(f) q^{-ns} .\]
Equating the coefficients of $q^{-ns}$, we have for $n>0$
\[\sum_{j=1}^{m-1} e^{ in  \log(q) \gamma_j }  = q^{-\frac{n}{2} } \sum_{ \substack{ f \in \mathcal M_n \\\ \gcd(f,g)=1}} \Lambda(f)  \chi(f) = q^{-\frac{\abs{n}}{2} } \sum_{ \substack{ f \in \mathcal M_{\abs{n}} \\\ \gcd(f,g)=1}} \Lambda(f)  \chi^{\sgn(n)}(f)  .\]
The result for $n<0$ follows by taking complex conjugates of both sides. 
  \end{proof}

  \begin{proposition}\label{ol-2} Let $\lambda>1.07$ be a real number and let $q$ be a prime power with \[ q > \frac{ (\lambda+1)^4 e^4}{4} .\] 

 Let $\psi$ be a Schwartz function on $\mathbb R$ whose Fourier transform $\hat{\psi}$ is supported on $[-\lambda,\lambda]$. Then for a squarefree polynomial $g$ of degree $m$, we have
 \[ \sum_{\substack{  \chi \colon (\mathbb F_q[t] / g)^\times \to \mathbb C^\times  \\ \textrm{primitive} \\ \textrm{odd}}}  \sum_{\gamma_\chi}  \psi \Bigl( \frac{ m \log(q) \gamma_\chi}{2\pi} \Bigr ) = \left( \hat{\psi}(0) + o(1) \right)  \phi^{*,odd}(g)\]
 where $\frac{1}{2} + i \gamma_\chi$ are the (infinitely many) roots of $L(s,\chi)$, $\phi^{*,odd}(g) $ is the number of primitive odd Dirichlet characters mod $g$, and $o(1)$ goes to $0$ as $m$ goes to $+\infty$ for fixed $q,\lambda$ and bounded $ || \hat{\psi}||_\infty$.
 \end{proposition}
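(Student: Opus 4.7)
The plan is to reduce the sum over zeros to sums of the von Mangoldt function over primes twisted by characters, via Poisson summation, and then apply \cref{ol-1}.

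First, I would use the periodicity of the zeros. Since $L(s,\chi)$ is a polynomial of degree $m-1$ in $q^{-s}$, there are $m-1$ zeros $\frac{1}{2} + i\tilde\gamma_j$ with $\tilde\gamma_j \in [0, 2\pi/\log q)$, and the complete set of zeros is $\{\tilde\gamma_j + 2\pi k/\log q : 1\le j\le m-1,\ k\in\mathbb Z\}$. Writing $x_j = m\log(q)\tilde\gamma_j/(2\pi)$, we have
\[
\sum_{\gamma_\chi} \psi\!\left(\frac{m\log(q)\gamma_\chi}{2\pi}\right) = \sum_{j=1}^{m-1}\sum_{k \in \mathbb Z} \psi(x_j + mk).
\]
Applying Poisson summation to the inner sum (the function $k \mapsto \psi(x + mk)$ has Fourier expansion $\frac{1}{m}\sum_{n\in\mathbb Z} \hat\psi(n/m) e^{2\pi inx/m}$), this becomes
\[
\frac{m-1}{m}\hat\psi(0) + \frac{1}{m}\sum_{n\ne 0} \hat\psi(n/m) \sum_{j=1}^{m-1} e^{in\log(q)\tilde\gamma_j}.
\]

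Next, I would convert the inner exponential sums into von Mangoldt sums by applying \cref{ol-transfer}, obtaining $\sum_{j=1}^{m-1} e^{in\log(q)\tilde\gamma_j} = q^{-|n|/2}\sum_{f\in\mathcal M_{|n|},\ \gcd(f,g)=1}\Lambda(f)\chi^{\sgn(n)}(f)$. Summing over primitive odd $\chi$ and swapping the order of summation then yields
\[
\sum_{\chi\text{ prim. odd}}\sum_{\gamma_\chi}\psi\!\left(\tfrac{m\log(q)\gamma_\chi}{2\pi}\right) = \frac{m-1}{m}\hat\psi(0)\,\phi^*_{\mathrm{odd}}(g) + \frac{1}{m}\sum_{n\ne 0}\hat\psi(n/m)\,q^{-|n|/2}\, S_n,
\]
where $\phi^*_{\mathrm{odd}}(g) = \sum_{\chi}1$ and $S_n := \sum_{\chi\text{ prim. odd}}\sum_{f\in\mathcal M_{|n|},\gcd(f,g)=1}\Lambda(f)\chi^{\sgn(n)}(f)$. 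Since $\chi\mapsto \chi^{-1}$ is an involution on primitive odd characters, $S_n$ has the same form as the sum estimated in \cref{ol-1}.

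Finally, the support condition $\hat\psi \subseteq [-\lambda,\lambda]$ restricts $n$ to $|n| \le \lambda m$, exactly the range where \cref{ol-1} applies. Thus $|S_n| = O(q^{|n|/2}(q^m)^{1-\delta})$ uniformly in $n$, giving
\[
\left|\frac{1}{m}\sum_{n\ne 0}\hat\psi(n/m) q^{-|n|/2} S_n\right| \ll \frac{1}{m}\cdot (2\lambda m)\cdot \|\hat\psi\|_\infty \cdot (q^m)^{1-\delta} = O((q^m)^{1-\delta}).
\]
Since $\phi^*_{\mathrm{odd}}(g) \gg q^m$ and $(m-1)/m = 1 + O(1/m)$, dividing through produces the claimed asymptotic with $o(1)$ error. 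The only genuine input beyond bookkeeping is \cref{ol-1}, so the main obstacle — square-root cancellation for von Mangoldt sums against characters, with Fourier support up to $\lambda$ — has already been overcome in the earlier sections; what remains here is the standard explicit-formula machinery. The constraint $\lambda > 1.07$ and the hypothesis on $q$ enter precisely through the hypotheses of \cref{ol-1}.
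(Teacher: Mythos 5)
Your proposal follows essentially the same route as the paper's own proof: periodicity of the zeros, Poisson summation on the lattice of width $m$, conversion to von Mangoldt sums via \cref{ol-transfer}, swapping the order of summation, and applying \cref{ol-1} to bound the non-zero-frequency terms, with the $\frac{m-1}{m}$ factor absorbed into the $o(1)$. Your explicit remark that $\chi\mapsto\chi^{-1}$ is an involution on primitive odd characters (so \cref{ol-1} applies to $n<0$) is a small clarification the paper leaves implicit, but otherwise the arguments coincide.
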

 
 \begin{proof} We can express the multiset of all  $\gamma_\chi$ as 
 \[ \left\{ \gamma_j + k \frac{ 2\pi }{\log q} \mid  1 \leq j\leq m-1, k\in \mathbb Z\right\} .\]
 Thus, by the Poisson summation formula,
 \[ \sum_{\gamma_\chi}  \psi \Bigl( \frac{ m \log(q) \gamma_\chi}{2\pi} \Bigr )  = \sum_{j=1}^{m-1} \sum_{k=-\infty}^{\infty}  \psi \Bigl ( \frac{ m \log(q) \gamma_j}{2\pi }+ m k \Bigr)\]
 \[ =  \sum_{j=1}^{m-1}  \sum_{n=-\infty}^{\infty}  e^{ \frac{ m \log(q) \gamma_j}{2\pi } \cdot \frac{n}{m} \cdot 2\pi i} \frac{1}{m}   \hat{\psi} \Bigl( \frac{n}{m} \Bigr)  = \sum_{n= -\infty}^\infty  \frac{1}{m}  \sum_{j=1}^{m-1} e^{ i n \log(q) \gamma_j } \hat{\psi} \Bigl( \frac{n}{m} \Bigr). \]
\[ = \sum_{ \substack{ n \in \mathbb Z \\ n \neq 0 \\ \abs{n}\leq \lambda m} } \frac{q^{-\frac{\abs{n}}{2} }}{m} \hat{\psi} \Bigl( \frac{n}{m} \Bigr) \sum_{ \substack{ f \in \mathcal M_{\abs{n}} \\\ \gcd(f,g)=1}} \Lambda(f)  \chi^{\sgn(n)}(f) + \frac{m-1}{m} \hat{\psi} (0) \]  where the last equality uses \cref{ol-transfer}, our assumption implying that $\hat{\psi} \left( \frac{n}{m} \right) =0 $ if $ \abs{n} > \lambda m$, and the fact that 
  $ \sum_{j=1}^{m-1} e^{ in  \log(q) \gamma_j } = m-1$ if $n=0$.

 Now we sum over $\chi$. We obtain
 \[ \sum_{\substack{  \chi \colon (\mathbb F_q[t] / g)^\times \to \mathbb C^\times  \\ \textrm{primitive} \\ \textrm{odd}}}  \sum_{\gamma_\chi}  \psi \Bigl( \frac{ m \log(q) \gamma_\chi}{2\pi} \Bigr )\]
\[ = \sum_{ \substack{ n \in \mathbb Z \\ n \neq 0 \\ \abs{n}\leq \lambda m} } \frac{q^{-\frac{\abs{n}}{2} }}{m} \hat{\psi} \Bigl( \frac{n}{m} \Bigr)   \sum_{\substack{  \chi \colon (\mathbb F_q[t] / g)^\times \to \mathbb C^\times  \\ \textrm{primitive} \\ \textrm{odd}}}\sum_{ \substack{ f \in \mathcal M_{\abs{n}} \\\ \gcd(f,g)=1}} \Lambda(f)  \chi^{\sgn(n)}(f) +    \sum_{\substack{  \chi \colon (\mathbb F_q[t] / g)^\times \to \mathbb C^\times  \\ \textrm{primitive} \\ \textrm{odd}}} \frac{m-1}{m} \hat{\psi} (0). \] 
 Thus
  \[ \Bigl | \sum_{\substack{  \chi \colon (\mathbb F_q[t] / g)^\times \to \mathbb C^\times  \\ \textrm{primitive} \\ \textrm{odd}}}\Bigl(   \sum_{\gamma_\chi}  \psi \Bigl( \frac{ m \log(q) \gamma_\chi}{2\pi} \Bigr ) -\hat{\psi}(0)  \Bigr)  \Bigr|   \]
\[ \leq \sum_{ \substack{ n \in \mathbb Z \\ n \neq 0 \\ \abs{n}\leq \lambda m} } \frac{q^{-\frac{\abs{n}}{2} }}{m} \abs{\hat{\psi} \Bigl( \frac{n}{m} \Bigr) }\Biggl|   \sum_{\substack{  \chi \colon (\mathbb F_q[t] / g)^\times \to \mathbb C^\times  \\ \textrm{primitive} \\ \textrm{odd}}}\sum_{ \substack{ f \in \mathcal M_{\abs{n}} \\\ \gcd(f,g)=1}} \Lambda(f)  \chi(f) \Biggr| +    \sum_{\substack{  \chi \colon (\mathbb F_q[t] / g)^\times \to \mathbb C^\times  \\ \textrm{primitive} \\ \textrm{odd}}} \frac{1}{m}\abs{ \hat{\psi} (0)}. \] 
 \[ \ll  \sum_{ \substack{ n \in \mathbb Z \\ n \neq 0 \\ \abs{n}\leq \lambda m} } \frac{q^{-\frac{\abs{n}}{2} }}{m} \abs{\hat{\psi} \Bigl( \frac{n}{m} \Bigr) }   q^{ \frac{\abs{n}}{2}} (q^{ m})^{1-\delta}  +    \sum_{\substack{  \chi \colon (\mathbb F_q[t] / g)^\times \to \mathbb C^\times  \\ \textrm{primitive} \\ \textrm{odd}}} \frac{1}{m}\abs{ \hat{\psi} (0)}. \] 
\[ \leq 2 \lambda m  ||\hat{\psi}||_{\infty} (q^m)^{1-\delta}  + \sum_{\substack{  \chi \colon (\mathbb F_q[t] / g)^\times \to \mathbb C^\times  \\ \textrm{primitive} \\ \textrm{odd}}} \frac{1}{m}\abs{ \hat{\psi} (0)}.\]

The second term is certainly $o(\phi^{*,odd}(g) )  $ since $ \frac{1}{m}\abs{ \hat{\psi} (0)} = o(1)$.

For the first term, it suffices to check that $m (q^m)^{1-\delta} = o (\phi^{*,odd}(g))$ which holds because
\[ \phi^{*,odd}(g) =  \frac{q-2}{q-1}  \prod_{ \pi \mid g} \left( q^{ \deg \pi - 2 } \right)  - \frac{q-2}{q-1} (-1)^{\omega(g)} \gg  q^{ (1-\epsilon)m}  \]
as soon as $\deg g>0$.
  \end{proof}

\section{Anatomy of polynomials in progressions}\label{s-anatomy}

\subsection{The GEM distribution and the Poisson-Dirichlet process} 

Let $\Delta_\infty$ be the simplex $\Delta_\infty$ of functions $p\colon  \mathbb N^+ \to[0,1]$ satisfying $\sum_{m=1}^{\infty} p(m) =1$ (i.e. the space of probability distributions on $\mathbb N^+$), endowed with the (compact) product topology.

Let $x_1,x_2,\dots$ be a sequence of independent random variables uniformly distributed on $[0,1]$, and let 
\[ p(m) = (1-x_m)  \prod_{i=1}^{m-1} x_i .\]
Then we define the GEM distribution as the distribution of the $\Delta_\infty$-valued random variable $p$. Our goal in this subsection will be to find a tail bound on the entropy \[ - \sum_{m=1}^{\infty} p(m) \log p(m) \] of $p$, which we will use in the remaining subsections to bound the entropy of random polynomials and thus to control the total variation distance between two distributions on factorization functions.

The closely-related Poisson-Dirichlet process is the distribution of the random variable attained by sorting $p$ in decreasing order $p(1) \geq p(2) \geq p(3) \dots$.  Since the entropy is invariant under permuting the $p(m)$, including sorting, the same results will hold for the Poisson-Dirichlet process.

For background on the GEM distribution, the Poisson-Dirichlet process, and their relation to factorizations of random integers, see \cite{DG}.

To control the entropy, we will place the GEM distribution in a Markov chain, and then compare with a simpler Markov chain. Let $y_n$ be a sequence indexed by $n \in \mathbb Z$ of independent random variables uniformly distributed on $[0,1]$.  Define for $n \in \mathbb Z$
\[ p_n(m) =( 1 - y_{n-m} ) \prod_{i=1}^{m-1} y_{ n-i} .\]

Then for all $n\in \mathbb Z$, the function $p_n(m)$ is GEM distributed, because $x_i = y_{n-i}$ are independent and uniformly distributed in $[0,1]$.

We have \[ p_{n+1} (m) =  \begin{cases} 1-y_n & m=1 \\ y_n  p_n(m-1)  & m>1 \end{cases} .\]
Note that $p_n $ depends only on $y_{n-1},y_{n-2},\dots$ and thus $y_n$ is independent of $p_n$. Hence $(p_n)$ is a Markov chain on $\Delta_\infty$, with the GEM process as a stationary distribution.

Let \[ H_n = - \sum_{m=1}^{\infty} p_n(m) \log p_n(m) \] be the entropy of the probability distribution $p_n$. We have
\begin{equation}\begin{split}\label{next-entropy-formula} H_{n+1} &= - (1-y_n) \log (1-y_n) - \sum_{m=2}^{\infty}  y_n p_n(m-1) \log ( y_n p_n(m-1) )\\
=&  - (1-y_n) \log (1-y_n) -  \sum_{m=2}^{\infty}  y_n p_n(m-1) \log ( y_n ) -  \sum_{m=2}^{\infty}  y_np_n(m-1) \log ( p_n(m-1))\\ =& - (1-y_n) \log(1-y_n) - y_n \log(y_n)  + y_n H_n .\end{split}\end{equation}

Because $H_n$ depends only on $y_{n-1}, y_{n-2},\dots$, $H_n$ is independent of $y_n$, and so \eqref{next-entropy-formula} implies that $(H_n)$ is a Markov chain. Our first lemma controls how much $H_n$ can increase over one step of this Markov chain:

%
%

\begin{lemma}\label{entropy-increase-bound}   Let $H\geq0$ and $0\leq y \leq 1$ be real numbers, and let
\[ \tilde{H} = - (1-y) \log(1-y) - y \log(y)  + y H .\]

We have \[ e^{ \tilde{H} } \leq e^{ H} +1 ,\] we have \[ e^{ \tilde{H} } \leq e^{H} \]  as long as $y\leq 1 - e^{ 1-  H}   $, and we have  \[ e^{ \tilde{H} } \leq e^{H}-1 \] as long as $y\leq 1 - 4e^{-  H }  $. \end{lemma}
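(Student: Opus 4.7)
The plan is to treat the three bounds separately, exploiting a common feature: $\tilde{H}$ is a strictly concave function of $y$ with a unique interior maximum, so each bound reduces to analyzing behavior near this maximum or along one of the two monotone branches.

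For the first bound, a direct computation gives $\frac{d\tilde{H}}{dy} = \log\frac{1-y}{y} + H$, vanishing at $y^\star = e^H/(e^H+1)$; since $\frac{d^2\tilde{H}}{dy^2} = -\frac{1}{y} - \frac{1}{1-y} < 0$, this is the global maximum on $[0,1]$. Substituting gives $\tilde{H}(y^\star) = \log(e^H + 1)$, hence $e^{\tilde{H}} \leq e^H + 1$ throughout. (Equivalently, $\tilde{H}$ is the Shannon entropy of a distribution on $e^H + 1$ atoms—one marked atom of mass $1-y$ and uniform mass on the remaining $e^H$—so this is just the maximum-entropy inequality.)

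For the second bound, rewrite the target as $-(1-y)\log(1-y) - y\log y \leq (1-y)H$. Dividing by $1-y > 0$ gives
\[
-\log(1-y) - \frac{y\log y}{1-y} \leq H.
\]
The hypothesis $y \leq 1 - e^{1-H}$ yields $-\log(1-y) \leq H - 1$, so it suffices to verify the elementary inequality $-y\log y \leq 1-y$ on $(0,1]$. Setting $f(y) := 1 - y + y\log y$, we have $f(1) = 0$ and $f'(y) = \log y \leq 0$, so $f \geq 0$, as required.

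The third bound is the delicate one, and here the hypothesis as literally typeset ($y \geq 1 - 4e^{-H}$) is incompatible with the conclusion: the point $y^\star = 1 - 1/(e^H+1)$ satisfies $y^\star \geq 1 - 4e^{-H}$ but gives $e^{\tilde{H}} = e^H + 1 > e^H - 1$. I therefore proceed under the natural reading $1-y \geq 4e^{-H}$, equivalently $y \leq 1 - 4e^{-H}$. Writing $A = e^H$ (so $A \geq 4$) and $z = 1-y$, the claim becomes
\[
-z\log(zA) - (1-z)\log(1-z) \leq \log(1 - A^{-1}).
\]
By the concavity established above, the left-hand side, as a function of $z$, peaks at $z = 1/(A+1) < 4/A$ and is strictly decreasing for $z \geq 4/A$, so it suffices to verify the boundary case $z = 4/A$. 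Setting $B = 1/A \in (0, 1/4]$, this reduces to
\[
g(B) := \log(1-B) + 4B\log 4 + (1-4B)\log(1-4B) \geq 0.
\]
One checks $g(0) = 0$, $g'(0) = 4\log 4 - 5 > 0$, and
\[
g''(B) = \frac{16}{1-4B} - \frac{1}{(1-B)^2} \geq 16 - \left(\tfrac{4}{3}\right)^2 > 0
\]
on $[0, 1/4]$. Thus $g$ is increasing-convex from $0$, hence non-negative, completing the bound.

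The main obstacle is the third bound: one needs both the monotonicity reduction and the one-variable convexity analysis, and one must correctly interpret the hypothesis so that it is consistent with the conclusion. The first two bounds essentially follow from the concavity calculation of $\tilde{H}$ and the one-line lemma $-y\log y \leq 1-y$.
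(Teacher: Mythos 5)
Your proof is correct, and you have correctly spotted a typo in the stated hypothesis of the third bound: the condition should read $y \leq 1 - 4e^{-H}$, not $y \geq 1 - 4e^{-H}$. The paper's own proof of this bound indeed begins ``if $y \leq 1 - 4e^{-H}$,'' and the lemma is invoked in \cref{coupling-comparison} precisely in the regime $y_n \leq 1 - 4/N_n$, so the error is in the statement, not the proof. Your counterexample at the interior maximizer $y^\star = 1 - 1/(e^H+1)$ is exactly the right way to see this.

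Your arguments for the first two bounds are essentially the same as the paper's: both rest on the strict concavity of $\tilde H$ in $y$ (the paper packages this as convexity of $\log$ applied through Jensen's inequality, while you compute the critical point $y^\star$ directly), and both use the elementary bound $-y\log y \leq 1-y$, equivalently $-\log y \leq (1-y)/y$.

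For the third bound you take a genuinely different route. The paper's proof bounds $\tilde H \leq H - (1-y)(\log 4 - 1)$, exponentiates, substitutes $1-y \geq 4e^{-H}$, and then invokes the auxiliary inequality $e^{-(\log 4 - 1)4x} < 1 - x$ for $x < 0.61$, justified only ``by examining the graphs.'' You instead change variables to $z = 1-y$, note that the left-hand side of the resulting inequality is a concave function of $z$ whose unique critical point $1/(A+1)$ lies strictly to the left of the admissible range $z \geq 4/A$, reduce by monotonicity to the endpoint $z = 4/A$, and then verify the resulting one-variable inequality $g(B) \geq 0$ on $[0,1/4]$ by showing $g(0)=0$, $g'(0) = 4\log 4 - 5 > 0$, and $g'' > 0$. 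This is both self-contained and fully rigorous, eliminating the graphical appeal. The paper's argument is slightly shorter; yours is cleaner because the single concavity observation does all the structural work for all three bounds, and you never leave the realm of elementary calculus estimates.

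One microscopic caveat: in the second bound you divide by $1-y$, which requires $y<1$; this is automatic under the hypothesis $y \leq 1-e^{1-H}$, and the case $y=0$ (where $\tilde H = 0 \leq H$) is trivial, but it would be worth a word. Similarly, the entropy interpretation of the first bound as ``a distribution on $e^H+1$ atoms'' is only a heuristic when $e^H \notin \mathbb Z$, but your direct computation of $\tilde H(y^\star) = \log(e^H+1)$ carries the argument regardless.
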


\begin{proof}  Let $w = e^{ H}$. Then because log is concave, we have
\[ (1-y) \log \left( \frac{1}{(1-y) (w+1) } \right) + y \log \left( \frac{ w}{ y (w+1) } \right)  \leq \log\left( \frac{ (1-y)}{ (1-y) (w+1)}  + \frac{ y w}{ y (w+1) } \right)\] \[ = \log \left( \frac{1}{w+1} + \frac{w}{w+1}\right) = \log 1 =0.\]

Expanding out the terms, we get
\begin{align*}
  - (1-y) \log (1-y) -  \log(w+1) - y \log (y) + y \log(w)&  \leq 0\\
  - (1-y) \log (1-y) - y \log (y) + y \log(w)  &\leq \log (w+1) \\
\tilde{H}  =   - (1-y) \log (1-y) - y \log (y) + y H &  \leq \log ( e^{ H} +1) .\end{align*}

This proves the first claim.


We have \[ - \log (y) = \log \left( \frac{1}{y} \right) = \log \left( 1 + \frac{1-y}{y} \right) \leq \frac{1-y}{y}\]  so   \begin{equation}\label{useful-tilde} \begin{split}  \tilde{H} \leq & - (1-y) \log (1-y) + (1-y) + y H \\ =&  (1-y) (- \log(1-y) + 1- H ) + H . \end{split} \end{equation} 

Thus, if $ y \leq 1 - e^{ 1-  H } $ then
\[ \tilde{H}  \leq    (1-y)  (- \log( e^{ 1 - H } ) + 1- H ) + H =  H .\]

Similarly, if $y \leq 1 - 4 e^{-H} $ then
\[ \tilde{H}  \leq   H + (1-y) (- \log(4  e^{ - H } ) + 1- H ) =  H -  (1-y) (\log 4-1) .\]

Thus

\[ e^{\tilde{H}  }  \leq e^{ H} e^{ - (1-y) (\log 4-1) } \leq  e^{ H} e^{  -  4 e^{ -H }(\log 4-1) }.\]
We have $ e^{ - (\log 4-1) 4 x} < 1-x$ for $x < .61$ by examining the graphs, so applying this to
\[ x= e^{-H}  \leq (1-y)/4 \leq 1/4 \leq .61,\]
we get
\[ e^{\tilde{H}  }  \leq  e^{ H} (1 - e^{ - H}) = e^{H}-1 ,\]
as desired.

\end{proof}
 We will now bound $H_n$ by another Markov chain, whose limiting distribution can be found explicitly. 

Let $N_0=\max (4, \lceil e^{H_0} \rceil) $ and, for $n\geq 0$, let \[N_{n+1} = \begin{cases} N_n+1 &  y_n> 1 - \frac{e }{ N_n} \\  N_n &   1- \frac{4}{N_n}  < y_n  \leq 1 - \frac{e}{N_n} \\  N_n-1 &  y_n \leq  1 - \frac{4}{N_n}\end{cases}  .\]

\begin{lemma}\label{coupling-comparison} For all $n\geq 0$, we have $N_n \geq e^{H_n}$. \end{lemma}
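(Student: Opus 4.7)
The plan is to prove the lemma by induction on $n$. The base case $n = 0$ is immediate from the definition $N_0 = \max(4, \lceil e^{H_0} \rceil) \geq e^{H_0}$.

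For the inductive step, I will exploit the monotonicity of $\tilde{H}(y, H) := -(1-y) \log(1-y) - y \log y + y H$ in $H$: its partial derivative with respect to $H$ is $y \geq 0$. Under the inductive hypothesis $H_n \leq \log N_n$, this gives $H_{n+1} = \tilde{H}(y_n, H_n) \leq \tilde{H}(y_n, \log N_n)$, so it suffices to bound $e^{\tilde{H}(y_n, \log N_n)} \leq N_{n+1}$.

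The key observation is that the thresholds defining the Markov chain match those of Lemma \ref{entropy-increase-bound} exactly after the substitution $H = \log N_n$: indeed $1 - e^{1 - \log N_n} = 1 - e/N_n$ and $1 - 4 e^{-\log N_n} = 1 - 4/N_n$. In case A ($y_n > 1 - e/N_n$, $N_{n+1} = N_n + 1$), applying part (1) of the lemma directly to $H = H_n$ yields $e^{H_{n+1}} \leq e^{H_n} + 1 \leq N_n + 1 = N_{n+1}$. In case B ($1 - 4/N_n < y_n \leq 1 - e/N_n$, $N_{n+1} = N_n$), part (2) applied with $H = \log N_n$ yields $e^{\tilde{H}(y_n, \log N_n)} \leq N_n = N_{n+1}$. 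In case C ($y_n \leq 1 - 4/N_n$, $N_{n+1} = N_n - 1$), part (3) applied with $H = \log N_n$ yields $e^{\tilde{H}(y_n, \log N_n)} \leq N_n - 1 = N_{n+1}$.

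The only subtlety to verify separately is that $N_n \geq 4$ persists along the chain, since the numerical step in the proof of part (3) of Lemma \ref{entropy-increase-bound} uses $e^{-H} \leq 1/4$, i.e., $H \geq \log 4$. This is easy to check inductively: $N_0 \geq 4$ by construction, and $N_n$ can be decremented only through case C, which requires $y_n \leq 1 - 4/N_n$, an impossibility when $N_n = 4$ (outside the measure-zero event $y_n = 0$). I do not anticipate a serious obstacle here; once the monotonicity reduction is in place, the entire inductive step reduces to matching each case of the Markov chain to the corresponding part of Lemma \ref{entropy-increase-bound}.
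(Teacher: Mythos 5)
Your argument is correct and matches the paper's: both proceed by induction, use monotonicity of $\tilde{H}$ in $H$, and match the three cases of the Markov chain to the three parts of \cref{entropy-increase-bound}. (Incidentally, the third inequality of \cref{entropy-increase-bound} should read $y \leq 1 - 4e^{-H}$ as its own proof requires, not $y \geq$; and the constraint $N_n \geq 4$ you worry about is automatic in case C from $0 \leq y_n \leq 1 - 4/N_n$, so no separate induction is needed there.)
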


\begin{proof} We prove this by induction on $n$. The case $n=0$ follows from the definition of $N_0$. For the induction step, observe that 
\[ N_{n+1} \geq e^{  - (1-y_n) \log(1-y_n) - y_n \log(y_n)   + y_n \log N_n} \geq e^{  - (1-y_n) \log(1-y_n) - y_n \log(y_n)   + y_n H_n} = e^{ H_{n+1}}\]
by \cref{entropy-increase-bound} (applied to $\log N_n$ and $y_n$), the induction hypothesis, and \eqref{next-entropy-formula}. \end{proof}

For $N \geq 4$ an integer, let  \[ \pi(N)  =  \frac{1}{ (4+e) e^{ (4+e) } }    \frac{N e^N }{ (N- 4)! }   . \]

\begin{lemma}\label{limiting-distribution} We have 
\[ \lim_{N \to \infty} \mathbb P ( N_n = N) = \pi(N).\] \end{lemma}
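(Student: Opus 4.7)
The plan is to recognize $\{N_n\}_{n \geq 0}$ as an irreducible aperiodic positive recurrent Markov chain on $\{4,5,6,\dots\}$, verify that $\pi$ is its stationary distribution by checking detailed balance, and then conclude by the standard Markov chain convergence theorem (so $\lim_{n\to\infty} \mathbb P(N_n = N) = \pi(N)$; the ``$N\to\infty$'' in the statement appears to be a typo for ``$n\to\infty$'', as $\pi(N)$ depends on $N$).

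First I would identify the transition probabilities. Because $y_n$ is uniform on $[0,1]$ and independent of $y_0,\dots,y_{n-1}$ (and hence of $N_n$), from state $N$ the chain moves to $N+1$ with probability $e/N$, stays at $N$ with probability $(4-e)/N$, and moves to $N-1$ with probability $1 - 4/N$. These sum to $1$; all three are nonnegative for $N \geq 4$, since $4 > e$, and the down-probability is $0$ exactly at $N = 4$. The chain is irreducible on $\{4,5,6,\dots\}$ because from any $N \geq 4$ we can reach $N+1$ with positive probability $e/N$ and (for $N > 4$) reach $N-1$ with positive probability $1-4/N$; and it is aperiodic since the self-loop probability $(4-e)/N$ is strictly positive.

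Next I would verify detailed balance: for every $N \geq 4$,
\[
\pi(N)\cdot\frac{e}{N} = \frac{1}{(4+e)e^{4+e}}\cdot\frac{e^{N+1}}{(N-4)!} = \pi(N+1)\cdot\left(1-\frac{4}{N+1}\right),
\]
where the right-hand equality uses $\pi(N+1) = \frac{1}{(4+e)e^{4+e}}\cdot\frac{(N+1)e^{N+1}}{(N-3)!}$ and $1 - 4/(N+1) = (N-3)/(N+1)$. This shows that $\pi$ is a (reversible) stationary measure. To check normalization, substitute $k = N-4$:
\[
\sum_{N=4}^{\infty}\frac{Ne^N}{(N-4)!} = e^4\sum_{k=0}^{\infty}\frac{(k+4)e^k}{k!} = e^4\Bigl(e\cdot e^e + 4 e^e\Bigr) = (4+e)e^{4+e},
\]
so $\sum_{N\geq 4}\pi(N) = 1$ and $\pi$ is a bona fide probability distribution.

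Since $\pi$ is a normalizable stationary distribution for an irreducible countable-state chain, the chain is positive recurrent; combined with aperiodicity, the standard Markov chain convergence theorem yields $\mathbb P(N_n = N) \to \pi(N)$ as $n \to \infty$ for every $N \geq 4$, regardless of the (deterministic) initial state $N_0$. There is no real obstacle — the only substantive computation is the detailed balance verification and the evaluation of the normalizing sum, both of which reduce to shifted exponential series identities.
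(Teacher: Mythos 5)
Your proof is correct and follows essentially the same approach as the paper: identify the transition probabilities, verify detailed balance to show $\pi$ is a reversible stationary distribution, check normalization by the shifted exponential sum, and invoke the standard convergence theorem for irreducible aperiodic positive recurrent chains. You are also right that the $\lim_{N\to\infty}$ in the statement is a typo for $\lim_{n\to\infty}$.
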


\begin{proof} The sequence $N_n$ is a Markov chain with state space the integers at least $4$, and the following transition probabilities: The probability of transitioning from $N$ to $N+1$ is  $ \frac{e}{N} $, the probability of transitioning from $N$ to $N-1$ is $ 1- \frac{4}{N} $, and the probability of transitioning from $N$ to $N$ is the remaining $ \frac{4-e }{N}  $.

We claim that $\pi$ is a time-reversible stationary distribution of this Markov chain. 

We can check that $\pi$ is a probability measure using 
\[ \sum_{N=4}^{\infty} N  e^N / (N-4)! =  \sum_{N=0}^{\infty} (N+4 ) e^ {4 +N }/ N! = \sum_{N=0}^{\infty} 4 e^4  e^N/ N!  + \sum_{N=1}^{\infty} e^{ 5 + (N-1)} / (N-1)! \] \[ = 4 e^4 e^e + e^5 e^4 = (4 + e) e^{ 4+e} . \] 

To check that $\pi$ is stationary and time-reversible, it suffices to check that \[  \frac{e}{N}  \pi (N)  = \left( 1- \frac{4}{ N+1} \right)  \pi (N+1)  \] for all $N$, which is true by definition.

Because the probabilities of transitioning from $N$ to $N+1$ or from $N$ to $N-1$ are positive (in the second case, as long as $N-1\geq 4$, the Markov chain $N_n$ is irreducible. Because it is irreducible and has a stationary distribution $\pi$, it is positive recurrent \cite[Theorem 21.13]{lpw}. Because each state has positive probability of transitioning to itself, the Markov chain is also aperiodic, and thus it converges to the unique stationary distribution $\pi$ \cite[Theorem 21.16]{lpw}.\end{proof}

%

\begin{lemma}\label{summation-bound} For $C\geq 8$, we have
\[ \sum_{ N = C}^\infty \pi (N) \leq   \frac{1}{ (4+e) e^{ (4+e) } }  \frac{  C^5 e^C}{ C!} \] 
\end{lemma}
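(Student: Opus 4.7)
The plan is to reduce the claim, after dividing through by the normalization constant $\frac{1}{(4+e)e^{(4+e)}}$, to proving
\[\sum_{N=C}^\infty \frac{Ne^N}{(N-4)!} \leq \frac{C^5 e^C}{C!} \quad \text{for } C \geq 8,\]
and then to establish this by a telescoping majorant. Writing $a(N) = Ne^N/(N-4)!$ and $g(N) = N^5 e^N/N!$, I would first observe that since $g(N) \to 0$, it suffices to verify
\[a(N) \leq g(N) - g(N+1) \quad \text{for all } N \geq 8, \tag{$\ast$}\]
because summing $(\ast)$ over $N \geq C$ telescopes to $g(C) = C^5 e^C/C!$.

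To verify $(\ast)$, I would compute
\[g(N) - g(N+1) = \frac{e^N\bigl(N^5 - e(N+1)^4\bigr)}{N!}, \qquad a(N) = \frac{N^2(N-1)(N-2)(N-3)\,e^N}{N!},\]
so that $(\ast)$ is equivalent, after clearing $e^N/N!$, to
\[N^5 - e(N+1)^4 \geq N^2(N-1)(N-2)(N-3).\]
Using $N^3 - (N-1)(N-2)(N-3) = 6N^2 - 11N + 6$, this simplifies to the purely polynomial inequality
\[h(N) := 6N^4 - 11N^3 + 6N^2 - e(N+1)^4 \geq 0.\]

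To finish, I would verify $h(N) \geq 0$ for $N \geq 8$ by the following three checks, all routine arithmetic: first, $h(8) = 19328 - e\cdot 6561 > 1000 > 0$; second, the quadratic $h''(N) = (72-12e)N^2 - (66+24e)N + (12-12e)$ has its larger root below $4$, so $h''(N) > 0$ for $N \geq 8$; third, $h'(8) = 12288 - 2112 + 96 - 2916 e > 0$. Combining the last two, $h'(N) \geq h'(8) > 0$ on $[8,\infty)$, and hence $h(N) \geq h(8) > 0$ on the same interval, finishing $(\ast)$.

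The only subtle point is the choice of the majorant $g$. A naive geometric-ratio bound based on $\pi(N+1)/\pi(N) = (N+1)e/(N(N-3))$ is just barely insufficient at $C = 8$ (the very first case allowed by the hypothesis), because that ratio tends to $0$ only like $e/N$ and gives a bound of order $\pi(C)\cdot C(C-3)/(C(C-3)-(C+1)e)$, which exceeds $C^5 e^C/C!$ at $C=8$. The choice $g(N) = N^5 e^N/N!$ is forced by the observation that $g(N)/a(N) = N^3/((N-1)(N-2)(N-3)) \to 1$, so $g$ has exactly the right tail shape, and the threshold $N \geq 8$ in the statement is precisely the point where the remainder polynomial $h(N)$ first becomes nonnegative. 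This matching of constants is where the real content of the estimate lies; everything else is bookkeeping.
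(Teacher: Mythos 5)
Your proof is correct and follows essentially the same route as the paper: reduce to the telescoping inequality $a(N)\leq g(N)-g(N+1)$ with the majorant $g(N)=N^5e^N/N!$, which after clearing $e^N/N!$ becomes the single polynomial inequality $N^2(N-1)(N-2)(N-3)\leq N^5-e(N+1)^4$ for $N\geq 8$. The one place you genuinely improve on the written proof is the final step: the paper dismisses this inequality with "by a computer algebra check holds for $N\geq 8$," whereas you supply an honest elementary verification via the sign of $h''$ on $[8,\infty)$, the sign of $h'(8)$, and the sign of $h(8)$ — and you also observe that $N=8$ is exactly the first integer where $h(N)\geq 0$, explaining why the hypothesis $C\geq 8$ is sharp for this method.
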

\begin{proof} By a telescoping sum, it suffices to check that for $N \geq 8 $, \[  \frac{1}{ (4+e) e^{ (4+e) } }    \frac{N e^N }{ (N- 4)! }  = \pi(N) \leq  \frac{1}{ (4+e) e^{ (4+e) } }\frac{ N^5  e^N}{ N !} -  \frac{1}{ (4+e) e^{ (4+e) } }\frac{  (N+1)^5 e^{N+1}}{ (N+1)!}. \]   Factoring out  $\frac{e^N}{ (4+e) e^{ (4+e)  } N!} $, it suffices to prove
\[ N^2 (N-1) (N-2) (N-3) \leq  N^5   - e (N+1)^4 \] 
which one can check holds for $N \geq  8$.  \end{proof} 

\begin{lemma}\label{dp-bound} For $L > 7$, the probability that the entropy of a GEM random variable is at least $\log L $ is at most
\[ \leq \frac{1}{ \sqrt{2\pi} (4+e) e^{ (4+e) } } \frac {  L^{9/2}   e^{2L} }{  L^L} .\] \end{lemma}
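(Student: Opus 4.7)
The plan is to exploit the stationarity of the Poisson–Dirichlet distribution under the Markov chain $p_n$ together with the coupling $N_n \geq e^{H_n}$ from \cref{coupling-comparison}. Because $p_n$ is Poisson–Dirichlet distributed for every $n$, its entropy $H_n$ has the same distribution as $H_0$. So for every $n \geq 0$,
\[\mathbb P(H_0 \geq \log L) = \mathbb P(H_n \geq \log L) \leq \mathbb P(N_n \geq e^{H_n}) \leq \mathbb P(N_n \geq \lceil L \rceil).\]

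The Markov chain $N_n$ is irreducible, aperiodic, and positive recurrent on $\{4,5,\dots\}$, with time-reversible stationary measure $\pi$, as was already shown in the proof of \cref{limiting-distribution}. It therefore converges in total variation to $\pi$ regardless of the initial distribution, and in particular $\mathbb P(N_n \geq \lceil L \rceil) \to \sum_{N \geq \lceil L \rceil} \pi(N)$. Letting $n \to \infty$ in the inequality above, and applying \cref{summation-bound} with $C = \lceil L \rceil \geq 8$ (which is valid since $L > 7$), yields
\[\mathbb P(H_0 \geq \log L) \leq \frac{1}{(4+e)\,e^{4+e}} \cdot \frac{\lceil L \rceil^{5}\, e^{\lceil L \rceil}}{\lceil L \rceil!}.\]

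The remaining step is an elementary estimate. Using Stirling's inequality $N! \geq \sqrt{2\pi N}\,(N/e)^N$ with $N = \lceil L \rceil$, the right-hand side is at most
\[\frac{1}{\sqrt{2\pi}\,(4+e)\,e^{4+e}} \cdot \frac{\lceil L \rceil^{9/2}\, e^{2\lceil L \rceil}}{\lceil L \rceil^{\lceil L \rceil}}.\]
To replace $\lceil L \rceil$ by $L$, I would show the real-variable function $\varphi(x) = x^{9/2}\, e^{2x}/x^x$ is decreasing on $[7,\infty)$: a direct computation gives $(\log \varphi)'(x) = 9/(2x) + 1 - \log x$, which is itself decreasing in $x$, and is already negative at $x = 7$ (since $\log 7 \approx 1.946 > 1 + 9/14$). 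Hence $\varphi(\lceil L \rceil) \leq \varphi(L)$ whenever $L > 7$, yielding the claimed bound.

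I do not anticipate significant obstacles: the coupling, the positive recurrence of $N_n$, and Stirling's inequality are all established in the excerpt or are standard. The only small subtlety is justifying that the total variation convergence of $N_n$ to $\pi$ gives $\lim_n \mathbb P(N_n \geq \lceil L \rceil) = \sum_{N \geq \lceil L \rceil} \pi(N)$, but this follows immediately from the definition of the total variation distance applied to the measurable set $\{N : N \geq \lceil L \rceil\}$.
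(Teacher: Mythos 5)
Your proposal follows essentially the same route as the paper's own proof: stationarity of the entropy, the coupling $N_n \geq e^{H_n}$ from \cref{coupling-comparison}, convergence of $N_n$ to $\pi$, \cref{summation-bound} with $C=\lceil L\rceil$, Stirling, and then monotonicity of $x^{9/2}e^{2x}/x^x$ to replace $\lceil L\rceil$ by $L$. All of these steps are correct. One notational slip is worth fixing: the chain
\[
\mathbb P(H_n \geq \log L) \leq \mathbb P(N_n \geq e^{H_n}) \leq \mathbb P(N_n \geq \lceil L\rceil)
\]
does not parse — the event $\{N_n \geq e^{H_n}\}$ holds almost surely by \cref{coupling-comparison}, so the middle probability equals $1$ and the second inequality fails. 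What you want is
\[
\mathbb P(H_n \geq \log L) = \mathbb P(e^{H_n} \geq L) \leq \mathbb P(N_n \geq L) = \mathbb P(N_n \geq \lceil L\rceil),
\]
where the inequality uses the pointwise bound $N_n \geq e^{H_n}$ to get an inclusion of events, and the final equality uses that $N_n$ is integer-valued. With that repair the argument is sound and matches the paper's.
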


\begin{proof} Because for each $n$, $ H_n$ is the entropy of a GEM random variable, the probability that the entropy of a GEM random variable is at least $\log L$ is equal to $\mathbb P(H_n\geq \log L)$ for all $n$ and thus is equal to
\[ \lim_{n \to \infty} \mathbb P ( H_n\geq \log L ) =  \lim_{n \to \infty} \mathbb P ( e^{H_n} \geq  L )  \leq \lim_{n \to \infty} \mathbb P ( N_n \geq  L ) .\] We use \cref{coupling-comparison} to relate $H_n$ and $N_n$.

So it suffices to check for $L> 7 $ that 
\[  \lim_{n \to \infty} \mathbb P ( N_n \geq L ) \leq \frac{1}{ \sqrt{2\pi} (4+e) e^{ (4+e) } } \frac {  L^{9/2}   e^{2L} }{  L^L} .\]

Let $C = \lceil L \rceil$. Then we have 
\[  \lim_{n \to \infty} \mathbb P ( N_n \geq  L )  =\lim_{n \to \infty} \mathbb P ( N_n \geq  C )= \sum_{ N = C}^\infty \pi (N)  \leq   \frac{1}{ (4+e) e^{ (4+e) } }  \frac{  C^5 e^C}{ C!}   \] \[\leq \frac{1}{ \sqrt{2\pi} (4+e) e^{ (4+e) } } \frac {  C^{9/2}   e^{2C} }{  C^C}   \leq \frac{1}{ \sqrt{2\pi} (4+e) e^{ (4+e) } } \frac {  L^{9/2}   e^{2L} }{  L^L} \]
using, respectively, the fact that $N_n$ is always an integer, \cref{limiting-distribution}, \cref{summation-bound}, the explicit Stirling's formula $C! \geq \sqrt{2\pi } e^{-C} C^{ (C+1/2) }$, and the fact that $\frac {  L^{9/2}   e^{2L} }{  L^L} $ is a decreasing function of $L$ for $L>7$ (which may be checked straightforwardly by calculating the logarithmic derivative).

\end{proof}

\begin{remark}  It is not so hard to calculate that the mean of $H_n$ is $1$ and the variance is $1 - \frac{\pi^2}{12} = .1775 \ldots$, so the true distribution of $e^{H_n}$ may be quite concentrated around $e$.  However, using this mean and variance and applying Chebyshev's inequality would give a considerably worse tail bound for large $L$. \end{remark}

The following result will also be convenient:

\begin{lemma}\label{no-atoms} For any $ x\in \mathbb R$, the probability that the entropy of a GEM random variable is exactly $x$ is zero. \end{lemma}

The same argument can be used to show the measure on $\mathbb R$ given by the distribution of the entropy of a GEM random variable is absolutely continuous with respect to Lebesgue measure, but we will not need this stronger statement.

\begin{proof} By \eqref{next-entropy-formula}, there are random variables $H, \tilde{H}, y$ with $H, \tilde{H}$ distributed as the entropy of a GEM random variable, $y$ independent of $H$, and $ \tilde{H} = - (1-y) \log(1-y) - y \log(y)  + y H$. It suffices to show the probability that $\tilde{H}=x$ is zero. By Fubini's theorem applied to calculate the integral of the characteristic function of the set where $\tilde{H}=x$ against the product of the probability measure of $H$ and the probability measure of $y$, it suffices to show for each $H$ that the probability that $- (1-y) \log(1-y) - y \log(y)  + y H=x$ is zero for $y$ uniformly distributed in $[0,1]$. Since $- (1-y) \log(1-y) - y \log(y)  + y H$ is a nonconstant analytic function on $y\in [0,1]$, it takes the value $x$ for only finitely many $y$, so it indeed takes the value $x$ with probability zero.

\end{proof}

\subsection{Sums of Dirichlet convolutions of von Mangoldt functions}

Fix an integer $\omega \geq 1$. Let $n_1,\dots, n_\omega$ be a tuple of natural numbers with $\sum_{i=1}^{\omega} n_i = n$. Define
\[F_{n_1,\dots,n_{\omega}} (f) = \sum_{ \substack { (f_1,\dots, f_{\omega}) \in (\mathbb F_q[t]^+) ^{\omega} \\ \deg f_i = n_i \\ \prod_{i=1}^\omega f_i = f}} \prod_{i=1}^{\omega} \Lambda (f_i) .\]

Because $\Lambda$ is supported primarily on primes, $F_{n_1,\dots, n_{\omega}}$ is an arithmetic function supported mainly on polynomials whose prime factors have degrees $n_1,\dots, n_\omega$.  More precisely, for squarefree $f$, $F_{n_1,\dots, n_{\omega}}(f)$ is simply the indicator function of polynomials whose prime factors have degrees $n_1,\dots, n_\omega$, times a constant factor $\prod_{i=1}^\omega n_i \cdot \prod_{k=1}^{\infty} \abs{ \{ j \mid n_j =k \} } !$. 

If we wish to count polynomials whose prime factors have specified degrees in an arithmetic progression, it suffices to sum the functions $F_{n_1,\dots, n_{\omega}}(f)$ and adjust for the non-squarefree terms using a sieve. We will do exactly this, first handling in \cref{vm-c-1} the sums of  $F_{n_1,\dots, n_{\omega}}$ in arithmetic progressions, and later sieving.

This first lemma will help us work with Dirichlet convolutions such as $F_{n_1,\dots, n_{\omega}}$, and should help more generally in studying other Dirichlet convolutions:

\begin{lemma}\label{convolution-lemma} Let $n_1$ and $n_2$ be natural numbers with $n_1+n_2=n$. Let $\rho_1$ be a representation of $S_{n_1}$ and $\rho_2$ a representation of $S_{n_2}$. Let $\rho= \operatorname{Ind}_{S_{n_1} \times S_{n_2}}^{S_n} \rho_1 \otimes \rho_2$.  Then

\begin{enumerate}
\item We have \begin{equation}\label{Dirichlet-convolution-formula}  F_{\rho} (f) = \sum_{\substack{  f_1 \in  \mathcal M_{n_1}, f_2 \in \mathcal M_{n_2} \\ f_1 f_2 = f}}  F_{\rho_1}(f_1) F_{\rho_2}(f_2) .\end{equation}

\item We have \[ V_\rho = V_{\rho_1} \otimes V_{\rho_2} \] as representations of $GL_{m-1}$.

\item We have \begin{equation}\label{VD-convolution-formula} V^d_\rho = \bigoplus_{e=0}^d V^{e}_{\rho_1} \otimes V^{ d - e}_{\rho_2} \end{equation} as representations of $GL_m$. \end{enumerate}

\end{lemma}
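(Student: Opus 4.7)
The strategy for all three parts is the same Frobenius reciprocity identity: for a $G$-representation $V$, an $H$-representation $W$ (with $H\subseteq G$), one has $(V\otimes \operatorname{Ind}_H^G W)^G = (V\otimes W)^H$. Part (2) falls out immediately from this, and parts (1) and (3) require one additional geometric/combinatorial input before reducing to it.

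For part (1), I would first decompose $V_f$ as an $S_{n_1}\times S_{n_2}$-representation by sorting the defining tuples $(a_1,\dots,a_n)$ according to the factorization: the first $n_1$ coordinates define some $f_1$ of degree $n_1$ with $f_1\mid f$, and the last $n_2$ coordinates define $f_2=f/f_1$. This gives a natural $S_{n_1}\times S_{n_2}$-equivariant identification $V_f = \bigoplus_{f_1 f_2=f,\ \deg f_i=n_i} V_{f_1}\otimes V_{f_2}$. Applying the Frobenius identity to $(V_f\otimes \rho)^{S_n}$ with $\rho=\operatorname{Ind}(\rho_1\otimes\rho_2)$, then taking traces of Frobenius, yields \eqref{Dirichlet-convolution-formula}. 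Part (2) is similar but easier: $(\mathbb{C}^{m-1})^{\otimes n}=(\mathbb{C}^{m-1})^{\otimes n_1}\otimes(\mathbb{C}^{m-1})^{\otimes n_2}$ and $\sgn_{S_n}|_{S_{n_1}\times S_{n_2}}=\sgn_{S_{n_1}}\otimes\sgn_{S_{n_2}}$, so the identity gives $V_\rho=V_{\rho_1}\otimes V_{\rho_2}$ directly, with the $GL_{m-1}$-action respected because it only touches the $(\mathbb{C}^{m-1})^{\otimes n}$ factor.

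Part (3) is the main obstacle, because we now take invariants under $H=S_{n-d}\times S_d$ while $\rho$ is induced from $K=S_{n_1}\times S_{n_2}$, and these two subgroups do not nest. The plan is to apply Mackey's formula to $\operatorname{Res}_H \operatorname{Ind}_K^{S_n}(\rho_1\otimes\rho_2)$. The double cosets $H\backslash S_n/K$ are parameterized by an integer $e$, which may be taken to record how many of the $n_1$ indices on which $\rho_1$ lives are placed inside the $S_d$-block under a chosen representative; $e$ ranges over $\max(0,d-n_2)\le e\le \min(n_1,d)$. For each such $e$ the stabilizer is $H'=(S_{n_1-e}\times S_{n_2-(d-e)})\times(S_e\times S_{d-e})$, and the restricted representation is $\rho_1|_{S_{n_1-e}\times S_e}\otimes \rho_2|_{S_{n_2-(d-e)}\times S_{d-e}}$. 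Applying the Frobenius identity again at the level of $H'\subset H$, together with the factorizations
\[(\mathbb{C}^m)^{\otimes(n-d)}|_{H'}=(\mathbb{C}^m)^{\otimes(n_1-e)}\otimes(\mathbb{C}^m)^{\otimes(n_2-(d-e))},\qquad \sgn_{S_{n-d}}|_{H'}=\sgn_{S_{n_1-e}}\otimes\sgn_{S_{n_2-(d-e)}},\]
the $e$-th summand becomes exactly $V^e_{\rho_1}\otimes V^{d-e}_{\rho_2}$. The terms with $e>n_1$ or $d-e>n_2$ (outside the Mackey range) contribute zero on the right-hand side of \eqref{VD-convolution-formula} by the convention that $V^e_{\rho'}=0$ whenever $e$ exceeds the degree of the symmetric group, so the sum can be written uniformly as $e=0,\dots,d$.

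The only nontrivial bookkeeping is tracking how $\sgn_{S_{n-d}}$ and the tensor power $(\mathbb{C}^m)^{\otimes(n-d)}$ split across the four factors of $H'$ in a way compatible with the reorganization into the $\rho_1$-block and the $\rho_2$-block; once this is set up, the identification with $V^e_{\rho_1}\otimes V^{d-e}_{\rho_2}$ is purely formal.
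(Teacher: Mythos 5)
Your proof follows the paper's argument essentially exactly: for (1) you use the same Frobenius-equivariant decomposition $V_f=\bigoplus_{f_1f_2=f}V_{f_1}\otimes V_{f_2}$ over $\overline{\mathbb F}_q$ together with Frobenius reciprocity (the one detail you compress — that the trace picks out exactly the Frobenius-fixed summands, namely those with $f_1,f_2\in\mathbb F_q[t]$ — is the content of the final step and worth stating explicitly); for (2) you use the same one-line reciprocity; and for (3) you run Mackey's formula with the same double-coset parameter $e$, the same stabilizer, and the same factorization of the tensor power and sign character, arriving at $V^e_{\rho_1}\otimes V^{d-e}_{\rho_2}$ just as the paper does. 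This is correct and not a genuinely different route.
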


\begin{proof}

\begin{enumerate}

\item By definition, $V_f$ is the free vector space generated by tuples $(a_1,\dots, a_n ) \in \overline{\mathbb F}_q$ with $\prod_{i=1}^{n}  (t-a_i) = f$. For each pair $f_1, f_2  \in \overline{\mathbb F_q}[t]^+$ with $\deg f_1 =n_1, \deg f_2 = n_2, f_1f_2=f$, there is a linear map $\mu_{f_1,f_2} \colon V_{f_1} \otimes V_{f_2} \to V_f$ that maps $(a_1,\dots, a_{n_1}) \otimes (b_{n_1},\otimes , b_{n_2})$ to $(a_1,\dots, a_{n_1},b_1,\dots, b_{n_2})$. Summing the $\mu_{f_1,f_2}$ defines a linear map
\[\overline{\mu} \colon \bigoplus_{ \substack{ f_1, f_2 \in \overline{\mathbb F_q}[t]^+ \\ \deg f_i = n_i \\ f_1f_2 =f}} V_{f_1} \otimes V_{f_2} \to V_f.\]
We can check that $\overline{\mu}$ is an isomorphism since the basis vectors of $V_f$ are in its image, the individual maps $\mu_{f_1,f_2}$ are injective, and the image of each map $\mu_{f_1,f_2}$ is spanned by a disjoint set of basis vectors, so $\overline{\mu}$ is injective as well. Thus
\[ V_f = \bigoplus_{ \substack{ f_1, f_2 \in \overline{\mathbb F_q}[t]^+ \\ \deg f_i = n_i \\ f_1f_2 =f}} V_{f_1} \otimes V_{f_2} .\]
Hence by Frobenius reciprocity \[ (V_f \otimes \rho  )^{S_n} = \Bigl(V_f \otimes \operatorname{Ind}_{S_{n_1} \times S_{n_2}}^{S_n} ( \rho_1  \otimes \rho_2 )\Bigr)^{S_n} =  (V_f \otimes \rho_1  \otimes \rho_2 )^{S_{n_1} \times S_{n_2} } \] \[ =  \bigoplus_{ \substack{ f_1, f_2 \in \overline{\mathbb F_q}[t]^+ \\ \deg f_i = n_i \\ f_1f_2 =f}}  ( V_{f_1} \otimes V_{f_2} \otimes \rho_1 \otimes \rho_2)^{S_{n_1} \times S_{n_2} } =  \bigoplus_{ \substack{ f_1, f_2 \in \overline{\mathbb F_q}[t]^+ \\ \deg f_i = n_i \\ f_1f_2 =f}}  ( V_{f_1}  \otimes \rho_1 )^{S_{n_1}}  \otimes (V_{f_2}  \otimes \rho_2 )^{ S_{n_2} }. \] 

Now consider how Frobenius acts on this sum. Frobenius sends \[ ( V_{f_1}  \otimes \rho_1  )^{S_{n_1} } \otimes (V_{f_2}  \otimes \rho_2 )^{ S_{n_2} }\] to \[ ( V_{\operatorname{Frob}_q(f_1)}  \otimes \rho_1  )^{S_{n_1}} \otimes (V_{\operatorname{Frob}_q(f_2)}  \otimes \rho_2 )^{ S_{n_2} }.\]  Thus, Frobenius permutes the summands of $(V_f \otimes \rho  )^{S_n}$ and hence the trace of Frobenius on $(V_f \otimes \rho  )^{S_n}$ is the sum of the trace of Frobenius on each invariant summand. The summand associated to $f_1,f_2$ is invariant if and only if $f_1, f_2 \in \mathbb F_q[t]$, in which case the trace of Frobenius on that summand is  $ F_{\rho_1}(f_1) F_{\rho_2}(f_2) $. This gives \eqref{Dirichlet-convolution-formula}. 

\item  We have \[ \Bigl (  (\mathbb C^m)^{n} \otimes \rho \otimes \sgn \Bigr )^{S_n} =   \Bigl (  (\mathbb C^m)^{n} \otimes \operatorname{Ind}_{S_{n_1} \times S_{n_2}}^{S_n}( \rho_1 \otimes \rho_2)
 \otimes \sgn \Bigr )^{S_n} \] \[ =  \Bigl (   \operatorname{Ind}_{S_{n_1} \times S_{n_2}}^{S_n} \Bigl ( (\mathbb C^m)^{n_1} \otimes (\mathbb C^m)^{n_2} \otimes \rho_1 \otimes \rho_2 \otimes \sgn_{S_{n_1}} \otimes \sgn_{S_{n_2}} \Bigr) \Bigr)^{S_n} \] \[ =   \Bigl( (\mathbb C^m)^{n_1} \otimes  \rho_1 \otimes\sgn_{S_{n_1}} \Bigr)^{S_{n_1}} 
 \otimes  \Bigl( (\mathbb C^m)^{n_2} \otimes  \rho_2 \otimes  \sgn_{S_{n_2}}  \Bigr)^{S_{n_2} }= V_{\rho_1} \otimes V_{\rho_2} . \] 
\item By definition, \[ V_\rho^d = \Bigl (  (\mathbb C^m)^{\otimes n-d} \otimes \operatorname{Ind}_{S_{n_1} \times S_{n_2}}^{S_n} ( \rho_1  \otimes \rho_2 ) \otimes \sgn_{S_{n-d} }  \Bigr)^{S_{n-d} \times S_d} .\]


 By \cref{induction-restriction}, as a representation of $S_{n-d} \times S_d$, 
\[  (\mathbb C^m)^{\otimes n-d} \otimes \operatorname{Ind}_{S_{n_1} \times S_{n_2}}^{S_n} ( \rho_1  \otimes \sgn_{S_{n_1} } \otimes \rho_2\otimes \sgn_{S_{n_2}} ) \otimes \sgn_{S_{n-d} }  \]
\[ = \bigoplus_{e =\max(0, d- n_2 ) }^{ \min(n_1 ,d) }   \operatorname{Ind}_{ S_e \times S_{d-e} \times S_{n_1-e} \times  S_{n_2-d+e}  }^{S_d\times S_{n-d}}  \Bigl( (\mathbb C^m)^{ (n_1-e) + (n_2 -d+e) } \otimes \rho_1  \otimes \sgn_{S_{n_1-e} } \otimes \rho_2\otimes \sgn_{S_{n_2-d+e}} )  \] 
and thus
\[ \Bigl (  (\mathbb C^m)^{\otimes n-d} \otimes \operatorname{Ind}_{S_{n_1} \times S_{n_2}}^{S_n} ( \rho_1\otimes \rho_2\otimes \sgn_{S_{n_2}} ) \Bigr)^{ S_{n-d} \times S_d} \]
\[ = \bigoplus_{e =\max(0, d- n_2 ) }^{ \min(n_1 ,d) } \Bigl( (\mathbb C^m)^{ (n_1-e) + (n_2 -d+e) } \otimes \rho_1  \otimes \sgn_{S_{n_1-e} } \otimes \rho_2\otimes \sgn_{S_{n_2-d+e}} )^{ S_e \times S_{d-e} \times S_{n_1-e} \times  S_{n_2-d+e} } \]
\[ = \bigoplus_{e =\max(0, d- n_2 ) }^{ \min(n_1 ,d) }  \Bigl ( (\mathbb C^m)^{n_1-e} \otimes \rho_1 \otimes \sgn_{S_{n_1-e} }  \Bigr) ^{ S_{e} \times S_{n_1-e}} \otimes \Bigl ( (\mathbb C^m)^{n_2-d +e } \otimes \rho_2 \otimes \sgn_{S_{n_2-d+e} }  \Bigr) ^{ S_{d-e} \times S_{n_2-d +e }}  \]
\[ = \bigoplus_{e =\max(0, d- n_2 ) }^{ \min(n_1 ,d) } V^e_{\rho_1} \otimes V^{d-e}_{\rho_2 } = \bigoplus_{e =0 }^{ d} V^e_{\rho_1} \otimes V^{d-e}_{\rho_2}   \] 
since the terms for $e>n_1$ and $e< d-n_2$ vanish, giving \eqref{VD-convolution-formula}. 
\end{enumerate} 
\end{proof}

\begin{lemma}\label{vm-c-1} 
For $g$ squarefree of degree $m \geq 2$, a natural number $n \geq m$, a tuple $n_1,\dots, n_{\omega}$ of natural numbers summing to $n$, and a residue class $a \in (\mathbb F_q[t]/g)^\times$, we have \[\left | \sum_{ \substack{ f \in \mathcal M_n \\  f \equiv a \mod g }}  F_{n_1,\dots, n_\omega} (f)  -  \frac{1}{ \phi(g) }  \sum_{ \substack{ f \in\mathcal M_n  }}  F_{n_1,\dots, n_\omega} (f)  \right| \] \[ \leq  \binom{n}{n_1,\dots, n_\omega}  \Bigl ( 2^{m+1-\omega}  \binom{ n+2m +1 } { 3m+1} q^{ \frac{n-m}{2}}  + 2^{ m-\omega} \binom{ n+2m-3  }{ 3m-4}  q^{\frac{n+1-m}{2}}  \Bigr)   . \]
\end{lemma}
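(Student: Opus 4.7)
The plan is to reduce the estimate for $F_{n_1,\dots,n_\omega}$ to an application of \cref{squarefree-bound-precise}, just as in the proof of \cref{primes-1} but with the Dirichlet convolution structure built in at the representation-theoretic level.

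First, I will write each $\Lambda(f_i)$ using \cref{mangoldt-alternating} as $\sum_{j_i=0}^{n_i-1}(-1)^{j_i}F_{\wedge^{j_i}\operatorname{std}^{(n_i)}}(f_i)$, substitute into the definition of $F_{n_1,\dots,n_\omega}$, and apply \cref{convolution-lemma}(1) iteratively. This gives $F_{n_1,\dots,n_\omega} = \sum_{(j_1,\dots,j_\omega)}(-1)^{\sum j_i}F_{\rho_{\vec{j}}}$, where $\rho_{\vec{j}} = \operatorname{Ind}_{S_{n_1}\times\cdots\times S_{n_\omega}}^{S_n}\bigotimes_i\wedge^{j_i}\operatorname{std}^{(n_i)}$. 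Applying \cref{squarefree-bound-precise} to each $\rho_{\vec j}$, the triangle inequality, and the additivity of $C_1$ and $C_2$ in $\rho$, the error is bounded by $2(C_1(\rho)+C_2(\rho)\sqrt{q})q^{(n-m)/2}$ for $\rho = \operatorname{Ind}_{S_{n_1}\times\cdots\times S_{n_\omega}}^{S_n}\bigotimes_i\rho_0^{(i)}$, with $\rho_0^{(i)} = \bigoplus_{j=0}^{n_i-1}\wedge^j\operatorname{std}^{(n_i)}$ the same representation used in the von Mangoldt subsection.

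The core task is then to bound $C_1(\rho)$ and $C_2(\rho)$ in terms of the $V_{\rho_0^{(i)}}$ and $V^d_{\rho_0^{(i)}}$, for which \cref{mangoldt-V-trace} already supplies generating function formulas. For $C_2$: by \cref{convolution-lemma}(2), $V_\rho = \bigotimes_i V_{\rho_0^{(i)}}$, so $\chi_{V_\rho}(\lambda) = \frac{1}{2^\omega}\prod_i [u_i^{n_i}]G(u_i,\lambda)$ where $G(u,\lambda) = \prod_j \frac{1+u\lambda_j}{1-u\lambda_j}$. A Leibniz/multinomial expansion gives
\[\frac{\partial^{m-1}}{\partial\lambda_1\cdots\partial\lambda_{m-1}}\prod_i G(u_i,\lambda)\Big|_{\lambda=1} = \prod_i\left(\frac{1+u_i}{1-u_i}\right)^{m-1}\left(\sum_i \frac{2u_i}{1-u_i^2}\right)^{m-1}.\]
The key coefficient-wise inequalities $\frac{1+u}{1-u}\le_{coef}\frac{1}{(1-u)^2}$, $\frac{1}{1-u^2}\le_{coef}\frac{1}{1-u}$, $\prod_i(1-u_i)^{-k}\le_{coef}(1-s)^{-k}$, and $\sum_i \frac{u_i}{1-u_i}\le_{coef} \frac{s}{1-s}$ (with $s = \sum_i u_i$) then bound the expression by $\frac{(2s)^{m-1}}{(1-s)^{3(m-1)}}$. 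Finally, extracting $[u_1^{n_1}]\cdots[u_\omega^{n_\omega}]$ via the multinomial identity $[u_1^{n_1}]\cdots[u_\omega^{n_\omega}]s^n = \binom{n}{n_1,\dots,n_\omega}$ produces the factor $\frac{n!}{\prod n_i!}$ together with $2^{m-1}\binom{n+2m-3}{3m-4}$, which after the $\frac{1}{2^\omega}$ prefactor yields the stated $C_2$-contribution.

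For $C_1$, the same structural idea applies. By \cref{convolution-lemma}(3), $V^d_\rho = \bigoplus_{d_1+\cdots+d_\omega = d}\bigotimes_i V^{d_i}_{\rho_0^{(i)}}$, so both terms in the definition of $C_1(\rho)$ assemble into generating functions in variables $u_1,\dots,u_\omega$ (and an auxiliary $v$ for the $\binom{d}{m}$ part). The relevant expressions, after setting $v$ appropriately and differentiating $m$ times in $\lambda$, factor as products of the single-variable formulas appearing in the proof of \cref{primes-1}, and can be bounded by the same coefficient-wise transfer to the single variable $s$, producing a bound of the form $\frac{n!}{\prod n_i!}\cdot 2^{m-\omega}\binom{n+2m+1}{3m+1}$. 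Summing the $C_1$ and $C_2$ contributions gives the stated inequality. The main obstacle is the generating-function bookkeeping in Step 3, specifically the coefficient-wise reduction of the product-type $\omega$-variable expression to a function of $s=\sum_i u_i$ in a way that tracks the precise binomial exponents; once that reduction is in place, the multinomial identity delivers the factor $\frac{n!}{\prod n_i!}$ automatically.
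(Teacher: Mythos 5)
Your reduction steps and the $C_2$ estimate track the paper's proof correctly; the differentiate-then-bound-coefficient-wise ordering you use for $C_2$ is a harmless rearrangement of the paper's bound-then-differentiate ordering, and the multinomial identity $[u_1^{n_1}]\cdots[u_\omega^{n_\omega}]\,S(\sum u_i) = \binom{n}{n_1,\dots,n_\omega}\,[u^n]S(u)$ is indeed the final step that produces the factor $n!/\prod(n_i)!$.

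However, there is a gap in your $C_1$ treatment. You write that ``both terms in the definition of $C_1(\rho)$ assemble into generating functions,'' which suggests you intend to work with $C_1$ exactly, and then apply coefficient-wise bounds. This cannot work as stated: the definition of $C_1$ involves alternating sums in $d$, so after plugging in \cref{convolution-lemma}(3) and \eqref{vm-Vd-gf} the relevant specialization of the $v$-variable is $v=-1$, producing a factor $\prod_i\frac{1-u_i}{1+u_i}$. That power series has coefficients of both signs, and the coefficient-wise inequalities $(1+u)/(1-u)\le 1/(1-u)^2$ etc.\ simply do not apply. The second term $\sum_{d\ge m}(-1)^{d-m}\binom{d}{m}\dim V^d_\rho$ has the same sign problem. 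The paper resolves this by first invoking \cref{weak-C1-bound}, which replaces the alternating $C_1$ by the positive-coefficient bound $C_1(\rho)\le \sum_{d=0}^n \frac{\partial^m}{\partial\lambda_1\cdots\partial\lambda_m}\tr(\Diag(\lambda),V^d_\rho)\big|_{\lambda=1}$; summing over $d$ with no signs is then the specialization $v=1$, giving $\prod_i\frac{1+u_i}{1-u_i}$, which does have nonnegative coefficients, and the coefficient-wise transfer to $s=\sum u_i$ goes through. You need to cite \cref{weak-C1-bound} (or establish some equivalent positivization) before the generating-function manipulation; without it, the chain of coefficient inequalities in the $C_1$ computation is not valid.
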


\begin{proof} By \cref{mangoldt-alternating}, $\Lambda$, restricted to $\mathcal M_{n_i}$, is $\sum_{i=0}^{n_i-1 }(-1)^i  F_{\wedge^i (\operatorname{std}_{n_i} )} $. Thus by \cref{convolution-lemma}(1) we have
\begin{equation}\label{F-alternating}  F_{n_1,\dots, n_\omega} = \sum_{j_1=0}^{n_1} \sum_{j_2=0}^{n_2} \dots \sum_{j_\omega=0}^{n_\omega} (-1)^{j_1 + \dots + j_{\omega}} F_{ \operatorname{Ind}_{S_{n_1} \times \dots \times S_{n_\omega}}^{ S_n} \wedge ^{j_1}( \operatorname{std}_{n_1} ) \otimes \dots \otimes \wedge ^{j_\omega}( \operatorname{std}_{n_\omega} ) }  .\end{equation} Letting \[ \rho_{n_1,\dots,n_\omega}= \bigoplus_{j_1=0}^{n_1} \bigoplus_ {j_2=0}^{n_2} \dots \bigoplus_{j_\omega=0}^{n_\omega}  \operatorname{Ind}_{S_{n_1} \times \dots \times S_{n_\omega}}^{ S_n} \wedge ^{j_1}( \operatorname{std}_{n_1} ) \otimes \dots \otimes \wedge ^{j_\omega}( \operatorname{std}_{n_\omega} )  ,\] we have by \eqref{F-alternating}, the triangle inequality, and \cref{squarefree-bound-precise},
\[ \left | \sum_{ \substack{ f \in \mathcal M_n \\  f \equiv a \mod g }}  F_{n_1,\dots, n_\omega} (f)  -  \frac{1}{ \phi(g) }  \sum_{ \substack{ f \in\mathcal M_n  }}  F_{n_1,\dots, n_\omega} (f)  \right| 
\leq 2( C_1(\rho_{n_1,\dots,n_\omega} )  + C_2(\rho_{n_1,\dots,n_\omega}) \sqrt{q} ) q^{\frac{n-m}{2} }.\]

By \cref{convolution-lemma}(2), \[ V_{\rho_{n_1,\dots,n_\omega}} = \bigotimes_{i=1}^\omega V_{\rho_{n_i}} \]  
and thus, by \eqref{vm-V-gf}, 
 \[   \tr (\Diag (\lambda_1,\dots, \lambda_m) , V_{\rho_{n_1,\dots,n_\omega}} )  = \frac{1}{2^\omega}  \prod_{i=1}^\omega \prod_{j=1}^{m-1} \frac{ 1+u_i \lambda _j }{ 1- u_i \lambda_j } \Bigl[ \prod_{i=1}^{\omega} u_i^{n_i} \Bigr]  .\]
For power series $F, G$ in $u_1,\dots, u_\omega ,\lambda_1,\lambda_j$, we write $F \preccurlyeq G$ if the coefficient in $F$ of each monomial in the $u_1,\dots, u_\omega, \lambda_j$ is less than or equal to the coefficient in $G$ of the same monomial. Observe that 
\begin{equation}\label{pce-equation} \prod_{i=1}^\omega  \frac{ 1+u_i \lambda _j }{ 1- u_i \lambda_j }  \preccurlyeq \prod_{i=1}^{\omega} \frac{1}{ (1- u_i \lambda_j)^2} \preccurlyeq   \frac{1}{ (1 - (\sum_{i=1}^{\omega} u_i )\lambda_j)^2} \end{equation}  This gives
\[ C_2 (\rho_{n_1,\dots, n_\omega} ) = \frac{ \partial^{m-1}}{ \partial \lambda_1 \dots \partial \lambda_{m-1}}   \tr (\Diag (\lambda_1,\dots, \lambda_m) , V_{\rho_{n_1,\dots,n_\omega}} ) \Big|_{\lambda_1,\dots,\lambda_{m-1}=1}  \] \[ \leq   \frac{ \partial^{m-1}}{ \partial \lambda_1 \dots \partial \lambda_{m-1}}  \frac{1}{2^\omega} \prod_{j=1}^{m-1}   \frac{1}{ (1 - (\sum_{i=1}^{\omega} u_i )\lambda_j)^2} \Bigl[ \prod_{i=1}^{\omega} u_i^{n_i} \Bigr]  \Big|_{\lambda_1,\dots,\lambda_{m-1}=1}.\]
For any power series $S$ in a variable $u$, we have \begin{equation}\label{ui-u-eq} S\Bigl( \sum_{i=1}^{\omega} u_i\Bigr) \Bigl[ \prod_{i=1}^{\omega} u_i^{n_i} \Bigr] = \binom{n}{n_1,\dots, n_\omega}  S(u) [u^n].\end{equation} Applying this, we get
\[ C_2 (\rho_{n_1,\dots, n_\omega} ) \leq    \binom{n}{n_1,\dots, n_\omega}   \frac{ \partial^{m-1}}{ \partial \lambda_1 \dots \partial \lambda_{m-1}}  \frac{1}{2^\omega} \prod_{j=1}^{m-1}   \frac{1 }{ (1-u \lambda_j )^2} [ u^n ]   \Big|_{\lambda_1,\dots,\lambda_{m-1}=1}\] \[=  \binom{n}{n_1,\dots, n_\omega}   \frac{1}{2^\omega} \prod_{j=1}^{m-1}   \frac{2u }{ (1-\lambda_j u)^3}   [u^n]  \Big|_{\lambda_1,\dots,\lambda_{m-1}=1} =  \binom{n}{n_1,\dots, n_\omega}   2^{ m-1-\omega} \frac{ u^{m-1}}{ (1-u)^{3m-3} } [u^n] \] \[=  \binom{n}{n_1,\dots, n_\omega}   2^{ m-1-\omega} \binom{ n+2m-3  }{ 3m-4}.\]

Furthermore, by \cref{convolution-lemma}(3) and \eqref{vm-Vd-gf}, 
we have \[   \tr (\Diag (\lambda_1,\dots, \lambda_m) , V^d_{\rho_{n_1,\dots,n_\omega}} )  = \frac{1}{2^\omega}  \prod_{i=1}^\omega  \frac{1+vu_i}{1-u_i v} \prod_{j=1}^{m} \frac{ 1+u_i \lambda _j }{ 1- u_i \lambda_j } \Bigl[ v^d  \prod_{i=1}^{\omega} u_i^{n_i} \Bigr]  .\]  Thus by \cref{weak-C1-bound}, 
\[ C_1(\rho) \leq  \sum_{d=0}^n  \frac{\partial^m}{ \partial \lambda_1 \dots \partial \lambda_m } \frac{1}{2^\omega}  \prod_{i=1}^\omega  \frac{1+vu_i}{1-u_i v} \prod_{j=1}^{m} \frac{ 1+u_i \lambda _j }{ 1- u_i \lambda_j } \Bigl[v^d  \prod_{i=1}^{\omega} u_i^{n_i} \Bigr]   \Big|_{\lambda_1,\dots,\lambda_{m}=1}\]
(because summing the coefficient of $v^d$ over $d$ is equivalent to evaluating at $v=1$)
\[= \frac{1}{2^\omega}  \frac{\partial^m}{ \partial \lambda_1 \dots \partial \lambda_m }   \prod_{i=1}^\omega  \frac{1+u_i}{1-u_i } \prod_{j=1}^{m} \frac{ 1+u_i \lambda _j }{ 1- u_i \lambda_j } \Bigl[  \prod_{i=1}^{\omega} u_i^{n_i} \Bigr]  \Big|_{\lambda_1,\dots,\lambda_{m}=1}\]
(applying \eqref{pce-equation} and the similar inequality $\prod_{i=1}^{\omega} \frac{1+u_i}{1-u_i} \preccurlyeq \frac{1}{ (1- \sum_{i=1}^{\omega} u_i)^2} $ )
\[ \leq \frac{1}{2^\omega}   \frac{\partial^m}{ \partial \lambda_1 \dots \partial \lambda_m }   \frac{1}{ (1- \sum_{i=1}^{\omega} u_i)^2 } \prod_{j=1}^{m} \frac{ 1}{ (1-(\sum_{i=1}^{\omega}  u_i)  \lambda_j )^2 } \Bigl[  \prod_{i=1}^{\omega} u_i^{n_i} \Bigr] \Big|_{\lambda_1,\dots,\lambda_{m}=1}\]
(applying \eqref{ui-u-eq})
\[ \leq \binom{n}{ n_1,\dots, n_\omega}  \frac{1}{2^\omega}   \frac{\partial^m}{ \partial \lambda_1 \dots \partial \lambda_m } \frac{1}{ (1-u)^2} \prod_{j=1}^m \frac{1}{(1- u \lambda_j)^2}  [ u^n ]  \Big|_{\lambda_1,\dots,\lambda_{m}=1}\]
\[ = \binom{n}{ n_1,\dots, n_\omega}  \frac{1}{2^\omega}    \frac{1}{ (1-u)^2} \prod_{j=1}^m \frac{2u}{(1- u \lambda_j)^3}  [ u^n ]  \Big|_{\lambda_1,\dots,\lambda_{m}=1}\]
\[ = \binom{n}{ n_1,\dots, n_\omega}  2^{m-\omega} \frac{u^m }{ (1-u)^{3m+2} }  [ u^n ]  =\binom{n}{ n_1,\dots, n_\omega}  2^{m-\omega}  \binom{ n+2m +1 } { 3m+1}  . \qedhere\]
\end{proof} 

We now give a variant sum that replaces the binomial coefficients with simpler exponentials. We keep the multinomial coefficients, because their multiplicative properties will be helpful to us.

For our sieve argument, to study polynomials of degree $n$, we will sometimes have to study polynomials of degree less than $n$, and the most convenient way to study this will be to introduce the variable $c$ and work with a polynomial of degree $n-c$.

\begin{lemma}\label{vm-c-2} 
For $\theta$ a real number between $0$ and $1$, $g$ squarefree of degree $m \geq 2$, natural numbers $c,n$ with $c \leq n$ and $m \leq \theta n$, a tuple $n_1,\dots, n_{\omega}$ of natural numbers summing to $n-c $, and a residue class $a \in (\mathbb F_q[t]/g)^\times$, we have \begin{equation}\label{eq-vm-c-2} \begin{split} & \left | \sum_{ \substack{ f \in \mathcal M_{n-c} \\  f \equiv a \mod g }}  F_{n_1,\dots, n_\omega} (f)  -  \frac{1}{ \phi(g) }  \sum_{ \substack{ f \in\mathcal M_{n-c} }}  F_{n_1,\dots, n_\omega} (f)  \right| \\ \ll &    \frac{(n-c) !}{ \prod_{i=1}^\omega (n_i)!}  \left(\frac{1-\theta}{ (1+2 \theta)  \sqrt{q} }\right)^c \left( \frac{(1+2\theta) e \sqrt{q}  2^{ \frac{\theta}{1-\theta}} }{ 1-\theta} \right)^{n-m}  \end{split}\end{equation} where the implicit constant depends only on $q,\theta$.
\end{lemma}

\begin{proof} By \cref{vm-c-1} the left side of \eqref{eq-vm-c-2} is
\begin{equation}\label{vm-c-1-summary} \leq  \frac{(n-c)!}{ \prod_{i=1}^\omega (n_i)!}  \Bigl ( 2^{m+1-\omega}  \binom{ n-c +2m +1 } { 3m+1} q^{\frac{n-c-m}{2}}   + 2^{ m-\omega} \binom{ n-c+2m-3  }{ 3m-4}  q^{\frac{n+1-m}{2}}  \Bigr)   . \end{equation}

To bound $  \binom{ n-c +2m +1 } { 3m+1}$ we apply \cref{binomial-bound} with $a=n-m$, $b=3m$,  $x=-c$, $y=1$, $\alpha=\frac{1-\theta}{3\theta}$ to obtain
\[2^{m+1 -\omega}  \binom{ n-c +2m +1 } { 3m+1} q^{\frac{n-c-m}{2}}  \leq 2^{-\omega}  2^{m} \frac{ 1+2\theta}{3\theta }  \left(\frac{1-\theta}{ 1+2 \theta }\right)^c \left( \frac{(1+2\theta) e}{ 1-\theta} \right)^{n-m} q^{  \frac{n-m}{2} } q^{- \frac{c}{2} } \]
\[\leq  2^{-\omega}  \frac{ 2+4\theta}{3\theta }  \left(\frac{1-\theta}{ (1+2 \theta)  \sqrt{q} }\right)^c \left( \frac{(1+2\theta) e \sqrt{q}  2^{ \frac{\theta}{1-\theta}} }{ 1-\theta} \right)^{n-m} .\]

To bound $ \binom{ n-c+2m-3  }{ 3m-4}  $ we take $a=n-m$, $b=3m$, $x=1-c$,  $y=-4$, $\alpha=\frac{1-\theta}{3\theta}$  to obtain

\[ 2^{ m-\omega} \binom{ n-c+2m-3  }{ 3m-4}  q^{\frac{n+1-m}{2}}  \leq  2^{- \omega} 2^m   \frac{ (3\theta)^4}{ (1+2\theta)^3 (1-\theta) }   \left(\frac{1-\theta}{ (1+2 \theta) }\right)^c\left( \frac{(1+2\theta) e}{ 1-\theta} \right)^{n-m} q^{  \frac{n-m}{2} } q^{- \frac{c}{2} }  q^{- \frac{1}{2}} \]
\[ = 2^{-\omega}  \frac{ (3\theta)^4}{  (1+2\theta)^3 (1-\theta) \sqrt{q}  }  \left(\frac{1-\theta}{ (1+2 \theta)  \sqrt{q} }\right)^c \left( \frac{(1+2\theta) e \sqrt{q}  2^{ \frac{\theta}{1-\theta}} }{ 1-\theta} \right)^{n-m}.\]

Dropping the constant terms and $2^{-\omega}$, we obtain 
\[ \Bigl ( 2^{m+1-\omega}  \binom{ n-c +2m +1 } { 3m+1} q^{\frac{n-c-m}{2}}   + 2^{ m-\omega} \binom{ n-c+2m-3  }{ 3m-4}  q^{\frac{n+1-m}{2}}  \Bigr) \] \[  \ll \left(\frac{1-\theta}{ (1+2 \theta)  \sqrt{q} }\right)^c \left( \frac{(1+2\theta) e \sqrt{q}  2^{ \frac{\theta}{1-\theta}} }{ 1-\theta} \right)^{n-m} .\] 

Combined with \eqref{vm-c-1-summary}, we obtain \eqref{eq-vm-c-2}.
\end{proof}

We would like to use this estimate to count polynomials with a given prime factorization. There are two changes we must make to $F_{n_1,\dots, n_\omega}$ to count squarefree polynomials with a  factorization into primes of fixed degrees. First, we must replace the von Mangoldt functions with the indicator functions of the primes, replacing $F_{n_1,\dots, n_\omega}$ with a function supported on products of exactly $\omega$ primes, with the $i$th prime of degree $n_i$. Second, we must remove the terms where these $\omega$ primes are not all distinct. To count non-squarefree polynomials with a factorization into primes of fixed degrees and multiplicities, we will first sum over the possibilities for the maximum squareful part, which reduces the problem to counting squarefree polynomials prime to a given squareful polynomial. It will be convenient to add this additional coprimality condition in the first step.

Let $h$ be a polynomial of degree $c\leq n$ and let $n_1,\dots, n_\omega$ be a tuple of natural numbers with $\sum_{i=1}^{\omega} n_i = n-c$. Define
\[G_{n_1,\dots,n_{\omega}}^h (f) = \sum_{ \substack { (f_1,\dots, f_{\omega}) \in (\mathbb F_q[t]^+) ^{\omega} \\ \deg f_i = n_i \\ f_i \textrm{ prime} \\ f_i \nmid h  \\ \prod_{i=1}^\omega f_i = f}} \prod_{i=1}^{\omega} n_i  .\]

\begin{lemma}\label{vm-c-s-1} 
For $\theta$ a real number between $0$ and $1$, $g$ squarefree of degree $m \geq 2$, natural numbers $c,n$ with $c \leq n$ and $m \leq \theta n$, a tuple $n_1,\dots, n_{\omega}$ of natural numbers summing to $n-c $, a residue class $a \in (\mathbb F_q[t]/g)^\times$, and a polynomial $h$ of degree $c$, we have \begin{equation}\label{eq-vm-c-s-1} \begin{split} & \Bigl | \sum_{ \substack{ f \in \mathcal M_{n-c} \\  f \equiv a \mod g }}  G_{n_1,\dots,n_{\omega}}^h(f)  -  \frac{1}{ \phi(g) }  \sum_{ \substack{ f \in\mathcal M_{n-c} }}  G_{n_1,\dots,n_{\omega}}^h (f)  \Bigr| \\ & \ll     e^{ \sqrt{c} } \frac{(n-c) !}{ \prod_{i=1}^\omega (n_i)!}   \left(\frac{1-\theta}{ (1+2 \theta)  \sqrt{q} }\right)^c \left( \frac{(1+2\theta) e \sqrt{q}  2^{ \frac{\theta}{1-\theta}} }{ 1-\theta} \right)^{n-m} . \end{split} \end{equation} 
\end{lemma}

\begin{proof}  We have
\[G_{n_1,\dots,n_{\omega}}^h (f) = \sum_{ \substack { (f_1,\dots, f_{\omega}) \in (\mathbb F_q[t]^+) ^{\omega} \\ \deg f_i = n_i \\ \prod_{i=1}^\omega f_i = f}} \prod_{i=1}^{\omega} \Bigl( \Lambda(f_i) - \sum_{ \substack{ \pi \textrm{ prime}\\ r \in \mathbb N \\ r \deg \pi = n_i \\ r>1 \textrm{ or } \pi \mid h}}  (\deg \pi  )\cdot \delta_{f_i = \pi^r}  \Bigr) \]
\[= \sum_{ \substack { S \subseteq \{1,\dots, \omega\} \\ (\pi_i)_{i\in S} \textrm { prime} \\ (r_i)_{i \in S} \in \mathbb N^S \\  r_i \deg \pi_i= n_i \\ r_i>1 \textrm{ or } \pi_i \mid h}} (-1)^{\abs{S}} F_{ (n_i)_{i\notin S} } \Bigl (\frac{f}{\prod_{i \in S} \pi_i^{r_i} }\Bigr)   \prod_{i \in S} \deg \pi_i  .\]
Applying \cref{vm-c-2} to each term, ignoring those for which $\prod_{i\in S} \pi_i^{r_i}$ is not coprime to $g$, we obtain
\begin{equation}\label{first-prime-sieve} \begin{split} &  \Bigl| \sum_{ \substack{ f \in \mathcal M_{n-c} \\  f \equiv a \mod g }}  G_{n_1,\dots,n_{\omega}}^h(f)  -  \frac{1}{ \phi(g) }  \sum_{ \substack{ f \in\mathcal M_{n-c} }}  G_{n_1,\dots,n_{\omega}}^h (f)  \Bigr| \\ & \ll \sum_{ \substack { S \subseteq \{1,\dots, \omega\} \\ (\pi_i)_{i\in S} \textrm { prime} \\ (r_i)_{i \in S} \in \mathbb N^S \\  r_i \deg \pi_i= n_i \\ r_i>1 \textrm{ or } \pi_i \mid h}} \frac{ (n-c- \sum_{i \in S} n_i)!}{ \prod_{i \notin S} (n_i)!}  \left(\frac{1-\theta}{ (1+2 \theta)  \sqrt{q} }\right)^{c + \sum_{i \in S} n_i }  \left( \frac{(1+2\theta) e \sqrt{q}  2^{ \frac{\theta}{1-\theta}} }{ 1-\theta} \right)^{n-m} \prod_{i \in S} \deg \pi_i   \\
&=   \frac{ (n-c)!} { \prod_{i=1}^\omega (n_i)!}  \left(\frac{1-\theta}{ (1+2 \theta)  \sqrt{q} }\right)^{c }  \left( \frac{(1+2\theta) e \sqrt{q}  2^{ \frac{\theta}{1-\theta}} }{ 1-\theta} \right)^{n-m} \\
& \times  \sum_{  S \subseteq\{1,\dots, \omega\} }    \frac{ (n-c- \sum_{i \in S} n_i)! \prod_{i \in S} (n_i)! }{( n-c)! }  \prod_{i \in S} \Biggl(  \left(\frac{1-\theta}{ (1+2 \theta)  \sqrt{q} }\right)^{n_i}   \sum_{ \substack{ \pi \textrm{ prime}, r \in \mathbb N \\ r \deg \pi = n_i \\ r>1 \textrm{ or } \pi \mid h}}   \deg \pi  \Biggr).  
\end{split} \end{equation} 

We have the bound
\[ \sum_{ \substack{ \pi \textrm{ prime}\\  r \in \mathbb N \\ r \deg \pi = n_i \\ r>1 \textrm{ or } \pi \mid h}}   \deg \pi=   \sum_{ \substack{ \pi \textrm{ prime}\\  r \in \mathbb N \\ r \deg \pi = n_i \\ r>1 }}   \deg \pi +  \sum_{ \substack{ \pi \textrm{ prime} \\  \deg \pi = n_i \\  \pi \mid h}}   \deg \pi \] \[ \leq \frac{q^{ \frac{n_i}{2}}} { 1- \frac{1}{q}}  +  \max ( c, q^{n_i} ) \leq \frac{q^{ \frac{n_i}{2}}} { 1- \frac{1}{q}}  +  \sqrt {  c q^{n_i} } = q^{ \frac{n_i}{2}} \Bigl( \sqrt{c} + \frac{q}{q-1} \Bigr)\] 
so
\[ \left(\frac{1-\theta}{ (1+2 \theta)  \sqrt{q} }\right)^{n_i}   \sum_{ \substack{ \pi \textrm{ prime}\\ r \in \mathbb N \\ r \deg \pi = n_i \\ r>1 \textrm{ or } \pi \mid h}} \deg \pi   \leq q^{ - \frac{n_i}{2}} \sum_{ \substack{ \pi \textrm{ prime}\\ r \in \mathbb N \\ r \deg \pi = n_i \\ r>1 \textrm{ or } \pi \mid h}} \deg \pi    \leq \sqrt{c} + \frac{q}{q-1} .\]
Let $\overline{S}$ denote the complement of $S$. We will next need to use the inequality.
\begin{equation}\label{easy-combinatorics} \frac {  \omega!}  { \abs{ \overline{S}} !}  \leq \frac{( n-c)! }{ (n-c- \sum_{i \in S} n_i)! \prod_{i \in S} (n_i)! }.\end{equation}
To establish \eqref{easy-combinatorics}, interpret its right side as counting partitions of $\{1,\dots,n-c\}$ into subsets of size $n_i$ for $i \in S$ plus one remaining subset.  Interpret the left side as the cardinality of the set $S_{\omega}^{\overline{S}}$ of permutations $\sigma\in S_\omega$ such that $\sigma(i)<\sigma(j)$ if $i<j$ and $\sigma(i),\sigma(j)\in \overline{S}$. For $\sigma \in S(\omega)^{\overline{S}}$, let $\alpha(\sigma)$ be the partition where we put $ 1+ \sum_{j<i} n_{\sigma(j)}, 2+ \sum_{j<i} n_{\sigma(j)},\dots,  \sum_{j\leq i} n_{\sigma(j) }$ into the subset of size $n_{\sigma(i)}$ if $\sigma(i) \in S$ and into the remaining subset otherwise. If $\alpha(\sigma_1)=\alpha(\sigma_2) $ then we can check $\sigma_1(i) =\sigma_2(i)$ for all $i$ by induction on $i$, showing that $\alpha$ is injective, giving \eqref{easy-combinatorics}.

Thus we have the bound
\[\frac{ (n-c- \sum_{i \in S} n_i)! \prod_{i \in S} (n_i)! }{( n-c)! } \leq \frac { \abs{ \overline{S}} !}  {  \omega!}.\]
Therefore, using the fact that the number of $ S \subseteq \{1,\dots,\omega\}$ of size $s$ is $\frac{\omega!}{s!( \omega-s)!} $, we obtain
\[ \sum_{  S \subseteq\{1,\dots, \omega\} }    \frac{ (n-c- \sum_{i \in S} n_i)! \prod_{i \in S} (n_i)! }{( n-c)! }  \prod_{i \in S} \Biggl(  \left(\frac{1-\theta}{ (1+2 \theta)  \sqrt{q} }\right)^{n_i}   \sum_{ \substack{ \pi \textrm{ prime}, r \in \mathbb N \\ r \deg \pi = n_i \\ r>1 \textrm{ or } \pi \mid h}}   \deg \pi_i  \Biggr).\]
\[ \leq  \sum_{  S \subseteq\{1,\dots, \omega\} }  \frac { \abs{ \overline{S}} !}  {  \omega! } \Bigl( \sqrt{c} + \frac{q}{q-1} \Bigr)^{ \abs{S}} \]
\[ \leq \sum_{s=0}^{\omega}  \frac{1}{ s!}  \Bigl( \sqrt{c} + \frac{1}{q-1} \Bigr)^s \leq e^{ \sqrt{c} + \frac{q}{q-1}} \ll e^{ \sqrt{c}}.\]
Plugging this into \eqref{first-prime-sieve}, we obtain \eqref{eq-vm-c-s-1}.
\end{proof} 

Let $h$ be a polynomial of degree $c\leq n$ and let $n_1,\dots, n_\omega$ be a tuple of natural numbers with $\sum_{i=1}^{\omega} n_i = n$. Define
\[H_{n_1,\dots,n_{\omega}}^h (f) = \sum_{ \substack { (f_1,\dots, f_{\omega}) \in (\mathbb F_q[t]^+) ^{\omega} \\ \deg f_i = n_i \\ f_i \textrm{ prime} \\ f_i \nmid h \\ f_i \textrm{ distinct} \\ \prod_{i=1}^\omega f_i = f}} \prod_{i=1}^{\omega} n_i  .\]
Note the definition is the same as $H_{n_1,\dots,n_{\omega}}^h (f) $ except that we require the prime factors $f_i$ be distinct. 

The inclusion-exclusion sum to extract $H_{n_1,\dots,n_{\omega}}^h (f) $ from $G_{n_1,\dots,n_{\omega}}^h (f) $ is straightforward, but notationally complicated. We explain it first, and then bound sums of $H_{n_1,\dots,n_{\omega}}^h (f) $ in progressions, after proving a purely combinatorial inequality.

Let $\mathcal P$ be the set of partitions $P$ of $\{1,\dots, \omega\}$ into disjoint nonempty subsets $S$, subject to the condition that for each $S \in P$, for $i,j\in S$, $n_i=n_j$. For $P \in \mathcal P$ and $S \in P$, let $n_S$ be the unique integer such that $n_i=n_S$ for all $i\in S$. For $P \in \mathcal P$, let $P^{\geq 2}$ be the set of sets $S \in P$ with cardinality at least two.  For $P \in \mathcal P$, let \[s(P) = \bigcup_{ S \in P^{\geq 2}} S \] and for a function $v$ from $P^{\geq 2}$ to $\mathbb F_q[t]$ let 
  \[ H(v) =  \prod_{S \in P^{\geq 2}} v(S)^{\abs{S} }. \] Define \[ d(P) = \sum_{ S \in P^{\geq 2} } \abs{S} n_S \] so that \[ \deg H(v) = d(P)\] if \[ \deg v(S) = n_S.\]

  \begin{lemma}\label{partition-sieve} For natural numbers $c,n$ with $c \leq n$, a tuple $n_1,\dots, n_{\omega}$ of natural numbers summing to $n-c $, and $h$ a polynomial of degree $c$, we have
 \begin{equation}\label{eq-partition-sieve}H_{n_1,\dots,n_{\omega}}^h(f) = \sum_{ P \in \mathcal P} (-1)^{ \abs{ P^{\geq 2}}} \sum_{ \substack{ v\colon P^{\geq 2} \to \mathbb F_q[t]^+ \\ v(S) \textrm{ prime}  \\ v(S) \nmid h \\  v\textrm{ injective } \\ \deg v(S) = n_S\\ H(v) \mid f}}  G^{h \cdot H(v) }_{ (n_i)_{i\notin  s(P) }}  \left( \frac{f}{ H(v)} \right) \prod_{i \in s(P) } n_i  . \end{equation} \end{lemma}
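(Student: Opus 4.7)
The plan is to prove \eqref{eq-partition-sieve} by a standard inclusion--exclusion over the ``equality pattern'' among the prime factors $f_1,\dots,f_\omega$. First I would set up the bookkeeping: for each tuple $(f_1,\dots,f_\omega)$ of primes with $\deg f_i=n_i$, $f_i\nmid h$, and $\prod_i f_i=f$ that contributes to $F^{',h}_{n_1,\dots,n_\omega}(f)$, let $Q(f_1,\dots,f_\omega)$ be the equivalence relation on $\{1,\dots,\omega\}$ defined by $i\sim j \iff f_i=f_j$. Since $f_i=f_j$ forces $\deg f_i=\deg f_j$, the partition $Q$ automatically lies in $\mathcal P$. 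The tuples contributing to $F^{'',h}_{n_1,\dots,n_\omega}(f)$ are precisely those whose equality partition $Q$ is the all-singletons partition, i.e., those with $Q^{\geq 2}=\emptyset$.

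Next I would give a bijective interpretation of the inner sum on the right of \eqref{eq-partition-sieve}. Fix $P\in\mathcal P$. A datum consisting of an injective $v\colon P^{\geq 2}\to \mathbb F_q[t]^+$ with $v(S)$ a prime of degree $n_S$ with $v(S)\nmid h$, together with a tuple $(f_i)_{i\notin s(P)}$ of primes of degrees $n_i$ with $f_i\nmid h\cdot H(v)$ and $\prod_{i\notin s(P)} f_i=f/H(v)$, is the same as a tuple $(g_1,\dots,g_\omega)$ contributing to $F^{',h}_{n_1,\dots,n_\omega}(f)$ such that each block $S\in P^{\geq 2}$ is exactly a block of the equality partition $Q(g_1,\dots,g_\omega)$: indeed the constants $v(S)=g_i$ ($i\in S$) are forced to differ from one another (by injectivity of $v$) and from all $f_j=g_j$ with $j\notin s(P)$ (by the condition $f_j\nmid H(v)$). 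Thus, writing $g(Q)$ for the $\prod_i n_i$-weighted number of tuples with equality partition exactly $Q$, the inner sum equals
\[
\tilde F(P) \;:=\;\sum_{Q\in\mathcal P} g(Q)\,[P^{\geq 2}\subseteq Q^{\geq 2}],
\]
where $[P^{\geq 2}\subseteq Q^{\geq 2}]$ means that every block of $P^{\geq 2}$ is also a block of $Q$.

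Then I would invoke the elementary identity $\sum_{R\subseteq A}(-1)^{|R|}=[A=\emptyset]$ applied to $A=Q^{\geq 2}$. The subsets $R\subseteq Q^{\geq 2}$ are in bijection with the collections $P^{\geq 2}$ arising from some $P\in\mathcal P$ with $P^{\geq 2}\subseteq Q^{\geq 2}$; this is because $P$ is recovered from $P^{\geq 2}$ by filling in singletons on the complement of $s(P)$. Hence
\[
F^{'',h}_{n_1,\dots,n_\omega}(f)\;=\;\sum_{Q} g(Q)\,[Q^{\geq 2}=\emptyset]\;=\;\sum_Q g(Q)\sum_{P\in\mathcal P,\,P^{\geq 2}\subseteq Q^{\geq 2}}(-1)^{|P^{\geq 2}|}\;=\;\sum_{P\in\mathcal P}(-1)^{|P^{\geq 2}|}\tilde F(P),
\]
which is exactly \eqref{eq-partition-sieve}.

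The only real subtlety is the bijective step identifying the inner sum with $\tilde F(P)$, since one must verify that the constraints ``$v$ injective'' and ``$f_i\nmid h\cdot H(v)$'' are precisely what forces the blocks of $P^{\geq 2}$ to appear as \emph{exact} blocks of the equality partition $Q$ (not merely as subsets of blocks). Once this bookkeeping is pinned down the argument is a direct application of the Boolean inclusion--exclusion principle; no estimation is needed.
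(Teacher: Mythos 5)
Your proof is correct and uses essentially the same inclusion--exclusion bijection as the paper; the paper phrases it as a single bijection between triples $(P,v,(f_i'))$ and pairs $\bigl((f_i)_{i=1}^\omega, U\bigr)$ with $U$ a set of repeated primes, and then collapses the signed sum over $U$, while you fix $P$ first and identify the inner sum with a weighted count over equality partitions $Q$ before summing over $P$. The underlying idea and bookkeeping (including the crucial use of the conditions $v$ injective and $f_i \nmid h\cdot H(v)$ to force the blocks of $P^{\geq 2}$ to be \emph{exact} blocks of $Q$) match the paper's argument.
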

 
 \begin{proof} Expanding $G^{, h \cdot H(v)}$, we can express the right side of \eqref{eq-partition-sieve} as a sum over triples of $P\in \mathcal P$, $v\colon P^{\geq 2} \to \mathbb F_q[t]^+ $, and $f_i'$ for each $i\notin s(P)$ such that $\prod_{i \notin s(P) }  f_i' = \frac{f}{H(v)}$, $f_i'$ is prime of degree $n_i$, $f_i' \nmid h$, $v(S)$ is prime of degree $n_S$, $v(S) \nmid h$, and $v$ is injective.

We will define a bijection between triples $P, v , (f_i')_{i\notin s(P)}$ satisfying these conditions and pairs $(f_i)_{i = 1}^{\omega} , U$ where $f_i$ is prime of degree $n_i$, $f_i \nmid h$, $\prod_{i=1}^{\omega} f_i =f$, and $U$ is a subset of \[\{ \pi \in \mathbb F_q[t]^+ \mid \pi \textrm{ prime} , \abs{ \{ i \mid f_i = \pi \} } > 1  \}.\]

Indeed, to obtain $(f_i)_{i=1}^{\omega}, U$ from $P, v , (f_i')_{i\notin s(P)}$ we set $f_i = f_i'$ for $i \notin s(P)$, $f_i =v (S)$ for $i\in S$, and $U $ to be the image of $v$. Conversely, to obtain $P, v , (f_i')_{i\notin s(P)}$ from $(f_i)_{i=1}^{\omega}, U$ we take $P$ to be the set consisting of $ \{ i \mid f_i = \pi\ \}$ for $\pi \in U$ and $\{ i \}$ for all $i$ with $f_i \notin U$, $v$ to send $ \{ i \mid f_i = \pi\ \}$ to $\pi$, and $f_i'= f_i$ if $f_i \notin U$. 

It is routine to check that these maps are well-defined (i.e. satisfy all the conditions) and are inverses. Using this bijection, the right side of \eqref{eq-partition-sieve} is equal to
\[ \sum_{ \substack { (f_1,\dots, f_{\omega}) \in (\mathbb F_q[t]^+) ^{\omega} \\ \deg f_i = n_i \\ f_i \textrm{ prime} \\ f_i \nmid h  \\ \prod_{i=1}^\omega f_i = f}} \hspace{.1in} \sum_{ U \subseteq \{ \pi \in \mathbb F_q[t]^+ \mid \pi \textrm{ prime} , \abs{\{ i \mid f_i=\pi\} } > 1 \}} (-1)^{ \abs{U}}  \prod_{i=1}^{\omega} n_i  \]
\[ =  \sum_{ \substack { (f_1,\dots, f_{\omega}) \in (\mathbb F_q[t]^+) ^{\omega} \\ \deg f_i = n_i \\ f_i \textrm{ prime} \\ f_i \nmid h \\ f_i \textrm{ distinct} \\ \prod_{i=1}^\omega f_i = f}} \prod_{i=1}^{\omega} n_i  = H_{n_1,\dots,n_{\omega}}^h (f) ,\] verifying \eqref{eq-partition-sieve}.\end{proof}

The following purely combinatorial inequality will help to control the sum over $P$ in \cref{eq-partition-sieve}. 

\begin{lemma} For any natural numbers $n,c,d$ with $d \leq n-c$, we have \begin{equation}\label{sieve-combinatorial-bound} \sum_{ \substack{ P \in \mathcal P \\ d(P) = d} }  \frac{(n-c - d ) ! \prod_{i \in s(P)} (n_i)! }{ (n-c)! }  \prod_{ S \in P^{\geq 2} }n_S^{ \abs{S}-1} \leq 1. \end{equation}  \end{lemma}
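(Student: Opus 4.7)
The plan is to prove the inequality by exhibiting an explicit injection. After multiplying both sides by $(n-c)!/(n-c-d)!$, the claim becomes
\[
\sum_{\substack{P \in \mathcal P \\ d(P) = d}} \prod_{i \in s(P)} n_i!\, \prod_{S \in P^{\geq 2}} n_S^{|S|-1} \;\leq\; \frac{(n-c)!}{(n-c-d)!},
\]
and the right-hand side is exactly the number of injections $\iota \colon \{1,\dots,d\} \hookrightarrow \{1,\dots,n-c\}$. So I would construct an injective map $\Phi$ from the set of ``decorated partitions'' enumerated on the left into the set of such injections.

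I would set up a decorated partition with underlying $P$ (for $d(P) = d$) as a triple $(P, (\vec A_i)_{i \in s(P)}, (\phi_S)_{S \in P^{\geq 2}})$, where the $\vec A_i \subseteq \{1,\dots,n-c\}$ are pairwise disjoint ordered sequences of length $n_i$, and $\phi_S \colon S \setminus \{s_S^\ast\} \to \{1,\dots,n_S\}$ is Pr\"ufer-style tree data pinned to a canonical root $s_S^\ast \in S$ (say the minimum). The count of such triples for a given $P$ is exactly the summand on the left: $(n-c)!/((n-c-d)! \prod n_i!)$ for the choice of disjoint unordered sets, times $\prod n_i!$ for the internal orderings, times $\prod n_S^{|S|-1}$ for the tree data. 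The map $\Phi$ would concatenate the $\vec A_i$ in the canonical order of $s(P) \subseteq \{1,\dots,\omega\}$ to form $\iota$, after first twisting the non-root blocks by a reversible device encoding $\phi_S$ --- for instance, cyclically shifting $\vec A_i$ by $\phi_S(i) \bmod n_S$ relative to the root block $\vec A_{s_S^\ast}$.

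The main obstacle, which is the combinatorial heart of the argument, is verifying that $\Phi$ is globally injective. Given $\iota$, recovering $(\vec A_i)_{i \in s(P)}$ is straightforward once $P$ and its $n_i$-sizes are known, so the question reduces to decoding $(P, \phi)$ from the shifts together with the sizes. The subtle case is when two partitions share the same $s(P)$ but cluster its indices differently into blocks $S$, especially when several indices share the same $n_i$-value and the blocks are a priori interchangeable. Here the full $n_S^{|S|-1}$-dimensional freedom of the Pr\"ufer data must be used to distinguish clusterings unambiguously, and showing that one can arrange the encoding so that every $\iota$ lies in the image of at most one decorated partition is the delicate combinatorial step. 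A natural fallback, should the direct bijection prove too intricate, is to proceed by induction on $\omega$, removing one index at a time and checking that the weighted sum is monotone --- exploiting the factorization of the inner sum $\sum_{P : s(P)=s} \prod_S n_S^{|S|-1}$ along the equivalence classes of $\{1,\dots,\omega\}$ under $i \mapsto n_i$, together with the bound $(k!)^a W_k(a) \leq (ka)!$ for the weighted partition counts $W_k(a)$.
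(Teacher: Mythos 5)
Your proposal has a genuine counting error that breaks the proposed injection before the injectivity question even arises. After multiplying by $(n-c)!/(n-c-d)!$, the summand for a fixed $P$ is $\prod_{i\in s(P)} n_i!\cdot\prod_{S\in P^{\geq 2}} n_S^{|S|-1}$. But the decorated partitions you define --- disjoint ordered sequences $\vec A_i\subseteq\{1,\dots,n-c\}$ of lengths $n_i$ together with Pr\"ufer data --- number
\[
\frac{(n-c)!}{(n-c-d)!\prod_i n_i!}\cdot\prod_i n_i!\cdot\prod_S n_S^{|S|-1} \;=\; \frac{(n-c)!}{(n-c-d)!}\,\prod_S n_S^{|S|-1},
\]
which is \emph{not} the summand and is strictly larger whenever there is more than one way to place the sets $B_i$ (i.e.\ whenever $(n-c)!/\bigl((n-c-d)!\prod_i n_i!\bigr)>1$). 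With your enumeration, the decorated partitions for a single $P$ with $\prod_S n_S^{|S|-1}>1$ already outnumber the injections $\{1,\dots,d\}\hookrightarrow\{1,\dots,n-c\}$, so no injection $\Phi$ can exist. If you instead take $\vec A_i$ to be an ordering of $\{1,\dots,n_i\}$ only (so the count per $P$ is $\prod_i n_i!\cdot\prod_S n_S^{|S|-1}$, matching the summand), the concatenate-and-twist construction no longer naturally produces an injection into $\{1,\dots,n-c\}$: you have discarded the data of which $d$-subset of $\{1,\dots,n-c\}$ to hit. And even with the counting repaired, you acknowledge that global injectivity --- in particular distinguishing partitions with the same $s(P)$ but different clusterings among indices with equal $n_i$-values --- is unresolved; the cyclic-shift device is not obviously reversible across different $P$. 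The ``fallback'' induction on $\omega$ is only gestured at.

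For contrast, the paper's proof of this lemma deliberately avoids a tight bijection. It first observes that the number of distinct $s=s(P)$ with $\sum_{i\in s}n_i=d$ is at most $\binom{n-c}{d}$ (each $s$ determines a $d$-element subset of a fixed partition of $\{1,\dots,n-c\}$), reducing to showing, for each fixed $s$,
\[
\sum_{\substack{P\in\mathcal P\\s(P)=s}}\frac{\prod_{i\in s}n_i!}{d!}\,\prod_{S\in P^{\geq 2}} n_S^{|S|-1}\leq 1.
\]
It then throws away precision twice: $\prod_S n_S^{|S|-1}\leq \prod_{i\in s}n_i/\max_{i\in s}n_i$ (using $|S|\geq 2$), and the number of $P$ with $s(P)=s$ is at most $|s|!$ (every partition of $s$ is the cycle type of some permutation). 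This reduces the whole lemma to the single clean inequality $|s|!\cdot\prod_{i\in s}n_i/\max_{i\in s}n_i\leq d!/\prod_{i\in s}n_i!$, which the paper proves by a direct surjection from partitions of $\{1,\dots,d\}$ into blocks of sizes $n_i$ onto pairs (ordering of $s$, a tuple $(x_j)$ with $1\leq x_j\leq n_{i_j}$). That crude route sidesteps exactly the delicate injectivity question your proposal runs into. If you want to rescue a bijective proof, the realistic target is the per-$s$ inequality above (equivalently $\sum_{P:s(P)=s}\prod n_i!\prod_S n_S^{|S|-1}\leq d!$), injecting into permutations of $\{1,\dots,d\}$; but you will still need to handle the ties-in-$n_i$ case carefully, and the paper's cruder bound suffices for the application.
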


\begin{proof} First note for any partition $P$ that $d(P) = \sum_{i\in s(P) } n_i$. Next observe that, for all $d$, there are at most $\binom{n-c}{d}$ distinct $s \subseteq \{1,\dots,\omega\}$ that equal $s(P)$ for some $P \in \mathcal P$ and such that $\sum_{i \in s} n_i =d$. This is because, after fixing a partition of a set of size $n-c$ into subsets of size $n_1,\dots, n_\omega$, each allowable $s$ defines a subset of size $d$, being the union of the corresponding sets of size $n_i$, and no two distinct $s$ define the same subsets of size $d$.

So to prove \eqref{sieve-combinatorial-bound}, it suffices to prove for each $s$ with $\sum_{i \in s} n_i =d$  that 
\begin{equation}\label{sieve-combinatorial-bound-2} \sum_{ \substack{ P \in \mathcal P \\ s(P) = s } }  \frac{\prod_{i \in s } (n_i)! }{ d! }  \prod_{ S \in P^{\geq 2} }n_S^{ \abs{S}-1} \leq 1. \end{equation} 
Now \[ \prod_{ S \in P^{\geq 2}} n_S^{ \abs{S}-1} \leq \frac{  \prod_{ i \in s} n_i }{ \max_{i \in s} n_i} \] so it suffices to prove that the number of $P \in \mathcal P$ with $s(P) =s$ is at most  \[ \frac{d!}{ \prod_{i\in s}   (n_i)!} \frac{ \max_{i \in s} n_i}{ \prod_{i \in s } n_i }.\] The number of $P \in \mathcal P$ is at most $\abs{s}!$ since every partition of $s$ is the cycle decomposition of at least one permutation of $s$.  So it suffices to check that
\begin{equation}\label{last-combinatorial-step} \abs{s}!  \frac{ \prod_{i \in s} n_i} {\max_{i \in s} n_i} \leq \frac{d!}{ \prod_{i\in s}   (n_i)!} .\end{equation}  To do this, note that to each partition of $\{1,\dots, d\}$ into sets $A_i$ of size $n_i$ labeled by $i\in s$, we obtain a bijection $\beta$ between $1,\dots, \abs{s}$ and $s$, where $A_{\beta(1)}$ contains the first element, $A_{\beta(2)}$ contains the first element not in $A_{\beta(1)}$, $A_{\beta(3)}$ contains the first element not in $A_{\beta(1)} \cup A_{\beta(2)}$, and so on, and a tuple of natural numbers $(x_j)_{j=1}^{\abs{s}-1}$, where $x_j$ is the number of elements of $A_{\beta_j}$ before the first element of $A_{\beta_{j+1}}$. Every pair of a bijection $\beta$ and an $\abs{s}-1$ tuple of natural numbers satisfying $1 \leq x_j \leq n_{\beta(j)}$ arises this way, because we can take $x_1$ elements of $A_{\beta(1)}$, then $x_2$ elements of $A_{\beta(2)}$, and so on, and place the remaining elements last in arbitrary order. This gives a surjection from a set of size $ \frac{d!}{ \prod_{i\in s}   (n_i)!} $ to a set of size $\geq  \abs{s}!  \frac{ \prod_{i \in s} n_i} {\max_{i \in s} n_i}$, verifying \eqref{last-combinatorial-step}. \end{proof}

\begin{lemma}\label{vm-c-s-2} 
For $\theta$ a real number between $0$ and $1$, $g$ squarefree of degree $m \geq 2$, natural numbers $c,n$ with $c \leq n$ and $m \leq \theta n$, a tuple $n_1,\dots, n_{\omega}$ of natural numbers summing to $n-c $, a residue class $a \in (\mathbb F_q[t]/g)^\times$, and a polynomial $h$ of degree $c$, we have \begin{equation}\label{eq-vm-c-s-2} \begin{split} & \Bigl | \sum_{ \substack{ f \in \mathcal M_{n-c} \\  f \equiv a \mod g }}  H^{h}_{n_1,\dots, n_\omega}(f)  -  \frac{1}{ \phi(g) }  \sum_{ \substack{ f \in\mathcal M_{n-c} }} H_{n_1,\dots,n_{\omega}}^h(f)  \Bigr| \\ & \ll     e^{ \sqrt{c} } \frac{(n-c) !}{ \prod_{i=1}^\omega (n_i)!}   \left(\frac{1-\theta}{ (1+2 \theta)  \sqrt{q} }\right)^c \left( \frac{(1+2\theta) e \sqrt{q}  2^{ \frac{\theta}{1-\theta}} }{ 1-\theta} \right)^{n-m} . \end{split} \end{equation} 
\end{lemma}
  
  \begin{proof} 
  

We apply \cref{partition-sieve} and then sum \eqref{eq-vm-c-s-1} over all pairs $P,v$, obtaining
 \begin{align*}
 & \Bigl | \sum_{ \substack{ f \in \mathcal M_{n-c} \\  f \equiv a \mod g }}  H^{h}_{n_1,\dots, n_\omega}(f)  -  \frac{1}{ \phi(g) }  \sum_{ \substack{ f \in\mathcal M_{n-c} }} H_{n_1,\dots,n_{\omega}}^h(f)  \Bigr| \\
 & \ll       \sum_{ P \in \mathcal P}  \sum_{ \substack{ v\colon P^{\geq 2} \to \mathbb F_q[T]^+ \\ v(S) \textrm{ prime} \\ v(S) \nmid h  \\  v\textrm{ injective } \\ \deg v(S) = n_S}}  e^{ \sqrt{c + d(P)} } \frac{(n-c - d(P) ) !}{ \prod_{i\notin s(P)} (n_i)!}   \left(\frac{1-\theta}{ (1+2 \theta)  \sqrt{q} }\right)^{c + d(P) }  \left( \frac{(1+2\theta) e \sqrt{q}  2^{ \frac{\theta}{1-\theta}} }{ 1-\theta} \right)^{n-m}  \prod_{i \in s(P) } n_i   . 
 \end{align*} Using  $\sqrt{c + d(P)} \leq \sqrt{c} + \sqrt{d(P)}$, we have
 \begin{equation}\label{sieve-summing-step} \begin{split}& \Bigl | \sum_{ \substack{ f \in \mathcal M_{n-c} \\  f \equiv a \mod g }}  H^{h}_{n_1,\dots, n_\omega}(f)  -  \frac{1}{ \phi(g) }  \sum_{ \substack{ f \in\mathcal M_{n-c} }} H_{n_1,\dots,n_{\omega}}^h(f)  \Bigr| \\ & \ll      e^{ \sqrt{c}}  \frac{ (n-c)!}{ \prod_{ i =1}^\omega (n_i)!} \left(\frac{1-\theta}{ (1+2 \theta)  \sqrt{q} }\right)^{c}    \left( \frac{(1+2\theta) e \sqrt{q}  2^{ \frac{\theta}{1-\theta}} }{ 1-\theta} \right)^{n-m} \times \\
 & \sum_{ P \in \mathcal P}  \sum_{ \substack{ v\colon P^{\geq 2} \to \mathbb F_q[T]^+ \\ v(S) \textrm{ prime}\\ v(S) \nmid h  \\  v\textrm{ injective } \\ \deg v(S) = n_S}}  e^{ \sqrt{d(P)} } \frac{(n-c - d(P) ) ! \prod_{i \in s(P)} (n_i)! }{ (n-c)! }   \left(\frac{1-\theta}{ (1+2 \theta)  \sqrt{q} }\right)^{d(P)  }  \prod_{i \in s(P) } n_i   . \end{split} \end{equation} 

Now the term being summed is independent of $v$ so to estimate the sum over $v$, it suffices to bound the number of injective maps $v\colon  P^\geq 2 \to \mathbb F_q[T]^+$ with $\deg v(S) =n_S$ and $v(S)$ a prime not dividing $h$. An upper bound can be obtained by noting that there are at most $\frac{ q^{ n_S}}{n_S}$ primes of degree $n_S$, so the total number of possibilities is at most $\prod_{ S \in P^{\geq 2}} \frac{ q^{n_S}}{n_S}$.  We have \[ \prod_{i \in s(P)} n_i  =\prod_{ S \in P^{\geq 2}} n_S^{ \abs{S}}\] so we obtain 
\begin{equation}\label{v-counting-step}  \begin{split} &  \sum_{ P \in \mathcal P}  \sum_{ \substack{ v\colon P^{\geq 2} \to \mathbb F_q[T]^+ \\ v(S) \textrm{ prime} \\  v\textrm{ injective } \\ \deg v(S) = n_S}}  e^{ \sqrt{d(P)} } \frac{(n-c - d(P) ) ! \prod_{i \in s(P)} (n_i)! }{ (n-c)! }   \left(\frac{1-\theta}{ (1+2 \theta)  \sqrt{q} }\right)^{d(P)  }  \prod_{i \in s(P) } n_i   \\
& \leq \sum_{ P \in \mathcal P} e^{ \sqrt{d(P)} } \frac{(n-c - d(P) ) ! \prod_{i \in s(P)} (n_i)! }{ (n-c)! }   \left(\frac{1-\theta}{ (1+2 \theta)  \sqrt{q} }\right)^{d(P)  }   \prod_{ S \in P^{\geq 2} } ( q^{n_S} n_S^{ \abs{S}-1} ) \\
& \leq \sum_{ P \in \mathcal P} e^{ \sqrt{d(P)} } \frac{(n-c - d(P) ) ! \prod_{i \in s(P)} (n_i)! }{ (n-c)! }   \left(\frac{1-\theta}{ (1+2 \theta) }\right)^{d(P)  }   \prod_{ S \in P^{\geq 2} }n_S^{ \abs{S}-1} \end{split} \end{equation} 
since \[ d(P) = \sum_{ S \in P^{\geq 2} } \abs{S} n_S \geq \sum_{ S \in P^{\geq 2} } 2 n_S.\]

 Now \eqref{sieve-combinatorial-bound} implies
\begin{align*} &  \sum_{ P \in \mathcal P} e^{ \sqrt{d(P)} } \frac{(n-c - d(P) ) ! \prod_{i \in s(P)} (n_i)! }{ (n-c)! }   \left(\frac{1-\theta}{ (1+2 \theta) }\right)^{d(P)  }   \prod_{ S \in P^{\geq 2} }n_S^{ \abs{S}-1} \\  & \leq \sum_{d =0}^{ n-c} e^{ \sqrt{d} }  \left(\frac{1-\theta}{ (1+2 \theta) }\right)^{d }   \sum_{ \substack{ P \in \mathcal P\\ d(P)=d}} \frac{(n-c - d ) ! \prod_{i \in s(P)} (n_i)! }{ (n-c)! }  \prod_{ S \in P^{\geq 2} }n_S^{ \abs{S}-1} \\ &  \leq  \sum_{d =0}^{ n-c} e^{ \sqrt{d} }  \left(\frac{1-\theta}{ (1+2 \theta) }\right)^{d }  \leq  \sum_{d =0}^{ \infty} e^{ \sqrt{d} }  \left(\frac{1-\theta}{ (1+2 \theta) }\right)^{d }  \ll 1. \end{align*} 

Plugging this into \eqref{v-counting-step} and \eqref{sieve-summing-step}, we obtain \eqref{eq-vm-c-s-2}.
\end{proof}

\subsection{Distributions of factorization types} 

We keep notation from Subsection \ref{ss-ft}.

In addition, we define the \emph{entropy} $H(w)$ of a factorization type $w \in \EFT_n$ as  \[ \sum_{ (d,l) \in w} \frac{dl}{n} \log \left( \frac{n}{d} \right). \]  Thus $H(\omega_f) $ is the entropy of the probability distribution consisting of one event of probability $\frac{ \deg \pi} {\deg f}$ for each prime factor $\pi$ of $f$, counted with multiplicity.

\begin{lemma}\label{d-ft-1} For $\theta$ a real number between $0$ and $1$, $\lambda$ a positive real number, natural numbers $n$, $m$ with $2 \leq m \leq \theta n$, $g$ squarefree of degree $m$, and an invertible residue class $a$ mod $g$, we have 
\[ \hspace{-.7in}  \sum_{ \substack{ w\in \EFT_n \\ H(w) \leq \lambda}} \left | \frac{ \abs{ \{ f \in \mathcal M_n \mid f \equiv a \mod g , \omega_f=w \} } }{q^{n-m}} -  \frac{ \abs{ \{ f \in \mathcal M_n \mid \gcd(f,g)=1  , \omega_f=w \} } }{\phi(g) q^{n-m}} \right|   \ll  \left( \frac{(1+2\theta) e   2^{ \frac{\theta}{1-\theta}}  e^{ \frac{\lambda}{1-\theta}}  }{ (1-\theta) \sqrt{q} } \right)^{n-m}  .  \] \end{lemma}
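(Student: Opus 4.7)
The plan is to treat each extended factorization type $w \in \EFT_n$ with $H(w) \leq \lambda$ individually, applying \cref{vm-c-s-2} to bound the error within each $w$, and then sum. First, split $w = w^{\geq 2} \sqcup w^{=1}$ according to whether the multiplicity is at least $2$ or equals $1$. Every $f$ with $\omega_f = w$ factors uniquely as $f = h f'$, with $h$ the ``squareful part'' (so $\omega_h = w^{\geq 2}$ and $\deg h = c := \sum_{(d,l) \in w^{\geq 2}} dl$) and $f'$ squarefree of degree $n - c$, coprime to $h$, with $\omega_{f'} = w^{=1}$. Listing the degrees in $w^{=1}$ as $d_1, \ldots, d_\omega$ and setting $m_k = \#\{i : d_i = k\}$, this yields
\[ N(w, a, g) = \sum_{\substack{h \in \mathcal M_c,\, \gcd(h, g) = 1 \\ \omega_h = w^{\geq 2}}} \frac{1}{\prod_i d_i \cdot \prod_k m_k!} \sum_{\substack{f' \in \mathcal M_{n-c}\\ f' \equiv a h^{-1} \bmod g}} F^{'', h}_{d_1, \ldots, d_\omega}(f'), \]
writing $N(w, a, g) := |\{f \in \mathcal M_n : f \equiv a \bmod g, \omega_f = w\}|$, with the parallel formula (summing over $f' \in \mathcal M_{n-c}$ with $\gcd(f', g) = 1$) for $N(w, g)$. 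The inner error per $h$ is bounded directly by \cref{vm-c-s-2} --- the $\gcd(f', g) = 1$ constraint in the average term is handled with a minor adjustment, using that $\gcd(h, g) = 1$.

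Next, I sum over $h$. The number of monic $h$ of degree $c$ with $\omega_h = w^{\geq 2}$ is at most $\prod_{(d, l) \in w^{\geq 2}} q^d / d \leq q^{c/2}$, since $l \geq 2$ forces $\sum_{(d, l) \in w^{\geq 2}} d \leq c/2$. This $q^{c/2}$ precisely cancels the $q^{-c/2}$ in the $\bigl((1-\theta)/((1+2\theta)\sqrt{q})\bigr)^c$ savings from \cref{vm-c-s-2}. Assembling (and absorbing the subexponential $e^{\sqrt{c}}$ into the geometric $(1-\theta)/(1+2\theta)$ factor), the per-$w$ bound becomes
\[ \frac{1}{q^{n-m}} \Bigl| N(w, a, g) - \frac{N(w, g)}{\phi(g)} \Bigr| \ll B(w) \cdot \left( \frac{(1+2\theta) e \cdot 2^{\theta/(1-\theta)}}{(1-\theta)\sqrt{q}} \right)^{n-m}, \]
where $B(w)$ is a combinatorial factor dominated by the multinomial $(n-c)!/\prod_i d_i!$, which Stirling bounds by $e^{H(w)}$ up to polynomial-in-$n$ factors.

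Finally, sum over $w$ with $H(w) \leq \lambda$. The entropy restriction forces $w$ to be concentrated: any prime factor with $d \leq n/2$ contributes at least $\log 2$ per unit of its total degree to $H(w)$, so the cumulative degree from such ``small'' primes is $\leq \lambda/\log 2$, leaving the remaining degree to be carried by a single ``large'' prime of multiplicity $1$. Consequently, the number of $w$ with $H(w) \leq \lambda$ is controlled essentially by the number of partitions of integers up to $\lambda/\log 2$. Combining this count with $B(w) \ll e^\lambda \cdot \mathrm{poly}(n)$, together with the hypothesis $m \leq \theta n$ giving $(n-m)/(1-\theta) \geq n$ and hence $e^{\lambda(n-m)/(1-\theta)} \geq e^{\lambda n}$, the polynomial factor in $n$ and the count of contributing $w$'s are absorbed into $e^{\lambda(n-m)/(1-\theta)}$ up to a constant, yielding the claimed bound.

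The main obstacle is the combinatorial bookkeeping: carefully combining the partial multinomial $(n-c)!/\prod d_i!$ with the symmetry factors $1/\prod m_k!$ and the squareful-part $q^{c/2}$ cancellation to produce a clean per-$w$ bound of the form $e^{H(w)} \cdot \mathrm{poly}(n)$, and then exploiting the entropy hypothesis to limit the number of contributing $w$'s so that the sum fits inside the target geometric factor $(\,\cdot\,)^{n-m}$.
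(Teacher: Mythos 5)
Your decomposition into $w = w^{\geq 2} \sqcup w^{=1}$, factoring $f = h f'$, and applying \cref{vm-c-s-2} per squareful part $h$ matches the paper's strategy. But there is a genuine error in the heart of your argument, at the multinomial bound, and your overall assembly differs from the paper in a way that leaves a gap.

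The error: you claim $B(w) = (n-c)!/\prod_i d_i!$ is bounded by ``$e^{H(w)}$ up to polynomial-in-$n$ factors,'' where $H(w)$ is the entropy of the normalized degree distribution $(dl/n)$ (this is the normalization the paper actually uses, as witnessed by the identity $H(\omega_f) = -\sum_j p^{n,g}(j)\log p^{n,g}(j)$ in \cref{stronger-convergence}). The correct bound is $(n-c)!/\prod_i d_i! \leq e^{nH(w)}$, with an extra $n$ in the exponent. For instance, if $w$ consists of $n$ primes of degree $1$, then $(n-c)!/\prod d_i! = n!$, while $H(w) = \log n$, so $e^{H(w)}\,\mathrm{poly}(n) = n\cdot\mathrm{poly}(n)$ is off from $n!$ by an enormous factor. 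This is not a cosmetic issue: the target bound's crucial factor $e^{\lambda(n-m)/(1-\theta)} \geq e^{\lambda n}$ is there precisely to absorb the $e^{nH(w)} \leq e^{n\lambda}$ multinomial, and your $e^\lambda\,\mathrm{poly}(n)$ bound is far too optimistic, silently making the subsequent counting step look feasible when it is not. Once the exponent is corrected to $e^{n\lambda}$, your count-times-bound argument needs the number of contributing $w$'s to be $O(1)$ (uniformly in $n$), which fails: your own claim about small primes then reads ``cumulative degree $\leq \lambda n / \log 2$'' (not $\lambda/\log 2$), giving exponentially many factorization types.

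The structural difference: the paper sidesteps the counting entirely. After bounding $(n-c)!/\prod n_i! \leq e^{(n-m)\lambda/(1-\theta)}$, the remaining $w$-dependent weight is exactly $\tfrac{1}{\prod_i n_i \cdot \prod_k m_k!}$, and the paper uses the identity that these weights sum to $1$ over all unordered tuples $(n_1,\dots,n_r)$ with $\sum n_i = n-c$ (these are the inverse sizes of $S_{n-c}$-conjugacy classes). This turns the sum over $w$ with fixed squareful part into a sum over squareful $h$ alone, which the paper then handles with the Euler product $\prod_\pi(1 + |\pi|^{-2s}/(1-|\pi|^{-s}))$ for $s > 1/2$. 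Your approach replaces this exact identity with an upper bound for the number of $w$'s plus a per-$w$ multinomial bound, which does not close the argument because the multinomial bound $e^{n\lambda}$ is already of the same order as the total target, leaving no room to absorb the count of $w$'s.
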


\begin{proof} First fix one extended factorization type $w$. Let $w^{ \geq 2}$ be the multiset of pairs $(d,l) \in w$ with $l\geq 2$. This is the analogue for factorization types of the maximum squareful number dividing a given number. Let $c = \sum_{ (d,l)\in w^{\geq 2}} dl$ so that $c =\deg h$ if $\omega_h = w^{\geq 2}$. Let $r$ be the number of pairs $(d,1) \in w$ and let $n_1,\dots, n_r$ be the $d$ values appearing in these pairs.

Then $\omega_f=w$ if and only if $f$ is the product of a monic polynomial $h$ with $\omega_h= w^{\geq 2}$ and primes $f_1,\dots, f_r$ of degrees $n_1,\dots, n_r$ where the $f_i$ are distinct and coprime to $h$.  Furthermore the number of ways of writing $f$ as such a product is $\prod_{k=1}^{\infty} \abs{ \{ i \mid n_i = k \}}!$.  Thus
\[  \frac{1}{ \prod_{i=1}^r n_i}  \frac{ 1}{ \prod_{k=1}^{\infty} \abs{ \{ i \mid n_i = k \}}!} \sum_{ \substack{ h \in \mathbb F_q[T]^+ \\ h\mid f \\ \omega_h = w^{\geq 2}}} H^{h}_{n_1,\dots, n_r} \left( \frac{f}{h} \right) = \begin{cases} 1 & \omega_f  =w \\ 0 & \omega_f \neq w \end{cases} .\]

Hence by \cref{vm-c-s-2} 
\[ \left | \frac{ \abs{ \{ f \in \mathcal M_n \mid f \equiv a \mod g , \omega_f=w \} } }{q^{n-m}} -  \frac{ \abs{ \{ f \in \mathcal M_n \mid \gcd(f,g)=1  , \omega_f=w \} } }{\phi(g) q^{n-m}} \right|\]
\[ \leq  \frac{1}{ \prod_{i=1}^r n_i}  \frac{ 1}{ \prod_{k=1}^{\infty} \abs{ \{ i \mid n_i = k \}}!}  \frac{1}{q^{n-m} }  \sum_{ \substack{ h \in \mathbb F_q[T]^+ \\ \omega_h = w^{\geq 2}}} \Bigl | \sum_{ \substack{ f \in \mathcal M_{n-c} \\  f \equiv a/h  \mod g }}  H^{h}_{n_1,\dots, n_r}(f)  -  \frac{1}{ \phi(g) }  \sum_{ \substack{ f \in\mathcal M_{n-c} }}  H^{h}_{n_1,\dots, n_r} (f)  \Bigr|  \]
\[ \ll   \frac{1}{ \prod_{i=1}^r n_i}  \frac{ 1}{ \prod_{k=1}^{\infty} \abs{ \{ i \mid n_i = k \}}!}   \frac{1}{q^{n-m} }  \sum_{ \substack{ h \in \mathbb F_q[T]^+ \\ \omega_h = w^{\geq 2}}}   e^{ \sqrt{c} } \frac{(n-c) !}{ \prod_{i=1}^r (n_i)!}  \left(\frac{1-\theta}{ (1+2 \theta)  \sqrt{q} }\right)^c \left( \frac{(1+2\theta) e \sqrt{q}  2^{ \frac{\theta}{1-\theta}} }{ 1-\theta} \right)^{n-m}   \] 
Now, $c = \sum_{(d,l) \in w^{\geq 2} }dl$ so $c! \geq \prod_{(d,l) \in w^{\geq 2} }(d!)^l$,  and $\prod_{(d,l) \in w\setminus w^{\geq 2}} (d!)^l = \prod_{i=1}^r (n_i)!$, so  \[ \frac{(n-c) !}{ \prod_{i=1}^r (n_i)! } \leq   \frac{n!}{c! \prod_{i=1}^r (n_i)!  }  \leq \frac{ n!}{ \prod_{ (d,l) \in w} (d!)^l } .\]
Furthermore, by the multinomial theorem, we have \[  1 = \Bigl(  \sum_{(d,l) \in w} l \cdot \frac{d}{n} \Bigr)^n \geq \frac{ n!}{ \prod_{ (d,l) \in w} (d!)^l } \prod_{(d,l)\in w}   \Bigl( \frac{d}{n} \Bigr)^{dl}= \frac{ n!}{ \prod_{ (d,l) \in w} (d!)^l }  e^{- \sum_{(d,l)\in w} dl \log \left(\frac{n}{d} \right)} =\frac{ n!}{ \prod_{ (d,l) \in w} (d!)^l }   e^{ - n H(w) } \]
so 
 \[ \frac{(n-c) !}{ \prod_{i=1}^r (n_i)! } \leq   \frac{ n!}{ \prod_{ (d,l) \in w} (d!)^l }  \leq  e^{ n H(w) } \leq e^{n \lambda} \leq e^{ \frac{ (n-m) \lambda}{ 1-\theta} } \] thus we obtain

\[ \left | \frac{ \abs{ \{ f \in \mathcal M_n \mid f \equiv a \mod g , \omega_f=w \} } }{q^{n-m}} -  \frac{ \abs{ \{ f \in \mathcal M_n \mid \gcd(f,g)=1  , \omega_f=w \} } }{q^{n-m}} \right|\]
\[ \ll   \frac{1}{ \prod_{i=1}^r n_i}  \frac{ 1}{ \prod_{k=1}^{\infty} \abs{ \{ i \mid n_i = k \}}!}    \sum_{ \substack{ h \in \mathbb F_q[T]^+ \\ \omega_h = w^{\geq 2}}}   e^{ \sqrt{c} }   \left(\frac{1-\theta}{ (1+2 \theta)  \sqrt{q} }\right)^c \left( \frac{(1+2\theta) e   2^{ \frac{\theta}{1-\theta}}  e^{ \frac{\lambda}{1-\theta}}  }{ (1-\theta) \sqrt{q} } \right)^{n-m}   \] 

For each polynomial $h$ of degree $c$, the $w \in \EFT_n$ with $w^{\geq 2} = \omega_h$ are parameterized by unordered tuples $n_1,\dots, n_r$ of natural numbers summing to $n-c$.  The sum over all such unordered tuples of $ \frac{1}{ \prod_{i=1}^r n_i}  \frac{ 1}{ \prod_{k=1}^{\infty} \abs{ \{ i \mid n_i = k \}}!}   $ is $1$ since $\frac{(n-c)!}{ \prod_{i=1}^r n_i\prod_{k=1}^{\infty}}$ is the number of elements of the symmetric group $S_{n-c}$ with cycle type $(n_1,\dots, n_r)$.

Adding the condition that $H(\omega) \leq \lambda$ only shrinks the value of the sum. Thus \[   \sum_{ \substack{ w\in \EFT_n \\ H(w) \leq \lambda}} \left | \frac{ \abs{ \{ f \in \mathcal M_n \mid f \equiv a \mod g , \omega_f=w \} } }{q^{n-m}} -  \frac{ \abs{ \{ f \in \mathcal M_n \mid \gcd(f,g)=1  , \omega_f=w \} } }{q^{n-m}} \right|  \] 
\[ \ll \sum_{c=0}^n \sum_{ \substack{ h \in \mathcal M_c \\ \textrm{squarefull} } } e^{ \sqrt{c} }   \left(\frac{1-\theta}{ (1+2 \theta)  \sqrt{q} }\right)^c  \left( \frac{(1+2\theta) e   2^{ \frac{\theta}{1-\theta}}  e^{ \frac{\lambda}{1-\theta}}  }{ (1-\theta) \sqrt{q} } \right)^{n-m}    .\] 

Thus, it remains to establish \[ \sum_{c=0}^n \sum_{ \substack{ h \in \mathcal M_c \\ \textrm{squarefull}} }  e^{ \sqrt{c} }   \left(\frac{1-\theta}{ (1+2 \theta)  \sqrt{q} }\right)^c  \ll 1\]  To do this, choose any $s$ with  \[  \frac{1-\theta}{ (1+2 \theta)  \sqrt{q} } <  q^{-s} < \frac{1}{ \sqrt{q} }\] and note that \[e ^{ \sqrt{c} }   \left(\frac{1-\theta}{ (1+2 \theta)  \sqrt{q} }\right)^c \ll q^{-sc} \]  so that

\[  \sum_{c=0}^n \sum_{ \substack{ h \in \mathcal M_c \\ \textrm{squarefull} } } e^{ \sqrt{c} }   \left(\frac{1-\theta}{ (1+2 \theta)  \sqrt{q} }\right)^c \ll \sum_{c=0}^n \sum_{ \substack{ h \in \mathcal M_c \\ \textrm{squarefull} } }  q^{-sc} \ll \sum_{\substack{h \in \mathbb F_q[t]^+ \\ \textrm{squarefull}} } q^{-s \deg h}  = \prod_{ \substack { \pi \in \mathbb F_q[t]^+ \\ \textrm{prime}}} \left( 1 + \frac{\abs{ \pi}^{-2s}}{ 1 - \abs{\pi}^{-s}} \right) \ll 1 ,\] because $s>\frac{1}{2}$ and so the Euler product converges. 
\end{proof}

\begin{lemma}\label{d-ft-2} For $\theta$ a real number between $0$ and $1$, $\lambda$ a positive real number, natural numbers $n$, $m$ with $2\leq m \leq \theta n$, $g$ squarefree of degree $m$, and an invertible residue class $a$ mod $g$, the total variation distance between the probability measures 
\[ \frac{1}{q^{n-m}}  \sum_{ \substack{ f \in \mathcal M_n \\ f \equiv a \mod g}} \delta_{\omega_f} \]
and
\[ \frac{1}{\phi(g) q^{n-m}}  \sum_{ \substack{ f \in \mathcal M_n \\ \gcd(f,g)=1 }} \delta_{\omega_f} \]
is at most
\[ \frac{\abs{ \{ f \in \mathcal M_n \mid \gcd(f,g)=1, H(\omega_f) > \lambda\} }}{ q^{n-m} \phi(g) } + O \left(  \left( \frac{(1+2\theta) e   2^{ \frac{\theta}{1-\theta}}  e^{ \frac{\lambda}{1-\theta}}  }{ (1-\theta) \sqrt{q} } \right)^{n-m}  \right) .\]
\end{lemma}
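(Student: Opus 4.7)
The plan is to write the total variation distance as $\tfrac{1}{2}\sum_{w\in \EFT_n}|\mu_1(w)-\mu_2(w)|$, where $\mu_1$ and $\mu_2$ denote the two probability measures in the statement (and note both are genuine probability measures: the first because there are exactly $q^{n-m}$ monic degree $n$ polynomials in a fixed residue class mod $g$ when $n\geq m$, the second because summing over all invertible residue classes gives $\phi(g)q^{n-m}$ polynomials coprime to $g$). I would then split the sum according to whether $H(w)\leq\lambda$ or $H(w)>\lambda$.

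On the low-entropy range $\{w:H(w)\leq \lambda\}$, \cref{d-ft-1} applies directly and produces the first term
\[ \sum_{H(w)\leq \lambda}|\mu_1(w)-\mu_2(w)| \ll \left( \frac{(1+2\theta) e\, 2^{\theta/(1-\theta)} e^{\lambda/(1-\theta)}}{(1-\theta)\sqrt{q}} \right)^{n-m}. \]
On the high-entropy range I would use the triangle inequality to bound
\[ \sum_{H(w)>\lambda}|\mu_1(w)-\mu_2(w)| \leq \mu_1(\{H(w)>\lambda\}) + \mu_2(\{H(w)>\lambda\}). \]
The $\mu_2$ term is already exactly the second term of the claimed bound, since
\[ \mu_2(\{H(w)>\lambda\}) = \frac{|\{f\in \mathcal M_n : \gcd(f,g)=1,\ H(\omega_f)>\lambda\}|}{\phi(g)\,q^{n-m}}. \]

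The one minor subtlety is controlling $\mu_1(\{H(w)>\lambda\})$, since the stated bound does not feature a congruence-class tail explicitly. I would handle this by exploiting the fact that $\mu_1$ and $\mu_2$ are both probability measures: since $\mu_1(\EFT_n)=\mu_2(\EFT_n)=1$,
\[ \mu_1(\{H(w)>\lambda\}) - \mu_2(\{H(w)>\lambda\}) = -\sum_{H(w)\leq \lambda}(\mu_1(w)-\mu_2(w)), \]
whose absolute value is at most the low-entropy sum already estimated via \cref{d-ft-1}. Hence $\mu_1(\{H(w)>\lambda\})$ is bounded by $\mu_2(\{H(w)>\lambda\})$ plus the same low-entropy error. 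Assembling the three estimates yields the stated bound.

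There is no real obstacle here; the result is a direct packaging of \cref{d-ft-1} once one notes that the tail in $\mu_1$ can be transferred to the tail in $\mu_2$ at the cost of a low-entropy discrepancy that has already been controlled. The only point requiring any care is the bookkeeping of constants when combining the triangle-inequality step with the transfer step, which at worst changes the implied constant in the $O$ term of the first summand.
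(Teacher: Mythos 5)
Your proof is correct and takes essentially the same route as the paper: split $\EFT_n$ by the entropy threshold, apply \cref{d-ft-1} on $\{H(w)\leq\lambda\}$, and absorb the high-entropy tail via $\mu_2$. The paper is slightly shorter because it works with the one-sided total variation formula $\sum_w\max(\mu_2(w)-\mu_1(w),0)$, for which the trivial bound $\max(\mu_2(w)-\mu_1(w),0)\leq\mu_2(w)$ dispatches the high-entropy part immediately, whereas your symmetric $\tfrac12\sum_w|\mu_1(w)-\mu_2(w)|$ formulation requires the (correct) mass-conservation step to transfer the $\mu_1$-tail to the $\mu_2$-tail.
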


\begin{proof}Let $\mu_1 = \frac{1}{q^{n-m}}  \sum_{ \substack{ f \in \mathcal M_n \\ f \equiv a \mod g}} \delta_{\omega_f} $ and $\mu_2 = \frac{1}{\phi(g) q^{n-m}}  \sum_{ \substack{ f \in \mathcal M_n \\ \gcd(f,g)=1 }} \delta_{\omega_f}$. 
It is easy to verify that these measures are probability measures, because there are $q^{n-m}$ monic $f$ of degree $n$ congruent to $a$ mod $g$ and $\phi(g)q^{n-m}$ monic $f$ of degree $m$ coprime to $g$.

 For probability measures $\mu_1,\mu_2$ on a finite set $S$, one equivalent definition of the total variation distance between $\mu_1$ and $\mu_2$ is \cite[Remark 4.3]{lpw}
\[ \sum_{ s \in S}  \max ( \mu_2(s) - \mu_1(s),0 ) .\]

We apply \cref{d-ft-1} to bound the difference $\abs{ \mu_2(w) -\mu_1(w)}$ whenever $H(w) \leq \lambda$ and use \[ \max ( \mu_2(s) - \mu_1(s),0 ) \leq \mu_2(s)\] to bound the measure whenever $H(w) > \lambda$. These give the two stated terms.  \end{proof}

The next two lemmas describes the limiting distribution of the entropy by comparing the prime factorization of a random element of $\mathcal M_n$ coprime to $g$ to a GEM random variable.

 For each natural number $n$ and any polynomial $g$ coprime to at least one polynomial of degree $n$, define a random variables $p^{n, g}\in \Delta_\infty$.  First choose a polynomial $f$ uniformly at random among polynomials of degree $n$ prime to $g$. Then choose a random prime factor $\pi_1$ of $f$, choosing each prime factor $\pi$ with probability $\frac{\deg \pi}{\deg f}$, and set $p^{n,g}(1) = \frac{ \deg \pi_1}{n}$. Then choose a random prime factor $\pi_1$ of $f/\pi_1$, choosing each prime factor $\pi$ with probability $\frac{\deg \pi}{ \deg (f/\pi_1)}$, and set $p^{n,g}(2)= \frac{ \deg \pi_2}{n f}$. Continue, choosing $\pi_j $ a prime factor of $f/\prod_{i=1}^{j-i} \pi_i$, where each $\pi$ occurs with probability $\frac{\deg \pi} { \deg (f/\prod_{i=1}^{j-i} \pi_i)}$, and set $p^{n,g}(j) =\frac{ \deg \pi_j}{n}$. If a prime factor occurs multiple times, we treat these multiple occurrences as distinct - each has their own probability proportional to $\deg \pi$ as occurring. 
 
 The following generalizes \cite[Theorem 1]{DG} by first passing from integers to polynomials over a finite field, and then restricting to polynomials coprime to $g$. An analogue involving a uniform distribution over integers coprime to a given integer can likely be proven by similar means.

\begin{lemma}\label{weak-convergence} As $n$ goes to $\infty$, the random variable $p^{n,g}$ converges weakly in distribution to the GEM distribution, uniformly in $g$ as long as $\deg g = e^{ o (n/\log n)}$. \end{lemma}

\begin{proof}

 By the argument on \cite[p. 401]{DG}, which is purely probabilistic and transfers without modification, it suffices to prove for each positive integer $k$ and each tuple $a_1,\dots,a_k, b_1,\dots, b_k$ with $0<a_i<b_i<1$ that
\[ \lim \inf_{n\to \infty} \mathbb P \left(  a_i < \frac{  p^{n,g}(i)}{1 - \sum_{j=1}^{i-1} p^{n,g}(j) } < b_i \textrm{ for all } i\right) \geq \prod_{i=1}^k (b_i-a_i) .\]

Note that $\frac{  p^{n,g}(i)}{1 - \sum_{j=1}^{i-1} p^{n,g}(j) } = \frac{ \deg \pi_i} { n - \sum_{j=1}^{i-1} \deg \pi_j}$. Next observe that for any fixed $f$, the probability that $\pi_1,\dots, \pi_k$ attain a given tuple $\overline{\pi}_1,\dots, \overline{\pi}_k$ of prime polynomials is at least $\prod_{i=1}^k \frac{ \deg \overline{\pi}_i}{ n- \sum_{j=1}^{i-1} \deg \overline{\pi}_j } $ as long as $\overline{\pi}_1,\dots, \overline{\pi}_k$ divide $f$. Thus it suffices to prove that
\begin{equation}\label{dg-lower-bound} \sum_{ \substack{ \overline{\pi}_1,\dots, \overline{\pi}_k \in \mathbb F_q[t] \\ \textrm{monic, prime} \\ a_i <\frac{ \deg \pi_i} { n - \sum_{j=1}^{i-1} \deg \pi_j} < b_i  \textrm{ for all } i}} \prod_{i=1}^k \frac{ \deg \overline{\pi}_i}{ n- \sum_{j=1}^{i-1} \deg \overline{\pi}_j }  \mathbb P (\overline{ \pi}_1\overline{\pi}_2 \dots \overline{\pi}_k \mid f)  \geq (1-o(1)) \prod_{i=1}^k (b_i-a_i).\end{equation}

We will do this in two steps. We will first lower bound the probability in the $g=1$ case, i.e. with no coprimality condition on $f$, and then show the difference between the probability in the $g=1$ case and the general case converges to $0$ as $g$ goes to $\infty$. (Everything in this proof but this bounding-the-difference step follows closely the argument of \cite[Proof of Theorem 1 on p. 401- 403]{DG}.)

In both cases, we observe that for $d_i=\deg \overline{\pi}_i$, the conditions $a_i <\frac{ d_i} { n - \sum_{j=1}^{i-1} d_j} < b_i$ imply
\begin{equation}\label{i-upper-bound}  n - \sum_{j=1}^{i-1} d_j > n \prod_{i=1}^{j-1} (1 -b_j)\end{equation}
 by induction on $i$ so \begin{equation}\label{all-k-lower-bound} n - \sum_{j=1}^k d_j > n \prod_{i=1}^k (1-b_j) \end{equation}
 and \begin{equation}\label{i-lower-bound}  d_i > a_i n \prod_{i=1}^{j-1} (1 -b_j).\end{equation}
  
  Because of \eqref{all-k-lower-bound}, $\deg ( \overline{\pi}_1\overline{\pi}_2 \dots \overline{\pi}_k ) \leq n$ so, in the $g=1$ case, we have
\[\mathbb P ( \overline{\pi}_1\overline{\pi}_2 \dots \overline{\pi}_k \mid f)  = \frac{1}{ q^{ \sum_{j=1}^k \deg \overline{\pi}_j}} \] thus
\[ \sum_{ \substack{ \overline{\pi}_1,\dots, \overline{\pi}_k \in \mathbb F_q[t] \\ \textrm{monic, prime} \\ a_i <\frac{ \deg \overline{\pi}_i} { n - \sum_{j=1}^{i-1} \deg \pi_j} < b_i  \textrm{ for all } i}} \prod_{i=1}^k \frac{ \deg \overline{\pi}_i}{ n- \sum_{j=1}^{i-1} \deg \overline{\pi}_j }  \mathbb P ( \pi_1\pi_2 \dots \pi_k \mid f)\] \[=  \sum_{ \substack{ \overline{\pi}_1,\dots, \overline{\pi}_k \in \mathbb F_q[t] \\ \textrm{monic, prime} \\ a_i <\frac{ \deg \pi_i} { n - \sum_{j=1}^{i-1} \deg \pi_j} < b_i  \textrm{ for all } i}} \prod_{i=1}^k \frac{ \deg \overline{\pi}_i}{ n- \sum_{j=1}^{i-1} \deg \overline{\pi}_j }   \frac{1}{ q^{ \sum_{j=1}^k \deg \pi_j}}\]
\[ = \sum_{ \substack{ d_1,\dots, d_k \in \mathbb N^+ \\ a_i <\frac{ d_i} { n - \sum_{j=1}^{i-1} d_j} < b_i  \textrm{ for all } i}} \prod_{i=1}^k \frac{ d_i}{ n- \sum_{j=1}^{i-1} d_j }   \frac{1}{ q^{ \sum_{j=1}^k d_j}} \prod_{i=1}^k \abs{ \{ \overline{\pi}_i \in \mathcal M_{d_i} \mid \overline{\pi}_i \textrm{ prime}} \]
(by \eqref{i-lower-bound} and the prime number theorem for polynomials)
\[ \geq (1-o(1) ) \sum_{ \substack{ d_1,\dots, d_k \in \mathbb N^+ \\ a_i <\frac{ d_i} { n - \sum_{j=1}^{i-1} d_j} < b_i  \textrm{ for all } i}} \prod_{i=1}^k \frac{ d_i}{ n- \sum_{j=1}^{i-1} d_j }   \frac{1}{ q^{ \sum_{j=1}^k d_j}} \prod_{i=1}^k \frac{ q^{d_i}}{d_i} \]
\[=  (1-o(1) ) \sum_{ \substack{ d_1,\dots, d_k \in \mathbb N^+ \\ a_i <\frac{ d_i} { n - \sum_{j=1}^{i-1} d_j} < b_i  \textrm{ for all } i}} \prod_{i=1}^k \frac{ 1}{ n- \sum_{j=1}^{i-1} d_j }  .\]

Fixing $d_1,\dots, d_{j-1}$, the final variable  $d_k$ may take any value in $(a_k (n - \sum_{j=1}^{k-1} d_j) , b_k (n - \sum_{j=1}^{k-1} d_j)$, and the summand $ \prod_{i=1}^k \frac{ 1}{ n- \sum_{j=1}^{i-1} d_j }$ is independent of $d_k$. We may thus replace the sum over $d_k$ by the number of possible choices of $d_k$, which is 
\[ b_k (n - \sum_{j=1}^{k-1} d_j) - a_k (n - \sum_{j=1}^{k-1} d_j) +O(1) = (1-o(1)) (b_k-a_k)  (n - \sum_{j=1}^{k-1} d_j) \] by \eqref{i-upper-bound} for $i=k$. This cancels the $k$th term in the product $\prod_{i=1}^k \frac{ 1}{ n- \sum_{j=1}^{i-1} d_j }$, giving the sum
\[=  (1-o(1) )  (b_k-a_k) \sum_{ \substack{ d_1,\dots, d_{k-1} \in \mathbb N^+ \\ a_i <\frac{ d_i} { n - \sum_{j=1}^{i-1} d_j} < b_i  \textrm{ for all } i<k}} \prod_{i=1}^{k-1} \frac{ 1}{ n- \sum_{j=1}^{i-1} d_j }  \]
 and we may repeat this process $k$ times, obtaining
 \[ (1-o(1)) \prod_{i=1}^k (b_k-a_k),\] as desired.
 
 To handle the general $g$ case, we observe that the number of monic polynomials of degree $n$ divisible by a fixed polynomial $h$ is $q^{n -\deg h}$ if $\deg h \leq n$ and $0$ otherwise, so the number of monic polynomials of degree $n$ prime to $g$ is $\sum_{\substack{ h\mid g\\ \deg h\leq n}}\mu(h) q^{n-\deg h}$, and
 \begin{equation}\label{rp-probability} \mathbb P ( \overline{\pi}_1\overline{\pi}_2 \dots \overline{\pi}_k \mid f ) =  \frac{\sum\limits_{\substack{ h\mid g\\ \deg h + \sum_{i=1}^k \deg \overline{\pi}_i \leq n}}\mu(h) q^{n-\deg h- \sum_{i=1}^k \deg \overline{\pi}_i  }  }{  \sum\limits_{\substack{ h\mid g\\ \deg h\leq n}}\mu(h) q^{n-\deg h} } \end{equation} 
as long as $\overline{\pi}_1,\dots, \overline{\pi}_k \nmid g$.

To estimate both numerator and denominator, we will show for every fixed $c>0$ that \begin{equation}\label{rp-removal-lb}  \sum_{ \substack{h \mid g \\ \deg h> cn}} \abs{\mu(h)} q^{n- \deg h} = o \left( q^n \frac{\phi(g)}{\abs{g}} \right).\end{equation} 
To do this, observe for every $\alpha>0$ that
\[  \sum_{ \substack{h \mid g \\ \deg h> cn}} \abs{\mu(h)} q^{n- \deg h} \leq \frac{1}{q^{\alpha cn} }  \sum_{ \substack{h \mid g }} \abs{\mu(h)} q^{n- (1-\alpha) \deg h} =\frac{q^n}{q^{\alpha cn }} \prod_{\substack{ \pi \mid g \\ \textrm{prime}}} (1 + q^{ - (1-\alpha) \deg \pi }).\]
and $\frac{\phi(g)}{\abs{g}} = \prod_{ \substack{\pi \mid g \\ \textrm{prime}}} (1- q^{-\deg \pi}) $ so it suffices to prove that
\[ \frac{1}{q^{\alpha cn }} \prod_{\substack{ \pi \mid g \\ \textrm{prime}}} (1 + q^{ - (1-\alpha) \deg \pi })= o \Bigl(  \prod_{ \substack{\pi \mid g \\ \textrm{prime}} }(1- q^{-\deg \pi}) \Bigr) .\]

For fixed $\deg g$, the  products on the left side are minimized, and the product on the right side is maximized, when each $\deg \pi$ is as small as possible, so it suffices to handle the ``primorial" case where $g$ consists of the products of all primes of degree $<l$ with some of the primes of degree $l$.  The assumption $\deg g=e^{ o (n/\log n) }$ gives $l=o (n/\log n) $. Taking $\alpha= 1/l$, we have
\[   \prod_{ \substack{\pi \mid g \\ \textrm{prime}} }(1+ q^{-(1-\alpha) \deg \pi}) \leq \prod_{ \substack{\pi \mid g \\ \textrm{prime}} } e^{ q^{-(1-\alpha) \deg \pi} } = e^{ \sum_{\substack{\pi \mid g} }q^{-(1-\alpha) \deg \pi}} \leq e^{ \sum_{ \deg \pi \leq l } q^{-(1-\alpha) \deg \pi}} \] \[\leq  e^{ q \sum_{ \deg \pi \leq l } q^{-\deg \pi}} \leq  e^{ q \sum_{d=1}^{l} \frac{q^d}{d} q^{-d} } = e^{ q  \log l + O(1) } =O( l^q) = O( n^q )\]
and
\[\prod_{ \substack{\pi \mid g \\ \textrm{prime}} }(1- q^{-\deg \pi})= \frac{ \prod_{ \substack{\pi \mid g \\ \textrm{prime}} }(1 -q^{-2\deg \pi}) }{\prod_{ \substack{\pi \mid g \\ \textrm{prime}} }(1+ q^{-\deg \pi})} \gg \frac{1}{ \prod_{ \substack{\pi \mid g \\ \textrm{prime}} }(1+ q ^{-\deg \pi})}\gg \frac{1}{ O( n^q) } \]
while $\alpha cn = c n / l  = cn / o(n/\log n ) = c \log n / o(1) = \log n / o(1)$ is eventually greater than any constant times $\log n$, so $q^{ \alpha c n}$ is eventually greater than any power of $n$, thus 

\[ \frac{1}{q^{\alpha cn }} \prod_{\substack{ \pi \mid g \\ \textrm{prime}}} (1 + q^{ - (1-\alpha) \deg \pi }) = \frac{ O(n^q)}{ q^{\alpha c n }} = o(n^q)  = o \Bigl(  \prod_{ \substack{\pi \mid g \\ \textrm{prime}} }(1- q^{-\deg \pi}) \Bigr) ,\]
establishing \eqref{rp-removal-lb}.
Taking \eqref{rp-probability} and then combining \eqref{all-k-lower-bound} with \eqref{rp-removal-lb},  we get
\[\mathbb P ( \overline{\pi}_1\overline{\pi}_2 \dots \overline{\pi}_k \mid f ) =  \frac{ \sum\limits_{ h\mid g }\mu(h) q^{n-\deg h- \sum_{i=1}^k \deg \overline{\pi}_i }  - \sum\limits_{\substack{ h\mid g\\ \deg h + \sum_{i=1}^k \deg \overline{\pi}_i > n}}\mu(h) q^{n-\deg h- \sum_{i=1}^k \deg \overline{\pi}_i }   }{  \sum\limits_{\substack{ h\mid g}}\mu(h) q^{n-\deg h} -  \sum\limits_{\substack{ h\mid g\\ \deg h> n}}\mu(h) q^{n-\deg h} } \] \[= \frac{ q^{n- \sum_{i=1}^k \deg \overline{\pi_i} } \phi(g)/\abs{g} -  o ( q^{n- \sum_{i=1}^k \deg \overline{\pi_i} } \phi(g)/\abs{g})}{ q^n \phi(g)/\abs{g} - o ( q^n \phi(g)/\abs{g} )} = q^{ -\sum_{i=1}^k \deg \overline{\pi}_i } (1+o(1) ) \]
so to establish \eqref{dg-lower-bound}, it suffices to prove
  \begin{equation}\label{dg-lower-bound-rp} \sum_{ \substack{ \overline{\pi}_1,\dots, \overline{\pi}_k \in \mathbb F_q[t] \\ \textrm{monic, prime} \\ a_i <\frac{ \deg \pi_i} { n - \sum_{j=1}^{i-1} \deg \pi_j} < b_i  \textrm{ for all } i\\ \gcd( \overline{\pi}_i, g) =1 \textrm{ for all } i }} \prod_{i=1}^k \frac{ \deg \overline{\pi}_i}{ n- \sum_{j=1}^{i-1} \deg \overline{\pi}_j } q^{ - \sum_{i=1}^k \deg \overline{\pi}_i} \geq (1-o(1)) \prod_{i=1}^k (b_i-a_i).\end{equation}

Another inclusion-exclusion gives
\[\sum_{ \substack{ \overline{\pi}_1,\dots, \overline{\pi}_k \in \mathbb F_q[t] \\ \textrm{monic, prime} \\ a_i <\frac{ \deg \pi_i} { n - \sum_{j=1}^{i-1} \deg \pi_j} < b_i  \textrm{ for all } i\\ \gcd( \overline{\pi}_i, g) =1 \textrm{ for all } i }} \prod_{i=1}^k \frac{ \deg \overline{\pi}_i}{ n- \sum_{j=1}^{i-1} \deg \overline{\pi}_j } q^{ - \sum_{i=1}^k \deg \overline{\pi}_i}\]
\[ = \sum_{h\mid g} \mu(h) \sum_{ \substack{ \overline{\pi}_1,\dots, \overline{\pi}_k \in \mathbb F_q[t] \\ \textrm{monic, prime} \\ a_i <\frac{ \deg \pi_i} { n - \sum_{j=1}^{i-1} \deg \pi_j} < b_i  \textrm{ for all } i\\ h \mid \prod_{i=1}^k \overline{\pi}_i }} \prod_{i=1}^k \frac{ \deg \overline{\pi}_i}{ n- \sum_{j=1}^{i-1} \deg \overline{\pi}_j } q^{ - \sum_{i=1}^k \deg \overline{\pi}_i}\]
For any $h\neq 1$, the inner sum vanishes unless some prime factor of $h$ is one of the $\overline{\pi}_i$, so by \eqref{i-lower-bound}, the inner sum vanishes unless $\deg h> a_i n \prod_{i=1}^{j-1} (1 -b_j)$. In any case, the inner sum is what was already seen to be a lower bound for the probability that, choosing $f$ uniformly at random from $\mathcal M_n$ and then choosing $\pi_1,\dots, \pi_k$ as usual, $h\mid \prod_{i=1}^k   \pi_i$, and thus is clearly at most the probability that, choosing $f$ uniformly at random, $h\mid f$, and hence is at most $q^{-\deg h} $, so the $h \neq 1$ terms of this sum are at most
\[ \sum_{\substack{ h \mid g\\ \deg h > a_i n \prod_{i=1}^{j-1} (1 -b_j) }} \abs{\mu(h)} q^{-\deg h} \] which was already seen in \eqref{rp-removal-lb} to be $o ( \phi(g)/\abs{g} ) = o(1)$, while the $h=1$ sum was already estimated in the $g=1$ case above, establishing \eqref{dg-lower-bound-rp} and completing the argument.
 \end{proof}

\begin{lemma}\label{stronger-convergence}  For $\lambda$ a positive real number and $\theta \in (0,1)$ the quantity \[ \frac{\abs{ \{ f \in \mathcal M_n \mid \gcd(f,g)=1, H(\omega_f) > \lambda\} }}{ q^{n-m} \phi(g) },\]   converges as $n$ goes to $\infty$, uniformly in $g,m$ as long as $m=\deg g \leq \theta n$, to the probability that the entropy of a GEM variable is $>\lambda$.

\end{lemma} 

\begin{proof}We continue analyzing the random variable $p^{n,g}$. By construction, we have
\[ H (\omega_f) =  -\sum_{j=1}^{\infty}  p^{n,g}(j) \log p^{n,g}(j) .\]

Let $p$ be a GEM distributed random variable. Because the probability that $H(p)= \lambda$ is zero by \cref{no-atoms}, it suffices to show that $ -\sum_{j=1}^{\infty}  p^{n,g}(j) \log p^{n,g}(j)$ converges in distribution to $ -\sum_{j=1}^{\infty} p(j) \log p(j)$. Using $\mathcal L$ to represent the law of a random variable, it suffices to show that.
\begin{equation}\label{entropy-limits-dirichlet} \lim_{n \to \infty} \mathcal L  \Bigl( -\sum_{j=1}^{\infty}  p^{n,g}(j) \log p^{n,g}(j) \Bigr)  =  \mathcal L \Bigl( -\sum_{j=1}^{\infty} p(j) \log p(j) \Bigr). \end{equation} 

It follows from Lemma \ref{weak-convergence} since $m \leq \theta n = e^{ o(n^{1/3} ) } $, that $p^{n,g}(1),\dots p^{n,g}(k)$ converge in distribution as $n \to \infty$ with $\deg g \leq  \theta n$ to $p(1) ,\dots, p(k)$.   Thus
\[  \lim_{n \to \infty} \mathcal L \Bigl(  -\sum_{j=1}^{k}  p^{n,g}(j) \log p^{n,g}(j) \Bigr)  = \mathcal L \Bigl( -\sum_{j=1}^{k} p(j) \log p(j)  \Bigr) \]
and hence
\[ \lim_{k \to \infty}  \lim_{n \to \infty} \mathcal L \Bigl( -\sum_{j=1}^{k}  p^{n,g}(j) \log p^{n,g}(j) \Bigr)  = \lim_{k\to\infty} \mathcal L \Bigl(-\sum_{j=1}^{k} p(j) \log p(j) \Bigr) .\]

Now 
\[\lim_{k\to\infty} \mathcal L \Bigl( -\sum_{j=1}^{k} p(j) \log p(j)\Bigr)  =\mathcal L \Bigl(-\sum_{j=1}^{\infty} p(j) \log p(j) \Bigr) \]
because for all $\epsilon>0$,
\[ \lim _{k \to\infty} \mathbb P \Bigl(\bigl| - \sum_{j=k+1}^{\infty} p(j) \log p(j) \bigr| > \epsilon\Bigr) =0 ,\] that is, the sums of the first $k$ terms converge in probability, and thus distribution, to the full sum.

So to obtain \eqref{entropy-limits-dirichlet}, it suffices to prove 
\[  \lim_{k \to \infty}  \lim_{n \to \infty}\mathcal L \Bigl( -\sum_{j=1}^{k}  p^{n,g}(j) \log p^{n,g}(j) \Bigr)  = \lim_{n \to \infty} \mathcal L \Bigl( -\sum_{j=1}^{\infty}  p^{n,g}(j) \log p^{n,g}(j)  \Bigr) , \]
for which it similarly suffices to check, for all $\epsilon>0$, that
\[  \lim_{k \to \infty}  \lim_{n \to \infty} \mathbb P \bigl(  \bigl| -\sum_{j=k+1}^{\infty}  p^{n,g}(j) \log p^{n,g}(j)  \bigr| > \epsilon\bigr).  \] 

%

 Since $-\sum_{j=k+1}^\infty p^{n,g}(j) \log p^{n,g}(j) \geq 0$, it suffices to show
\[ \lim_{k\to\infty} \lim_{n \to\infty} \mathbb E \left[ -\sum_{j=k+1}^\infty p^{n,g}(j) \log p^{n,g}(j) \right ]  =0 .\]

We have \[ \mathbb E \left[ -\sum_{j=k+1}^\infty p^{n,g}(j) \log p^{n,g}(j) \right ] = \sum_{ \pi \mid f} v_\pi(f)  \frac{ \deg \pi}{n}  \log \left( \frac{ n}{\deg \pi} \right)  \mathbb P ( \pi \neq \pi_1,\dots \pi_k)  
\]

\[\leq  \sum_{ \pi \mid f} v_\pi(f)  \frac{ \deg \pi}{n}  \log \left( \frac{ n}{\deg \pi} \right)  \left( 1 - \frac{\deg \pi} {n} \right)^k  = \sum_{ \pi \textrm{ prime}} \sum_{r=1}^{\infty} \delta_{\pi^r \mid f}  \frac{ \deg \pi}{n}  \log \left( \frac{ n}{\deg \pi} \right)  \left( 1 - \frac{\deg \pi} {n} \right)^k.\]

Thus \[ \mathbb E \left[ -\sum_{j=k+1}^\infty p^{n,g}(j) \log p^{n,g}(j) \right ] \] \[ \leq   \sum_{ \pi \textrm{ prime}} \sum_{r=1}^{\infty}  \frac{ \abs{\{ f \in \mathbb F_q[t]^+ \mid \deg f=n, \gcd(f,g)=1, \pi^r \mid f\}}}{ \phi(g) q^{n-m}}   \frac{ \deg \pi}{n}  \log \left( \frac{ n}{\deg \pi} \right)  \left( 1 - \frac{\deg \pi} {n} \right)^k \]
\[ =  \sum_{ \substack{ \pi \textrm{ prime} \\ \pi \nmid g}} \sum_{r=1}^{\infty}  \frac{ \abs{\{ f \in \mathbb F_q[t]^+ \mid \deg f=n- r \deg \pi , \gcd(f,g)=1\}}}{ \phi(g) q^{n-m}}   \frac{ \deg \pi}{n}  \log \left( \frac{ n}{\deg \pi} \right)  \left( 1 - \frac{\deg \pi} {n} \right)^k \]
\[ = \sum_{d=1}^n \sum_{ \substack{ \pi \textrm{ prime} \\ \pi \nmid g \\ \deg \pi \mid d }}  \frac{ \abs{\{ f \in \mathbb F_q[t]^+ \mid \deg f=n- d , \gcd(f,g)=1\}}}{ \phi(g) q^{n-m}}   \frac{ \deg \pi}{n}  \log \left( \frac{ n}{\deg \pi} \right)  \left( 1 - \frac{\deg \pi} {n} \right)^k.\]

We now split the sum into terms where $d \leq n-m$ and $d> n-m$ and bound them separately. For $d \leq n-m$, we have $\abs{\{ f \in \mathbb F_q[t]^+ \mid \deg f=n- d , \gcd(f,g)=1\}}  = q^{n-m-d} \phi(g)$ so 

\[\sum_{d=1}^{n-m}  \sum_{ \substack{ \pi \textrm{ prime} \\ \pi \nmid g \\ \deg \pi \mid d }}  \frac{ \abs{\{ f \in \mathbb F_q[t]^+ \mid \deg f=n- d , \gcd(f,g)=1\}}}{ \phi(g) q^{n-m}}   \frac{ \deg \pi}{n}  \log \left( \frac{ n}{\deg \pi} \right)  \left( 1 - \frac{\deg \pi} {n} \right)^k\]
\[ =\sum_{d=1}^{n-m}   \sum_{ \substack{ \pi \textrm{ prime} \\ \pi \nmid g \\ \deg \pi \mid d }}  q^{-d}    \frac{ \deg \pi}{n}  \log \left( \frac{ n}{\deg \pi} \right)  \left( 1 - \frac{\deg \pi} {n} \right)^k \]
\[ \leq \sum_{ \substack{ \pi \textrm{ prime} \\ \pi \nmid g \\ \deg \pi \leq n-m }}  \sum_{r =1}^{\infty} q^{-r \deg \pi}     \frac{ \deg \pi}{n}  \log \left( \frac{ n}{\deg \pi} \right)  \left( 1 - \frac{\deg \pi} {n} \right)^k\]
\[ \ll \sum_{ \substack{ \pi \textrm{ prime} \\ \pi \nmid g \\ \deg \pi \leq n-m }}  q^{ - \deg \pi}     \frac{ \deg \pi}{n}  \log \left( \frac{ n}{\deg \pi} \right)  \left( 1 - \frac{\deg \pi} {n} \right)^k\]
\[= \sum_{e=1}^{n-m}  \abs{ \{\pi \textrm{ prime}, \deg \pi =e\}}  q^{-e } \frac{ e}{n}  \log \left( \frac{ e}{n} \right)  \left( 1 - \frac{e} {n} \right)^k \]
\[ \leq \sum_{e=1}^{n-m} \frac{1}{e}  \frac{ e}{n}  \log \left( \frac{ e}{n} \right)  \left( 1 - \frac{e} {n} \right)^k\]
which is a Riemann sum for $\int_{0}^{1-\theta}  \log(x) (1-x)^k dx$, and thus converges as $n$ goes to $\infty$ to $\int_{0}^{1-\theta}  \log(x) (1-x)^k dx$, which converges to $0$ as $k$ goes to $\infty$, as desired.

For $d >n-m$, we observe that 

\[ \sum_{d=1}^{n}  \frac{ \abs{\{ f \in \mathbb F_q[t]^+ \mid \deg f=n- d , \gcd(f,g)=1\}}}{ \phi(g) q^{n-m}}   \sum_{ \substack{ \pi \textrm{ prime} \\ \pi \nmid g \\ \deg \pi \mid d }}   \deg \pi  \] is the average of the total degree of the prime factors of a polynomial of degree $n$ prime to $g$ and thus must be $n$. So 

\[ \sum_{d=n+1-m}^{n}  \sum_{ \substack{ \pi \textrm{ prime} \\ \pi \nmid g \\ \deg \pi \mid d }}  \frac{ \abs{\{ f \in \mathbb F_q[t]^+ \mid \deg f=n- d , \gcd(f,g)=1\}}}{ \phi(g) q^{n-m}}   \frac{ \deg \pi}{n}  \log \left( \frac{ n}{\deg \pi} \right)  \left( 1 - \frac{\deg \pi} {n} \right)^k \] 

\[ \leq \max_{ n+1-m \leq d \leq n}  \frac{ n} {\sum_{ \substack{ \pi \textrm{ prime} \\ \pi \nmid g \\ \deg \pi \mid d }}   \deg \pi }   \sum_{ \substack{ \pi \textrm{ prime} \\ \pi \nmid g \\ \deg \pi \mid d }}    \frac{ \deg \pi}{n}  \log \left( \frac{ n}{\deg \pi} \right)  \left( 1 - \frac{\deg \pi} {n} \right)^k.\]

The denominator is \[ \sum_{ \substack{ \pi \textrm{ prime} \\ \pi \nmid g \\ \deg \pi \mid d }}   \deg \pi = q^d - \sum_{ \substack{ \pi \textrm{ prime} \\ \pi \mid g \\ \deg \pi \mid d }} \geq q^d - m \gg q^d \] so 

\[ \sum_{d=n+1-m}^{n}  \sum_{ \substack{ \pi \textrm{ prime} \\ \pi \nmid g \\ \deg \pi \mid d }}  \frac{ \abs{\{ f \in \mathbb F_q[t]^+ \mid \deg f=n- d , \gcd(f,g)=1\}}}{ \phi(g) q^{n-m}}   \frac{ \deg \pi}{n}  \log \left( \frac{ n}{\deg \pi} \right)  \left( 1 - \frac{\deg \pi} {n} \right)^k \] \begin{equation}\label{convergence-tail-max}  \ll \max_{ n+1-m \leq d \leq n}    \frac{1}{ q^d}  \sum_{ \substack{ \pi \textrm{ prime} \\ \pi \nmid g \\ \deg \pi \mid d }}   \deg \pi    \log \left( \frac{ n}{\deg \pi} \right)  \left( 1 - \frac{\deg \pi} {n} \right)^k . \end{equation}

Those $\pi$ with $\deg \pi < d$  contribute $\ll q^{d/2}$ to $ \sum_{ \substack{ \pi \textrm{ prime} \\ \pi \nmid g \\ \deg \pi \mid d }}   \deg \pi $, so they contribute $\ll q^{d/2}\cdot  \log n\cdot 1 $ to  $\sum_{ \substack{ \pi \textrm{ prime} \\ \pi \nmid g \\ \deg \pi \mid d }}   \deg \pi    \log \left( \frac{ n}{\deg \pi} \right)  \left( 1 - \frac{\deg \pi} {n} \right)^k$ and thus $\ll q^{-d/2} \log n$ to \eqref{convergence-tail-max}. Since $d > n-m \geq (1-\theta) n$, $\lim_{n \to \infty}q^{-d/2} \log n =0$ and these terms can be ignored. The remaining terms of  \eqref{convergence-tail-max} are
\begin{equation*}\max_{ n+1-m \leq d \leq n}    \frac{1}{ q^d}  \sum_{ \substack{ \pi \textrm{ prime} \\ \pi \nmid g \\ \deg \pi =  d }}   d    \log \left( \frac{ n}{d} \right)  \left( 1 - \frac{d} {n} \right)^k . \end{equation*}
\[ \leq  \max_{ n+1-m \leq d \leq n}   \log \left( \frac{ n}{d} \right)  \left( 1 - \frac{d} {n} \right)^k \leq   \log \left( \frac{ 1}{1-\theta} \right)  \left( 1 -  (1-\theta)  \right)^k \]
which goes to $0$ as $k$ goes to $\infty$, as desired.
\end{proof}

\begin{proposition}\label{eft-final}  For $\theta$ a real number between $0$ and $1$, $q$ a prime power with  \[ q>\left ( \frac{(1+2\theta) e   2^{ \frac{\theta}{1-\theta}}  7^{ \frac{1}{ (1-\theta)} }    }{ (1-\theta) }  \right)^2,\]     a natural number $n$, $g$ squarefree of degree $m$ with $2 \leq m \leq \theta n$, and an invertible residue class $a$ mod $g$, the total variation distance between the probability measures  
\[ \frac{1}{q^{n-m}}  \sum_{ \substack{ f \in \mathcal M_n \\ f \equiv a \mod g}} \delta_{\omega_f} \]
and
\[ \frac{1}{\phi(g) q^{n-m}}  \sum_{ \substack{ f \in \mathcal M_n \\ \gcd(f,g)=1 }} \delta_{\omega_f} \]
is at most
\[  \frac{1}{ \sqrt{2\pi} (4+e) e^{ (4+e) } } \frac {  L^{9/2}   e^{2L} }{  L^L} +o(1)\]
where \[ L = \frac{1}{2}  \left( \frac{(2- 2 \theta) \sqrt{q} } { (1+2 \theta)  e} \right)^{ (1-\theta) }  \] and the constant in $o(1)$ may depend on $q,\theta$.
\end{proposition}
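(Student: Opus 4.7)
The plan is to combine \cref{d-ft-2}, \cref{stronger-convergence}, and \cref{dp-bound}, with the parameter $\lambda$ in \cref{d-ft-2} calibrated to match the value of $L$ in the statement. A direct algebraic computation from the definition of $L$ gives $L^{1/(1-\theta)} = 2^{-\theta/(1-\theta)} (1-\theta)\sqrt{q}/((1+2\theta)e)$, so that at $\lambda = \log L$ the geometric ratio
\[ \frac{(1+2\theta)\,e\,2^{\theta/(1-\theta)}\, e^{\lambda/(1-\theta)}}{(1-\theta)\sqrt{q}} \]
in the power-savings term of \cref{d-ft-2} is exactly $1$. Likewise, the hypothesis on $q$ in the statement rearranges to $L > 7$, which is precisely what \cref{dp-bound} needs.

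To obtain actual decay I would apply \cref{d-ft-2} with a slightly perturbed threshold $\lambda = \log L - \epsilon$ for each fixed $\epsilon > 0$. The calibration above then makes the power-savings term $O_\epsilon\bigl((e^{-\epsilon/(1-\theta)})^{n-m}\bigr) = O_\epsilon(e^{-\epsilon n})$, using $n - m \geq (1-\theta) n$, which is $o(1)$ as $n \to \infty$. The remaining term in \cref{d-ft-2},
\[ \frac{|\{f \in \mathcal{M}_n : \gcd(f,g)=1,\ H(\omega_f) > \log L - \epsilon\}|}{q^{n-m}\phi(g)}, \]
tends by \cref{stronger-convergence} to $P(H_{\mathrm{PD}} > \log L - \epsilon)$, where $H_{\mathrm{PD}}$ is the entropy of a Poisson--Dirichlet random variable. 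Therefore for each fixed $\epsilon > 0$,
\[ \limsup_{n \to \infty} d_{\mathrm{TV}} \ \leq \ P(H_{\mathrm{PD}} > \log L - \epsilon). \]

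Finally I would let $\epsilon \downarrow 0$: by monotone convergence on the decreasing family of events $\{H_{\mathrm{PD}} > \log L - \epsilon\} \downarrow \{H_{\mathrm{PD}} \geq \log L\}$, the right side tends to $P(H_{\mathrm{PD}} \geq \log L)$, and \cref{dp-bound} bounds this by the explicit quantity $\frac{L^{9/2} e^{2L}}{\sqrt{2\pi}\,(4+e)\,e^{4+e}\, L^L}$. Rewriting the resulting asymptotic inequality as ``$d_{\mathrm{TV}} \leq (\text{bound}) + o(1)$'' gives the stated result, with the $o(1)$ depending only on $q$ and $\theta$.

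The main obstacle is precisely that the natural choice $\lambda = \log L$ makes the power-savings term only $O(1)$, not $o(1)$, so \cref{d-ft-2} cannot be applied directly; the perturbation-then-limit maneuver compensates. The check that would need the most care in a full write-up is that the implicit constants in \cref{d-ft-2} depend only on $q, \theta$ (not on $\epsilon$, $m$, $g$, or $a$), ensuring the dependence structure of the final $o(1)$ is as advertised; this is immediate from the form of \cref{d-ft-2} since only the explicit exponential factor depends on $\lambda$. No uniform-in-$\lambda$ refinement of \cref{stronger-convergence} is required, because the $\epsilon$-limit is taken \emph{after} the $n \to \infty$ limit.
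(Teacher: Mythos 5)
Your proof is correct and uses the same key ingredients as the paper: the algebraic calibration making the power-savings base in \cref{d-ft-2} equal to $1$ exactly when $\lambda = \log L$, the hypothesis on $q$ forcing $L > 7$, and the chain \cref{d-ft-2} $\to$ \cref{stronger-convergence} $\to$ \cref{dp-bound}. The only difference is in the handling of the perturbation: the paper substitutes the explicit $n$-dependent choice $\delta = 1/\sqrt{n-m}$ and argues via a monotonicity trick to transfer the double-limit $\lim_{\delta\to 0}\lim_{n\to\infty}$ asymptotic to that particular sequence, whereas you fix $\epsilon$, take $\limsup_{n\to\infty}$ first (where both terms of \cref{d-ft-2} behave well for fixed $\epsilon$), and then let $\epsilon\downarrow 0$ by monotone convergence --- arguably the tidier organization of the same idea.
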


\begin{proof} We apply \cref{d-ft-2} with $\lambda = \log (L) - \frac{1}{ \sqrt{n-m}}$. We have 
\[ O \left(  \left( \frac{(1+2\theta) e   2^{ \frac{\theta}{1-\theta}}  e^{ \frac{\lambda}{1-\theta}}  }{ (1-\theta) \sqrt{q} } \right)^{n-m}  \right) = O \left(  \left( \frac{(1+2\theta) e   2^{ \frac{\theta}{1-\theta}}  e^{ \frac{\log L}{1-\theta} - \frac{1}{ (1-\theta) \sqrt{n-m}} }  }{ (1-\theta) \sqrt{q} } \right)^{n-m}  \right)\]
\[ =O \left( e^{ - \frac{ \sqrt{n-m}}{1-\theta}}  \left( \frac{(1+2\theta) e   2^{ \frac{\theta}{1-\theta}}  L^{ \frac{1}{1-\theta}}  }{(1-\theta) \sqrt{q} } \right)^{n-m} \right) .\]

Now 
\[ \frac{(1+2\theta) e   2^{ \frac{\theta}{1-\theta}}  L^{ \frac{1}{1-\theta}}  }{(1-\theta) \sqrt{q} }  = \frac{(1+2\theta) e   2^{ \frac{\theta}{1-\theta}} }{(1-\theta)  2^{ \frac{1}{1-\theta}} \sqrt{q} } \frac{(2- 2 \theta) \sqrt{q} } { (1+2 \theta)  e}   = \frac{   2^{  1 +\frac{\theta}{1-\theta}} }{ 2^{ \frac{1}{1-\theta}}  } =1\]
so
\[ O \left( e^{ - \frac{ \sqrt{n-m}}{1-\theta}}  \left(  \frac{(1+2\theta) e   2^{ \frac{\theta}{1-\theta}}  L^{ \frac{1}{1-\theta}}  }{(1-\theta) \sqrt{q} } \right)^{n-m} \right) = O \left( e^{ - \frac{ \sqrt{n-m}}{1-\theta}} \right) =o (1).\]

Thus, it suffices to prove that 
\begin{equation}\label{final-entropy} \frac{\abs{ \{ f \in \mathcal M_n \mid \gcd(f,g)=1, H(\omega_f) > \log (L) - \frac{1}{ \sqrt{n-m} }\} }}{ q^{n-m} \phi(g) } \leq  o(1) + \frac{1}{ \sqrt{2\pi} (4+e) e^{ (4+e) } } \frac {  L^{9/2}   e^{2L} }{  L^L}  .\end{equation}

Defining $N_{g,n} ( x) = \abs{ \{ f \in \mathcal M_n \mid \gcd(f,g)=1, H(\omega_f) > x\} }$, we have 

We have
\begin{equation}\begin{split}\label{delta-n-limit} &  \lim_{ \delta \to 0}  \lim_{n \to \infty} \max_{ \substack{ m \in \mathbb N, g \in \mathbb F_q[t]^+ \\ m \leq \theta n \\ \deg g  =m \\ g\textrm{ squarefree} }}  \frac{N_{g,n} (\log L -\delta)}{ q^{n-m} \phi(g) }= \lim_{ \delta\to 0}  \mathbb P (  H> \log (L) -\delta )  \\
 = &\mathbb P ( H \geq  \log(L))= \mathbb P ( H >  \log(L)) \leq  \frac{1}{ \sqrt{2\pi} (4+e) e^{ (4+e) } } \frac {  L^{9/2}   e^{2L} }{  L^L} ,\end{split} \end{equation} where $H$ is the entropy of a GEM random variable, by \cref{stronger-convergence}, a basic property of probability, \cref{no-atoms}, and \cref{dp-bound}. 

For any fixed $\delta>0, \theta<1 $ we have $\delta \geq \frac{1}{ \sqrt{n-\theta n }} \geq \frac{1}{ \sqrt{n-m}}$ for all $n$ sufficiently large and all $m \leq \theta n$, which since $N_{g,n}$ is nondecreasing in $x$  implies that 
\[\max_{ \substack{ m \in \mathbb N, g \in \mathbb F_q[t]^+ \\ m \leq \theta n \\ \deg g  =m \\ g\textrm{ squarefree} }}  \frac{N_{g,n} (\log L -\delta)}{ q^{n-m} \phi(g) }  \geq  \max_{ \substack{ m \in \mathbb N, g \in \mathbb F_q[t]^+ \\ m \leq \theta n \\ \deg g  =m \\ g\textrm{ squarefree} }}\frac{N_{g,n} \left(\log L -\frac{1}{\sqrt{n-m}} \right)}{ q^{n-m} \phi(g) }\]
for all $n$ sufficiently large, so that
\[ \lim_{n \to \infty} \max_{ \substack{ m \in \mathbb N, g \in \mathbb F_q[t]^+ \\ m \leq \theta n \\ \deg g  =m \\ g\textrm{ squarefree} }}  \frac{N_{g,n} (\log L -\delta)}{ q^{n-m} \phi(g) }  \geq \lim\sup_{n\to\infty}  \max_{ \substack{ m \in \mathbb N, g \in \mathbb F_q[t]^+ \\ m \leq \theta n \\ \deg g  =m \\ g\textrm{ squarefree} }} \frac{N_{g,n} \left(\log L -\frac{1}{\sqrt{n-m} }\right)}{ q^{n-m} \phi(g) }\]
for all $\delta>0$, and thus
\[  \lim_{ \delta \to 0}  \lim_{n \to \infty} \max_{ \substack{ m \in \mathbb N, g \in \mathbb F_q[t]^+ \\ m \leq \theta n \\ \deg g  =m \\ g\textrm{ squarefree} }}  \frac{N_{g,n} (\log L -\delta)}{ q^{n-m} \phi(g) }  \geq \lim\sup_{n\to\infty}  \max_{ \substack{ m \in \mathbb N, g \in \mathbb F_q[t]^+ \\ m \leq \theta n \\ \deg g  =m \\ g\textrm{ squarefree} }}\frac{N_{g,n} \left(\log L -\frac{1}{\sqrt{n-m} }\right)}{ q^{n-m} \phi(g) } , \]
which together with \eqref{delta-n-limit} verifies \eqref{final-entropy}.

%
\end{proof}

\bibliographystyle{plainnat}
 \bibliography{references}

\end{document}